\documentclass[11pt]{amsart}

\usepackage{color,amssymb,amsthm,amstext,latexsym,amsmath,amscd,amsfonts}
\usepackage{enumitem}
\usepackage[mathscr]{eucal}
\usepackage{tensor}
\usepackage{wrapfig} 
\usepackage[all]{xy}
\usepackage{float}
\usepackage[pdftex]{graphicx}
\usepackage{longtable}
\usepackage{mathtools}
\usepackage{accents}
\usepackage[normalem]{ulem}
\usepackage{cancel}
\usepackage{upgreek}
\usepackage{tikz}
\usepackage{graphicx}
\usepackage{hyphenat}
\usepackage{lmodern}
\usepackage{tcolorbox}
\setcounter{tocdepth}{1}
\usepackage{tikz}
\usepackage{tikz-cd}
\usepackage{courier}
\usepackage{hyperref}
\usepackage[normalem]{ulem}

\input xy
\xyoption{all}
\usetikzlibrary{calc}


\theoremstyle{plain}
\newtheorem{thm}{Theorem}
\newtheorem{theorem}[thm]{Theorem}
\newtheorem{corollary}[thm]{Corollary}
\newtheorem{lemma}[thm]{Lemma}

\newtheorem{proposition}[thm]{Proposition}
\newtheorem{mr}{Main Result}

\newtheoremstyle{exm}
{9pt}{9pt}{}{}{\bfseries}{}{.5em}{}
\theoremstyle{exm}

\newtheoremstyle{rmk}
{9pt}{9pt}{}{}{\bfseries}{}{.5em}{}
\theoremstyle{rmk}

\newtheoremstyle{question}
{9pt}{9pt}{}{}{\bfseries}{}{.5em}{}
\theoremstyle{question}

\numberwithin{equation}{section}
\numberwithin{thm}{section}
\numberwithin{figure}{section}

\theoremstyle{definition}
\newtheorem{definition}[thm]{Definition}

\newtheorem{example}[thm]{Example}
\newtheorem{remark}[thm]{Remark}

\newtheorem*{claim}{Claim}



\newcommand{\R}{\mathbb{R}}
\newcommand{\C}{\mathbb{C}}
\newcommand{\Z}{\mathbb{Z}}
\newcommand{\Q}{\mathbb{Q}}


\newcommand{\oc}{(\widehat{U},\Gamma,\varphi)} 
\newcommand{\otheroc}{(\widehat{V},\Gamma',\psi)} 
\newcommand{\thirdoc}{(\widehat{W},\Gamma'',\chi)} 


\setcounter{tocdepth}{2}

\title[Classification of Hamiltonian $S^1$-spaces]{Classification of
  Hamiltonian $S^1$-actions on compact symplectic
  orbifolds with isolated cyclic singular points in dimension four}

\author[L. Godinho]{Leonor Godinho}
\address{Departamento de Matem\'atica, Centro de An\'alise Matem\'atica, Geometria e Sistemas din\^a\-micos, Instituto Superior
T\'ecnico, Universidade de Lisboa, Av. Rovisco Pais, 1049-001 Lisbon, Portugal}

\email{lgodin@math.tecnico.ulisboa.pt}

\author[G. T. Mwakyoma-Oliveira]{Grace T. Mwakyoma-Oliveira}
\email{gracetmwakyoma@gmail.com}

\author[D. Sepe]{Daniele Sepe}
\address{Escuela de Matem\'aticas, Universidad Nacional de Colombia sede
  Medell\'in, Colombia, and Instituto de Matem\'atica e Estat\'istica, Departamento de
  Matem\'atica Aplicada (GMA), Universidade Federal Fluminense,
  Brazil.}
\email{dsepe@unal.edu.co}

\date{\today}

\begin{document}

\keywords{Hamiltonian circle actions, symplectic orbifolds.}
\subjclass{53D20, 53D35.}
\begin{abstract}
In this paper, we classify Hamiltonian $S^1$-actions on compact, four dimensional
symplectic orbifolds that have isolated singular
points with cyclic orbifold structure groups, thus extending the
classification due to Karshon to the orbifold setting. To such a
space, we associated a combinatorial invariant, a labeled multigraph,
that determines the isomorphism type of the space. Moreover, we show
that any such space can be obtained by applying finitely many
equivariant weighted blow-ups to a minimal space, i.e., one on which
no equivariant weighted blow-down can be applied. 
\end{abstract}

\maketitle

\tableofcontents

\section*{Introduction}\label{sec:introduction}
In this paper we study Hamiltonian $S^1$-actions on compact, four dimensional
symplectic orbifolds that have isolated singular
points with cyclic orbifold structure groups. Intuitively, an {\bf orbifold} is a space that is locally
modeled on a quotient of Euclidean space by an effective action of a finite group (see Definition
\ref{defn:orbifold}). First introduced by Satake
in \cite{satake}, not only are they intrinsically interesting since
they are examples of `singular spaces', but in fact play an important
role in complex algebraic and symplectic geometry, and beyond. In the
former, they represent examples of `mild' singular behavior and have
been extensively studied in relation to {\bf desingularizations},
i.e., ways to resolve the singular locus to obtain a smooth complex
variety (see, for instance, \cite{reid}). In the latter, they arise naturally in the context of
symplectic reduction at a regular value of a moment map of a
Hamiltonian action. Thus, as stated in \cite{LermanTolman},
\begin{center}
  {\em `we need to understand symplectic orbifolds even if we only
    want to understand symplectic manifolds.'}
\end{center}
In this direction, in recent years there has been plenty of interest in
desingularizing symplectic orbifolds to construct
interesting examples of symplectic manifolds (see,
among others, \cite{fer_mun,fine_panov}). Finally, symplectic
orbifolds also arise naturally when studying the so-called almost
regular (or Besse) contact manifolds, as the latter are principal
$S^1$-orbi-bundles over the former (see \cite{kegel_lange}).

Let $M$ be an orbifold of dimension $n$ and let $x \in M$ be a point. Associated to $x$
is its {\bf orbifold structure group} $\Gamma$, which, using the above
intuitive definition, corresponds to the isotropy of the orbit of the
finite group action that $x$ represents (see Definition
\ref{defn:orbifold_str_group}). We say that $x$ is {\bf singular} if $\Gamma$
is not trivial and that $M$ has {\bf isolated singular points} if the
set of singular points of $M$ is discrete. In this paper, we are only
concerned with orbifolds with isolated singular points, the main
reason being that many of the subtleties of the differential topology
of orbifolds are made significantly simpler by imposing this
condition. For instance, for such orbifolds, the standard differential
topological notions of vector fields, differential forms, etc. can be
defined `naively'. Hence, the definitions of a symplectic manifold
and of a Hamiltonian action are the usual ones (see Definitions
\ref{defn:symplectic} and \ref{defn:hamiltonian}). 

Let $(M,\omega)$ be a compact, four dimensional
symplectic orbifold that has isolated singular
points and suppose that $H \colon M \to \R$ is the moment map for an
effective Hamiltonian $S^1$-action on $M$. In this paper, we study
such objects under the extra condition that all orbifold structure groups be {\bf cyclic}. This turns out to be a mild restriction, as
explained below. Let $x \in M$ be a singular
point with non-trivial orbifold structure group $\Gamma$. Since $M$ has only
isolated singular points, it follows that $x$ is a fixed point of the
action (see Corollary \ref{cor:action_preserves_structure_group}). Moreover, since $(M,\omega)$ is
symplectic and has dimension four, the group $\Gamma$ is (isomorphic
to) a subgroup of $\mathrm{U}(2)$ -- see Theorem
\ref{thm:darboux_orbifolds}. If $\Gamma$ is not abelian, by Lemma
\ref{lem::orbiweights} and Remark
\ref{rmk:non-abelian_local_extremum}, it follows that $x$ is an
isolated fixed point that is a local (hence global) extremum. Otherwise, since the singular set of $M$ is
isolated, it follows that $\Gamma$ has to be cyclic (see Lemma
\ref{lemma:cyclic_rep_isolated_sing}). In other words, by restricting
the orbifold structure groups to be cyclic, we are only excluding isolated
extrema with non-abelian structure groups. A Hamiltonian $S^1$-action
on $(M,\omega)$ with the above hypotheses is called a {\bf Hamiltonian
  $S^1$-space} and denoted by $(M,\omega,H)$. \\

The aim of this paper is to classify Hamiltonian $S^1$-spaces up to
the natural notion of isomorphism (see Definition
\ref{defn:hamiltonian}), thus generalizing the main results in
\cite{karshon} to the setting of orbifolds. Given a Hamiltonian $S^1$-space $(M,\omega,H)$, we associate a
{\bf labeled multigraph} to it that, when $M$ is a manifold, coincides with
the invariant introduced in \cite[Section 2.1]{karshon}. However,
the invariant that we introduce is necessarily a
multigraph even when all fixed points of the action
are isolated and edges that are incident to a vertex need not have
coprime labels (see, for instance, Example \ref{tswEx} and Figure \ref{fig::typeBspace1}).

\renewcommand{\thefigure}{A}
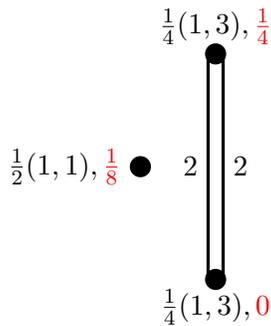
\begin{figure}[h!]
	\centering
	\begin{tikzpicture}	
		\coordinate (Q1) at (7,3);
		\fill (Q1) circle (4pt);
		\coordinate (Q2) at (7,0);
		\fill (Q2) circle (4pt);	
		
		\coordinate (Q0) at (6,1.5);
		\fill (Q0) circle (4pt);	
	
		\coordinate[label=left:$2$] (BC) at ($(6.9,1.5)$);	
		\coordinate[label=right:$2$] (BC) at ($(7.1,1.5)$);
		
		\coordinate[label=below:${\frac{1}{4}(1,3), \color{red}{0}}$] (BC) at ($(Q2)$);
		\coordinate[label=above:${\frac{1}{4}(1,3), \color{red}{\frac{1}{4}}}$] (BC) at ($(Q1)$);
		\coordinate[label=left:${\frac{1}{2}(1,1),\color{red} \frac{1}{8}}\,\,$] (BC) at ($(Q0)$);
		
		\draw[very thick](7.1,3) -- (7.1,0);
		\draw[very thick](6.9,3) -- (6.9,0);
	\end{tikzpicture}
\caption{Example of a labeled multigraph of a Hamiltonian $S^1$-space
  (see Example \ref{tswEx}).}
\label{fig::typeBspace1}
\end{figure}
Moreover, there are more
labels than in the setting of manifolds, reflecting the richness of
the (local) orbifold structure. Our first main
result states that the labeled multigraph is a complete invariant up
to isomorphism (see Lemma
\ref{lemma:trivial_direction} and Theorem \ref{thm:uniqueness}).

\begin{mr}\label{mr1}
  Two Hamiltonian $S^1$-spaces are isomorphic if and only if the
  associated labeled multigraphs are equal.
\end{mr}

Intuitively, our strategy to prove Main Result \ref{mr1} is to use an equivariant
desingularization via {\bf weighted blow-ups} to reduce the problem to
one between smooth manifolds, so that we can invoke \cite[Theorem
4.1]{karshon}. We believe that the existence of an equivariant desingularization
(see Theorem \ref{thm:desingularization}) should be of interest in its own
right, especially in light of its relation to Hirzebruch-Jung
continued fractions and, therefore, to the well-known Hirzebruch-Jung
resolution of cyclic quotient singularities in complex algebraic
geometry (see \cite{reid}).

Our second main result is concerned with determining the so-called
{\bf minimal spaces}, i.e., those Hamiltonian $S^1$-spaces on which no
equivariant weighted blow-down can be performed. Together with
equivariant weighted blow-ups, these can be thought of
as the fundamental building blocks of Hamiltonian $S^1$-spaces. More
precisely, the following holds (see Lemmas \ref{lemma:existsurface1}
and \ref{lemma:only_iso}, and Theorems
\ref{thm:existsurface} and \ref{thm:fixed}).

\begin{mr}\label{mr2}
  Any Hamiltonian $S^1$-space can be obtained by applying finitely
  many equivariant weighted blow-ups to a minimal space.
\end{mr}

We split the proof of Main Result \ref{mr2} in two cases, depending on
whether the given Hamiltonian $S^1$-space has at least one fixed orbi-surface (see Theorem
\ref{thm:existsurface}), or not (see Theorem \ref{thm:fixed}). In each
case, our strategy for the proof is
the same and consists of two steps. First, we use the
localization formula for orbifolds (see Theorem
\ref{prop:localization}) to determine the possible labeled multigraph
of the minimal models (see Theorems \ref{fig::minimalfixedsurface} and
\ref{thm:existisolated}). Second, we construct
Hamiltonian $S^1$-spaces that have as labeled multigraphs those
determined in the first step (see Sections \ref{MSF} and
\ref{MSI}). In all but two cases we construct the spaces explicitly; in the remaining two, we
obtain them from quotients of weighted projective spaces after
applying finitely many equivariant weighted blow-ups and
blow-downs. As a simple consequence of Main Result \ref{mr2}, we
exhibit minimal spaces with isolated fixed points for which
the $S^1$-action cannot be extended to a toric one (see Lemma
\ref{lemma:notextendTorus}). These include an example studied in
\cite{counterex}; moreover, this phenomenon should be contrasted with
\cite[Theorem 5.1]{karshon} in the case of manifolds. \\

We view this paper as a step towards studying
`large' Hamiltonian actions on symplectic orbifolds, extending the
work in the completely integrable case carried out in
\cite{LermanTolman}. As such, there are several potential extensions
of the present paper that we intend to work on in the future. These
include removing the (easier) assumption on the orbifold structure
group and the (harder) assumption on the singular set of the orbifold,
as well as computing the equivariant cohomology of these
spaces. Further ahead, there may be scope to extend the classification
of tall complexity one Hamiltonian torus actions in
\cite{karshon,kt2,karshontolman3} to the realm of orbifolds.

\subsection*{Structure of the paper} In Section \ref{ChapterPreliminaries}
we review standard differential topological
constructions for orbifolds, ranging from the definition and examples
of theses spaces
to maps between them, orbi-bundles, quotients, group actions and the study of
symplectic geometry for orbifolds. While most of the material in this
section is probably well-known to experts, we set the notation and
construct some fundamental examples that we use throughout the
paper. In Section \ref{sec:orbifolds} we
define orbifolds using local charts (but see also
\ref{rmk:groupoids}). We introduce the orbifold structure group of a 
point on an orbifold. Depending on whether this group is trivial or
not, the point is regular or singular respectively. We discuss maps
between orbifolds, coming to the notion of good maps introduced in
\cite[Section 4]{chen_ruan}. We conclude this section by recalling the
classification of compact, orientable two dimensional orbifolds with
isolated singular points, which is a simple extension of the analogous
result for manifolds. In Section \ref{sec:orbi-bundles} we discuss the
analog of fiber bundles in the setting of orbifolds, which are known
as orbi-bundles. We define these objects using local charts and
construct some examples of vector and principal orbi-bundles that are
used throughout the paper. Special attention is given to Seifert
fibrations, i.e., principal
$S^1$-orbi-bundles over compact, two dimensional orbifolds with
isolated singular points. In particular, we discuss the construction of the
associated Seifert invariant (see Definition
\ref{defn:seifert_invariant}). To conclude this section, we consider
orbifold covering maps which generalize topological covering maps. In
Section \ref{sec:suborbifolds} we introduce suborbifolds, placing
particular emphasis on full (or strong) embedded suborbifolds (see Definition
\ref{defn:suborbifold}). These can be thought of as being the
appropriate analog of embedded submanifolds in the setting of
orbifolds. For instance, such suborbifolds have a normal orbi-bundle
and there is a tubular neighborhood theorem (see Theorem
\ref{thm:tub_neigh_orb}). We discuss group actions on orbifolds in
Section \ref{sec:group-acti-orbif}. To start, we define them and
construct several examples that are useful for our purposes. As in the
case of manifolds, there are linearization results near fixed points
(see Corollary \ref{cor:linearization_fixed_point}). To conclude the
section, we state that finite quotients of orbifolds are again
orbifolds (see Corollary \ref{cor:quotient_orbifold}), and use this
result to describe cyclic quotients of weighted projective
planes. Once endowed with a suitable symplectic form and Hamiltonian
$S^1$-action, these underlie some of the minimal spaces that
we obtain. In Section \ref{sec:sympl-orbif-hamilt} we discuss aspects
of symplectic geometry on orbifolds, with a view to introduce Hamiltonian
group actions. These are very similar to the better known versions for
manifolds. We start by defining and giving examples of symplectic
orbifolds. The connection between symplectic forms and compatible
almost complex structures is discussed and then the Darboux theorem is
stated (see Theorem \ref{thm:darboux_orbifolds}). We use this result
to classify open neighborhoods of isolated singular points with cyclic
orbifold structure group in a four
dimensional symplectic orbifold (see Corollary
\ref{cor:sing_point_dim_4}). For our purposes, it is also useful to
recall the classification of compact symplectic surfaces with isolated
singular points (see Theorem
\ref{thm:classification_symplectic_orbi-surfaces}), as these appear in
our spaces as connected components of the fixed point set. We conclude
this section by defining Hamiltonian group actions, giving examples,
stating the equivariant Darboux and symplectic neighborhood theorems
(see Theorems \ref{thm:darboux} and \ref{thm:esnt}), and by stating
the connectedness result for fibers of the moment map of Hamiltonian
torus actions on a compact symplectic manifold (see Theorem
\ref{thm:connected_fibers}). 

In Section \ref{ChapterLocalForms} we take a closer look at the local
behavior of Hamiltonian $S^1$-actions near fixed points. We start in
Section \ref{sec:unit-repr-s1} by discussing the orbi-weights of a
fixed point of a Hamiltonian $S^1$-action. First, we prove a general
result about finite extensions of tori (see Theorem
\ref{thm::groupext}), which may be of independent interest and of use
in the study of Hamiltonian torus actions on orbifolds. The crucial
result that we need to define orbi-weights is Lemma
\ref{lem::orbiweights}. In Section \ref{sec:local-class-near} we prove
the local normal form for an isolated fixed point with cyclic orbifold
structure group of a Hamiltonian $S^1$-action on a four dimensional
symplectic orbifold (see Theorem \ref{cor::localformpt}). Moreover,
this section contains several technical results that we use in the
rest of the paper.

In Section \ref{sec:labelled-multigraph} we introduce the complete
combinatorial invariant of a Hamiltonian $S^1$-space, the 
labeled multigraph. As a first step, in Section
\ref{sec:isotr-orbi-spher} we discuss properties of the connected
component of the fixed point set of such a space, as well as the
so-called isotropy orbi-spheres, which are the closure of a connected
component of the set of points with finite stabilizer. Then we proceed
to define the desired invariant in Section
\ref{sec:label-mult-hamilt}. We also discuss some crucial examples
that, eventually, turn out to be minimal spaces (see Examples
\ref{ex:projective2} and \ref{tswEx}). Moreover, we prove the `easy'
direction of Main Result \ref{mr1} in Lemma
\ref{lemma:trivial_direction}. As a first partial converse to this
result, in Section \ref{sec:label-mult-determ} we show that the
labeled multigraph of a Hamiltonian $S^1$-space allows one to
reconstruct invariant open neighborhoods of all of the connected
components of the fixed point set (see Lemma
\ref{lemma:gamma_thin_vertex} and Proposition
\ref{prop:graph_seifert}). In the proof of Proposition
\ref{prop:graph_seifert} we make use of the localization formula (see
Theorem \ref{prop:localization}) and so we also give
a short introduction to equivariant cohomology for orbifolds.

Section \ref{ChapterEquivariantOperations} is the technical heart of
our paper, where we discuss the effects of equivariant weighted
blow-ups and blow-downs on the labeled multigraphs of Hamiltonian
$S^1$-spaces. In Section \ref{sec:sympl-weight-blow} we deal with
equivariant weighted blow-ups. We start by recalling the original
(equivariant) definition of this operation introduced in \cite{lgodinho,MS}, as well as
establishing some basic properties (see, for instance,
\ref{lemma:blow_up_preserves}). Then we turn to the two fundamental
technical results of the paper, Propositions \ref{prop::desing} and
\ref{prop::desing2}, which describe the effects of certain equivariant
weighted blow-ups on the labeled multigraph of a Hamiltonian
$S^1$-space. Using Proposition \ref{prop::desing}, we prove the key
result that every Hamiltonian $S^1$-space may be desingularized after
finitely many equivariant
weighted blow-ups (see Theorem \ref{thm:desingularization}). In Section
\ref{sec:wbd} we discuss equivariant weighted
blow-downs. This is a fundamental tool that we use to prove Main
Result \ref{mr2}. In particular, in this section we prove several
technical results that culminate in Corollary
\ref{cor::index2gradientsph}, which gives a sufficient condition to
blow-down an $S^1$-invariant orbi-sphere.

We prove Main Results \ref{mr1} and \ref{mr2} in Section
\ref{ChapterClassification}. The `hard' part of Main Result \ref{mr1}
is proved in Section \ref{uniqueness}, while Section
\ref{sec:existence} deals with Main Result \ref{mr2}. To prove the
latter, we need to introduce chains of $S^1$-invariant orbi-spheres,
which are entirely analogous to those considered in \cite[Section
3.1]{karshon}. Finally, we prove Main Result \ref{mr2} by showing a
sequence of results (see Lemma \ref{lemma:existsurface1}, Theorem
\ref{fig::minimalfixedsurface}, Lemma \ref{lemma:only_iso}, Theorem
\ref{thm:existisolated}, as well as Sections \ref{MSF} and
\ref{MSI}). We conclude this section by showing that certain minimal
models with only isolated fixed points are such that the $S^1$-action
cannot be extended to a toric one (see Lemma
\ref{lemma:notextendTorus}).

In the final section of this paper, Section \ref{sec:final-remarks},
we prove some additional results regarding Hamiltonian $S^1$-spaces,
namely that they are Kähler (see Theorem \ref{thm:kahler}), and that
some of them are birationally equivalent to a certain minimal model
(see Theorem \ref{thm:birationalequiv}), thus answering a question
posed to us by Margaret Symington. 

\subsection*{Conventions}\label{sec:conventions}
Given a positive integer $c > 1$, we denote the
cyclic group of order $c$ by $\Z_c$ and we identify it as the subgroup
of $S^1 = \{z \in \C \mid |z| = 1\}$ of $c$-th roots of unity. A
generator of $\Z_c$ is denoted by $\xi_c$. 
\subsection*{Acknowledgments} We would like to thank Margaret
Symington for interesting discussions. L.G. was partially funded by
FCT/Portugal through UID/MAT/04459/2020 and PTDC/MATPUR/29447/2017. G.T.M.O. was supported by FCT/Portugal through PD/BD/128420/2017. 
 D.S. was partially supported by FAPERJ
grant JCNE E-26/202.913/2019 and by a CAPES/Alexander von Humboldt
Fellowship for Experienced Researchers
88881.512955/2020-01. D.S. would like to thank Instituto Superior
Técnico for the kind hospitality during several short visits. This
study was financed in part by the Coordenação de Aperfeiçoamento de Pessoal de Nível Superior -- Brazil
(CAPES) -- Finance code 001.

\section{A primer on (symplectic) orbifolds, orbi-bundles and group actions}\label{ChapterPreliminaries}

In this section we review some background material on (symplectic)
orbifolds and group actions (see for example
\cite{adem,chen_ruan,HaefligerSalem,LermanTolman,Thurston} for
details). 

\subsection{Orbifolds}\label{sec:orbifolds}

\subsubsection{Definition and first examples}\label{sec:basic-definitions}

Throughout this section, we denote a topological vector space with
underlying set $M$ by $|M|$ and every subset of $|M|$ is endowed with
the relative topology.

\begin{definition}\label{defn:luc}
  Let $U \subseteq |M|$ be an open set. A {\bf local uniformizing chart} (l.u.c. for short) on $|M|$ is a triple
  $\oc$, where $\widehat{U}$ is a connected smooth manifold, $\Gamma
  \subset \mathrm{Diff}(\widehat{U})$ is a finite group 
  acting effectively, and $\varphi :
  \widehat{U} \to U$ is a  $\Gamma$-invariant continuous
  map such that the map $\bar{\varphi} : \widehat{U}/\Gamma \to
  U$ given by $\hat{\varphi}([\hat{x}]) = \varphi(\hat{x})$ is a homeomorphism. We say that $U$ is {\bf
    uniformized} by $\oc$.
\end{definition}

The above definition is equivalent to the standard one, in
which $\widehat{U}$ is assumed to be a connected open subset of
$\mathbb{R}^n$ (see \cite[Lemma 4.1.1]{chen_ruan}). Like charts for (topological) manifolds, l.u.c.'s need not be
unique. For this reason, it is useful to introduce the following
notions.

\begin{definition}\label{defn:embedding_isomorphism}
  Given two l.u.c.'s $\oc$ and $\otheroc$ on $|M|$,
  \begin{itemize}[leftmargin=*]
  \item an {\bf embedding} 
    $$\lambda : \oc \hookrightarrow \otheroc$$ 
    is a smooth open
    embedding $\lambda : \widehat{U} \to \widehat{V}$ such that $\psi \circ
    \lambda = \varphi$, and
  \item we say that $\oc$ and $\otheroc$ are {\bf isomorphic} if
    there exists an embedding $\lambda : \oc \hookrightarrow \otheroc$
    that is a diffeomorphism. 
  \end{itemize}
\end{definition}

\begin{remark}\label{rmk:embeddings_induce_homom}
  An embedding $\lambda : \oc \hookrightarrow \otheroc$ of l.u.c.'s
  induces an injective group homomorphism $\Gamma \to \Gamma'$ (which we
  also denote by $\lambda$), satisfying the following property:
  $$ \lambda(\gamma \cdot \hat{x}) = \lambda(\gamma) \cdot
  \lambda(\hat{x}) \text{ for all } \hat{x} \in \widehat{U} \text{ and
    for all } \gamma \in \Gamma. $$
  Moroever, the image of $\lambda : \Gamma \to \Gamma'$ can be
  characterized as follows:
  \begin{equation}
    \label{eq:9}
     \gamma' \in \mathrm{im}\, \lambda \Leftrightarrow
     \lambda(\widehat{U}) \cap \gamma' \cdot \lambda(\widehat{U}) \neq
     \emptyset, 
  \end{equation}
  (see \cite[Section 1.1]{adem}). In
  particular, if $\lambda$ is an isomorphism, then $\lambda: \Gamma
  \to \Gamma'$ is also an isomorphism. As a
  consequence, the above notion of isomorphism is equivalent to the notion of
  isomorphism of l.u.c.'s given in \cite[Section 4.1]{chen_ruan}.
\end{remark}

Given a l.u.c. $\oc$ on $|M|$ and $\gamma_0 \in \Gamma$,
the map $\lambda_{\gamma_0} : \hat{U} \to \hat{U}$ given by $\hat{x}
\mapsto \gamma_0 \cdot \hat{x}$ is an automorphism of $\oc$. In fact,
{\em any} automorphism of $\oc$ is of this form. This is the content
of the following result, stated below without proof (see \cite[Lemma 1]{satake_gauss} and
\cite[Proposition A.1]{mo_pr}).

\begin{lemma}\label{lemma:auto_luc}
  Let $\oc$ be a l.u.c. on $|M|$. Given an automorphism $\lambda : \oc \to \oc$, there exists a unique $\gamma_0 \in \Gamma$ such that
  $\lambda = \lambda_{\gamma_0}$.
\end{lemma}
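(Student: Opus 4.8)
The plan is to reduce the statement to the corresponding fact about deck transformations of the (possibly branched) covering $\varphi\colon \widehat{U}\to U$. Let $\lambda\colon\oc\to\oc$ be an automorphism, i.e.\ a diffeomorphism $\lambda\colon\widehat{U}\to\widehat{U}$ with $\varphi\circ\lambda=\varphi$. First I would note that, since $\varphi$ is $\Gamma$-invariant and $\bar\varphi\colon\widehat{U}/\Gamma\to U$ is a homeomorphism, the fibers of $\varphi$ are exactly the $\Gamma$-orbits; hence the condition $\varphi\circ\lambda=\varphi$ says precisely that $\lambda$ maps each $\Gamma$-orbit to itself. In particular, for the regular points of the $\Gamma$-action (those with trivial isotropy), which form an open dense subset $\widehat{U}_{\mathrm{reg}}\subseteq\widehat{U}$ whose complement has codimension at least one (indeed at least two if $\Gamma$ acts by orientation-preserving diffeomorphisms, but codimension one already suffices), $\lambda$ restricts to a deck transformation of the genuine (unramified) covering $\widehat{U}_{\mathrm{reg}}\to\widehat{U}_{\mathrm{reg}}/\Gamma$.

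The heart of the argument is then a connectedness/continuity step. Fix a basepoint $\hat{x}_0\in\widehat{U}_{\mathrm{reg}}$. Since $\lambda(\hat x_0)$ lies in the $\Gamma$-orbit of $\hat x_0$, there is a (unique, by effectiveness and freeness at regular points) $\gamma_0\in\Gamma$ with $\lambda(\hat x_0)=\gamma_0\cdot\hat x_0=\lambda_{\gamma_0}(\hat x_0)$. I claim $\lambda=\lambda_{\gamma_0}$ on all of $\widehat{U}$. Consider the set
\[
S=\{\hat x\in\widehat{U}_{\mathrm{reg}} \mid \lambda(\hat x)=\lambda_{\gamma_0}(\hat x)\}.
\]
It is nonempty, closed in $\widehat{U}_{\mathrm{reg}}$ (both maps are continuous), and open: near any $\hat x\in S$ one can choose a connected neighborhood $W$ small enough that $\varphi|_W$ is injective (possible at regular points, by the slice theorem for the finite group action, since the isotropy is trivial there), and then $\varphi\circ\lambda=\varphi=\varphi\circ\lambda_{\gamma_0}$ together with injectivity of $\varphi$ on $\lambda(W)\cup\lambda_{\gamma_0}(W)$ — which can be arranged by shrinking further, as $\lambda$ and $\lambda_{\gamma_0}$ are local diffeomorphisms — forces $\lambda=\lambda_{\gamma_0}$ on $W$. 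Since $\widehat{U}$ is connected and $\widehat{U}_{\mathrm{reg}}$ is connected (its complement has codimension $\ge 1$, or one may invoke that $\widehat{U}_{\mathrm{reg}}$ is connected because $\widehat U$ is and the fixed-point loci are closed nowhere dense submanifolds), we get $S=\widehat{U}_{\mathrm{reg}}$, and then $\lambda=\lambda_{\gamma_0}$ on $\widehat{U}$ by density and continuity. Uniqueness of $\gamma_0$ is immediate: if $\lambda_{\gamma_0}=\lambda_{\gamma_1}$, then $\gamma_0^{-1}\gamma_1$ acts trivially on $\widehat{U}$, hence equals the identity by effectiveness of the $\Gamma$-action.

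The main obstacle I anticipate is the connectedness of $\widehat{U}_{\mathrm{reg}}$ and, relatedly, ensuring the argument does not secretly require $\widehat U$ to be a linear chart $\R^n$ with a linear action. Both points are handled by the remark after Definition~\ref{defn:luc}: one may always assume $\widehat U\subseteq\R^n$ is connected, and by the classical Bochner-type linearization one reduces locally to linear actions, for which the fixed-point sets are linear subspaces and thus the regular locus is connected with complement of codimension $\ge 1$. Alternatively, and perhaps more cleanly, I would cite the referenced sources \cite{satake_gauss} and \cite[Proposition A.1]{mo_pr} for exactly this statement; but the self-contained argument above, via the identification of automorphisms of the chart with deck transformations of $\widehat U_{\mathrm{reg}}\to\widehat U_{\mathrm{reg}}/\Gamma$ and the standard fact that deck transformations of a connected covering are determined by their value at one point, is the route I would write out.
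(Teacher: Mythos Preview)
The paper does not give its own proof; it states the lemma and cites \cite[Lemma~1]{satake_gauss} and \cite[Proposition~A.1]{mo_pr}. So there is no in-paper argument to compare against.

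Your route via deck transformations on the regular locus is the right idea, but there is a real gap: you assert that $\widehat{U}_{\mathrm{reg}}$ is connected because its complement has codimension at least one, and you repeat this in the final paragraph. This fails whenever $\Gamma$ contains a reflection --- take $\Gamma=\Z_2$ acting on $\widehat U=\R$ by $x\mapsto -x$, where $\widehat{U}_{\mathrm{reg}}=\R\setminus\{0\}$ is disconnected --- and the lemma carries no orientation hypothesis. Codimension two would suffice; codimension one does not. The repair, which is essentially what the cited references carry out, is to run the clopen argument on $\widehat{U}$ itself. The missing ingredient is that $\lambda$ has \emph{finite order}: it permutes each finite $\Gamma$-orbit, so $\lambda^{|\Gamma|!}=\mathrm{id}$. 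One then picks $\gamma_0$ so that $A_{\gamma_0}=\{\hat x:\lambda(\hat x)=\gamma_0\cdot\hat x\}$ has nonempty interior (finitely many closed sets cover $\widehat U$), sets $\mu=\gamma_0^{-1}\circ\lambda$, linearizes the finite-order diffeomorphism $\mu$ at any fixed point, and notes that a linear map fixing a nonempty open set is the identity; this forces $\mathrm{int}(\mathrm{Fix}\,\mu)$ to have empty boundary, hence to be clopen in the connected $\widehat U$, hence to equal $\widehat U$.
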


\begin{remark}\label{rmk:induced_luc}
  The property of being uniformized behaves well under restriction to
  connected subsets. More precisely, let $U \subseteq |M|$ be an open
  set that is uniformized by the l.u.c. $\oc$ and let $U' \subseteq U$
  be a connected subset. If we set $\widehat{U}'$ to be a connected
  component of $\varphi^{-1}(U')$, $\Gamma'$ to be the subgroup that
  fixes $\widehat{U}'$ and $\varphi':= \varphi|_{\widehat{U}'}$, then
  $(\widehat{U}', \Gamma', \varphi')$ is a l.u.c. that uniformizes
  $U'$. Moreover, the isomorphism class of
  $(\widehat{U}', \Gamma', \varphi')$ does not depend on the choices
  made (see \cite[Lemma 4.1.1]{chen_ruan}). We say that
  $(\widehat{U}', \Gamma', \varphi')$ is the l.u.c. on
  $U' \subseteq U$ {\bf induced} by $\oc$.
\end{remark}

\begin{definition}\cite[Definition 1.1]{adem} \label{defn:orbifold}
    Let $\lvert M \rvert$ be a topological space. 
  \begin{enumerate}[label = (\arabic*), ref = (\arabic*), leftmargin=*]
  \item \label{item:3} An {\bf orbifold atlas} on $\lvert M \rvert$ is a collection
    $\mathcal{A} = \{\oc\}$ of l.u.c.'s such that
    $\{\varphi(\widehat{U})\}$ covers $\lvert M \rvert$ and
    satisfies the following compatibility condition: For every $\oc,\otheroc
    \in \mathcal{A}$ such that $U \cap V \neq \emptyset$ (where $U = \varphi(\widehat{U})$ and $V =
    \psi(\widehat{V})$) and every $x \in U \cap V$, there exist an open
    subset $W \subset U \cap V$ containing $x$, a l.u.c.
    $\thirdoc \in \mathcal{A}$ with $W = \chi(\widehat{W})$ and embeddings
    $\thirdoc \hookrightarrow \oc$ and $\thirdoc \hookrightarrow
    \otheroc$.
  \item \label{item:4} An orbifold atlas $\mathcal{B}$ on $\lvert M \rvert$ {\bf
      refines} the orbifold atlas $\mathcal{A}$ if every
    l.u.c. in $\mathcal{B}$ admits an embedding in some
    l.u.c. in $\mathcal{A}$. 
  \item \label{item:5} Two orbifold atlases $\mathcal{A}$ and
    $\mathcal{A}'$ of $\lvert M \rvert$ are {\bf equivalent} if they admit a common
    refinement. 
  \item \label{item:6} An {\bf (effective) orbifold} is a paracompact, Hausdorff
    topological space $\lvert M \rvert$ endowed with the equivalence class of an
    orbifold atlas. To simplify notation, we denote an orbifold by $M$.
  \end{enumerate}
\end{definition}

\begin{remark}\label{rmk:equivalent_definition_orbifold}
  Using the above notions of isomorphism of l.u.c.'s and of induced
  l.u.c.'s, it follows that Definition \ref{defn:orbifold} is
  equivalent to \cite[Definition 4.1.2]{chen_ruan}. In particular,
  given an orbifold $M$ and a point $x \in M$, there exists an open
  neighborhood $U$ of $x$ that is uniformized by some l.u.c. $\oc$;
  moreover the isomorphism class of the germ of $\oc$ at $x$ is unique.
\end{remark}

\begin{remark}\label{rmk:dim_orbifold}
  In analogy with the case of manifolds, if $M$ is an orbifold and
  $|M|$ is connected, then there is a well-defined notion of {\bf
    dimension} of $M$, which is given by the dimension of the domain
  of any l.u.c. on $|M|$. 
\end{remark}

The `simplest' families of orbifolds are illustrated in the following
two examples.

\begin{example}\label{exm:manifolds_as_orbifolds}
  Any (paracompact, Hausdorff) manifold is an orbifold: Given a smooth atlas
  $\mathcal{A}$ on a
  manifold $M$, we endow each
  coordinate chart $(U,\varphi)$ with the action of the trivial
  subgroup of $\mathrm{Diff}(U)$. \hfill$\Diamond$ 
\end{example}

\begin{example}[Orbi-vector spaces]\label{exm:orbi_vector_space}
  Let $\widehat{V}$ be a finite dimensional vector space over $\mathbb{K} \in
  \{\R, \C\}$ and let $\Gamma < \mathrm{GL}(\widehat{V})$ be a finite
  subgroup. The global quotient $\widehat{V}/\Gamma$ is an orbifold that is
  known as an {\bf orbi-vector space}. \hfill$\Diamond$ 
\end{example}

Example \ref{exm:orbi_vector_space} can be greatly generalized as follows. Let $G$ be a Lie group acting on (the left on) a manifold
$\widehat{M}$. We say that the action is  {\bf proper} if the
map $\phi:G \times \widehat{M} \to \widehat{M} \times \widehat{M}$ given by $\phi(g,p)= g \cdot p$ is proper, and
 that it is {\bf locally free} if all stabilizers are finite. If the
 $G$-action on $\widehat{M}$ is proper, then the quotient space $\widehat{M}/G$ is paracompact and Hausdorff (see \cite[Section
 2.5]{dk}). If the action is also locally free, then the Slice Theorem
 (see \cite[Theorem 2.4.1]{dk}),
 implies that $\widehat{M}/G$ can naturally be endowed with
 the structure of an orbifold (see \cite[Section 1.1]{adem}): A l.u.c. centered at a point $[p] \in \widehat{M}/G$ is given by the triple $(\widehat{U}, \Gamma, \varphi)$,
 where
 \begin{itemize}[leftmargin=*]
 \item $\widehat{U} \subseteq T_p\widehat{M}$ is an open neighborhood of
   the origin that is invariant under the the linear action of the isotropy
   group $G_p$,
 \item $\Gamma = \rho(G_p)$, where
   $\rho : G_p \to \mathrm{GL}(T_pM)$ is the slice
   representation\footnote{It is a consequence of the Slice Theorem
     that $\rho$ is injective if the $G$-action is effective.}, and
 \item $\varphi : \widehat{U} \to \widehat{M}/G$ is the
   composite of the quotient map $\widehat{U} \to
   \widehat{U}/\Gamma$ and the inclusion $
   \widehat{U}/\Gamma \hookrightarrow \widehat{M}/G$ arising from the
   Tube Theorem.
 \end{itemize}

Following \cite[Definitions 1.7 and 1.8]{adem}, we call
$\widehat{M}/G$ a(n {\bf effective}) {\bf quotient
  orbifold}; moreover, if $G$ is finite, we say that $
\widehat{M}/G$ is a(n {\bf effective}) {\bf global quotient}.
 
\begin{example}[Weighted projective spaces]\label{ex_weightedprj}	
  Fix an integer $n \geq 1$ and positive
  integers $m_0,\dots, m_{n}$ satisfying $\gcd(m_0,\dots,m_n)=1$. Consider the smooth function $H :
  \C^{n+1} \to \R$ given by
  $$H(z_0,\dots,z_{n})=\frac{1}{2} \sum\limits_{j=0}^nm_j \lvert z_j \rvert ^2.$$
  \noindent
  The level set $E:=H^{-1}(\frac{1}{2}\Pi_{j=0}^n \,\, m_j)$ is regular (and
  diffeomorphic to a $2n+1$ dimensional sphere). Moreover it is invariant under the smooth $S^1$-action on $\C^{n+1}$ given by
  \begin{equation}
    \label{eq:8}
    \begin{split}
      S^1\times\mathbb{C}^{n+1}&\rightarrow\mathbb{C}^{n+1}\\
    \left(\lambda,(z_0,\dots,z_{n})\right)&\mapsto(\lambda^{m_0}z_0,\dots,\lambda^{m_n}z_n).
    \end{split}
  \end{equation}
  \noindent
  The induced $S^1$-action on $E$ is proper,
  effective and locally free. Hence, the quotient 
  $$
  \mathbb{C}P^n(m_0,\dots,m_n):= E / S^1  $$ 
is an
orbifold, known in the literature as a {\bf weighted
  projective space} of (complex) dimension $n$.
\hfill$\Diamond$

\renewcommand{\thefigure}{\thesection.\arabic{figure}}
\begin{figure}[h]
	\centering
	\includegraphics[width=2.5cm]{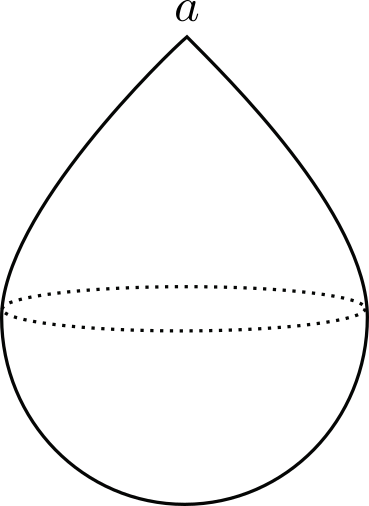}
	\caption{$\mathbb{C}P^1(1,a)$, also called a teardrop. }
	\label{fig:teardrop}
\end{figure}
\end{example}

\begin{example}\label{tswEx_1}
  The following example was constructed by Singer, Talvacchia and
  Watson in \cite{counterex}, and is carried out
  throughout the text. The \color{black}
  effective action of
  $\mathbb{Z}_4$  on $\widehat{M}:=\mathbb{C}P^1\times\mathbb{C}P^1$
  uniquely determined by 
  \begin{equation}
    \label{eq:29}
    e^{\frac{\pi \imath}{2}} \cdot \left([z_0:z_1],[w_0:w_1]\right) = \left([-w_0:w_1],[z_0:z_1]\right),
  \end{equation}
  is proper and locally free. Hence, the
  quotient $M:= \widehat{M}/\mathbb{Z}_4$ inherits the
  structure of an orbifold. 	\hfill$\Diamond$ 

\end{example}

\begin{example}\label{ex:quotient_cp1xcp1_finite}
  Let $c > 1$ be an integer and fix $1 \leq l < c$ such that
  $\gcd(l,c)=1$. The effective action of $\mathbb{Z}_c$ on
  $\mathbb{C}P^1\times\mathbb{C}P^1$ given by  
  \begin{equation}
    \label{eq:30}
    \lambda \cdot \left([z_0: z_1],[w_0 : w_1]\right)=\left([z_0: \lambda z_1], [ w_0:  \lambda^{-l} w_1]\right), 
  \end{equation}
  is proper and locally free. Hence, the quotient $M_c:=
  \mathbb{C}P^1\times\mathbb{C}P^1/\mathbb{Z}_c$ inherits the structure of an
  orbifold. In order to stress the dependence on $c$, we denote the
  elements of $M_c$ by $[[z_0:z_1],[w_0,w_1]]_c$. \hfill $\Diamond$
\end{example}

To conclude this section, we introduce the notion of orientation and
orientability for orbifolds. To this end, we fix once and for all the
standard orientation on $\R^n$. 

\begin{definition}\label{defn:orientability}
  Let $|M|$ be a topological space.
  \begin{itemize}[leftmargin=*]
  \item A l.u.c. $\oc$ on $|M|$ is {\bf oriented} if $\Gamma$ is a
    subgroup of the group of orientation-preserving diffeomorphisms of
    $\R^n$.
  \item An embedding $\lambda : \oc \to \otheroc$ between oriented l.u.c.'s on
    $|M|$ is {\bf orientation-preserving} if $\lambda : \widehat{U}
    \to \widehat{V}$ is orientation-preserving.
  \item An orbifold atlas $\mathcal{A} =\{\oc\}$ on $|M|$ is {\bf oriented} if
    each l.u.c. $\oc$ in $\mathcal{A}$ is oriented and the embeddings
    of part \ref{item:3} of Definition \ref{defn:orbifold} are
    orientation-preserving.
  \item An orbifold $M$ is {\bf orientable} if it admits an oriented
    orbifold atlas. In this case, an {\bf orientation} on $M$ is a
    choice of (equivalence class of) such an orbifold atlas.
  \end{itemize}
\end{definition}

\begin{remark}\label{rmk:orient}
  \mbox{}
  \begin{itemize}[leftmargin=*]
  \item A manifold is orientable as an orbifold if and only if it is
    orientable as a manifold.
  \item An orbi-vector space $\widehat{V}/\Gamma$ is orientable if and only if
    $\Gamma$ is a subgroup of $\mathrm{SL}(\widehat{V})$.
  \item Let $\widehat{M}$ be an oriented manifold and let $G$ be a Lie
    group acting on $\widehat{M}$ properly, in a locally free fashion,
    and by orientation-preserving diffeomorphisms. The quotient orbifold
    $\widehat{M}/G$ inherits a natural orientation. 
  \end{itemize}
\end{remark}

\begin{remark}\label{rmk:product_orbifolds}
  If $M_1$ and $M_2$ are orbifolds, there is a natural equivalence
  class of orbifold atlases on the product $|M_1| \times |M_2| =|M_1
  \times M_2|$: If
  $\mathcal{A}_1 = \{(\widehat{U}_1,\Gamma_1,\varphi_1)\}$ and
  $\mathcal{A}_2 = \{(\widehat{U}_2,\Gamma_2,\varphi_2)\}$ are
  orbifold atlases representing the orbifold structures on $|M_1|$ and
  $|M_2|$ respectively, the orbifold structure on $|M_1 \times M_2|$
  is represented by the `product' atlas
  $$\mathcal{A}_1 \times
  \mathcal{A}_2 = \{(\widehat{U}_1 \times \widehat{U}_2, \Gamma_1
  \times \Gamma_2, \varphi_1 \times \varphi_2)\}.$$
  The resulting orbifold is denoted by $M_1 \times M_2$. 
  Moreover, if both $M_1$ and $M_2$ are orientable, so is $M_1 \times
  M_2$. 
\end{remark}

  

\subsubsection{Orbifold structure group}\label{sec:orbif-struct-group}
Let $M$ be an orbifold, fix a point $x \in \lvert M \rvert$ and let $\oc$ be an
l.u.c. such that $x \in U = \varphi(\widehat{U})$. The stabilizers
of any two points $\hat{x},\hat{x}' \in \widehat{U}$ such that
$\varphi(\hat{x}) = x = \varphi(\hat{x}')$ are conjugate
as subgroups of $\Gamma$. We denote the underlying abstract group by $\Gamma_x\oc$. If $\otheroc$ is an orbifold
chart satisfying the above properties, then
$\Gamma_x \oc \cong \Gamma_x \otheroc$ -- for a proof,
see the discussion preceding \cite[Definition 1.5]{adem}. 

\begin{definition}\label{defn:orbifold_str_group}
  Let $M$ be an orbifold. Given a point $x \in \lvert M
  \rvert$, let $\oc$ be an
  l.u.c. such that $x \in \varphi(\widehat{U})$. We call
  $\Gamma_x \oc$  the {\bf orbifold structure group of
    $\boldsymbol{x}$} and denote it  simply by $\Gamma_x$. 
\end{definition}

\begin{example}(Orbi-vector spaces, continued)\label{exm:orbi-vector_space_origin}
  Let $\widehat{V}/\Gamma$ be an orbi-vector space in the sense of Example
  \ref{exm:orbi_vector_space}. The
  orbifold structure group of $[0] \in \widehat{V}/\Gamma$ is $\Gamma$. \hfill$\Diamond$ 
\end{example}

\begin{remark}\label{rmk:product_orbifold_structure_group}
  For $i=1,2$, let $M_i$ be an orbifold and let $x_i \in M_i$ be a point with
  orbifold structure group $\Gamma_i$. The point $(x_1,x_2) \in M_1
  \times M_2$ has orbifold structure group given by $\Gamma_1 \times \Gamma_2$.
\end{remark}

\begin{remark}\label{rmk:centred}
  By the Slice Theorem \color{black} (see \cite[Theorem
  2.4.1]{dk}) and since l.u.c.'s behave well with respect
  to restriction (see Remark \ref{rmk:induced_luc}), for every $x \in \lvert M \rvert$, there exists
  a l.u.c. $\oc$ with the
  following properties
  \begin{itemize}[leftmargin=*]
  \item $\Gamma$ is the orbifold structure group of $x$ and $\Gamma <
    \mathrm{O}(n)$,
  \item the open subset $\widehat{U}$ of $\R^n$ is
    invariant under the standard linear action of $\mathrm{O}(n)$, and
  \item the point $0 \in \widehat{U}$ satisfies $\varphi(0) = x$.
  \end{itemize}
  In this case, we say that $\oc$ is a l.u.c. {\bf centered
    at $\boldsymbol{x}$}. If, in addition, $M$ is
  oriented, then $\Gamma$ is a subgroup of $\mathrm{SO}(n)$.
\end{remark}

\begin{definition}\label{defn:singular_point}
  Let $M$ be an effective orbifold. We say that a point $x \in |M|$ is
  {\bf singular} if $\Gamma_x \neq 1$, and {\bf regular} otherwise. 
\end{definition}

\begin{remark}\label{rmk:smooth_dense}
  Since the group action in any l.u.c. is assumed to be effective, it follows that the set of regular points of
  an effective orbifold is necessarily open and dense.
\end{remark}

\begin{remark}\label{rmk:orbifolds_no_singular_points}
  Remark \ref{rmk:centred} implies that any orbifold that has no
  singular point can be viewed as a manifold, the smooth atlas being
  given by (the equivalence class of) the union of the l.u.c.'s
  centered at each point.
\end{remark}

\begin{definition}\label{defn:isolated}
  We say that an orbifold $M$ has {\bf isolated singular
    points} if the set of singular points is discrete.
\end{definition}

\begin{example}[Weighted projective spaces, continued]\label{exm:weighted_isolated}
  If, in Example \ref{ex_weightedprj}, the positive integers
  $m_0,\ldots, m_n$ are chosen to be pairwise
  coprime, then the weighted
  projective space $\mathbb{C}P^n(m_0,\dots,m_n)$ has isolated
  singular points. These are $[(2\prod\limits_{j \neq 0} m_j)^{1/2}:0:\ldots:0] ,[0: (2\prod\limits_{j \neq 1} m_j)^{1/2}:\ldots:0], \ldots,
  [0:0:\ldots: (2\prod\limits_{j \neq n} m_j)^{1/2}]$ with orbifold structure groups given by
  $\mathbb{Z}_{m_0},\ldots, \mathbb{Z}_{m_n}$ respectively.  \hfill$\Diamond$ 
\end{example}

\begin{example}[Example \ref{tswEx_1}, continued]\label{exm:singer}
  The orbifold of Example \ref{tswEx_1} has isolated singular
  points. These are $[[0:1],[0:1]], [[1:0],[1:0]]$ and $[[0:1],[1:0]]$
  with orbifold structure groups given by $\mathbb{Z}_4$,
  $\mathbb{Z}_4$ and $\mathbb{Z}_2$ respectively. 	\hfill$\Diamond$ 
\end{example}

\begin{example}[Example \ref{ex:quotient_cp1xcp1_finite}, continued]\label{exm:finite_quotient_cp1xcp1}
  The orbifold of Example \ref{ex:quotient_cp1xcp1_finite} has
  isolated singular points. These are $[[1:0], [1:0]]_c, [[1:0], [0:1]]_c,
  [[0:1], [1:0]]_c$ and $[[0:1], [0:1]]_c$ and all have orbifold structure
  group given by $\mathbb{Z}_c$. \hfill$\Diamond$ 
\end{example}

\subsubsection{Maps between orbifolds}\label{sec:maps-betw-orbif}

In this section, we define a certain class of maps between
orbifolds. We start by considering what happens for fixed l.u.c.'s. 

\begin{definition}\label{defn:c_infty_lift_luc}
  Let $|M_i|$ be a topological space and let $U_i \subseteq |M_i|$ be an
  open set that is uniformized by the l.u.c. $(\widehat{U}_i,
  \Gamma_i,\varphi_i)$ for $i=1,2$. Let $f: U_1 \to U_2$ be a
  continuous map.
  \begin{itemize}[leftmargin=*]
  \item A {\bf $\boldsymbol{C^{\infty}}$ lift} of $f$ is a
    $C^{\infty}$ map $\widehat{f} : \widehat{U}_1 \to \widehat{U}_2$
    such that:
    \begin{itemize}[leftmargin=*]
    \item $\varphi_2 \circ \widehat{f} = f \circ \varphi_1$, and
    \item for all $\gamma_1 \in \Gamma_1$, there exists a
      $\gamma_2 \in \Gamma_2$ such that
      $\widehat{f}(\gamma_1 \cdot \hat{x}) = \gamma_2 \cdot \widehat{f}(\hat{x})$
      for all $\hat{x} \in \widehat{U}_1$.
    \end{itemize}
  \item A {\bf good $\boldsymbol{C^{\infty}}$ lift} of $f$ is a pair
    $(\widehat{f},\Theta)$ such that:
    \begin{itemize}[leftmargin=*]
    \item $\widehat{f} : \widehat{U}_1 \to \widehat{U}_2$
      is a $C^{\infty}$ map with $\varphi_2 \circ \widehat{f} = f \circ \varphi_1$, and
    \item $\Theta : \Gamma_1 \to \Gamma_2$ is a homomorphism making 
      $\hat{f}$ is $\Theta$-equivariant. 
    \end{itemize}
  \end{itemize}
\end{definition}

\begin{remark}\label{rmk:maps_not_equivalent}
  It is immediate from Definition \ref{defn:c_infty_lift_luc} that the
  $C^{\infty}$ map of a good $C^{\infty}$ lift is, in fact, a $C^{\infty}$ lift. However,
  the converse need not hold (see \cite[Example 4.4.2a]{chen_ruan}).
\end{remark}

Just as l.u.c.'s are not unique, neither are (good) $C^{\infty}$ lifts of
continuous maps between l.u.c.'s (whenever they exist).

\begin{remark}\label{rmk:lifts_not_unique}
  Let $|M_i|$ be a topological space and let $U_i \subseteq |M_i|$ be an
  open set that is uniformized by the l.u.c. $(\widehat{U}_i,
  \Gamma_i,\varphi_i)$ for $i=1,2$. Let $f: U_1 \to U_2$ be a
  continuous map and let $\widehat{f}$ be a $C^{\infty}$ lift of $f$. If $\lambda:
  (\widehat{U}'_1, \Gamma'_1, \varphi'_1) \hookrightarrow (\widehat{U}_1,
  \Gamma_1,\varphi_1)$ (respectively $\lambda:
  (\widehat{U}'_2, \Gamma'_2, \varphi'_2) \hookrightarrow (\widehat{U}_2,
  \Gamma_2,\varphi_2)$) is an isomorphism in the sense of Definition
  \ref{defn:embedding_isomorphism}, it follows from Remark
  \ref{rmk:embeddings_induce_homom} that $\widehat{f} \circ \lambda$
  (respectively $\lambda^{-1} \circ \widehat{f}$) is a
  $C^{\infty}$ lift of $f$. Analogously, if $(\widehat{f},\Theta)$ is
  a good $C^{\infty}$ lift of $f$, then by Remark
  \ref{rmk:embeddings_induce_homom} (and, in particular,
  \eqref{eq:9}), $(\widehat{f} \circ \lambda, \Theta \circ \lambda)$
  (respectively $(\lambda^{-1} \circ \widehat{f}, \lambda^{-1} \circ
  \Theta)$) is a good $C^{\infty}$ lift of $f$.
\end{remark}

In light of Remark \ref{rmk:lifts_not_unique}, we introduce the following equivalence relation between (good)
$C^{\infty}$ lifts of
continuous maps between l.u.c.'s.

\begin{definition}\label{defn:isomorphic_lifts}
  Let $|M_i|$ be a topological space and let $U_i \subseteq |M_i|$ be an
  open set that is uniformized by the l.u.c.'s $(\widehat{U}_i,
  \Gamma_i,\varphi_i)$ and $(\widehat{U}'_i,
  \Gamma'_i,\varphi'_i)$ for $i=1,2$. Let $f :U_1 \to U_2$ be a
  continuous map. We say that 
  \begin{itemize}[leftmargin=*]
  \item two $C^{\infty}$ lifts $\widehat{f} :
    \widehat{U}_1 \to \widehat{U}_2$ and  $\widehat{f}' :
    \widehat{U}'_1 \to \widehat{U}'_2$ of $f$ are {\bf isomorphic} if there exist isomorphisms $\lambda_1 : (\widehat{U}_1,
    \Gamma_1,\varphi_1) \to (\widehat{U}'_1,
    \Gamma'_1,\varphi'_1)$ and $\lambda_2 : (\widehat{U}_2,
    \Gamma_2,\varphi_2) \to (\widehat{U}'_2,
    \Gamma'_2,\varphi'_2)$ of l.u.c.'s such that $\widehat{f}' \circ \lambda_1 =
    \lambda_2 \circ \widehat{f}$, and
  \item two good  $C^{\infty}$ lifts $(\widehat{f}, \Theta: \Gamma_1
    \to \Gamma_2)$ and $(\widehat{f}', \Theta': \Gamma'_1
    \to \Gamma'_2)$ are {\bf isomorphic} if there exist isomorphisms $\lambda_1 : (\widehat{U}_1,
    \Gamma_1,\varphi_1) \to (\widehat{U}'_1,
    \Gamma'_1,\varphi'_1)$ and $\lambda_2 : (\widehat{U}_2,
    \Gamma_2,\varphi_2) \to (\widehat{U}'_2,
    \Gamma'_2,\varphi'_2)$ of l.u.c.'s such that $\widehat{f}' \circ \lambda_1 =
    \lambda_2 \circ \widehat{f}$ and $\Theta' \circ \lambda_1 =
    \lambda_2 \circ \Theta$.
  \end{itemize}
\end{definition}

\begin{remark}\label{rmk:restriction_lifts}
  Whenever they exist, (good) $C^{\infty}$ lifts behave well with respect to
  restriction to connected open subsets. More precisely, let $|M_i|$ be a topological space
  and let $V_i \subseteq |M_i|$ be an open set that is uniformized by
  the l.u.c. $(\widehat{V}_i, \Gamma'_i,\psi_i)$ for $i=1,2$. Let
  $f: V_1 \to V_2$ be a continuous map that has a $C^{\infty}$ lift
  $\widehat{f} : \widehat{V}_1 \to \widehat{V}_2$. Let $U_i \subseteq V_i$ be an
  open set that is uniformized by $(\widehat{U}_i, \Gamma_i,
  \varphi_i)$ for $i=1,2$ such that $f(U_1) \subseteq U_2$.  Suppose that we are given an embedding
  $\lambda_1 : (\widehat{U}_1, \Gamma_1, \varphi_1) \hookrightarrow
  (\hat{V}_1, \Gamma'_1,\psi_1)$. By construction,
  $\psi_2(\hat{f} \circ \lambda_1(\widehat{U}_1)) \subseteq
  U_2$. Therefore, there exists an
  embedding
  $\lambda_2 : (\widehat{U}_2,\Gamma_2,\varphi_2) \hookrightarrow
  (\hat{V}_2, \Gamma'_2,\psi_2)$ such that
  $\hat{f} \circ \lambda_1(\widehat{U}_1) \subseteq \lambda_2(\widehat{U}_2)$
  (see \cite[Section 4.1]{chen_ruan}). Using Remark
  \ref{rmk:embeddings_induce_homom} (and, in particular,
  \eqref{eq:9}), it follows that the map
  $\lambda_2^{-1} \circ \widehat{f} \circ \lambda_1 : \widehat{U}_1 \to
  \widehat{U}_2$ is a $C^{\infty}$ lift of the restriction of $f$ to
  $U_1$. Analogously, if $(\widehat{f}, \Theta)$ is a good
  $C^{\infty}$ lift of $f$, then $(\lambda_2^{-1} \circ \widehat{f}
  \circ \lambda_1, \lambda_2^{-1} \circ \Theta \circ \lambda_1)$ is a
  good $C^{\infty}$ lift of the restriction of $f$ to
  $U_1$. We refer to the $C^{\infty}$ lift (respectively good $C^{\infty}$ lift) constructed above as the {\bf restriction} of
  $\widehat{f}$ (respectively $(\widehat{f}, \Theta)$) to $\widehat{U}_1$.
\end{remark}

Following \cite[Section 4]{chen_ruan}, we break the definition of the
maps of our interest in two parts. First, we introduce $C^{\infty}$
lifts of maps between orbifolds. 

\begin{definition}\label{defn:c_infty_lift}
  Let $M_1, M_2$ be orbifolds and let $f : |M_1| \to |M_2|$ be a
  continuous map. A {\bf $\boldsymbol{C}^{\infty}$ lift} of $f$
  consists of the following data:
  \begin{itemize}[leftmargin=*]
  \item \label{item:1} orbifold atlases $\mathcal{A}_1 =
    \{(\widehat{U}_1,\Gamma_1,\varphi_1)\}$ and $\mathcal{A}_2 =
    \{(\widehat{U}_2,\Gamma_2,\varphi_2)\}$ such that, for all
    $(\widehat{U}_1,\Gamma_1,\varphi_1) \in \mathcal{A}_1$, there
    exists $(\widehat{U}_2,\Gamma_2,\varphi_2) \in \mathcal{A}_2$ with
    $f(\varphi(\widehat{U}_1)) \subseteq \varphi_2(\widehat{U}_2)$,
  \item for all  $(\widehat{U}_1,\Gamma_1,\varphi_1) \in
    \mathcal{A}_1$ and $(\widehat{U}_2,\Gamma_2,\varphi_2) \in
    \mathcal{A}_2$ such that
    $f(\varphi(\widehat{U}_1)) \subseteq \varphi_2(\widehat{U}_2)$, a
    $C^{\infty}$ lift of the restriction of $f$ to $\varphi_1
    (\widehat{U}_1)$,
  \end{itemize}
  satisfying the following compatibility condition:

  \noindent
  Given $(\widehat{U}_1,\Gamma_1,\varphi_1), (\widehat{V}_1,\Gamma'_1,\psi_1)
  \in \mathcal{A}_1$ with $U_1 \cap V_1 \neq \emptyset$ (where $U_1 = \varphi(\widehat{U_1})$ and $V_1 =
  \psi(\widehat{V_1})$), and given a point $x \in U_1 \cap V_1$,
  there exist
  \begin{itemize}[leftmargin=*]
  \item an open subset $W_1 \subseteq U_1 \cap V_1$ 
    uniformized by a l.u.c. $(\widehat{W}_1,\Gamma''_1,\chi_1) \in
    \mathcal{A}_1$ containing $x$,
  \item an open subset $W_2 \subseteq |M_2|$ uniformized by
    $(\widehat{W}_2, \Gamma_2'', \chi_2)$ such that
    $f(W_1) \subseteq W_2$, and
  \item embeddings $(\widehat{W}_1,\Gamma''_1,\chi_1) \hookrightarrow
    (\widehat{U}_1,\Gamma_1,\varphi_1)$ and
    $(\widehat{W}_1,\Gamma''_1,\chi_1) \hookrightarrow
    (\widehat{V}_1,\Gamma'_1,\psi_1)$, 
  \end{itemize}
  such that the $C^{\infty}$ lift of the restriction
  of $f$ to $W_1$ is isomorphic to the restrictions of the
  $C^{\infty}$ lifts of $f|_{U_1}$ and $f|_{V_1}$ to $W_1$.
  We denote a $C^{\infty}$ lift of $f$ by the triple
  $(\mathcal{A}_1,\mathcal{A}_2,\{\widehat{f}_{\widehat{U}_1} : \widehat{U}_1 \to
  \widehat{U}_2\})$. 
\end{definition}

In order to make Definition \ref{defn:c_infty_lift} independent of
choices, we introduce the following notion.

\begin{definition}\label{defn:equivalent_lifts}
  Let $M_1, M_2$ be orbifolds and let $f : |M_1| \to |M_2|$ be a
  continuous map. We say that the $C^{\infty}$ lifts $(\mathcal{A}_1,\mathcal{A}_2,\{\widehat{f}_{\widehat{U}_1} : \widehat{U}_1 \to
  \widehat{U}_2\})$ and $(\mathcal{A}'_1,\mathcal{A}'_2,\{\widehat{f}'_{\widehat{U}'_1} : \widehat{U}'_1 \to
  \widehat{U}'_2\})$ of $f$ are {\bf equivalent} if there exists a
  $C^{\infty}$ lift $(\mathcal{B}_1,\mathcal{B}_2,\{\widehat{f}_{\widehat{V}_1} : \widehat{V}_1 \to
  \widehat{V}_2\})$ such that
  \begin{itemize}[leftmargin=*]
  \item the orbifold atlas $\mathcal{B}_i$ is a common refinement of
    $\mathcal{A}_i$ and $\mathcal{A}_i'$ for $i=1,2$, and
  \item for all l.u.c.'s $(\widehat{V}_1,
    \tilde{\Gamma}_1,\tilde{\varphi}_1)$ and for all embeddings $(\widehat{V}_1,
    \tilde{\Gamma}_1,\tilde{\varphi}_1) \hookrightarrow
    (\widehat{U}_1,\Gamma_1,\varphi_1)$ and $(\widehat{V}_1,
    \tilde{\Gamma}_1,\tilde{\varphi}_1) \hookrightarrow
    (\widehat{U}'_1,\Gamma'_1,\varphi'_1)$ , the $C^{\infty}$ lift
    $\widehat{f}_{\widehat{V}_1}$ of the restriction of $f$ to $V_1 =
    \tilde{\varphi}_1(\widehat{V}_1)$ is equivalent to the restrictions of
    $\widehat{f}_{\widehat{U}_1}$ and $\widehat{f}'_{\widehat{U}'_1}$
    to $V_1$. 
  \end{itemize}
\end{definition}

Using Definitions \ref{defn:c_infty_lift} and
\ref{defn:equivalent_lifts}, we can introduce smooth maps between
orbifolds.

\begin{definition}\label{defn:Cinfty_maps}
  Let $M_1$ and $M_2$ be orbifolds. A {\bf $\boldsymbol{C^{\infty}}$
    map} between $M_1$ and $M_2$ is a continuous map $f : |M_1| \to
  |M_2|$ together with an equivalence class of $C^{\infty}$ lifts of
  $f$. For simplicity, we denote a $C^{\infty}$ map by $f : M_1 \to
  M_2$. 
\end{definition}

Unfortunately, $C^{\infty}$ maps in the sense of Definition
\ref{defn:Cinfty_maps} do not have good geometric properties (for
instance, in general they do not pull vector bundles back to vector bundles). To
rectify this issue, following \cite[Section 4.4]{chen_ruan}, we consider those $C^{\infty}$ maps that have the
following type of representative in the equivalence class of $C^{\infty}$ lifts.

\begin{definition}\label{defn:compatible_sys}
   Let $M_1, M_2$ be orbifolds and let $f : |M_1| \to |M_2|$ be a
  continuous map. A $C^{\infty}$ lift $(\mathcal{A}_1,\mathcal{A}_2,\{\widehat{f}_{\widehat{U}_1} : \widehat{U}_1 \to
  \widehat{U}_2\})$ of $f$ is {\bf good} if
  \begin{itemize}[leftmargin=*]
  \item for all $(\widehat{U}_1,\Gamma_1,\varphi_1) \in
    \mathcal{A}_1$, there exists a homomorphism $\Theta_{\widehat{U}_1} :
    \Gamma_1 \to \Gamma_2$ such that $(\widehat{f}_{\widehat{U}_1},
    \Theta_{\widehat{U}_1})$ is a good $C^{\infty}$ lift of the
    restriction of $f$ to $U_1$,
  \item if $\lambda_1 :
    (\widehat{U}'_1,\Gamma'_1,\varphi'_1) \hookrightarrow
    (\widehat{U}_1,\Gamma_1,\varphi_1)$ is an embedding between
    l.u.c.'s in $\mathcal{A}_1$, then there
    exists an embedding $\lambda_2 : (\widehat{U}'_2,\Gamma'_2,\varphi'_2) \hookrightarrow
    (\widehat{U}_2,\Gamma_2,\varphi_2)$ between the corresponding
    l.u.c.'s in $\mathcal{A}_2$ such that the restriction of $(\widehat{f}_{\widehat{U}_1},
    \Theta_{\widehat{U}_1})$ to $\widehat{U}'_1$ equals $(\widehat{f}_{\widehat{U}'_1},
    \Theta_{\widehat{U}'_1})$. 
  \end{itemize}
\end{definition}

\begin{definition}\label{defn:good_map}
  Let $M_1$ and $M_2$ be orbifolds.
  \begin{itemize}[leftmargin=*]
  \item A $C^{\infty}$ map $f : M_1 \to M_2$ is {\bf good} if there
    exists a good $C^{\infty}$ lift in the equivalence class of $f$.
  \item A {\bf diffeomorphism} between $M_1$ and $M_2$ is a good
    $C^{\infty}$ map $f : M_1 \to M_2$ that admits an inverse $g : M_2
    \to M_1$ that is also a good $C^{\infty}$ map.
  \end{itemize}
\end{definition}

\begin{remark}[Good maps via Lie groupoids]\label{rmk:groupoids}
  The above definition of good $C^{\infty}$ maps is somewhat
  cumbersome and can be made more transparent using the approach to
  orbifolds using {\em Lie groupoids} (see \cite[Section 1.4]{adem}
  and references therein for more
  details). Loosely speaking, an orbifold $M$ is an
  equivalence class of certain Lie groupoids, where the topological
  space $|M|$ is homeomorphic to the orbit space of any representative
  in the equivalence class. (The equivalence relation is under the
  so-called {\em Morita equivalences}, see \cite[Definition
  1.43]{adem}.) In the case of a global quotient
  $\widehat{M}/G$, a Lie groupoid representing $\widehat{M}/G$ is the
  action groupoid $G \ltimes \widehat{M}$. The orbifold structure
  group at a point $x \in M$ corresponds to the isotropy Lie group of
  the orbit representing $x$ with respect to any Lie groupoid
  representing $M$.

  From this perspective, if
  $M_1,M_2$ are orbifolds, a
  continuous map $f : |M_1| \to |M_2|$ is a good $C^{\infty}$ map if
  and only if there exist Lie groupoids $\mathcal{G}_1$ and
  $\mathcal{G}_2$ representing $M_1$ and $M_2$ respectively, and a Lie
  groupoid homomorphism $\mathcal{F} : \mathcal{G}_1 \to
  \mathcal{G}_2$, such that the induced map on the orbit spaces equals
  $f$ -- once the homeomorphisms identifying the orbit spaces of
  $\mathcal{G}_1$ and of $\mathcal{G}_2$ with $|M_1|$ and $|M_2|$
  respectively are
  taken into account -- (see \cite[Proposition 5.1.7]{lup_ur}).

  The perspective on orbifolds via Lie groupoids goes beyond the scope
  of this paper. However, it is used sparingly to give
  quick sketches of proofs (see, for instance, Lemma \ref{lemma:obvious_good_maps}
  below).
\end{remark}

In what follows we give families of $C^{\infty}$ maps between
orbifolds that are good. We start by introducing the following notion (see
\cite[Definition 4.4.10]{chen_ruan}).

\begin{definition}\label{definition:regular_map}
  A $C^{\infty}$ map $f : M_1 \to M_2$ between orbifolds is {\bf
    regular} if the preimage of the subset of $M_2$ consisting of
  regular points under $f$ is open, dense and connected in $M_1$. 
\end{definition}

The following result, stated below without proof, is proved in
\cite[Lemma 4.4.11]{chen_ruan}.

\begin{lemma}\label{lemma:regular_good}
  Regular $C^{\infty}$ maps are good.
\end{lemma}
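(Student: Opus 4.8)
The plan is to show that a regular $C^{\infty}$ map $f : M_1 \to M_2$ admits a good $C^{\infty}$ lift, i.e., a lift in the sense of Definition \ref{defn:compatible_sys}. The key point is that for a regular map, on each l.u.c. the $C^{\infty}$ lift $\widehat{f}_{\widehat{U}_1} : \widehat{U}_1 \to \widehat{U}_2$ is essentially forced to be $\Theta$-equivariant for a \emph{canonically determined} homomorphism $\Theta_{\widehat{U}_1}$, because the regular locus of $\widehat{U}_1$ is dense and connected, so equivariance on the regular locus propagates by continuity. First I would fix representatives: choose orbifold atlases $\mathcal{A}_1, \mathcal{A}_2$ witnessing a $C^{\infty}$ lift of $f$, refining as needed so that every $\widehat{U}_1 \in \mathcal{A}_1$ is connected, every $\widehat{U}_2 \in \mathcal{A}_2$ is connected, and $f(\varphi_1(\widehat{U}_1)) \subseteq \varphi_2(\widehat{U}_2)$ for the associated pair. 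I may further shrink so that $\widehat{U}_1$ (hence its regular locus, which by Remark \ref{rmk:smooth_dense} is open dense, and by the regularity hypothesis together with connectedness of $\widehat{U}_1$ is connected) maps into a single l.u.c. downstairs.

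The heart of the argument is to produce, for each $\widehat{U}_1$, the homomorphism $\Theta_{\widehat{U}_1} : \Gamma_1 \to \Gamma_2$. I would argue as follows. Fix $\gamma_1 \in \Gamma_1$. By the definition of a $C^{\infty}$ lift of a map between l.u.c.'s (Definition \ref{defn:c_infty_lift_luc}), there is \emph{some} $\gamma_2 \in \Gamma_2$ with $\widehat{f}(\gamma_1 \cdot \hat{x}) = \gamma_2 \cdot \widehat{f}(\hat{x})$ for all $\hat{x} \in \widehat{U}_1$; the issue is that $\gamma_2$ might not be unique, nor obviously multiplicative in $\gamma_1$. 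Uniqueness is recovered from regularity: $\widehat{f}$ maps the (dense, connected) regular locus of $\widehat{U}_1$ into the regular locus of $\widehat{U}_2$, on which $\Gamma_2$ acts freely, so $\gamma_2$ is uniquely pinned down by the identity on any single regular point, hence on all of $\widehat{U}_1$ by density and continuity of the $\Gamma_2$-action. Define $\Theta_{\widehat{U}_1}(\gamma_1) := \gamma_2$. Multiplicativity is then automatic: $\widehat{f}((\gamma_1 \gamma_1') \cdot \hat{x}) = \Theta(\gamma_1) \cdot \widehat{f}(\gamma_1' \cdot \hat{x}) = \Theta(\gamma_1)\Theta(\gamma_1') \cdot \widehat{f}(\hat{x})$, and uniqueness forces $\Theta(\gamma_1 \gamma_1') = \Theta(\gamma_1)\Theta(\gamma_1')$. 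Thus $(\widehat{f}_{\widehat{U}_1}, \Theta_{\widehat{U}_1})$ is a good $C^{\infty}$ lift of $f|_{U_1}$ in the sense of Definition \ref{defn:c_infty_lift_luc}.

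Next I would check the compatibility with embeddings required by the second bullet of Definition \ref{defn:compatible_sys}. Given an embedding $\lambda_1 : (\widehat{U}'_1, \Gamma'_1, \varphi'_1) \hookrightarrow (\widehat{U}_1, \Gamma_1, \varphi_1)$ in $\mathcal{A}_1$, the defining compatibility condition of the $C^{\infty}$ lift (Definition \ref{defn:c_infty_lift}) supplies, after passing to a common refinement, an embedding $\lambda_2$ downstairs with $\lambda_2^{-1} \circ \widehat{f}_{\widehat{U}_1} \circ \lambda_1$ the prescribed lift $\widehat{f}_{\widehat{U}'_1}$ (cf.\ Remark \ref{rmk:restriction_lifts}); one then verifies that $\lambda_2^{-1} \circ \Theta_{\widehat{U}_1} \circ \lambda_1$ is precisely $\Theta_{\widehat{U}'_1}$, again by the uniqueness established above, now applied on the regular locus of $\widehat{U}'_1$. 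Here it is important that the $\Theta$'s are canonical, so they are forced to agree — this is the whole reason regularity is imposed. Assembling the pairs $\{(\widehat{f}_{\widehat{U}_1}, \Theta_{\widehat{U}_1})\}$ over $\mathcal{A}_1$ yields a good $C^{\infty}$ lift representing $f$, so $f$ is good by Definition \ref{defn:good_map}.

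I expect the main obstacle to be bookkeeping rather than mathematics: one must be careful that the atlas refinements needed to make each chart connected, to ensure each $\widehat{U}_1$ lands in a single $\widehat{U}_2$, and to realize the embedding-compatibility all coexist, and that the regular locus remains dense and connected after these refinements (density is automatic from Remark \ref{rmk:smooth_dense}; connectedness of $f^{-1}(\text{regular locus})$ must survive restriction to a basis of connected opens, which requires a small argument or an appropriate choice of atlas). Once uniqueness of the $\gamma_2$ is in hand, every equivariance and compatibility statement collapses to "two homomorphisms agreeing on a dense set agree," so the only real content is the density/connectedness of the regular preimage, which is exactly what Definition \ref{definition:regular_map} grants us.
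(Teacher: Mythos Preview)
The paper does not prove this lemma; it is stated without proof, with a citation to \cite[Lemma 4.4.11]{chen_ruan}. Your proposal reconstructs essentially the argument given there, and the core mechanism is correct: regularity forces the element $\gamma_2 \in \Gamma_2$ associated to each $\gamma_1 \in \Gamma_1$ (whose \emph{existence} is already granted by Definition~\ref{defn:c_infty_lift_luc}) to be \emph{unique}, so the assignment $\Theta(\gamma_1):=\gamma_2$ is canonical, and multiplicativity and the embedding-compatibility then follow from that canonicity.

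Two small points worth sharpening. First, you conflate two different ``regular loci'': Remark~\ref{rmk:smooth_dense} is about points of $\widehat{U}_1$ where $\Gamma_1$ acts freely, whereas your uniqueness argument needs $\widehat{f}^{-1}(\text{points of }\widehat{U}_2\text{ with trivial }\Gamma_2\text{-stabilizer})$, i.e., the $\varphi_1$-preimage of $f^{-1}(\text{regular points of }M_2)\cap U_1$. The former need not map into the latter. Density of the latter in $\widehat{U}_1$ does follow from the regularity hypothesis (dense in $M_1$ $\Rightarrow$ dense in $U_1$ $\Rightarrow$ preimage under the open map $\varphi_1$ is dense). Second, for the uniqueness of $\gamma_2$ you need only \emph{density}, not connectedness: Definition~\ref{defn:c_infty_lift_luc} already posits a single $\gamma_2$ valid for all $\hat{x}$, so the existence of \emph{one} $\hat{x}$ with $\widehat{f}(\hat{x})$ of trivial $\Gamma_2$-stabilizer pins down $\gamma_2$. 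Where connectedness genuinely enters, in Chen--Ruan's treatment, is in the embedding-compatibility step you flag at the end: comparing two a priori different local lifts of $f|_{U'_1}$ and concluding they differ by a \emph{single} element of $\Gamma'_2$ (rather than a locally varying one) so that $\lambda_2$ can be adjusted to make them equal. Your closing remarks correctly identify this as the place where care is required.
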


The next result provides another large class of good $C^{\infty}$
maps.

\begin{lemma}\label{lemma:obvious_good_maps}
  Let $\widehat{M}_i$ be a manifold and let $G_i$ be a Lie group
  acting on $\widehat{M}_i$ so that the action is proper and locally
  free for $i=1,2$. If $\widehat{f} : \widehat{M}_1 \to \widehat{M}_2$
  is a smooth map and if $\Theta : G_1 \to G_2$ is a homomorphism of
  Lie groups such that $\widehat{f}$ is $\Theta$-equivariant, then the
  continuous map $f : \widehat{M}_1/G_1 \to \widehat{M}_2/G_2$ given
  by $[\widehat{x}] \mapsto [\widehat{f}(\widehat{x})]$ is a good
  $C^{\infty}$ map.  
\end{lemma}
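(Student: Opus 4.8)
The statement asserts that if $\widehat{f} : \widehat{M}_1 \to \widehat{M}_2$ is a $\Theta$-equivariant smooth map between manifolds carrying proper, locally free actions of Lie groups $G_1,G_2$, then the induced map $f : \widehat{M}_1/G_1 \to \widehat{M}_2/G_2$ is a good $C^\infty$ map between the quotient orbifolds. The plan is to verify this directly from Definition \ref{defn:good_map}: I must exhibit a good $C^\infty$ lift of $f$ in the sense of Definition \ref{defn:compatible_sys}, i.e., produce orbifold atlases $\mathcal{A}_1,\mathcal{A}_2$ of the two quotient orbifolds, local lifts $\widehat{f}_{\widehat{U}_1}$ of the restrictions of $f$, homomorphisms $\Theta_{\widehat{U}_1}$ making these lifts good $C^\infty$ lifts of l.u.c.'s, and check the compatibility-under-embeddings condition. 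The key observation is that the l.u.c.'s of a quotient orbifold $\widehat{M}/G$ are built from the Slice Theorem exactly as recalled in the paragraph preceding Example \ref{ex_weightedprj}: centered at $[p]$, the chart is $(\widehat{U},\Gamma,\varphi)$ with $\widehat{U} \subseteq T_p\widehat{M}$ a $G_p$-invariant neighborhood of the origin and $\Gamma = \rho(G_p)$ the slice representation.

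First I would fix a point $[p] \in \widehat{M}_1/G_1$ and set $q = \widehat{f}(p)$, so $f([p]) = [q]$; since $\widehat{f}$ is $\Theta$-equivariant, $\Theta(G_{1,p}) \subseteq G_{2,q}$ (if $g \in G_{1,p}$, then $\Theta(g) \cdot q = \Theta(g)\cdot \widehat{f}(p) = \widehat{f}(g\cdot p) = \widehat{f}(p) = q$). I would then choose a $G_{1,p}$-invariant slice $S_1$ at $p$ and a $G_{2,q}$-invariant slice $S_2$ at $q$ such that $\widehat{f}(S_1) \subseteq G_2 \cdot S_2$; this can be arranged by shrinking $S_1$ using continuity of $\widehat{f}$ and the tube theorem. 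Composing $\widehat{f}|_{S_1}$ with the $G_2$-equivariant retraction $G_2 \cdot S_2 \to S_2$ (or rather its slice-coordinate version $G_2 \times_{G_{2,q}} S_2 \to S_2$) produces a smooth $\Theta|_{G_{1,p}}$-equivariant map $\widehat{f}_{S_1} : S_1 \to S_2$. Transporting to linear slice coordinates via the slice representations $\rho_1 : G_{1,p} \to \mathrm{GL}(T_p\widehat{M}_1)$, $\rho_2 : G_{2,q} \to \mathrm{GL}(T_q\widehat{M}_2)$ gives, on l.u.c.'s $\widehat{U}_1 \subseteq T_p\widehat{M}_1$ and $\widehat{U}_2 \subseteq T_q\widehat{M}_2$, a smooth lift together with the homomorphism $\Theta_{\widehat{U}_1} : \Gamma_1 = \rho_1(G_{1,p}) \to \Gamma_2 = \rho_2(G_{2,q})$ induced by $\Theta$; by construction $\varphi_2 \circ \widehat{f}_{\widehat{U}_1} = f \circ \varphi_1$ and $\widehat{f}_{\widehat{U}_1}$ is $\Theta_{\widehat{U}_1}$-equivariant, so $(\widehat{f}_{\widehat{U}_1}, \Theta_{\widehat{U}_1})$ is a good $C^\infty$ lift of the restriction of $f$. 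Letting $[p]$ range over $\widehat{M}_1/G_1$ yields atlases $\mathcal{A}_1$, $\mathcal{A}_2$ and the collection of local good lifts.

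It then remains to check the two compatibility requirements of Definitions \ref{defn:c_infty_lift} and \ref{defn:compatible_sys}: that on overlaps the local lifts agree up to restriction to a common refining chart, and that whenever $\lambda_1 : (\widehat{U}_1',\Gamma_1',\varphi_1') \hookrightarrow (\widehat{U}_1,\Gamma_1,\varphi_1)$ is an embedding in $\mathcal{A}_1$ there is a compatible embedding $\lambda_2$ in $\mathcal{A}_2$ with the restriction of $(\widehat{f}_{\widehat{U}_1},\Theta_{\widehat{U}_1})$ to $\widehat{U}_1'$ equal to $(\widehat{f}_{\widehat{U}_1'},\Theta_{\widehat{U}_1'})$. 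Both follow because embeddings of these slice l.u.c.'s are (up to the action of $G_2$ resp.\ $G_1$) restrictions of inclusions of smaller slices into larger ones, and the retraction $G_i\cdot S_i \to S_i$ used to build the lifts is natural with respect to shrinking the slice; concretely, one uses the characterization \eqref{eq:9} of the image of an induced group homomorphism together with Lemma \ref{lemma:auto_luc} to see that the local lifts, which are all obtained by composing $\widehat{f}$ with slice retractions, are forced to agree on overlaps up to an automorphism, hence up to an element of $\Gamma_2$. Here I expect the main technical obstacle: arranging the slices $S_2$ at different points of $\widehat{M}_2$ coherently and verifying that the $G_2$-ambiguity in the retraction is exactly absorbed by the isomorphisms of l.u.c.'s appearing in Definition \ref{defn:equivalent_lifts}, so that the resulting $C^\infty$ lift is well-defined as an equivalence class. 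A cleaner route, sketched in Remark \ref{rmk:groupoids}, is to observe that $G_i \ltimes \widehat{M}_i$ is a Lie groupoid representing $\widehat{M}_i/G_i$, that the pair $(\widehat{f},\Theta)$ defines a Lie groupoid homomorphism $G_1 \ltimes \widehat{M}_1 \to G_2 \ltimes \widehat{M}_2$ inducing $f$ on orbit spaces, and then invoke \cite[Proposition 5.1.7]{lup_ur} to conclude that $f$ is a good $C^\infty$ map; this bypasses the bookkeeping entirely, at the cost of importing the groupoid dictionary.
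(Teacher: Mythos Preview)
Your proposal is correct and matches the paper's own proof, which is given only as a sketch offering the same two strategies: a direct check in slice l.u.c.'s and the Lie groupoid shortcut via \cite[Proposition 5.1.7]{lup_ur}. Your outline of the first strategy is in fact more detailed than what the paper provides.
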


\begin{proof}[Sketch of proof]
  We give two strategies to prove the above result. The first is to
  use the l.u.c.'s for quotients given in Section
  \ref{sec:basic-definitions}, i.e., to check the result in `local
  coordinates'. It can be checked directly that $\widehat{f}$ and
  $\Theta$ induce a good $C^{\infty}$ lift of the restriction of $f$
  to any open neighborhood that is uniformized by a l.u.c., and that
  the compatibility condition of Definition \ref{defn:compatible_sys}
  holds.

  The second (more direct) strategy is to use the Lie groupoid approach to orbifolds
  (see Remark \ref{rmk:groupoids}). The map $\widehat{f}$ and the
  homomorphism $\Theta$ can be used to construct a Lie groupoid
  homomorphism $\mathcal{F} : G_1 \ltimes M_1 \to G_2 \ltimes M_2$
  with the property that the induced map on orbit spaces coincides
  with $f$. The result then follows at once from \cite[Proposition 5.1.7]{lup_ur}.
\end{proof}

In the following example, we illustrate an application of Lemma
\ref{lemma:obvious_good_maps}.

\begin{example}[Weighted projective spaces, continued]\label{exm:weighted_proj_standard}
  As in Example \ref{ex_weightedprj}, we fix an integer $n \geq 1$ and positive
  integers $m_0,\dots, m_{n}$ satisfying $\gcd(m_0,\dots,m_n)=1$. We
  consider the smooth $\C^*$-action on $\C^{n+1} \smallsetminus
  \{0\}$ given by
  \begin{equation}
    \label{eq:11}
    \begin{split}
      \C^*\times\mathbb{C}^{n+1} \smallsetminus \{0\}
      &\rightarrow \mathbb{C}^{n+1} \smallsetminus \{0\}\\
    \left(w,(z_0,\dots,z_{n})\right)&\mapsto(w^{m_0}z_0,\dots,w^{m_n}z_n),
    \end{split}
  \end{equation}
  (cf. \eqref{eq:8}). This action is proper and locally free,
  so that the quotient $(\C^{n+1} \smallsetminus
  \{0\})/\C^*$ inherits the structure of an orbifold. We denote an
  element in $(\C^{n+1} \smallsetminus
  \{0\})/\C^*$ by $[z_0:\ldots :z_n]$. We remark that, just as for
  projective spaces, we may exploit the equivalence relation to carry
  out calculations in `local coordinates' that we refer to as {\bf
    homogeneous coordinates} (see, for instance, Example
  \ref{ex::orbifold_quotient} below).

  Let $E \subset \C^{n+1} \smallsetminus \{0\}$ be as in Example
  \ref{ex_weightedprj}. If $\widehat{f} : E \to \C^{n+1}
  \smallsetminus \{0\}$ and $\Theta : S^1 \to \C^*$ denote the
  inclusions, then, by Lemma \ref{lemma:obvious_good_maps}, the induced continuous map $f : \C P^n(m_0,\ldots,
  m_n) \to  (\C^{n+1} \smallsetminus
  \{0\})/\C^*$ is a good $C^\infty$ map. In fact, it is a
  diffeomorphism and the inverse is given by the `standard' projection
  maps $\C^{n+1}
  \smallsetminus \{0\} \to E$ and $\C^* \to S^1$. Throughout
  this paper, we use this diffeomorphism to identify $(\C^{n+1} \smallsetminus
  \{0\})/\C^*$ with $\C P^n(m_0,\ldots,
  m_n)$ tacitly, trusting that this does not cause confusion. \hfill$\Diamond$ 
\end{example}

To conclude this section, we give a sketch of a proof of the following
result regarding good $C^{\infty}$ maps.

\begin{lemma}\label{lemma:good_homomorphism}
  Let $f : M_1 \to M_2$ be a good $C^{\infty}$ map. For any point $x
  \in M_1$, there is an induced conjugacy class of homomorphisms
  $\Gamma_x \to \Gamma_{f(x)}$. 
\end{lemma}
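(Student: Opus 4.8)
The plan is to extract the conjugacy class of homomorphisms from any good $C^\infty$ lift of $f$, and then check that the class is independent of the choice of lift within the equivalence class. First I would fix a point $x \in M_1$ and choose a good $C^\infty$ lift $(\mathcal{A}_1,\mathcal{A}_2,\{\widehat{f}_{\widehat{U}_1}\})$ representing $f$. Pick a l.u.c. $(\widehat{U}_1,\Gamma_1,\varphi_1) \in \mathcal{A}_1$ with $x \in \varphi_1(\widehat{U}_1)$; by Remark \ref{rmk:induced_luc}, after passing to the induced l.u.c. on a smaller connected neighborhood of $x$ we may assume that $\Gamma_1 = \Gamma_x$ (more precisely, that the stabilizer of any preimage of $x$ is all of $\Gamma_1$, which is then identified with $\Gamma_x$). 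By goodness of the lift there is a homomorphism $\Theta_{\widehat{U}_1} : \Gamma_1 \to \Gamma_2$ making $\widehat{f}_{\widehat{U}_1}$ equivariant, where $(\widehat{U}_2,\Gamma_2,\varphi_2) \in \mathcal{A}_2$ is the chart over $\varphi_1(\widehat{U}_1)$; similarly, shrinking around $f(x)$ and using Remark \ref{rmk:restriction_lifts}, we may arrange that $\Gamma_2 = \Gamma_{f(x)}$. This produces a candidate homomorphism $\Gamma_x \to \Gamma_{f(x)}$.

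Next I would verify that the conjugacy class of this homomorphism does not depend on the choices made. There are three sources of ambiguity. (i) The identification of $\Gamma_1$ with $\Gamma_x$ depends on a choice of preimage $\hat{x}$ of $x$; two such choices differ by an element of $\Gamma_1$ (since $\varphi_1$ realizes the quotient), which changes $\Theta_{\widehat{U}_1}$ by an inner automorphism of the source; similarly on the target side. (ii) Replacing $(\widehat{U}_1,\Gamma_1,\varphi_1)$ by a smaller l.u.c. $(\widehat{U}_1',\Gamma_1',\varphi_1')$ with an embedding into it: by the second bullet of Definition \ref{defn:compatible_sys} and Remark \ref{rmk:embeddings_induce_homom}, the homomorphism on structure groups transforms by the injections $\Gamma_1' \hookrightarrow \Gamma_1$ and $\Gamma_2' \hookrightarrow \Gamma_2$, and when both l.u.c.'s are centered at $x$ and $f(x)$ respectively these injections are isomorphisms, so the class is unchanged; for two l.u.c.'s at $x$ not comparable by an embedding, one passes to a common refinement using the compatibility condition of Definition \ref{defn:c_infty_lift} and Lemma \ref{lemma:auto_luc}. (iii) Replacing the chosen good lift by an equivalent one: by Definition \ref{defn:equivalent_lifts} any two such lifts agree, up to the isomorphisms of Definition \ref{defn:isomorphic_lifts}, on a common refinement, and those isomorphisms again only alter $\Theta$ by the conjugation/injection bookkeeping already handled. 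Assembling these, the induced conjugacy class of homomorphisms $\Gamma_x \to \Gamma_{f(x)}$ is well defined.

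Alternatively — and more cleanly — I would use the Lie groupoid picture of Remark \ref{rmk:groupoids}: a good $C^\infty$ map $f$ is represented by a Lie groupoid homomorphism $\mathcal{F} : \mathcal{G}_1 \to \mathcal{G}_2$, and such a homomorphism restricts, on isotropy groups, to a homomorphism from the isotropy group of the orbit representing $x$ to that of the orbit representing $f(x)$, i.e., $\Gamma_x \to \Gamma_{f(x)}$. Changing $\mathcal{F}$ within its Morita-equivalence class changes this homomorphism only by conjugation, so one again obtains a well-defined conjugacy class; I would present this as a sketch and defer to \cite{adem,lup_ur} for the groupoid formalism.

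The main obstacle is purely bookkeeping: carefully tracking how the homomorphism $\Theta_{\widehat{U}_1}$ transforms under (a) restriction to sub-l.u.c.'s, (b) change of base point in source and target, and (c) passage to an equivalent lift, and checking that each of these only modifies it within a fixed conjugacy class. Nothing deep is involved, but the combination of Definitions \ref{defn:c_infty_lift}, \ref{defn:equivalent_lifts}, \ref{defn:compatible_sys} and Remark \ref{rmk:embeddings_induce_homom} has to be applied with some care; the groupoid argument sidesteps most of it, at the cost of invoking machinery developed elsewhere.
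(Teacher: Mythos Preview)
Your proposal is correct and takes essentially the same two-pronged approach as the paper: a direct verification via good lifts (the paper cites \cite[Definition 4.4.9]{chen_ruan} for this) and the Lie groupoid shortcut of Remark~\ref{rmk:groupoids}. The paper's proof is only a brief sketch of these two strategies, whereas you have spelled out the bookkeeping for the direct argument in more detail than the paper does.
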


\begin{proof}[Sketch of proof]
  As in the proof of Lemma \ref{lemma:obvious_good_maps}, we give two
  strategies. The first is to check the result directly as in
  \cite[Definition 4.4.9]{chen_ruan}. The second (more direct) strategy is to use the Lie groupoid approach to orbifolds
  (see Remark \ref{rmk:groupoids}): With the latter, the result is
  immediate as a Lie groupoid morphism induces homomorphisms between
  isotropy groups and Morita equivalences preserve the conjugacy class
  of these homomorphisms.
\end{proof}

\subsubsection{Compact, connected, orientable orbi-surfaces with isolated
  singular points}\label{ss::classification_orbispheres}
In this section, we state a classification result for a certain family
of two-dimensional orbifolds, which we call {\bf orbi-surfaces}. This
result should be seen as the analog of the well-known classification of closed
two-dimensional manifolds for orbifolds.

Let $\Sigma$ be an orientable orbi-surface, let $x \in
\Sigma$ be a point, and let $\oc$ be a l.u.c. centered at $x$ as in Remark
\ref{rmk:centred}. The orbifold structure group $\Gamma$ of $x$ is a
subgroup of $\mathrm{SO}(2)$. Hence, if $\Gamma$ is not trivial, then it is
a finite cyclic subgroup $\Z_c$ for some natural number $c \geq
2$. If, in addition, $\Sigma$ is compact and has only isolated singular
points, then it has finitely many such points. The following result,
stated below without proof, provides a complete classification of such orbi-surfaces.

\begin{theorem}\cite[Theorem
  9.5.2]{alvaro}\label{thm::classorbisurface}
  \mbox{}
  \begin{enumerate}[label=(\arabic*),ref=(\arabic*),leftmargin=*]
  \item \label{item:11} If $\Sigma$ be a compact, connected, orientable orbi-surface with isolated
    singular points, then there exists a non-negative integer $g$ such
    that $|\Sigma|$ is homeomorphic to the compact orientable surface of
    genus $g$.
  \item \label{item:12} For $i=1,2$, let $\Sigma_i$ be a compact, connected, orientable orbi-surface with isolated
    singular points, let $g_i$ be the genus of $|\Sigma_i|$, let
    $S_i$ be the set of singular points of $\Sigma_i$, and let $c_i : S_i
    \to \mathbb{N}$ be the map that sends a singular point to the order of its orbifold structure group. The two orbifolds $\Sigma_1$ and $\Sigma_2$ are diffeomorphic if
    and only if $g_1 = g_2$ and there exists a bijection $\sigma : S_1
    \to S_2$ such that $c_2 \circ \sigma = c_1$.
  \item \label{item:13} For any non-negative integers $g, n$ and for any map $c :
    \{1,\ldots, n\} \to \mathbb{N}$ such that $c(j) \geq 2$ for all
    $j=1,\ldots, n$, there exists a compact, connected, orientable orbi-surface with isolated
    singular points $\Sigma$ such that $|\Sigma|$ is homeomorphic to the compact orientable surface of
    genus $g$, the set of singular points $S$ of $\Sigma$ has cardinality
    $n$ and there exists a bijection $\sigma : S \to \{1,\ldots, n\}$
    with the property that $c(\sigma(p))$ is the order of the orbifold
    structure group of $p$, for any $p \in S$.
  \end{enumerate}
\end{theorem}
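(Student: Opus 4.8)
The plan is to reduce Theorem \ref{thm::classorbisurface} to the classical classification of compact, connected, orientable surfaces, treating the finitely many cone points as extra bookkeeping data. For part \ref{item:11}, I would first observe that away from its isolated singular points $\Sigma$ is an honest smooth surface (Remark \ref{rmk:orbifolds_no_singular_points}), and that near each singular point a l.u.c. centered at $x$ (Remark \ref{rmk:centred}) presents a neighborhood of $x$ as $\widehat{U}/\Z_c$ with $\Z_c < \mathrm{SO}(2)$ acting by rotations on a disk; such a quotient is again homeomorphic to a disk. Hence $|\Sigma|$ is a topological $2$-manifold, compact and orientable (the orientation on $\Sigma$ induces one on the regular locus, which is dense), so the classical classification of closed orientable surfaces gives the genus $g$.

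For part \ref{item:12}, the `only if' direction is essentially functorial: a diffeomorphism $f \colon \Sigma_1 \to \Sigma_2$ of orbifolds is in particular a homeomorphism $|f| \colon |\Sigma_1| \to |\Sigma_2|$, forcing $g_1 = g_2$; moreover by Lemma \ref{lemma:good_homomorphism} it induces isomorphisms $\Gamma_x \cong \Gamma_{f(x)}$ on orbifold structure groups (an orbifold diffeomorphism has a good $C^\infty$ inverse, so the induced homomorphisms are isomorphisms), hence $|f|$ restricts to a bijection $\sigma \colon S_1 \to S_2$ with $c_2 \circ \sigma = c_1$. For the `if' direction, given $g_1 = g_2$ and a bijection $\sigma$ with $c_2 \circ \sigma = c_1$, I would first choose a homeomorphism $|\Sigma_1| \to |\Sigma_2|$ carrying $S_1$ onto $S_2$ via $\sigma$ (possible since the genera agree and we may move finitely many marked points around by an ambient isotopy); then upgrade it to an orbifold diffeomorphism. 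On the complement of the singular sets this is just a matter of isotoping to a diffeomorphism of smooth surfaces; near a matched pair of singular points $p \in S_1$, $\sigma(p) \in S_2$, both local models are $\mathbb{D}^2/\Z_c$ for the \emph{same} $c$ acting by the standard rotation, since any two effective linear $\Z_c$-actions on $\R^2$ preserving orientation are conjugate in $\mathrm{SO}(2)$; so the local good $C^\infty$ lift is the identity of $\mathbb{D}^2$ with $\Theta = \mathrm{id}$, and one glues these local pieces to the diffeomorphism of the regular loci. Part \ref{item:13} is a straightforward construction: take the closed orientable surface $\Sigma_g$ of genus $g$, pick $n$ distinct points, and on a disk neighborhood of the $j$-th point replace the trivial l.u.c. by $(\mathbb{D}^2, \Z_{c(j)}, \pi)$ with $\Z_{c(j)}$ acting by rotation; checking the compatibility condition of Definition \ref{defn:orbifold} is immediate since the new charts only interact with the old smooth ones on annuli where the group is forced to act freely, hence trivially after restriction (Remark \ref{rmk:induced_luc}).

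The main obstacle is the `if' direction of part \ref{item:12}: producing an honest orbifold diffeomorphism rather than merely a homeomorphism of underlying spaces, i.e., gluing the local good $C^\infty$ lifts near the cone points to a global diffeomorphism of the regular parts in a way that respects the compatibility and equivalence conditions of Definitions \ref{defn:c_infty_lift}, \ref{defn:equivalent_lifts} and \ref{defn:compatible_sys}. Concretely one needs an isotopy argument: the diffeomorphism of $|\Sigma_1| \setminus S_1$ to $|\Sigma_2| \setminus S_2$ must be chosen so that, in suitable annular collars around each removed point, it is $\Z_c$-equivariantly standard, so that it extends over the cone charts; this is where one invokes the smooth isotopy extension / Palais-type arguments on the punctured surfaces. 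Since the excerpt cites this as \cite[Theorem 9.5.2]{alvaro} and states it without proof, I would present the above as a sketch, emphasizing that all the genuinely orbifold-specific input is the local rigidity of effective orientation-preserving $\Z_c$-actions on the disk, and that everything else is the classical surface classification plus a marked-point bookkeeping.
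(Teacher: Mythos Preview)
The paper does not prove this theorem: it is stated without proof and attributed to \cite[Theorem 9.5.2]{alvaro}, so there is no argument in the paper to compare against. Your sketch is the standard one and is sound; the only point worth tightening is exactly the one you flag, namely arranging the diffeomorphism of the punctured regular loci to be rotationally standard on annular collars so that it extends over the matched cone charts, which is a routine application of the isotopy extension theorem on surfaces.
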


\subsection{Orbi-bundles}\label{sec:orbi-bundles}
\subsubsection{Definition and Examples} \label{sec:definition-examples}
In this section, we follow mainly \cite{furuta,lgodinho}.

\begin{definition}\label{defn:trivial_orbi-bundle}
  Let $|M_i|$ be a topological space for $i=1,2$, let $\mathrm{pr}: |M_1| \to |M_2|$ be a
  surjective continuous map, let $F$ be a connected smooth manifold and let $G$
  be a Lie group acting on $F$. We say that $\mathrm{pr}: |M_1| \to
  |M_2|$ admits a {\bf local trivializing l.u.c.}
  over the open set $U_2 \subseteq |M_2|$ with generic fiber $F$ and
  structure group $G$ if there exist
  \begin{itemize}[leftmargin=*]
  \item a l.u.c. $(\widehat{U}_2, \Gamma_2,\varphi_2)$ uniformizing
    $U_2$,
  \item a  l.u.c. $(\widehat{U}_1, \Gamma_1,\varphi_1)$ uniformizing
    $U_1:=\mathrm{pr}^{-1}(U_2)$,
  \item a smooth map $\rho : \Gamma_2 \times \widehat{U}_2\to G$, and
  \item a diffeomorphism $\lambda : \widehat{U}_2 \times F \to
    \widehat{U}_1$,
  \end{itemize}
  such that
  \begin{itemize}[leftmargin=*]
  \item the map
    \begin{equation*}
      \begin{split}
        \Gamma_2 \times \widehat{U}_2 \times F &\to \widehat{U}_2
        \times F \\
        (\gamma,\widehat{x},y) &\mapsto (\gamma \cdot \widehat{x}, \rho(\gamma,\widehat{x})(y))
      \end{split}
    \end{equation*}
    is an action of $\Gamma_2$ on $\widehat{U}_2 \times F$, where
    $\cdot$ denotes the $\Gamma_2$-action on $\widehat{U}_2$,
  \item the open set $U_1$ is uniformized by the l.u.c.
    $(\widehat{U}_2 \times F, \Gamma_2, \varphi_1 \circ \lambda)$, and
  \item the pair $(\widehat{\mathrm{pr}}_1,
    \mathrm{id})$ is a good $C^{\infty}$ lift of $\mathrm{pr}$.
  \end{itemize}
  We denote a local trivializing l.u.c. for $\mathrm{pr}: |M_1| \to
  |M_2|$ over $U_2$ by
  $$ (\widehat{U}_1 \to \widehat{U}_2, \Gamma_2,
  \varphi_1,\varphi_2,\rho, \lambda). $$
\end{definition}

\begin{remark}\label{rmk:obs_defn_triv_orbi-bundle}
  The conditions in Definition \ref{defn:trivial_orbi-bundle} imply that $\lambda : (\widehat{U}_2 \times F,
  \Gamma_2, \lambda \circ \varphi_1) \hookrightarrow (\widehat{U}_1,
  \Gamma_1,\varphi_1)$ is an isomorphism of l.u.c.'s and that
  $(\widehat{\mathrm{pr}}_1 \circ \lambda^{-1},\lambda^{-1})$ is a
  good $C^{\infty}$ lift of $\mathrm{pr}$. This is summarized in the
  following commutative diagram:
  \begin{equation*}
    \xymatrix{\widehat{U}_2 \times F \ar[r]_-{\simeq}^-{\lambda}
      \ar[dr]_-{\widehat{\mathrm{pr}}_1} & \widehat{U}_1
      \ar[r]^-{\varphi_1} \ar[d]^-{\widehat{\mathrm{pr}}_1 \circ
        \lambda^{-1}} & U_1 \ar[d]^-{\mathrm{pr}} \\
       & \widehat{U}_2 \ar[r]_-{\varphi_2}& U_2.}
  \end{equation*}
\end{remark}

As the terminology suggests, local trivializing l.u.c.'s should be
thought of as being analogous to l.u.c.'s and local trivializations for
fiber bundles with a given structure group. In this sense, the following notion is the
analog of Definition \ref{defn:embedding_isomorphism}.

\begin{definition}\label{defn:embedding_bundle}
  Let $|M_i|$ be a topological space for $i=1,2$, let $\mathrm{pr}: |M_1| \to |M_2|$ be a
  surjective continuous map, let $F$ be a smooth manifold, let $G$ be
  a Lie group acting effectively on $F$, and let
  $U_2, V_2 \subseteq |M_2|$ be open sets. Given local trivializing
  l.u.c.'s $(\widehat{U}_1 \to \widehat{U}_2, \Gamma_2,
  \varphi_1,\varphi_2,\lambda)$ and $(\widehat{V}_1 \to \widehat{V}_2, \Gamma'_2,
  \psi_1,\psi_2,\mu)$ for $\mathrm{pr}: |M_1| \to |M_2|$ with generic
  fiber $F$ and structure group $G$ over $U_2$
  and $V_2$ respectively, an {\bf embedding} of $(\widehat{U}_1 \to \widehat{U}_2, \Gamma_2,
  \varphi_1,\varphi_2,\lambda)$ into $(\widehat{V}_1 \to \widehat{V}_2, \Gamma'_2,
  \psi_1,\psi_2,\mu)$ is an embedding $\Lambda : (\widehat{U}_2,
  \Gamma_2, \varphi_2) \hookrightarrow  (\widehat{V}_2,
  \Gamma'_2, \psi_2)$ together with a map $g: \widehat{U}_2 \to
  G$ such that the map
  \begin{equation*}
    \begin{split}
      \widehat{U}_2 \times F &\to \widehat{V}_2 \times F \\
      (\widehat{x},y) &\mapsto (\Lambda(x), g(x)(y))
    \end{split}
  \end{equation*}
  is an embedding between $(\widehat{U}_2 \times F, \Gamma_2,
  \varphi_1 \circ \lambda)$ and $(\widehat{V}_2 \times F, \Gamma'_2,
  \psi_1 \circ \lambda)$. We denote such an embedding by
  $(\Lambda, g)$ and say that $(\Lambda, g)$ {\bf extends} $\Lambda$.

  We say that $(\widehat{U}_1 \to \widehat{U}_2, \Gamma_2,
  \varphi_1,\varphi_2,\lambda)$ and $(\widehat{V}_1 \to \widehat{V}_2, \Gamma'_2,
  \psi_1,\psi_2,\mu)$ are {\bf isomorphic} if there exists an
  embedding  $(\Lambda, g)$ of $(\widehat{U}_1 \to \widehat{U}_2, \Gamma_2,
  \varphi_1,\varphi_2,\lambda)$ into $(\widehat{V}_1 \to \widehat{V}_2, \Gamma'_2,
  \psi_1,\psi_2,\mu)$ such that $\Lambda: (\widehat{U}_2,
  \Gamma_2, \varphi_2) \hookrightarrow  (\widehat{V}_2,
  \Gamma'_2, \psi_2)$ is an isomorphism.
\end{definition}

\begin{remark}\label{rmk:analog_homom_induced}
  It can be checked that the analogs of Remarks
  \ref{rmk:embeddings_induce_homom} and \ref{rmk:induced_luc} hold for
  local trivializing l.u.c.'s. 
\end{remark}

\begin{definition}\label{defn:locally_trivial_orbi-bundle_atlas}
  Let $|M_i|$ be a topological space for $i=1,2$,  let $\mathrm{pr} : |M_1| \to |M_2|$ be a surjective
  continuous map, let $F$ be a smooth manifold, and let $G$ be a Lie
  group acting on $F$. We say that $\mathrm{pr} : |M_1| \to |M_2|$ admits an {\bf orbi-bundle atlas}
  with generic fiber $F$ and structure group $G$ if there exists an orbifold atlas
  $\mathcal{A}_2$ on $|M_2|$ such that
  \begin{itemize}[leftmargin=*]
  \item for each $(\widehat{U}_2,\Gamma_2,\varphi_2) \in
    \mathcal{A}_2$, the map $\mathrm{pr} : |M_1| \to |M_2|$ admits a
    local trivializing l.u.c. over $U_2 = \varphi_2(\widehat{U}_2)$ with generic fiber $F$ and structure group $G$ of
    the form $(\widehat{U}_1 \to \widehat{U}_2, \Gamma_2,
    \varphi_1,\varphi_2,\lambda)$, and
  \item for every pair $(\widehat{U}_2,\Gamma_2,\varphi_2),
    (\widehat{V}_2,\Gamma'_2,\psi_2) \in
    \mathcal{A}_2$ with $U_2 \cap V_2 \neq \emptyset$, for
    every $x \in U_2 \cap V_2$, and for every
    $(\widehat{W}_2,\Gamma''_2,\chi_2) \in \mathcal{A}_2$ with $x \in
    W_2$ and with embeddings $(\widehat{W}_2,\Gamma''_2,\chi_2)
    \hookrightarrow (\widehat{U}_2,\Gamma_2,\varphi_2)$ and $(\widehat{W}_2,\Gamma''_2,\chi_2)
    \hookrightarrow  (\widehat{V}_2,\Gamma'_2,\psi_2)$, there exist
    embeddings of $(\widehat{W}_1 \to \widehat{W}_2, \Gamma''_2,
    \chi_1,\chi_2,\nu)$ into $(\widehat{U}_1 \to \widehat{U}_2, \Gamma_2,
    \varphi_1,\varphi_2,\lambda)$ and $(\widehat{V}_1 \to \widehat{V}_2, \Gamma'_2,
    \psi_1,\psi_2,\mu)$ that extend the
    above embeddings of l.u.c.'s. 
  \end{itemize}
\end{definition}

\begin{remark}\label{rmk:locally_trivial_orbi-bundle_atlas}
  Given an orbi-bundle atlas as in Definition
  \ref{defn:locally_trivial_orbi-bundle_atlas}, the collection
  $\mathcal{A}_1=\{(\widehat{U}_1, \Gamma_1, \varphi_1)\}$ defines an orbifold atlas
  on $|M_1|$. Hence, {\em a posteriori}, $M_1$ is an orbifold. The
  second bullet point in Definition \ref{defn:locally_trivial_orbi-bundle_atlas}
  is a compatibility condition between $\mathcal{A}_1$ and $\mathcal{A}_2$. For this reason, we denote an orbi-bundle atlas
  by $(\mathcal{A}_1,\mathcal{A}_2)$ and we refer to the local
  trivializing l.u.c.'s of Definition
  \ref{defn:locally_trivial_orbi-bundle_atlas} as {\em lying in} $(\mathcal{A}_1,\mathcal{A}_2)$. 
\end{remark}

\begin{definition}\label{defn:locally_trivial_orbi-bundle}
  Let $|M_i|$ be a topological space for $i=1,2$,  let $\mathrm{pr} : |M_1| \to |M_2|$ be a surjective
  continuous map, let $F$ be a smooth manifold and let $G$ be a Lie
  group acting on $F$. Let $(\mathcal{A}_1,\mathcal{A}_2)$ and
  $(\mathcal{B}_1,\mathcal{B}_2)$ be orbi-bundle atlases for
  $\mathrm{pr} : |M_1| \to |M_2|$ with generic fiber $F$ and structure
  group $G$.
  \begin{enumerate}[label=(\arabic*), ref=(\arabic*), leftmargin=*]
  \item \label{item:2} The orbi-bundle atlas
    $(\mathcal{B}_1,\mathcal{B}_2)$ {\bf refines}
    $(\mathcal{A}_1,\mathcal{A}_2)$ if every local trivializing
    l.u.c. lying in $(\mathcal{B}_1,\mathcal{B}_2)$ admits an
    embedding into some local trivializing
    l.u.c. lying in $(\mathcal{A}_1,\mathcal{A}_2)$.
  \item \label{item:7} The orbi-bundle atlases
    $(\mathcal{A}_1,\mathcal{A}_2)$ and
    $(\mathcal{B}_1,\mathcal{B}_2)$ are {\bf equivalent} if they admit
    a common refinement.
  \item \label{item:8} An {\bf orbi-bundle} with generic fiber $F$ and
    structure group $G$ is a
    surjective continuous map $\mathrm{pr} : |M_1| \to |M_2|$ endowed
    with an equivalence class of orbi-bundle
    atlases with generic fiber $F$ and
    structure group $G$. To simplify notation, we denote an
    orbi-bundle by $\mathrm{pr} : M_1 \to M_2$. 
  \end{enumerate}
\end{definition}

\begin{remark}\label{rmk:transition_maps}
  By Remark \ref{rmk:analog_homom_induced}, Definition
  \ref{defn:locally_trivial_orbi-bundle} agrees with the notion of
  orbi-bundles in \cite[Section 4.1]{chen_ruan} and \cite[Section
  2.1]{lgodinho}. Moreover, an orbi-bundle atlas can be understood in
  terms of the analog of transition maps (see \cite[page
  123]{lgodinho}): this comes from unraveling the compatibility
  condition between the atlases $\mathcal{A}_1$ and $\mathcal{A}_2$ in
  Definition \ref{defn:locally_trivial_orbi-bundle_atlas}.
\end{remark}

\begin{remark}\label{rmk:projection_good}
  If $\mathrm{pr} : M_1 \to M_2$ is an orbi-bundle, then $\mathrm{pr}$
  is a good $C^{\infty}$ map (see \cite[Remark 4.4.12b]{chen_ruan}). 
\end{remark}

In analogy with the case of manifolds, if $F$ is a vector space and
$G$ is (a subgroup of) $\mathrm{GL}(F)$, we call an orbi-bundle with
generic fiber $F$ and structure group $G$ a {\bf vector orbi-bundle}.

\begin{remark}\label{rmk:tangent_orbi-bundle}
  If $M$ is an orbifold, one can construct the {\bf tangent}
  orbi-bundle $TM \to M$ in a natural fashion (see, for instance,
  \cite[Section 1.3]{adem}). The fiber $T_x M$ over $x \in
  M$ can be identified with the following orbi-vector space: If $\oc$
  is a l.u.c. centered at $x$ and $\widehat{U} \subseteq \R^n$ (see Remark \ref{rmk:centred}), then $T_x
  M \simeq \R^n/\Gamma$. If $f : M_1 \to M_2$ is a good $C^{\infty}$
  map, then there is a well-defined notion of {\bf derivative} of $f$
  at $x \in M_1$, i.e., a $\Gamma$-equivariant linear map $d_x f : T_x
  M_1 \to T_{f(x)} M_2$, where $\Gamma$ is the orbifold structure
  group of $x$ (see \cite[Section 2.1]{kleiner_lott}).
\end{remark}

\begin{example}[Standard vector orbi-bundles
  over orbifolds]\label{exm:standard_bundles}
  All the standard operations on
  vector bundles over manifolds extend to vector orbi-bundles over
  orbifolds (see \cite[Section 1.3]{adem}). In particular, given an
  orbifold $M$, one can construct the {\bf cotangent}
  orbi-bundle $T^*M \to M$ and all tensor bundles over $M$.  \hfill$\Diamond$   
\end{example}


The construction of orbifolds using locally free and proper actions
described in Section \ref{sec:basic-definitions} can be generalized to
construct orbi-bundles. For our purposes, we need only the following
special case: Let $\widehat{M}, F$ be smooth manifolds, let
$G$ be a Lie group acting on $F$ and let $H$ be a Lie group. We
suppose that
\begin{itemize}[leftmargin=*]
\item $H$ acts on $\widehat{M}$,
\item there exists a smooth map $\rho : H \times \widehat{M} \to G$
  such that the map
  \begin{equation*}
    \begin{split}
      H \times \widehat{M} \times F &\to \widehat{M} \times F \\
      (h,\widehat{x},y) &\mapsto (h \cdot x, \rho(h,\widehat{x})(y))
    \end{split}
  \end{equation*}
  defines an $H$-action on $\widehat{M} \times F$, where $\cdot$
  denotes the $H$-action on $\widehat{M}$,
\item the $H$-action on $\widehat{M}$ is locally free, and
\item the $H$-actions on $\widehat{M}$ and on  $\widehat{M} \times F$
  are proper. 
\end{itemize}
Since the $H$-actions on $\widehat{M}$ and $\widehat{M} \times F$ are
locally free and proper, we obtain orbifolds $\widehat{M}/H$ and
$(\widehat{M} \times F)/H$. Moreover, there is a surjective continuous
map $(\widehat{M} \times F)/H \to \widehat{M}/H$ such that the
following diagram commutes
\begin{equation*}
  \xymatrix{\widehat{M} \times F \ar[d] \ar[r] & (\widehat{M} \times
    F)/H \ar[d] \\
    \widehat{M} \ar[r] & \widehat{M}/H,}
\end{equation*}
where the horizontal arrows are the quotient maps and the vertical
arrow on the left is projection onto the first component. Using the
standard l.u.c.'s on $\widehat{M}/H$ and
$(\widehat{M} \times F)/H$ (described in Section \ref{sec:basic-definitions}), it can be checked directly that
$(\widehat{M} \times F)/H \to \widehat{M}/H$ is an orbi-bundle with
generic fiber $F$ and structure group $G$. The following example gives
a concrete application of this construction.

\begin{example}[Weighted projective spaces, continued]\label{exm:weighted_O}
  Fix an integer $n \geq 1$ and positive
  integers $m_0,\dots, m_{n}$ satisfying
  $\gcd(m_0,\dots,m_n)=1$. Fix an integer $k$ and consider the
  $\C^*$-action on $(\C^{n+1} \smallsetminus \{0\}) \times\mathbb{C}$ given by
  \begin{align*}
    \C^* \times (\C^{n+1} \smallsetminus \{0\})\times\mathbb{C}&\rightarrow (\C^{n+1} \smallsetminus \{0\})\times\mathbb{C}\\
    (w,z_0,\dots,z_n,v)&\mapsto(w^{m_0}z_0,\dots,w^{m_n}z_n,\lambda^kv),
  \end{align*}
  (cf. \eqref{eq:11}). Together with the $\C^*$-action on $\C^{n+1} \smallsetminus \{0\}$ given in Example
  \ref{exm:weighted_proj_standard}, these two $\C^*$-action satisfy the above
  hypotheses, where $G = \C^* = \mathrm{GL}(\C)$. The resulting complex vector
  orbi-bundle
  $$(  (\C^{n+1} \smallsetminus \{0\}) \times \C)/\C^* \to 
  \mathbb{C}P^{n}(m_0,\dots,m_n)$$
  of rank 1 is denoted by
  $\mathcal{O}_{m_0,\dots,m_n}(k)$. Finally, we observe that it is
  also possible to construct $\mathcal{O}_{m_0,\dots,m_n}(k)$ as an
  appropriate $S^1$-quotient of the product $E \times \C$, where $E$
  is as in Example \ref{ex_weightedprj}. We leave the details of this
  latter construction to the reader (see Example
  \ref{exm:weighted_proj_standard}). \hfill$\Diamond$   
\end{example}


\begin{remark}\label{rmk:section}
  Given an orbi-bundle $\mathrm{pr} : M_1 \to M_2$, there is a natural
  notion of a {\bf $C^{\infty}$ section} of $\mathrm{pr} : M_1 \to
  M_2$ (see \cite[Section 4.1]{chen_ruan}): This is a $C^{\infty}$ map
  $s : M_2 \to M_1$ that is a set-theoretic section of $\mathrm{pr}$
  such that the $C^{\infty}$ lifts of $s$ in local trivializing
  l.u.c.'s for $\mathrm{pr} : M_1 \to
  M_2$ are equivariant sections. By \cite[Remark 4.4.12b]{chen_ruan},
  any $C^{\infty}$ section of $\mathrm{pr} : M_1 \to M_2$ is good. 

  In particular, it makes sense to define {\bf tensors} on an orbifold
  $M$ as $C^{\infty}$ sections of the appropriate vector
  orbi-bundle constructed in Remark \ref{exm:standard_bundles} (see,
  for instance, \cite[Definition 1.27]{adem}). Moreover, the {\bf de
    Rham differential} $d : \Omega^{\bullet}(M) \to \Omega^{\bullet
    +1}(M)$ can be defined in the same way as for manifolds. Hence, it
  also makes sense to consider closed and exact forms, as well as de
  Rham cohomology (see \cite[Chapter 2.1]{adem} for an approach
  involving Lie groupoids). Finally, {\bf integration} of differential
  forms can be defined using the standard formulas.
\end{remark}

\begin{remark}\label{rmk:flow}
  By Remark \ref{rmk:section}, we may consider a vector field $\xi$ on an
  orbifold $M$. The naive definition of the {\bf flow} of $\xi$ on $M$
  makes sense and satisfies the standard properties (see
  \cite{hepworth} for a more sophisticated approach using Lie
  groupoids). If $M$ has isolated singular points, this is
  particularly simple using l.u.c.'s and their compatibility. In particular, it can be shown that the standard Cartan
  calculus holds for orbifolds. 
\end{remark}

To conclude this section, we introduce {\bf principal
  $G$-orbi-bundles}: These are orbi-bundles in the sense of Definition
\ref{defn:locally_trivial_orbi-bundle}, where $F = G$ and $G$ acts on
itself by left multiplication. In fact, just as for manifolds, it is
possible to go from orbi-bundles over $M$ with typical fiber $F$ and
structure group $G$ to principal $G$-orbi-bundles by means of the
standard construction and vice versa (see, for instance, \cite[Section
2.1]{lgodinho}). In particular, this establishes the `standard' correspondence
between complex line orbi-bundles and principal $S^1$-orbi-bundles over orbisurfaces (see, for example, \cite{furuta}).


\subsubsection{Seifert manifolds}\label{sec:seifert-manifolds}
To start, we define certain 3-manifolds endowed with an
$S^1$-action, first introduced by Seifert in \cite{seifert}.

\begin{definition}\label{defn:Seifert_manifold}
  A {\bf Seifert manifold} is a 3-dimensional compact, connected oriented manifold
  $M$ endowed with an effective, locally free,
  orientation-preserving $S^1$-action. We call the
  orbit map $\pi: M\rightarrow M/S^1$ a \textbf{Seifert
    fibration}.  
\end{definition} 
 
\begin{remark}\label{rmk:base_seifert_fibration}
  Given a Seifert fibration $\pi : M \to M/S^1$, the construction
  illustrated in Section \ref{sec:basic-definitions} yields that
  orbit space $\Sigma:=M/S^1$ is a compact, connected oriented
  orbi-surface.
\end{remark}

Seifert manifolds and fibrations have been classified by
Raymond \cite{raymond}. For completeness, in what follows we
illustrate this classification (see also \cite[Section I.3.c]{audin}
and \cite{jankinsNeuman,orlik}). Let $\pi:M\rightarrow \Sigma$ be a
Seifert fibration. Since $S^1$ is abelian, it makes sense to consider
the stabilizer of an orbit $S^1\cdot x$, where $x \in M$, which we
denote by $\Gamma$. Since the $S^1$-action on $M$ is effective, the
only possible stabilizers are the trivial subgroup and the cyclic
subgroups $\Z_{\alpha}$ given by $\alpha$-th roots of unity, where
$\alpha > 1$ is an integer. In particular, every orbit is
diffeomorphic to $S^1$. We say that an orbit $S^1 \cdot x$ is {\bf
  principal} (respectively {\bf exceptional}) if its stabilizer is
trivial (respectively $\Z_{\alpha}$ for some $\alpha >1$).

The Slice
Theorem (see \cite[Theorem
2.4.1]{dk}) provides a description of $\pi : M \to \Sigma$ near any
orbit, which we illustrate below. To this end, we fix $x \in M$ and let
$D^2\subset \R^2 \simeq \mathbb{C}$ be an open disc centred at the
origin. Here, $\R^2$ is identified with the normal space to the orbit
$S^1 \cdot x$ at $x$. Since the $S^1$-action is
orientation-preserving, the image of the isotropy representation $\rho
(\Gamma)$ lies in $\mathrm{SO}(2) \simeq \mathrm{U}(1)$, so that
$\rho$ is completely determined by $\nu \in \Z$ such that $\rho
(\gamma) = \gamma^{\nu}$, for all $\gamma \in \Gamma
\subset S^1$. We consider the $\Gamma$-action on $S^1 \times D^2$
given by
\begin{equation}
  \label{eq:10}
  \gamma \cdot (\lambda, w) = (\lambda \gamma^{-1}, \gamma^{\nu}z).
\end{equation}
This action is free and proper so that the quotient $(S^1 \times
D^2)/\Gamma$ is a smooth manifold. We endow $(S^1 \times
D^2)/\Gamma$ with the $S^1$-action given by 
$$e^{i\theta}\cdot[\lambda,w]=[e^{i\theta}\lambda,w].$$
By the Slice Theorem, an
$S^1$-invariant neighborhood of $S^1 \cdot x$ is $S^1$-equivariantly
isomorphic to $(S^1 \times
D^2)/\Gamma$. In particular, since the $S^1$-action on $M$ is
effective, it follows that $\nu \neq 0$ and, if $\Gamma = \Z_{\alpha}$
for some $\alpha >1$, then $\gcd(\alpha, \nu)=1$. In the latter case
(i.e., if the orbit is exceptional),
there exists $\gamma \in \Gamma$ such that $\gamma^{\nu} =
e^{\frac{2\pi i}{\alpha}}$ and $\beta \in \Z$ with $\gcd(\alpha ,
\beta)=1$ such that \eqref{eq:10} becomes
$$ e^{\frac{2\pi i}{\alpha}} \cdot (\lambda, w) = \left(\lambda
  e^{-\frac{2\pi i \beta}{\alpha}}, e^{\frac{2\pi i}{\alpha}}w
\right). $$
If we choose $0 < \beta < \alpha$, we obtain the \textbf{Seifert
  invariant} of an exceptional orbit, which we denote by $(\alpha,
\beta)$. 

\begin{remark}\label{rmk:Seifert_principal}
  A closer look at the above argument yields the following extra
  results:
  \begin{enumerate}[label=(\arabic*),ref= (\arabic*),leftmargin=*]
  \item \label{item:9} exceptional orbits are isolated in $\Sigma$, and
  \item \label{item:10} locally near an orbit with stabilizer $\Gamma$ (and isotropy
    representation determined by the integer $\nu$), the Seifert
    fibration $\pi : M \to \Sigma$ is isomorphic to the map $ (S^1 \times
    D^2)/\Gamma \to D^2/\Gamma$ sending $[\lambda, w] \to [w]$, where
    the $\Gamma$-action on $D^2$ is given by the second component of
    \eqref{eq:10}. 
  \end{enumerate}
  In particular, since exceptional orbits are precisely the singular points
  of the orbifold $\Sigma$, by property \ref{item:9} the orbifold structure on $B$ has only isolated
  singular points. In particular, by Remark
  \ref{rmk:base_seifert_fibration} and part \ref{item:11} of Theorem
  \ref{thm::classorbisurface}, there exists a non-negative integer $g$
  such that $|B|$ is homeomorphic to the compact orientable surface of
  genus $g$. Finally, by property \ref{item:10}, $\pi : M \to
  \Sigma$ is a principal $S^1$-orbi-bundle.
\end{remark}

\begin{remark}\label{rmk:neigh_exceptional_orbit}
  Let $S^1 \cdot x$ be an exceptional orbit with stabilizer $\Gamma$
  and Seifert invariant
  $(\alpha, \beta)$. It can be shown that the quotient $(S^1 \times
  D^2)/\Gamma$ is, in fact, $S^1$-equivariantly
  isomorphic to a solid torus $S^1 \times
  D^2$ endowed with an $S^1$-action that is not trivial on both components -- for
  more details, see \cite[Section I.3.c]{audin}. Moreover, this identification can be extended to an identification
  between $(S^1 \times \overline{D}^2)/\Gamma$ and $S^1 \times
  \overline{D}^2$. Under this identification, the exceptional orbit corresponds to $S^1
  \times \{0\}$ and a principal orbit lying on the boundary $S^1
  \times S^1$ intersects the `meridian' $\{1\} \times S^1$ in precisely
  $\alpha$ points.
\end{remark}

By Remark \ref{rmk:Seifert_principal}, there exists a positive integer
$n$ such that $\Sigma$ has precisely $n$ singular points, that we denote by
$[x_1],\dots,[x_n]$ for some $x_1,\ldots, x_n \in
M$. Let $(\alpha_i,\beta_i)$ denote the Seifert invariant of $S^1
\cdot x_i$ for $i=1,\ldots, n$. The remaining invariant of the Seifert
fibration $\pi : M \to \Sigma$ is encoded by how the neighborhoods of the
exceptional orbits are `glued in'. To make this precise, for each
$i=1,\ldots, n$, let $\Gamma_i$ be the stabilizer of $x_i$ and let $\mathcal{V}_i \subset M$ be an $S^1$-invariant
open neighborhood of $[x]$ that is $S^1$-equivariantly
diffeomorphic to $(S^1 \times D^2)/\Gamma_i$. Moreover, we fix a point
$x_0 \in M$ with trivial stabilizer and let $\mathcal{V}_0 \subset M$
be an open neighborhood of $[x_0]$ in $M$. (The reason for
this extra neighborhood becomes clear in what follows.) Finally, we choose
$\mathcal{V}_0, \mathcal{V}_1,\ldots , \mathcal{V}_n$ sufficiently
small so that their closures are pairwise disjoint. For $i=0,\ldots, n$, we set $D_i:= \pi
(\mathcal{V}_i) \subset \Sigma$; by construction, these are homeomorphic to
open discs centered at the respective orbits that have closures that are pairwise
disjoint. By Remark \ref{rmk:Seifert_principal}, the complement $\Sigma
\smallsetminus\bigcup_{i=0}^n \overline{D}_i$ is a surface with vanishing
second cohomology. Moreover, the restriction
\begin{equation}
  \label{eq:12}
  \pi : M \smallsetminus \bigcup_{i=0}^n \overline{\mathcal{V}}_i \to \Sigma
  \smallsetminus\bigcup_{i=0}^n \overline{D}_i 
\end{equation}
is a principal $S^1$-bundle. Hence, it is a trivial principal
$S^1$-bundle. In particular, by Remark
\ref{rmk:neigh_exceptional_orbit}, in order to reconstruct the
manifold $M$ from the non-negative integers $g, n$ and the Seifert invariants
$(\alpha_1,\beta_1),\ldots, (\alpha_n,\beta_n)$, it suffices to
understand how the closed solid tori $\overline{\mathcal{V}}_0,
\ldots, \overline{\mathcal{V}}_n$ are glued to $M \smallsetminus
\bigcup_{i=0}^n \mathcal{V}_i$.

To understand the gluing maps, we start by observing that, since
\eqref{eq:12} is trivial, there exists a smooth section
$\sigma: \Sigma
\smallsetminus\bigcup_{i=0}^n \overline{D}_i \to 
M \smallsetminus \bigcup_{i=0}^n \overline{\mathcal{V}}_i$. Moreover, $\sigma$ extends to
the closure $\Sigma
\smallsetminus\bigcup_{i=0}^n D_i $ of $\Sigma
\smallsetminus\bigcup_{i=0}^n \overline{D}_i$; by a slight abuse of
notation, we denote the extension also by $\sigma$. We set $\partial_i
\sigma:=
\sigma(\partial \overline{D}_i)$ and let $b_i \in \partial
\overline{\mathcal{V}}_i$ be a principal orbit (see Remark
\ref{rmk:neigh_exceptional_orbit}), for $i=0,\ldots, n$. Since
$\sigma$ is a section, the homology classes of $\partial_i \sigma$ and $b_i$ form a basis of $H_1(\partial
\overline{\mathcal{V}}_i)$. A different basis of  $H_1(\partial
\overline{\mathcal{V}}_i)$ is given by the homology classes of a choice of meridian $a_i$ and
parallel $\ell_i$, which are the `standard' ones for
$\overline{\mathcal{V}}_0$ and the ones given by the images of $\{1\} \times S^1$ and $S^1
\times \{1\}$ under the identification of Remark
\ref{rmk:neigh_exceptional_orbit} for $i=1,\ldots, n$. (In what
follows we abuse notation and denote a curve and its homology class by
the same symbol.) Since $\partial
\overline{\mathcal{V}}_i \simeq S^1 \times S^1$, the isotopy
class of the gluing map of $\overline{\mathcal{V}}_i$ is determined by
the induced automorphism on $H_1(\partial\overline{\mathcal{V}}_i)$, which, in turn, is codified by how the basis
$a_i, \ell_i$ is related to the basis $\partial_i \sigma$ and
$b_i$. It can be shown (see \cite[Section I.3.c]{audin}), that there
exists $\beta_0 \in \Z$ such that the section $\sigma$ can be chosen to
satisfy\footnote{In some sense, the appearance of $\beta_0$ is so that
\eqref{eq:13} holds and can be thought of as a `balancing' constant. This justifies the removal of an extra solid
torus in the beginning of the discussion.}
\begin{equation}
  \label{eq:13}
  \begin{split}
    a_0 &= \partial \sigma_0 - \beta_0 b \\
    a_i& =\alpha_i\partial\sigma_i-\beta_ib_i\quad\text{for}\quad 1 \leq
    i\le n.
  \end{split}
\end{equation}
Moreover, together with the identification of Remark \ref{rmk:neigh_exceptional_orbit}, \eqref{eq:13} determines how $a_i, \ell_i$ is related to the basis $\partial_i \sigma$ and
$b_i$ for $i=0,\ldots, n$. 

\begin{definition}\label{defn:seifert_invariant}
  Let $\pi : M \to \Sigma$ be a Seifert fibration. Its {\bf Seifert
    invariant} is given by
  $$ (g;\beta_0, (\alpha_1,\beta_1),\dots,(\alpha_n,\beta_n)). $$
\end{definition}

\begin{remark}\label{rmk:seifert_invariant}
  If two Seifert fibrations have equal Seifert invariants, then the
  total spaces are diffeomorphic. Conversely, given any suitable
  abstract data $
  (g;\beta_0, (\alpha_1,\beta_1),\dots,(\alpha_n,\beta_n)) $, there
  exists a Seifert fibration with the above as its Seifert invariant
  (see \cite[Theorem I.3.7]{audin2}).
\end{remark}

\begin{remark}\label{rmk:seifert_base}
  If a Seifert fibration $\pi : M \to \Sigma$ has Seifert invariant $
  (g;\beta_0, (\alpha_1,\beta_1),\dots,(\alpha_n,\beta_n)) $, then the
  compact, connected orientable orbi-surface $\Sigma$ only has isolated
  singular points (see Remark \ref{rmk:Seifert_principal}). Hence, by
  Theorem \ref{thm::classorbisurface}, $\Sigma$
  is classified by $g$,
  the non-negative integer $n$ and the pairs  $(\alpha_1,\beta_1),\dots,(\alpha_n,\beta_n)$.
\end{remark}

Following \cite[Section 3]{bs}, we state a result that shows how the Seifert
invariant of a Seifert fibration $\pi : M \to \Sigma$ determines the Euler
class of $\pi : M \to \Sigma$. First, we need to define the latter
notion. Instead of using a conceptually clearer but more involved
approach (as in, e.g., \cite{Haefliger}), we follow a simpler, {\it ad
  hoc} path as
in \cite[Section I.3.d]{audin} or \cite[Section 3]{bs}. Let $\pi : M
\to \Sigma$ be a Seifert fibration with Seifert invariant $(g;\beta_0,
(\alpha_1,\beta_1),\dots,(\alpha_n,\beta_n))$. Let $\alpha =
\mathrm{lcm}(\alpha_1,\ldots, \alpha_n)$, where $\alpha :=1$ if
$n=0$. We set $M':= M/\Z_{\alpha}$, where $\Z_{\alpha}$ acts on $M$ via
the inclusion in $S^1$. By \cite[Proposition I.3.8]{audin}, the
quotient $M'$ can be endowed with a structure of smooth manifold such
that 
\begin{itemize}[leftmargin=*]
\item the quotient map $q: M \to M'$ is smooth, 
\item there is a free and proper smooth action of $S^1/\Z_{\alpha}$ on
  $M'$, and
\item the quotient map is equivariant with respect to the projection $S^1 \to
  S^1/\Z_{\alpha}$. 
\end{itemize}
In the presence of exceptional orbits, the quotient map $q$ fails to
be a covering; in fact, it is a {\em branched} covering along the
exceptional orbits. (It is what is known as an {\em orbifold covering
  map}, see Definition \ref{defn:orbifold_covering_map} below.) Together with the second bullet point above, this
implies that the base space of the principal $S^1$-bundle $\pi' : M'
\to M'/(S^1/\Z_{\alpha})$ can be identified
with $|\Sigma|$. Abusing notation, we denote $|\Sigma|$ with the resulting
manifold structure by $B$. If $e' \in H^2(|\Sigma|;\Z) \simeq \Z$ denotes the Euler
class of $\pi' : M' \to \Sigma$, then we define the {\bf Euler class} of
$\pi : M \to \Sigma$ to be $e:= e'/\alpha \in H^2(|\Sigma|;\Q) \simeq \Q$
(cf. \cite[Proposition I.3.9]{audin}). 

\begin{proposition}[Section 4 in \cite{bs}]\label{def::EulerOrbibundle}
  The Euler class $e$ of a Seifert fibration with Seifert invariant $ (g;\beta_0, (\alpha_1,\beta_1),\dots,(\alpha_n,\beta_n))$
  is given by
  \begin{equation}
    \label{eq:23}
    e=\beta_0+\sum_{i=1}^n\frac{\beta_i}{\alpha_i}.
  \end{equation}
\end{proposition}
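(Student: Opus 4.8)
The plan is to reduce the computation of the rational Euler class $e$ to the well-understood integral Euler class $e'$ of the principal $S^1$-bundle $\pi' : M' \to B$, where $M' = M/\Z_\alpha$ and $\alpha = \mathrm{lcm}(\alpha_1,\dots,\alpha_n)$, by tracking how the gluing data in \eqref{eq:13} transforms under the quotient by $\Z_\alpha$. First I would recall the classical formula for the Euler number of a principal $S^1$-bundle over a closed orientable surface in terms of a clutching/gluing presentation: if one removes small discs $\overline{D}_0,\dots,\overline{D}_n$, trivializes the bundle over the complement (which has trivial $H^2$) via a section $\sigma$, and glues back in solid tori $\overline{\mathcal V}_i$, then the Euler number is $-\sum_i d_i$, where $d_i$ is the integer recording the `twisting' of the gluing, i.e. the coefficient expressing the boundary curve $\partial_i\sigma$ in terms of the meridian--parallel basis $(a_i,\ell_i)$ of $H_1(\partial\overline{\mathcal V}_i)$. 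Concretely, \eqref{eq:13} together with the identification of Remark \ref{rmk:neigh_exceptional_orbit} pins down exactly this change of basis, so that in the manifold case ($n=0$, all $\alpha_i = 1$) one gets $e = \beta_0$, confirming the formula in that degenerate situation.

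Next I would carry out the passage to $M'$. The group $\Z_\alpha \subset S^1$ acts on $M$, and by \cite[Proposition I.3.8]{audin} the quotient $M'$ is a smooth manifold carrying a free $S^1/\Z_\alpha$-action whose base is $|\Sigma| = B$. The key local computation is the effect of this quotient near each exceptional orbit: an invariant neighbourhood $\overline{\mathcal V}_i \cong (S^1\times\overline D^2)/\Z_{\alpha_i}$ of the $i$-th exceptional orbit, with Seifert invariant $(\alpha_i,\beta_i)$, maps under $q$ to a solid torus in $M'$; I need to see how the section $\sigma$ and the curves $a_i,\ell_i,b_i$ descend, and in particular how the `$\alpha_i$' in the relation $a_i = \alpha_i\partial_i\sigma - \beta_i b_i$ interacts with the further quotient by $\Z_{\alpha/\alpha_i}$ (the part of $\Z_\alpha$ acting with quotient $\Z_{\alpha/\alpha_i}$ on that torus). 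The upshot should be that $M'$ admits a gluing presentation of the same shape but with integer data $\beta'_0, \beta'_1,\dots,\beta'_n$ satisfying $\beta'_i = \tfrac{\alpha}{\alpha_i}\beta_i$ for $i\ge 1$ and $\beta'_0 = \alpha\beta_0$, whence by the manifold formula $e' = -(\beta'_0 + \sum_{i=1}^n \beta'_i)$ up to sign conventions, equivalently $e' = \alpha\bigl(\beta_0 + \sum_{i=1}^n \tfrac{\beta_i}{\alpha_i}\bigr)$. Dividing by $\alpha$ as in the definition of $e$ (following \cite[Proposition I.3.9]{audin}) then yields \eqref{eq:23}.

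The main obstacle I anticipate is the bookkeeping in the local model around an exceptional orbit: getting the orientations and the direction of the `$\beta$'-twist right, and correctly identifying which subgroup of $\Z_\alpha$ acts how on the solid torus $(S^1\times\overline D^2)/\Z_{\alpha_i}$ so that the descended meridian--parallel basis is the standard one for $M'$. A clean way to organize this is to do everything in $H_1$ of the boundary tori, using \eqref{eq:13} as the definition of $\sigma$ and checking directly that pushing forward along $q$ multiplies the relevant intersection numbers by $\alpha/\alpha_i$ (because a principal orbit of $M'$ meets the meridian in $\alpha/\alpha_i$ points, by the analogue of Remark \ref{rmk:neigh_exceptional_orbit} applied to the branched cover $q$). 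Alternatively, and perhaps more robustly, one can bypass the explicit cut-and-paste by invoking additivity of the Euler class under connected sum along the discs $D_i$ and computing each local contribution via the standard $S^1$-bundle over $S^2$ obtained by doubling a single $\overline{\mathcal V}_i$; this localizes the problem to the already-established case of a Seifert fibration with a single exceptional orbit, for which \eqref{eq:23} reads $e = \beta_i/\alpha_i$ and can be checked by hand. Since the statement is attributed to \cite[Section 4]{bs}, I would in practice follow their argument, filling in the orientation conventions to match those fixed earlier in this section.
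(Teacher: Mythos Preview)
Your approach is correct in outline but takes a different route from the paper. You propose to pass to the smooth quotient $M' = M/\Z_\alpha$, track how the gluing data of \eqref{eq:13} transforms under $q$, compute the integer Euler number $e'$ of the honest $S^1$-bundle $M' \to B$ by the classical clutching formula, and then divide by $\alpha$. This follows the definition of $e$ literally, and the local bookkeeping you flag (how $\Z_{\alpha/\alpha_i}$ acts on each solid torus and what the descended meridian--parallel basis looks like) can indeed be carried out, though as you note it is fiddly.

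The paper bypasses the quotient entirely and works directly in $H_1(M;\Q)$. The key observation is that each meridian $a_i$ bounds a disc in the solid torus $\overline{\mathcal V}_i$, hence is null-homologous in $M$. Reading \eqref{eq:13} in $H_1(M;\Q)$ then gives $\partial_i\sigma = \tfrac{\beta_i}{\alpha_i} b_i$ for $i\ge 1$ and $\partial_0\sigma = \beta_0 b_0$. Since any two principal orbits are homologous, all $b_i$ represent the same class $b_0$, and summing yields $\partial\sigma = \bigl(\beta_0 + \sum_i \tfrac{\beta_i}{\alpha_i}\bigr) b_0$. The obstruction-theoretic description of $e$ from \cite[Section 3]{bs} then identifies this coefficient with $e$ directly. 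This argument is shorter and sidesteps precisely the local orientation and branching computations you anticipated as the main obstacle; on the other hand, your approach has the virtue of making the relation $e = e'/\alpha$ explicit at the level of gluing data rather than invoking it as a definition.
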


\begin{proof}[Sketch of proof]
  The idea is to calculate $e$ using standard obstruction theory as in
  the manifold case, using \cite[Section 3]{bs}. We fix the notation
  as in the derivation of the Seifert invariant of a Seifert
  fibration. Since all meridians are homologous to zero in $M$,
  \eqref{eq:13} implies that
  \begin{equation}
    \label{eq:14}
    \partial \sigma_0 = \beta_0 b_0 \text{ and } \partial \sigma_i =
    \frac{\beta_i}{\alpha_i}b_i \text{ for } 1 \leq i \leq n. 
  \end{equation}
  Any two principal orbits are homologous in $M$: To see this, first one can
  prove it for any two principal orbits that are contained in an
  $S^1$-invariant open neighborhood of a principal orbit and then use
  the fact that the open subset of principal orbits is path-connected
  in $M$. Hence, using \eqref{eq:14} and the fact that $\partial
  \sigma = \sum\limits_{i=0}^n \partial \sigma_i$, we obtain that
  $$ \partial \sigma = \left(\beta_0 + \sum\limits_{i=1}^n
    \frac{\beta_i}{\alpha_i}\right) b_0. $$
  Unraveling the obstruction-theoretic definition of $e$ given in
  \cite[Section 3]{bs}, the desired result follows.
\end{proof}

\begin{remark}\label{rmk:chern_class}
  Under the correspondence between complex line orbi-bundles and principal
  $S^1$-orbi-bundles over (compact, connected orientable)
  orbi-surfaces (see Section \ref{sec:definition-examples}), the Euler
  class of a Seifert fibration is equal to the {\bf first Chern class
    (or degree)} of the corresponding complex line orbi-bundle. (For
  our purposes, we take this to be a {\em definition} of the latter,
  which can be alternatively defined intrinsically using the ideas in \cite{Haefliger}.)
\end{remark}

\color{black}
\begin{remark}[Seifert orbifolds]\label{rmk:seifert-orbifolds}
  The above discussion of Seifert manifolds can be generalized to {\bf
    Seifert orbifolds} (see \cite[Sections 2 and
  3]{bs}): the former are 3-dimensional, compact connected oriented
  orbifolds that are the total spaces of principal $S^1$-orbi-bundles
  over compact, connected oriented orbi-surfaces that have only
  isolated singular points. Such a principal $S^1$-orbi-bundle $\pi :
  M \to \Sigma$ is called a {\bf Seifert orbi-fibration}. Associated
  to $\pi : M \to \Sigma$ is a Seifert invariant given as in
  Definition \ref{defn:seifert_invariant} and the Euler class of $\pi
  : M \to \Sigma$ can be obtained from the Seifert invariant by
  \eqref{eq:23}. 
\end{remark}

To conclude this section, we use Seifert manifolds to construct
certain four-dimensional orbifolds that play an important role in our
paper.

\begin{example}[From Seifert fibrations to projectivized plane bundles]\label{exm:projectivized}
  Let $\pi : M \to \Sigma$ be a Seifert fibration
  and let $\mathrm{pr} : L \to \Sigma$ be the corresponding complex
  line orbi-bundle. The {\bf projectivization} $\mathbb{P}(L \oplus
  \C) \to \Sigma$ is an orbifold that is the total space of an
  orbi-bundle with generic fiber $\C P^1$. To see this, we consider
  the manifold (!) $M \times \C P^1$ endowed with the following
  $S^1$-action:
  \begin{equation}
    \label{eq:16}
    \lambda \cdot (x,[z_0:z_1]) = (\lambda x, [z_0:\lambda z_1]). 
  \end{equation}
  Since the $S^1$-action on $M$ is locally free, so is this
  $S^1$-action on $M \times \C P^1$. Hence, the quotient $(M \times \C
  P^1)/S^1$ is an orbifold. In fact, since the exceptional orbits in
  $M$ are isolated, it follows that $(M \times \C
  P^1)/S^1$ has isolated singular points. The set of singular points
  is 
  \begin{equation*}
    \begin{split}
      & \{[x,[0:1]] \in (M \times \C
      P^1)/S^1 \mid [x] \text{ is singular in } \Sigma \} \\
      \cup & \{[x,[1:0]] \in (M \times \C
      P^1)/S^1 \mid [x] \text{ is singular in } \Sigma \}.
    \end{split}
  \end{equation*}
  Moreover, since the above $S^1$-action preserves the projection $M
  \times \C P^1 \to \Sigma$, there is a $C^{\infty}$ map $(M \times \C
  P^1)/S^1 \to \Sigma$ that can be checked to be an orbi-bundle with
  generic fiber $\C P^1$. It remains to show that the total space of
  this orbi-bundle can be identified with  $\mathbb{P}(L \oplus
  \C)$. To this end, we use the idea of \cite[Lemma VIII.2.4]{audin}:
  the map $(M \times \C
  P^1)/S^1 \to \mathbb{P}(L \oplus
  \C)$ that sends $[x,[z_0:z_1]]$ to $[xz_0:z_1]$ is the desired
  identification. \hfill$\Diamond$   
\end{example}

\subsubsection{Orbifold covering maps}\label{sec:orbif-cover-maps}
For our purposes, it is also useful to introduce orbi-bundles with
zero dimensional {\em disconnected} fibers. To this end, we follow
\cite[Section 1]{bs} and \cite[Section 13.2]{Thurston}.

\begin{definition}\label{defn:orbifold_covering_map}
  Let $M_1$ and $M_2$ be orbifolds. A surjective continuous map
  $\mathrm{pr}: |M_1| \to |M_2|$ is an {\bf orbifold covering map} if,
  for any $x \in M_2$, there exists an open neighborhood $U \subseteq
  M_2$ of $x$ and a l.u.c. $\oc$ uniformizing $U$ such that, for any
  connected component $V$ of $\mathrm{pr}^{-1}(U)$, there exist a
  subgroup $\Gamma'$ of $\Gamma$ and a
  l.u.c. $(\widehat{U},\Gamma', \psi)$ uniformizing $V$ with $\varphi
  = \mathrm{pr} \circ \psi$. We denote such a map by $\mathrm{pr}: M_1 \to M_2$.
\end{definition}

In other words, orbifold covering maps are locally modeled on taking
quotients of subgroups. Hence, in general, an orbifold covering
map fails to be a topological covering map, but it is one away from
singular points. Moreover, any orbifold covering map is automatically
a good $C^{\infty}$ map.

If $\mathrm{pr}: M_1 \to M_2$ is an orbifold covering map and the set
of singular points in $M_2$ has codimension at least two (as in the
case that is of most interest to us), then there exists $r \in
\Z_{\geq 0} \cup \{\infty\}$ such that, for any regular $x \in M_2$,
the cardinality of $\mathrm{pr}^{-1}(x)$ equals $r$. Following \cite[Section 1]{bs}, we call $r$ the
{\bf geometric degree} of $\mathrm{pr}: M_1 \to M_2$. 

\begin{example}[An orbifold covering map for the weighted projective line]\label{exm:cp1_covering_cp1pq}
  Let $p, q$ be positive integers that are coprime. We claim that the
  map
  \begin{equation}
    \label{eq:19}
    \begin{split}
      \mathrm{pr}: \C P^1 & \to \C P^1(p,q) \\
      [z_0:z_1] &\mapsto [z^p_0:z^q_1]
    \end{split}
  \end{equation}
  is an orbifold covering map of geometric degree $pq$. To see this,
  we observe that $\mathrm{pr}$ is induced by the smooth map
  \begin{equation}
    \label{eq:20}
    \begin{split}
      \widehat{\mathrm{pr}}: \C^2 \smallsetminus \{0\} & \to \C^2 \smallsetminus
      \{0\} \\
      (z_0,z_1) &\mapsto (z^p_0,z^q_1).
    \end{split}
  \end{equation}
  This map of \eqref{eq:20} is surjective; hence so is $\mathrm{pr}$. Moreover, it
  is a covering map when restricted to
  $(\C^*)^2$. Hence, the restriction of $\mathrm{pr}$ to the
  complement of the singular points in $\C P^1(p,q)$ is an (orbifold)
  covering map. Finally, by considering local charts near $[1:0]$ and
  $[0:1]$ in $\C P^1$ and the standard l.u.c.'s near $[1:0]$ and
  $[0:1]$ in $\C P^1(p,q)$, it can be checked that $\mathrm{pr} : \C P^1 \to \C P^1(p,q)$
  satisfies the condition of Definition
  \ref{defn:orbifold_covering_map} near the singular points of
  $\C
  P^1(p,q)$. Finally, the geometric degree of $\mathrm{pr} : \C P^1 \to \C
  P^1(p,q)$ is $pq$.
\end{example}

To conclude this section, we use Example \ref{exm:cp1_covering_cp1pq}
to compute the degree of the complex line
orbi-bundle $\mathcal{O}_{p,q}(k) \to \C P^1(p,q)$ (see Example
\ref{exm:weighted_O} and Remark \ref{rmk:chern_class}). While this result
is known (see \cite[Corollary 1.7]{furuta} and
\cite[Proposition 2.0.18]{MOY}), we provide a proof for completeness. 

\begin{lemma}\label{lemma:degree_Ok}
  Let $p,q$ be positive integers that are coprime and let $k$ be an
  integer. The degree of the complex line orbi-bundle
  $\mathcal{O}_{p,q}(k) \to \C P^1(p,q)$ equals $k/pq$. 
\end{lemma}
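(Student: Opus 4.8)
The plan is to pull the line orbi-bundle $\mathcal{O}_{p,q}(k)$ back along the orbifold covering map $\mathrm{pr}: \C P^1 \to \C P^1(p,q)$ of Example \ref{exm:cp1_covering_cp1pq} and reduce to a computation on the genuine manifold $\C P^1$. First I would recall that $\mathrm{pr}$ is induced by the smooth map $\widehat{\mathrm{pr}}: (z_0,z_1) \mapsto (z_0^p, z_1^q)$ of \eqref{eq:20}, which is $\Theta$-equivariant for the homomorphism $\Theta : \C^* \to \C^*$, $w \mapsto w^{pq}$ (since $(w^{pq})^p = (w^p)^{pq}$ and likewise in the second coordinate, up to the reparametrisation $z_i \mapsto z_i^{m_i}$ implicit in the weighted action). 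By the construction of $\mathcal{O}_{p,q}(k)$ in Example \ref{exm:weighted_O} as a $\C^*$-quotient of $(\C^2 \smallsetminus \{0\}) \times \C$ with the fibre acted on by $w \mapsto w^k$, the pullback bundle $\mathrm{pr}^* \mathcal{O}_{p,q}(k)$ over $\C P^1$ is the $\C^*$-quotient of $(\C^2 \smallsetminus \{0\}) \times \C$ where now the fibre is acted on through $\Theta$, i.e. by $w \mapsto (w^{pq})^k = w^{pqk}$; hence $\mathrm{pr}^* \mathcal{O}_{p,q}(k) \cong \mathcal{O}(pqk)$ as an honest line bundle over $\C P^1$, whose degree is $pqk$. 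This identification should be packaged via Lemma \ref{lemma:obvious_good_maps} (the map $\widehat{\mathrm{pr}}$ together with $\Theta$ induces a good $C^\infty$ map of the total spaces covering $\mathrm{pr}$), so that one genuinely has a morphism of orbi-bundles.

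The second ingredient is the relation between the degree of an orbi-bundle and the degree of its pullback under an orbifold covering map. I would argue that, because $\mathrm{pr}$ has geometric degree $pq$ and the singular set of $\C P^1(p,q)$ has codimension two, the Euler/Chern class satisfies $\deg(\mathrm{pr}^* \mathcal{O}_{p,q}(k)) = pq \cdot \deg(\mathcal{O}_{p,q}(k))$. The cleanest way to see this, given the {\it ad hoc} definition of degree adopted in Remark \ref{rmk:chern_class} via the Euler class of an associated Seifert fibration (Proposition \ref{def::EulerOrbibundle}), is: the unit circle orbi-bundle of $\mathcal{O}_{p,q}(k)$ is a Seifert fibration over $\C P^1(p,q)$ with Seifert invariant that one reads off from $k$ and the two exceptional orbits of orders $p$ and $q$; its pullback along $\mathrm{pr}$ is the unit circle bundle of $\mathcal{O}(pqk) \to \C P^1$, which is a genuine $S^1$-bundle of Euler number $pqk$. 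Comparing with \eqref{eq:23}, and using that passing to the $\Z_\alpha$-quotient (here essentially the $\C P^1$ to $\C P^1(p,q)$ covering absorbed into the $M \mapsto M' = M/\Z_\alpha$ construction with $\alpha = pq$) divides the Euler number by $\alpha$, one gets $\deg \mathcal{O}_{p,q}(k) = pqk / (pq)^2$? — no: one gets $\deg \mathcal{O}_{p,q}(k) = (pqk)/(pq) = k$? That is wrong against the claim, so the correct bookkeeping is that the geometric degree of $\mathrm{pr}$ is $pq$ but the ``branching'' at the two singular points of orders $p,q$ contributes an extra factor, giving $\deg(\mathrm{pr}^*\mathcal E) = pq \cdot \deg(\mathcal E)$ and hence $\deg \mathcal{O}_{p,q}(k) = (pqk)/(pq)^2 \cdot \text{(correction)} = k/pq$. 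I would therefore verify the normalisation carefully by a direct Seifert-invariant computation of $\mathcal{O}_{p,q}(k)$ rather than relying on a naive multiplicativity heuristic.

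Concretely, the safest route — and the one I would actually write up — bypasses the covering map subtlety: compute the Seifert invariant of the unit circle orbi-bundle $S(\mathcal{O}_{p,q}(k)) \to \C P^1(p,q)$ directly from the $S^1$-description in Example \ref{exm:weighted_O} (realising $\mathcal{O}_{p,q}(k)$ as an $S^1$-quotient of $E \times \C$, as the example indicates), identify the two exceptional orbits with the singular points $[1:0]$ and $[0:1]$ of orders $p$ and $q$, read off the local Seifert data $(\alpha_i,\beta_i)$ with $\alpha_1 = p$, $\alpha_2 = q$ (where $\beta_1, \beta_2$ are determined mod $p$, resp.\ mod $q$, by $k$ and the weights), pin down $\beta_0$ by the requirement $\gcd$-conditions of \eqref{eq:13}, and then apply Proposition \ref{def::EulerOrbibundle} to get $\deg = \beta_0 + \beta_1/p + \beta_2/q = k/pq$. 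The main obstacle is exactly this last bookkeeping: getting the residues $\beta_1 \bmod p$ and $\beta_2 \bmod q$ and the integer $\beta_0$ right so that the fractions sum to $k/pq$ — this is where the weights $(p,q)$ enter nontrivially and where a sign or an inversion mod $p$ (resp.\ mod $q$) is easy to get wrong. Once the Seifert invariant is correctly extracted, the conclusion is a one-line application of \eqref{eq:23}.
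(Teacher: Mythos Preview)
Your overall strategy---pull back $\mathcal{O}_{p,q}(k)$ along the orbifold cover $\mathrm{pr}:\C P^1\to\C P^1(p,q)$ and use multiplicativity of the Euler class---is exactly the paper's approach. The confusion you run into in the second paragraph stems from a single concrete error in the first: the equivariance homomorphism $\Theta$ is the \emph{identity}, not $w\mapsto w^{pq}$. Indeed, with the standard $\C^*$-action $w\cdot(z_0,z_1)=(wz_0,wz_1)$ on the source and the weighted action $w\cdot(u_0,u_1)=(w^pu_0,w^qu_1)$ on the target, one has
\[
\widehat{\mathrm{pr}}(wz_0,wz_1)=\bigl((wz_0)^p,(wz_1)^q\bigr)=(w^pz_0^p,w^qz_1^q)=w\cdot\widehat{\mathrm{pr}}(z_0,z_1).
\]
Extending to the fibre coordinate $v$ (left untouched by $\widehat{\mathrm{Pr}}$), the fibre action on the pullback is therefore still $v\mapsto w^kv$, so
\[
\mathrm{pr}^*\mathcal{O}_{p,q}(k)\;\cong\;\mathcal{O}(k),
\]
not $\mathcal{O}(pqk)$. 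This is precisely what the paper records via the explicit bundle map $\mathrm{Pr}:\mathcal{O}(k)\to\mathcal{O}_{p,q}(k)$, $[z_0,z_1,v]\mapsto[z_0^p,z_1^q,v]$.

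With this correction your multiplicativity heuristic becomes clean and correct: the covering formula of \cite[Section 3]{bs} (which is the rigorous statement of what you call ``$\deg(\mathrm{pr}^*\mathcal{E})=pq\cdot\deg(\mathcal{E})$'') gives
\[
k=\deg\mathcal{O}(k)=pq\cdot\deg\mathcal{O}_{p,q}(k),
\]
hence $\deg\mathcal{O}_{p,q}(k)=k/pq$. There is no branching correction to worry about---the covering formula for Seifert Euler numbers already accounts for the orbifold structure. Your fallback route via a direct Seifert-invariant computation would also work, but it is exactly the bookkeeping you rightly fear, and the paper avoids it.
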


\begin{proof}
  Let $\mathcal{O}(k) \to \C P^1$ denote the complex line bundle of
  degree $k$. We claim that the map
  \begin{equation}
    \label{eq:18}
    \begin{split}
      \mathrm{Pr}: \mathcal{O}(k) & \to \mathcal{O}_{p,q}(k) \\
      [z_0,z_1,v] &\mapsto [z^p_0,z^q_1,v]
    \end{split}
  \end{equation}
  is an orbifold covering map that makes the following diagram commute
  \begin{equation}\label{eq:21}
    \begin{tikzcd}
      \mathcal{O}(k) \arrow[r,"\mathrm{Pr}"] \ar{d} &
      \mathcal{O}_{p,q}(k)\arrow{d} \\
      \C P^1 \arrow[r,"\mathrm{pr}"] & \C P^1(p,q),
    \end{tikzcd}
  \end{equation}
  where $\mathrm{pr} : \C P^1 \to \C P^1(p,q)$ is the orbifold
  covering map of Example \ref{exm:cp1_covering_cp1pq}. This follows
  by the same argument as in Example \ref{exm:cp1_covering_cp1pq} once
  we observe that $\mathrm{Pr}$ is induced by the smooth map
  \begin{equation*}
    \begin{split}
      \widehat{\mathrm{Pr}}: (\C^2 \smallsetminus \{0\}) \times \C & \to (\C^2 \smallsetminus \{0\}) \times \C \\
      (z_0,z_1,v) &\mapsto (z^p_0,z^q_1,v),
    \end{split}
  \end{equation*}
  which makes the following diagram commute
  \begin{equation*}
    \begin{tikzcd}
      (\C^2 \smallsetminus \{0\}) \times \C \arrow[r,"\widehat{\mathrm{Pr}}"] \ar{d} &
      (\C^2 \smallsetminus \{0\}) \times \C\arrow{d} \\
      \C^2 \smallsetminus \{0\} \arrow[r,"\widehat{\mathrm{pr}}"] & \C^2 \smallsetminus \{0\},
    \end{tikzcd}
  \end{equation*}
  where $\widehat{\mathrm{pr}} : \C^2 \smallsetminus \{0\} \to \C^2
  \smallsetminus \{0\}$ is given by \eqref{eq:20}. Moreover, this
  argument also implies that $\mathrm{pr}^*(\mathcal{O}_{p,q}(k))
  \simeq \mathcal{O}_k$.

  Let $S(\mathcal{O}_k) \to \C P^1$ and
  $S(\mathcal{O}_{p,q}(k)) \to \C P^1(p,q)$ denote the principal
  $S^1$-orbi-bundles associated to $\mathcal{O}(k) \to \C P^1$ and
  $\mathcal{O}_{p,q}(k) \to \C P^1(p,q)$ respectively. Then
  $\mathrm{Pr} : \mathcal{O}(k) \to \mathcal{O}_{p,q}(k)$ induces an
  orbifold covering map $S(\mathcal{O}_k) \to S(\mathcal{O}_{p,q}(k))$
  that we also denote by $\mathrm{Pr}$ by a slight abuse of
  notation. By construction, the analog of the commutative diagram
  \eqref{eq:21} holds replacing the top row with $\mathrm{Pr}:
  S(\mathcal{O}_k) \to S(\mathcal{O}_{p,q}(k))$, i.e., $\mathrm{Pr}$
  sends fibers to fibers. Moreover, since $\mathrm{pr}^*(\mathcal{O}_{p,q}(k))
  \simeq \mathcal{O}_k$, the preimage of a fiber of
  $S(\mathcal{O}_{p,q}(k)) \to \C P^1(p,q)$ consisting of regular
  points is the disjoint union of $pq$ fibers of $S(\mathcal{O}_k) \to
  \C P^1$. Since $\mathrm{pr} : \C P^1 \to \C P^1(p,q)$ has geometric
  degree $pq$ and since $S(\mathcal{O}_k) \to \C P^1$ has Euler class
  equal to $k$, by the covering formula of \cite[Section 3]{bs}, the Euler class of $S(\mathcal{O}_{p,q}(k)) \to \C
  P^1(p,q)$ equals $k/pq$. The result follows from Remark \ref{rmk:chern_class}.
\end{proof}

Combining Lemma \ref{def::EulerOrbibundle}, Remark
\ref{rmk:seifert-orbifolds} and Lemma \ref{lemma:degree_Ok}, we obtain
the following result immediately.

\begin{corollary}\label{cor:degree_seifert_cp1pq}
  Let $p,q$ be positive integers that are coprime and let $k$ be an
  integer. Let $S(\mathcal{O}_{p,q}(k)) \to \C P^1(p,q)$ denote the principal
  $S^1$-orbi-bundle associated to $\mathcal{O}_{p,q}(k) \to \C
  P^1(p,q)$ and let $(0;b_0, (p,l_p),(q,l_q))$ be its Seifert
  invariant. Then
  \begin{equation}
    \label{eq:24}
    \frac{k}{pq} = b_0 + \frac{l_p}{p} + \frac{l_q}{q}.
  \end{equation}
\end{corollary}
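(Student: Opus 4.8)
The plan is to compute the Euler class of the Seifert orbi-fibration $S(\mathcal{O}_{p,q}(k)) \to \C P^1(p,q)$ in two different ways and equate the results; both computations are essentially immediate from material already established above. Note first that $\C P^1(p,q)$ is a compact, connected, orientable orbi-surface with isolated singular points, so by Remark \ref{rmk:seifert-orbifolds} the principal $S^1$-orbi-bundle $S(\mathcal{O}_{p,q}(k)) \to \C P^1(p,q)$ is a Seifert orbi-fibration and carries a well-defined Euler class, which is obtained from its Seifert invariant $(0; b_0, (p, l_p), (q, l_q))$ by the formula \eqref{eq:23}. Explicitly, this Euler class equals
\[
b_0 + \frac{l_p}{p} + \frac{l_q}{q}.
\]

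On the other hand, under the correspondence between complex line orbi-bundles and principal $S^1$-orbi-bundles over compact, connected, orientable orbi-surfaces, the Euler class of a Seifert orbi-fibration coincides with the degree of the associated complex line orbi-bundle (Remark \ref{rmk:chern_class}, valid in the Seifert orbifold setting by Remark \ref{rmk:seifert-orbifolds}). Hence the Euler class of $S(\mathcal{O}_{p,q}(k)) \to \C P^1(p,q)$ is also the degree of $\mathcal{O}_{p,q}(k) \to \C P^1(p,q)$, and by Lemma \ref{lemma:degree_Ok} the latter equals $k/pq$. Equating the two expressions for the Euler class then yields \eqref{eq:24}.

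There is no substantive obstacle here: the argument is purely a matter of collecting the identifications already made. The only point deserving (minor) care is that Proposition \ref{def::EulerOrbibundle} and Remark \ref{rmk:chern_class} are phrased for Seifert \emph{manifolds} and fibrations over orbi-surfaces, whereas the total space $S(\mathcal{O}_{p,q}(k))$ may itself be an orbifold rather than a manifold. Invoking Remark \ref{rmk:seifert-orbifolds}, which states that the whole discussion of Seifert fibrations — including the Euler class formula \eqref{eq:23} and the correspondence with degrees of line orbi-bundles — carries over verbatim to Seifert orbi-fibrations, removes this subtlety and makes the proof immediate.
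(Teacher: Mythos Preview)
Your proof is correct and matches the paper's approach exactly: the paper simply states that the result follows immediately by combining Proposition~\ref{def::EulerOrbibundle}, Remark~\ref{rmk:seifert-orbifolds}, and Lemma~\ref{lemma:degree_Ok}, which is precisely the combination you spell out. Your explicit mention of the manifold-versus-orbifold subtlety and its resolution via Remark~\ref{rmk:seifert-orbifolds} is exactly the reason the paper cites that remark.
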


\subsection{Suborbifolds}\label{sec:suborbifolds}
In this section, we follow \cite{borz_brun,mestre_weilandt,weilandt}. 

\begin{definition}\label{defn:suborbifold_atlas}
  Let $|M|$ be a topological space, let $\mathcal{A} =
  \{(\widehat{U}_{\alpha},\Gamma_{\alpha},\varphi_{\alpha})\}_{\alpha \in A}$ be
  an orbifold atlas on $|M|$, and let $|N| \subseteq |M|$ be a subset
  endowed with the subspace topology. A {\bf suborbifold atlas} on
  $|N|$ is a subset $B \subseteq A$ together with a family of
  connected embedded submanifolds $\{\widehat{V}_{\beta} \subseteq
  \widehat{U}_{\beta}\}_{\beta \in B}$ such that
  \begin{enumerate}[label=(\arabic*),ref=(\arabic*),leftmargin=*]
  \item \label{item:14} for all $\beta \in B$,
    $\varphi_{\beta}(\widehat{V}_{\beta})$ is a non-empty open subset
    of $|N|$,
  \item \label{item:16} $\bigcup\limits_{\beta \in B}
    \varphi_{\beta}(\widehat{V}_{\beta}) = |N|$, and 
  \item \label{item:15} for all $\beta \in B$, there exists a subgroup
    $\Delta_{\beta}$ of $\Gamma_{\beta}$ so that
    \begin{itemize}[leftmargin=*]
    \item $\widehat{V}_{\beta}$ is $\Delta_{\beta}$-invariant, and
    \item if $\gamma \in \Gamma_\beta$ and $\widehat{x} \in
      \widehat{V}_{\beta}$ are such that $\gamma \cdot \widehat{x} \in
      \widehat{V}_{\beta}$, then there exists $\delta \in
      \Delta_{\beta}$ with $\gamma \cdot \widehat{x} = \delta \cdot
      \widehat{x}$. 
    \end{itemize}
  \end{enumerate}
  If, in addition, $\Delta_{\beta} = \Gamma_{\beta}$ for all $\beta
  \in B$, the suborbifold atlas is said to be {\bf full}. 
\end{definition}

As proved in \cite[Proposition 3.3]{weilandt}, if $M$ is an orbifold
and a subset $|N| \subseteq |M|$ has a suborbifold atlas, then $|N|$
inherits the structure of an orbifold, which we denote by
$N$. Moreover, the notion of full suborbifold atlas corresponds to
that of suborbifold atlas introduced in \cite[Definition
13.2.7]{Thurston}.

\begin{definition}\label{defn:suborbifold}
  Let $M$ be an orbifold. A {\bf (full) suborbifold} of $M$ is a subset $|N|$
  that admits a (full) suborbifold atlas.
\end{definition}

A full orbifold is also known as a {\em strong} suborbifold in the
literature (see,
e.g. \cite[Part (b) of Definition 2.6]{torusOrbifolds}).

\begin{remark}\label{rmk:full_suborbifolds_normal}
  If $N$ is a full suborbifold of $M$, the tangent orbi-bundle to $N$ can
  be seen as an orbi-subbundle of $TM$ and so the {\bf normal
    orbi-bundle} $\nu_N$ to $N$
  in $M$ may be defined as usual (see the discussion following
  \cite[Definition 2.6]{torusOrbifolds}).
\end{remark}

The proof of the existence of a tubular neighborhood for submanifolds
works {\em mutatis mutandis} for full suborbifolds. Hence, the
following result holds.

\begin{theorem}[Tubular neighborhood theorem for full suborbifolds]\label{thm:tub_neigh_orb}
  Let $M$ be an orbifold and let $N$ be a full suborbifold. Then there
  exist an open neighborhood $U \subseteq M$ of $N$, an open
  neighborhood $V \subseteq \nu_N$ of the zero section and a
  diffeomorphism $\varphi : U \to V$ that equals the zero section when
  restricted to $N$.
\end{theorem}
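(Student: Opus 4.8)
The plan is to mimic the classical proof of the tubular neighborhood theorem, working entirely in local uniformizing charts and then gluing, using the fact that the full suborbifold condition guarantees that all the local data is compatible with the full orbifold structure groups. First I would fix a Riemannian metric on $M$ — this makes sense by Remark \ref{rmk:section}, since a metric is a $C^\infty$ section of the appropriate symmetric tensor orbi-bundle, and one can build it from a partition of unity subordinate to an orbifold atlas (averaging locally over the finite group $\Gamma$ in each l.u.c. to make the local metrics $\Gamma$-invariant). With respect to this metric one gets the orthogonal splitting $TM|_N = TN \oplus \nu_N$ from Remark \ref{rmk:full_suborbifolds_normal}, and the exponential map of the metric is defined on an open neighborhood of the zero section of $TM$; restricting to $\nu_N$ gives a candidate map $\exp : V \to M$ defined on a neighborhood $V \subseteq \nu_N$ of the zero section, which restricts to the identity (i.e. the zero section) on $N$.

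Next I would verify that $\exp$ restricted to $\nu_N$ is a local diffeomorphism along the zero section. In a l.u.c. $(\widehat U, \Gamma, \varphi)$ of the orbifold atlas defining the suborbifold structure, the set $N$ is uniformized by $(\widehat V, \Gamma, \varphi|_{\widehat V})$ with $\widehat V \subseteq \widehat U$ a $\Gamma$-invariant embedded submanifold (here I use fullness: $\Delta_\beta = \Gamma_\beta$). The orbi-metric lifts to a $\Gamma$-invariant metric $\widehat g$ on $\widehat U$, the normal bundle $\nu_{\widehat V}$ is a $\Gamma$-equivariant vector bundle, and $\widehat{\exp} : \nu_{\widehat V} \to \widehat U$ is the usual (equivariant) exponential map. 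The classical computation shows $d_{(\widehat x,0)}\widehat{\exp}$ is an isomorphism at every point $\widehat x$ of the zero section, so by the inverse function theorem $\widehat{\exp}$ is a $\Gamma$-equivariant diffeomorphism from a $\Gamma$-invariant neighborhood $\widehat V'$ of the zero section in $\nu_{\widehat V}$ onto a $\Gamma$-invariant open neighborhood $\widehat U'$ of $\widehat V$ in $\widehat U$ (shrinking and averaging over $\Gamma$ to get invariance). These local diffeomorphisms descend to diffeomorphisms of l.u.c.'s on the orbifold level, and since $\widehat{\exp}$ is canonically associated to $\widehat g$ and the embedding, the local pieces agree on overlaps — this is where the compatibility conditions of Definition \ref{defn:orbifold} and the induced-l.u.c. discussion of Remark \ref{rmk:induced_luc} do their work. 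Patching yields $V \subseteq \nu_N$ and $U \subseteq M$ and a diffeomorphism $\varphi : V \to U$ in the sense of Definition \ref{defn:good_map}, with $\varphi|_N$ the zero section; composing with the obvious identification reverses the direction to get $\varphi : U \to V$ as stated.

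The main obstacle is not any single deep step but the bookkeeping of equivariance and gluing: one must be careful that the Riemannian metric, the orthogonal complement $\nu_N$, and the exponential map are all constructed in a way that is simultaneously $\Gamma$-equivariant in each chart and compatible under the embeddings between charts, so that the resulting map is a genuine good $C^\infty$ map of orbifolds rather than merely a map of underlying spaces. The fullness hypothesis is exactly what makes this painless — without it the local group acting on $\widehat V$ would be a proper subgroup $\Delta_\beta \subsetneq \Gamma_\beta$ and the normal orbi-bundle would not be well-defined — so once $\nu_N$ is in hand (Remark \ref{rmk:full_suborbifolds_normal}) the argument really is the manifold proof carried out $\Gamma$-equivariantly chart by chart, as the statement preceding the theorem asserts.
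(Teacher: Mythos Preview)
Your proposal is correct and is exactly what the paper has in mind: the paper gives no detailed proof, stating only that ``the proof of the existence of a tubular neighborhood for submanifolds works \emph{mutatis mutandis} for full suborbifolds,'' and your argument via an orbi-Riemannian metric, the $\Gamma$-equivariant exponential map in each l.u.c., and gluing is precisely the classical proof carried out with those modifications.
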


\begin{example}[Weighted projective spaces, continued]\label{exm:sub_weighted_proj_spaces}
  Fix an integer $n \geq 1$, positive
  integers $m_0,\dots, m_{n-1}$ satisfying
  $\gcd(m_0,\dots,m_{n-1})=1$, and a positive integer $m_n$. As in the
  case of manifolds, the
  weighted projective space $\C P^{n-1}(m_0,\ldots, m_{n-1})$ can be
  viewed as a (full) suborbifold of $\C P^n(m_0,\ldots,m_{n-1},m_n)$ as
  follows. We observe that the inclusion $\C^n \smallsetminus \{0\}
  \hookrightarrow \C^{n+1} \smallsetminus \{0\}$ given by

  $$(z_0,\ldots, z_{n-1}) \mapsto (z_0,\ldots, z_{n-1},0) $$
  is $\C^*$-equivariant under the actions on $\C^n
  \smallsetminus \{0\}$ and $\C^{n+1}
  \smallsetminus \{0\}$ defined in Example
  \ref{exm:weighted_proj_standard}. Hence, there is an induced map
  $\C P^{n-1}(m_0,\ldots, m_{n-1}) \to \C P^{n}(m_0,\ldots, m_{n-1},
  m_n)$ that is, in fact, a topological embedding; the image of this
  map is precisely
  \begin{equation}
    \label{eq:15}
     \{[z_0:\ldots: z_{n-1}:z_n] \in \C P^{n}(m_0,\ldots, m_{n-1},
     m_n) \mid z_n = 0\}.
  \end{equation}
  It can be checked, either directly, or using \cite[Theorem
  1]{borz_brun}, that the subset \eqref{eq:15} is a (full) suborbifold of $ \C P^{n}(m_0,\ldots, m_{n-1},
  m_n)$ that is diffeomorphic to $\C
  P^{n-1}(m_0,\ldots, m_{n-1})$. The normal orbi-bundle to $\C
  P^{n-1}(m_0,\ldots, m_{n-1})$ in $ \C P^{n}(m_0,\ldots, m_{n-1},
  m_n)$ can be identified with the complement of $[0:\ldots:0:1]$ in $ \C P^{n}(m_0,\ldots, m_{n-1},
  m_n)$ and is isomorphic to $\mathcal{O}_{m_0,\dots,m_{n-1}}(m_n)$. \hfill$\Diamond$   
\end{example}

\begin{remark}\label{ex::weightedprojSeifertInvariants}
  Let $p,q,k$ be pairwise coprime positive integers. We can use Example \ref{exm:sub_weighted_proj_spaces} to calculate the
  Seifert invariant of $S(\mathcal{O}_{p,q}(k)) \to \C P^1(p,q)$,
  since this is the principal $S^1$-orbi-bundle associated to the
  normal bundle to $\C P^1(p,q)$ in $\C P^2(p,q,k)$. We know that the
  Seifert invariant is of the form $(0;b_0, (p,l_p),(q,l_q))$.

  First we determine $l_p$ and $l_q$. Since $p,q$ are coprime, by
  B\'{e}zout's lemma, there exist unique integers $n_1,n_2$ with
  $|n_1|<q$ and $|n_2|<p$ such that
  \begin{equation}
    \label{eq:25}
    n_1p+n_2q=1.
  \end{equation}
  Moreover, since
  $p,q>0$, $n_1 n_2 < 0$. Without loss of generality we may assume
  that $n_1<0$. If we set $n_1^\prime:=-n_1$, $\beta:=n_2k$ and
  $\alpha:=n_1^\prime k$, then, by \eqref{eq:25},
  \begin{align}\label{ex_seifertinv}
    q\beta-p\alpha=k.
  \end{align}
  If we view $\C P^2(p,q,k)$ as $(\C^3 \smallsetminus \{0\})/\C^*$ as
  in Example \ref{exm:weighted_proj_standard}, then $l_p$ (respectively $l_q$)
  is obtained by understanding the action of $\Z_p$ (respectively
  $\Z_q$) near the point $(1,0,0)$ (respectively $(0,1,0)$). If
  $\lambda \in \Z_p$ and $\mu \in \Z_q$, by \eqref{ex_seifertinv}, 
  \begin{equation*}
    \lambda \cdot (z_0,z_1,z_2) = (z_0, \lambda^q z_1, \lambda^{q\beta}
    z_2) \text{ and } \mu \cdot (z_0,z_1,z_2) = (\mu^p  z_0, z_1,\mu^{-p\alpha}z_2)
  \end{equation*}
  for any $(z_0,z_1,z_2) \in \C^3 \smallsetminus \{0\}$. Let
  \begin{equation}
    \label{eq:32}
    \beta^\prime\equiv\beta\mod p \quad \text{and} \quad -\alpha^\prime\equiv-\alpha\mod q
  \end{equation}
  be such that $0<\beta^\prime<p$ and $0<q-\alpha^\prime<q$ respectively. 
  Since the two  structure groups act with exponent $\beta^\prime$ and
  $q-\alpha^\prime$ in the fiber direction of the normal bundle (given
  by the $z_2$-coordinate), $l_p = \beta'$ and $l_q =
  q-\alpha^{\prime}$.

  It remains to calculate $b_0$. To this end, by definition of
  $\alpha'$ and $\beta'$ and by
  \eqref{ex_seifertinv}, there exists an integer $N$ such that 
  \begin{equation}\label{eq:degree}
    \beta^\prime q-\alpha^\prime p=k-Npq.
  \end{equation} 
  By Corollary \ref{cor:degree_seifert_cp1pq},
  $$ \frac{k}{pq} = b_0
  +\frac{\beta^\prime}{p}+\frac{q-\alpha^\prime}{q}.$$
  Hence, by \eqref{eq:degree}, $b_0=N-1$, so that the Seifert
  invariant is
  $$(0;N-1, (p,\beta'),(q,q-\alpha^{\prime})), $$
  where
  $\alpha^{\prime}, \beta'$ are defined in \eqref{eq:32}.
\end{remark}

If a complex line bundle $L\rightarrow \mathbb{C}P^1/\mathbb{Z}_m$  is
trivial, then the Seifert invariant of the corresponding  Seifert
fibration satisfies the following a useful condition.
\begin{lemma}\label{lem::deg_zero}
  Let $L\rightarrow \mathbb{C}P^1/\mathbb{Z}_m$ be a complex line
  orbi-bundle whose associated circle orbi-bundle has Seifert invariant $(0;b,(m,l),(m,\hat{l}))$. If the degree of $L$ is zero then $l+\hat{l}=m$.
\end{lemma}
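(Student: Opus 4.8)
The plan is to reduce the statement to the Euler-class formula for Seifert fibrations together with the normalization built into the Seifert invariant. By Remark \ref{rmk:chern_class}, the degree of $L$ equals the Euler class $e$ of the associated principal $S^1$-orbi-bundle $S(L) \to \mathbb{C}P^1/\mathbb{Z}_m$; by Proposition \ref{def::EulerOrbibundle}, applied via Remark \ref{rmk:seifert-orbifolds} to the Seifert orbi-fibration $S(L) \to \mathbb{C}P^1/\mathbb{Z}_m$ (whose base is a compact, connected, orientable orbi-surface with isolated singular points, by Theorem \ref{thm::classorbisurface}), we have
$$ e = b + \frac{l}{m} + \frac{\hat{l}}{m}. $$

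First I would set $e = 0$ and clear denominators, obtaining $bm + l + \hat{l} = 0$, hence $l + \hat{l} = -bm$; in particular, $m$ divides $l + \hat{l}$.

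Next I would invoke the normalization of the Seifert invariant. Recall from the discussion preceding Definition \ref{defn:seifert_invariant} that for an exceptional orbit with Seifert invariant $(\alpha_i, \beta_i)$ one chooses $0 < \beta_i < \alpha_i$. Since $\mathbb{Z}_m$ (with $m > 1$, by the conventions of the paper) is the orbifold structure group of each of the two singular points of $\mathbb{C}P^1/\mathbb{Z}_m$, the two orbits of $S(L)$ lying over them are exceptional, and the corresponding Seifert invariants $(m, l)$ and $(m, \hat{l})$ satisfy $0 < l < m$ and $0 < \hat{l} < m$. Therefore $0 < l + \hat{l} < 2m$.

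Finally, combining $m \mid (l + \hat{l})$ with $0 < l + \hat{l} < 2m$ forces $l + \hat{l} = m$, which is the claim. The only point that needs care — and the closest thing to an obstacle here — is ensuring that the Euler-class formula is being applied in the orbifold (Seifert orbi-fibration) setting with the correct normalization conventions for the Seifert invariant; once these are pinned down, the argument is a one-line divisibility observation.
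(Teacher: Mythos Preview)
Your proof is correct and follows essentially the same approach as the paper: both apply the Euler-class formula from Proposition~\ref{def::EulerOrbibundle} to obtain $l+\hat{l}=-bm$, and then use the normalization $1\le l,\hat{l}<m$ to force $l+\hat{l}=m$. The paper phrases the final step as ruling out $\alpha\ge 2$ in $l+\hat{l}=\alpha m$, while you phrase it as a divisibility-plus-bound argument, but these are the same observation.
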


\begin{proof} If the degree of $L$ is zero, then, by Proposition~\ref{def::EulerOrbibundle}, we have
$$0 = \deg(L)=b+\frac{l+\hat{l}}{m},$$ 
and so $l+\hat{l}=\alpha\cdot m$ for some positive integer $\alpha$ since $1\leq l,\hat{l}<m$.  If  $\alpha\geq 2$, then we have 
$$l = \alpha\cdot m-\hat{l} \geq 2 m -\hat{l}> m $$ 
which is impossible. Hence, $\alpha=1$ and the result follows.
\end{proof}

\begin{example}[Projectivized plane bundles, continued]\label{exm:projectivized_suborbifolds}
  Let $\Sigma$ be a compact, connected orientable orbi-surface and let
  $\mathrm{pr} : L \to \Sigma$ be a complex line orbi-bundle. As in
  the case of manifolds, the
  projectivization $\mathbb{P}(L \oplus \C) \to \Sigma$ constructed in
  Example \ref{exm:projectivized} admits two `standard' $C^{\infty}$
  sections, namely $x \mapsto [0:1]$ and $x \mapsto [1:0]$, where the
  former lifts to a section of the rank-two complex orbi-bundle $L \oplus
  \C \to \Sigma$ and the latter does not. The images of these two
  sections are full suborbifolds of $\mathbb{P}(L \oplus \C)$ that are
  diffeomorphic to $\Sigma$. \hfill$\Diamond$   
\end{example}
\subsection{Group actions on orbifolds}\label{sec:group-acti-orbif}

\subsubsection{Definition and examples}\label{sec:defintion-examples} 
\begin{definition}
  Let $G$ be a Lie group and let $M$ be an orbifold. A \textbf{(smooth)
    action} of $G$ on $M$ is a continuous action of $G$ on $|M|$ such
  that the map
  \begin{equation*}
    \begin{split}
      \phi:G\times M &\rightarrow M \\
      (g,x) &\mapsto g \cdot x
    \end{split}
  \end{equation*}
  is a good $C^{\infty}$ map.
\end{definition}

\begin{remark}\label{rmk:infinitesimal_action}
  As in the case of manifolds, given a $G$-action on an orbifold $M$,
  any element $\xi$ of the Lie algebra $\mathfrak{g}$ of $G$ gives
  rise to a vector field $\xi^M $ on $M$ via the usual formula:
  $$ \xi^M(x) := \frac{d}{d t}\bigg\rvert_{t = 0} \phi(\exp(t\xi),x)
  \quad \text{for any } x \in M, $$
  where $\exp : \mathfrak{g} \to G$ denotes the exponential map.
\end{remark}

\begin{example}[Orbi-vector spaces, continued]\label{exm:linear_actions_orbi-vector_space}
  Let $\widehat{V}/\Gamma$ be an orbi-vector space. We define
  $\mathrm{GL}(\widehat{V}/\Gamma): = N(\Gamma)/\Gamma$, where $N(\Gamma) \leq
  \mathrm{GL}(\widehat{V})$ is the normalizer of $\Gamma$ in
  $\mathrm{GL}(\widehat{V})$. A {\bf representation} of a group $G$ on $\widehat{V}/\Gamma$ is a homomorphism $\rho : G \to
  \mathrm{GL}(\widehat{V}/\Gamma)$. If $G$ is a Lie group, a smooth action of
  $G$ on $\widehat{V}/\Gamma$ is {\bf linear} if there exists a representation
  $\rho$ of $G$ on $\widehat{V}/\Gamma$ such that $\phi(g,x) = \rho(g)(x)$ for
  all $g \in G$ and all $x \in \widehat{V}/\Gamma$. In analogy with \cite[Lemma
  3.1]{LermanTolman}, given a linear action of $G$ on $\widehat{V}/\Gamma$,
  there exist a Lie group $\widehat{G}$, a representation $\rho:
  \widehat{G} \to N(\Gamma)$ and a short exact sequence of Lie groups
  \begin{equation*}
    \begin{tikzcd}
      1\arrow{r} & \Gamma\arrow[r,hook,"\iota"]& \widehat{G}\arrow[r,"\pi"] & G\arrow{r} & 1,
    \end{tikzcd}
  \end{equation*}
  such that the following diagram commutes
  \begin{equation*}
    \begin{tikzcd}
      1\arrow{r} & \Gamma\arrow[r,hook,"\iota"] \arrow[d,equal]&
      \widehat{G}\arrow[r,"\pi"] \arrow[d,"\widehat{\rho}"]
      & G\arrow{r} \arrow[d,"\rho"] & 1 \\
      1\arrow{r} & \Gamma\arrow[r,hook,"\iota"]& N(\Gamma)\arrow{r}
      & \mathrm{GL}(V/\Gamma) \arrow{r} & 1.
    \end{tikzcd}
  \end{equation*}
  Moreover, $\rho$ is faithful if and only if $\widehat{\rho}$ is
  faithful. \hfill$\Diamond$   
\end{example}

Many of the smooth actions that we consider in this paper arise from
the following construction: Let $H$ be a Lie group that acts properly
and in a locally free fashion on the manifold $\widehat{M}$. Let $G$
be a Lie group that acts on $\widehat{M}$ so that the $G$- and
$H$-actions commute. The action of $g \in G$ on $\widehat{x} \in
\widehat{M}$ is denoted by $g\cdot x$. Then there is a continuous
$G$-action on $|\widehat{M}/H|$ given by
\begin{equation*}
  \begin{split}
    \phi: G \times |\widehat{M}/H| & \to |\widehat{M}/H| \\
    (g, [\widehat{x}]) & \mapsto [g \cdot \widehat{x}].
  \end{split}
\end{equation*}
By Lemma \ref{lemma:obvious_good_maps}, $\phi$ is a good $C^{\infty}$
map. We stress that the $G$-action on $\widehat{M}/H$ need not be
effective even if the $G$-action on $\widehat{M}$ is. The following
examples illustrate the above construction.
      
\begin{example}[Weighted projective spaces, continued]\label{exm:circle_action_weighted}
  Fix an integer $n \geq 1$, positive
  integers $m_0,\dots, m_{n}$ satisfying
  $\gcd(m_0,\dots,m_{n})=1$, and integers $a_0,\ldots, a_n$. The map
  \begin{equation*}
    \begin{split}
      \phi: S^1 \times \C P(m_0,\ldots, m_n) &\to \C P(m_0,\ldots,
      m_n) \\
      (\lambda, [z_0:\ldots:z_n]) &\mapsto [\lambda^{a_0}z_0:\ldots: \lambda^{a_n}z_n]
    \end{split}
  \end{equation*}
  is an $S^1$-action on $\C P(m_0,\ldots, m_n)$. \hfill$\Diamond$   
\end{example}

\begin{example}[Example \ref{tswEx_1}, continued]\label{exm:circle_action_Talvacchia}
  We let $M =  (\C P^1 \times \C P^1)/\Z_4$ be the orbifold
  constructed in Example \ref{tswEx_1}. The map
  \begin{equation*}
    \begin{split}
      \phi: S^1 \times M &\to M \\
      \left(\lambda,\left[[z_0:z_1],[w_0:w_1]\right]\right)&\mapsto
      \left[[\lambda^{1/2}z_0:z_1],[ \lambda^{1/2} w_0:w_1]\right]
    \end{split}
  \end{equation*}
  is an $S^1$-action on $M$. We observe that the expression involving
  square roots of elements of $S^1$ are well-defined because of the
  definition of the $\Z_4$-action on $\C P^1 \times \C P^1$. \hfill$\Diamond$   
\end{example}

\begin{example}[Example \ref{ex:quotient_cp1xcp1_finite}, continued]\label{exm:circle_action_cp1cp1}
  Given an integer $c > 1$, let $M_c =  (\C P^1 \times \C P^1)/\Z_c$ be the orbifold constructed in Example
  \ref{ex:quotient_cp1xcp1_finite} and let $m,n$ be positive integers. The map
  \begin{equation*}
    \begin{split}
      \phi: S^1 \times M_c &\to M_c \\
      (\lambda, [[z_0: z_1],[w_0: w_1]]_c) &\mapsto [[ z_0: \lambda^{ \frac{m}{c}} z_1], [ w_0: \lambda^{\frac{n}{c}} w_1]]_c
    \end{split}
  \end{equation*}
  is an $S^1$-action on $M_c$, where, as in Example
  \ref{exm:circle_action_Talvacchia}, there is no ambiguity in the
  expressions involving $c$-th roots because of the $\Z_c$ action on
  $\C P^1 \times \C P^1$. \hfill$\Diamond$   
\end{example}

\begin{example}[Projectivized plane bundles, continued]\label{exm:circle_action_proj}
  Let $\pi : M \to \Sigma$ be a Seifert fibration. The map
  \begin{equation*}
    \begin{split}
      \phi : S^1 \times ((M
      \times \C P^1)/S^1) & \to (M
      \times \C P^1)/S^1 \\
      (\lambda, [x,[z_0:z_1]]) &\mapsto [x,[\lambda z_0:z_1]]
    \end{split}
  \end{equation*}
  is an $S^1$-action on the orbifold $(M
  \times \C P^1)/S^1$ constructed in Example \ref{exm:projectivized}
  (see \eqref{eq:16} for the $S^1$-action on $M \times \C
  P^1$). Let $\mathrm{pr} : L \to \Sigma$ be the complex line
  orbi-bundle corresponding to $\pi : M \to \Sigma$. Using the
  identification between $(M
  \times \C P^1)/S^1$ and $\mathbb{P}(L \oplus \C)$ given in Example
  \ref{exm:projectivized}, we obtain an $S^1$-action on the latter. Note that  if $[x]\in \Sigma$ is a singular point with orbifold
 structure group $\Z_k$, then by \eqref{eq:16}
  $$
 [x,[\xi_k z_0:z_1]] = [x,[z_0:\xi_k^{-1} z_1]] =  [\xi_k x,[z_0: z_1]] = [x,[z_0: z_1]],
  $$
  and so each point in the fiber over $[x]$ has $\mathbb{Z}_k$ as stabilizer.
  \hfill$\Diamond$   
\end{example}

\subsubsection{Linearization at a fixed point}\label{sec:line-at-fixed}
Lemma \ref{lemma:good_homomorphism} has the
following immediate consequence that is important for our purposes.

\begin{corollary}\label{cor:action_preserves_structure_group}
  Let $G$ act on an orbifold $M$. If $x, y \in M$ lie in the same
  $G$-orbit, then $\Gamma_x \simeq \Gamma_y$. In particular, if $M$ has
  isolated singular points and $G$ is
  connected, then any singular point in $M$ is fixed by the $G$-action.
\end{corollary}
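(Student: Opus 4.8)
The plan is to deduce the statement from Lemma \ref{lemma:good_homomorphism}, which attaches to a good $C^{\infty}$ map a conjugacy class of homomorphisms between orbifold structure groups. The first step is to observe that, for each fixed $g \in G$, the translation $\phi_g \colon M \to M$, $x \mapsto g \cdot x$, is a good $C^{\infty}$ map. Indeed, $\phi_g = \phi \circ j_g$, where $\phi \colon G \times M \to M$ is the action map (good by hypothesis) and $j_g \colon M \to G \times M$ sends $x$ to $(g,x)$; using the product orbifold structure on $G \times M$ (Remark \ref{rmk:product_orbifolds}), one checks in local uniformizing charts that $j_g$ carries a good $C^{\infty}$ lift (with associated homomorphisms the identities on the $\Gamma_x$, cf. Remark \ref{rmk:product_orbifold_structure_group}), and then one uses that compositions of good $C^{\infty}$ maps are good. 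The same reasoning applies to $\phi_{g^{-1}}$, which is an inverse of $\phi_g$; hence $\phi_g$ is a diffeomorphism.

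The second step applies Lemma \ref{lemma:good_homomorphism} to $\phi_g$ and to $\phi_{g^{-1}}$, yielding conjugacy classes of homomorphisms $\Gamma_x \to \Gamma_{g \cdot x}$ and $\Gamma_{g \cdot x} \to \Gamma_x$. Since the assignment in Lemma \ref{lemma:good_homomorphism} is functorial up to conjugacy and $\phi_{g^{-1}} \circ \phi_g = \mathrm{id}_M$ induces (the conjugacy class of) the identity of $\Gamma_x$, both composites are conjugate to identity homomorphisms, so each of the two homomorphisms is an isomorphism. Therefore $\Gamma_x \simeq \Gamma_{g \cdot x}$, and if $x$ and $y$ lie in the same $G$-orbit, say $y = g \cdot x$, then $\Gamma_x \simeq \Gamma_y$.

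For the final assertion, assume $M$ has isolated singular points and $G$ is connected, and let $x \in M$ be singular. The orbit $G \cdot x$ is the image of the connected space $G$ under the continuous orbit map $g \mapsto g \cdot x$, hence connected; by the first part every point of $G \cdot x$ has orbifold structure group isomorphic to $\Gamma_x \neq 1$, so $G \cdot x$ is a nonempty connected subset of the discrete set of singular points of $M$, and thus a single point. Hence $x$ is fixed by the $G$-action. The only genuinely technical point is the verification that $\phi_g$ (equivalently $j_g$) is a good $C^{\infty}$ map, i.e., unwinding Definitions \ref{defn:c_infty_lift} and \ref{defn:compatible_sys}; this is routine and can also be seen transparently from the Lie groupoid picture of Remark \ref{rmk:groupoids}, where $\phi_g$ is visibly induced by a Lie groupoid homomorphism. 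Everything else is formal.
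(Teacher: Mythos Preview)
Your proof is correct and follows exactly the route the paper intends: the paper presents this corollary as an immediate consequence of Lemma~\ref{lemma:good_homomorphism} without further argument, and you have simply written out the details (that $\phi_g$ is a diffeomorphism, hence induces an isomorphism on structure groups, and that a connected orbit consisting of singular points in a discrete singular set must be a point).
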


Let $G$ be a compact Lie group acting on an orbifold $M$. The local structure of a $G$-action in a neighborhood of a fixed point
$x \in M$ is relevant for the purposes of our paper. First, we observe
that there exists a l.u.c. $\oc$
centered at $x$ such that the open neighborhood $U$ of $x$ that is
uniformized by $\oc$ is $G$-invariant. We call such a l.u.c. {\bf
  $\boldsymbol{G}$-invariant}. Moreover, the $G$-action on $U$
can be described entirely by an action of a `larger' Lie group on
$\widehat{U}$. More precisely, the following result holds (for a proof, see \cite[Section 2]{LermanTolman} or
\cite[Proposition 2.8 and Corollary 2.9]{torusOrbifolds}; cf. Example \ref{exm:linear_actions_orbi-vector_space}). 

\begin{proposition}\label{prop:luc_centered_equivariant}
  Let $G$ be a compact Lie group acting on an orbifold $M$, let $x
  \in M$ be a fixed point and let $\oc$ be a $G$-invariant
  l.u.c. centered at $x$. There exist a Lie group $\widehat{G}$, a
  smooth action of $\widehat{G}$ on $\widehat{U}$ fixing $0$, and a
  short exact sequence of Lie groups
  \begin{equation}\label{exact_sequ}
    \begin{tikzcd}
      1\arrow{r} & \Gamma\arrow[r,hook,"\iota"]& \widehat{G}\arrow[r,"\pi"] & G\arrow{r} & 1
    \end{tikzcd}
  \end{equation}
  such that the following diagram commutes
  \begin{center}
    \begin{tikzcd}
      \widehat{G}\times \widehat{U} \arrow[r,"\widehat{\phi}"] \arrow[d, "{(\pi,\varphi)}"]
      & \widehat{U} \arrow[d, "\varphi"] \\
      G\times U\arrow[r, "\phi"]
      & U,
    \end{tikzcd}
  \end{center}
  where $\widehat{\phi} : \widehat{G} \times \widehat{U} \to
  \widehat{U}$ encodes the $\widehat{G}$-action. Moreover, $\Gamma$ commutes with every connected subgroup of $\widehat{G}$.
\end{proposition}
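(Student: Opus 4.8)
The plan is to reduce the statement to the well-known linearization of compact Lie group actions near a fixed point for \emph{manifolds}, applied to a suitable local uniformizing chart. First I would use Remark \ref{rmk:centred} together with the hypothesis that $G$ is compact to produce a $G$-invariant l.u.c. centered at $x$: average an arbitrary centered l.u.c. over $G$ (or, more precisely, shrink an arbitrary centered l.u.c. to a $G$-invariant one using a Haar measure argument on $|M|$), so that we may assume from the outset that $U$ is $G$-invariant and $\widehat{U}\subseteq\R^n$ is an $\mathrm{O}(n)$-invariant open neighborhood of $0$ with $\varphi(0)=x$.

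The heart of the argument is to build $\widehat{G}$ and the $\widehat{G}$-action on $\widehat{U}$. Consider the group $\widehat{G}$ of all pairs $(g,\widehat{h})$ where $g\in G$ and $\widehat{h}:\widehat{U}\to\widehat{U}$ is a diffeomorphism that is a $C^{\infty}$ lift of the homeomorphism $\phi(g,\cdot):U\to U$ in the sense of Definition \ref{defn:c_infty_lift_luc} (using the same l.u.c. $\oc$ on both source and target). By Lemma \ref{lemma:auto_luc}, the lifts of the identity $\phi(1,\cdot)=\mathrm{id}_U$ are exactly the elements $\lambda_{\gamma_0}$, $\gamma_0\in\Gamma$; hence the projection $\pi:\widehat{G}\to G$, $(g,\widehat{h})\mapsto g$ is surjective with kernel $\iota(\Gamma)=\{(1,\lambda_\gamma)\mid\gamma\in\Gamma\}$, giving the short exact sequence \eqref{exact_sequ}. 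The tautological action $\widehat{\phi}((g,\widehat{h}),\widehat{y}):=\widehat{h}(\widehat{y})$ makes the required square commute by construction. Surjectivity of $\pi$ (i.e., the existence of a $C^{\infty}$ lift of $\phi(g,\cdot)$ for every $g$) follows because $\phi$ is a good $C^{\infty}$ map: a good $C^{\infty}$ lift of $\phi$ restricts, near $\{g\}\times U$, to a good $C^{\infty}$ lift of $\phi(g,\cdot)$ between l.u.c.'s, and after composing with an isomorphism of l.u.c.'s (Remark \ref{rmk:lifts_not_unique}) one may take source and target to both be $\oc$. The smooth structure on $\widehat{G}$ and smoothness of $\widehat{\phi}$ come from the same good lift of $\phi$, which provides, on a neighborhood of $1\in G$, a smooth local section of $\pi$; this realizes $\widehat{G}$ as a (not necessarily compact, not necessarily connected) Lie group that is a finite cover-type extension of $G$ by the finite group $\Gamma$, and standard arguments then make $\widehat\phi$ smooth globally. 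That $0$ is fixed by $\widehat{G}$ is automatic since every lift of $\phi(g,\cdot)$ must send the fiber $\varphi^{-1}(x)$ to itself, and after centering we arranged $\varphi^{-1}(x)\cap\widehat{U}=\{0\}$ (shrinking $\widehat{U}$ if necessary, which we may do $\mathrm{O}(n)$- and $\widehat{G}$-equivariantly).

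Finally, for the last sentence, that $\Gamma$ commutes with every connected subgroup of $\widehat{G}$: since $\Gamma$ is the finite normal subgroup $\ker\pi$, for each $\gamma\in\Gamma$ the conjugation map $\widehat{G}\to\widehat{G}$, $\widehat{g}\mapsto\widehat{g}\gamma\widehat{g}^{-1}$ is continuous, has image inside the finite set $\Gamma$ (normality), and hence is locally constant; restricted to any connected subgroup $K\leq\widehat{G}$ it is constant, equal to its value at $1\in K$, namely $\gamma$. Thus $\widehat{g}\gamma\widehat{g}^{-1}=\gamma$ for all $\widehat{g}\in K$, i.e., $\Gamma$ is centralized by $K$. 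I expect the main obstacle to be the \emph{regularity} bookkeeping: verifying cleanly that the good $C^{\infty}$ lift of $\phi$ equips $\widehat G$ with a genuine Lie group structure for which $\pi$ is a smooth covering-like homomorphism and $\widehat\phi$ is smooth — this is where one has to be careful to use Definition \ref{defn:compatible_sys} (the \emph{good} lift, with its compatible family of homomorphisms $\Theta_{\widehat U_1}$) rather than merely a $C^{\infty}$ lift, and to invoke the earlier cited references (\cite[Section 2]{LermanTolman}, \cite[Proposition 2.8 and Corollary 2.9]{torusOrbifolds}) for the technical details; everything else is essentially formal once $\widehat G$ is in hand.
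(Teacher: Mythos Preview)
The paper does not supply its own proof of this proposition; it defers entirely to \cite[Section 2]{LermanTolman} and \cite[Proposition 2.8 and Corollary 2.9]{torusOrbifolds}. Your construction of $\widehat{G}$ as the group of pairs $(g,\widehat{h})$ with $\widehat{h}$ a diffeomorphic lift of $\phi(g,\cdot)$, together with the use of Lemma~\ref{lemma:auto_luc} to identify $\ker\pi$ with $\Gamma$, is exactly the standard argument in those references, and your continuity argument for the final sentence (conjugation into a finite normal subgroup is locally constant, hence constant on connected subgroups) is correct and is precisely the reasoning the paper itself reproduces later in the proof of Theorem~\ref{thm::groupext}.

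One small point worth tightening: you should note that each lift $\widehat{h}$ is automatically a \emph{diffeomorphism} (needed for $\widehat{G}$ to be a group), which follows since any lift of $\phi(g^{-1},\cdot)$ composed with $\widehat{h}$ is a lift of $\mathrm{id}_U$ and hence, by Lemma~\ref{lemma:auto_luc}, equals some $\lambda_\gamma$. Your own assessment of where the real work lies---the Lie group structure on $\widehat{G}$ and smoothness of $\widehat\phi$ via the good lift of $\phi$---is accurate; this is indeed what the cited references handle, and you are right to flag it rather than sweep it under the rug.
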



On the other hand, as in the case of manifolds, there is a
linear $G$-action on the tangent space $T_xM$ given by taking
derivatives of the $G$-action at $x$ (see Remark
\ref{rmk:tangent_orbi-bundle} and Example
\ref{exm:linear_actions_orbi-vector_space}). The next result explains
the relation between the $G$-action near $x$ and the linear $G$-action
on $T_xM$; it is the analog of the Bochner linearization theorem for
orbifolds (see \cite[Theorem 2.2.1]{dk}).

\begin{corollary}\label{cor:linearization_fixed_point}
  Let $G$ be a compact Lie group acting on an orbifold $M$. If $x$ is a
  fixed point, then there exist $G$-invariant open neighborhoods $U
  \subseteq M$ and $W \subseteq T_xM$ of $x$ and $[0]$ respectively,
  and a $G$-equivariant diffeomorphism $f : U \to W$ that sends $x$ to $[0]$.
\end{corollary}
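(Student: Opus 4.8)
The plan is to reduce the statement to the classical Bochner linearization theorem for compact Lie group actions on manifolds by passing to a $G$-invariant local uniformizing chart. First I would fix a $G$-invariant l.u.c. $\oc$ centered at $x$ (so that $\widehat{U}\subseteq\R^n$ and $\varphi(0)=x$), and apply Proposition \ref{prop:luc_centered_equivariant} to obtain a Lie group $\widehat{G}$, a smooth $\widehat{G}$-action $\widehat{\phi}$ on $\widehat{U}$ fixing $0$, and the short exact sequence \eqref{exact_sequ} $1\to\Gamma\to\widehat{G}\to G\to 1$ intertwined (via $(\pi,\varphi)$) with the $G$-action on $U$. Since $\Gamma$ is finite and $G$ is compact, $\widehat{G}$ is compact. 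Moreover, for $\gamma\in\Gamma$ the diffeomorphism $\widehat{\phi}(\iota(\gamma),-)$ covers the identity on $U$ and hence is an automorphism of $\oc$, so by Lemma \ref{lemma:auto_luc} the restriction of the $\widehat{G}$-action to $\iota(\Gamma)$ recovers the orbifold structure-group action of $\Gamma$ on $\widehat{U}$; in particular, since $\Gamma<\mathrm{O}(n)$ acts linearly, the induced linear $\widehat{G}$-action $\widehat{\rho}(\widehat{g})=d_0\widehat{\phi}_{\widehat{g}}$ on $T_0\widehat{U}=\R^n$ restricts on $\iota(\Gamma)$ to the standard linear $\Gamma$-action, so that $T_0\widehat{U}/\Gamma$ is exactly $T_xM$ by Remark \ref{rmk:tangent_orbi-bundle}.

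Second, I would invoke the Bochner linearization theorem for the compact group $\widehat{G}$ acting on the manifold $\widehat{U}$ at the fixed point $0$ (see \cite[Theorem 2.2.1]{dk}): there exist $\widehat{G}$-invariant connected open neighborhoods $\widehat{U}_0\subseteq\widehat{U}$ of $0$ and $\widehat{W}\subseteq\R^n$ of $0$, together with a $\widehat{G}$-equivariant diffeomorphism $\widehat{f}:\widehat{U}_0\to\widehat{W}$ with $\widehat{f}(0)=0$, where $\widehat{G}$ acts on $\widehat{W}$ linearly via $\widehat{\rho}$. Since $\widehat{U}_0$ is $\iota(\Gamma)$-invariant, $(\widehat{U}_0,\Gamma,\varphi|_{\widehat{U}_0})$ is a l.u.c. (cf. Remark \ref{rmk:induced_luc}) uniformizing $U:=\varphi(\widehat{U}_0)$, which is $G$-invariant because $\widehat{\phi}$ and $\varphi$ are intertwined; similarly, the image $W\subseteq T_xM=\R^n/\Gamma$ of $\widehat{W}$ under the quotient map is a $G$-invariant open neighborhood of $[0]$.

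Third, since $\widehat{f}$ is in particular $\iota(\Gamma)$-equivariant for the orbifold structure-group action on $\widehat{U}_0$ and the standard linear $\Gamma$-action on $\widehat{W}$, Lemma \ref{lemma:obvious_good_maps} (applied with $G_1=G_2=\Gamma$ and $\Theta=\mathrm{id}_{\Gamma}$) yields a good $C^{\infty}$ map $f:U\to W$ induced by $\widehat{f}$; applying the same lemma to $\widehat{f}^{-1}$ shows that $f$ is a diffeomorphism, and $f(x)=[0]$ since $\widehat{f}(0)=0$. Finally, $G$-equivariance of $f$ follows by lifting $g\in G$ to some $\widehat{g}\in\widehat{G}$ with $\pi(\widehat{g})=g$ and combining the $\widehat{G}$-equivariance of $\widehat{f}$ with the fact that the linear $G$-action on $T_xM$ is, by Example \ref{exm:linear_actions_orbi-vector_space}, precisely the descent of $\widehat{\rho}$ to $\R^n/\Gamma$: for $y\in U$ with a lift $\widehat{y}\in\widehat{U}_0$ one computes $f(g\cdot y)=[\widehat{f}(\widehat{\phi}_{\widehat{g}}(\widehat{y}))]=[\widehat{\rho}(\widehat{g})\widehat{f}(\widehat{y})]=g\cdot f(y)$.

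The only genuine content beyond bookkeeping is the passage from the orbifold to the manifold level supplied by Proposition \ref{prop:luc_centered_equivariant}, together with the observation that $\widehat{G}$ is compact, which is exactly what permits the use of the manifold Bochner theorem; once this is in place, the main point that needs care is matching the restriction of the $\widehat{G}$-action to $\iota(\Gamma)$ with the orbifold structure-group action (Lemma \ref{lemma:auto_luc}), so that the descent of $\widehat{f}$ along the quotient maps lands in $T_xM$ and intertwines the two linear $G$-actions.
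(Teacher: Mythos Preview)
Your proof is correct and follows essentially the same approach as the paper: both lift to the compact extension $\widehat{G}$ via Proposition~\ref{prop:luc_centered_equivariant}, apply the classical Bochner/Slice linearization on the manifold $\widehat{U}$, and then descend through the $\Gamma$-quotient to obtain the desired $G$-equivariant diffeomorphism into $T_xM\simeq\R^n/\Gamma$. Your write-up is simply more explicit about the bookkeeping---in particular the identification of the $\iota(\Gamma)$-action with the orbifold structure-group action via Lemma~\ref{lemma:auto_luc} and the verification of $G$-equivariance---whereas the paper compresses these steps into the assertion that one may assume the $\widehat{G}$-action is already linear.
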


\begin{proof}
  Let $\oc$ be a $G$-invariant l.u.c. centered at $x$ and let
  $\widehat{G}$ be the Lie group extending $G$ given by Proposition
  \ref{prop:luc_centered_equivariant}. Since $\widehat{G}$ is compact,
  by the Slice Theorem (see \cite[Theorem
  2.4.1]{dk}), there is no loss of generality in assuming that the
  $\widehat{G}$-action on $\widehat{U}$ is the restriction of a linear
  $\widehat{G}$-action. By Proposition
  \ref{prop:luc_centered_equivariant}, the map $\overline{\varphi} :
  \widehat{U}/\Gamma \to U$ is a $G$-equivariant
  diffeomorphism, where $\widehat{U}/\Gamma \subseteq \R^n/\Gamma$ is
  endowed with the restriction of a linear $\widehat{G}/\Gamma \simeq
  G$-action. Since the latter
  linear action corresponds to the linear $G$-action on $T_x M$ under
  the isomorphism $T_x M \simeq \R^n/\Gamma$, the result follows.
\end{proof}

We conclude this section by stating the following consequence of
Corollary \ref{cor:linearization_fixed_point} (see \cite[Corollary
2.10]{torusOrbifolds} for a proof).

\begin{corollary}\label{cor:fixed_pt_set_suborbi}
  Let $G$ be a compact Lie group that acts effectively on an orbifold
  $M$. Any connected component of the fixed point set of the action
  that has positive dimension\footnote{This restriction is
  necessary as we are working only with {\em effective}
  orbifolds.} is
  a full suborbifold of $M$.
\end{corollary}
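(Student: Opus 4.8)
The plan is to build a full suborbifold atlas on $|N|$ by working in a linearized $G$-invariant chart around each fixed point contained in $N$. Fix $x \in N$. By Corollary \ref{cor:linearization_fixed_point} there is a $G$-invariant l.u.c. $\oc$ centered at $x$; combining Proposition \ref{prop:luc_centered_equivariant} with the Slice Theorem (see \cite[Theorem 2.4.1]{dk}), which applies because the extension $\widehat{G}$ of $G$ is compact, we may moreover assume that $\widehat{G}$ acts linearly on the open set $\widehat{U} \subseteq \R^{n}$, that $\varphi(0) = x$, and that the $G$-action on $U = \varphi(\widehat{U})$ is the $\widehat{G}$-action pushed down through the exact sequence $1 \to \Gamma \to \widehat{G} \to G \to 1$.

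I would then pin down the fixed-point set inside this chart. A point $[\hat{v}] \in U$ is $G$-fixed exactly when $\widehat{G}\cdot\hat{v} \subseteq \Gamma\cdot\hat{v}$; since $\Gamma$ is finite this forces the $\widehat{G}$-orbit of $\hat{v}$ to be finite, hence fixed by the identity component $\widehat{G}^{0}$. Consequently $\varphi^{-1}(U^{G})$ lies in the linear subspace $\widehat{F} := (\R^{n})^{\widehat{G}^{0}}$, which is $\widehat{G}$-invariant and, because $\Gamma$ commutes with $\widehat{G}^{0}$ by Proposition \ref{prop:luc_centered_equivariant}, also $\Gamma$-invariant. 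On $\widehat{F}$ the $\widehat{G}$-action factors through the finite group $\widehat{G}/\widehat{G}^{0}$, so $U^{G}$ is precisely the fixed set of the induced action of the finite group $G/G^{0}$ on the orbi-vector-space chart $(\widehat{U}\cap\widehat{F})/\Gamma$. A standard invariant-metric argument for finite group actions then yields, after shrinking $U$, a $\Gamma$-invariant linear subspace $L \subseteq \widehat{F}$ with $\varphi(\widehat{U}\cap L)$ equal to the connected component of $U^{G}$ containing $x$; set $\widehat{V}_{x} := \widehat{U}\cap L$. The hypothesis $\dim N > 0$ ensures $\dim L > 0$, ruling out the degenerate case (excluded because $M$ is effective) in which $N$ would be a single singular point.

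Finally I would assemble the local data. Take as orbifold atlas $\mathcal{A}$ of $M$ the collection of $G$-invariant linearized l.u.c.'s centered at the points of $M^{G}$ (constructed as above) together with arbitrary l.u.c.'s covering $|M| \smallsetminus M^{G}$, and let $B \subseteq \mathcal{A}$ be the subcollection of charts centered at points of $N$, each equipped with the submanifold $\widehat{V}_{x}$. Conditions \ref{item:14} and \ref{item:16} of Definition \ref{defn:suborbifold_atlas} hold by construction, the sets $\varphi_{x}(\widehat{V}_{x})$ being open in $|N|$ and covering it. For condition \ref{item:15} one chooses $\Delta_{x} := \Gamma_{x}$: the subspace $\widehat{V}_{x}$ is $\Gamma_{x}$-invariant by construction, so the first bullet holds, and the second bullet is then vacuous (take $\delta = \gamma$); in particular the suborbifold atlas is full. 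By \cite[Proposition 3.3]{weilandt}, $|N|$ inherits the structure of an orbifold, i.e.\ $N$ is a full suborbifold of $M$.

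The main obstacle is the middle step: showing that in each chart the connected component of the fixed set through $x$ is the $\varphi$-image of one $\Gamma$-invariant linear subspace, and that these choices are mutually compatible over overlaps. The reduction to a finite group acting on $\widehat{F}$ is clean, but one still has to control the interaction of the residual $G/G^{0}$-action with the $\Gamma$-quotient; this compatibility is exactly what is extracted in \cite[Corollary 2.10]{torusOrbifolds}, on which we rely.
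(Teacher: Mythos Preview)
The paper gives no detailed proof; it records the statement as a consequence of the linearization result (Corollary~\ref{cor:linearization_fixed_point}) and defers to \cite[Corollary~2.10]{torusOrbifolds}. Your route---linearize in a $G$-invariant chart via Proposition~\ref{prop:luc_centered_equivariant}, observe that $\varphi^{-1}(U^G)$ lies in the linear subspace $\widehat F=(\R^n)^{\widehat G^0}$, and then check Definition~\ref{defn:suborbifold_atlas} with $\Delta_x=\Gamma_x$---is exactly this approach. When $G$ is connected (the only case the paper uses, with $G=S^1$) your argument in fact simplifies: since $\widehat G=\widehat G^0\cdot\Gamma$ and $\Gamma$ commutes with $\widehat G^0$, every point of $\widehat F\cap\widehat U$ already maps to a $G$-fixed point, so $\varphi^{-1}(U^G)=\widehat F\cap\widehat U$ on the nose and one may take $L=\widehat F$; the residual finite-group step is vacuous.

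For disconnected $G$, however, your ``standard invariant-metric argument'' conceals a real gap: the connected component of $U^G$ through $x$ need not be the image of a single $\Gamma$-invariant linear subspace. For instance, let $\Gamma=\Z_4$ act on $\R^2$ by quarter-turn rotation and let $G=\Z_2$ act on $\R^2/\Z_4$ via $[(x,y)]\mapsto[(x,-y)]$ (a good map, with $\Theta\colon\rho\mapsto\rho^{-1}$). Then $\varphi^{-1}\bigl((\R^2/\Z_4)^G\bigr)$ is the union of the two coordinate axes and the two diagonals, whose image is a wedge of two arcs meeting only at the cone point---and $\R^2$ admits no $\Z_4$-invariant line. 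So the asserted $L$ does not exist, and indeed $M^G$ fails to be a full suborbifold at the cone point. The reference \cite{torusOrbifolds} concerns torus actions, hence connected $G$, and the paper only ever invokes the corollary for $G=S^1$; so nothing downstream is affected. But your sketch for arbitrary compact $G$ would require a genuinely different idea at that step, not merely more bookkeeping on overlaps.
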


\subsubsection{Quotients of orbifolds by finite
  groups}\label{sec:quot-orbif-finite}
In this subsection we show that the quotient of an orbifold by an
effective action of a finite group inherits the structure of an
orbifold (see Corollary \ref{cor:quotient_orbifold} below), and then we
illustrate this construction with several examples that are relevant
for our purposes.

Let $M$ be an orbifold of dimension $n$. The {\bf orthonormal frame} orbi-bundle
$\mathrm{Fr}(M) \to M$ can be constructed as in \cite[Section
1.3]{adem}: The total space can be obtained by gluing together the
orthonormal frame bundles of l.u.c.'s for $M$. (Alternatively, one can
define a Riemannian metric for an orbifold, prove that any orbifold
admits such a metric and construct the orthonormal frame
orbi-bundle as in the case of manifolds. We leave the details to the reader.) This principal
$\mathrm{O}(n)$-orbi-bundle has the following important property (see
\cite[Theorem 1.23]{adem} for a proof).

\begin{theorem}\label{thm:frame}
  Let $M$ be an orbifold of dimension $n$. The total space
  $\mathrm{Fr}(M)$ of the
  orthonormal frame orbi-bundle of $M$ is a smooth manifold endowed with an
  effective, locally free $\mathrm{O}(n)$-action such that the
  quotient orbifold $\mathrm{Fr}(M)/\mathrm{O}(n)$ is diffeomorphic to
  $M$. 
\end{theorem}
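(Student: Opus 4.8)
The plan is to build $\mathrm{Fr}(M)$ by gluing the orthonormal frame bundles of the l.u.c.'s of $M$ and then to observe that the resulting total space carries no nontrivial orbifold structure, i.e., is an honest manifold. First I would fix a Riemannian metric on $M$: such metrics exist by averaging local metrics against the finite group actions in the l.u.c.'s and patching with a partition of unity, just as for manifolds, using that tensor calculus on $M$ behaves naively (cf. Remark \ref{rmk:section}). We may then assume every l.u.c. $(\widehat U,\Gamma,\varphi)$ carries a $\Gamma$-invariant metric (average over $\Gamma$) and that the embeddings between l.u.c.'s are isometric. For such a l.u.c. the frame bundle $\mathrm{Fr}(\widehat U)\to\widehat U$ is a genuine principal $\mathrm{O}(n)$-bundle; $\Gamma$ acts on $\mathrm{Fr}(\widehat U)$ by pushforward of frames, this action is free, proper, and commutes with the right $\mathrm{O}(n)$-action; and an isometric embedding $\lambda\colon\widehat W\hookrightarrow\widehat U$ of l.u.c.'s induces an $\mathrm{O}(n)$-equivariant open embedding $\mathrm{Fr}(\widehat W)\hookrightarrow\mathrm{Fr}(\widehat U)$ intertwining the associated homomorphism $\Gamma''\to\Gamma$. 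These data assemble into a principal $\mathrm{O}(n)$-orbi-bundle $\mathrm{Fr}(M)\to M$ in the sense of Section \ref{sec:definition-examples}, whose local trivializing l.u.c.'s have the form $(\mathrm{Fr}(\widehat U)\to\widehat U,\Gamma,\dots)$.

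The crucial point is freeness of the $\Gamma$-action on $\mathrm{Fr}(\widehat U)$. If $\gamma\in\Gamma$ fixes a frame $(x;e_1,\dots,e_n)$, then $\gamma(x)=x$ and $d_x\gamma(e_i)=e_i$ for all $i$, so $d_x\gamma=\mathrm{id}$. Since $\gamma$ is an isometry, its fixed-point set is a closed totally geodesic submanifold with $T_x\mathrm{Fix}(\gamma)=\ker(d_x\gamma-\mathrm{id})=T_x\widehat U$; hence $\mathrm{Fix}(\gamma)$ is open and closed in the connected manifold $\widehat U$, so $\gamma=\mathrm{id}$. Consequently the orbifold structure group at every point of $\mathrm{Fr}(M)$ is trivial, and by Remark \ref{rmk:orbifolds_no_singular_points} the space $\mathrm{Fr}(M)$ is a smooth manifold (alternatively, one may invoke \cite[Theorem 1.23]{adem}).

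It remains to analyse the $\mathrm{O}(n)$-action. The right $\mathrm{O}(n)$-actions on the various $\mathrm{Fr}(\widehat U)$ commute with the $\Gamma$-actions, hence glue to a smooth $\mathrm{O}(n)$-action on the manifold $\mathrm{Fr}(M)$. It is effective, because $\mathrm{O}(n)$ already acts effectively (indeed freely) on each $\mathrm{Fr}(\widehat U)$. It is locally free: over $[x]\in U=\widehat U/\Gamma$, the stabilizer in $\mathrm{O}(n)$ of a class $[p]\in\mathrm{Fr}(\widehat U)/\Gamma$ consists of those $A$ with $p\cdot A=\gamma\cdot p$ for some $\gamma\in\Gamma$, and since $\mathrm{O}(n)$ acts freely on $\mathrm{Fr}(\widehat U)$ such an $A$ is determined by $\gamma$; thus the stabilizer injects into $\Gamma$ and is finite. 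Finally $\mathrm{Fr}(M)/\mathrm{O}(n)$ is computed locally by $(\mathrm{Fr}(\widehat U)/\Gamma)/\mathrm{O}(n)=\mathrm{Fr}(\widehat U)/(\mathrm{O}(n)\times\Gamma)=\widehat U/\Gamma=U$, and these identifications are compatible with the gluing, producing a diffeomorphism $\mathrm{Fr}(M)/\mathrm{O}(n)\xrightarrow{\ \sim\ }M$.

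I expect the main obstacle to be the bookkeeping in the first paragraph: checking that the locally defined frame bundles, the $\Gamma$-actions, and the maps induced by l.u.c.-embeddings satisfy the compatibility conditions of Definition \ref{defn:locally_trivial_orbi-bundle_atlas}, so that $\mathrm{Fr}(M)$ is well defined independently of the chosen metric and atlas. The freeness argument, though short, is the conceptual heart of the statement, as it is what makes the construction collapse from an orbifold to a manifold.
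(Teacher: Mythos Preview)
The paper does not supply its own proof of this theorem; it is stated with the parenthetical ``see \cite[Theorem 1.23]{adem} for a proof'' and then used as a black box. Your proposal is precisely the standard argument one would find in that reference (and you even cite it yourself): build $\mathrm{Fr}(M)$ from the frame bundles of the l.u.c.'s, observe that the finite groups act freely on frames because an isometry fixing both a point and a full frame at that point must be the identity, and conclude via Remark~\ref{rmk:orbifolds_no_singular_points} that $\mathrm{Fr}(M)$ is a manifold. The analysis of the $\mathrm{O}(n)$-action and the identification of the quotient with $M$ are routine and correct.

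One small imprecision: your justification of effectiveness (``because $\mathrm{O}(n)$ already acts effectively on each $\mathrm{Fr}(\widehat U)$'') is not quite the right reason, since the action in question is on the quotient $\mathrm{Fr}(\widehat U)/\Gamma$, not on $\mathrm{Fr}(\widehat U)$ itself. The clean fix is to note that over a regular point (which exist by Remark~\ref{rmk:smooth_dense}) the $\Gamma$-stabilizer is trivial, so the fiber of $\mathrm{Fr}(M)$ there is a free $\mathrm{O}(n)$-orbit. This is a one-line patch and does not affect the overall correctness of your argument.
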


As a consequence of Theorem \ref{thm:frame}, every orbifold $M$ can be
thought of as a quotient orbifold $\widehat{M}/H$, where $H$ is
compact. This fact has the following useful consequence, stated below
without proof (see \cite[Remark 2.15]{kleiner_lott}).

\begin{corollary}\label{cor:quotient_orbifold}
  If $G$ is a finite group acting on an orbifold $M$, then the
  quotient $M/G$ can be endowed with the structure of an orbifold such
  that the quotient map $M \to M/G$ is an orbifold covering map. 
\end{corollary}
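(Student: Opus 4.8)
The plan is to reduce the general statement to the case of a finite group acting on a \emph{manifold}, where the corresponding fact is classical, and then transport the resulting orbifold structure through the frame orbi-bundle. First I would invoke Theorem \ref{thm:frame}: the orthonormal frame orbi-bundle $\mathrm{Fr}(M) \to M$ has total space a smooth manifold $\widehat{M} := \mathrm{Fr}(M)$ carrying an effective, locally free $\mathrm{O}(n)$-action with $\widehat{M}/\mathrm{O}(n) \cong M$. The key observation is that the given $G$-action on $M$ lifts canonically to a smooth $G$-action on $\widehat{M}$: a diffeomorphism of $M$ induces a bundle automorphism of the orthonormal frame orbi-bundle (once a $G$-invariant Riemannian metric on $M$ is chosen, which exists by averaging over the finite group $G$), and this automorphism is, by construction, a genuine diffeomorphism of the manifold $\widehat{M}$ commuting with the $\mathrm{O}(n)$-action. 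Since $G$ is finite and $\mathrm{O}(n)$ is compact, the combined $(G \times \mathrm{O}(n))$-action on $\widehat{M}$ is proper; and since the $\mathrm{O}(n)$-action is already locally free and $G$ is finite, the $(G \times \mathrm{O}(n))$-action is locally free as well.

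Next I would form the manifold quotient $\widehat{M}/G$. Because $G$ is a finite group acting smoothly and effectively on the manifold $\widehat{M}$, the quotient $\widehat{M}/G$ is a global quotient orbifold in the sense of Example \ref{exm:orbi_vector_space} and the discussion following it (the action is automatically proper and locally free). The residual $\mathrm{O}(n)$-action on $\widehat{M}$ descends to an $\mathrm{O}(n)$-action on $\widehat{M}/G$, which is still locally free and proper, so by the construction in Section \ref{sec:basic-definitions} the further quotient $(\widehat{M}/G)/\mathrm{O}(n)$ inherits an orbifold structure. By construction this space is homeomorphic to $|M|/G = |M/G|$, and a diagram chase identifies $(\widehat{M}/G)/\mathrm{O}(n)$ with $\widehat{M}/(G \times \mathrm{O}(n))$; this is the orbifold structure we put on $M/G$. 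That the natural map $M \to M/G$ is an orbifold covering map in the sense of Definition \ref{defn:orbifold_covering_map} can then be checked locally: near a point $x \in M$, a $G$-invariant l.u.c. $\oc$ (obtained by the Slice Theorem applied to the $G$-action, as in Remark \ref{rmk:centred}) has the property that the stabilizer $G_x$ acts on $\widehat{U}$, and the composite quotient $\widehat{U} \to \widehat{U}/(\Gamma \rtimes G_x)$ exhibits the image of $U$ in $M/G$ as uniformized by $(\widehat{U}, \Gamma \rtimes G_x, \varphi_{M/G})$ with $\Gamma$ a subgroup; this is precisely the local model required by Definition \ref{defn:orbifold_covering_map}.

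The main obstacle I anticipate is not any single deep step but rather the careful bookkeeping of \emph{how the group extensions compose}: one must verify that lifting the $G$-action to $\widehat{M}$ and then quotienting yields an action of $\mathrm{O}(n)$ (not merely of some extension of it) on $\widehat{M}/G$, and that this is compatible with the orbifold structure group computation at each point. Concretely, the subtlety is that the orbifold structure group of a point of $M/G$ is an extension of the orbifold structure group of the corresponding point of $M$ by (a subgroup of) $G$, and one needs Corollary \ref{cor:action_preserves_structure_group} together with the local linearization (Corollary \ref{cor:linearization_fixed_point}) to see that these extensions assemble coherently. Once the local models are pinned down, verifying the covering-map condition and the well-definedness (independence of the chosen $G$-invariant metric, hence of the chosen frame bundle lift) follows from the fact that any two such choices are related by an $\mathrm{O}(n)$-equivariant diffeomorphism, which descends to a diffeomorphism of the quotient orbifolds. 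I would keep the exposition brief and cite \cite[Remark 2.15]{kleiner_lott} for the details, as the statement indicates.
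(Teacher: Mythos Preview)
Your proposal is correct and follows precisely the approach the paper sets up: the paper states the corollary without proof, citing \cite[Remark 2.15]{kleiner_lott}, but the preceding paragraph explicitly positions it as a consequence of Theorem \ref{thm:frame}, i.e., of realizing $M$ as $\mathrm{Fr}(M)/\mathrm{O}(n)$ and then passing to the quotient by the combined $(G \times \mathrm{O}(n))$-action on the manifold $\mathrm{Fr}(M)$. Your write-up fleshes out exactly this route; the only caveat is that the local extension you denote $\Gamma \rtimes G_x$ need not be a semidirect product in general (it is merely an extension of $G_x$ by $\Gamma$), though you already flag this bookkeeping as the main subtlety.
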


\begin{example}[Cyclic quotients of weighted projective lines]\label{exm:quotient_wpl}
  Let $p,q$ be positive integers that are pairwise coprime and let
  $a,b$ be integers. We consider the following smooth action of
  $\mathbb{Z}_m$  on $\mathbb{C}P^1(p,q)$:
\begin{equation}
  \label{eq:26}
  \begin{split}
    \phi:\mathbb{Z}_m\times\mathbb{C}P^1(p,q)&\rightarrow \mathbb{C}P^1(p,q)\\
    (\mu,[z_0:z_1])&\mapsto[\lambda^az_0:\lambda^bz_1].
  \end{split}
\end{equation}
By the weighted homogeneity of the coordinates in $\C P^1(p,q)$,
\begin{equation}
  \label{eq:27}
  \mu \cdot [z_0:z_1] =  [z_0: \mu^{\frac{bp-aq}{p}}z_1] =
  [\mu^{\frac{aq-bp}{q}}z_0:z_1]
\end{equation}
for any $\mu \in \Z_m$ and any $[z_0:z_1] \in \C P^1(p,q)$. We set
$k_1:=|aq-bp|$.  A necessary and sufficient condition for this action to be effective
and so that the set of points with non-trivial stabilizer is discrete
is that $\gcd(k_1,m) = 1$. In this case, the only points with
non-trivial stabilizers are $[1:0]$ and $[0:1]$. By Corollary
\ref{cor:quotient_orbifold}, the quotient $\C P^1(p,q)/\Z_m(a,b)$ inherits the
structure of an orbifold that has two singular points (corresponding
to $[1:0]$ and $[0:1]$), and with respect
to which the quotient map $\C P^1(p,q) \to \C P^1(p,q)/\Z_m(a,b)$ is
an orbifold covering map. This orbifold structure can also be obtained
as a quotient orbifold (see Example
\ref{exm:cyclic_quotients_symp}).

We claim that the orbifold structure group of the singular point
corresponding to $[1:0]$ is $\Z_{mp}$. To see this, we consider the
standard $\Z_{mp}$-action on $\C$ and observe that an open
neighborhood of $[1:0] \in \C P^1(p,q)$ is diffeomorphic to an open
neighborhood of $[0] \in \C/\Z_p$, where $\Z_p$ acts on $\C$ via the
inclusion $\Z_p \hookrightarrow \Z_{mp}$. Since $\gcd(k_1,m) = 1$, the residual $\Z_{mp}/\Z_p
\simeq \Z_m$-action on $\C/\Z_p$ can be identified with the
$\Z_m$-action on $\C/\Z_p$ given by \eqref{eq:27}. Analogously, the structure group of the singular point
corresponding to $[0:1]$ is $\Z_{mq}$. In particular, by Theorem
\ref{thm::classorbisurface}, if $a', b' \in \Z$ are also such that
$k'_1:=|a'q-b'p|$ is coprime with $m$, then $\C P^1(p,q)/\Z_m(a,b)$
and $\C P^1(p,q)/\Z_m(a',b')$ are diffeomorphic. Hence, we write
simply $\C P^1(p,q)/\Z_m$ for this quotient. 
\end{example}

\begin{example}[Complex
  line bundles over cyclic quotients of weighted projective lines]\label{ex::orbifold_quotient}
  In analogy with Example \ref{exm:quotient_wpl}, let $p,q,k$ be positive integers that are pairwise coprime and let
  $a,b,c$ be integers. We consider the following smooth action of $\mathbb{Z}_m$  on $\mathbb{C}P^2(p,q,k)$:
  \begin{equation}
    \label{eq:17}
    \begin{split}
      \phi:\mathbb{Z}_m\times\mathbb{C}P^2(p,q,k)&\rightarrow \mathbb{C}P^2(p,q,k)\\
      (\lambda,[z_0:z_1:z_2])&\mapsto[\lambda^az_0:\lambda^bz_1:\lambda^cz_2].
    \end{split}
  \end{equation}
  By the weighted homogeneity of the coordinates in
  $\mathbb{C}P^2(p,q,k)$, 
  \begin{equation}
    \label{eq:22}
    \begin{split}
      \lambda \cdot [z_0:z_1:z_2] & = [z_0: \lambda^{\frac{bp-aq}{p}}z_1:
      \lambda^{\frac{cp-ak}{p}}z_2] \\
      & = [\lambda^{\frac{aq-bp}{q}}z_0: z_1:
      \lambda^{\frac{cq-bk}{q}}z_2] \\
      & = [\lambda^{\frac{ak-cp}{k}}z_0: \lambda^{\frac{bk-cq}{k}} z_1:z_2],
    \end{split}
  \end{equation}
  for any $\lambda \in \Z_m$ and any
  $[z_0:z_1:z_2] \in \mathbb{C}P^2(p,q,k)$. 
  We set $k_1:=|aq-bp|, k_2:=|bk-cq|$ and $k_3:=|cp-ak|$. A necessary and sufficient condition for this action to be effective
  and so that the set of points with non-trivial stabilizer is discrete
  is
  \begin{equation*}
    \gcd(k_1,m) = \gcd(k_2,m) = \gcd(k_3,m) = 1.
  \end{equation*}
  In this case, the points that have non-trivial stabilizers are
  $[1:0:0]$, $[0:1:0]$ and $[0:0:1]$. By Corollary
  \ref{cor:quotient_orbifold}, the quotient $\mathbb{C}P^2(p,q,k)/\Z_m(a,b,c)$
  inherits the structure of an orbifold that has three singular points,
  corresponding to $[1:0:0]$, $[0:1:0]$ and $[0:0:1]$. This
  orbifold structure can also be realized as a quotient orbifold (see
  Example \ref{exm:cyclic_quotients_symp} below). By \eqref{eq:22},
  the orbifold structure groups of the singular points
  corresponding to $[1:0:0]$, $[0:1:0]$ and $[0:0:1]$ are 
  $\Z_{mp}$, $\Z_{mq}$ and $\Z_{mk}$.
  
  In analogy with Example \ref{exm:sub_weighted_proj_spaces}, the subset
  of $\mathbb{C}P^2(p,q,k)/\Z_m(a,b,c)$ given by $\{[z_0:z_1:0] \in \C P^2(p,q,k)\}/\Z_m $
  is a (full) suborbifold that can be identified with the orbifold 
  $\C P^1(p,q)/\Z_m = \C P^1(p,q)/\Z_m(a,b)$ of Example \ref{exm:quotient_wpl}. The
  normal orbi-bundle to $\C P^1(p,q)/\Z_m$ in $\mathbb{C}P^2(p,q,k)/\Z_m(a,b,c)$ is
  precisely $\mathcal{O}_{p,q}(k)/\Z_m(a,b,c) \to \C P^1(p,q)/\Z_m$, where the
  $\Z_m$-action on $\mathcal{O}_{p,q}(k)$ is given by a formula entirely
  analogous to \eqref{eq:17}. \hfill$\Diamond$ 
\end{example}

For our purposes, we need to compute the degree of the complex line
orbi-bundle $\mathcal{O}_{p,q}(k)/\Z_m(a,b,c) \to \C P^1(p,q)/\Z_m$
of Example \ref{ex::orbifold_quotient} and to relate it to the Seifert invariant of the corresponding circle
bundle. The first of these results can be proved with the techniques of Lemma
\ref{lemma:degree_Ok} using the orbifold covering map $\C P^1(p,q) \to
\C P^1(p,q)/\Z_m$. Hence, we omit its proof.

\begin{lemma}\label{lem::linebundle}
  Let $p,q,k$ be positive integers that are pairwise coprime, let $m$
  be a positive integer, and let
  $a,b,c$ be integers such that
  $$ \gcd(|aq-bp|,m) = \gcd(|bk-cq|,m) = \gcd(|cp-ak|,m) = 1.$$
  The degree of the complex line orbi-bundle
  $$\mathcal{O}_{p,q}(k)/\Z_m(a,b,c) \to \C P^1(p,q)/\Z_m$$
  is $k/pqm$. 
\end{lemma}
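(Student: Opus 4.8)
The plan is to reduce the computation of $\deg\bigl(\mathcal{O}_{p,q}(k)/\Z_m(a,b,c)\bigr)$ to the already-computed value $\deg\bigl(\mathcal{O}_{p,q}(k)\bigr) = k/pq$ (Lemma \ref{lemma:degree_Ok}), by exploiting the orbifold covering map $\C P^1(p,q) \to \C P^1(p,q)/\Z_m$ of Example \ref{exm:quotient_wpl} and the covering formula for Euler classes from \cite[Section 3]{bs}. Concretely, I would first observe that the $\Z_m$-action on $\mathcal{O}_{p,q}(k)$ analogous to \eqref{eq:17} covers the $\Z_m$-action on $\C P^1(p,q)$ given by \eqref{eq:26}, so that passing to quotients yields a commutative square
\begin{equation*}
  \begin{tikzcd}
    \mathcal{O}_{p,q}(k) \arrow[r] \arrow[d] & \mathcal{O}_{p,q}(k)/\Z_m(a,b,c) \arrow[d] \\
    \C P^1(p,q) \arrow[r] & \C P^1(p,q)/\Z_m,
  \end{tikzcd}
\end{equation*}
in which both horizontal arrows are orbifold covering maps (by Corollary \ref{cor:quotient_orbifold}) and, on the level of the associated principal $S^1$-orbi-bundles, the top map sends fibers to fibers and is, over regular points, an $m$-to-$1$ covering on base spaces and a fiberwise isomorphism on fibers (since the $\Z_m$-action is free on the complement of the three special points and acts trivially in the fiber directions only up to the identifications already recorded).

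Next I would invoke the covering formula of \cite[Section 3]{bs} exactly as in the proof of Lemma \ref{lemma:degree_Ok}: if $\pi : P \to B$ is a Seifert orbi-fibration pulled back along an orbifold covering map of geometric degree $r$ from $\pi' : P' \to B'$, then the Euler classes are related by $e(\pi) = r \cdot e(\pi')$ — or, read the other way, the Euler class downstairs is $1/r$ times the Euler class upstairs. Here the orbifold covering map $\C P^1(p,q) \to \C P^1(p,q)/\Z_m$ has geometric degree $m$ (it is generically free for $\Z_m$), and by construction the circle orbi-bundle $S(\mathcal{O}_{p,q}(k)) \to \C P^1(p,q)$ is the pullback of $S(\mathcal{O}_{p,q}(k)/\Z_m(a,b,c)) \to \C P^1(p,q)/\Z_m$ along this map. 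Since $\deg \mathcal{O}_{p,q}(k) = k/pq$ by Lemma \ref{lemma:degree_Ok} and Remark \ref{rmk:chern_class} identifies degree with Euler class, we conclude $\deg\bigl(\mathcal{O}_{p,q}(k)/\Z_m(a,b,c)\bigr) = (k/pq)/m = k/pqm$. Finally, I would note that the degree does not depend on the choices of $a,b,c$ — consistent with the diffeomorphism-type statement at the end of Example \ref{exm:quotient_wpl} — since the hypotheses $\gcd(|aq-bp|,m)=\gcd(|bk-cq|,m)=\gcd(|cp-ak|,m)=1$ only guarantee that the quotient is an orbifold with isolated singular points of the expected orders, and the Euler class computation above is insensitive to the particular weights.

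The main obstacle I anticipate is verifying carefully that the top horizontal map of the square really is an orbifold covering map of circle orbi-bundles that restricts to a genuine $m$-fold (topological) covering over the regular locus and is a fiberwise $S^1$-isomorphism there — in other words, that the $\Z_m$-action lifted to $\mathcal{O}_{p,q}(k)$ acts \emph{freely} on the total space away from the fibers over the three special points and commutes with the $S^1$-action, so that the covering formula of \cite[Section 3]{bs} applies verbatim. This is essentially a local check near $[1:0]$, $[0:1]$, using homogeneous coordinates as in Example \ref{ex::orbifold_quotient} and the explicit description of the $\Z_m$-action in \eqref{eq:22}; it runs parallel to the corresponding verification in Lemma \ref{lemma:degree_Ok} (where $\mathrm{pr}: \C P^1 \to \C P^1(p,q)$ played the role of the covering), so the paper can legitimately say ``with the techniques of Lemma \ref{lemma:degree_Ok}'' and omit the routine details, which is exactly what the statement preceding Lemma \ref{lem::linebundle} does.
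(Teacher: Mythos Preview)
Your proposal is correct and follows precisely the approach the paper indicates: the paper omits the proof but says it ``can be proved with the techniques of Lemma \ref{lemma:degree_Ok} using the orbifold covering map $\C P^1(p,q) \to \C P^1(p,q)/\Z_m$,'' which is exactly the covering-formula argument you have written out. Your identification of the key verification (that the quotient map on total spaces is an orbifold covering compatible with the $S^1$-fibers, so that the covering formula of \cite[Section 3]{bs} applies) is the right point to flag, and the paper's choice to leave this routine local check implicit matches your assessment.
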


Combining Lemma \ref{def::EulerOrbibundle}, Remark
\ref{rmk:seifert-orbifolds} and Lemma \ref{lem::linebundle}, the
following result holds.

\begin{corollary}\label{cor:degree_quotient}
  Let $p,q,k$ be positive integers that are pairwise coprime, let $m$
  be a positive integer, and let
  $a,b,c$ be integers such that
  $$ \gcd(|aq-bp|,m) = \gcd(|bk-cq|,m) = \gcd(|cp-ak|,m) = 1.$$
  Let $S(\mathcal{O}_{p,q}(k)/\Z_m(a,b,c)) \to \C
  P^1(p,q)/\Z_m$ denote the principal $S^1$-orbi-bundle
  associated to $\mathcal{O}_{p,q}(k)/\Z_m(a,b,c) \to \C
  P^1(p,q)/\Z_m$ and let 
  $$(0;b_0, (mp, l_{mp}),
  (mq,l_{mq}))$$
  be the Seifert invariant of $S(\mathcal{O}_{p,q}(k)/\Z_m(a,b,c)) \to \C
  P^1(p,q)/\Z_m$. Then 
  \begin{equation}
    \label{eq:28}
    \frac{k}{pqm} = b_0 + \frac{l_{mp}}{mp} + \frac{l_{mq}}{mq}.
  \end{equation}
\end{corollary}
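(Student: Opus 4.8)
The plan is to compute the Euler class of the Seifert orbi-fibration $S(\mathcal{O}_{p,q}(k)/\Z_m(a,b,c)) \to \C P^1(p,q)/\Z_m$ in two ways: once from its Seifert invariant via formula \eqref{eq:23}, and once by identifying it with the degree of the associated complex line orbi-bundle, which is already computed in Lemma \ref{lem::linebundle}. Equating the two expressions will give \eqref{eq:28} at once.

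First I would check that the setup of Remark \ref{rmk:seifert-orbifolds} applies. By Example \ref{ex::orbifold_quotient} (specialized to $\C P^1(p,q)$, i.e.\ forgetting the $z_2$-coordinate), the hypothesis $\gcd(|aq-bp|,m) = \gcd(|bk-cq|,m) = \gcd(|cp-ak|,m) = 1$ guarantees that $\C P^1(p,q)/\Z_m$ is a compact, connected, orientable orbi-surface with exactly two singular points --- the images of $[1:0]$ and $[0:1]$ --- whose orbifold structure groups are $\Z_{mp}$ and $\Z_{mq}$ respectively; moreover $|\C P^1(p,q)/\Z_m|$ is homeomorphic to $S^2$, so the genus is $0$. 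Hence $S(\mathcal{O}_{p,q}(k)/\Z_m(a,b,c)) \to \C P^1(p,q)/\Z_m$ is a Seifert orbi-fibration. Since an exceptional orbit lies over a singular point of the base and its stabilizer is exactly the orbifold structure group of that point (see part \ref{item:10} of Remark \ref{rmk:Seifert_principal}), the $\alpha$-entries of the Seifert invariant are forced to be $mp$ and $mq$, which is why it has the stated form $(0;b_0, (mp, l_{mp}), (mq,l_{mq}))$.

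Next I would assemble the computation. By Remark \ref{rmk:seifert-orbifolds} together with Proposition \ref{def::EulerOrbibundle}, the Euler class of this Seifert orbi-fibration equals $b_0 + \frac{l_{mp}}{mp} + \frac{l_{mq}}{mq}$. On the other hand, by Remark \ref{rmk:chern_class} this Euler class coincides with the degree (first Chern class) of the complex line orbi-bundle $\mathcal{O}_{p,q}(k)/\Z_m(a,b,c) \to \C P^1(p,q)/\Z_m$, and by Lemma \ref{lem::linebundle} that degree is $k/pqm$. Equating the two values yields \eqref{eq:28}.

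I do not expect a genuine obstacle: the argument is a short bookkeeping exercise combining Lemma \ref{def::EulerOrbibundle} (via Remark \ref{rmk:seifert-orbifolds}), Remark \ref{rmk:chern_class} and Lemma \ref{lem::linebundle}. The only point that deserves attention is the shape of the Seifert invariant --- namely that its two exceptional orbits have $\alpha$-entries $mp$ and $mq$ --- which should be argued from the local model of Seifert fibrations over the singular points of the base (and from the coprimality hypothesis, which via Example \ref{ex::orbifold_quotient} pins down those structure groups) rather than simply assumed; once this is in place, the three quoted results combine immediately.
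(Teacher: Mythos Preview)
Your proposal is correct and follows essentially the same route as the paper, which states the corollary as an immediate consequence of Lemma \ref{def::EulerOrbibundle}, Remark \ref{rmk:seifert-orbifolds} and Lemma \ref{lem::linebundle}. Your additional verification of the shape of the Seifert invariant is a reasonable elaboration (though the base orbi-surface is more directly handled in Example \ref{exm:quotient_wpl} than in Example \ref{ex::orbifold_quotient}, and only the condition $\gcd(|aq-bp|,m)=1$ is needed for that step).
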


\subsection{Symplectic orbifolds and Hamiltonian
  actions} \label{sec:sympl-orbif-hamilt}

\subsubsection{Definition and Examples}\label{sec:definition-examples-1}
The definition of a symplectic form carries on from manifolds to
orbifolds. 

\begin{definition}\label{defn:symplectic}
  Let $M$ be an orbifold. A {\bf symplectic form} on $M$ is a closed,
  non-degenerate 2-form $\omega \in \Omega^2(M)$. A \textbf{symplectic
    orbifold} is a pair $(M,\omega)$, where $\omega$ is a symplectic
  form on $M$. A \textbf{symplectomorphism} between two symplectic
  orbifolds $(M_1,\omega_1),(M_2,\omega_2)$ is a diffeomorphism
  $\varphi:M\rightarrow M'$ that satisfies
  $\varphi^*\omega'=\omega$. We denote a symplectomorphism by $\varphi
  : (M_1,\omega_1) \to (M_2,\omega_2)$.
\end{definition}

As for manifolds, the existence of a non-degenerate
2-form on a connected orbifold $M$ implies that the dimension of $M$
is even. The simplest examples of symplectic orbifolds are
linear. 

\begin{example}[Symplectic orbi-vector spaces]\label{exm:symp_ovs}
  Let $(\widehat{V},\widehat{\omega})$ be a real symplectic vector space
  and let $\mathrm{Sp}(\widehat{V},\widehat{\omega}) < \mathrm{GL}(V)$
  be the subgroup of linear maps that are symplectomorphisms. If $\Gamma < \mathrm{Sp}(\widehat{V},\widehat{\omega})$ is a
  finite subgroup, then the orbi-vector space $\widehat{V}/\Gamma$
  inherits a symplectic form $\omega$. We call $(\widehat{V}/\Gamma,
  \omega)$ a {\bf symplectic} orbi-vector space.
\end{example}

\begin{remark}\label{rmk:symp_ovs}
  Let $(\widehat{V},\widehat{\omega})$ be as in Example
  \ref{exm:symp_ovs} and let $\widehat{J}$ be a $\Gamma$-invariant (almost) complex structure
  on $\widehat{V}$ that is compatible with $\widehat{\omega}$. (Such a
  structure exists since $\Gamma$ is compact.) This allows us to consider $\widehat{V}$ as a Hermitian vector
  space. Since the unitary group
  $\mathrm{U}(\widehat{V})$ is the maximal compact
  subgroup of $\mathrm{Sp}(\widehat{V},\widehat{\omega})$ (see
  \cite[Proposition 2.22]{mcduff_salamon}), $\Gamma <
  \mathrm{U}(\widehat{V})$. In particular, the real orbi-vector
  space $\widehat{V}/\Gamma$ can be endowed with the structure of a
  {\bf Hermitian} orbi-vector spaces.
\end{remark}

There are two basic constructions of symplectic orbifolds starting
from symplectic manifolds, which should be thought of as the analogs
of global quotients and quotients in the symplectic category (see Section
\ref{sec:basic-definitions}). Let $(\widehat{M},\widehat{\omega})$ be
a symplectic manifold. First, we suppose that $\Gamma$ is a finite group that acts on
$\widehat{M}$ effectively and by symplectomorphisms. The global
quotient $M:= \widehat{M}/\Gamma$ inherits a symplectic form
$\omega$; this is clearly the analog of a global quotient. More generally, we suppose that $G$ is a Lie group that acts properly and
in a locally free fashion on $\widehat{M}$ so that the action is {\bf
  Hamiltonian}, i.e., there exists a smooth $G$-equivariant map $\Phi : M \to
\mathfrak{g}^*$ to the dual of the Lie algebra of $G$, called {\em
  moment map}, such that
$$ \omega(\xi^M, \cdot) = d \langle \xi, \Phi \rangle \quad \text{ for
  all } \xi \in \mathfrak{g}, $$
where $G$ acts on $\mathfrak{g}^*$ with the coadjoint action, $\xi^M
\in \mathfrak{X}(M)$ is the vector field determined by $\xi$ as in
Remark \ref{rmk:infinitesimal_action}, and $\langle \cdot,\cdot \rangle$ is the natural pairing
$\mathfrak{g} \times \mathfrak{g}^* \to \R$. If $\alpha \in \mathfrak{g}^*$ is a
regular value that is fixed by the coadjoint action of $G$, then the
{\bf reduced space} $M_{\alpha}:=\Phi^{-1}(\alpha)/G$ is an orbifold
that inherits a symplectic form $\omega_\alpha$ (see \cite{mar_wein}
and \cite[Lemma 3.9]{LermanTolman}). This is the analog of a quotient
in symplectic geometry. The following examples are various instances of the above two constructions.

\begin{example}[Weighted projective spaces, continued]\label{exm:wps_as_symp}
  Fix an integer $n \geq 1$ and positive
  integers $m_0,\dots, m_{n}$ satisfying $\gcd(m_0,\dots,m_n)=1$. We
  endow $\C^{n+1}$ with the standard symplectic form $\omega_0$. The
  $S^1$-action on $\C^{n+1}$ given by \eqref{eq:8} is
  Hamiltonian. Hence, $\C P^n(m_0,\ldots, m_n)$ inherits a symplectic
  form $\omega$, which we call the {\bf standard
    symplectic form} on $\C P^n(m_0,\ldots, m_n)$. We observe that, if
  $m_i = 1$ for all $i =0, \ldots, n$, then $\C P^n(m_0,\ldots, m_n)
  \simeq \C P^n$ and $\omega$ is the Fubini-Study form.
\end{example}

\begin{example}[Example \ref{tswEx_1}, continued]\label{exm:tsw_symp}
  We let $M =  (\C P^1 \times \C P^1)/\Z_4$ be the orbifold
  constructed in Example \ref{tswEx_1}. We endow $\C P^1 \times \C
  P^1$ with the symplectic form $\widehat{\omega}$ given by taking the
  product of the Fubini-Study form on each factor. Since the $\Z_4$-action
  of \eqref{eq:29} is symplectic, $M$ inherits a symplectic form $\omega$.
\end{example}

\begin{example}[Example \ref{ex:quotient_cp1xcp1_finite}, continued]\label{exm:cp1cp1zc_symp}
  Given an integer $c > 1$, let $M_c =  (\C P^1 \times \C P^1)/\Z_c$ be the orbifold constructed in Example
  \ref{ex:quotient_cp1xcp1_finite}. We endow $\C P^1 \times \C
  P^1$ with the same symplectic form $\widehat{\omega}$ as in Example
  \ref{exm:tsw_symp}. Since the $\Z_c$-action of \eqref{eq:30} is
  symplectic, $M_c$ inherits a symplectic form $\omega_c$. 
\end{example}

\begin{example}[Cyclic quotients of weighted projective spaces, continued]\label{exm:cyclic_quotients_symp}
  The orbifolds $\C P^1(p,q)/\Z_m = \C P^1(p,q)/\Z_m(a,b)$ and $\C
  P^2(p,q,k)/\Z_m(a,b,c)$ of Examples \ref{exm:quotient_wpl} and
  \ref{ex::orbifold_quotient} admit symplectic forms. To see this, we
  present them as quotient orbifolds. In what follows, we deal only
  with $\C P^1(p,q)/\Z_m$, as the argument for $\C
  P^2(p,q,k)/\Z_m(a,b,c)$ is entirely analogous. Moreover, we fix the
  notation and conditions of Example \ref{exm:quotient_wpl}. 

  We start by considering $\C^2$ endowed with the standard symplectic
  form $\omega_0$. Let $\Z_m \times S^1$ act on $\C^2$ by 
  \begin{equation}
    \label{eq:31}
    \begin{split}
      \mathbb{Z}_m\times S^1 \times \C^2&\rightarrow \C^2\\
      ((\lambda,\mu),(z_0,z_1))&\mapsto(\lambda^a\mu^p z_0,\lambda^b \mu^qz_1),
    \end{split}
  \end{equation}
  (cf. \eqref{eq:8} and \eqref{eq:26}). This action is locally free
  and Hamiltonian with moment map $H : \C^2 \to \R$ given
  by
  $$ H(z_0,z_1) = \frac{1}{2}\left(p|z_0|^2 +q|z_1|^2\right).$$
  Since $pq \neq 0$, $pq$ is a regular value of $H$. Hence, the reduced space
  $M_{pq}(a,b)$ is an orbifold that inherits a symplectic form
  $\omega_{\mathrm{red}}$. The techniques used in Example
  \ref{exm:weighted_proj_standard} can be used to show that $M_{pq}(a,b)$
  is diffeomorphic to $\C P^1(p,q)/\Z_m$. Throughout this paper,
  we use this diffeomorphism to identify $M_{pq}(a,b)$ with $\C
  P^1(p,q)/\Z_m$, trusting that this does not cause confusion.
\end{example}

Next we show that certain projectivized plane
bundles over orbi-surfaces admit a symplectic form (see Example
\ref{exm:projectivized}).

\begin{example}[Projectivized plane bundles, continued]\label{exm:proj_symp}
  Let $\pi : M \to \Sigma$ be a Seifert fibration with Euler class
  different from zero and such that $M$ is a manifold. Let
  $\mathrm{pr} : L \to \Sigma$ be the corresponding complex line
  orbi-bundle. We aim to construct a family of symplectic forms on the
  total space of the projectivization $\mathbb{P}(L\oplus \C) \to
  \Sigma$. To this end, we fix the identification between $ \mathbb{P}(L\oplus \C)$ and the
  quotient $(M \times \C P^1)/S^1$ as in Example
  \ref{exm:projectivized}.

  Since the Euler class of $\pi : M \to \Sigma$ does not vanish, by
  \cite[Theorem 1.4]{kegel_lange}, there exists an $S^1$-invariant 1-form $\lambda \in
  \Omega^1(M)$ such that $\lambda(X^M) =1$, where $X^M \in
  \mathfrak{X}(M)$ is the infinitesimal generator of the
  $S^1$-action. (This can be thought of as a connection 1-form on the
  Seifert fibration.) Moreover, in analogy with the case of principal
  $S^1$-bundles over manifolds, since the Euler class of $\pi : M \to
  \Sigma$ does not vanish, there exists a {\em symplectic form}
  $\omega_\Sigma \in \Omega^2(\Sigma)$ such that $d\lambda = \pi^*
  \omega_{\Sigma}$. (The 1-form $\lambda$ is a so-called {\em Besse
    contact form}, see \cite{kegel_lange} for more details.)

  We fix $s > 0$. We consider the Fubini-Study symplectic form $\omega_{\C P^1}$ on
  $\C P^1$ and we let $h : \C P^1 \to \R$ be the smooth function given
  by
  $$[z_0:z_1] \mapsto \frac{|z_0|^2}{|z_0|^2+|z_1|^2} + \frac{1}{s}.$$
  This is a moment map for the $S^1$-action on $(\C P^1, \omega_{\C P^1}$ given by
  $\lambda \cdot [z_0:z_1] = [z_0:\lambda z_1]$.
  We denote the projections from $M \times \C P^1$
  to the first and second component by $\mathrm{pr}_1$ and
  $\mathrm{pr}_2$ respectively and consider the
  following 2-form on $M \times \C P^1$:
  $$ s\left(\mathrm{pr}^*_2 \omega_{\C P^1} - d\left(\mathrm{pr}^*_2h\,
    \mathrm{pr}^*_1 \lambda \right)\right).$$
  By construction, this form is closed, has maximal rank equal to
  four and its kernel is precisely the span of the infinitesimal
  generator of the $S^1$-action of Example
  \ref{exm:projectivized}. Hence, we obtain a symplectic form $\omega_s
  \in \Omega^2((M \times \C P^1)/S^1)$ as desired.
\end{example}

We conclude this section with the following observation, which,
intuitively speaking, states that we may work with {\em full}
submanifolds of symplectic suborbifolds as in the case of manifolds.

\begin{remark}\label{rmk:full_suborbifold_symplectic}
  By Remark \ref{rmk:full_suborbifolds_normal}, the tangent bundle of
  a full suborbifold of $M$ can be seen as an orbi-subbundle of $TM$. Hence, the notion of {\bf
    Lagrangian/isotropic/symplectic} full suborbifolds of a symplectic
  orbifold $(M,\omega)$ can be defined as in the case of manifolds. 
\end{remark}

To conclude this section, we state the following important
property of symplectic orbifolds, which is the analog of a standard
result for symplectic manifolds (see \cite[Proposition 8]{mu_ro} for a
proof). 

\begin{proposition}\label{prop:acs}
  Let $(M,\omega)$ be a symplectic orbifold. Then there exists a
  compatible almost complex structure $J$ on $M$, i.e.,
  $\omega(\cdot,J\cdot)$ is a Riemannian metric on $M$. 
\end{proposition}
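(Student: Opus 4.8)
The plan is to mimic the classical proof for symplectic manifolds — polar decomposition of $\omega$ with respect to an auxiliary Riemannian metric — and to observe that, since this construction is canonical, it passes equivariantly to local uniformizing charts and glues. First I would fix a Riemannian metric $g$ on $M$. Such a metric exists by the usual averaging argument: cover $|M|$ by domains of l.u.c.'s, take a partition of unity subordinate to this cover, in each l.u.c. $\oc$ pick an arbitrary metric on $\widehat{U}$ and average it over the finite group $\Gamma$ to make it $\Gamma$-invariant (hence descending to $\varphi(\widehat{U})$), and patch these together. Concretely, $g$ is a positive-definite symmetric $C^{\infty}$ section of $T^*M \otimes T^*M$, good by Remark \ref{rmk:section}. (Alternatively, one could exploit the presentation $M \cong \mathrm{Fr}(M)/\mathrm{O}(n)$ of Theorem \ref{thm:frame}, but the chart-wise argument is cleaner here.)

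Next comes the pointwise linear algebra. Working in a l.u.c. $\oc$ centered at a point as in Remark \ref{rmk:centred}, the globally defined tensors $\omega$ and $g$ lift to $\Gamma$-invariant tensors $\widehat{\omega}$, $\widehat{g}$ on $\widehat{U}$, with $\widehat{\omega}$ symplectic and $\widehat{g}$ a metric. Define a $\Gamma$-equivariant endomorphism $\widehat{A}$ of $T\widehat{U}$ by $\widehat{\omega}(u,v) = \widehat{g}(\widehat{A}u,v)$. Then $\widehat{A}$ is invertible and skew-adjoint with respect to $\widehat{g}$, so $\widehat{A}^*\widehat{A} = -\widehat{A}^2$ is symmetric positive definite; let $\widehat{P} := (-\widehat{A}^2)^{1/2}$ be its unique positive-definite symmetric square root and set $\widehat{J} := \widehat{A}\,\widehat{P}^{-1}$. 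Since $\widehat{P}$ is a function of $\widehat{A}^2$ it commutes with $\widehat{A}$, and one checks in the usual way that $\widehat{J}^2 = -\mathrm{id}$, that $\widehat{J}$ is $\widehat{g}$-orthogonal, and that $\widehat{g}_J := \widehat{g}(\widehat{P}\,\cdot\,,\cdot)$ is a $\widehat{J}$-compatible metric satisfying $\widehat{\omega}(\cdot,\widehat{J}\cdot) = \widehat{g}_J$. All of this is $\Gamma$-equivariant because $\widehat{A}$ is and the square root is canonical.

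Finally I would carry out the gluing. The assignment $(\widehat{g},\widehat{\omega}) \mapsto \widehat{J}$ commutes with every linear isomorphism — pulling back $g$ and $\omega$ pulls back $\widehat{A}$, hence $\widehat{P}$ and $\widehat{J}$ — so in particular it is compatible with the $\Gamma$-action, with the derivatives of embeddings $\oc \hookrightarrow \otheroc$, and with the transition data of an orbifold atlas. Hence the locally defined $\widehat{J}$'s assemble into a well-defined $C^{\infty}$ section $J$ of the orbi-bundle $\mathrm{End}(TM) \to M$ (see Remark \ref{rmk:tangent_orbi-bundle}), which is good by Remark \ref{rmk:section} — equivalently, $J$ is built by a fixed real-analytic formula out of the good sections $g$ and $\omega$. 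By construction $J^2 = -\mathrm{id}$, and $\omega(\cdot,J\cdot)$ is locally, hence globally, the Riemannian metric assembled from the $\widehat{g}_J$'s. Thus $J$ is a compatible almost complex structure.

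I do not expect a serious obstacle. The two points requiring care are (i) the existence of a Riemannian metric on an orbifold, which is routine averaging plus a partition of unity, and (ii) verifying that $\widehat{A} \mapsto (-\widehat{A}^2)^{1/2}$ depends smoothly (indeed analytically) on $\widehat{A}$ on the open set where $-\widehat{A}^2$ is positive definite — the standard smoothness of the matrix square root on positive-definite symmetric matrices — so that the assembled $J$ is genuinely a good $C^{\infty}$ section; this follows from the naturality noted above. Everything else is the verbatim manifold argument.
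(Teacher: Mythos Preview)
Your argument is correct and is precisely the standard proof: polar decomposition of the endomorphism $A$ defined by $\omega(u,v)=g(Au,v)$, observing that the construction is natural and hence passes to l.u.c.'s equivariantly and glues. There is nothing to fault.

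However, the paper does not actually prove this proposition; it merely states it and refers the reader to \cite[Proposition 8]{mu_ro} for a proof. So there is no in-paper argument to compare against. Your write-up is essentially the argument one would expect to find in the cited reference (and is the direct orbifold analogue of the manifold proof), so in that sense your approach aligns with what the paper implicitly has in mind. The only cosmetic point: the paper also notes (Remark \ref{rmk:contractible}) that the space of compatible almost complex structures is contractible, which follows from the same naturality you exploit together with convexity of the space of auxiliary metrics; you may want to record that as an immediate corollary of your construction.
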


\begin{remark}\label{rmk:contractible}
  Since the proof of \cite[Proposition 8]{mu_ro} is
  entirely analogous to the case of symplectic manifolds, the space of
  compatible almost complex structures on a given symplectic orbifold
  is contractible.
\end{remark}

Proposition \ref{prop:acs} has the following immediate consequence.

\begin{corollary}\label{cor:complex_bundle}
  Let $(M,\omega)$ be a symplectic orbifold and let $N \subset M$ be a
  full symplectic suborbifold. Then $TM$ admits the structure of a
  complex vector orbi-bundle such that the restriction $TM|_N$ has
  $TN$ as a complex vector orbi-subbundle. In particular, the normal
  bundle to $N$ has the structure of a complex vector orbi-bundle. 
\end{corollary}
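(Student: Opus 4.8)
The plan is to upgrade Proposition~\ref{prop:acs} to a \emph{relative} statement: there exists a compatible almost complex structure $J$ on $(M,\omega)$ such that $J$ preserves the orbi-subbundle $TN \subseteq TM|_N$. Here we view $TN$ as an orbi-subbundle of $TM|_N$ via Remark~\ref{rmk:full_suborbifolds_normal} and Remark~\ref{rmk:full_suborbifold_symplectic}, using that $N$ is a \emph{full} suborbifold. Once such a $J$ is available, $(TM,J)$ is a complex vector orbi-bundle, since in every l.u.c. it lifts to a $\Gamma$-invariant fibrewise complex structure on the lifted tangent bundle (cf. Remark~\ref{rmk:tangent_orbi-bundle}); because $J$ preserves $TN$, the latter becomes a complex vector orbi-subbundle of $TM|_N$, which is the content of the first assertion.

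To produce such a $J$, I would first use that $N$ is \emph{symplectic}: then $\omega$ restricts to a fibrewise nondegenerate form on $TN$, so $TM|_N = TN \oplus (TN)^{\omega}$, where $(TN)^{\omega}$ is the fibrewise $\omega$-orthogonal complement, and both summands are symplectic vector orbi-bundles over $N$. Working in $G$-invariant l.u.c.'s centered along $N$ (legitimate since $N$ is full, so the lift $\widehat{N}$ is $\Gamma$-invariant; cf. Remark~\ref{rmk:centred} and Theorem~\ref{thm:tub_neigh_orb}) and averaging over the finite structure groups, the standard manifold construction yields compatible complex structures $J_N$ on $TN$ and $J_\nu$ on $(TN)^{\omega}$; their direct sum is a compatible complex structure on $TM|_N$ preserving $TN$. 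Then, using the tubular neighbourhood Theorem~\ref{thm:tub_neigh_orb} together with the fact recorded in Remark~\ref{rmk:contractible} that the fibrewise space $\mathcal{J}(\omega_x)$ of $\omega_x$-compatible complex structures on $T_xM$ is contractible, one first extends $J_N \oplus J_\nu$ to a compatible almost complex structure on an invariant tubular neighbourhood of $N$, and then patches it with an arbitrary compatible almost complex structure on the complement by means of the canonical fibrewise retraction, obtaining a global compatible $J$ on $M$ with $J(TN) = TN$.

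For the final assertion, the normal orbi-bundle $\nu_N$ of Remark~\ref{rmk:full_suborbifolds_normal} is isomorphic as a vector orbi-bundle to $(TN)^{\omega}$; and $(TN)^{\omega}$ is $J$-invariant, because the Riemannian metric $\omega(\cdot,J\cdot)$ has $(TN)^{\omega}$ as the orthogonal complement of the $J$-invariant subbundle $TN$. Hence $(TN)^{\omega}$, and therefore $\nu_N$, inherits the structure of a complex vector orbi-bundle.

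I expect the main obstacle to be the relative extension step: making rigorous, in the orbifold category, that a compatible almost complex structure prescribed along the suborbifold $N$ extends to a global one. The manifold argument (contractibility of the fibre $\mathcal{J}(\omega_x)$ plus a partition-of-unity argument exploiting the canonical fibrewise retraction) carries over after lifting to l.u.c.'s and averaging over the finite groups, but the verification of the compatibility conditions between charts — already implicit in the proof of Proposition~\ref{prop:acs} — is the only genuinely technical point, and everything else is linear algebra over orbifold charts.
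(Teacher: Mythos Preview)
Your proof is correct and supplies precisely the details that the paper omits: the paper simply declares the corollary an ``immediate consequence'' of Proposition~\ref{prop:acs} without further argument. You correctly observe that a \emph{generic} compatible $J$ on $(M,\omega)$ need not preserve $TN$, so the relative construction---build a compatible $J$ along $N$ respecting the symplectic splitting $TM|_N = TN \oplus (TN)^{\omega}$, then extend globally using contractibility of the fibre $\mathcal{J}(\omega_x)$---is exactly what is required, and your outline carries the standard manifold argument over to orbifold charts in the expected way.
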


We observe that, by Remark \ref{rmk:contractible}, the structures of
complex vector orbi-bundles of Corollary \ref{cor:complex_bundle} do
not depend on the choice of compatible almost complex structure up to isomorphism.

\subsubsection{Darboux's theorem and its consequences in dimension four}\label{sec:darbouxs-theorem-its}

A version of Darboux's theorem holds for symplectic orbifolds (see,
for instance, \cite[Proposition 11]{mu_ro}), i.e., Example
\ref{exm:symp_ovs} is locally the only one up to symplectomorphism. 

\begin{theorem}[Darboux's theorem for orbifolds]\label{thm:darboux_orbifolds}
  Let $(M,\omega)$ be a symplectic orbifold. If $x \in M$ is a
  point with orbifold structure group $\Gamma$, then there exists a
  l.u.c. $\oc$ centered at $x$ such that $\widehat{U}$ is an open
  subset of $\R^{2n} \simeq \C^n$, $\Gamma < \mathrm{U}(n)$ and $\omega|_U$ is
  as in Example \ref{exm:symp_ovs}. 
\end{theorem}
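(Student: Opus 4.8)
The plan is to reduce the statement to the classical equivariant Darboux theorem applied in a local uniformizing chart. First I would invoke Remark \ref{rmk:centred} together with Proposition \ref{prop:acs}: choose a $\Gamma$-invariant compatible almost complex structure $J$ on $M$ (which exists since the space of compatible almost complex structures is contractible and, more to the point, because one can average over the finite group $\Gamma$ in a l.u.c.), and pick a l.u.c. $\oc$ centered at $x$ with $\widehat{U}\subseteq\R^{2n}$ an open $\mathrm{O}(2n)$-invariant neighborhood of the origin, $\varphi(0)=x$, and $\Gamma<\mathrm{O}(2n)$ the orbifold structure group of $x$ acting linearly. Pulling back the symplectic form via $\varphi$ (using that $M$ has isolated singular points, so forms are defined naively, or more generally using the lift of $\varphi$) gives a $\Gamma$-invariant closed nondegenerate $2$-form $\widehat\omega:=\varphi^*\omega$ on $\widehat U$, and the pulled-back $\widehat J$ is a $\Gamma$-invariant $\widehat\omega$-compatible almost complex structure. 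At the origin, the pair $(\widehat\omega_0,\widehat J_0)$ makes $T_0\widehat U\cong\R^{2n}$ into a Hermitian vector space, and the linear $\Gamma$-action preserves this Hermitian structure; hence, after a $\Gamma$-equivariant linear change of coordinates (diagonalizing the Hermitian inner product), we may assume $\widehat\omega_0$ is the standard symplectic form $\widehat\omega_{\mathrm{std}}$ on $\C^n$ and $\Gamma<\mathrm{U}(n)$.

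Next I would run the Moser trick equivariantly. Consider the family $\widehat\omega_t:=(1-t)\widehat\omega_{\mathrm{std}}+t\,\widehat\omega$ for $t\in[0,1]$ on a possibly smaller $\Gamma$-invariant neighborhood of $0$; since $\widehat\omega_0=\widehat\omega_t$ at the origin for all $t$, each $\widehat\omega_t$ is nondegenerate near $0$ (shrinking $\widehat U$ if needed), closed, and $\Gamma$-invariant. Write $\widehat\omega-\widehat\omega_{\mathrm{std}}=d\mu$ for a $1$-form $\mu$ vanishing to first order at $0$; by averaging $\mu$ over $\Gamma$ we may take $\mu$ to be $\Gamma$-invariant (averaging commutes with $d$ and with vanishing at $0$, since $0$ is $\Gamma$-fixed). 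Solving $\iota_{X_t}\widehat\omega_t=-\mu$ produces a time-dependent $\Gamma$-equivariant vector field $X_t$ vanishing at $0$, whose flow $\psi_t$ (defined on a smaller $\Gamma$-invariant neighborhood by the naive flow of Remark \ref{rmk:flow}, which makes sense precisely because we are in a genuine manifold chart here) is $\Gamma$-equivariant and satisfies $\psi_1^*\widehat\omega=\widehat\omega_{\mathrm{std}}$, $\psi_1(0)=0$.

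Finally I would transport this back to the orbifold. The $\Gamma$-equivariant diffeomorphism $\psi_1$ of $\widehat U$ (shrunk, $\Gamma$-invariant, containing $0$) descends, via Remark \ref{rmk:induced_luc} and the fact that $\Gamma$-equivariant maps between l.u.c.'s induce maps of orbifolds, to a new l.u.c. $(\widehat U',\Gamma,\varphi\circ\psi_1^{-1})$ centered at $x$ in which the pullback of $\omega$ is $\widehat\omega_{\mathrm{std}}$; intersecting $\widehat U'$ with a $\Gamma$-invariant ball we obtain the desired l.u.c. with $\widehat U\subseteq\C^n$, $\Gamma<\mathrm{U}(n)$, and $\omega|_U$ as in Example \ref{exm:symp_ovs}. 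The main obstacle — really the only place where care is needed beyond citing the classical proof — is ensuring at each stage ($J$, the linear normalization, the primitive $\mu$, the vector field $X_t$, its flow) that everything stays $\Gamma$-invariant/equivariant; this is handled uniformly by averaging over the finite group $\Gamma$ (using that $0$ is fixed) and by noting that the constructions of the classical Darboux–Moser argument are natural with respect to the linear $\Gamma$-action. Since the entire argument takes place inside a fixed manifold chart $\widehat U$, no orbifold-specific subtleties about flows or forms intervene — the isolated-singular-points hypothesis is not even needed here, only the averaging.
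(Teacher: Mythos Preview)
The paper does not give its own proof of this theorem; it simply states it and cites \cite[Proposition 11]{mu_ro}. Your proposal is correct and is exactly the standard argument one finds in such references: work in a l.u.c.\ centered at $x$, pull $\omega$ back to a $\Gamma$-invariant symplectic form $\widehat\omega$ on $\widehat U\subseteq\R^{2n}$, use a $\Gamma$-invariant compatible complex structure at the origin to put $\Gamma$ inside $\mathrm{U}(n)$ and identify $\widehat\omega_0$ with the standard form, and then run the Moser isotopy with all data averaged over the finite group $\Gamma$ so that the resulting flow is $\Gamma$-equivariant and descends. Your observation that the whole argument lives in the manifold $\widehat U$, so no orbifold-specific analysis is needed beyond $\Gamma$-equivariance, is exactly the point.
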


\begin{remark}\label{rmk:darboux_as_linearisation}
  With the notation of Theorem \ref{thm:darboux_orbifolds}, we observe
  that the orbi-vector space $T_x M$ can be identified with
  $\R^{2n}/\Gamma$ and that it inherits a linear symplectic form
  $\omega_x$. Hence, it is a symplectic orbi-vector space (see Example \ref{exm:symp_ovs}). 
\end{remark}

Since the fixed point set of any linear unitary action has real
codimension at least two, the following result is an immediate
consequence of Theorem \ref{thm:darboux_orbifolds}.

\begin{corollary}\label{cor:symp_codim_2}
  Let $(M,\omega)$ be a symplectic orbifold. Then the singular set of
  $M$ has codimension at least two.
\end{corollary}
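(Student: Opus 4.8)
The final statement to prove is Corollary~\ref{cor:symp_codim_2}: the singular set of a symplectic orbifold $(M,\omega)$ has codimension at least two.

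The plan is to deduce this directly from Darboux's theorem for orbifolds (Theorem~\ref{thm:darboux_orbifolds}) together with an elementary linear-algebra fact about the fixed-point sets of finite unitary groups. First, I would fix a singular point $x \in M$, so that its orbifold structure group $\Gamma$ is non-trivial, and apply Theorem~\ref{thm:darboux_orbifolds} to obtain a l.u.c. $\oc$ centered at $x$ with $\widehat{U} \subseteq \R^{2n} \simeq \C^n$ open, $\Gamma < \mathrm{U}(n)$, and $\omega|_U$ the standard linear form. Since by Remark~\ref{rmk:induced_luc} and Definition~\ref{defn:orbifold_str_group} the singular points of $U$ that have structure group all of $\Gamma$ correspond, under $\varphi$, to the image of the fixed-point set $\widehat{U}^{\Gamma}$, and more generally every singular point near $x$ corresponds to a point in $\widehat{U}$ with non-trivial stabilizer, it suffices to show that the subset of $\widehat{U}$ consisting of points with non-trivial stabilizer has real codimension at least two.

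The key step is the linear-algebra observation: if $\Lambda < \mathrm{U}(n)$ is any non-trivial finite subgroup and $g \in \Lambda$, $g \ne \mathrm{id}$, then the fixed-point set $(\C^n)^{g} = \ker(g - \mathrm{id})$ is a proper complex subspace of $\C^n$, hence has real codimension at least $2$. The set of points of $\widehat{U}$ with non-trivial stabilizer is contained in the finite union $\bigcup_{g \in \Gamma \setminus \{\mathrm{id}\}} \widehat{U} \cap (\C^n)^{g}$, a finite union of (intersections of $\widehat{U}$ with) complex-linear subspaces, each of real codimension at least two; a finite union of such sets still has codimension at least two. Pushing forward under the homeomorphism $\bar\varphi$ and noting that $x$ was an arbitrary singular point, we conclude that the singular set of $M$ is locally contained in a codimension-$\geq 2$ subset, hence has codimension at least two.

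I do not expect any real obstacle here: the only subtlety worth spelling out is the passage between "point of $\widehat{U}$ with non-trivial stabilizer" and "singular point of $M$", which is exactly the content of how orbifold structure groups are computed in local charts (the discussion preceding Definition~\ref{defn:orbifold_str_group} and Remark~\ref{rmk:induced_luc}), together with the fact that conjugate points in $\widehat{U}$ have conjugate stabilizers so the set in question is $\Gamma$-invariant and descends to $U$. Everything else is the standard fact that fixed subspaces of unitary transformations are complex subspaces, which is why the statement is phrased with codimension \emph{two} rather than one. In fact this is where the hypothesis of being \emph{symplectic} (as opposed to merely Riemannian) is used: it forces $\Gamma < \mathrm{U}(n)$ rather than merely $\Gamma < \mathrm{O}(2n)$, and an orientation-preserving orthogonal transformation of $\R^2$ can have $\{0\}$ as fixed set, which is codimension two anyway, but the cleanest uniform argument is the unitary one.
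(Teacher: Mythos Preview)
Your proof is correct and follows exactly the same approach as the paper: the paper states the corollary as an immediate consequence of Theorem~\ref{thm:darboux_orbifolds} together with the observation that the fixed point set of any linear unitary action has real codimension at least two, and you have simply spelled out this one-line argument in full detail. Your elaboration of why the singular locus in $U$ corresponds to points of $\widehat{U}$ with non-trivial $\Gamma$-stabilizer, and why each such stabilizer fixes a proper complex subspace, is precisely what the paper leaves implicit.
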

\color{black}

Let $(M,\omega)$ be a symplectic orbifold. By Theorem
\ref{thm:darboux_orbifolds}, the local invariants of a neighborhood of a point $x \in M$ with
structure group $\Gamma$ up to symplectomorphisms are precisely the invariants of a Hermitian
orbi-vector space $\C^n/\Gamma$ up to unitary isomorphisms. For this
reason, and since we are particularly interested in the case of
four-dimensional symplectic orbifolds with isolated singular points and abelian orbifold structure groups,
in what follows we study Hermitian
orbi-vector spaces 
$\C^2/\Gamma$ where $\Gamma$ is abelian and $[(0,0)]$
is the only singular point.

Fix a faithful representation $\Gamma \hookrightarrow
\mathrm{U}(2)$. Since $\Gamma$
is finite and abelian, we may assume that the image of $\Gamma$ in $\mathrm{U}(2)$ is contained
in the maximal torus consisting of diagonal matrices $(S^1)^2 <
\mathrm{U}(2)$. Moreover, since the rank of that torus is two,
$\Gamma$ has at most two generators.

\begin{lemma}\label{lemma:abelian+isolated_cyclic}
  Suppose that $\Gamma$ is a finite abelian group and let $\Gamma \hookrightarrow
  \mathrm{U}(2)$ be a faithful representation. If the quotient
  $\C^2/\Gamma$ orbifold has isolated singular points, then $\Gamma$
  is cyclic.
\end{lemma}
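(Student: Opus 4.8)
The plan is to exploit the normalization recorded just before the statement: since $\Gamma$ is finite and abelian, after conjugating we may assume $\Gamma$ is a subgroup of the diagonal maximal torus $(S^1)^2 < \mathrm{U}(2)$, so every $\gamma \in \Gamma$ has the form $\gamma = \diag(\gamma^{(1)},\gamma^{(2)})$ with $\gamma^{(i)} \in S^1$, and $\Gamma$ has at most two generators. Under this identification a point $z = (z_1,z_2) \in \C^2$ maps to a singular point of $\C^2/\Gamma$ exactly when its stabilizer $\Gamma_z = \{\gamma \in \Gamma \mid \gamma^{(i)} z_i = z_i \text{ for } i=1,2\}$ is nontrivial, and the fixed-point set of a single nontrivial diagonal element $\gamma$ is the coordinate subspace $\{z_i = 0 : \gamma^{(i)} \neq 1\}$.

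First I would show that no nontrivial element of $\Gamma$ has a trivial diagonal entry. Suppose, for contradiction, that some nontrivial $\gamma \in \Gamma$ has, say, $\gamma^{(1)} = 1$; then $\gamma^{(2)} \neq 1$ and the fixed set of $\gamma$ on $\C^2$ is the complex line $\{z_2 = 0\}$. Every point of this line has $\gamma$ in its stabilizer and hence maps to a singular point of $\C^2/\Gamma$; since the image of a complex line under the finite quotient map $\C^2 \to \C^2/\Gamma$ is still two-real-dimensional, this produces a non-discrete set of singular points, contradicting the hypothesis. The case $\gamma^{(2)} = 1$ is handled symmetrically using the line $\{z_1 = 0\}$.

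It then follows that the projection homomorphism $\pi_1 \colon \Gamma \to S^1$, $\gamma \mapsto \gamma^{(1)}$, is injective: its kernel is precisely the set of $\gamma \in \Gamma$ with $\gamma^{(1)} = 1$, which by the previous paragraph reduces to $\{1\}$. Hence $\Gamma$ embeds in $S^1$ and is therefore cyclic, since every finite subgroup of $S^1$ is cyclic. The only point requiring a little care --- and the step I expect to be the main (though still elementary) obstacle --- is justifying that the set of singular points arising from a coordinate line is genuinely non-discrete in $\C^2/\Gamma$; this is immediate from the finiteness of $\Gamma$, but it is where one actually invokes the ``isolated singular points'' hypothesis.
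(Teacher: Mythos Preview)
Your proof is correct and shares the same underlying observation as the paper's --- a nontrivial diagonal element with one entry equal to $1$ fixes a coordinate line and thereby produces a non-discrete set of singular points --- but the two arguments are organized differently. The paper argues by contradiction from the structure theorem: assuming $\Gamma \simeq \Z_{m_1} \times \Z_{m_2}$ with $\gcd(m_1,m_2) > 1$, it writes out the action explicitly with integer exponents, uses effectiveness to derive a coprimality condition, and then constructs a specific nontrivial element fixing $\{z_2 = 0\}$. Your route is more direct: you observe that the hypothesis forces both coordinate projections $\Gamma \to S^1$ to be injective, whence $\Gamma$ embeds in $S^1$ and is cyclic. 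This bypasses the explicit computation entirely and is arguably cleaner; the paper's version has the minor advantage of exhibiting concretely which element goes wrong in the non-cyclic case, but that information is not needed elsewhere.
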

\begin{proof}
  Suppose that there exist
  integers $m_1,m_2 \geq 2$ with $\gcd(m_1,m_2) > 1$ such that $\Gamma
  \simeq \Z_{m_1} \times \Z_{m_2}$. We may assume that there exist
  $k_1,k_2,j_1,j_2 \in \Z$ such that the $\Gamma$-action on
  $\C^2$ induced by the representation is given by
  \begin{equation*}
    \begin{split}
      \Gamma \times \C^2 &\to \C^2 \\
      ((e^{\frac{2\pi i l_1}{m_1}}, e^{\frac{2\pi i
          l_2}{m_2}}),(z_1,z_2)) &\mapsto (e^{\frac{2\pi i l_1k_1}{m_1}}e^{\frac{2\pi i
          l_2k_2}{m_2}}z_1, e^{\frac{2\pi i l_1j_1}{m_1}}e^{\frac{2\pi i
          l_2j_2}{m_2}}z_2).
    \end{split}
  \end{equation*}
  A simple calculation shows that the above action is effective if and
  only if
  \begin{equation}
    \label{eq:62}
    \gcd(|k_2j_1 - k_1j_2|,m_1) = 1 = \gcd(|k_2j_1 - k_1j_2|,m_2).
  \end{equation}
  We set $\gcd(m_1,m_2) = c$ and write $m_i = m'_ic$ for $i=1,2$. We
  set $l_1 = m'_1k_2$ and $l_2 = -m'_2k_1$. Then the subgroup of $\Gamma$ generated by $(e^{\frac{2\pi i l_1}{m_1}}, e^{\frac{2\pi i
      l_2}{m_2}})$ fixes all points in the subspace $z_2 =
  0$. Moreover, either $l_1$ is not divisible by $m_1$ or
  $l_2$ is not divisible by $m_2$. Indeed, if otherwise, then $c > 1$ divides both $k_1$
  and $k_2$, contradicting \eqref{eq:62}. Hence, all points in the
  subspace $z_2=0$ have non-trivial stabilizer.
\end{proof}

By Lemma \ref{lemma:abelian+isolated_cyclic}, we may assume that
$\Gamma$ is cyclic, i.e., there exists an integer $m \geq 2$ such that
$\Gamma \simeq \Z_m$. As in the proof of Lemma
\ref{lemma:abelian+isolated_cyclic},
given any representation $\Z_m \to \mathrm{U}(2)$, there exist integers $a,b \in \Z$ such that
the induced action on $\C^2$ is given by 
\begin{equation}
  \label{eq:5}
  \begin{split}
    \Z_m \times \C^2 &\to \C^2 \\
    (e^{\frac{2\pi ik}{m}},(z_1,z_2)) &\mapsto (e^{\frac{2\pi i
        ak}{m}}z_1, e^{\frac{2\pi ibk}{m}}z_2). 
  \end{split}
\end{equation}
A simple calculation yields the following result, which gives a
necessary and sufficient condition for the action of \eqref{eq:5} to be such that the origin is the only point
with non-trivial stabilizer.

\begin{lemma}\label{lemma:cyclic_rep_isolated_sing}
  Consider the $\Z_m$-action on $\C^2$ given by \eqref{eq:5}. The
  quotient $\C^2/\Z_m$ is an orbifold such that $[(0,0)]$ is the only
  singular point if and only if
  \begin{equation}
    \label{eq:2}
    \gcd(a,m) = 1 =\gcd(b,m).
  \end{equation}
\end{lemma}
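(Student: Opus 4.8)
The plan is to work directly with the explicit $\Z_m$-action of \eqref{eq:5} and to characterize exactly which points of $\C^2$ have non-trivial stabilizer, then to read off the criterion. Write $\zeta = e^{2\pi i/m}$, a generator of $\Z_m$. For a point $(z_1,z_2) \in \C^2$, the power $\zeta^k$ fixes $(z_1,z_2)$ if and only if $e^{2\pi i ak/m}z_1 = z_1$ and $e^{2\pi i bk/m}z_2 = z_2$, i.e.\ $ak \equiv 0 \pmod m$ whenever $z_1 \neq 0$, and $bk \equiv 0 \pmod m$ whenever $z_2 \neq 0$. First I would dispose of the case analysis by coordinate subspaces: the origin is always fixed by all of $\Z_m$, and a point with $z_1 \neq 0 \neq z_2$ is fixed by $\zeta^k$ exactly when $ak \equiv bk \equiv 0 \pmod m$.

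Next I would treat the two coordinate axes (minus the origin). On the subspace $\{z_2 = 0\}$, a point $(z_1,0)$ with $z_1 \neq 0$ is fixed by $\zeta^k$ precisely when $ak \equiv 0 \pmod m$; such a nonzero $k \in \{1,\dots,m-1\}$ exists if and only if $m \nmid ak$ fails for some $k$, equivalently if and only if $\gcd(a,m) > 1$ (take $k = m/\gcd(a,m)$). So the axis $\{z_2 = 0\} \setminus \{0\}$ consists of singular points exactly when $\gcd(a,m) > 1$; symmetrically, $\{z_1 = 0\}\setminus\{0\}$ consists of singular points exactly when $\gcd(b,m)>1$. Finally, for a point $(z_1,z_2)$ with both coordinates nonzero to have a non-trivial stabilizer we would need some $k \in \{1,\dots,m-1\}$ with $ak \equiv bk \equiv 0 \pmod m$; since $\gcd(a,m)=1$ forces $m \mid k$, there is no such $k$ once \eqref{eq:2} holds, so no point off the axes is singular.

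Putting these observations together: if \eqref{eq:2} holds, then no point with $z_1 \neq 0$ and no point with $z_2 \neq 0$ has non-trivial stabilizer, so $[(0,0)]$ is the only singular point of $\C^2/\Z_m$ (which is an orbifold by Example~\ref{exm:orbi_vector_space}, or by Corollary~\ref{cor:quotient_orbifold}). Conversely, if $\gcd(a,m) > 1$ then every point of $\{z_2 = 0\}\setminus\{0\}$ is singular, and if $\gcd(b,m) > 1$ then every point of $\{z_1 = 0\}\setminus\{0\}$ is singular; in either case there are non-isolated singular points, and indeed $[(0,0)]$ is not the only singular point. (One should also note that \eqref{eq:2} is precisely the condition that the representation \eqref{eq:5} be faithful, so there is no separate effectiveness issue to check.) The main obstacle is essentially bookkeeping: making sure the "if and only if" is clean by handling the origin, the two punctured axes, and the open dense torus orbit $\{z_1 z_2 \neq 0\}$ separately, and correctly invoking the elementary fact that $\{k \in \Z/m : ak \equiv 0\} $ is non-trivial iff $\gcd(a,m) > 1$. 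There is no real analytic or geometric difficulty here; the content is entirely the congruence computation.
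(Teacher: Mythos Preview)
Your proof is correct and is exactly the ``simple calculation'' the paper alludes to but does not write out (the paper states the lemma without proof). The case split into the origin, the two punctured coordinate axes, and the open set $\{z_1 z_2 \neq 0\}$, together with the elementary fact that $\{k \in \Z/m : ak \equiv 0\}$ is nontrivial iff $\gcd(a,m)>1$, is precisely what is needed.

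One small inaccuracy in your parenthetical: condition \eqref{eq:2} is \emph{sufficient} for the representation \eqref{eq:5} to be faithful, but it is not ``precisely'' the faithfulness condition. For instance, with $m=6$, $a=2$, $b=3$ the action is faithful (no nonzero $k$ satisfies both $2k \equiv 0$ and $3k \equiv 0 \pmod 6$), yet $\gcd(a,m)=2>1$. This does not affect your argument, since you only use the implication in the direction \eqref{eq:2} $\Rightarrow$ faithful, which is correct; but you should drop the word ``precisely.''
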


\begin{remark}\label{rmk:wlog_1}
  If \eqref{eq:2} holds, by composing with a suitable isomorphism
  $\Z_m \to \Z_m$, without loss of generality we may assume that
  either $a$ or $b$ in \eqref{eq:5} equals 1.
\end{remark}

Remark \ref{rmk:wlog_1} motivates introducing the
following terminology that is also used in complex algebraic geometry
(see, e.g., \cite{reid}). 

\begin{definition}\label{def:model_singular_point}
  Let $m \geq 2$ be an integer and let $l \in \Z$ be such that $1\le
  l<m$ and $\gcd(l,m)=1$. Let $\mathbb{C}^2/\mathbb{Z}_m$ be the
  Hermitian orbi-vector space that is the quotient of the following
  $\mathbb{Z}_m$-action on $\C^2$: 
  \begin{align} \label{eq:typesing}
    \mathbb{Z}_m \times \mathbb{C}^2 & \to \mathbb{C}^2  \nonumber \\
    (\xi_m , (z_1,z_2))  & \mapsto (\xi_m z_1,\xi_m^l z_2).
  \end{align}
  We call $\C^2/\Z_m$ the \textbf{model for a cyclic
    isolated singular point of type $\frac{1}{m}(1,l)$}. The integers
  $(1,l)$ are the {\bf weights} of the $\mathbb{Z}_m$-action on
  $\C^2$. 
\end{definition}

\begin{remark}\label{rmk:ambiguity}
  In Definition \ref{def:model_singular_point}, we choose to make the
  weight of the $\Z_m$-action of the first coordinate to be equal to one. The other choice
  would yield an integer  $1\le l'<m$ such that $\gcd(l',m)=1$ and $l
  \cdot l'= 1 \mod m$.
\end{remark}

The following result states that the ambiguity of Remark
\ref{rmk:ambiguity} disappears when considering the unitary
isomorphism type of the corresponding Hermitian orbi-vector space. Its
proof is a straightforward calculation and is left to the reader.

\begin{lemma}\label{lemma:model_iso}
  There exists a unitary isomorphism between models for a cyclic
  singular point of types $\frac{1}{m}(1,l)$ and $\frac{1}{m}(1,l')$ if and only if either $l = l'$ or
  $l \cdot l' = 1 \mod m$.
\end{lemma}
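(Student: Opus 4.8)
The plan is to show both directions by finding explicit unitary maps. For the "if" direction, first I would handle the case $l = l'$, which is trivial since the two models are literally the same $\Z_m$-action. Then I would treat the case $l \cdot l' \equiv 1 \pmod m$: here the natural candidate is the coordinate swap $\sigma : \C^2 \to \C^2$, $(z_1,z_2) \mapsto (z_2, z_1)$, which is visibly unitary. I would check that $\sigma$ intertwines the $\Z_m$-action of type $\frac{1}{m}(1,l)$ with an action of type $\frac{1}{m}(l,1)$, and then reparametrize the group: replacing the generator $\xi_m$ by $\xi_m^{l'}$ (legitimate because $\gcd(l',m)=1$, so $\xi_m^{l'}$ is again a generator) turns weights $(l,1)$ into $(l l', l') \equiv (1, l') \pmod m$. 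Hence $\sigma$ descends to a unitary isomorphism $\C^2/\Z_m \xrightarrow{\sim} \C^2/\Z_m$ between the two models. Throughout, "unitary isomorphism of Hermitian orbi-vector spaces" means a linear map $\widehat{V} \to \widehat{V}'$ that is unitary and equivariant with respect to some isomorphism of the structure groups, so that it descends to the quotients — this is exactly what the swap-plus-reparametrization provides.

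For the "only if" direction, I would argue that a unitary isomorphism $\C^2/\Z_m \to \C^2/\Z_m$ of the two models lifts to a unitary map $A \in \mathrm{U}(2)$ conjugating the image of one $\Z_m$-representation onto the image of the other (this is the content of Remark \ref{rmk:embeddings_induce_homom} / Lemma \ref{lemma:auto_luc} applied to a l.u.c. centered at the singular point, together with the fact that a linear isomorphism of quotients lifts to the covers). Since both representations land in the standard maximal torus $(S^1)^2$ of diagonal matrices, and the image subgroup together with its weight data is recoverable from the abstract conjugacy class, $A$ must normalize $(S^1)^2$; hence $A$ is, up to a diagonal unitary (which acts trivially by conjugation on the torus), either the identity or the coordinate swap. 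In the first case the two weight systems $\{(1,l)\}$ and $\{(1,l')\}$ must coincide as multisets of characters of $\Z_m$, forcing $l = l'$. In the second case the swap sends weights $(1,l)$ to $(l,1)$, and matching this against $(1,l')$ after rescaling the generator gives $l' \equiv l^{-1} \pmod m$, i.e. $l l' \equiv 1 \pmod m$.

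The main obstacle is bookkeeping the relation between a unitary isomorphism of the quotient orbi-vector spaces and the induced conjugacy of the representations at the level of $\mathrm{U}(2)$: one must be careful that the isomorphism of structure groups $\Z_m \to \Z_m$ need not be the identity, which is precisely where the possibility $l' \equiv l^{-1}$ (rather than only $l' = l$) enters. Once this is set up cleanly, the computation reduces to the elementary observation that two diagonal $\Z_m$-representations of $\C^2$ with isolated fixed point at the origin are conjugate in $\mathrm{U}(2)$ if and only if their weight pairs agree up to simultaneously permuting the coordinates and applying an automorphism of $\Z_m$ — which is exactly the stated dichotomy. Since the paper explicitly says the proof is "a straightforward calculation and is left to the reader," I would keep the writeup short, emphasizing only the swap map and the generator-reparametrization that produces the isomorphism, and the lifting argument that produces the constraint.
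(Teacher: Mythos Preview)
Your proposal is correct, and since the paper explicitly omits the proof (``a straightforward calculation \ldots\ left to the reader''), there is nothing to compare against; the swap map plus generator reparametrization for the ``if'' direction and the lift-to-$\mathrm{U}(2)$ argument for the ``only if'' direction are exactly what one would expect. One small streamlining for the converse: rather than arguing that $A$ normalizes the full diagonal torus, you can simply note that conjugation preserves eigenvalues, so $A$ sends the generator $\mathrm{diag}(\xi_m,\xi_m^{l})$ of $G_l$ to a generator $\mathrm{diag}(\xi_m^{k},\xi_m^{kl'})$ of $G_{l'}$ whose eigenvalue multiset must still be $\{\xi_m,\xi_m^{l}\}$; matching the two possibilities gives $k\equiv 1,\ l'\equiv l$ or $k\equiv l,\ kl'\equiv 1$, i.e.\ $l=l'$ or $ll'\equiv 1\pmod m$, bypassing the normalizer discussion entirely.
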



As an immediate consequence of Theorem \ref{thm:darboux_orbifolds},
Lemma \ref{lemma:cyclic_rep_isolated_sing} and Remark \ref{rmk:wlog_1}, we
obtain the following result.

\begin{corollary}\label{cor:sing_point_dim_4}
  Let $(M,\omega)$ be a four dimensional symplectic orbifold. Let $x
  \in M$ be an isolated singular point with structure group $\Z_m$ for
  some integer $m \geq 2$. Then there exist an integer $1 \leq l < m$
  with $\gcd(l,m)=1$, an open neighborhood $U \subseteq M$ of $x$, an
  open neighborhood $V \subseteq \C^2/\Z_m$ of $[0,0]$, and a symplectomorphism
  $\varphi : (U,\omega) \to (V,\omega_0)$ that sends $x$ to $[0,0]$,
  where $\C^2/\Z_m$ is the global quotient constructed from $l$ as in
  Definition~\ref{def:model_singular_point} and $\omega_0$ is the
  symplectic form on $\C^2/\Z_m$ that descends from the standard
  symplectic form on $\C^2$.  
\end{corollary}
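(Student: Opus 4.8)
The plan is to deduce Corollary~\ref{cor:sing_point_dim_4} directly by assembling the pieces that have already been established. The statement is essentially a translation: it says that near an isolated singular point $x$ with cyclic structure group $\Z_m$ in a $4$-dimensional symplectic orbifold, the symplectic orbifold looks exactly like the standard model $\C^2/\Z_m$ of type $\frac{1}{m}(1,l)$ from Definition~\ref{def:model_singular_point}. First I would invoke Theorem~\ref{thm:darboux_orbifolds} (Darboux for orbifolds) applied to the point $x$: since the structure group is $\Gamma = \Z_m$, there is a l.u.c. $\oc$ centered at $x$ with $\widehat{U} \subseteq \C^2$ open, $\Gamma < \mathrm{U}(2)$ acting linearly, $\varphi(0) = x$, and $\omega|_U$ identified with the form descending from the standard $\omega_0$ on $\C^2$. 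This already gives a symplectomorphism from a neighborhood $U$ of $x$ onto a neighborhood of $[0,0]$ in a Hermitian orbi-vector space $\C^2/\Gamma$, where the $\Gamma$-action is a faithful unitary representation.

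Next I would normalize the representation. Since $\Gamma \simeq \Z_m$ is finite abelian and faithfully represented in $\mathrm{U}(2)$, after conjugating we may assume its image lies in the diagonal maximal torus $(S^1)^2$, so the action is of the form \eqref{eq:5} for suitable integers $a, b$. Because $x$ is an \emph{isolated} singular point of $M$, the origin must be the only point of $\C^2/\Z_m$ with nontrivial stabilizer in a neighborhood; by Lemma~\ref{lemma:cyclic_rep_isolated_sing} this forces $\gcd(a,m) = 1 = \gcd(b,m)$. Then Remark~\ref{rmk:wlog_1} lets me compose with an automorphism $\Z_m \to \Z_m$ so that, without loss of generality, $a = 1$; writing $l$ for the (suitably reduced) second weight, with $1 \le l < m$ and $\gcd(l,m) = 1$, the action becomes exactly \eqref{eq:typesing}. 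Thus the Hermitian orbi-vector space $\C^2/\Gamma$ is, up to unitary isomorphism, the model for a cyclic isolated singular point of type $\frac{1}{m}(1,l)$ in the sense of Definition~\ref{def:model_singular_point}.

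Finally I would glue these two steps together: the Darboux symplectomorphism composed with the unitary isomorphism normalizing the representation (which is automatically a symplectomorphism of the quotients, since unitary maps preserve $\omega_0$ and hence the descended form) yields the desired symplectomorphism $\varphi : (U,\omega) \to (V,\omega_0)$ sending $x$ to $[0,0]$, where $V \subseteq \C^2/\Z_m$ is an open neighborhood of $[0,0]$ and $\C^2/\Z_m$ is built from $l$ as in Definition~\ref{def:model_singular_point}.

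I do not expect a genuine obstacle here, as every ingredient is already in place; the only mild subtlety is the bookkeeping around the $\mathrm{U}(2)$-conjugation and the reduction of the weights modulo $m$ — one must check that conjugating the representation and replacing $U$ by a possibly smaller invariant neighborhood does not disturb the symplectic identification, which is immediate since conjugation by a fixed unitary matrix is a linear symplectomorphism of $\C^2$ commuting with the quotient construction. There is also a harmless ambiguity in the choice of $l$ (it could be replaced by its inverse mod $m$), but this is exactly the content of Lemma~\ref{lemma:model_iso} and does not affect the existence statement being claimed.
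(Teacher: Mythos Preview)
Your proposal is correct and follows exactly the approach the paper takes: the paper states the corollary as an immediate consequence of Theorem~\ref{thm:darboux_orbifolds}, Lemma~\ref{lemma:cyclic_rep_isolated_sing}, and Remark~\ref{rmk:wlog_1}, which are precisely the three ingredients you invoke and assemble. Your additional remarks on the bookkeeping (diagonalizing the abelian representation, the unitary conjugation being a symplectomorphism) simply make explicit what the paper leaves implicit.
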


Motivated by Definition \ref{def:model_singular_point} and Corollary
\ref{cor:sing_point_dim_4}, we introduce the following terminology.

\begin{definition}\label{def::quotientSingularity}
  Let $x$ a singular point in a symplectic orbifold of
  dimension four. Let $\Z_m$ be the orbifold structure group of $x$
  and let $1 \leq l < m$
  with $\gcd(l,m)=1$ be the integer of Corollary
  \ref{cor:sing_point_dim_4}. We say that is a \textbf{cyclic
    isolated singular point of type $\frac{1}{m}(1,l)$}. 
\end{definition}

\begin{example}
  Consider the weighted projective plane $\C P^2(3,5,7)$. The type of
  the singular
  points $[1:0:0]$, $[0:1:0]$ and $[0:0:1]$ is $\frac{1}{3}(1,2)$,
  $\frac{1}{5}(1,4)$ and  $\frac{1}{7}(1,4)$ respectively.  \hfill$\Diamond$
\end{example}

\begin{example}
The type of the three singular points in the quotient $\C
P^2(3,5,7)/\mathbb{Z}_{11}(1,2,10)$ are $\frac{1}{33}(1,23)$,
$\frac{1}{55}(1,19)$ and  $\frac{1}{77}(1,25)$.  \hfill$\Diamond$
\end{example}

\begin{example}\label{tswEx_types} (Example~\ref{tswEx_1}, continued) 
Let $M:=(\mathbb{C}P^1\times\mathbb{C}P^1)/\mathbb{Z}_4$ be the
orbifold constructed in Example~\ref{tswEx_1}. It has three isolated
singular points, $[[0:1],[0:1]], [[1:0],[1:0]]$ and $[[0:1],[1:0]]$
with orbifold structure groups given by $\mathbb{Z}_4$,
$\mathbb{Z}_4$ and $\mathbb{Z}_2$ respectively. By diagonalizing the action of $\mathbb{Z}_4$ on $\C P^1\times \C P^1$ around  $\left([0:1],[0:1]\right)\quad \text{and} \quad
\left([1:0],[1:0]\right)$,
there exist coordinates around these points for which the $\mathbb{Z}_4$-action is given by
\begin{align*}
 \mathbb{Z}_4 \times \mathbb{C}^2 & \to \mathbb{C}^2  \nonumber \\
(\xi_4 , (z,w))  & \mapsto (\xi_4 z,\xi_4^3 w).
\end{align*}
Similarly, there exist coordinates around $\left([0:1],[1:0]\right)\in \C P^1\times \C P^1$ for which the $\mathbb{Z}_2$-action is given by
\begin{align*}
  \mathbb{Z}_2 \times \mathbb{C}^2 & \to \mathbb{C}^2  \nonumber \\
(\xi_2 , (z,w))  & \mapsto (\xi_2 z,\xi_2 w).
\end{align*}
(Note that $\left([1:0],[0:1]\right)$ and $\left([0:1],[1:0]\right)$ are in the same $\mathbb{Z}_4$-orbit).
We conclude that the type of $[[0:1],[0:1]], [[1:0],[1:0]]$ is
$\frac{1}{4}(1,3)$, while that of $[[0:1],[1:0]]$ is $\frac{1}{2}(1,1)$.
 \hfill$\Diamond$

\end{example} 

\subsubsection{Compact, connected, symplectic orbi-surfaces}\label{sec:comp-conn-sympl}

In dimension two, symplectic geometry is simply the geometry of
oriented (orbi-)surfaces; moreover, by Corollary
\ref{cor:symp_codim_2}, the singular set is necessarily isolated. Hence, a connected, oriented
orbi-surface $\Sigma$ with isolated singular points comes equipped with a
symplectic form $\omega$. If $x \in \Sigma$ and $\oc$ is a l.u.c. centered at $x$ as in Remark
\ref{rmk:centred}, then $\Gamma$ is a subgroup of $\mathrm{SO}(2)
\simeq \mathrm{U}(1)$
and $\omega|_U$ is the quotient of the standard symplectic form
$\omega_0$ on $\R^2 \simeq \C$ by the action of $\Gamma$.

In analogy with the case of manifolds, compact, connected,
symplectic orbi-surfaces can be
classified up to symplectomorphism (see \cite[Section 5]{dui_pel} and Theorem
\ref{thm::classorbisurface}).

\begin{theorem}\label{thm:classification_symplectic_orbi-surfaces}
  For $i=1,2$, let $(\Sigma_i,\omega_i)$ be a compact, connected,
  symplectic orbi-surface. The
  symplectic orbifolds $(\Sigma_1,\omega_1)$ and $(\Sigma_2,\omega_2)$
  are symplectomorphic if and only if $\Sigma_1$ is diffeomorphic to
  $\Sigma_2$ and $\int_{\Sigma_1} \omega_1 = \int_{\Sigma_2} \omega_2$.  
\end{theorem}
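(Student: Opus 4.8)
The plan is to mimic the classical Moser argument for closed surfaces, adapting it to the orbifold setting. The "only if" direction is essentially tautological: a symplectomorphism is in particular a diffeomorphism, and since integration of differential forms on orbifolds obeys the usual change-of-variables formula (see Remark \ref{rmk:section}), it preserves $\int \omega$. So the substance is the "if" direction: given diffeomorphic $\Sigma_1, \Sigma_2$ with equal total symplectic areas, produce a symplectomorphism.

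First I would fix a diffeomorphism $f : \Sigma_1 \to \Sigma_2$, which exists by hypothesis (and is good by Lemma \ref{lemma:regular_good}, being regular). Pulling back, I reduce to the following problem on a single compact connected oriented orbi-surface $\Sigma$ (with isolated singular points, automatically, by Corollary \ref{cor:symp_codim_2}): given two symplectic forms $\omega_0 := f^*\omega_2$ and $\omega_1 := \omega_1$ inducing the same orientation and satisfying $\int_\Sigma \omega_0 = \int_\Sigma \omega_1$, find a diffeomorphism $\psi : \Sigma \to \Sigma$ isotopic to the identity with $\psi^*\omega_1 = \omega_0$. The key point is that the de Rham cohomology of an orbifold behaves as in the manifold case (Remark \ref{rmk:section}); in particular $H^2_{\mathrm{dR}}(\Sigma) \cong \R$ for a compact connected oriented orbi-surface, with the isomorphism given by integration. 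Since $\int_\Sigma(\omega_1 - \omega_0) = 0$, it follows that $\omega_1 - \omega_0$ is exact, say $\omega_1 - \omega_0 = d\mu$ for some $\mu \in \Omega^1(\Sigma)$. Then I would run the Moser trick: set $\omega_t := \omega_0 + t(\omega_1 - \omega_0)$ for $t \in [0,1]$, note that each $\omega_t$ is closed and, because all $\omega_t$ induce the same orientation on the (two-dimensional) orbi-surface, nondegenerate; define a time-dependent vector field $\xi_t$ on $\Sigma$ by $\iota_{\xi_t}\omega_t = -\mu$ (using nondegeneracy of $\omega_t$), which makes sense as a $C^\infty$ section of $T\Sigma$ by Remark \ref{rmk:section}; and let $\psi_t$ be its flow, which exists for all $t \in [0,1]$ by compactness of $\Sigma$ (the flow of a vector field on an orbifold is defined in Remark \ref{rmk:flow}, and Cartan calculus holds there). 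Then $\frac{d}{dt}\psi_t^*\omega_t = \psi_t^*(\mathcal{L}_{\xi_t}\omega_t + \frac{d\omega_t}{dt}) = \psi_t^*(d\iota_{\xi_t}\omega_t + (\omega_1-\omega_0)) = \psi_t^*(-d\mu + d\mu) = 0$, so $\psi_1^*\omega_1 = \omega_0$, and $\psi := \psi_1 \circ f^{-1}$... wait, tracing back, $\psi_1 : (\Sigma_1, \omega_0) \to (\Sigma_1,\omega_1)$ is a symplectomorphism, hence $f \circ \psi_1 : (\Sigma_1,\omega_1) \to (\Sigma_2,\omega_2)$ is the desired symplectomorphism.

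The main obstacle, and the only place where orbifold subtleties genuinely enter, is verifying that the de Rham theory and Cartan calculus on orbifolds with isolated singular points support each step: that $H^2_{\mathrm{dR}}(\Sigma)\cong\R$ with integration as the isomorphism (so that equal areas forces exactness), that an orbifold $1$-form contracted against a nondegenerate $2$-form yields a genuine orbifold vector field whose flow exists and is a diffeomorphism in the sense of Definition \ref{defn:good_map}, and that $\mathcal{L} = d\iota + \iota d$ holds. All of these are asserted in Remarks \ref{rmk:section} and \ref{rmk:flow}; because we restrict to isolated singular points, each verification can be done "naively" by working in l.u.c.'s centered at the finitely many singular points, where everything lifts $\Gamma$-equivariantly to the standard manifold statements, and gluing via the compatibility of l.u.c.'s. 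Alternatively one may simply cite \cite[Section 5]{dui_pel}, where this classification is carried out; I would include the Moser argument for completeness and defer the cohomological input to the references.
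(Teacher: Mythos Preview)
Your proposal is correct and takes the standard Moser approach, which is precisely what the paper defers to by citing \cite[Section~5]{dui_pel} rather than giving its own proof. Two minor slips worth fixing: you should note that the diffeomorphism $f$ can be chosen orientation-preserving (with respect to the orientations induced by $\omega_1,\omega_2$), so that $f^*\omega_2$ and $\omega_1$ genuinely induce the same orientation and the convex path $\omega_t$ stays nondegenerate; and in the last line the desired symplectomorphism is $f\circ\psi_1^{-1}$, not $f\circ\psi_1$, since $\psi_1^*\omega_1=\omega_0=f^*\omega_2$ gives $(f\circ\psi_1^{-1})^*\omega_2=\omega_1$.
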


\subsubsection{Hamiltonian actions on symplectic orbifolds}\label{sec:hamilt-acti-sympl}
In this paper we are particularly interested in the following type of
actions on symplectic orbifolds.

\begin{definition}\label{defn:hamiltonian}
  Let $G$ be a Lie group with Lie algebra $\mathfrak{g}$ and let
  $(M,\omega)$ be a symplectic orbifold.
  \begin{itemize}[leftmargin=*]
  \item An action of $G$ on $M$ is {\bf Hamiltonian} if there exists a
    good $C^{\infty}$ map $H : M \to \mathfrak{g}^*$, called {\bf
      moment map}, that is $G$-equivariant with respect to the
    coadjoint action of $G$ on $\mathfrak{g}^*$ and such that
    $$ \omega(\xi^M ,\cdot) = d \langle \xi,\Phi \rangle (\cdot) \quad
    \text{for all } \xi \in \mathfrak{g}, $$
    \noindent
    where $\xi^M \in \mathfrak{X}(M)$ is the vector field
    determined by $\xi$ as in Remark \ref{rmk:infinitesimal_action}, and $\langle \cdot, \cdot \rangle$ is the
    natural pairing $\mathfrak{g} \times \mathfrak{g}^* \to \R$.
  \item A {\bf Hamiltonian $G$-space} is a triple $(M,\omega,\Phi)$,
    where $\Phi : M \to \mathfrak{g}^*$ is the moment map for a
    Hamiltonian $G$-action on $M$.
  \item Two Hamiltonian $G$-spaces $(M_1,\omega_1,\Phi_1)$ and
    $(M_2,\omega_2,\Phi_2)$ are {\bf isomorphic} if there exists a
    $G$-equivariant symplectomorphism $\varphi : (M_1,\omega_1) \to
    (M_2,\omega_2)$ such that $\Phi_2 \circ \varphi = \Phi_1$. 
\end{itemize}

\end{definition}

\begin{remark}\label{rmk:hamiltonian}
  As for symplectic manifolds, a Hamiltonian action on a symplectic
  orbifold is by symplectomorphisms. Moreover, we observe that, if $G$
  is compact and connected, then $G$-equivariance in the above
  definition of an isomorphism of Hamiltonian $G$-spaces is automatic
  as long as the symplectomorphism intertwines the moment maps. 
\end{remark}

\subsubsection*{Linear symplectic actions on symplectic
  orbi-vector spaces}
Linear symplectic actions play an important role in
understanding the behavior of Hamiltonian group actions near fixed
points (see Theorem \ref{thm:darboux} and Section
\ref{ChapterLocalForms}). For this reason, we recall here some of
their properties, following \cite[Lemmas 3.1 and 3.2 ]{LermanTolman}. Let $(\widehat{V}/\Gamma,\omega)$ be a symplectic orbi-vector space
obtained as a global quotient of a real symplectic vector space
$(\widehat{V},\widehat{\omega})$ as in Example
\ref{exm:symp_ovs}. By a slight abuse of notation, we set $\widehat{N}$ to be the normalizer of
$\Gamma$ in $\mathrm{Sp}(\widehat{V},\widehat{\omega})$. This is a
subgroup of the normalizer $N(\Gamma)$ of $\Gamma$ in
$\mathrm{GL}(\widehat{V})$ of Example
\ref{exm:linear_actions_orbi-vector_space}. As in \cite[Section
3]{LermanTolman}, we set
$\mathrm{Sp}(\widehat{V}/\Gamma,\omega):=\widehat{N}/\Gamma$. 

To start, we recall the following result without proof (see \cite[Lemma 3.2]{LermanTolman} for
a proof).

\begin{lemma}\label{lemma:sp-acts-ham}
  The natural
  $\mathrm{Sp}(\widehat{V}/\Gamma,\omega)$-action on
  $(\widehat{V}/\Gamma,\omega)$ is Hamiltonian and a moment map for
  this action is given by $\Phi : \widehat{V}/\Gamma \to
  \mathfrak{sp}(\widehat{V}/\Gamma,\omega)^*$ defined by
  $$ \langle \xi, \Phi \rangle([\widehat{v}]) = \frac{1}{2} \omega(\xi [\widehat{v}],
  [\widehat{v}]) \quad \text{for any } [\widehat{v}] \in
  \widehat{V}/\Gamma \text{ and any } \xi \in
  \mathfrak{sp}(\widehat{V}/\Gamma,\omega),$$
  where $\xi [\widehat{v}]$ is the value of the vector field $\xi$ at
  $[\widehat{v}]$.

  Moreover, let $\pi : \widehat{N} \to \mathrm{Sp}(\widehat{V}/\Gamma,\omega)$ and $\pi^* :
  \mathfrak{sp}(\widehat{V}/\Gamma,\omega)^* \to
  \widehat{\mathfrak{n}}^*$
  denote the quotient map and the induced isomorphism between the duals of the
  Lie algebras respectively, and let $\widehat{\Phi} : \widehat{V} \to
  (\mathfrak{sp}(\widehat{V},\widehat{\omega}))^*$ be the homogeneous
  moment map for the standard
  $\mathrm{Sp}(\widehat{V},\widehat{\omega})$-action on
  $(\widehat{V},\widehat{\omega})$. Then the following diagram commutes
  \begin{equation}
    \label{eq:37}
    \begin{tikzcd}
      \widehat{V} \arrow[r, "q"]
      \arrow[d,"\widehat{\Phi}_{\widehat{N}}" ]&
      \widehat{V}/\Gamma \arrow[d,"\Phi"]\\
      \widehat{\mathfrak{n}}^*& \mathfrak{sp}(\widehat{V}/\Gamma,\omega)^* \arrow[l,"\pi^*"],
    \end{tikzcd}
  \end{equation}
  where $q : \widehat{V} \to
  \widehat{V}/\Gamma$ is the quotient map.
\end{lemma}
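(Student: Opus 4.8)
The plan is to reduce everything to the well-known linear theory on $\widehat V$, using the short exact sequence $1 \to \Gamma \to \widehat N \to \mathrm{Sp}(\widehat V/\Gamma,\omega) \to 1$ and the fact that $\Gamma$ acts trivially on the quotient $\widehat V/\Gamma$. First I would fix a $\Gamma$-invariant compatible complex structure $\widehat J$ on $\widehat V$ (which exists since $\Gamma$ is finite, cf. Remark \ref{rmk:symp_ovs}) and an invariant inner product, so that $\widehat N < \mathrm U(\widehat V)$ and $\mathrm{Sp}(\widehat V/\Gamma,\omega) = \widehat N/\Gamma$ is a compact Lie group; in particular every vector field coming from $\mathfrak{sp}(\widehat V/\Gamma,\omega)$ is complete and the action is genuinely smooth on the orbifold in the sense of Section \ref{sec:defintion-examples}. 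The key point is that $\mathfrak{sp}(\widehat V/\Gamma,\omega) \cong \widehat{\mathfrak n}$ as Lie algebras via $\diff\pi$, since $\Gamma$ is discrete; so an element $\xi \in \mathfrak{sp}(\widehat V/\Gamma,\omega)$ is represented by a unique $\widehat\xi \in \widehat{\mathfrak n} \subseteq \mathfrak{sp}(\widehat V,\widehat\omega)$, and the induced vector field $\xi$ on $\widehat V/\Gamma$ is exactly the pushforward $q_*\widehat\xi^{\,\widehat V}$ (this is well-defined precisely because $\widehat\xi^{\,\widehat V}$ is $\Gamma$-invariant, $\Gamma$ being in the centralizer of the connected group generated by $\widehat\xi$).

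Next I would verify the moment map formula. Define $\Phi : \widehat V/\Gamma \to \mathfrak{sp}(\widehat V/\Gamma,\omega)^*$ by $\langle \xi,\Phi\rangle([\widehat v]) = \tfrac12\,\omega(\xi[\widehat v],[\widehat v])$; this is well-defined and a good $C^\infty$ map because it is the descent of the $\Gamma$-invariant smooth function $\widehat v \mapsto \tfrac12\,\widehat\omega(\widehat\xi^{\,\widehat V}(\widehat v),\widehat v)$ under $q$, using Lemma \ref{lemma:obvious_good_maps} (or directly: it is a polynomial function on $\widehat V$ invariant under $\Gamma$, hence descends to the orbifold). Equivariance under $\mathrm{Sp}(\widehat V/\Gamma,\omega)$ is inherited from the equivariance of the homogeneous moment map $\widehat\Phi$ on $\widehat V$ under $\widehat N$, modulo $\Gamma$ acting trivially on both $\widehat V/\Gamma$ and (via the coadjoint action of $\widehat N$, which factors through $\widehat N/\Gamma$) on $\mathfrak{sp}(\widehat V/\Gamma,\omega)^*$. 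For the defining identity $\omega(\xi^{\widehat V/\Gamma},\cdot) = \diff\langle\xi,\Phi\rangle$, I would pull everything back along the orbifold covering $q$: since $q^*\omega = \widehat\omega$, $q_*\widehat\xi^{\,\widehat V} = \xi$, and $q^*\langle\xi,\Phi\rangle = \langle\widehat\xi,\widehat\Phi\rangle = \tfrac12\widehat\omega(\widehat\xi^{\,\widehat V},\cdot\,)$, the orbifold identity follows from the corresponding identity on the manifold $\widehat V$, using that $q^*$ is injective on forms away from (hence everywhere, by continuity / density of regular points) the singular locus.

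Finally, for the commutativity of diagram \eqref{eq:37}: unwinding the definitions, $\pi^*(\Phi([\widehat v]))$ is the functional $\widehat\xi \mapsto \langle\pi_*^{-1}\!\text{-image},\Phi\rangle = \tfrac12\,\omega((\diff\pi\,\widehat\xi)[\widehat v],[\widehat v]) = \tfrac12\,\widehat\omega(\widehat\xi^{\,\widehat V}(\widehat v),\widehat v) = \langle\widehat\xi,\widehat\Phi_{\widehat N}(\widehat v)\rangle$, where $\widehat\Phi_{\widehat N}$ denotes the composition of $\widehat\Phi$ with the restriction $\mathfrak{sp}(\widehat V,\widehat\omega)^* \to \widehat{\mathfrak n}^*$; this is exactly $\widehat\Phi_{\widehat N}(\widehat v)$ evaluated at $\widehat\xi$, i.e. $\pi^*\circ\Phi\circ q = \widehat\Phi_{\widehat N}$, as claimed. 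I expect the only genuine subtlety — not a hard one, but the one requiring care — to be the smoothness/goodness of $\Phi$ as a map of orbifolds and the legitimacy of checking the moment-map identity by pullback along the covering $q$; everything else is a mechanical transcription of the standard linear symplectic picture. Since the excerpt states the lemma "without proof" (citing \cite[Lemma 3.2]{LermanTolman}), one could alternatively just invoke that reference, but the argument above makes the orbifold bookkeeping explicit.
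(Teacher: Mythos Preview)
The paper does not prove this lemma at all; it simply cites \cite[Lemma 3.2]{LermanTolman}, as you yourself note at the end. Your argument is the natural one and essentially reconstructs what is in that reference: identify $\mathfrak{sp}(\widehat V/\Gamma,\omega)$ with $\widehat{\mathfrak n}$ via $d\pi$, observe that $\Phi$ is the descent of the $\Gamma$-invariant homogeneous quadratic moment map on $\widehat V$, and check the moment-map identity and the diagram by pulling back along the quotient $q$. This is correct and there is nothing to compare against in the paper itself.

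One small inaccuracy worth flagging: your first paragraph asserts that after choosing a $\Gamma$-invariant compatible $\widehat J$ one has $\widehat N < \mathrm U(\widehat V)$ and hence $\mathrm{Sp}(\widehat V/\Gamma,\omega)$ is compact. This is false in general --- $\widehat N$ is the normalizer of $\Gamma$ in the full symplectic group $\mathrm{Sp}(\widehat V,\widehat\omega)$, and e.g.\ if $\Gamma$ is trivial then $\widehat N = \mathrm{Sp}(\widehat V,\widehat\omega)$ itself. Fortunately this step is unnecessary: the action is linear, so the generating vector fields are linear and their flows $t \mapsto e^{t\widehat\xi}$ are automatically complete and smooth on the orbifold; no compactness is needed. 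With that paragraph removed or corrected, the rest of your argument stands.
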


Let $G$ be a Lie group and let $\rho : G \to
\mathrm{Sp}(\widehat{V}/\Gamma,\omega)$ be a Lie group homomorphism,
that we call a {\bf symplectic representation} of $G$. In analogy
with Example \ref{exm:linear_actions_orbi-vector_space} (see also
\cite[Lemma 3.1]{LermanTolman}), there exist a Lie group $\widehat{G}$, a representation $\rho:
\widehat{G} \to \widehat{N}$ and a short exact sequence of Lie groups
\begin{equation}\label{eq:exact_sequ_lin}
  \begin{tikzcd}
    1\arrow{r} & \Gamma\arrow[r,hook]& \widehat{G}\arrow[r,"\pi"] & G\arrow{r} & 1,
  \end{tikzcd}
\end{equation}
such that the following diagram commutes
\begin{equation*}
  \begin{tikzcd}
    1\arrow{r} & \Gamma\arrow[r,hook] \arrow[d,equal]&
    \widehat{G}\arrow[r,"\pi"] \arrow[d,"\widehat{\rho}"]
    & G\arrow{r} \arrow[d,"\rho"] & 1 \\
    1\arrow{r} & \Gamma\arrow[r,hook]& \widehat{N}\arrow[r,"\pi"]
    & \mathrm{Sp}(\widehat{V}/\Gamma,\omega) \arrow{r} & 1.
  \end{tikzcd}
\end{equation*}
The representation $\rho$ is faithful if and only if $\widehat{\rho}$ is
faithful (see \cite[Lemma 3.1]{LermanTolman}). By a standard argument
using functoriality, the following result is an immediate consequence
of Lemma \ref{lemma:sp-acts-ham}.

\begin{corollary}\label{cor:linear-symp_action}
  Let $\rho : G \to
  \mathrm{Sp}(\widehat{V}/\Gamma,\omega)$ be a symplectic
  representation. The induced linear $G$-action on $(\widehat{V}/\Gamma,\omega)$ is Hamiltonian and a moment map $\Phi_G :
  \widehat{V}/\Gamma \to \mathfrak{g}^*$ for
  this action is given by $\rho^* \circ
  \Phi$. Moreover, let $\Phi_{\widehat{G}} : \widehat{V}
  \to \widehat{\mathfrak{g}}^*$ be the moment map for the Hamiltonian
  $\widehat{G}$-action on $(\widehat{V},\widehat{\omega})$. Then the
  following diagram commutes

   \begin{equation*}
    \begin{tikzcd}
      \widehat{V} \arrow[r, "q"]
      \arrow[d,"\widehat{\Phi}_{\widehat{G}}" ]&
      \widehat{V}/\Gamma \arrow[d,"\Phi_G"]\\
      \widehat{\mathfrak{g}}^*& \mathfrak{g}^* \arrow[l,"\pi^*"].
    \end{tikzcd}
  \end{equation*}
\end{corollary}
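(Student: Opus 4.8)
The plan is to deduce Corollary~\ref{cor:linear-symp_action} from Lemma~\ref{lemma:sp-acts-ham} by a functoriality argument, exactly as in the proof of \cite[Lemma~3.1]{LermanTolman}. First I would recall the setup: we have the symplectic representation $\rho : G \to \mathrm{Sp}(\widehat{V}/\Gamma,\omega) = \widehat{N}/\Gamma$, and the lifted data consisting of $\widehat{G}$, the faithful-or-not representation $\widehat{\rho} : \widehat{G} \to \widehat{N}$, and the commuting diagram of short exact sequences in \eqref{eq:exact_sequ_lin} and the display following it. The $G$-action on $\widehat{V}/\Gamma$ is, by definition, the one obtained by composing $\rho$ with the natural $\mathrm{Sp}(\widehat{V}/\Gamma,\omega)$-action; hence its fundamental vector fields are pushforwards of those of the $\mathrm{Sp}(\widehat{V}/\Gamma,\omega)$-action, and the moment map condition for the $G$-action follows by pulling back the moment map $\Phi$ of Lemma~\ref{lemma:sp-acts-ham} along $\rho$. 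Concretely, I would set $\Phi_G := \rho^* \circ \Phi$ and check that for each $\xi \in \mathfrak g$, writing $\bar\xi = d\rho(\xi) \in \mathfrak{sp}(\widehat V/\Gamma,\omega)$, the vector field $\xi^{\widehat V/\Gamma}$ equals $\bar\xi^{\widehat V/\Gamma}$, so that
\begin{equation*}
  \omega(\xi^{\widehat V/\Gamma},\cdot) = \omega(\bar\xi^{\widehat V/\Gamma},\cdot) = d\langle \bar\xi, \Phi\rangle = d\langle \xi, \rho^*\Phi\rangle = d\langle\xi,\Phi_G\rangle,
\end{equation*}
using the defining property of $\Phi$ from Lemma~\ref{lemma:sp-acts-ham}. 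Equivariance of $\Phi_G$ follows from equivariance of $\Phi$ together with the fact that $\rho^*$ intertwines the coadjoint actions, since $\rho$ is a homomorphism.

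For the commuting square, I would argue as follows. Consider the moment map $\widehat\Phi_{\widehat N} : \widehat V \to \widehat{\mathfrak n}^*$ for the $\widehat N$-action on $(\widehat V,\widehat\omega)$ appearing in Lemma~\ref{lemma:sp-acts-ham}. Pulling back along the Lie algebra map $d\widehat\rho : \widehat{\mathfrak g} \to \widehat{\mathfrak n}$ gives the moment map $\widehat\Phi_{\widehat G} = (d\widehat\rho)^* \circ \widehat\Phi_{\widehat N}$ for the $\widehat G$-action on $(\widehat V,\widehat\omega)$ — this is the standard functoriality of moment maps under restriction of the group along a homomorphism. Now combine this with diagram~\eqref{eq:37} of Lemma~\ref{lemma:sp-acts-ham}, which says $\pi^* \circ \Phi = \widehat\Phi_{\widehat N}\circ q$, and with the compatibility of the two short exact sequences in the display after \eqref{eq:exact_sequ_lin}, which at the level of Lie algebras gives a commuting square relating $d\widehat\rho$, $d\rho$, and the projections $\widehat{\mathfrak g}\to\mathfrak g$ and $\widehat{\mathfrak n}\to\mathfrak{sp}(\widehat V/\Gamma,\omega)$ (note $\Gamma$ is discrete, so these projections are isomorphisms on Lie algebras, and the dual maps $\pi^*$ are the relevant isomorphisms). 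Chasing the resulting diagram yields $\pi^* \circ \Phi_G = \widehat\Phi_{\widehat G} \circ q$, which is precisely the asserted commuting square, after identifying $\mathfrak g^* \cong \widehat{\mathfrak g}^*$ via $\pi^*$.

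I would present this as a short proof in the same spirit as the ``Sketch of proof'' entries elsewhere in the excerpt: state that the result follows by applying functoriality of moment maps under the homomorphisms $\rho$ and $\widehat\rho$ to Lemma~\ref{lemma:sp-acts-ham}, spell out the formula $\Phi_G = \rho^*\circ\Phi$, verify the moment map identity and equivariance as above, and then obtain the commuting diagram by gluing \eqref{eq:37} to the commuting diagram of exact sequences. The only mild subtlety — and the step I expect to need the most care — is keeping the dualizations and the identifications $\mathfrak g^*\cong\widehat{\mathfrak g}^*$, $\mathfrak{sp}(\widehat V/\Gamma,\omega)^*\cong\widehat{\mathfrak n}^*$ straight, since these are where the short exact sequences enter; but this is bookkeeping rather than a genuine obstacle, and it is entirely parallel to the corresponding argument in \cite{LermanTolman}. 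There is no essential new geometric content beyond Lemma~\ref{lemma:sp-acts-ham}.
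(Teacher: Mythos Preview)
Your proposal is correct and follows exactly the paper's approach: the paper states that the corollary ``is an immediate consequence of Lemma~\ref{lemma:sp-acts-ham}'' via ``a standard argument using functoriality,'' which is precisely the functoriality-of-moment-maps argument you have spelled out. You have simply given more detail than the paper does.
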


\begin{remark}\label{rmk:G-compact}
  If $G$ is
  compact, then so is $\widehat{G}$. As in Remark \ref{rmk:symp_ovs},
  we may view $\widehat{V}$ (respectively $\widehat{V}/\Gamma$) as a
  Hermitian (respectively orbi-)vector space. Let $N_{\mathrm{U}(\widehat{V})}(\Gamma)$ denote the normalizer of
  $\Gamma$ in $\mathrm{U}(\widehat{V})$ and set $\mathrm{U}(\widehat{V}/\Gamma) :=
  N_{\mathrm{U}(\widehat{V})}(\Gamma)/\Gamma$. Then $\widehat{\rho}$
  and $\rho$ factor through $N_{\mathrm{U}(\widehat{V})}(\Gamma)$ and
  $\mathrm{U}(\widehat{V}/\Gamma)$ respectively. Hence, in this case,
  the study of symplectic representations of $G$ on $(\widehat{V}/\Gamma,\omega)$ reduces to that of
  unitary representations on $\widehat{V}/\Gamma$.

  Moreover, the subset of $\widehat{V}/\Gamma$ of fixed
  points under the $G$-action is, in fact, a $G$-invariant orbi-vector
  complex subspace: To
  see this, we identify the above subset with the quotient by $G$
  of the complex subspace of $\widehat{V}$ that consists of fixed
  point under the $\widehat{G}$-action. In particular,  the subset of $\widehat{V}/\Gamma$ that consists of fixed
  points under the $G$-action is a full symplectic suborbifold (see Definition
  \ref{defn:suborbifold} and Remark \ref{rmk:full_suborbifold_symplectic}). 
\end{remark}

\subsubsection*{Examples of Hamiltonian circle actions}
In this paper, we are mostly interested in Hamiltonian actions of
compact tori, with particular emphasis on $S^1$. In
this case, the moment map is invariant under the group action.
Moreover, we use the notation $H$ for the moment map of a circle action
and we fix once and for all an identification between the dual of the
Lie algebra of $S^1$ and $\R$.

\begin{example}[Weighted projective spaces, continued]\label{exm:weighted_ham}
  Fix an integer $n \geq 1$ and positive
  integers $m_0,\dots, m_{n}$ satisfying $\gcd(m_0,\dots,m_n)=1$. We
  endow the weighted projective space $\C P^n(m_0,\ldots, m_n)$ with
  the standard symplectic form $\omega$ of Example
  \ref{exm:wps_as_symp}. Let $a_0,\ldots, a_n$ be integers. The
  $S^1$-action on $\C P^n(m_0,\ldots, m_n)$ of Example
  \ref{exm:circle_action_weighted} is Hamiltonian and a moment map is
  given by
  \begin{equation}
    \label{eq:33}
    \begin{split}
      H : \C P^n(m_0,\ldots, m_n) & \to \R \\
      [z_0:\ldots:z_n] &\mapsto \frac{1}{2} \sum\limits_{j=0}^n a_j
      \left(\prod_{s \neq j} m_s \right) \frac{|z_j|^2}{\|z\|^2}, 
    \end{split}
  \end{equation}
  where $\|z\|^2:= \sum_{j=0}^n |z_j|^2$. \hfill$\Diamond$ 
\end{example}

\begin{example}[Example \ref{tswEx_1}, continued]\label{exm:tsw_ham}
  Let $M =  (\C P^1 \times \C P^1)/\Z_4$ be the orbifold
  constructed in Example \ref{tswEx_1} and let $\omega$ be the
  symplectic form on $M$ constructed in Example
  \ref{exm:tsw_symp}. The $S^1$-action on $M$ of Example
  \ref{exm:circle_action_Talvacchia} is Hamiltonian and a moment map
  is given by
  \begin{equation}
    \label{eq:34}
    \begin{split}
      H : M & \to \R \\
      \left([[z_0:z_1],[w_0:w_1]]\right) &\mapsto
      \frac{1}{4}\left(\frac{|z_0|^2}{|z_0|^2+|z_1|^2}+\frac{|w_0|^2}{|w_0|^2+|w_1|^2} \right).
    \end{split}
  \end{equation}
  \hfill$\Diamond$ 
\end{example}

\begin{example}[Example \ref{ex:quotient_cp1xcp1_finite}, continued]\label{exm:finite_quotient_cp1cp1_ham}
  Let $c > 1$ be an integer and let $M_c = (\C P^1 \times \C P^1)/\Z_c$
  be the orbifold constructed in Example
  \ref{ex:quotient_cp1xcp1_finite} with $l =1$. Let $\omega_c$ be the
  symplectic form on $M_c$ constructed in Example
  \ref{exm:cp1cp1zc_symp}. We consider $m=1, n= c-1$
  and the corresponding $S^1$-action on $M_c$ of Example
  \ref{exm:circle_action_cp1cp1}. This action is Hamiltonian and a moment map is
  given by
  \begin{equation}
    \label{eq:35}
    \begin{split}
      H : M_c & \to \R \\
      \left([[z_0:z_1],[w_0:w_1]]\right) &\mapsto
      \frac{1}{2c}\left(\frac{|z_1|^2}{|z_0|^2+|z_1|^2}+(c-1)\frac{|w_1|^2}{|w_0|^2+|w_1|^2} \right).
    \end{split}
  \end{equation}
  \hfill$\Diamond$ 
\end{example}

\begin{example}[Projectivized plane bundles, continued]\label{exm:proj_plane_ham}
  Let $\pi : M \to \Sigma$ be a Seifert fibration with Euler class
  different from zero and such that $M$ is a manifold. Given $s
  >0$, let $\omega_s$ be the symplectic form on $(M \times \C
  P^1)/S^1$ constructed in Example \ref{exm:proj_symp}. The $S^1$-
  action on $(M \times \C
  P^1)/S^1$ of Example \ref{exm:circle_action_proj} is Hamiltonian and
  a moment map is given by 
  \begin{equation}
    \label{eq:36}
    \begin{split}
      H : (M \times \C
      P^1)/S^1 & \to \R \\
      [x,[z_0:z_1]] &\mapsto \frac{s|z_0|^2}{2(|z_0|^2 + |z_1|^2)}. 
    \end{split}
  \end{equation}
  \hfill$\Diamond$ 
\end{example}

\subsubsection*{The equivariant Darboux theorem and its generalizations}
Next we present the linearization theorem for
fixed points of Hamiltonian group actions, first proved in \cite[Lemma
3.5]{LermanTolman}. In some sense, this result combines Corollary
\ref{cor:linearization_fixed_point} and Theorem \ref{thm:darboux_orbifolds}. To this end, we
recall that, as in the case of manifolds, if a Lie group $G$ acts on $(M,\omega)$
in a Hamiltonian fashion and $x \in M$ is a fixed point of the action,
then the linear $G$-action on $T_xM$ preserves the linear symplectic
form $\omega_x$ (see Remark \ref{rmk:darboux_as_linearisation}). In
particular, we obtain a symplectic representation $\rho : G \to
\mathrm{Sp}(T_xM,\omega_x)$ that is known as the {\bf
  symplectic slice representation} at $x$. The linear $G$-action on $T_xM$ is
Hamiltonian with moment map $\Phi_{\mathrm{lin}} : T_x M \to
\mathfrak{g}^*$; moreover, if $G$ is compact, the representation
$\rho$ can be taken to be unitary (see Corollary \ref{cor:linear-symp_action} and Remark
\ref{rmk:G-compact}). 

\begin{theorem}[Equivariant Darboux theorem for orbifolds]\label{thm:darboux}
   Let $G$ be a compact Lie group and let $(M,\omega, \Phi)$ be a
   Hamiltonian $G$-space. If $x$ is a
   fixed point, then there exist $G$-invariant open neighborhoods $U
  \subseteq M$ and $W \subseteq T_xM$ of $x$ and $[0]$ respectively,
  and a $G$-equivariant symplectomorphism $f : (U,\omega) \to (W,\omega_x)$ that sends $x$
  to $[0]$ and such that $\Phi = \Phi(x) + \Phi_{\mathrm{lin}} \circ f$.
\end{theorem}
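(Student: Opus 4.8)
The plan is to combine the two ingredients already available in the excerpt: the linearization of the $G$-action near a fixed point (Corollary~\ref{cor:linearization_fixed_point}) together with the Darboux theorem for orbifolds (Theorem~\ref{thm:darboux_orbifolds}), and then promote the resulting diffeomorphism to an equivariant symplectomorphism by the Moser trick, carried out equivariantly. Since the statement is local near $x$, by Corollary~\ref{cor:linearization_fixed_point} I would first replace $M$ by a $G$-invariant open neighborhood of $x$ that is $G$-equivariantly diffeomorphic to a $G$-invariant open neighborhood of $[0]$ in the orbi-vector space $T_xM$, with its linear $G$-action (equivalently, unitary by Remark~\ref{rmk:G-compact}). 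Under this identification $\omega$ becomes a $G$-invariant symplectic form $\omega_0$ on a neighborhood of $[0]$ in $T_xM$, and $\omega_x$ is the constant linear symplectic form. By averaging (or by Theorem~\ref{thm:darboux_orbifolds} applied $G$-equivariantly, using that $\widehat{U}$ carries a $\widehat{G}$-action as in Proposition~\ref{prop:luc_centered_equivariant}), one may assume $\omega_0$ and $\omega_x$ agree at the point $[0]$.

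Next I would run the equivariant Moser argument in the local uniformizing chart. Lift everything to $\widehat{U}\subseteq\R^{2n}$, where the Lie group $\widehat G$ extending $G$ acts linearly and the two symplectic forms pull back to $\widehat G$-invariant closed $2$-forms $\widehat\omega_0,\widehat\omega_x$ that coincide at the origin. On a contractible $\widehat G$-invariant neighborhood of $0$, the Poincaré lemma gives a primitive $\sigma$ of $\widehat\omega_x-\widehat\omega_0$ with $\sigma_0=0$, and averaging $\sigma$ over the compact group $\widehat G$ makes it invariant. Then $\widehat\omega_t:=\widehat\omega_0+t(\widehat\omega_x-\widehat\omega_0)$ is symplectic near $0$ for $t\in[0,1]$, the time-dependent vector field $X_t$ defined by $\iota_{X_t}\widehat\omega_t=-\sigma$ is $\widehat G$-invariant and vanishes at $0$, and its flow $\psi_t$ is $\widehat G$-equivariant, fixes $0$, and satisfies $\psi_1^*\widehat\omega_x=\widehat\omega_0$. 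Because $\psi_t$ commutes with the $\Gamma$-action (as $\Gamma\subset\widehat G$), it descends to a $G$-equivariant symplectomorphism $f$ between neighborhoods of $[0]$ in $T_xM$, i.e.\ between $(U,\omega)$ and $(W,\omega_x)$, fixing $x\leftrightarrow[0]$. Shrinking $U,W$ if necessary keeps everything inside the domains where the flow is defined.

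Finally, the moment map statement. Since $f$ is $G$-equivariant and $G$ is compact (hence $S^1$-invariance of the moment map reduces the problem to a function, and in general $G$-equivariance is preserved), the composite $\Phi\circ f^{-1}$ and $\Phi(x)+\Phi_{\mathrm{lin}}$ are both moment maps on $(W,\omega_x)$ for the same linear $G$-action, agreeing at $[0]$ (both equal $\Phi(x)$ there, since $\Phi_{\mathrm{lin}}$ is the homogeneous moment map vanishing at $0$). Two moment maps for the same action on a connected orbifold differ by a constant in $(\mathfrak g^*)^G$; evaluating at $[0]$ shows the constant is zero, hence $\Phi = \Phi(x)+\Phi_{\mathrm{lin}}\circ f$ after composing back.

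I expect the main obstacle to be purely bookkeeping: ensuring at each step that the neighborhoods chosen are simultaneously $G$-invariant (equivalently $\widehat G$-invariant upstairs), contractible, $\Gamma$-invariant, and small enough for the Moser flow to exist for all $t\in[0,1]$, and checking that the descent from $\widehat U$ to $\widehat U/\Gamma$ of the equivariant objects is compatible with the good-map structure of Section~\ref{sec:maps-betw-orbif}. The genuinely new orbifold content — that a $\Gamma$-equivariant symplectomorphism of $\widehat U$ descends to a good $C^\infty$ map of $\widehat U/\Gamma$ — is already packaged in Lemma~\ref{lemma:obvious_good_maps} and Proposition~\ref{prop:luc_centered_equivariant}, so the argument is a faithful transcription of the manifold proof with these two lemmas inserted at the appropriate places.
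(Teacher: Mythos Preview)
Your proof is correct and follows the standard equivariant Moser argument carried out in a $\widehat{G}$-invariant local uniformizing chart. Note, however, that the paper does not give its own proof of this theorem: it simply cites \cite[Lemma~3.5]{LermanTolman}. Your argument is essentially the one found there, so there is nothing to compare beyond the fact that you have spelled out the details (lifting to $\widehat{U}$ via Proposition~\ref{prop:luc_centered_equivariant}, averaging the Poincar\'e primitive over the compact group $\widehat{G}$, and descending along $\Gamma$) rather than invoking the reference.
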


By Remark \ref{rmk:G-compact}, Theorem \ref{thm:darboux} has the
following immediate consequence (cf. Corollary
\ref{cor:fixed_pt_set_suborbi}).

\begin{corollary}\label{cor:fixed_point_set_symp_full_suborbifold}
  Let $G$ be a compact Lie group and let $(M,\omega, \Phi)$ be a
  Hamiltonian $G$-space. Any connected component of the fixed point
  set is a full symplectic suborbifold. 
\end{corollary}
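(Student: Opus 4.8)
The plan is to deduce Corollary \ref{cor:fixed_point_set_symp_full_suborbifold} from Theorem \ref{thm:darboux} exactly as one deduces the smooth statement Corollary \ref{cor:fixed_pt_set_suborbi} from the linearization result Corollary \ref{cor:linearization_fixed_point}, with the extra bookkeeping coming from the final paragraph of Remark \ref{rmk:G-compact}. First I would fix a connected component $C$ of the fixed point set $M^G$ and a point $x \in C$. Applying Theorem \ref{thm:darboux}, I obtain a $G$-invariant open neighborhood $U \subseteq M$ of $x$, a $G$-invariant open neighborhood $W \subseteq T_xM$ of $[0]$, and a $G$-equivariant symplectomorphism $f : (U,\omega) \to (W,\omega_x)$ with $f(x) = [0]$. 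Under $f$, the set $U \cap M^G$ is carried onto $W \cap (T_xM)^G$, where $(T_xM)^G$ denotes the fixed locus of the symplectic slice representation $\rho : G \to \mathrm{Sp}(T_xM,\omega_x)$.

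The key point is then local and linear: by Remark \ref{rmk:G-compact}, since $G$ is compact we may view $T_xM \simeq \widehat{V}/\Gamma$ as a Hermitian orbi-vector space, and the fixed locus of the induced $G$-action is a $G$-invariant complex orbi-subspace, namely the quotient by $\Gamma$ of the fixed subspace $\widehat{V}^{\widehat{G}} \subseteq \widehat{V}$ of the lifted $\widehat{G}$-action; moreover, again by Remark \ref{rmk:G-compact}, it is a full symplectic suborbifold of $\widehat{V}/\Gamma$ (in the sense of Definition \ref{defn:suborbifold} and Remark \ref{rmk:full_suborbifold_symplectic}). Intersecting with the open set $W$ and transporting back via $f^{-1}$, I conclude that $U \cap M^G$ is a full symplectic suborbifold of $U$ in a neighborhood of $x$. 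Since $x \in C$ was arbitrary, this shows that every connected component $C$ of $M^G$ is locally, near each of its points, a full symplectic suborbifold of $M$; as the full-suborbifold property is local (it is witnessed by the existence of a full suborbifold atlas in the sense of Definition \ref{defn:suborbifold_atlas}, which can be assembled from the local models just produced), $C$ is a full symplectic suborbifold of $M$.

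One point to be careful about is the low-dimensional caveat that appears in Corollary \ref{cor:fixed_pt_set_suborbi}: there the positive-dimension hypothesis is needed because the fixed point set of an effective action may contain isolated singular points, and an isolated point with non-trivial structure group is not an (effective) suborbifold. Here, however, the statement is about \emph{symplectic} full suborbifolds, and a zero-dimensional component of $M^G$ consists of a single fixed point $x$; near such a point the local model is $\{[0]\} \subseteq \widehat{V}/\Gamma$, which is a full suborbifold (its structure group is $\Gamma_x$) and is trivially symplectic. So no dimension restriction is needed, consistent with the statement as given. I would remark on this in the write-up to reassure the reader that the absence of the footnote from Corollary \ref{cor:fixed_pt_set_suborbi} is intentional.

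The main obstacle — really the only non-formal ingredient — is the linear-algebra assertion that $(\widehat{V}/\Gamma)^G$ is a full symplectic suborbifold of $\widehat{V}/\Gamma$, and this is precisely what the last paragraph of Remark \ref{rmk:G-compact} supplies: the identification of $(\widehat{V}/\Gamma)^G$ with $\widehat{V}^{\widehat{G}}/\Gamma$, the fact that $\widehat{V}^{\widehat{G}}$ is a $\widehat{\omega}$-symplectic subspace of $\widehat{V}$ (because $\rho$, hence $\widehat{\rho}$, may be taken unitary, and a unitary-invariant subspace is complex hence symplectic with respect to the compatible form), and the observation that a $\Gamma$-invariant linear complex subspace descends to a full suborbifold. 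Given that remark, the proof is essentially an application of Theorem \ref{thm:darboux} together with the observation that being a full symplectic suborbifold is a local condition, so the proof is short.
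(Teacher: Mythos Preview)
Your approach is exactly the paper's: the paper gives no explicit proof and simply says the corollary follows immediately from Theorem~\ref{thm:darboux} via Remark~\ref{rmk:G-compact} (with a cross-reference to Corollary~\ref{cor:fixed_pt_set_suborbi}). Your unpacking of this---local model from equivariant Darboux, identification of the fixed locus in the linear model as $\widehat{V}^{\widehat{G}}/\Gamma$, and assembling the local full suborbifold atlases---is precisely what the paper intends.

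There is, however, an error in your handling of the zero-dimensional case. You assert that $\{[0]\}\subseteq \widehat{V}/\Gamma$ with structure group $\Gamma_x$ is a full suborbifold. Under the paper's \emph{effective} orbifold convention (Definition~\ref{defn:orbifold}), this fails whenever $\Gamma_x$ is non-trivial: a non-trivial group cannot act effectively on a point, so the induced structure on $\{[0]\}$ is not an orbifold in the paper's sense. This is precisely the content of the footnote in Corollary~\ref{cor:fixed_pt_set_suborbi} that you are trying to explain away. The omission of the positive-dimension hypothesis in the statement of Corollary~\ref{cor:fixed_point_set_symp_full_suborbifold} is arguably an imprecision in the paper (later uses, e.g.\ Corollary~\ref{cor:fixed_point_set}, treat isolated fixed points separately via Theorem~\ref{cor::localformpt} rather than as suborbifolds), but your proposed resolution---that isolated singular fixed points are full suborbifolds---is not consistent with the effective convention and should not be included.
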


To conclude this section, we state a useful generalization of Theorem
\ref{thm:darboux} to arbitrary full symplectic suborbifolds (see
Theorem \ref{thm:esnt} below). To this end, let $(M,\omega)$ be a
symplectic orbifold and let $N$ be a full symplectic suborbifold. Then
the normal orbi-bundle to $N$ is a symplectic orbi-bundle (cf. Remark \ref{rmk:full_suborbifolds_normal}). Since the standard Cartan calculus holds for orbifolds (see Remark
\ref{rmk:flow}), the proof of the equivariant symplectic neighborhood
theorem for manifolds using the Moser trick works for fully embedded
symplectic suborbifolds. Hence, the following result holds.

\begin{theorem}[Equivariant symplectic neighborhood theorem for orbifolds]\label{thm:esnt}
  Let $G$ be a compact Lie group and let $(M_i,\omega_i, \Phi_i)$ be a
  Hamiltonian $G$-space for $i=1,2$. Let $N$ be an orbifold endowed
  with a $G$-action and let
  $I_i : N \to M_i$ be a full $G$-equivariant embedding for
  $i=1,2$. If the normal orbi-bundles of $N$ in $M_1$ and $M_2$
  are $G$-equivariantly isomorphic as symplectic orbi-bundles, then for $i=1,2$ there exists a $G$-invariant
  open neighborhood $U_i \subseteq M_i$ of $I_i(N_i)$ and an
  isomorphism $\varphi : (U_1, \omega|_{U_1}, H_1|_{U_1}) \to (U_2,
  \omega|_{U_2}, H_2|_{U_2})$ that extends the diffeomorphism
  identifying $I_1(N) \simeq I_2(N)$.    
\end{theorem}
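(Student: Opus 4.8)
The plan is to adapt the classical equivariant symplectic neighbourhood theorem (in the form due to Marle and Guillemin--Sternberg for manifolds) to the orbifold setting, exploiting the fact that all the analytic tools used in the manifold proof — Cartan calculus, the Moser trick, tubular neighbourhoods — have already been established for orbifolds earlier in the paper (see Remark \ref{rmk:flow}, Theorem \ref{thm:tub_neigh_orb}, and Corollary \ref{cor:complex_bundle}). The point of departure is the following: by hypothesis we are given a $G$-equivariant isomorphism $\Psi : \nu_1 \to \nu_2$ of the normal orbi-bundles as symplectic orbi-bundles, covering the identification $I_1(N) \simeq I_2(N)$. By the tubular neighbourhood theorem for full suborbifolds (Theorem \ref{thm:tub_neigh_orb}), for $i=1,2$ there is a $G$-invariant open neighbourhood $\mathcal{U}_i$ of $I_i(N)$ in $M_i$ and a $G$-equivariant diffeomorphism $\mathcal{U}_i \cong V_i \subseteq \nu_i$ onto a neighbourhood of the zero section restricting to the identity on $N$; composing with $\Psi$ produces a $G$-equivariant diffeomorphism $\phi : U_1 \to U_2$ between (possibly shrunk) $G$-invariant neighbourhoods of $I_1(N)$ and $I_2(N)$, which is the identity on $N$ and whose derivative along $N$ intertwines the symplectic structures. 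One should verify $G$-equivariance throughout: since $G$ is compact, all auxiliary metrics, connections, and cutoff functions used to build the tubular neighbourhoods can be chosen $G$-invariant by averaging, and since the suborbifolds are \emph{full} this averaging takes place on the uniformizing charts with the extended group $\widehat{G}$ of Proposition \ref{prop:luc_centered_equivariant}, exactly as in Corollary \ref{cor:linearization_fixed_point}.

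Next I would run the Moser argument. Set $\omega_0 := \phi^*(\omega_2|_{U_2})$ and $\omega_1 := \omega_1|_{U_1}$; these are two $G$-invariant symplectic forms on $U_1$ that agree along $N$ together with all first-order data (the latter because $d_x\phi$ is symplectic for $x \in N$). Writing $\omega_t := \omega_1 + t(\omega_0 - \omega_1)$ for $t \in [0,1]$, one checks that $\omega_t$ is closed, $G$-invariant, and — after shrinking $U_1$ to a smaller $G$-invariant neighbourhood of $N$ — non-degenerate for all $t$, since non-degeneracy is an open condition and holds along the compact set $N$. Because $\omega_0 - \omega_1$ is closed and vanishes on $N$, the relative Poincar\'e lemma for orbifolds (which follows from the standard homotopy-operator construction applied on uniformizing charts, using that $N$ is a \emph{full} suborbifold so the $\Gamma$-actions are compatible) yields a $1$-form $\sigma$ on a neighbourhood of $N$ with $d\sigma = \omega_0 - \omega_1$ and $\sigma|_N = 0$; averaging over $G$ we may take $\sigma$ to be $G$-invariant. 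Define the time-dependent vector field $\xi_t$ by $\iota_{\xi_t}\omega_t = -\sigma$; it is $G$-invariant and vanishes on $N$, so its flow $\rho_t$ fixes $N$ pointwise, is defined for $t \in [0,1]$ on a sufficiently small $G$-invariant neighbourhood of $N$, and satisfies $\rho_t^*\omega_t = \omega_1$ by the usual Cartan computation $\frac{d}{dt}\rho_t^*\omega_t = \rho_t^*(\mathcal L_{\xi_t}\omega_t + \dot\omega_t) = \rho_t^*(d\iota_{\xi_t}\omega_t + (\omega_0-\omega_1)) = 0$. Then $\varphi := \phi \circ \rho_1$ is a $G$-equivariant symplectomorphism between $G$-invariant neighbourhoods $U_1 \subseteq M_1$ and $U_2 \subseteq M_2$ of $I_1(N)$ and $I_2(N)$ extending the given identification of $N$.

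Finally I would upgrade the symplectomorphism to an isomorphism of Hamiltonian $G$-spaces, i.e. match the moment maps. Since $\varphi$ is $G$-equivariant and symplectic, $\varphi^* H_2$ and $H_1$ are both moment maps for the same Hamiltonian $G$-action on $(U_1, \omega_1|_{U_1})$, hence $d(\varphi^*H_2 - H_1)(\xi^M) = 0$ for every $\xi \in \mathfrak g$; by effectiveness and connectedness of $U_1$ this forces $\varphi^* H_2 - H_1$ to be locally constant, and after possibly restricting to the connected component containing $N$ it is a single constant $c \in \mathfrak g^*$, which must be $G$-fixed. In our situation $G$ is a torus (indeed $S^1$), so equivariance of the moment maps and the normalization built into $I_1, I_2$ — both are full $G$-equivariant embeddings of the \emph{same} Hamiltonian model $N$, so $H_1 \circ I_1 = H_2 \circ I_2$ — pins down $c = 0$, giving $\varphi^* H_2 = H_1$ and hence the desired isomorphism $\varphi : (U_1, \omega|_{U_1}, H_1|_{U_1}) \to (U_2, \omega|_{U_2}, H_2|_{U_2})$. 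The main obstacle, and the step deserving the most care, is making each classical ingredient — invariant tubular neighbourhood, relative Poincar\'e lemma, existence and completeness of the Moser flow — genuinely orbifold-valid and genuinely $G$-equivariant: the resolution is that fullness of $N$ lets one carry out every construction on uniformizing charts with the extended group $\widehat{G}$ (Proposition \ref{prop:luc_centered_equivariant}) and then glue, and compactness of $G$ furnishes the averaging needed for invariance, so that no step leaves the orbifold category. Everything else is a routine transcription of the manifold proof.
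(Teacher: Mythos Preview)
Your proposal is correct and follows precisely the approach the paper indicates: the paper does not write out a proof at all, but simply states (in the sentence preceding the theorem) that ``since the standard Cartan calculus holds for orbifolds (see Remark \ref{rmk:flow}), the proof of the equivariant symplectic neighborhood theorem for manifolds using the Moser trick works for fully embedded symplectic suborbifolds.'' You have filled in exactly this Moser argument, with appropriate care about equivariance and fullness.
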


\begin{remark}\label{rmk:iso_normal}
  If $N$ is a fully embedded symplectic suborbifold of a symplectic
  orbifold $(M,\omega)$, then the normal orbi-bundle $\nu_N$ to $N$ is both a 
  complex and symplectic orbi-bundle, and the two structures are
  compatible in the standard sense (see Corollary
  \ref{cor:complex_bundle}). In fact, just as in the case of
  manifolds, the isomorphism class of $\nu_N$ as a symplectic
  orbi-bundle is the same as that of $\nu_N$ as a complex orbi-bundle
  (see \cite[Corollary A.6]{karshon} for the case of manifolds).
\end{remark}

\subsubsection*{Properties of Hamiltonian torus actions on compact
  symplectic orbifolds}
To conclude this section, we recall a foundational result in the study
of Hamiltonian actions of tori on compact symplectic orbifolds, which
is one of the main results of
\cite{LermanTolman}.

\begin{theorem}\label{thm:connected_fibers}
  Let $T$ be a torus and let $(M,\omega, \Phi)$ be a Hamiltonian
  $T$-space. If $M$ is compact, then $\Phi^{-1}(\alpha)$ is connected
  for every $\alpha \in \mathfrak{t}^*$. 
\end{theorem}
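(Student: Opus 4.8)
The plan is to reduce the orbifold statement to the classical connectedness theorem for Hamiltonian torus actions on compact symplectic \emph{manifolds} (due to Atiyah and Guillemin--Sternberg) via the orthonormal frame orbi-bundle, exactly the device supplied by Theorem \ref{thm:frame}. Given a Hamiltonian $T$-space $(M,\omega,\Phi)$ with $M$ compact, consider the orthonormal frame orbi-bundle $p \colon \mathrm{Fr}(M) \to M$; by Theorem \ref{thm:frame}, $\widehat{M} := \mathrm{Fr}(M)$ is a compact smooth manifold carrying an effective, locally free $\mathrm{O}(n)$-action (here $n = \dim M$) whose quotient recovers $M$. The first step is to lift the $T$-action and the symplectic data upstairs: one chooses an invariant Riemannian metric on $M$ (obtained by averaging, since $T$ is compact), so that $T$ acts on $\mathrm{Fr}(M)$ by bundle automorphisms commuting with the $\mathrm{O}(n)$-action, and one equips $\widehat{M}$ with a closed $T\times\mathrm{O}(n)$-invariant $2$-form $\widehat{\omega}$ whose restriction to each $\mathrm{O}(n)$-orbit direction is suitably non-degenerate transverse to the orbits — concretely $\widehat{\omega} = p^*\omega + $ a correction built from a connection, a "minimal coupling" construction. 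The moment map $\Phi$ pulls back to $\widehat{\Phi} := \Phi \circ p \colon \widehat{M} \to \mathfrak{t}^*$, which is a moment map for the lifted $T$-action with respect to $\widehat{\omega}$ because $p$ is $T$-equivariant and $p^*\omega$ differs from $\widehat{\omega}$ by a form that is basic/horizontal and annihilates the $T$-orbit directions.

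The second step is to apply the manifold version of the theorem — which I may invoke in the generality of presymplectic or "fat" actions, or more cleanly by a small desingularizing perturbation — to the compact manifold $\widehat{M}$ with its $T$-action and moment map $\widehat{\Phi}$, concluding that $\widehat{\Phi}^{-1}(\alpha)$ is connected for every $\alpha \in \mathfrak{t}^*$. The final step is to descend: since $p$ is surjective with connected fibers (each fiber is an $\mathrm{O}(n)$-orbit, hence connected when $n\ge 2$; the case $n\le 1$ or disconnected $\mathrm{O}(n)$ requires the trivial separate check, or one uses $\mathrm{SO}(n)$ together with an orientation) and continuous, and $\widehat{\Phi} = \Phi \circ p$, we have $\Phi^{-1}(\alpha) = p\bigl(\widehat{\Phi}^{-1}(\alpha)\bigr)$, which is the continuous image of a connected set and therefore connected. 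This gives the claim.

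An alternative, and probably cleaner, route avoids frame bundles and instead runs Atiyah's original Morse-theoretic argument directly on the orbifold. One picks a generic $\xi \in \mathfrak{t}$ so that the component $\Phi^\xi := \langle \xi, \Phi\rangle$ is a Morse--Bott function on $M$ in the orbifold sense (its critical set is the union of the fixed-point components of the subtorus generated by $\xi$, each a full symplectic suborbifold by Corollary \ref{cor:fixed_point_set_symp_full_suborbifold}), and one shows that the Morse index and coindex at each critical component are even. The index computation is purely local and reduces to the symplectic slice representation at a point of the critical orbifold (Theorem \ref{thm:darboux}, Remark \ref{rmk:G-compact}): on a local uniformizing chart the action is linear and unitary, so the Hessian of $\Phi^\xi$ splits into $2$-dimensional weight spaces and every index is even. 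Evenness of indices and coindices of $\pm\Phi^\xi$ then forces all level sets of $\Phi^\xi$, and by a standard density/limiting argument all fibers $\Phi^{-1}(\alpha)$, to be connected — the Morse theory goes through verbatim on an orbifold with isolated singular points since, by Remark \ref{rmk:flow}, gradient flows and the usual handle-attachment picture make sense naively.

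The main obstacle in the first approach is constructing the invariant closed $2$-form $\widehat{\omega}$ on $\mathrm{Fr}(M)$ together with its moment map carefully enough that $\widehat{\Phi}$ is genuinely a moment map, and checking that the manifold theorem applies despite $\widehat{\omega}$ being only presymplectic (degenerate along the $\mathrm{O}(n)$-orbit directions); in the Morse-theoretic approach the main point requiring care is verifying that the local normal form indeed yields even indices at \emph{singular} fixed points — i.e. that passing to the quotient by the orbifold structure group $\Gamma_x \subset \mathrm{U}(n)$ does not spoil the even-dimensionality of the negative eigenspace — which follows because $\Gamma_x$ acts $\C$-linearly and hence preserves each weight $2$-plane. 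Since \cite{LermanTolman} already establishes this theorem (it is quoted as one of their main results), in the paper we would simply cite it; the proposal above records how one would prove it from the tools assembled in this section.
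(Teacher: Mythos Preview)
The paper does not prove this theorem at all: it is stated as one of the main results of \cite{LermanTolman} and simply cited, with the remark immediately following it that the proof ``uses Morse theory, which, in the case of orbifolds, was started in \cite{LermanTolman} and further continued in \cite{hepworth}.'' Your closing sentence already anticipates this, and your second (Morse-theoretic) route is indeed the approach taken by Lerman--Tolman and alluded to in the paper.

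Two comments on the substance of your sketches. First, in the frame-bundle approach, the claim that an $\mathrm{O}(n)$-orbit is connected for $n\ge 2$ is false: $\mathrm{O}(n)$ has two components for every $n\ge 1$, so the fibers of $p$ are never connected. Your parenthetical fix via $\mathrm{SO}(n)$ and an orientation is not an optional refinement but is essential, and it is available here because symplectic orbifolds are orientable (cf.\ Remark \ref{rmk:orient} and Proposition \ref{prop:acs}). Even with that repair, the presymplectic issue you flag is real: the manifold theorem of Atiyah and Guillemin--Sternberg is stated for genuine symplectic forms, and pulling back $\omega$ to the frame bundle gives a form degenerate along the fibers, so you would either need a version for closed $2$-forms with moment maps whose components are still Morse--Bott with even indices, or a Sternberg--Weinstein-type coupling to make the form nondegenerate, neither of which is free.

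Second, your Morse-theoretic route is the right idea, but note that Theorem \ref{thm:connected_fibers} as stated is \emph{not} restricted to orbifolds with isolated singular points (that hypothesis is imposed only from Section \ref{sec:labelled-multigraph} onward). So the appeal to Remark \ref{rmk:flow} and ``naive'' gradient flows covers a special case; the general statement needs the orbifold Morse theory developed in \cite{LermanTolman,hepworth}, which is precisely why the paper cites rather than reproves it.
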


We remark that, as in the case of manifolds, the proof of Theorem
\ref{thm:connected_fibers} uses Morse theory, which, in the case of
orbifolds, was started in \cite{LermanTolman} and further continued in
\cite{hepworth}. Finally, the following result is an immediate consequence of
Theorem \ref{thm:connected_fibers} that we use throughout this paper.

\begin{corollary}\label{lem::uniqueMaxMin}
  Let $(M,\omega,H)$ be Hamiltonian $S^1$-space. If $M$
  is compact, the function $H$ has a unique local maximum and minimum. 
\end{corollary}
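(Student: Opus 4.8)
\textbf{Proof proposal for Corollary \ref{lem::uniqueMaxMin}.}

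The plan is to deduce this directly from the connectedness of the fibers of the moment map, Theorem \ref{thm:connected_fibers}, applied to the torus $T = S^1$. First I would recall that $M$ compact forces $H(M)$ to be a compact interval $[m, M_{\max}] \subseteq \R$, so that $H$ attains a global maximum and a global minimum; the content of the statement is the uniqueness of \emph{local} extrema. I would argue for the maximum (the minimum being entirely symmetric, or obtained by replacing $H$ with $-H$, which is the moment map for the inverted $S^1$-action). Suppose $x_0$ is a local maximum of $H$, with value $c = H(x_0)$. Since $x_0$ is a local maximum, an invariant open neighborhood $U$ of $x_0$ satisfies $H(U) \subseteq (-\infty, c]$, and $x_0$ is necessarily a fixed point of the action (an orbit through a local extremum of the moment map is a single point, since $H$ is constant on orbits and has a critical point there). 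The key step is then to show $c = M_{\max}$ and that the fiber $H^{-1}(c)$ consists of local maxima only, which will pin down uniqueness once combined with a connectedness argument.

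The main argument runs as follows. By Theorem \ref{thm:connected_fibers}, $H^{-1}(c)$ is connected. I claim $H^{-1}(c)$ is contained in the set of points which are local maxima of $H$; equivalently, that for every $y \in H^{-1}(c)$ there is an invariant neighborhood $V$ of $y$ with $H(V) \subseteq (-\infty, c]$. To see this, I would use the equivariant Darboux theorem, Theorem \ref{thm:darboux}: near any fixed point $y$ the moment map is $H(y) + \Phi_{\mathrm{lin}} \circ f$ where $\Phi_{\mathrm{lin}}$ is the moment map of a linear (unitary, after Remark \ref{rmk:G-compact}) $S^1$-representation on $T_y M \simeq \C^n/\Gamma$, hence of the form $\Phi_{\mathrm{lin}}(z) = \tfrac12 \sum w_j |z_j|^2$ for integer weights $w_j$. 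Near a point $y$ with finite nontrivial stabilizer, a similar local model holds on a slice. The function $H - c$ is then locally a sum of terms $\tfrac12 w_j |z_j|^2$; being $\leq 0$ near $x_0$ forces the relevant weights at $x_0$ to be $\leq 0$, but this is a statement about $x_0$, not yet about arbitrary $y \in H^{-1}(c)$. The cleaner route: the set $\mathcal{L} = \{ y \in M : y \text{ is a local maximum of } H \}$ is a union of connected components of the fixed point set together with possibly fixed orbi-surfaces on which $H$ is locally maximal; in any case, using the local normal form, $\mathcal{L}$ is both open and closed in $H^{-1}(c_{\max})$ for $c_{\max} := \max\{ H(y) : y \in \mathcal{L}\}$, and more to the point $H^{-1}(c)$ for the value $c$ of \emph{any} local maximum is entirely contained in $\mathcal{L}$ and maps to a single value, so by connectedness of $H^{-1}(c)$ and of the image all local maxima share the same critical value; a standard Morse-theoretic argument (the Morse theory for Hamiltonian orbifold actions developed in \cite{LermanTolman,hepworth}, which underlies Theorem \ref{thm:connected_fibers} itself) then shows that a local maximum at a value $c < M_{\max}$ would create a second component of a level set just below $M_{\max}$, contradicting connectedness. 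Hence every local maximum lies in $H^{-1}(M_{\max})$, and $H^{-1}(M_{\max})$ being connected and consisting of local maxima, it is the \emph{unique} local maximum set; symmetrically for the minimum.

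I expect the main obstacle to be making precise the Morse-theoretic comparison: namely, ruling out a local maximum at a value strictly below the global maximum. The clean way is to invoke the index computation from the local normal form (Theorem \ref{thm:darboux}): at a fixed point that is a local but not global maximum, the negative-definite subspace of the Hessian of $H$ has real dimension $2n$ (all weights $\leq 0$), so the point has ``Morse index'' equal to $\dim M$; but a standard fact in Hamiltonian (orbifold) Morse theory — essentially the input to \cite[]{LermanTolman}'s connectedness proof — is that the set of points of maximal index must lie in the top level set, because just below that level set the manifold would otherwise be disconnected. Since this is exactly the ingredient already used to prove Theorem \ref{thm:connected_fibers}, it is legitimate to quote it. Thus the corollary reduces to: (i) compactness gives existence of global extrema; (ii) the local normal form classifies local extrema as fixed points of maximal/minimal index; (iii) connectedness of level sets (Theorem \ref{thm:connected_fibers}) forces all such points into a single level set; (iv) that single level set, being connected and consisting entirely of local maxima (resp. minima), is the unique local maximum (resp. minimum).
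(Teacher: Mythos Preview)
Your approach is the intended one: the paper states the corollary as an ``immediate consequence'' of Theorem~\ref{thm:connected_fibers} and gives no further argument, so deducing it from connectedness of level sets together with the local normal form is exactly right.

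That said, your write-up has a genuine imprecision in the key step. You say that a local maximum at a value $c < M_{\max}$ ``would create a second component of a level set just below $M_{\max}$''; this is the wrong level set to inspect, and in general the level sets just below $M_{\max}$ can be perfectly connected even in the presence of such a spurious local maximum. The disconnection happens at level $c$ itself. Concretely: if $x_0$ is a local maximum with $H(x_0) = c$, then in the equivariant Darboux chart of Theorem~\ref{thm:darboux} we have $H = c + \tfrac12 \sum_j w_j |z_j|^2$ with all $w_j \leq 0$, so $H^{-1}(c)$ near $x_0$ coincides with the set $\{z_j = 0 \text{ whenever } w_j < 0\}$, which is precisely the connected component $F$ of the fixed-point set through $x_0$. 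Hence $F$ is open in $H^{-1}(c)$ (by this local computation at every point of $F$) and closed in $H^{-1}(c)$ (as a component of $M^{S^1}$). If $c < M_{\max}$ there exist points with $H > c$, so any path from such a point to the global minimum meets $H^{-1}(c)$ at a point not in $F$; thus $H^{-1}(c) \neq F$ and $H^{-1}(c)$ is disconnected, contradicting Theorem~\ref{thm:connected_fibers}. Therefore every local maximum lies in $H^{-1}(M_{\max})$, which is connected by the same theorem; this is the unique local maximum. Your summary (i)--(iv) is correct once (iii) is read this way, but you should replace the vague appeal to ``a standard Morse-theoretic argument \ldots just below $M_{\max}$'' with the open-and-closed argument at the actual level $c$.
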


\section{Local normal forms}\label{ChapterLocalForms}

\subsection{Orbi-weights of Hamiltonian $S^1$-actions}\label{sec:unit-repr-s1}
Since we are primarily interested in Hamiltonian $S^1$-actions on
symplectic orbifolds, motivated by Corollary
\ref{cor:linear-symp_action}, Remark \ref{rmk:G-compact} and Theorem \ref{thm:darboux}, we take a closer look at
unitary representations of $S^1$ on the Hermitian vector space
$\C^n/\Gamma$, with the view of proving Lemma \ref{lem::orbiweights}
and introducing orbi-weights at a fixed point (see Definition \ref{defn:orbi-weights}). We begin by proving the following result, which
provides a description of (abelian) finite extensions of tori and, as
such, may be of independent interest.

\begin{theorem}\label{thm::groupext}
  Let $\Gamma$ be a finite group and let $T$ be a torus. Consider the following short exact sequence of Lie groups
  
  \begin{equation} \label{eq:exactseq}
    \begin{tikzcd} 
      1\arrow{r} &\Gamma\arrow[r,"i"]& \widehat{G}\arrow[r,"j"]& T\arrow{r} & 1.
    \end{tikzcd}
  \end{equation}
  Let $\widehat{G}_0\trianglelefteq \widehat{G}$ denote the  identity component of $ \widehat{G}$  and set $\Gamma_0:=i^{-1}(\widehat{G}_0)\trianglelefteq\Gamma.$ Then 
  \begin{itemize}[leftmargin=*]
  \item $\widehat{G}_0$ is a torus of the same dimension as $T$ and $j|_{\widehat{G}_0}:\widehat{G}_0\rightarrow T$ is a covering map,
  \item $\pi_0(\widehat{G})\simeq \Gamma/\Gamma_0$, and
  \item $\widehat{G}\simeq (\Gamma\times \widehat{G}_0)/\Gamma_0$, where $\Gamma_0$ is identified with a subgroup of $\Gamma\times \widehat{G}_0$ via
    \begin{align*}
      \Gamma_0&\hookrightarrow \Gamma\times \widehat{G}_0\\
      \gamma&\mapsto (\gamma,i(\gamma^{-1})).
    \end{align*}
  \end{itemize}
  Moreover, if $\Gamma$ is abelian, then $\widehat{G}\simeq(\Gamma/\Gamma_0)\times\widehat{G}_0$.
\end{theorem}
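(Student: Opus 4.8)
The plan is to analyze the short exact sequence $1 \to \Gamma \to \widehat{G} \xrightarrow{j} T \to 1$ by first understanding the identity component $\widehat{G}_0$ and then the component group $\pi_0(\widehat{G})$. First I would observe that $\widehat{G}$ is a compact Lie group (extension of the compact group $T$ by the finite group $\Gamma$), and its identity component $\widehat{G}_0$ is a connected compact Lie group. Since $\Gamma$ is finite and normal in $\widehat{G}$, the restriction $j|_{\widehat{G}_0}\colon \widehat{G}_0 \to T$ has finite kernel $i(\Gamma) \cap \widehat{G}_0 = i(\Gamma_0)$ and open image; because $T$ is connected and $j$ is open (being a quotient map of Lie groups), $j(\widehat{G}_0) = T$, so $j|_{\widehat{G}_0}$ is a surjective homomorphism with finite kernel, hence a covering map. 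A connected compact Lie group that covers a torus is itself a torus of the same dimension (its Lie algebra is abelian, being isomorphic to $\mathfrak t$, and it is connected compact abelian). This gives the first bullet. For the second bullet, $j$ descends to a surjection $\pi_0(\widehat{G}) = \widehat{G}/\widehat{G}_0 \to T/T = 1$ with kernel the image of $\Gamma$; more precisely, restricting $j$ to the (discrete) group $\widehat{G}/\widehat{G}_0$ we get that $\widehat{G}/\widehat{G}_0 \cong i(\Gamma)/(i(\Gamma)\cap \widehat{G}_0) = \Gamma/\Gamma_0$, since $i(\Gamma)$ surjects onto $\widehat{G}/\widehat{G}_0$ (every component of $\widehat{G}$ meets $i(\Gamma)$, as $j$ maps $i(\Gamma)$ onto the trivial element and $\widehat{G}_0 \cdot i(\Gamma)$ is open, closed, hence all of $\widehat{G}$).

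For the third bullet, I would show the map
\begin{equation*}
  \Psi\colon (\Gamma \times \widehat{G}_0)/\Gamma_0 \to \widehat{G}, \qquad [(\gamma, g)] \mapsto i(\gamma)\, g,
\end{equation*}
is a well-defined isomorphism, where $\Gamma_0$ is embedded in $\Gamma \times \widehat{G}_0$ via $\gamma \mapsto (\gamma, i(\gamma^{-1}))$ (this embedding makes sense because $\gamma \in \Gamma_0$ means $i(\gamma) \in \widehat{G}_0$). Well-definedness: $i(\gamma \gamma_0)\, i(\gamma_0^{-1}) g = i(\gamma)\, g$ for $\gamma_0 \in \Gamma_0$. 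It is a homomorphism because $i(\Gamma)$ is central in $\widehat{G}$ — here I would invoke the hypothesis that $\Gamma$ commutes with every connected subgroup of $\widehat{G}$ in the relevant instances of this lemma; however, in the abstract statement one needs that $\widehat{G}_0$ centralizes $i(\Gamma)$. This follows because conjugation by a fixed $h \in i(\Gamma)$ gives a continuous automorphism $c_h$ of $\widehat{G}$; since $i(\Gamma)$ is finite and normal, $c_h$ restricted to $i(\Gamma)$ is inner-by-a-finite-group, and $c_h|_{\widehat{G}_0}$ is an automorphism of the torus $\widehat{G}_0$ — the conjugation action of the connected group $\widehat{G}_0$ on the finite normal subgroup $i(\Gamma)$ is trivial (a connected group acting by automorphisms on a finite set fixes it pointwise), hence $i(\Gamma)$ is central. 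Surjectivity of $\Psi$ follows since $\widehat{G} = i(\Gamma) \cdot \widehat{G}_0$ (shown above), and injectivity: if $i(\gamma) g = 1$ then $i(\gamma) = g^{-1} \in \widehat{G}_0$, so $\gamma \in \Gamma_0$ and $(\gamma, g) = (\gamma, i(\gamma^{-1}))$ lies in the embedded $\Gamma_0$.

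For the final assertion, suppose $\Gamma$ is abelian. Then $\widehat{G} \cong (\Gamma \times \widehat{G}_0)/\Gamma_0$ with $\Gamma$ abelian, $\widehat{G}_0$ a torus, and $\Gamma_0 \hookrightarrow \Gamma \times \widehat{G}_0$ via the antidiagonal-type embedding. The plan here is to split the extension: since $\widehat{G}_0$ is a divisible abelian group, the inclusion $i(\Gamma_0) \hookrightarrow \widehat{G}_0$ extends (divisibility = injectivity in the category of abelian groups) to a homomorphism $s\colon \Gamma \to \widehat{G}_0$ with $s|_{\Gamma_0} = i|_{\Gamma_0}$; equivalently, there is a retraction of $\Gamma \times \widehat G_0$ onto $\widehat G_0$ restricting appropriately. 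Concretely, one produces a subgroup $\Gamma' \cong \Gamma/\Gamma_0$ of $\Gamma \times \widehat{G}_0$ (a complement to $\{1\} \times \widehat G_0$ modulo $\Gamma_0$) which meets $\Gamma_0$ trivially and together with $\{1\}\times \widehat G_0$ generates everything modulo $\Gamma_0$, giving $(\Gamma \times \widehat G_0)/\Gamma_0 \cong (\Gamma/\Gamma_0) \times \widehat{G}_0$. The main obstacle I anticipate is precisely this last splitting step: making the divisibility argument clean and checking that the resulting direct-product decomposition is as Lie groups (i.e. that the complement $\Gamma/\Gamma_0$ embeds as a discrete subgroup, which is automatic since it is finite). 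Everything else is standard structure theory of compact Lie groups.
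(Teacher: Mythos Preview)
Your proposal is correct and follows essentially the same route as the paper: the covering argument for $\widehat{G}_0$, the surjection $\Gamma\twoheadrightarrow\pi_0(\widehat{G})$ with kernel $\Gamma_0$, the centrality of $i(\Gamma)$ in $\widehat{G}_0$ via the connected-acting-on-discrete argument, the multiplication map $(\gamma,g)\mapsto i(\gamma)g$, and the divisibility of the torus $\widehat{G}_0$ for the abelian splitting all match the paper's proof. The only difference is that the paper makes the final step explicit by writing down the automorphism $\phi(g_1,g_2)=(g_1,I(g_1)g_2)$ of $\Gamma\times\widehat{G}_0$ (with $I\colon\Gamma\to\widehat{G}_0$ your extension $s$) and checking it carries $\ker\mu$ to $\Gamma_0\times\{1\}$, which is exactly the ``complement'' you are gesturing at.
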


\begin{proof}
  Since $j : \widehat{G} \to T$, is a covering map, it induces an isomorphism of Lie algebras $
  j_*:\mathrm{Lie}(\widehat{G})\rightarrow \mathrm{Lie}(T)$. Hence,
  $\widehat{G}_0$ is abelian and of the same dimension as $T$. Since $\widehat{G}$ is compact, $\widehat{G}_0$
  is a torus. Moreover, since $\exp_T \circ j_* = j \circ
  \exp_{\widehat{G}}$, since
  $\exp_{\widehat{G}}(\mathrm{Lie}(\widehat{G})) =
  \widehat{G}_0$ and since $\exp_T(\mathrm{Lie}(T)) =T$, the
  restriction of $j$ to $\widehat{G}_0$ is onto, so that
  \begin{equation}
    \label{eq:3}
    \begin{tikzcd} 
      1\arrow{r} &\Gamma_0\arrow[r,"i_0"]& \widehat{G}_0\arrow[r,"j_0"]& T\arrow{r} & 1
    \end{tikzcd}
  \end{equation}
  \noindent
  is a short exact sequence of Lie groups, where $i_0$ (respectively $j_0$) denotes
  the restrictions of $i$ (respectively $j$) to $\Gamma_0$ (respectively
  $\widehat{G}_0$). Hence, $j_0 : \widehat{G}_0 \to T$ is a covering
  map.

  Since
  \begin{center}
    \begin{tikzcd}
      &1\arrow[d]& 1\arrow[d]&1\arrow[d]& \\
      1\arrow{r} &\Gamma_0 \arrow[d,  "i_0" ]\arrow[r,hook]& \Gamma \arrow[d, "i"]\arrow[r, two heads]&
      \Gamma/\Gamma_0 \arrow[d,]\arrow{r} & 1\\
      1\arrow{r} & \widehat{G}_0\arrow[r,hook]\arrow[d,
      two heads, "j_0"]& \widehat{G}\arrow[d,two heads,"j"]\arrow[r, two heads]& \pi_0(\widehat{G})\arrow[d]\arrow[r] & 1\\
      1\arrow[r]	& T\arrow[r]\arrow[d]&T\arrow[r]\arrow[d]&1 & \\
      & 1& 1& & 
    \end{tikzcd}
  \end{center}
  is a diagram of exact sequences of Lie groups, we have $\pi_0(\widehat{G})
  \simeq \Gamma/\Gamma_0$. 

  Next we show that every element of $\Gamma$ commutes with every
  element of $\widehat{G}_0$. To show this, let $g_0\in
  \widehat{G}_0$ and $h\in \Gamma$. Since $T$ is abelian, $j
  \left(g_0 \, h \, g_0^{-1} \right) = e \in T$. Hence, $ g_0
  \, h \, g_0^{-1} \in \Gamma$. Since $\widehat{G}_0$ is connected
  and $\Gamma$ is discrete, for a fixed $h \in \Gamma$, the map $
  \widehat{G}_0 \to \Gamma$ that sends $g_0$ to $g_0 h g_0^{-1}$
  is constant. Hence, $g_0 \, h \, g_0^{-1} = h$ for every $g_0\in
  \widehat{G}_0$ and every $h \in \Gamma$, as desired.

  Since $\Gamma$ commutes with $\widehat{G}_0$,  the map 
  \begin{align} \label{eq:group}
    \mu:\Gamma \times \widehat{G}_0&\rightarrow \widehat{G}\\ \nonumber
    (g_1,g_2)&\mapsto i(g_1) g_2
  \end{align}
  is a smooth homomorphism. Moreover, it is surjective. Indeed,  let $g\in \widehat{G}$. Then $g\widehat{G}_0
  \subset \widehat{G}$ is the connected component of $\widehat{G}$
  containing $g$. Since $\Gamma$ surjects onto $\pi_0(\widehat{G})$,
  there exists an element $h\in\Gamma$ such that $i(h) \in
  g\widehat{G}_0$. Hence, $i(h)=g \, g_0$ for some $g_0 \in
  \widehat{G}_0$, as desired. Since 
  \begin{align*}
    \ker\mu&
             =\{(g_1,g_2)\in\Gamma\times\widehat{G}_0 \mid g_2=i(g_1^{-1})\}\simeq i^{-1}(\widehat{G}_0) = \Gamma_0,
  \end{align*}
 we have $\widehat{G}\simeq(\Gamma\times\widehat{G}_0)/\Gamma_0$.

  If $\Gamma$ is abelian, then
  $\widehat{G}\simeq(\Gamma\times\widehat{G}_0)/\Gamma_0$ is also
  abelian. Furthermore, since $\widehat{G}_0$ is a torus, it is a
  divisible group. Hence, we can extend the identity map $\widehat{G}_0\rightarrow \widehat{G}_0$ to a homomorphism $\widehat{G}\rightarrow
  \widehat{G}_0$ (see \cite[Lemma 4.2]{lang}). The restriction of
  this homomorphism  to
  $\Gamma$ is a homomorphism
  $I:\Gamma\rightarrow\widehat{G}_0$ that extends
  $\Gamma_0\rightarrow\widehat{G}_0$. We consider
  the  isomorphism 
  \begin{align*}
    \phi:\Gamma\times\widehat{G}_0&\rightarrow \Gamma\times\widehat{G}_0\\
    (g_1,g_2)&\mapsto(g_1,I(g_1)g_2).
  \end{align*}
  By a direct calculation, we see that
  $\phi(\ker\mu)=\Gamma_0\times\{1\}$, where $\mu:\Gamma \times
  \widehat{G}_0 \to \widehat{G}$ is as in \eqref{eq:group},
  and that  $\phi$ descends to an isomorphism
  $(\Gamma\times\widehat{G}_0)/\Gamma_0\rightarrow
  \Gamma/\Gamma_0\times\widehat{G}_0$, as desired.
\end{proof}

In what follows, we fix the notation of the subsection on linear
symplectic actions in Section \ref{sec:hamilt-acti-sympl}. Moreover, we fix the standard symplectic form
$\omega_0$ on $\C^n$, so that $\C^n/\Gamma$ inherits a linear
symplectic form. By an abuse of notation, we denote the resulting
symplectic vector orbi-space also by $\C^n/\Gamma$.

\begin{lemma}\label{lem::orbiweights}  Let $\Gamma$ be a subgroup of $\mathrm{U}(n)$ and let $\rho : S^1 \to
  \mathrm{U}(\mathbb{C}^n/\Gamma)$ be a representation.
  Let $\widehat{G}$ be the extension of $S^1$ by $\Gamma$ of
  \eqref{eq:exact_sequ_lin}, let $\widehat{\rho} : \widehat{G} \to
  N_{\mathrm{U}(n)}(\Gamma)$ be the representation given by Remark
  \ref{rmk:G-compact}, and let $\widehat{G}_0$ denote the connected
  component of the identity of $\widehat{G}$. 
  There exist a positive integer $p$ that divides $|\Gamma|$,
  $a_1,\ldots, a_n\in \mathbb{Z}$, and an isomorphism
  $\widehat{G}_0 \simeq S^1$such that
  \begin{itemize}[leftmargin=*]
  \item the covering map $\widehat{G}_0 \simeq S^1 \to S^1$ is given
    by $\widehat{\lambda} \to \widehat{\lambda}^p$, and
  \item for all $\widehat{\lambda}\in \widehat{G}_0 \simeq S^1$ and for
    all $(z_1,\ldots, z_n) \in \mathbb{C}^n$, 
    \begin{equation*}
    \widehat{\rho}(\widehat{\lambda})(z_1,\ldots, z_n) = (\widehat{\lambda}^{a_1}z_1,\ldots, \widehat{\lambda}^{a_n}z_n).
  \end{equation*}
  \end{itemize}
  Finally, the induced $S^1$-action on $\mathbb{C}^n/\Gamma$ is
  Hamiltonian and a moment map $H : \mathbb{C}^n/\Gamma \to \R$ is
  given by  
  \begin{equation}
    \label{eq:6}
    H([z_1,\ldots,z_n]) =
    \frac{1}{2}\left(\frac{a_1}{p}|z_1|^2+\cdots + \frac{a_n}{p}|z_n|^2 \right).
  \end{equation}
\end{lemma}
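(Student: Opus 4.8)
The plan is to exploit Theorem~\ref{thm::groupext} applied to the short exact sequence \eqref{eq:exact_sequ_lin} with $T = S^1$. First I would invoke that theorem to conclude that $\widehat{G}_0$ is a torus of dimension one, i.e.\ $\widehat{G}_0 \simeq S^1$, and that $j|_{\widehat{G}_0} : \widehat{G}_0 \to S^1$ is a covering map. Every covering $S^1 \to S^1$ is of the form $\widehat{\lambda} \mapsto \widehat{\lambda}^p$ for some positive integer $p$ once we fix an isomorphism $\widehat{G}_0 \simeq S^1$; moreover from the exact sequence \eqref{eq:3} (the restriction of \eqref{eq:exact_sequ_lin} to $\widehat{G}_0$ and $\Gamma_0 := i^{-1}(\widehat{G}_0)$) we get $\Gamma_0 \simeq \ker(j_0) \simeq \Z_p$, so $p = |\Gamma_0|$ divides $|\Gamma|$, as claimed.

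Next I would pin down the representation $\widehat{\rho}$ restricted to $\widehat{G}_0 \simeq S^1$. By Remark~\ref{rmk:G-compact}, $\widehat{\rho}$ takes values in $N_{\mathrm{U}(n)}(\Gamma)$, and in particular $\widehat{\rho}|_{\widehat{G}_0}$ is a unitary representation of the circle $\widehat{G}_0$ on $\C^n$. By the classification of unitary representations of $S^1$, after a unitary change of coordinates on $\C^n$ there are integers $a_1,\dots,a_n \in \Z$ such that
\begin{equation*}
  \widehat{\rho}(\widehat{\lambda})(z_1,\dots,z_n) = (\widehat{\lambda}^{a_1}z_1,\dots,\widehat{\lambda}^{a_n}z_n)
\end{equation*}
for all $\widehat{\lambda} \in \widehat{G}_0 \simeq S^1$. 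One subtlety to address: the change of coordinates must be chosen so that the standard symplectic form $\omega_0$ is preserved (so that we remain in the Hermitian/unitary setting and \eqref{eq:6} below holds with the standard moment map), which is automatic since a unitary change of basis preserves $\omega_0$; I should also note that this diagonalization can be taken simultaneously compatible with the $\Gamma$-action in the sense needed, but for the statement as written only the action of $\widehat{G}_0$ needs to be diagonalized, and the weights $a_j$ are what we record.

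For the final claim, I would apply Corollary~\ref{cor:linear-symp_action}: the unitary (hence symplectic) representation $\rho : S^1 \to \mathrm{U}(\C^n/\Gamma)$ induces a Hamiltonian action on $(\C^n/\Gamma,\omega_0)$, and the moment map $\Phi_{\widehat{G}} : \C^n \to \widehat{\mathfrak{g}}^* \simeq \R$ for the $\widehat{G}$-action (equivalently, for the $\widehat{G}_0 \simeq S^1$-action, since only the identity component acts nontrivially on the Lie algebra level) is the standard homogeneous moment map $\widehat{H}(z_1,\dots,z_n) = \tfrac{1}{2}\sum_j a_j |z_j|^2$. The commuting square of Corollary~\ref{cor:linear-symp_action} together with the identification $\pi^*$ of the dual Lie algebras — which, since the covering $\widehat{G}_0 \to S^1$ has degree $p$, rescales by a factor of $\tfrac{1}{p}$ — yields that the descended moment map $H : \C^n/\Gamma \to \R$ satisfies $H([z_1,\dots,z_n]) = \tfrac{1}{p}\widehat{H}(z_1,\dots,z_n)$, which is exactly \eqref{eq:6}. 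The main obstacle I anticipate is bookkeeping the factor $p$ correctly: one must be careful that the identification of $\mathrm{Lie}(\widehat{G})$ with $\mathrm{Lie}(S^1)$ induced by $j$ versus the chosen isomorphism $\widehat{G}_0 \simeq S^1$ differ by multiplication by $p$, and it is precisely this discrepancy that produces the $a_j/p$ in \eqref{eq:6} rather than $a_j$; everything else is a routine unwinding of the functoriality in Corollary~\ref{cor:linear-symp_action}.
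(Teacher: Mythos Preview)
Your proposal is correct and follows essentially the same route as the paper's proof: invoke Theorem~\ref{thm::groupext} to identify $\widehat{G}_0\simeq S^1$ with covering degree $p=|\Gamma_0|$ dividing $|\Gamma|$, diagonalize the unitary $\widehat{G}_0$-representation to extract the integer weights $a_1,\dots,a_n$, and then use Corollary~\ref{cor:linear-symp_action} together with the fact that the derivative of the degree-$p$ covering is multiplication by $p$ to obtain the descended moment map~\eqref{eq:6}. Your additional remarks on the unitary coordinate change preserving $\omega_0$ and the bookkeeping of the factor $p$ are accurate and simply make explicit what the paper leaves implicit.
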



As an immediate consequence of Lemma \ref{lem::orbiweights}, the
$S^1$-action on $\C^n/\Gamma$ is given by
\begin{equation}
  \label{eq:1}
  \lambda \cdot [z_1,\ldots, z_n] = [\lambda^{\frac{a_1}{p}}z_1,\ldots,
  \lambda^{\frac{a_n}{p}}z_n] \text{ for any } \lambda \in S^1,
\end{equation}
(cf. Examples
\ref{exm:circle_action_Talvacchia} and \ref{exm:circle_action_cp1cp1}). 

\begin{proof}[Proof of Lemma \ref{lem::orbiweights}]
  Since the following diagram commutes
  \begin{equation}\label{commdiag}
    \begin{tikzcd}
      1\arrow{r} &\Gamma\arrow[d,equal]\arrow[r,hook]& \widehat{G}\arrow[d,"\widehat{\rho}"]\arrow[r,"j"]& S^1\arrow[d,"\rho"]\arrow{r} & 1\\
      1\arrow{r} & \Gamma\arrow[r,hook]& N_{\mathrm{U}(n)}(\Gamma)\arrow[r,"\pi"]& \mathrm{U}(\mathbb{C}^n/\Gamma)\arrow{r} & 1
    \end{tikzcd}
  \end{equation}
  (see the discussion preceding Corollary
  \ref{cor:linear-symp_action}, and Remark \ref{rmk:G-compact}),
  by Theorem~\ref{thm::groupext}, the restriction of $j$ to $\widehat{G}_0$ is a covering map
  $\widehat{G}_0 \to S^1$. Hence, there exist an
  isomorphism $\widehat{G}_0 \simeq S^1$ and a positive integer $p$
  such that the subgroup $\Gamma_0 < \Gamma$ of Theorem
  \ref{thm::groupext} is isomorphic to $\Z_p$ and the covering map $\widehat{G}_0 \simeq S^1 \to S^1$ sends
  $\widehat{\lambda}$ to $\widehat{\lambda}^p$. In particular, $p$
  divides $|\Gamma|$. Moreover, since
  $\widehat{\rho}$ is unitary, there exist $a_1,\ldots,
  a_n \in \Z$ such that
  \begin{equation}
    \label{eq:4}
    \widehat{\rho}_0(\widehat{\lambda}) = \mathrm{diag}(\widehat{\lambda}^{a_1},\ldots,
    \widehat{\lambda}^{a_n}) \quad \text{for any } \widehat{\lambda} \in \widehat{G}_0. 
  \end{equation}
  Finally, the $\widehat{G}$-action on $(\C^n,\omega_0)$ is Hamiltonian and a
  moment map $\widehat{H} \colon \C^n \to \R$ is given by 
  \begin{equation}
    \label{eq:7}
    \widehat{H}(z_1,\ldots,z_n) = \frac{1}{2}\left(a_1|z_1|^2+\ldots +
      a_n|z_n|^2 \right).
  \end{equation}
  Since the derivative $\R \to \R$ of the covering map $S^1 \to S^1$
  is given by multiplication by $p$ and since $\widehat{H}$ is
  $\Gamma$-invariant, the last statement follows by Corollary
  \ref{cor:linear-symp_action}. 
\end{proof}

\begin{remark}\label{rmk:non-abelian_local_extremum}
  With the notation of Lemma \ref{lem::orbiweights}, set $n = 2$,
  suppose that the representation $\rho$ is faithful and that $\Gamma$
  is not abelian. We claim that, in this case, $a_1 = a_2 = \pm 1$. To
  see this, we observe that, by \cite[Lemma 3.1]{LermanTolman}, $\rho$ is injective if and only if
  $\widehat{\rho}$ is injective. Hence, $\gcd(a_1,a_2) =1$. By
  definition, $\widehat{\rho}(\widehat{G}_0)$ is contained in
  $N_{\mathrm{U}(2)}(\Gamma)$. Thus, if $\left(
    \begin{smallmatrix}
      \alpha & \beta \\
      \gamma & \delta
    \end{smallmatrix}
  \right) \in \Gamma$, then
  \begin{equation}
    \label{eq:65}
    \begin{pmatrix}
      \widehat{\lambda}^{a_1} & 0 \\
      0 & \widehat{\lambda}^{a_2}
    \end{pmatrix}
    \begin{pmatrix}
      \alpha & \beta \\
      \gamma & \delta
    \end{pmatrix}
    \begin{pmatrix}
      \widehat{\lambda}^{-a_1} & 0 \\
      0 & \widehat{\lambda}^{-a_2}
    \end{pmatrix} = \begin{pmatrix}
      \alpha & \widehat{\lambda}^{a_1-a_2}\beta \\
      \widehat{\lambda}^{a_2-a_1}\gamma & \delta
    \end{pmatrix} \in \Gamma
  \end{equation}
  for all $\widehat{\lambda} \in \widehat{G}_0 \simeq S^1$. Since
  $\Gamma$ is finite and not abelian, it follows that $a_1 = a_2$,
  whence the result.
\end{remark}

The rational numbers that appear in Lemma \ref{lem::orbiweights} are the analog of
weights in the realm of orbifolds. To see this, let $(M,\omega,H)$ be
a Hamiltonian $S^1$-space and let $x \in M$ be a fixed point. By Theorems
\ref{thm:darboux_orbifolds} and \ref{thm:darboux}, the action near $x$ is modeled on an $S^1$-action
on $\C^n/\Gamma$ induced by a unitary representation, where $\Gamma$
is the orbifold structure group of $x$ that acts on $\C^n$ in a
unitary fashion. This is precisely the situation considered in Lemma
\ref{lem::orbiweights}. Hence, we may introduce the following terminology, following
\cite{LermanTolman,olocalization}.

\begin{definition}\label{defn:orbi-weights}
  Let $(M,\omega,H)$ be
  a Hamiltonian $S^1$-space and let $x \in M$ be a fixed point with
  orbifold structure group $\Gamma$. Let $p,a_1,\ldots, a_n \in \Z$ be the integers given
  by Lemma \ref{lem::orbiweights}. We say that the rational numbers $\frac{a_1}{p},\ldots,
  \frac{a_n}{p} \in \mathbb{Q}$ are the {\bf
    orbi-weights} at $x$. 
\end{definition}

\begin{example}
Let $\Gamma=\mathbb{Z}_m$ act on $\mathbb{C}^2$ by
\begin{align*}
 \mathbb{Z}_m \times \mathbb{C}^2 & \to \mathbb{C}^2  \nonumber \\
(\mu, (z_1,z_2))  & \mapsto (z_1,\mu z_2).
\end{align*}
If the action of $S^1$  on $\mathbb{C}^2/\mathbb{Z}_m$ is given by
$$
\lambda \cdot [z_1,z_2]_m=[\lambda z_1, z_2]_m, \quad \lambda \in S^1,
$$
then the extension $\widehat{G}$ of $S^1$ by $\mathbb{Z}_m$ in
\eqref{commdiag} is $\widehat{G} = S^1\times \mathbb{Z}_m$ 
and $\Gamma_0:=\widehat{G}_0 \cap \mathbb{Z}_m = \{id\}$. In this
case, the orbi-weights at $[0]$ are $1,0$.

If, on the other hand, the action of $S^1$ is given by
$$
\lambda \cdot [z_1,z_2]_m=[ z_1, \lambda z_2]_m, \quad \lambda \in S^1,
$$
then $\widehat{G} = S^1$ and $\Gamma_0= \mathbb{Z}_m  $. In this
case, the orbi-weights at $[0]$ are $0, \frac{1}{m}$. \hfill$\Diamond$
\end{example}	

  \begin{example}\label{ex:projective}(Weighted projective planes)
	Consider the weighted projective plane
        $M=\mathbb{C}P^2(p,s,q)$ with $p,q,s$ pairwise relatively
        prime, equipped with (a multiple of)  the standard  symplectic
        form of Example~\ref{ex_weightedprj}  and the Hamiltonian
        circle action of Examples~\ref{exm:circle_action_weighted}
        and \ref{exm:weighted_ham} given by
  \begin{equation}\label{eq:exactionwps}
    \begin{split}
      \phi: S^1 \times \C P^2(p,s,q) &\to \C P^2(p,s,q) \\
      (\lambda, [z_0:z_1:z_n]) &\mapsto[\lambda^a z_0:\lambda^b z_1:\lambda^cz_2],
    \end{split}
  \end{equation}
where $a,b,c\in \mathbb{Z}$ are such that $k_0:= aq -cp $, $k_1:=bq-cs$ and $k_2:=as-bp$ are positive and pairwise relatively prime. 

This action has three isolated fixed points,  $F_1 = [1 : 0 : 0]$,
$F_2 = [0 : 1 : 0]$ and $F_3 = [0 : 0 : 1]$, with orbifold structure
group $\Z_p$, $\Z_s$ and $\Z_q$ respectively. We consider the global
quotient $\C^2/\Z_p$ given by the following $\Z_p$-action:
\begin{equation*}
  \begin{split}
    \mathbb{Z}_p \times \mathbb{C}^2 & \to \mathbb{C}^2  \\
    (\mu, (z_1,z_2))  & \mapsto (\mu^sz_1,\mu^q z_2).
  \end{split}
\end{equation*}
A l.u.c. centered at $F_1$ is given by the map that sends $[z,w]_p\in
\mathbb{C}^2/\Z_p$ to $[1:z:w]\in M$.
By the weighted homogeneity of the coordinates in  $\mathbb{C}P^2(p,s,q)$, the $S^1$-action in these coordinates  is given by
\begin{align*}
  \lambda\cdot[z_0:z_1:z_2]&=[\lambda^a z_0:\lambda^b z_1:\lambda^cz_2] =[z_0:\lambda^{-\frac{as-bp}{p}}z_1:\lambda^{-\frac{aq-cp}{p}}z_2].
\end{align*} 	
Since the above action is linear, the orbi-weights at $F_1$ are $ -\frac{k_0}{p},-\frac{k_2}{p}$. Similarly, 	
\begin{align*}	
\lambda\cdot[z_0:z_1:z_2]&=[\lambda^{\frac{as-bp}{s}}z_0:z_1:\lambda^{-\frac{bq-cs}{s}}z_2]\\
&=[\lambda^{\frac{aq-cp}{q}}z_0:\lambda^{\frac{bq-cs}{q}}z_1:z_2].
\end{align*} 
Hence, the orbi-weights at $F_2$ and $F_3$ are
$\frac{k_2}{s},-\frac{k_1}{s}$ and $\frac{k_0}{q}, \frac{k_1}{q}$ respectively.

If $k_1=0$ and $k_0,k_2$ are relatively prime, the fixed point set of the
action in \eqref{eq:exactionwps} is the union of $F_1$ and the points of the orbi-sphere
$$
\Sigma=\{[0:z_1:z_2]\in M\} \simeq \C P^1(s,q). 
$$
The orbi-weights at $F_1$ are $ -\frac{q}{p},-\frac{s}{p}$ and the
orbi-weights at each point in $\Sigma$ are $0,1$.  Note that, since
$k_1=0$, we have $bq=cs$. Hence, since $p,q,s$ are pairwise relatively prime
and $k_0,k_2$ are relatively prime, we must have $k_2=s$ and
$k_0=q$. \hfill$\Diamond$ 
\end{example}

\subsection{Local classification near fixed points in the four dimensional case}\label{sec:local-class-near}
The aim of this section is to prove the local classification of Hamiltonian
$S^1$-spaces on four dimensional symplectic orbifolds near a cyclic isolated
singular point of type $\frac{1}{m}(1,l)$ -- see Theorem
\ref{cor::localformpt} below. We start by proving the following
result, which gives necessary and sufficient conditions for given
rational numbers to arise as orbi-weights at a fixed point that is
singular for the orbifold under the assumption that the $S^1$-action
be effective. 

\begin{proposition}\label{prop_local}
  Let $m \geq 2$ be an integer, let $p$ be a positive integer that
  divides $m$, and let $a_1,a_2 \in \Z$. Let
  $\mathbb{C}^2/\mathbb{Z}_m$ be a model for a cyclic isolated
  singular point of type $\frac{1}{m}(1,l)$. There
  exists a faithful representation $\rho : S^1 \to
  \mathrm{U}(\mathbb{C}^2/\mathbb{Z}_m)$ such that $p, a_1,a_2$ are as
  in Lemma \ref{lem::orbiweights} if and only if
  \begin{equation}
    \label{eq:50}
    \gcd(a_1,a_2)=1 \quad \text{and} \quad \gcd(|a_1l-a_2|,m)=p.
  \end{equation}
  Moreover, $[0,0]$ is an isolated fixed point for the induced $S^1$-action on
  $\C^2/\Z_m$ exactly if $a_1a_2 \neq 0$. 
  Finally, the $S^1$-action extends to a
  toric action on  $\mathbb{C}^2/\mathbb{Z}_m$. 
\end{proposition}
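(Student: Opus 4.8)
The plan is to produce a $2$-torus acting effectively and in a Hamiltonian fashion on $\mathbb{C}^2/\mathbb{Z}_m$ so that the $S^1$-action supplied by the first part of the statement is its restriction along a subgroup. Throughout I will use the one arithmetic consequence of \eqref{eq:50} that matters here, namely $p \mid a_1 l - a_2$ (immediate from $\gcd(|a_1l-a_2|,m)=p$); this is precisely the congruence that makes the formula \eqref{eq:1} for the $S^1$-action well defined, and nothing else from \eqref{eq:50} will be needed. Let $(S^1)^2$ act on $\mathbb{C}^2$ by $(t_1,t_2)\cdot(z_1,z_2)=(t_1z_1,t_2z_2)$, and regard $\mathbb{Z}_m$ as the subgroup generated by $\mathrm{diag}(\xi_m,\xi_m^l)$ inside the diagonal torus $(S^1)^2<\mathrm{U}(2)$ — this is exactly the group whose quotient yields the model of Definition \ref{def:model_singular_point}. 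Since $(S^1)^2$ is abelian, the quotient $T:=(S^1)^2/\mathbb{Z}_m$ is a compact connected abelian Lie group of dimension $2$, hence a torus, and it acts on $\mathbb{C}^2/\mathbb{Z}_m$ in the obvious way. A short argument — carried out on the open dense connected set of $(z_1,z_2)$ with both coordinates nonzero, where $(S^1)^2$ acts freely and meets $\mathbb{Z}_m$ trivially — shows this $T$-action is effective. Moreover $(S^1)^2$ lies in $\mathrm{Sp}(\mathbb{C}^2,\omega_0)$ and normalizes $\mathbb{Z}_m$, so it induces a symplectic representation of $T$ on $\mathbb{C}^2/\mathbb{Z}_m$; by Corollary \ref{cor:linear-symp_action} the $T$-action on $(\mathbb{C}^2/\mathbb{Z}_m,\omega_0)$ is therefore Hamiltonian. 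As $\dim T=2=\frac{1}{2}\dim_{\mathbb{R}}(\mathbb{C}^2/\mathbb{Z}_m)$, this is a toric action.

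To realize the given $S^1$-action inside it, I would define $\widetilde\iota:\mathbb{R}\to(S^1)^2$ by $\theta\mapsto(e^{\mathrm{i}a_1\theta/p},e^{\mathrm{i}a_2\theta/p})$ and post-compose with the projection $(S^1)^2\to T$. Since $p\mid m$, the integer $k:=ma_1/p$ is defined, and using $p\mid a_1l-a_2$ one checks that $\widetilde\iota(2\pi)=(e^{2\pi\mathrm{i}a_1/p},e^{2\pi\mathrm{i}a_2/p})$ equals $\mathrm{diag}(\xi_m^{k},\xi_m^{kl})\in\mathbb{Z}_m=\ker\big((S^1)^2\to T\big)$. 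Hence the composite $\mathbb{R}\to T$ is $2\pi$-periodic and descends to a Lie group homomorphism $\iota:S^1\to T$. For $\lambda=e^{\mathrm{i}\theta}$ the element $\iota(\lambda)$ acts on $\mathbb{C}^2/\mathbb{Z}_m$ by $[z_1,z_2]\mapsto[e^{\mathrm{i}a_1\theta/p}z_1,e^{\mathrm{i}a_2\theta/p}z_2]$, which by Lemma \ref{lem::orbiweights} (see \eqref{eq:1}) is exactly the action of $\lambda$ in the $S^1$-action under consideration. Thus the $S^1$-action is the restriction of the toric $T$-action along $\iota$, as required.

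Almost all of this is routine bookkeeping. The only points demanding a little care are the well-definedness of $\iota$ — i.e. the membership $\widetilde\iota(2\pi)\in\mathbb{Z}_m$, which is where the hypothesis \eqref{eq:50} enters, through $p\mid a_1l-a_2$ — and the effectiveness of the $T$-action; neither presents a genuine obstacle, and I do not expect any real difficulty in this final part of the proposition.
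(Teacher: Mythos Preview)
Your argument for the toric extension is correct, and the underlying object is the same one the paper uses: the diagonal torus $(S^1)^2<\mathrm{U}(2)$ modulo the embedded copy of $\mathbb{Z}_m$. The execution, however, is different. The paper argues structurally: it invokes Theorem \ref{thm::groupext} to say that the extension $\widehat{G}$ of $S^1$ by $\mathbb{Z}_m$ is abelian, hence its unitary image can be simultaneously diagonalized; since $\mathbb{Z}_m$ is already diagonal, everything sits inside the maximal torus and the quotient torus acts. You instead bypass $\widehat{G}$ entirely and work constructively, defining $T=(S^1)^2/\mathbb{Z}_m$ from the outset and then writing down the one-parameter subgroup $\iota:S^1\to T$ explicitly from the formula \eqref{eq:1}, checking well-definedness via the congruence $p\mid a_1l-a_2$. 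Your route is more concrete and makes the arithmetic role of \eqref{eq:50} in this step completely transparent; the paper's route is terser and more conceptual but leaves the passage from ``$\widehat{\rho}(\widehat{G})$ is diagonal'' to ``the $S^1$-action extends'' essentially to the reader. Both are fine; yours would sit well in an expository treatment.
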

    
\begin{proof}
  Suppose first that there is a faithful representation $\rho : S^1 \to
  \mathrm{U}(\mathbb{C}^2/\mathbb{Z}_m)$ such that the orbi-weights at
  $[0]$ of the induced $S^1$-action are $a_1/p,
  a_2/p$. Let $\widehat{G}$ be the extension of $S^1$ by $\Z_m$ of
  \eqref{eq:exact_sequ_lin}, let $\widehat{\rho} : \widehat{G} \to
  N_{\mathrm{U}(2)}(\Z_m)$ be the representation given by Remark
  \ref{rmk:G-compact}, and let $\widehat{G}_0$ denote the connected
  component of the identity of $\widehat{G}$. By Lemma
  \ref{lem::orbiweights}, there exists a positive integer $p$ and an isomorphism $\widehat{G}_0
  \simeq S^1$ such that $\Gamma_0 = \Z_m \cap
  \widehat{G}_0 \simeq \Z_p$. Since the following diagram commutes
  \begin{equation}
    \begin{tikzcd}
      1\arrow{r} &\Z_m\arrow[d,equal]\arrow[r,hook]& \widehat{G}\arrow[d,"\widehat{\rho}"]\arrow[r,"j"]& S^1\arrow[d,"\rho"]\arrow{r} & 1\\
      1\arrow{r} & \Z_m\arrow[r,hook]& N_{\mathrm{U}(2)}(\Z_m)\arrow[r,"\pi"]& \mathrm{U}(\mathbb{C}^2/\Z_m)\arrow{r} & 1,
    \end{tikzcd}
  \end{equation}
 for any $\gamma_0 \in \Z_p$   we have  $\widehat{\rho}(\gamma_0) \in
  \Z_m < \mathrm{U}(2)$. Hence, since $[0,0]$ is an isolated singular
  point of type $\frac{1}{m}(1,l)$, for any $\gamma_0 \in \Z_p$, there
  exists $\mu \in \Z_m$ such that
  \begin{equation}
    \label{eq:38}
     \widehat{\rho}(\gamma_0)(z_1,z_2) = (\mu z_1,\mu^l z_2) \text{
   for all } (z_1,z_2) \in \C^2.
  \end{equation}
  On the other hand, by Lemma \ref{lem::orbiweights},
  \begin{equation}
    \label{eq:39}
    \widehat{\rho}(\widehat{\lambda})(z_1,z_2) =
    (\widehat{\lambda}^{a_1}z_2,\widehat{\lambda}^{a_2}z_2) \text{ for
      all } \widehat{\lambda} \in S^1 \simeq \widehat{G}_0.
  \end{equation}
  By \eqref{eq:38} and \eqref{eq:39}, and since $\Gamma_0 \simeq
  \Z_p$, we have that
  \begin{equation} \label{eq:condition1}
    a_1l - a_2 = 0 \mod p.
  \end{equation}

By \cite[Lemma 3.1]{LermanTolman}, $\rho$ is injective if and only if
$\widehat{\rho}$ is injective. By \eqref{eq:39}, if
$\widehat{\rho}$ is injective, then $\gcd(a_1,a_2)=1$. Moreover,
by \eqref{eq:1}, $\rho(\lambda) =
\mathrm{diag}(\lambda^{\frac{a_1}{p}},
\lambda^{\frac{a_2}{p}})$. Hence, setting $\lambda = e^{2\pi i
  \theta}$ for some $\theta \in [0,1)$, we have that $\lambda \in \ker
\rho$ if and only if 
$$ e^{\frac{2\pi i\theta a_1}{p}}  =\mu \text{ and }\quad
e^{\frac{2\pi i\theta a_2}{p}}=\mu^l \text{ for some }\mu \in
\mathbb{Z}_m. $$
Equivalently,
\begin{equation}
  \theta(a_1l-a_2) \in \mathbb{Z},\quad  \theta(a_1l-a_2)  = 0 \mod p,  \quad\text{and}\quad e^{\frac{2\pi i\theta a_1}{p}}\in \mathbb{Z}_m. \label{inj_cond}
\end{equation}

\begin{claim}
  If $\gcd(a_1,a_2)=1$ and $a_1a_2\ne 0$, then $\rho$ is injective if and only if $\gcd\left(\frac{\lvert a_1l-a_2\rvert}{p},\frac{m}{p}\right)=1$.
\end{claim}

\begin{proof}[Proof of Claim]
  For each implication, we split the proof in two cases, namely $p=1$ and $p >1$. First we suppose that
  $\gcd\left(\frac{|a_1l-a_2|}{p},\frac{m}{p}\right)=1$. \\

  \noindent
  {\bf Case $p=1$:} Since $\gcd(0,m)=m\neq 1$, then $\lvert a_1l - a_2\rvert \neq 0$. If $\lvert a_1l-a_2 \rvert=1$, then the
  result follows by \eqref{inj_cond}. Hence, we may assume that
  $|a_1l-a_2|>1$. By \eqref{inj_cond}, there exists an integer $0\leq q< \lvert
  a_1l-a_2\rvert $ such that
  $$\theta=
  \frac{q}{\lvert a_1l-a_2\rvert} \quad  \text{ and } \quad e^{\frac{2\pi i q a_1}{\lvert a_1l-a_2\rvert}}\in \Z_m,$$ 
  so that $e^{\frac{2\pi iq a_1m}{\lvert a_1l-a_2\rvert}}=1$. Since $\gcd(\lvert a_1l-a_2\rvert,m)=1$ and 
  $$\gcd(\lvert a_1l-a_2\rvert, a_1)=\gcd(a_2,a_1)=1,$$ 
  this can only be satisfied if $q=0$ and so $\rho$ is injective. \\
  
  \noindent
  {\bf Case $p >1$:} By \eqref{eq:condition1}, there
  exists an integer $\alpha \geq 0$ such that $\lvert a_1l-a_2\rvert
  =\alpha\, p$. Therefore 
  \begin{equation}\label{eq:gcd}
    \gcd\left(\frac{\lvert a_1l-a_2\rvert }{p},\frac{m}{p}\right)=\gcd(\alpha,k)=1,
  \end{equation}
  where $k$ is such that $m=k \, p$. Suppose first that $\alpha=0$. Then
  $$1=\gcd(\alpha,k)=\gcd(0,k)=k,$$ 
  and so $p=m$. Hence, by  \eqref{inj_cond}, $\theta a_1\in \mathbb{Z}$. Moreover, 
  $$1=\gcd(\lvert a_1l-a_2\rvert,a_1)=\gcd(0,a_1)=a_1.$$
  Hence, $\theta\in\mathbb{Z}$ and $\rho$ is injective. Next
  we assume that $\alpha=1$. Then, by \eqref{inj_cond}, $\theta\in
  \mathbb{Z}$ and $\rho$ is injective. Finally, if  $ \alpha >1$, then, by the second equation in \eqref{inj_cond}, there exists an integer
  $0\leq q < p$ such that  
  $$\theta=\frac{q}{\alpha} \quad \text{ and} \quad e^{\frac{2\pi i q
      a_1k}{\alpha}}=1.$$
  Since 	
  $$1=\gcd(a_1,a_2)= \gcd(\lvert a_1l-a_2\rvert ,a_1)=\gcd( \alpha\,  p,a_1),$$  
  we have that $\gcd(a_1,\alpha)=1$. Moreover, by assumption,
  $\gcd(k,\alpha)=1$ and so $q=0 \mod \alpha$. Hence, since $q<\alpha$, it follows that $q=\theta=0$ and so
  $\rho$ is injective. \\
  
  Conversely, suppose that  $\gcd\left(\frac{\lvert
    a_1l-a_2\rvert}{p},\frac{m}{p}\right)=c\ne 1$. \\

  \noindent
  {\bf Case $p=1$:} If $\lvert a_1l-a_2\rvert=0$, then
  \eqref{inj_cond} is always satisfied  by $\theta=\frac{1}{m}$
  and so $\rho$ is not injective. Otherwise, since $\lvert
  a_1l-a_2\rvert \neq 1$, then $\lvert a_1l-a_2\rvert >1$. In this case, $\theta=1/c$ satisfies \eqref{inj_cond}  and so  
  $\rho$ is not injective. \\
  
  \noindent
  {\bf Case $p > 1$:} Note that there exists an integer  $\alpha \neq
  1$ such that $\lvert a_1l-a_2\rvert =\alpha\,  p$. If $\alpha =0$,  then \eqref{inj_cond}  is satisfied  by 
  $\theta=\frac{1}{k}$ and so  $\rho$ is not injective.  If
  $\alpha>1$, then, since $c=\gcd(\alpha,k)$, we have that $\theta=1/c$ satisfies \eqref{inj_cond} and so 
  $\rho$ is not injective.
\end{proof}

If $a_1a_2\neq 0$, then, by the above Claim and since $\gcd\left(\lvert
  a_1l-a_2\rvert,p\right)=p$ by \eqref{eq:condition1}, we have that $\gcd\left(\lvert
  a_1l-a_2\rvert,m\right)=p$, so that condition~\eqref{eq:50} holds. Else, assume that $a_2=0$ without loss of generality. Since
$\gcd(a_1,0)=1$, then $\lvert a_1\rvert =1$. This readily implies that
$p =1$ for, if not, 
$$p=\gcd(\lvert a_1l\rvert,p)=\gcd(l,p),$$
which contradicts $1=\gcd(l,m)$. Hence, condition \eqref{eq:50} holds.
	
It is a simple calculation to check that \eqref{eq:50} is sufficient to define a faithful representation $\rho : S^1 \to
\mathrm{U}(\mathbb{C}^2/\mathbb{Z}_m)$ such that the orbi-weights at
$[0,0]$ of the induced $S^1$-action are $a_1/m,
a_2/m$. We leave this to the reader and remark only that
\eqref{eq:condition1} ensures that $\rho$ is well-defined, and that the
above Claim implies that it is injective in the case $a_1a_2 \neq 0$.

Suppose that $[0,0] \in \C^2/\Z_m$ is not an isolated fixed point of
the $S^1$-action. Since the action is given by
$$ \lambda \cdot [z_1,z_2] = [\lambda^{\frac{a_1}{p}}z_1,
\lambda^{\frac{a_1}{p}}z_2],$$
and since $\rho$ is faithful, if $[z_1,z_2] \neq [0,0]$ is a fixed
point, then exactly one of $z_1$ or $z_2$ equals 0. Assume without
loss of generality that $z_2=0$. Then, for all $\lambda \in S^1$,
there exists $\mu \in \Z_m$ such that $\lambda^{\frac{a_1}{p}} =
\mu$. This can only occur if $a_1 = 0$. Hence, since $[0,0] \in
\C^2/\Z_m$ is a fixed point of the $S^1$-action, it is isolated
exactly if $a_1a_2 \neq 0$.

Finally, we want to prove that the circle action on
$\mathbb{C}^2/\mathbb{Z}_m$ extends to a torus action. By Theorem
\ref{thm::groupext}, $\widehat{G}$ is abelian. Hence, since $\widehat{\rho}$
is a unitary representation, there exists a basis of $\C^2$ such that $\widehat{\rho}(\widehat{G})$ consists
solely of diagonal elements. Since the maximal torus $(S^1)^2 \subset
\mathrm{U}(2)$ equals the subset of diagonal elements of
$\mathrm{U}(2)$ and since $\Z_m < (S^1)^2$, the result follows. 
\end{proof}

\begin{remark}\label{rmk:type}
  Let $\mathbb{C}^2/\mathbb{Z}_m$ be a model for a cyclic isolated
  singular point of type $\frac{1}{m}(1,l)$, let $\rho : S^1 \to
  \mathrm{U}(\C^2/\Z_m)$ be a faithful representation, and let $p,
  a_1,a_2$ be the integers associated to $\rho$ by Lemma \ref{lem::orbiweights}. Let $1 \leq l'
  < m$ be the unique integer such that $l \cdot l' = 1 \mod m$. Then the model for a cyclic isolated
  singular point of type $\frac{1}{m}(1,l')$ is isomorphic to
  $\mathbb{C}^2/\mathbb{Z}_m$ (see Lemma \ref{lemma:model_iso}). We
  fix such an isomorphism and let
  $\rho'$ denote the corresponding faithful unitary
  representation. Let $p', a_1',a_2'$ be the integers associated to
  $\rho'$ by Lemma \ref{lem::orbiweights}. Then $p = p'$ and the
  multisets $\{a_1,a_2\}$ and $\{a_1',a_2'\}$ are equal. Moreover,
  since $l \cdot l' = 1 \mod m$, equation \eqref{eq:50} is satisfied by $p, a_1,a_2$ if and only if they
  are satisfied by $p', a_1',a_2'$.

  In particular, if $a_1 \neq a_2$, this allows us to {\em choose} a
  preferred representative for the type of a cyclic isolated singular
  point: Namely, we choose  $\frac{1}{m}(1,l)$ so that $a_1 >
  a_2$, i.e., the larger of the two orbi-weights is in the direction of
  the coordinate corresponding to $1$. If $a_1 = a_2$, then by Proposition \ref{prop_local} $a_1
  = a_2 = \pm 1$. Moreover, if $l^2 = 1 \mod m$, then $l = l'$, so that there is
  no choice to be made. Finally, if $l^2 \neq 1 \mod m$, then there is
  no preferred choice for the expression $\frac{1}{m}(1,l)$. This
  comes into play in Section \ref{sec:label-mult-hamilt}.
\end{remark}

The main result of this section, Theorem
\ref{cor::localformpt}, is an immediate consequence of Theorem \ref{thm:darboux}, Lemma \ref{lem::orbiweights} and
Proposition \ref{prop_local}.

\begin{theorem}\label{cor::localformpt}
  Let $(M,\omega,H)$ be a four dimensional Hamiltonian $S^1$-space and
  let $x \in M$ be a cyclic quotient isolated singular point of type
  $\frac{1}{m}(1,l)$ that is also a fixed point for the $S^1$-action.  Let $\C^2/\Z_m$ be the corresponding model and
  let $\omega_0$ be the
  symplectic form on $\C^2/\Z_m$ that descends from the standard
  symplectic form on $\C^2$. Let $p, a_1,a_2 \in \Z$ be the integers
  associated to the representation $\rho : S^1 \to \mathrm{U}(T_x M)
  \simeq \mathrm{U}(\C^2/\Z_m)$ defining the linear $S^1$-action on
  $T_xM$ by Lemma \ref{lem::orbiweights}.

  There exist an open $S^1$-invariant neighborhood $U
  \subseteq M$ of $x$, an
  open neighborhood $V \subseteq \C^2/\Z_m$ of $[0,0]$, and an
  isomorphism of Hamiltonian $S^1$-spaces between $(U,\omega, H)$ and
  $(V,\omega_0, H_{\mathrm{lin}})$, where the map 
  $H_{\mathrm{lin}} : \C^2/\Z_m \to \R$ given by 
 \begin{equation*}
    H_{\mathrm{lin}}([z_1,z_2]):=H(x)+\frac{1}{2}\left(\frac{a_1}{p}\lvert z_1\rvert^2+\frac{a_2}{p}\lvert z_2\rvert^2\right)
  \end{equation*}  
  is the moment map for the linear $S^1$-action.

  Moreover, $x$ is an isolated fixed point of the $S^1$-action if and
  only if
  $$ a_1a_2 \neq 0 \, , \, \gcd(a_1,a_2)=1 \text{ and }
  \gcd(|a_1l-a_2|,m)=p. $$
  Finally, $x$ is not an isolated fixed point if and only if $p=1$ and
  $\{|a_1|,|a_2|\} = \{0,1\}$.
\end{theorem}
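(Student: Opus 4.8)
The plan is to assemble Theorem \ref{cor::localformpt} almost entirely from results already established, treating it as a synthesis rather than a new argument. First I would invoke the equivariant Darboux theorem (Theorem \ref{thm:darboux}): since $S^1$ is compact and $x$ is a fixed point of the Hamiltonian $S^1$-space $(M,\omega,H)$, there exist $S^1$-invariant open neighborhoods $U\subseteq M$ of $x$ and $W\subseteq T_xM$ of $[0]$, and an $S^1$-equivariant symplectomorphism $f:(U,\omega)\to(W,\omega_x)$ sending $x$ to $[0]$ with $H=H(x)+\Phi_{\mathrm{lin}}\circ f$. Next, since $x$ is a cyclic isolated singular point of type $\frac1m(1,l)$, Corollary \ref{cor:sing_point_dim_4} (via Theorem \ref{thm:darboux_orbifolds}) identifies $T_xM$ with $\C^2/\Z_m$ as a Hermitian orbi-vector space, so the symplectic slice representation $\rho:S^1\to\mathrm{U}(T_xM)\simeq\mathrm{U}(\C^2/\Z_m)$ is exactly a faithful unitary representation of the kind studied in Lemma \ref{lem::orbiweights}. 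Applying that lemma produces the integers $p,a_1,a_2$, the identification $\widehat{G}_0\simeq S^1$, and, crucially, the explicit formula \eqref{eq:6} for the moment map of the linear action, which after shifting by $H(x)$ is precisely the stated $H_{\mathrm{lin}}$. Composing the Darboux identification with the identification $W\subseteq T_xM\simeq\C^2/\Z_m$ and shrinking if necessary gives the neighborhood $V\subseteq\C^2/\Z_m$ of $[0,0]$ and the claimed isomorphism of Hamiltonian $S^1$-spaces $(U,\omega,H)\simeq(V,\omega_0,H_{\mathrm{lin}})$.

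Second I would address the isolatedness criterion. The characterization that $x$ is an isolated fixed point if and only if $a_1a_2\neq 0$, $\gcd(a_1,a_2)=1$, and $\gcd(|a_1l-a_2|,m)=p$ is simply a restatement of Proposition \ref{prop_local}: that proposition says $p,a_1,a_2$ arise from a faithful representation on the model $\C^2/\Z_m$ of type $\frac1m(1,l)$ exactly when $\gcd(a_1,a_2)=1$ and $\gcd(|a_1l-a_2|,m)=p$, and that $[0,0]$ is an isolated fixed point of the induced action exactly when $a_1a_2\neq0$. Since the local model obtained in the first part realizes $x$'s neighborhood as such a representation, the fixed point $x$ is isolated in $U$ (hence locally isolated in $M$) precisely under the conjunction of these three conditions. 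For the final equivalence — $x$ is not isolated iff $p=1$ and $\{|a_1|,|a_2|\}=\{0,1\}$ — I would argue directly: if $x$ is not isolated then $a_1a_2=0$, say $a_2=0$ without loss of generality; then $\gcd(a_1,a_2)=\gcd(a_1,0)=|a_1|$, and the constraint $\gcd(a_1,a_2)=1$ forces $|a_1|=1$; moreover $\gcd(|a_1l-a_2|,m)=\gcd(l,m)=1$ (using $\gcd(l,m)=1$ from the type), so $p=1$. Conversely, if $p=1$ and $\{|a_1|,|a_2|\}=\{0,1\}$ then $a_1a_2=0$, and the action $\lambda\cdot[z_1,z_2]=[\lambda^{a_1}z_1,\lambda^{a_2}z_2]$ on $\C^2/\Z_m$ fixes an entire coordinate orbi-disc, so $[0,0]$ (hence $x$) is not isolated.

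I do not expect a genuine obstacle here; the theorem is explicitly billed in the preceding sentence as "an immediate consequence of Theorem \ref{thm:darboux}, Lemma \ref{lem::orbiweights} and Proposition \ref{prop_local}." The one point requiring a little care is bookkeeping of which coordinate carries which weight when passing through the identifications — the Darboux chart, the identification $T_xM\simeq\C^2/\Z_m$ of Corollary \ref{cor:sing_point_dim_4}, and the normalization in Definition \ref{def:model_singular_point} where the first weight of the $\Z_m$-action is set to $1$. Remark \ref{rmk:type} already records that the multiset $\{a_1,a_2\}$ and the integer $p$ are invariants independent of the choice between $l$ and $l'$, so there is no genuine ambiguity; one only needs to make sure that when $a_1\neq a_2$ one uses the preferred representative of Remark \ref{rmk:type} (larger orbi-weight along the coordinate with $\Z_m$-weight $1$) so that the statement is unambiguous. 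Beyond that, the proof is a short concatenation of cited results plus the elementary gcd computation in the last paragraph.

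\begin{proof}
  The existence of the $S^1$-invariant neighborhood $U$, the neighborhood $V \subseteq \C^2/\Z_m$ and the isomorphism of Hamiltonian $S^1$-spaces follows by combining Theorem \ref{thm:darboux}, Corollary \ref{cor:sing_point_dim_4} and Lemma \ref{lem::orbiweights}: Theorem \ref{thm:darboux} provides an $S^1$-invariant neighborhood $U$ of $x$, an $S^1$-invariant neighborhood $W \subseteq T_xM$ of $[0]$ and an $S^1$-equivariant symplectomorphism $f : (U,\omega) \to (W,\omega_x)$ with $f(x) = [0]$ and $H = H(x) + \Phi_{\mathrm{lin}} \circ f$; Corollary \ref{cor:sing_point_dim_4} identifies $(T_xM,\omega_x)$ with the model $(\C^2/\Z_m,\omega_0)$ for a cyclic isolated singular point of type $\frac{1}{m}(1,l)$; and, under this identification, the linear $S^1$-action on $T_xM$ corresponds to the action induced by a faithful unitary representation $\rho : S^1 \to \mathrm{U}(\C^2/\Z_m)$, so that Lemma \ref{lem::orbiweights} yields the integers $p, a_1, a_2$ and formula \eqref{eq:6} for a moment map of the linear action. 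Shifting by $H(x)$ gives precisely $H_{\mathrm{lin}}$, and composing $f$ with the identification $W \subseteq T_xM \simeq \C^2/\Z_m$ (shrinking $U$ if necessary) produces the desired isomorphism between $(U,\omega,H)$ and $(V,\omega_0,H_{\mathrm{lin}})$.

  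Since the neighborhood of $x$ is realized as (an open subset of) a faithful representation on the model $\C^2/\Z_m$ of type $\frac{1}{m}(1,l)$, Proposition \ref{prop_local} applies with this $p, a_1, a_2$. By that proposition, such a representation exists if and only if $\gcd(a_1,a_2) = 1$ and $\gcd(|a_1l - a_2|,m) = p$, and $[0,0]$ is an isolated fixed point of the induced $S^1$-action exactly when $a_1a_2 \neq 0$. Hence $x$ is an isolated fixed point of the $S^1$-action on $M$ if and only if
  $$ a_1 a_2 \neq 0 \, , \quad \gcd(a_1,a_2) = 1 \quad \text{and} \quad \gcd(|a_1l - a_2|,m) = p. $$

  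Finally, we prove the last equivalence. Suppose $x$ is not an isolated fixed point. Then $a_1a_2 = 0$; without loss of generality $a_2 = 0$. From $\gcd(a_1,a_2) = \gcd(a_1,0) = |a_1|$ and the requirement $\gcd(a_1,a_2) = 1$ we get $|a_1| = 1$, so $\{|a_1|,|a_2|\} = \{0,1\}$. Moreover $\gcd(|a_1l - a_2|,m) = \gcd(|a_1 l|,m) = \gcd(l,m) = 1$, where the last equality uses $\gcd(l,m) = 1$ from the type; hence $p = 1$. Conversely, if $p = 1$ and $\{|a_1|,|a_2|\} = \{0,1\}$, then $a_1a_2 = 0$, say $a_2 = 0$ and $|a_1| = 1$; by \eqref{eq:1} the induced action on $\C^2/\Z_m$ is $\lambda \cdot [z_1,z_2] = [\lambda^{a_1}z_1, z_2]$, which fixes every point of the form $[0,z_2]$, so $[0,0]$ — and therefore $x$ — is not an isolated fixed point.
\end{proof}
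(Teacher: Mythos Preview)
Your proposal is correct and follows exactly the approach the paper indicates: the paper provides no proof at all, stating only that the theorem ``is an immediate consequence of Theorem \ref{thm:darboux}, Lemma \ref{lem::orbiweights} and Proposition \ref{prop_local}.'' You spell out precisely how those three results combine, adding Corollary \ref{cor:sing_point_dim_4} as the natural bridge identifying $T_xM$ with the model $\C^2/\Z_m$, and you supply the short gcd computation for the final equivalence that the paper leaves entirely implicit.
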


As a consequence of Theorem \ref{cor::localformpt}, its analog for
manifolds (see \cite[Corollary A.7]{karshon}), and Corollary
\ref{cor:fixed_point_set_symp_full_suborbifold}), we obtain the
following result, which describes properties of the fixed point set of
Hamiltonian $S^1$-actions on four dimensional symplectic orbifolds
with discrete singular sets all of whose orbifold structure groups are
cyclic.

\begin{corollary}\label{cor:fixed_point_set}
  Let $(M,\omega,H)$ be a four dimensional Hamiltonian $S^1$-space
  with discrete singular set and such that all orbifold structure
  groups are cyclic. Any connected component of the fixed point set of
  $M$ is a full symplectic suborbifold of dimension at most
  two. Moreover, if $\Sigma$ is a two dimensional component, all
  points in $\Sigma$ are local extrema of $H$. 
\end{corollary}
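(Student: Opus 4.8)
The plan is to deduce Corollary \ref{cor:fixed_point_set} directly from the linearization theorems (Corollaries \ref{cor:fixed_point_set_symp_full_suborbifold} and \ref{cor:linearization_fixed_point}), the local normal form (Theorem \ref{cor::localformpt}), and its manifold counterpart \cite[Corollary A.7]{karshon}. First I would recall from Corollary \ref{cor:fixed_point_set_symp_full_suborbifold} that every connected component $Z$ of the fixed point set of $(M,\omega,H)$ is a full symplectic suborbifold; since $M$ has dimension four, $Z$ has dimension $0$, $2$, or $4$. The dimension-four case is excluded because the $S^1$-action is effective (a component of dimension $4$ would be all of $M$, contradicting effectiveness), so $\dim Z \in \{0,2\}$, giving the first assertion.

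Next I would show that a two-dimensional component $\Sigma$ has only regular points, hence is a suborbifold that is actually a manifold, so that the manifold result applies. Indeed, if $x \in \Sigma$ were singular with (cyclic) orbifold structure group $\Z_m$, $m \geq 2$, then by Theorem \ref{cor::localformpt} a neighborhood of $x$ is modeled on $\C^2/\Z_m$ with the linear $S^1$-action having orbi-weights $a_1/p, a_2/p$; since $x$ lies in a $2$-dimensional fixed component, $x$ is \emph{not} an isolated fixed point, so by the last line of Theorem \ref{cor::localformpt} we have $p = 1$ and $\{|a_1|,|a_2|\} = \{0,1\}$. But then one orbi-weight is an integer $0$ and the other is $\pm 1$, and the fixed set of this linear action near $[0,0]$ is one of the two coordinate axes in $\C^2/\Z_m$ — which, away from the origin, consists of points with nontrivial $\Z_m$-stabilizer. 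This contradicts the hypothesis that the singular set of $M$ is discrete (indeed, the fixed axis would be a curve of singular points). Hence $\Sigma$ contains no singular point of $M$, so by Remark \ref{rmk:orbifolds_no_singular_points} it is a smooth symplectic surface embedded in $M$.

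Finally, to see that every point of a $2$-dimensional component $\Sigma$ is a local extremum of $H$, I would pass to an $S^1$-invariant neighborhood $U$ of any $x \in \Sigma$. By Corollary \ref{cor:linearization_fixed_point} (equivalently Theorem \ref{thm:darboux}) the $S^1$-action on $U$ is equivariantly modeled on the linear action on $T_xM$; since $x$ is regular, $T_xM \simeq \C^2$ and the linear action has integer weights $(a_1,a_2)$, with the fixed subspace equal to the tangent space of $\Sigma$, which is $2$-dimensional — so exactly one of $a_1, a_2$ is zero, say $a_2 = 0$, and $a_1 \neq 0$ (effectiveness). The linearized moment map is $H(x) + \tfrac12 a_1 |z_1|^2$ on this chart, which attains a strict extremum along $z_1 = 0$, i.e. along $\Sigma$. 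Thus $x$ is a local maximum of $H$ if $a_1 < 0$ and a local minimum if $a_1 > 0$; since $\Sigma$ is connected and $H$ is locally constant on $\Sigma$, the sign of $a_1$ is constant along $\Sigma$, and every point of $\Sigma$ is a local extremum. Alternatively, once $\Sigma$ is known to be a smooth surface, this is exactly \cite[Corollary A.7]{karshon}, and I would simply cite it.

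The main obstacle is the middle step: ruling out singular points on positive-dimensional fixed components. The key point is that the only way a singular point $x$ can fail to be isolated as a fixed point is the degenerate case $p=1$, $\{|a_1|,|a_2|\}=\{0,1\}$ of Theorem \ref{cor::localformpt}, and in that case the fixed locus through $x$ necessarily sweeps out a coordinate axis of the local model $\C^2/\Z_m$, all of whose points (other than the origin) are singular — contradicting the discreteness of the singular set. Once this is in place, everything else is a direct application of results already established in the excerpt, including the manifold case from \cite{karshon}.
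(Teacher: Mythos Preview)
Your middle step contains a genuine error. You claim that in the local model $\C^2/\Z_m$ of type $\tfrac{1}{m}(1,l)$, the points of a coordinate axis away from the origin ``have nontrivial $\Z_m$-stabilizer'', and hence would give a curve of singular points. This is false: by the very definition of the model (and Lemma \ref{lemma:cyclic_rep_isolated_sing}), $\gcd(l,m)=1$ ensures that the origin is the \emph{only} point of $\C^2$ with nontrivial $\Z_m$-stabilizer, so every point $[z_1,0]$ with $z_1\neq 0$ is regular in $\C^2/\Z_m$. There is therefore no contradiction with discreteness of the singular set, and your conclusion that $\Sigma$ is a smooth surface does not follow. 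In fact the paper explicitly allows singular points on fixed orbi-surfaces: see Remark \ref{rmk:type_fixed}, the labels attached to fat vertices in Section \ref{sec:label-mult-hamilt}, and the minimal models in Theorem \ref{fig::minimalfixedsurface}.

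The fix is simple and is what the paper does: drop the attempt to reduce to the smooth case, and argue pointwise using the appropriate local model at each fixed point. At a regular $x\in\Sigma$ your last paragraph (or \cite[Corollary A.7]{karshon}) applies verbatim. At a singular $x\in\Sigma$, Theorem \ref{cor::localformpt} gives $p=1$ and $\{|a_1|,|a_2|\}=\{0,1\}$, so the linearized moment map is $H(x)+\tfrac12(\pm|z_i|^2)$ for one coordinate, which is again a local extremum along the fixed axis. Together with Corollary \ref{cor:fixed_point_set_symp_full_suborbifold} and the effectiveness argument ruling out dimension four, this gives the corollary.
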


\begin{remark}\label{rmk:fixed_fully_embedded}
  Under the hypotheses of Corollary \ref{cor:fixed_point_set}, we can
  prove a slightly stronger result, namely that any two dimensional
  component $\Sigma$ of the fixed point set is {\em fully embedded} in the
  sense of \cite[Definition 14]{mestre_weilandt}, i.e., the inclusion
  map $\Sigma \hookrightarrow M$ is a good $C^{\infty}$ map. This uses
  the fact that the singular set of $M$ is discrete\footnote{The example of a
  full suborbifold that is not fully embedded of \cite[Example
  5.4]{weilandt} can be endowed with a suitable $S^1$-action to
  illustrate what may go wrong if the singular set of $M$ is not discrete.}.
\end{remark}

Let $\mathbb{C}^2/\mathbb{Z}_m$ be a model for a cyclic isolated
singular point of type $\frac{1}{m}(1,l)$. Our next result states that
the integers $p, a_1, a_2$ associated to a faithful unitary representation
of $S^1$ on $\C^2/\Z_m$ determine the isomorphism class of the
representation.

\begin{proposition}\label{prop:triple_determines}
  Let $\mathbb{C}^2/\mathbb{Z}_m$ be a model for a cyclic isolated
  singular point of type $\frac{1}{m}(1,l)$. Let $\rho, \rho' : S^1
  \to \mathrm{U}(\C^2/\Z_m)$ be faithful representations. Let $p,
  a_1,a_2$ (respectively $p',
  a'_1,a'_2$) be the integers associated to $\rho $ (respectively
  $\rho'$) by Lemma \ref{lem::orbiweights}. There exists a unitary
  isomorphism $A \in \mathrm{U}(\C^2/\Z_m)$ such that
  \begin{equation}
    \label{eq:40}
    A \circ \rho(\lambda) = \rho'(\lambda) \circ A \text{ for all }
    \lambda \in S^1
  \end{equation}
  if and only if $p = p'$ and either the multisets $\{a_1,a_2\}$ and
  $\{a'_1,a'_2\}$ are equal and $l^2= 1 \mod m$, or $a_1 = a_1'$ and $a_2 =
  a_2'$ and $l^2 \neq 1 \mod m$.
\end{proposition}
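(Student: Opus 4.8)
The statement compares two faithful unitary representations $\rho,\rho' : S^1 \to \mathrm{U}(\C^2/\Z_m)$ on a model of type $\frac{1}{m}(1,l)$, asserting that they are conjugate by some $A \in \mathrm{U}(\C^2/\Z_m)$ precisely when $p=p'$ and the orbi-weight data match up in the way prescribed. My plan is to lift everything to $\C^2$ using the machinery already set up: by Remark \ref{rmk:G-compact} and Lemma \ref{lem::orbiweights} each representation $\rho$ corresponds to an extension $\widehat{G}$ of $S^1$ by $\Z_m$ with connected component $\widehat{G}_0 \simeq S^1$, a covering $\widehat{G}_0 \to S^1$ of degree $p$, and diagonal weights $(a_1,a_2)$ on $\C^2$. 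Since a unitary isomorphism $A$ of $\C^2/\Z_m$ is by definition an element of $N_{\mathrm{U}(2)}(\Z_m)/\Z_m$, it lifts to $\widehat{A} \in N_{\mathrm{U}(2)}(\Z_m)$; and conjugating $\rho$ by $A$ corresponds (via the diagrams preceding Corollary \ref{cor:linear-symp_action}) to conjugating the lifted linear action $\widehat{\rho}_0$ of $\widehat{G}_0$ on $\C^2$ by $\widehat{A}$, together with a compatible identification $\widehat{G}_0 \simeq \widehat{G}_0'$. The key point is that $A$ must also conjugate the residual $\Z_m$-action, i.e. $\widehat{A}$ normalizes $\Z_m$ while intertwining the two linear circle actions.

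\textbf{Key steps.} First I would establish the ``only if'' direction. If such an $A$ exists, then the covering degrees $p,p'$ are determined intrinsically by $\rho$ (they equal $|\Gamma_0| = |\Z_m \cap \widehat{G}_0|$, the order of the kernel of $\widehat{G}_0 \to S^1$) and conjugation does not change this, so $p=p'$. Next, conjugating the diagonal action $\mathrm{diag}(\widehat{\lambda}^{a_1},\widehat{\lambda}^{a_2})$ by $\widehat{A} \in \mathrm{U}(2)$ and demanding the result be diagonal forces $\widehat{A}$ to be diagonal (if $a_1 \neq a_2$) or arbitrary (if $a_1 = a_2$); when $\widehat{A}$ is diagonal the multiset of weights is preserved, and in fact the ordered pair is preserved unless we simultaneously allow swapping the two coordinates. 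But a coordinate swap $\left(\begin{smallmatrix} 0 & 1 \\ 1 & 0\end{smallmatrix}\right)$ normalizes $\Z_m$ acting with weights $(1,l)$ if and only if $\{1,l\} = \{l', l l'\} \bmod m$ in the appropriate sense, which unwinds to $l^2 = 1 \bmod m$ (cf. Lemma \ref{lemma:model_iso} and Remark \ref{rmk:type}). So if $l^2 \neq 1 \bmod m$ the swap is not available and we get $a_1 = a_1'$, $a_2 = a_2'$; if $l^2 = 1 \bmod m$ the swap is available and we only get equality of multisets. For the ``if'' direction, I would simply exhibit $A$: when the ordered pairs agree take $A$ induced by the identity (and the canonical identification of the two $\widehat{G}_0$'s), and when $l^2 = 1 \bmod m$ and the multisets agree but the pairs are swapped, take $A$ induced by the coordinate swap, checking directly that it normalizes $\Z_m$ (using $l \cdot l = 1 \bmod m$, hence $l' = l$) and intertwines the actions.

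\textbf{The main obstacle.} The delicate step is the careful bookkeeping around the case $a_1 = a_2$: by Proposition \ref{prop_local} this forces $a_1 = a_2 = \pm 1$ and $p = 1$, but one must still check that the statement is consistent — namely that conjugacy holds iff $l^2 = 1 \bmod m$ {\em or} the pairs agree. When $a_1 = a_2$ the pairs {\em automatically} agree, so the claimed condition ``($l^2 = 1 \bmod m$ and multisets equal) or ($l^2 \neq 1$ and pairs equal)'' reduces to ``true'' in this subcase, matching the fact that any $\widehat{A} \in \mathrm{U}(2)$ commutes with a scalar action. Conversely when $a_1 \neq a_2$ one has to verify that the only normalizing elements inducing intertwiners are diagonal matrices or (conditionally) the swap; here I would use that $\Z_m$ acts with {\em distinct} characters $(z_1 \mapsto \xi_m z_1, z_2 \mapsto \xi_m^l z_2)$ with $1 \neq l$ precisely when $m > 2$ or $l \neq 1$, and argue that normalizing a finite abelian subgroup of the diagonal torus with a regular element either preserves or permutes the weight spaces. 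The real subtlety is translating ``$\widehat{A}$ normalizes $\Z_m$ and swaps coordinates'' into the numerical condition $l^2 \equiv 1 \bmod m$, which I expect to follow cleanly from Lemma \ref{lemma:model_iso} once the set-up is in place, but which requires stating precisely which identification of $\C^2/\Z_m$ with itself the swap induces.
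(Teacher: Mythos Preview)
Your plan is correct and follows essentially the same route as the paper: lift $A$ to $\widehat A\in N_{\mathrm U(2)}(\Z_m)$, then determine which $\widehat A$ can intertwine two diagonal one-parameter subgroups. The paper packages the normalizer computation as a separate lemma (Lemma~\ref{lemma:normalizer}), splitting into the three cases $l=1$, $l^2\equiv 1$ with $l\neq 1$, and $l^2\not\equiv 1$, and then runs the intertwining argument case by case; you instead derive the needed facts about the normalizer on the fly and organize by whether $a_1=a_2$, but the content is the same. One small slip: the claim that $a_1=a_2$ forces $p=1$ is not correct (e.g.\ $m=8$, $l=3$, $a_1=a_2=1$ gives $p=\gcd(l-1,m)=2$), but this is harmless for your argument, since in that subcase the stated condition still reduces to ``pairs equal'' regardless of $p$, and the scalar-action reasoning goes through unchanged.
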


In order to prove Proposition \ref{prop:triple_determines}, we need
the following result, stated below without proof as it is a simple
calculation.

\begin{lemma}\label{lemma:normalizer}
  Let $\mathbb{C}^2/\mathbb{Z}_m$ be a model for a cyclic isolated
  singular point of type $\frac{1}{m}(1,l)$. Then the normalizer
  $N_{\mathrm{U}(2)}(\Z_m)$ is given by
  \begin{enumerate}[label=(\arabic*), ref=(\arabic*),leftmargin=*]
  \item \label{item:20} $\mathrm{U}(2)$ if $l =1$,
  \item \label{item:21} the subgroup generated by the diagonal
    elements and the matrix $
    \left(\begin{smallmatrix}
        0 & 1 \\
        1 & 0
      \end{smallmatrix} \right)$ if $l^2 = 1 \mod m$ and $l \neq 1 \mod m$, and
  \item \label{item:22} the subgroup of diagonal matrices in all other cases.
  \end{enumerate}
\end{lemma}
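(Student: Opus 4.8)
The plan is to describe $\Z_m < \mathrm{U}(2)$ explicitly as the cyclic group generated by the single element $g := \diag(\xi_m,\xi_m^l)$, and then to determine which $A \in \mathrm{U}(2)$ satisfy $A\Z_m A^{-1} = \Z_m$. The first observation I would record is that $A$ normalizes $\Z_m$ if and only if $AgA^{-1} \in \Z_m$: conjugation by $A$ is an automorphism, so $AgA^{-1}$ has order $m$ and generates a cyclic subgroup of order $m$ contained in $\Z_m$, which must therefore equal $\Z_m$. Hence it suffices to understand when $AgA^{-1} = g^k$ for some integer $k$. I would immediately dispose of the case $l=1$, where $g = \xi_m I$ is scalar and hence central in $\mathrm{U}(2)$; every unitary matrix then centralizes $\Z_m$, giving $N_{\mathrm{U}(2)}(\Z_m) = \mathrm{U}(2)$, which is part \ref{item:20}.

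For $l \neq 1 \bmod m$, since $\gcd(l,m)=1$ the eigenvalues $\xi_m$ and $\xi_m^l$ of $g$ are distinct. As $AgA^{-1}$ is conjugate to $g$, its eigenvalue multiset is $\{\xi_m,\xi_m^l\}$, whereas $g^k = \diag(\xi_m^k,\xi_m^{lk})$ has eigenvalue multiset $\{\xi_m^k,\xi_m^{lk}\}$. Comparing these two-element multisets leaves exactly two possibilities: either (a) $\xi_m^k=\xi_m$, forcing $k\equiv 1 \bmod m$ and $AgA^{-1}=g$; or (b) $\xi_m^k=\xi_m^l$ and $\xi_m^{lk}=\xi_m$, i.e. $k\equiv l \bmod m$ together with $l^2\equiv 1 \bmod m$.

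Next I would analyze each possibility. In case (a), $A$ commutes with the diagonal matrix $g$, and since $g$ has distinct eigenvalues its centralizer in $\mathrm{U}(2)$ is precisely the group of diagonal unitary matrices, so $A$ is diagonal. In case (b), the relation $g^l A = Ag$ forces $A$ to interchange the two eigenlines of $g$: from $gv = \xi_m v$ one computes $g^l(Av) = A(gv) = \xi_m(Av)$, so $Av$ lies in the $\xi_m$-eigenspace of $g^l = \diag(\xi_m^l,\xi_m)$, namely the second coordinate axis, while $v$ spans the first; thus $A$ is anti-diagonal. Assembling these, if $l^2 \neq 1 \bmod m$ only case (a) occurs and $N_{\mathrm{U}(2)}(\Z_m)$ is exactly the diagonal torus, giving part \ref{item:22}; if $l^2 \equiv 1 \bmod m$ but $l \neq 1 \bmod m$, both cases arise, so $N_{\mathrm{U}(2)}(\Z_m)$ consists of all diagonal and all anti-diagonal unitary matrices. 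Since right-multiplying an arbitrary diagonal unitary matrix by $w := \left(\begin{smallmatrix} 0 & 1 \\ 1 & 0 \end{smallmatrix}\right)$ yields an arbitrary anti-diagonal unitary matrix, this is precisely the subgroup generated by the diagonal matrices and $w$, which is part \ref{item:21}.

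The argument is conceptually straightforward, so the main obstacle is bookkeeping rather than ideas: I must carry out the eigenvalue-multiset comparison cleanly (in particular noting that distinctness of $\xi_m,\xi_m^l$ is what makes the two-case split exhaustive), and verify that the two families found—diagonal and anti-diagonal—coincide exactly with the concretely generated group of \ref{item:21}. I expect the anti-diagonal identification in case (b) and the clean handling of the trichotomy on $l^2 \bmod m$ to be the points requiring the most care.
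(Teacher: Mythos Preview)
Your proof is correct. The paper states this lemma ``without proof as it is a simple calculation'' and gives no argument at all, so there is nothing to compare; your approach---reducing to the condition $AgA^{-1}=g^k$, comparing eigenvalue multisets to obtain the dichotomy $k\equiv 1$ or ($k\equiv l$ and $l^2\equiv 1$), and then reading off that $A$ is diagonal or anti-diagonal accordingly---is exactly the kind of direct computation the authors have in mind.
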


\begin{proof}[Proof of Proposition \ref{prop:triple_determines}]
  First, we assume that there exists $A \in \mathrm{U}(\C^2/\Z_m)$
  such that \eqref{eq:40} holds. Moreover, we start by assuming that
  $l^2 \neq 1 \mod m$. By part \ref{item:22} of Lemma \ref{lemma:normalizer},
  $N_{\mathrm{U}(2)}(\Z_m)$ is the subgroup of diagonal
  matrices. Hence, there exist $\psi, \theta \in \R$ such that
  \begin{equation}
    \label{eq:41}
    [e^{2\pi i \psi} \lambda^{\frac{a_1}{p}}z_1, e^{2\pi i \theta}
    \lambda^{\frac{a_2}{p}}z_2] = [e^{2\pi i \psi} \lambda^{\frac{a'_1}{p'}}z_1, e^{2\pi i \theta}
    \lambda^{\frac{a'_2}{p'}}z_2]
  \end{equation}
  for all $ [z_1,z_2] \in \C^2/\Z_m$ and for all $\lambda \in S^1$. Let $[z_1,z_2]$ be such that $z_1z_2 \neq 0$.  Then by \eqref{eq:41} and the fact that
  $\mathrm{U}(\C^2/\Z_m) = N_{\mathrm{U}(2)}(\Z_m)/\Z_m$ we have that  \begin{equation*}
    \lambda^{\frac{m a_1}{p}} =  \lambda^{\frac{m a'_1}{p'}} \quad
    \text{and} \quad \lambda^{\frac{m a_2}{p}} = \lambda^{\frac{m a'_2}{p'}},
  \end{equation*}
  for all $\lambda \in S^1$. Hence, since $m \neq 0$, we have
  \begin{equation}
    \label{eq:43}
    \frac{a_1}{p} = \frac{a'_1}{p'} \quad \text{and} \quad \frac{a_2}{p} = \frac{a'_2}{p'}.
  \end{equation}
Note that, $a_i = 0$ if and only if $a'_i =0$. Moreover, in
  this case, we have by Proposition \ref{prop_local}, that   $a_j = a'_j = \pm 1$ for $j \neq i$ (note that  $p$ and $p'$ are
  positive) and so $p = p'$. If none of
  $a_1,a_2,a_1', a_2'$ are zero, then, by \eqref{eq:43},  we have $a_1 a_2^\prime = a_1^\prime a_2$ and then, since 
  $$\gcd(a_1,a_2) =\gcd(a_1',a_2') = 1,$$
  (and  $p$ and $p^\prime$ are positive), we have that 
  $a_1 = a_1'$, $a_2 = a_2'$ and  $p=p^\prime$.

  Next suppose that $l^2 = 1 \mod m$ and $l \neq 1$. In this case, by part
  \ref{item:21} of Lemma
  \ref{lemma:normalizer}, we have that $N_{\mathrm{U}(2)}(\Z_m)$ is the subgroup
  generated by diagonal
  matrices and $
    \left(\begin{smallmatrix}
        0 & 1 \\
        1 & 0
      \end{smallmatrix} \right)$. Hence, there exist $\psi, \theta
  \in \R$ such that either \eqref{eq:41} or
  $$  [e^{2\pi i \psi} \lambda^{\frac{a_2}{p}}z_2, e^{2\pi i \theta}
  \lambda^{\frac{a_1}{p}}z_1] = [e^{2\pi i \psi} \lambda^{\frac{a'_1}{p'}}z_2, e^{2\pi i \theta}
  \lambda^{\frac{a'_2}{p'}}z_1] $$
  holds for all $ [z_1,z_2] \in \C^2/\Z_m$ and for all $\lambda \in
  S^1$. The former case is dealt with above; the latter is entirely
  analogous and yields that $a_1 = a_2'$ and $a_2 = a_1'$.

  Finally, assume that $l = 1$. By part
  \ref{item:20} of Lemma \ref{lemma:normalizer}, the group
  $N_{\mathrm{U}(2)}(\Z_m)$ equals $\mathrm{U}(2)$. Hence, there exist
  $\alpha, \beta \in \C$ with $|\alpha|^2 + |\beta|^2 =1$ and $\theta
  \in \R$ such that
  \begin{equation}
    \label{eq:42}
    \begin{split}
      & [\alpha \lambda^{\frac{a_1}{p}}z_1 + \beta
      \lambda^{\frac{a_2}{p}}z_2, e^{2 \pi i \theta} (\bar{\alpha}
      \lambda^{\frac{a_2}{p}}z_2 - \bar{\beta}
      \lambda^{\frac{a_1}{p}}z_1)] \\
      = & [\lambda^{\frac{a_1'}{p'}}(\alpha z_1 + \beta z_2),
      \lambda^{\frac{a_2'}{p'}}e^{2 \pi i \theta} (\bar{\alpha} z_2 -  \bar{\beta} z_1)]
    \end{split}
  \end{equation}
  for all $ [z_1,z_2] \in \C^2/\Z_m$ and for all $\lambda \in
  S^1$. By considering the cases $z_1= 0$ and $z_2 \neq 0$, and
  $z_1\neq 0$ and $z_2 = 0$ and using the fact that $|\alpha|^2 +
  |\beta|^2 =1$, the argument is the same as that of the case $l^2 = 1
  \mod m$ and $l \neq 1$. 
\end{proof}

To conclude this section we prove a result that describes the points with non-trivial
stabilizer of an effective unitary linear $S^1$-action on the model $\C^2/\Z_m$.

\begin{lemma}\label{lem::isotropy}
  Let $\mathbb{C}^2/\mathbb{Z}_m$ be a model for a cyclic isolated
  singular point of type $\frac{1}{m}(1,l)$. Let $\rho : S^1 \to
  \mathrm{U}(\mathbb{C}^2/\mathbb{Z}_m)$ be a faithful representation
  and let $p, a_1,a_2$ be the integers associated to $\rho$ as in
  Lemma \ref{lem::orbiweights}. Any point $[z_1,z_2] \in
  \C^2/\Z_m$ such that $z_1z_2 \neq 0$ has
  trivial stabilizer for the induced $S^1$-action. Moreover, the stabilizer of a point $[z_1,0]$
  (respectively $[0,z_2]$) with $z_1 \neq 0$ (respectively $z_2 \neq
  0$) is $\Z_{|ka_1|}$ (respectively $\Z_{|ka_2|}$), where $k = m/p$
  and we set $\Z_0 \equiv S^1$. 
\end{lemma}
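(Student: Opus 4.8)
The plan is to reduce the statement to an explicit computation using the normal form provided by Lemma~\ref{lem::orbiweights}. By that lemma, after fixing an isomorphism $\widehat{G}_0 \simeq S^1$, the induced $S^1$-action on $\C^2/\Z_m$ is
\begin{equation*}
  \lambda \cdot [z_1,z_2] = [\lambda^{\frac{a_1}{p}}z_1, \lambda^{\frac{a_2}{p}}z_2],
\end{equation*}
where this is to be read as $[\widehat{\lambda}^{a_1}z_1, \widehat{\lambda}^{a_2}z_2]$ for $\widehat{\lambda} \in \widehat{G}_0$ with $\widehat{\lambda}^p = \lambda$, the ambiguity in the choice of $\widehat{\lambda}$ being absorbed by the $\Z_m$-quotient via $\Gamma_0 \simeq \Z_p$. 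Concretely, a point $[z_1,z_2]$ is fixed by $\lambda \in S^1$ if and only if there exists $\mu \in \Z_m$ such that $(\widehat{\lambda}^{a_1}z_1, \widehat{\lambda}^{a_2}z_2) = (\mu z_1, \mu^l z_2)$. The strategy is to analyze this condition separately on the open stratum $z_1 z_2 \neq 0$ and on each axis.

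First I would handle the generic stratum: suppose $z_1 z_2 \neq 0$ and $\lambda$ stabilizes $[z_1,z_2]$. Then $\widehat{\lambda}^{a_1} = \mu$ and $\widehat{\lambda}^{a_2} = \mu^l$ for some $\mu \in \Z_m$, hence $\mu$ and $\mu^l$ both lie in the cyclic group generated by $\widehat{\lambda}$; more usefully, $\widehat{\lambda}^{a_1 l - a_2} = \mu^l \mu^{-l} \cdot(\text{correction})$ — being careful here: from $\widehat{\lambda}^{a_1} = \mu$ we get $\widehat{\lambda}^{a_1 l} = \mu^l = \widehat{\lambda}^{a_2}$, so $\widehat{\lambda}^{a_1 l - a_2} = 1$, i.e., $\widehat{\lambda}$ is a root of unity of order dividing $|a_1 l - a_2|$ (if this is nonzero). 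Combined with $\widehat{\lambda}^{a_1} = \mu \in \Z_m$, so $\widehat{\lambda}^{m a_1} = 1$, and $\gcd(a_1, a_2) = 1$ together with $\gcd(|a_1 l - a_2|, m) = p$ (from Proposition~\ref{prop_local}, as $a_1 a_2 \neq 0$ here by faithfulness — actually I should not assume $a_1 a_2 \neq 0$; I will split), the same combinatorial argument as in the proof of the Claim inside Proposition~\ref{prop_local} forces $\widehat{\lambda}^p = \lambda = 1$. This is essentially the injectivity-of-$\rho$ computation already carried out, so I would cite or reproduce it rather than redo it; when one of $a_1, a_2$ is zero, faithfulness forces $\{|a_1|,|a_2|\} = \{0,1\}$ and the action fixes an entire axis, but never a point with $z_1 z_2 \neq 0$, which one checks directly.

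Next I would treat the axis $\{[z_1,0] : z_1 \neq 0\}$ (the case $[0,z_2]$ being symmetric with $a_1 \leftrightarrow a_2$, using the type $\frac{1}{m}(1,l')$ with $l l' \equiv 1$). A point $[z_1,0]$ is fixed by $\lambda$ iff there is $\mu \in \Z_m$ with $\widehat{\lambda}^{a_1} = \mu$ (the second coordinate condition being vacuous). Writing $\widehat{\lambda} = e^{2\pi i \theta}$, the stabilizer of $[z_1,0]$ in $S^1$ corresponds to those $\lambda = \widehat{\lambda}^p$ with $\widehat{\lambda}^{a_1} \in \Z_m$, i.e., $\widehat{\lambda}^{m a_1} = 1$; but one must pass back through the covering $\widehat{\lambda} \mapsto \widehat{\lambda}^p = \lambda$, which has kernel $\Z_p = \Gamma_0$, and note that $\Gamma_0$ acts trivially on $\C^2/\Z_m$. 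Unwinding, the set of $\lambda \in S^1$ fixing $[z_1,0]$ is the image under $\widehat{\lambda} \mapsto \widehat{\lambda}^p$ of $\{\widehat{\lambda} : \widehat{\lambda}^{m a_1} = 1\}$, a cyclic group of order $m|a_1|$, so its image is cyclic of order $m|a_1|/p = |k a_1|$ with $k = m/p$; when $a_1 = 0$ this degenerates to all of $S^1$, consistent with the convention $\Z_0 \equiv S^1$. I would write this last step carefully as the index computation $|\{\widehat\lambda : \widehat\lambda^{ma_1}=1\}| / |\ker(\widehat\lambda \mapsto \widehat\lambda^p)| = m|a_1|/p$, after checking $\Z_p = \ker$ is contained in the order-$m|a_1|$ group (it is, since $p \mid m \mid m|a_1|$).

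The main obstacle I anticipate is bookkeeping the passage between $\widehat{G}_0 \simeq S^1$ and $S^1$ through the $p$-fold covering, and making sure the $\Z_m$-quotient (in particular the subgroup $\Gamma_0 \simeq \Z_p$ sitting diagonally inside $\widehat{G}_0$ and $\Z_m$) is correctly accounted for so that the stabilizer orders come out as $|ka_1|$ and $|ka_2|$ rather than off by a factor of $p$ or $m$; the generic-stratum triviality claim is a near-verbatim appeal to the Claim in Proposition~\ref{prop_local}, so it should not require new ideas.
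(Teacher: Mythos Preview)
Your proposal is correct. For the axis stabilizers your computation via the $p$-fold cover $\widehat{G}_0 \to S^1$ is equivalent to the paper's, which phrases the same thing directly as $\lambda^{a_1/p} \in \Z_m \Rightarrow \lambda^{ma_1/p}=\lambda^{ka_1}=1$ and then checks that $e^{2\pi i/(ka_1)}$ does fix $[z_1,0]$.

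For the generic stratum, however, the paper takes a shortcut that bypasses the combinatorics you outline. Once one sees that $\lambda$ fixing $[z_1,z_2]$ with $z_1 z_2 \neq 0$ amounts to the existence of $\mu \in \Z_m$ with $\lambda^{a_1/p}=\mu$ and $\lambda^{a_2/p}=\mu^l$, the paper simply observes that these conditions are independent of $(z_1,z_2)$; hence such a $\lambda$ fixes \emph{every} point with $z_1z_2\neq 0$, and by continuity all of $\C^2/\Z_m$, so faithfulness gives $\lambda=1$ immediately. Your route---re-running the gcd analysis from the Claim inside Proposition~\ref{prop_local}---also works, and you rightly flag it as something to cite rather than redo, but the paper's continuity-plus-faithfulness argument is a one-liner and handles the $a_1a_2=0$ case uniformly without the separate check you mention.
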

 
\begin{proof}
  Suppose first that $[z_1,z_2] \in \C^2/\Z_m$ is such that $z_1z_2
  \neq 0$ and that $\lambda \in S^1$ fixes it. By \eqref{eq:typesing}
  and \eqref{eq:1}, there exists $\mu \in \Z_m$ such that 
  \begin{equation*}
    \lambda^{\frac{a_1}{p}} = \mu \text{ and } \lambda^{\frac{a_2}{p}}
    = \mu^{\ell}.
  \end{equation*}
  In particular, such a $\lambda$ fixes every point $[z_1,z_2] \in
  \C^2/\Z_m$ with $z_1z_2
  \neq 0$. By continuity, $\lambda$ fixes every point in
  $\C^2/\Z_m$. Since $\rho$ is faithful, $\lambda =1$.

  Next we determine the stabilizer of a point of the form $[z_1,0]$
  with $z_1 \neq 0$. If $a_1 = 0$, then $[z_1,0]$ is stabilized by
  $S^1$. So we may assume that $a_1 \neq 0$. If $\lambda \in S^1$
  fixes $[z_1,0]$, then  $\lambda^{\frac{a_1}{p}}\in \mathbb{Z}_m$ and so
  $\lambda^{\frac{ m a_1}{p}} =1$ and  $\lambda^{k a_1} =1$. Thus
  the stabilizer of $[z_1,0]$ is contained in $\Z_{|ka_1|}$. On the
  other hand, it is immediate to see that $e^{
    \frac{2 \pi i }{ka_1}}$ fixes $[z_1,0]$. Therefore, the stabilizer
  of $[z_1,0]$ equals $\Z_{|ka_1|}$. Similarly, the points of the form
  $[0,z_2]$ with $z_2 \neq 0$ have stabilizer $\mathbb{Z}_{|k a_2|}$.
\end{proof}

\begin{remark}\label{rmk:fixed_orbisurfaces_iso_spheres}
  By Lemma \ref{lem::isotropy}, a linear Hamiltonian $S^1$-action on a model for a cyclic isolated
  singular point of type $\frac{1}{m}(1,l)$ may enjoy properties that
  cannot occur for linear linear Hamiltonian $S^1$-actions on four
  dimensional symplectic vector spaces. We fix the notation in Lemma
  \ref{lem::isotropy} and suppose that $k > 1$. If both $a_1, a_2 \neq
  0$, then the greatest common divisor of the two (finite) stabilizers
  $\Z_{|ka_1|}$ and $\Z_{|ka_2|}$ equals $k$, i.e., they are not coprime. On the other
  hand, if one of $a_1$ or $a_2$ is equal to zero, then there is a one dimensional orbi-vector subspace of $\C^2/\Z_m$ that
  is pointwise fixed by the action and a complementary one that
  consists of points with finite non-trivial stabilizer. 
\end{remark}
\color{black}


\section{The labeled multigraph of a Hamiltonian $S^1$-space}\label{sec:labelled-multigraph}
By an abuse of notation, henceforth whenever we say that
$(M,\omega,H)$ is a {\bf Hamiltonian $S^1$-space}, we assume that
\begin{itemize}[leftmargin=*]
\item $M$ is compact,
\item the singular set of $M$ is discrete, and
\item every singular point of $M$ has a cyclic orbifold structure group.
\end{itemize}

\subsection{Isotropy orbi-spheres}\label{sec:isotr-orbi-spher}
Let $(M,\omega,H)$ be a Hamiltonian $S^1$-space. For any subgroup $K$
of $S^1$, we consider two subsets of $M$, namely $\mathrm{Stab}(M;K)
\subseteq M$
and $M^K \subseteq M$, that are defined as follows
\begin{equation}
  \label{eq:61}
  \begin{split}
    \mathrm{Stab}(M;K) &:=\{ x \in M \mid \mathrm{Stab}(x) = K\}, \\
    M^K & := \{ x \in M \mid h \cdot x = x \text{ for all } h \in K
    \},
  \end{split}
\end{equation}
where $\mathrm{Stab}(x)$ denotes the stabilizer of $x$. We endow both
subsets with the subspace topology. We observe that the closure of
$\mathrm{Stab}(M;K)$ is $M^K$ and that these two sets coincide if $H = S^1$. By Corollary
\ref{cor:fixed_point_set}, a connected component of $M^{S^1}$ is
either an isolated fixed point $x$ or a full symplectic submanifold $\Sigma$ of dimension
two. In the former case, a neighborhood of $x$ is equivariantly
determined by Theorem \ref{cor::localformpt}. In the latter case, since $M$ is compact, $\Sigma$ is too. Hence,
by Theorem \ref{thm:classification_symplectic_orbi-surfaces}, it is
classified by its diffeomorphism type (see Theorem
\ref{thm::classorbisurface}), and its symplectic area.


\begin{definition}\label{defn:fixed_orbi-surface}
  Let $(M,\omega,H)$ be a Hamiltonian $S^1$-space. A connected
  component of $M^{S^1}$ of dimension two is called a {\bf fixed orbi-surface}.
\end{definition}

\begin{remark}\label{rmk:type_fixed}
  Let $\Sigma$ be a fixed orbi-surface of a Hamiltonian
  $S^1$-space. By Corollary \ref{cor:fixed_point_set}, the
  orbi-surface $\Sigma$ is a local extremum of $H$. Hence, by
  Corollary \ref{lem::uniqueMaxMin}, it is a global extremum of
  $H$. Moreover, by
  Theorem \ref{cor::localformpt}, if $x \in \Sigma$ is a singular
  point, then the integer $p$ of Lemma \ref{lem::orbiweights} equals
  one and the
  orbi-weights $a_1,a_2$ at $x$ satisfy $\{|a_1|,|a_2|\} =
  \{0,1\}$. In particular, we can choose a preferred representative $\frac{1}{m}(1,l)$
  for the type of $x$ (see Remark \ref{rmk:type}).
\end{remark}

Next we turn to understanding a connected component of
$\mathrm{Stab}(M;K)$ for some fixed
proper subgroup $K \simeq \Z_k$ of $S^1$, where $k \geq 2$. To this end, let $x \in M$
be a point with stabilizer equal to $\Z_k$. By Corollary
\ref{cor:action_preserves_structure_group}, $x$ is regular. Therefore,
the `standard' local model for the $S^1$-action near $x$ holds (see
\cite[Lemma A.14]{karshon} and, more generally, \cite{gs,marle}). To
recall this model, we let $\rho : \Z_k \to \mathrm{U}(1)$ be the
symplectic slice representation at $x$ and consider
$$Y:=S^1\times_{\mathbb{Z}_k}\R \times \C = (S^1\times\R \times \C)/\Z_k,$$
where the $Z_k$-action on $S^1\times\R \times \C$ is given by
$$ \mu \cdot (e^{2 \pi i\theta},h,z) = (\mu \cdot e^{2 \pi i\theta},
h, \rho(\mu^{-1})z), $$
for any $\mu \in \Z_k$ and any $(e^{2 \pi i\theta},h,z) \in
S^1\times\R \times \C$. There exists a symplectic form $\omega_Y$ on
$Y$ such that the $S^1$-action given by
$$ \lambda \cdot [e^{2 \pi i\theta},h,z] = [\lambda \cdot e^{2 \pi
  i\theta},h,z] $$
is Hamiltonian with homogeneous moment map $\Phi_Y : Y \to \R$ given
by
$$ \Phi_Y([e^{2 \pi i\theta},h,z]) = z. $$
The following
result is a special case of the Marle-Guillemin-Sternberg local normal
form (see \cite{gs,marle}).

\begin{lemma}~\label{lemma:nbhd}
  Let $(M,\omega,H)$ be a Hamiltonian $S^1$-space and let $x \in M$
  be a point with stabilizer equal to $\Z_k$. There exist an
  $S^1$-invariant $U \subseteq M$ open neighborhood of $x$ and an open
  neighborhood $V \subseteq Y$ of $[1,0,0]$ such that
  $(U,\omega|_U,\Phi|_U)$ and $(V, \omega_Y|_V, \Phi_Y|_V)$ are
  isomorphic in the sense of Definition \ref{defn:hamiltonian}.
\end{lemma}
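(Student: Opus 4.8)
The statement is a special case of the Marle--Guillemin--Sternberg local normal form theorem, so the plan is to reduce the orbifold situation to the (known) manifold version near a regular orbit. First I would observe that, by Corollary \ref{cor:action_preserves_structure_group}, the point $x$ is a regular point of $M$, and since the singular set is discrete we may choose the $S^1$-invariant neighborhood $U$ of $x$ so small that $U$ contains no singular points; hence $U$ is (equivariantly) an honest symplectic manifold with a Hamiltonian $S^1$-action, and the orbifold structure plays no further role. This is where the hypothesis that $M$ has discrete singular set is used: it guarantees that a neighborhood of a point with finite nontrivial stabilizer can be taken to be a manifold, so no genuinely new ``orbifold'' phenomena arise here (contrast the case of a fixed \emph{surface} of singular points).

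\textbf{Key steps.} Once we are working on the manifold $U$, I would invoke the equivariant Darboux/symplectic neighborhood package already available to us. The orbit $S^1 \cdot x \cong S^1/\Z_k$ is a compact isotropic (in fact, since $H$ is the moment map and $x$ is not fixed, one-dimensional coisotropic-in-its-own-symplectic-slice sense) submanifold; the symplectic slice at $x$ is the one-dimensional complex representation $\rho : \Z_k \to \mathrm{U}(1)$ described before the statement. The model space $Y = S^1 \times_{\Z_k}(\R \times \C)$, equipped with $\omega_Y$ and the homogeneous moment map $\Phi_Y$, is built precisely so that near $[1,0,0]$ its linear data (the orbit, the $S^1$-action on it, the slice representation, and the moment map) match those of $(M,\omega,H)$ near $x$. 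Then I would either cite \cite[Lemma A.14]{karshon} (which is exactly this statement for manifolds, our $U$ being a manifold) or, more conceptually, apply Theorem \ref{thm:esnt}: take $N = S^1 \cdot x$ with its $S^1$-action, $I_1 : N \hookrightarrow M$ the inclusion and $I_2 : N \hookrightarrow Y$ the inclusion of $S^1\times_{\Z_k}\{0\}\times\{0\}$; the two normal symplectic orbi-bundles (really vector bundles, since $N$ sits in manifolds) are $S^1$-equivariantly isomorphic because both are the associated bundle $S^1\times_{\Z_k}(\R\oplus\C_\rho)$ determined by the slice representation. Theorem \ref{thm:esnt} then produces $S^1$-invariant neighborhoods and an isomorphism of Hamiltonian $S^1$-spaces; adjusting by a constant in the moment map (or noting $\Phi_Y$ is already normalized to vanish appropriately) identifies $\Phi|_U$ with $\Phi_Y|_V$ after shrinking, giving exactly the claimed isomorphism in the sense of Definition \ref{defn:hamiltonian}.

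\textbf{Main obstacle.} The only subtle point is the passage from orbifold to manifold: one must be sure that ``$S^1$-invariant neighborhood free of singular points'' exists, and this is where discreteness of the singular set is essential. Given Corollary \ref{cor:action_preserves_structure_group}, $x$ regular and the singular set discrete, such a $U$ exists, and from that point everything is the classical manifold argument, for which Theorem \ref{thm:esnt} (the equivariant symplectic neighborhood theorem, which we have already stated in the orbifold setting and which specializes to the manifold one) does all the work. So there is essentially no new content beyond assembling these ingredients; I would state the proof as a short reduction followed by a citation of \cite{gs,marle} (or \cite[Lemma A.14]{karshon}) on $U$.
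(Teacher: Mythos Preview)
Your proposal is correct and matches the paper's treatment exactly: the paper does not give a standalone proof but notes just before the lemma that $x$ is regular by Corollary~\ref{cor:action_preserves_structure_group}, so the standard manifold local model applies, and then states the lemma as ``a special case of the Marle--Guillemin--Sternberg local normal form (see \cite{gs,marle}),'' also pointing to \cite[Lemma A.14]{karshon}. Your reduction via discreteness of the singular set and citation of the same sources is precisely this argument, with the additional (optional) remark that Theorem~\ref{thm:esnt} could serve as an in-paper substitute for the external citation.
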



Theorem \ref{cor::localformpt}, its analog for manifolds (see \cite[Corollary A.7]{karshon}) and Lemma \ref{lemma:nbhd} are the
fundamental tools to prove the following result.

\begin{proposition}\label{prop:iso_sphere}
  Let $(M,\omega,H)$ be a Hamiltonian $S^1$-space and let $k \geq 2$
  be an integer. Let $\Sigma \subseteq \mathrm{Stab}(M;\Z_k)$ be a non-empty
  connected component. The closure $\overline{\Sigma}$ of $\Sigma$ is a
  compact, full symplectic submanifold of $(M,\omega)$ of dimension
  2. Moreover, if the stabilizer of $y \in \overline{\Sigma}$ contains $\Z_k$ as a
  proper subgroup, then $y \in M^{S^1}$. Finally, $H|_{\Sigma} : \overline{\Sigma} \to \R$ is the moment map of the
  effective Hamiltonian $S^1/\Z_k$-action on $\overline{\Sigma}$ induced by the
  $S^1$-action on $M$.
\end{proposition}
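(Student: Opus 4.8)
The plan is to analyze the structure of $\overline{\Sigma}$ locally, treating separately the points of $\Sigma$ itself (where the stabilizer is exactly $\Z_k$) and the points of $\overline{\Sigma}\smallsetminus\Sigma$ (where the stabilizer must be larger). First I would show that $\overline{\Sigma}\smallsetminus\Sigma$ consists of finitely many points, all of which lie in $M^{S^1}$: if $y\in\overline{\Sigma}$ has stabilizer strictly containing $\Z_k$, I would consider the two possibilities for $y$ given by Corollary \ref{cor:fixed_point_set_symp_full_suborbifold} and the local models. If $y$ is regular, the local model of Lemma \ref{lemma:nbhd} (with the larger group) together with the fact that $\Sigma$ limits to $y$ forces $y$ to be fixed; if $y$ is singular, then $y\in M^{S^1}$ automatically by Corollary \ref{cor:action_preserves_structure_group}, and Theorem \ref{cor::localformpt} describes an invariant neighborhood of $y$ in which the only points with stabilizer $\Z_k$ form (the nonzero part of) a one-dimensional orbi-vector subspace whose closure adds exactly the point $y$. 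In either case, near $y$ the set $\overline\Sigma$ is a single smooth (orbifold) disc through $y$ and $y$ is an isolated point of $\overline\Sigma\smallsetminus\Sigma$; since $M$ is compact and such points are isolated in $\overline\Sigma$, there are finitely many of them.

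Second, I would assemble the local pieces into the statement that $\overline{\Sigma}$ is a compact full symplectic suborbifold of dimension two. Over $\Sigma$ itself, Lemma \ref{lemma:nbhd} gives that near each point $\Sigma$ looks like $S^1\times_{\Z_k}\R\times\{0\}$ inside $Y$, which is a two-dimensional symplectic submanifold on which the $S^1$-action descends to an effective $S^1/\Z_k$-action; in particular $\Sigma$ is a (smooth) symplectic surface. Over the finitely many added points the local models above show $\overline\Sigma$ is a two-dimensional full symplectic suborbifold (the relevant l.u.c.\ being $\widehat U$ with the subgroup $\Delta$ acting, as in Definition \ref{defn:suborbifold_atlas}), the singular points of $\overline\Sigma$ being precisely the singular points of $M$ that lie on it. Patching these charts, using that $\overline\Sigma$ is closed in the compact $M$, yields that $\overline\Sigma$ is a compact full symplectic suborbifold of dimension two, and one checks the fixed points $\overline\Sigma\smallsetminus\Sigma$ are exactly those $y\in\overline\Sigma$ whose stabilizer properly contains $\Z_k$.

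Third, for the moment map claim, I would note that $\Z_k$ acts trivially on $\overline\Sigma$, so the $S^1$-action on $M$ restricts to an action of $S^1/\Z_k$ on $\overline\Sigma$; it is effective because on $\Sigma$ the generic stabilizer for the $S^1$-action is exactly $\Z_k$, so no nontrivial element of $S^1/\Z_k$ acts trivially. The restriction $H|_{\overline\Sigma}$ is smooth, $S^1$-invariant (hence $S^1/\Z_k$-invariant), and the defining identity $\omega(\xi^M,\cdot)=dH$ restricts to $\overline\Sigma$: since $\overline\Sigma$ is a symplectic suborbifold, $\omega|_{\overline\Sigma}$ is nondegenerate, and the generator of the $S^1/\Z_k$-action on $\overline\Sigma$ is (a constant multiple of) the restriction of $\xi^M$, so $H|_{\overline\Sigma}$ is a moment map for the induced action. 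Being compact and connected, $\overline\Sigma$ is then the expected object.

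\textbf{Main obstacle.} The delicate step is the analysis at the limit points $y\in\overline\Sigma\smallsetminus\Sigma$, particularly when $y$ is a singular point of $M$: one must extract from Theorem \ref{cor::localformpt} (the case $p=1$, $\{|a_1|,|a_2|\}=\{0,1\}$ and more generally the description of which orbits have stabilizer $\Z_k$ via Lemma \ref{lem::isotropy}) that the closure of the $\Z_k$-stabilizer locus is a smooth orbifold disc through $y$ and not something more singular, and that the suborbifold atlas condition of Definition \ref{defn:suborbifold_atlas} (fullness) is genuinely satisfied there. Verifying fullness and that $\overline\Sigma$ inherits a bona fide orbifold structure from these local models — rather than merely a topological one — is where the care is needed; the smooth (manifold) case at regular $y$ is handled exactly as in \cite[Lemma A.14 and Corollary A.7]{karshon}.
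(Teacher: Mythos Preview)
Your proposal is correct and follows essentially the same approach as the paper: both arguments rely on the trio of local models---Lemma~\ref{lemma:nbhd} at points of $\Sigma$, Theorem~\ref{cor::localformpt} at singular fixed points, and the manifold normal form \cite[Corollary~A.7]{karshon} at regular fixed points---to show that $\overline{\Sigma}$ is locally a two-dimensional full symplectic suborbifold and that limit points with larger stabilizer must be fixed. The paper's proof is terser (it invokes the Principal Orbit Theorem for effectiveness and leaves the moment-map verification implicit), while you spell out the case analysis at $\overline{\Sigma}\smallsetminus\Sigma$, the fullness check, and the moment-map identity more carefully; your identification of the fullness verification at singular points via Lemma~\ref{lem::isotropy} as the delicate step is exactly what the paper's citation of Theorem~\ref{cor::localformpt} is doing.
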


\begin{proof}
  Since $M$ is compact, so is $\overline{\Sigma}$. Let $x \in \Sigma$. By the Principal Orbit Theorem applied to $\Sigma$ (see \cite[Theorem 2.8.5]{dk}), there
  exists an open, dense subset of $\overline{\Sigma}$ consisting of points with
  stabilizer equal to $\Z_k$. Hence, the quotient $S^1/\Z_k$ acts
  effectively on $\overline{\Sigma}$. Moreover, by Theorem
  \ref{cor::localformpt}, its analog for manifolds (see
  \cite[Corollary A.7]{karshon}) and Lemma \ref{lemma:nbhd}, $\overline{\Sigma}$
  is a full symplectic submanifold of $(M,\omega)$ of dimension
  2, and if the stabilizer of $y \in \overline{\Sigma}$ contains $\Z_k$ as a
  proper subgroup, then $y \in M^{S^1}$. In fact, $\overline{\Sigma}$ is a fully embedded symplectic
  submanifold of $(M,\omega)$, i.e., the inclusion $\overline{\Sigma}
  \hookrightarrow M$ is a good $C^{\infty}$ map. To see this, we may argue as in Remark
  \ref{rmk:fixed_fully_embedded}, using the fact that the singular
  points of $M$ are isolated. Hence, the restriction of $H$ to
  $\overline{\Sigma}$ is a good $C^{\infty}$ map. 
\end{proof}

In other words, if $\Sigma \subseteq \mathrm{Stab}(M;\Z_k)$ is a
non-empty connected component then $(\overline{\Sigma}, \omega|_{\overline{\Sigma}},
H|_{\overline{\Sigma}})$ is a compact symplectic toric orbifold of dimension
two.

\begin{corollary}\label{cor:Z_k-orbi-sphere}
  Let $(M,\omega,H)$ be a Hamiltonian $S^1$-space and let $k \geq 2$
  be an integer. Let $\Sigma \subseteq \mathrm{Stab}(M;\Z_k)$ be a non-empty
  connected component and let $\overline{\Sigma} \subseteq M^{\Z_k}$
  denote its closure. Then
  \begin{itemize}[leftmargin=*]
  \item there are precisely two points $x_{\min}, x_{\max} \in \overline{\Sigma}$
    that are fixed points for the $(S^1/\Z_k)$-action on $\overline{\Sigma}$ and,
    hence, lie in $M^{S^1}$,
  \item the following equation holds
    \begin{equation}
      \label{eq:44}
      H(x_{\max}) - H(x_{\min}) = \frac{k}{2 \pi} \int\limits_{\overline{\Sigma}}
      \omega|_{\overline{\Sigma}}, \text{ and}
    \end{equation}
  \item letting $\Z_{n_{\min}}$ and $\Z_{n_{\max}}$ denote the orbifold
    structure groups of $x_{\min}$ and $x_{\max}$ respectively and
    setting $\gcd(n_{\min},n_{\max}) = m$, then $\overline{\Sigma}$ is
    diffeomorphic to $\C P^1(n'_{\min}, n'_{\max})/\Z_m$, where
    $n_{\min}=m n'_{\min}$ and $n_{\max} = m n'_{\max}$. 
\end{itemize}
\end{corollary}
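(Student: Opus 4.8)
The plan is to leverage Proposition~\ref{prop:iso_sphere}, which already identifies $(\overline{\Sigma},\omega|_{\overline{\Sigma}},H|_{\overline{\Sigma}})$ as a compact, two-dimensional symplectic orbifold carrying an effective Hamiltonian $S^1/\Z_k$-action with moment map $H|_{\overline{\Sigma}}$; after fixing an identification $S^1/\Z_k\simeq S^1$, it is a Hamiltonian $S^1$-space in the sense of Section~\ref{sec:labelled-multigraph} (its singular set is contained in its fixed point set, hence discrete, with cyclic structure groups). For the first bullet point, I would first observe that $H|_{\overline{\Sigma}}$ attains a maximum and a minimum on the compact $\overline{\Sigma}$, at points necessarily fixed by the $S^1/\Z_k$-action, and that these points are distinct since the $S^1/\Z_k$-action is nontrivial (it has orbits with stabilizer exactly $\Z_k\subsetneq S^1$) so $H|_{\overline{\Sigma}}$ is not constant. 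By Corollary~\ref{lem::uniqueMaxMin} applied to $\overline{\Sigma}$, the local maximum and minimum are unique; it then remains to note that every fixed point of the $S^1/\Z_k$-action is an isolated local extremum of $H|_{\overline{\Sigma}}$. This follows from the local model: at such a point the action is modeled on a unitary representation on $\C/\Z_n$ (Corollary~\ref{cor:linear-symp_action}, Remark~\ref{rmk:G-compact}), whose single orbi-weight is nonzero because the fixed set of a nontrivial circle action on a connected orbi-surface has empty interior, so by \eqref{eq:6} the moment map near that point is $\mathrm{const}\pm\tfrac12|z|^2$. Hence the fixed point set of $\overline{\Sigma}$ is exactly $\{x_{\min},x_{\max}\}$, and these lie in $M^{S^1}$ since $S^1$ acts on $\overline{\Sigma}$ through $S^1/\Z_k$.

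For the second bullet point, the key is tracking the $k$-fold covering $S^1\to S^1/\Z_k$. Restricted to the invariant suborbifold $\overline{\Sigma}$, the fundamental vector field $X^M$ of the $S^1$-action on $M$ equals $k$ times the fundamental vector field of the effective $S^1/\Z_k$-action, so $\iota_{X^M}\omega|_{\overline{\Sigma}}=k\,d\mu'$ for a moment map $\mu'$ of the effective action; thus $H|_{\overline{\Sigma}}=k\mu'+\mathrm{const}$. The claim then reduces to the standard identity $\mu'(x_{\max})-\mu'(x_{\min})=\tfrac{1}{2\pi}\int_{\overline{\Sigma}}\omega|_{\overline{\Sigma}}$ for an effective Hamiltonian circle action on a compact symplectic orbi-surface, which I would obtain from the Duistermaat--Heckman theorem for orbifolds (the pushforward of the Liouville measure under $\mu'$ is $2\pi$ times Lebesgue measure on $[\mu'(x_{\min}),\mu'(x_{\max})]$) or, more elementarily, by integrating $\omega|_{\overline{\Sigma}}=d\theta\wedge d\mu'$ over the full-measure set of principal orbits. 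Multiplying by $k$ gives \eqref{eq:44}.

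For the last bullet point I would argue purely at the level of orbifold diffeomorphism type via Theorem~\ref{thm::classorbisurface}: a compact, connected, orientable orbi-surface is classified by the genus of its underlying space together with the multiset of orders of the orbifold structure groups at its singular points. The interior points of $\Sigma$ are regular in $M$ (they have stabilizer $\Z_k\subsetneq S^1$, hence are not $S^1$-fixed, hence not singular by Corollary~\ref{cor:action_preserves_structure_group}), so the only possible singular points of $\overline{\Sigma}$ are $x_{\min}$ and $x_{\max}$. Using the local normal form at these points (Theorem~\ref{cor::localformpt} when they are singular in $M$, its manifold analog \cite[Corollary A.7]{karshon} otherwise) together with Lemma~\ref{lem::isotropy}, one checks that near $x_{\min}$ the orbi-sphere $\overline{\Sigma}$ is tangent to a single coordinate direction of the model $\C^2/\Z_{n_{\min}}$ on which $\Z_{n_{\min}}$ acts faithfully, so the orbifold structure group of $\overline{\Sigma}$ at $x_{\min}$ is $\Z_{n_{\min}}$; likewise at $x_{\max}$ it is $\Z_{n_{\max}}$. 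Finally, $H|_{\overline{\Sigma}}$ identifies $|\overline{\Sigma}|$ with the suspension of a circle — its image is the interval $[H(x_{\min}),H(x_{\max})]$, with single-orbit preimages over interior values — so $|\overline{\Sigma}|\cong S^2$ has genus $0$. Since $n'_{\min},n'_{\max}$ are coprime, $\C P^1(n'_{\min},n'_{\max})/\Z_m$ is a compact, connected, orientable genus-$0$ orbi-surface with exactly two singular points, of structure groups $\Z_{mn'_{\min}}=\Z_{n_{\min}}$ and $\Z_{mn'_{\max}}=\Z_{n_{\max}}$, by Example~\ref{exm:quotient_wpl}; hence Theorem~\ref{thm::classorbisurface} yields a diffeomorphism $\overline{\Sigma}\simeq\C P^1(n'_{\min},n'_{\max})/\Z_m$.

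The main obstacle I anticipate is the bookkeeping in the second bullet point: getting the factor of $k$ right requires care with the identification $S^1/\Z_k\simeq S^1$ and with the moment-map and area normalizations, and with checking that the Duistermaat--Heckman identity remains valid on orbifolds (it does, since the singular set has measure zero). The structure-group computation in the last bullet is also somewhat delicate, as one must treat separately the cases where $x_{\min}$ (resp. $x_{\max}$) is an isolated fixed point of $M$ or lies on a fixed orbi-surface, but in each case Lemma~\ref{lem::isotropy} makes the verification routine.
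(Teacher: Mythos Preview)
Your argument is correct, but it takes a more hands-on route than the paper. The paper invokes the Lerman--Tolman classification of symplectic toric orbifolds \cite[Theorem~1.5]{LermanTolman} as a single black box: in dimension two, that result immediately gives the two fixed points, the area formula~\eqref{eq:44}, the fact that the only singular points of $\overline{\Sigma}$ are $x_{\min},x_{\max}$ with structure groups of the stated orders, and that the underlying surface has genus zero. You instead rederive each of these facts from the tools already developed in the paper: Corollary~\ref{lem::uniqueMaxMin} and the local model for the two fixed points, a direct moment-map computation for the factor of $k$ in~\eqref{eq:44}, and Theorem~\ref{cor::localformpt} with Lemma~\ref{lem::isotropy} for the structure groups at the poles. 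Your approach is more self-contained and makes the factor of $k$ transparent, at the cost of having to handle the case analysis (isolated fixed point versus point on a fixed orbi-surface) explicitly; the paper's approach is shorter but leans on external machinery. Both arrive at the last bullet the same way, via Example~\ref{exm:quotient_wpl} and Theorem~\ref{thm::classorbisurface}.
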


\begin{proof}
  By the classification of Lerman and Tolman \cite[Theorem
  1.5]{LermanTolman}, since $\dim \overline{\Sigma} =2$ and since we are
  considering an effective action of $(S^1/\Z_k)$, setting $H(\overline{\Sigma})
  = [h_{\min},h_{\max}]$, we have that
  \begin{itemize}[leftmargin=*]
  \item there are precisely two fixed points $x_{\min}, x_{\max} \in
    \overline{\Sigma}$ for the $(S^1/\Z_k)$-action on $\overline{\Sigma}$, where
    $H(x_{\min}) = h_{\min}$ and $H(x_{\max}) = h_{\max}$,
  \item equation \eqref{eq:44} holds, 
  \item the set of singular points of $\overline{\Sigma}$ is contained in
    $\{x_{\min}, x_{\max} \}$, and
  \item the labels attached to $h_{\min}$ and $h_{\max}$ are precisely
    the orders of the orbifold structure groups of $x_{\min}$ and
    $x_{\max}$ respectively. 
  \end{itemize}
  The first and second statements follow immediately. To prove the last, we
  observe that, by
  \cite[Example 1.1 and Theorem
  1.5]{LermanTolman}, the genus of the underlying topological surface
  $|\Sigma|$ is zero. By Example \ref{exm:quotient_wpl}, $\C
  P^1(n'_{\min}, n'_{\max})/\Z_m$ as in the statement has genus zero
  and at most two
  singular points that have orbifold structure groups given by
  $\Z_{n_{\min}}$ and $\Z_{n_{\max}}$. Hence, the result follows immediately by
  Theorem \ref{thm::classorbisurface}.
\end{proof}

Motivated by Corollary \ref{cor:Z_k-orbi-sphere} and \cite{karshon}, we introduce the following terminology.

\begin{definition}\label{defn:Z_k-sphere}
  Let $(M,\omega,H)$ be a Hamiltonian $S^1$ and let $ k \geq 2$ be an
  integer. The closure $\overline{\Sigma} \subseteq M^{\Z_k}$ of a non-empty connected component of $\Sigma \subseteq \mathrm{Stab}(M;\Z_k)$ 
  is called a
  \textbf{$\mathbb{Z}_k$-isotropy orbi-sphere}. We refer to the two
  fixed points in $\overline{\Sigma} \cap M^{S^1}$ as the {\bf north} and {\bf
    south} poles of $\overline{\Sigma}$, depending on the value of the moment map. Finally, whenever $\Z_k$ is not relevant, we drop it from the terminology. 
\end{definition}

\begin{remark}
  By Lemma \ref{lem::isotropy} and Corollary
  \ref{cor:Z_k-orbi-sphere}, a fixed point with
  orbifold structure group given by $\Z_m$ has an orbi-weight of $-
  \frac{k}{m}$ (respectively $\frac{k}{m}$) if and only if it is the
  north (respectively south) pole of a $\Z_k$-isotropy orbi-sphere.
\end{remark}

\begin{remark}
  Unlike the case of smooth manifolds (see \cite{karshon}), a
  $\Z_k$-isotropy orbi-sphere need not be a connected component of
  $M^{\Z_k}$. The reason is that a connected component of $M^{\Z_k}$
  in a linear action on an orbi-vector space need not be a
  suborbifold (cf. Remark \ref{rmk:fixed_orbisurfaces_iso_spheres}). 
\end{remark}

\begin{example}\label{ex:projective25}(Weighted projective planes, continued)
We fix notation as in Example~\ref{ex:projective}. Let
$M=\mathbb{C}P^2(p,s,q)$ be a weighted projective plane endowed with
the standard symplectic form. We consider the Hamiltonian $S^1$-action
of \eqref{eq:exactionwps}. Suppose first that $k_0,k_1,k_2\neq 0$. All
the points in the orbi-sphere  $\{[0:z_1:z_2]\in M\}$ through the
fixed points $F_2$ and $F_3$ are fixed by the subgroup
$\mathbb{Z}_{k_1}$ of $S^1$. Hence, this is a $\Z_{k_1}$-isotropy
orbi-sphere. Similarly, the orbi-sphere $\{[z_0:z_1:0]\in M\}$
(respectively $\{[z_0:0:z_2]\in M\}$) is a $\Z_{k_2}$-isotropy
orbi-sphere (respectively $\Z_{k_3}$). To consider the remaining cases, without loss of generality we may assume that $k_1=0$. Then the orbi-sphere
$\{[z_0:z_1:0]\in M\}$ is a $\Z_s$-isotropy orbi-sphere and
$\{[z_0:0:z_2]\in M\}$ is a $\Z_q$-isotropy orbi-sphere. \hfill$\Diamond$ 
\end{example}

\begin{example}\label{tswIS} (Example~\ref{tswEx_1}, continued) 
  The Hamiltonian $S^1$-space constructed in Examples
  \ref{tswEx_1}, \ref{exm:circle_action_Talvacchia}, \ref{exm:tsw_symp}
  and \ref{exm:tsw_ham} has exactly two $\mathbb{Z}_2$-isotropy
  orbi-spheres that share the same north and south poles, which are the
  two fixed points of type $\frac{1}{4}(1,3)$  -- see Example
  \ref{tswEx_types}. In fact, the union of these two isotropy
  orbi-spheres and the three fixed points of Example \ref{tswEx_types} are the only points in $M$ with a
  non-trivial stabilizer.  
  \hfill$\Diamond$
\end{example}

To conclude this section, we state the following result that is an immediate  consequence of Theorem \ref{cor::localformpt} and Lemma \ref{lem::isotropy}. 
\begin{corollary}\label{cor:north-south}
	Let $(M,\omega,H)$ be a Hamiltonian $S^1$-space and let $x \in M^{S^1}$. Then
	\begin{itemize}[leftmargin=*]
		\item $x$ belongs to at most two isotropy orbi-spheres;
		\item if $H(x)$ is not extremal and $x$ lies on two isotropy orbi-spheres, then $x$ is the south pole of one and the north pole of the other; and
		\item if $x$ lies on a fixed orbi-surface and on an isotropy orbi-sphere, then it is a singular point of $M$ and lies on exactly one isotropy orbi-sphere.
	\end{itemize}
\end{corollary}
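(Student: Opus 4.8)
The plan is to reduce each of the three bullets to the local normal forms already established: Theorem \ref{cor::localformpt} for a fixed point that is an isolated singular point, its smooth analogue \cite[Corollary A.7]{karshon} for a smooth fixed point, and the Marle--Guillemin--Sternberg model of Lemma \ref{lemma:nbhd} for points on an isotropy orbi-sphere. Throughout, I fix $x \in M^{S^1}$ and work in an $S^1$-invariant neighborhood $U$ of $x$ that is equivariantly symplectomorphic to a neighborhood of $[0,0]$ in a model $\C^2/\Z_m$ (with $m \geq 1$, the case $m=1$ being the smooth one), on which the linear action has orbi-weights $\tfrac{a_1}{p}, \tfrac{a_2}{p}$ as in Lemma \ref{lem::orbiweights}. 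The key observation is that isotropy orbi-spheres through $x$ correspond, locally, to the coordinate orbi-lines $\{z_2 = 0\}$ and $\{z_1 = 0\}$ on which the stabilizer is finite and nontrivial: by Lemma \ref{lem::isotropy}, $\{z_1 \neq 0, z_2 = 0\}$ consists of points with stabilizer $\Z_{|ka_1|}$ (where $k = m/p$) and similarly for the other axis, while $\{z_1 z_2 \neq 0\}$ has trivial stabilizer. Hence a local piece of a $\Z_j$-isotropy orbi-sphere through $x$ forces $j \in \{|ka_1|, |ka_2|\}$ to be $\geq 2$, so at most one such piece can sit on each axis, giving at most two isotropy orbi-spheres through $x$. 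Since any isotropy orbi-sphere containing $x$ must, near $x$, be a one-dimensional invariant symplectic suborbifold through $x$, and the only such are (closures of) the two punctured axes, this proves the first bullet.

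\textbf{First bullet.} Made precise: if $\overline{\Sigma}$ is an isotropy orbi-sphere with $x \in \overline{\Sigma}$, then by Proposition \ref{prop:iso_sphere} $\overline{\Sigma}$ is a full symplectic submanifold of dimension $2$, so $T_x\overline{\Sigma}$ is a one-dimensional complex $S^1$-invariant subspace of $T_xM \simeq \C^2/\Z_m$ on which $S^1$ does not act with a finite stabilizer equal to all of $S^1$ (i.e. it is not pointwise fixed) — but it is an isotropy, not fixed, sphere, so the generic stabilizer is a proper nontrivial finite subgroup. The only $S^1$-invariant complex lines in $\C^2/\Z_m$ are the images of the two coordinate axes (when $a_1 \neq a_2$) or, when $a_1 = a_2 = \pm 1$ and $p=1$, possibly a pencil of lines — but in the latter case Lemma \ref{lem::isotropy} shows every punctured line has trivial stabilizer, so there is no isotropy orbi-sphere at all through $x$. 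Thus there are at most two coordinate axes that can carry isotropy orbi-spheres, and the count follows.

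\textbf{Second and third bullets.} For the second bullet, suppose $H(x)$ is not extremal and $x$ lies on two isotropy orbi-spheres; then neither $a_1$ nor $a_2$ is zero (an orbi-weight zero would make $x$ non-isolated by Theorem \ref{cor::localformpt}, and would put $x$ on a fixed orbi-surface, which by Remark \ref{rmk:type_fixed} and Corollary \ref{lem::uniqueMaxMin} is a global extremum of $H$ — contradicting non-extremality). So $x$ is an isolated fixed point with both orbi-weights nonzero, and by Corollary \ref{lem::uniqueMaxMin} (connectedness of level sets) $x$ is neither a local max nor a local min, forcing $\tfrac{a_1}{p}$ and $\tfrac{a_2}{p}$ to have opposite signs. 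By the remark following Definition \ref{defn:Z_k-sphere}, a fixed point with orbifold structure group $\Z_m$ is the north pole of a $\Z_k$-isotropy orbi-sphere exactly when it has orbi-weight $-\tfrac{k}{m}$, and the south pole when the orbi-weight is $+\tfrac{k}{m}$; since one orbi-weight at $x$ is positive and the other negative, $x$ is the south pole of one orbi-sphere and the north pole of the other. For the third bullet, if $x$ lies on a fixed orbi-surface $\Sigma'$ then by Remark \ref{rmk:type_fixed} the local model at $x$ has $p = 1$ and $\{|a_1|, |a_2|\} = \{0,1\}$, say $a_2 = 0$ (the axis $\{z_1 = 0\}$ being pointwise fixed, i.e. locally $\Sigma'$) and $a_1 = \pm 1$; then $\{z_2 = 0\}$ is, by Lemma \ref{lem::isotropy}, a line all of whose punctured points have stabilizer $\Z_{|ka_1|} = \Z_m$, so if $m \geq 2$ this gives exactly one $\Z_m$-isotropy orbi-sphere through $x$ and $x$ is a singular point; and since the other axis is pointwise fixed rather than isotropy, there is no second isotropy orbi-sphere. (If $m = 1$, $x$ is a smooth point lying on a fixed surface, and the standard smooth analysis of \cite{karshon} shows it lies on no isotropy sphere — but this case is implicitly excluded since the statement asserts $x$ is a singular point; one should phrase the bullet as: whenever $x$ lies on both a fixed orbi-surface and an isotropy orbi-sphere, necessarily $m \geq 2$.)

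\textbf{Main obstacle.} The delicate point is the bookkeeping between the algebraic data $(p, a_1, a_2)$ and the geometric notions of ``lies on an isotropy orbi-sphere'' and ``north/south pole'' — in particular making sure that a \emph{local} arc on a coordinate axis with finite nontrivial stabilizer is actually part of a genuine (global) isotropy orbi-sphere $\overline{\Sigma}$ as in Definition \ref{defn:Z_k-sphere}, and conversely that every isotropy orbi-sphere through $x$ does restrict to such an arc. This is handled by Proposition \ref{prop:iso_sphere} together with the local models, but requires care that the connected component of $\mathrm{Stab}(M; \Z_k)$ through a nearby point is the one whose closure passes through $x$ — this uses that the singular set of $M$ is discrete, exactly as in Remark \ref{rmk:fixed_fully_embedded}.
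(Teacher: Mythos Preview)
Your proof is correct and follows exactly the approach the paper indicates: the paper states the corollary as ``an immediate consequence of Theorem \ref{cor::localformpt} and Lemma \ref{lem::isotropy}'' with no further details, and your argument is precisely the unpacking of that claim --- reducing each bullet to the local model $\C^2/\Z_m$ and reading off the stabilizers of the two coordinate axes via Lemma \ref{lem::isotropy}. The additional citations you invoke (Proposition \ref{prop:iso_sphere}, Remark \ref{rmk:type_fixed}, Corollary \ref{lem::uniqueMaxMin}, the remark after Definition \ref{defn:Z_k-sphere}) are exactly the auxiliary facts one needs to make the ``immediate'' claim rigorous, so there is no substantive difference in strategy.
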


\subsection{The labeled multigraph}\label{sec:label-mult-hamilt}
We associate a {\bf labeled multigraph} to a
Hamiltonian $S^1$-space $(M,\omega,H)$ as follows (see
Table~\ref{table:multigraph}). \\

\noindent{\bf Vertex set:}

\begin{itemize}[leftmargin=*]
\item Each connected component of $M^{S^1}$ corresponds to a vertex. A
  vertex corresponding to a fixed orbi-surface is called {\bf
    fat}\footnote{Correspondingly, we use a
  thicker symbol when drawing a fat vertex.}. Every vertex is labeled with the moment map value
  of the corresponding connected component of $M^{S^1}$.
\item If a vertex $v$ corresponds to a singular point $x \in M$, we
  label $v$ with the type of the singular point (see Remark
  \ref{rmk:type}). More explicitly, if the
  orbi-weights at $x$ are distinct, then we label $v$ with
  $\frac{1}{m}(1,l)$, where $1$ is in the direction of the larger
  orbi-weight. If the orbi-weights are equal and $l^2 = 1 \mod m$,
  then again we label $v$ with
  $\frac{1}{m}(1,l)$. Finally, in the remaining case, we label $v$
  with {\em both} $\frac{1}{m}(1,l)$ and $\frac{1}{m}(1,l')$.
\item If a vertex $v$ corresponds to a fixed orbi-surface $\Sigma$, we
  label $v$ with the following data:
  \begin{itemize}
  \item the genus $g$ of $|\Sigma|$,
  \item the type of each singular point in $\Sigma$ (see
    Remark \ref{rmk:type_fixed}), and
  \item the normalized symplectic area $a(\Sigma):= \frac{1}{2\pi} \int_\Sigma \omega|_{\Sigma}$ of $\Sigma$.
  \end{itemize}
\end{itemize}

\noindent{\bf Edge set:} 
\begin{itemize}[leftmargin=*]
\item For any $k \geq 2$, each $\mathbb{Z}_k$-isotropy orbi-sphere corresponds to an edge
  that joins the vertices corresponding to the north and south poles
  (see Definition \ref{defn:Z_k-sphere}). We label the edge with the
  integer $k$.
\end{itemize}

\begin{table}[h!]\label{table:multigraph}
  \begin{tabular}{ l|c| c  }
    & \textbf{Symbol} &\textbf{Labels} \\
    \hline
    &&\\
    \textbf{Fixed point} &
                           \begin{tikzpicture}
                             \coordinate (P1) at (-1,3);
                             \fill (P1) circle (4pt);
                           \end{tikzpicture}
                      & type of singular point,\\
    && moment map value \\
    &&\\
    \textbf{Fixed surface} & 
                             \begin{tikzpicture}
                               \fill (0,0) circle (8pt);
                             \end{tikzpicture}
                      &  type of each singular point, \\&&  genus,
                                                           area,
                                                            moment map value \\
    &&\\
    \textbf{Isotropy $\Z_k$-orbi-sphere}& 
                                   \begin{tikzpicture}[shorten >=1pt,node distance=3cm,auto]
                                     \tikzstyle{state}=[shape=circle,thick,draw,minimum size=0.5cm]
                                     \draw[very thick] (0,0) -- (0,1);
                                   \end{tikzpicture}
                      & $k$ \\
  \end{tabular}
  \vspace{0.2cm}
  \caption{The labeled multigraph at a glance.}
\end{table}

 
 

For simplicity, we often omit the moment map labels in figures depicting a labeled multigraph. Moreover, whenever it is
not relevant for the specific example or proof, we label a vertex that corresponds to a singular point
with only one expression of the form $\frac{1}{m}(1,l)$ even if the
orbi-weights are equal and $l^2 \neq 1 \mod m$. We also stress that
the relative position of non-extremal vertices does not necessarily
reflect the corresponding moment map values. However, the relative
position of the north/south pole of a given isotropy orbi-sphere always
corresponds to the top/bottom vertex of the corresponding edge. Finally, by a slight
abuse of notation, when showing the
labeled multigraph of a general family of Hamiltonian $S^1$-spaces (as
in Example \ref{ex:projective2} below), we include edges that may
correspond to `free' $S^1$-invariant orbi-spheres (in Figure \ref{Figure:mgwps} below, any
one of $k_0,k_1,k_2$ may be equal to one).

\begin{example}\label{ex:projective2}(Weighted projective planes, continued)
  We determine the labeled multigraph of the Hamiltonian $S^1$-space
  $(M=\mathbb{C}P^2(p,s,q),\omega,H)$ of
  Example~\ref{ex:projective} under the condition that  the integers $k_0,k_1,k_2$ are all positive (see Figure
  \ref{Figure:mgwps} below). To this end, we fix the notation in
  Example~\ref{ex:projective}. We remark that
  \begin{itemize}[leftmargin=*]
  \item the real numbers $\alpha$ and $\beta$ in the moment map labels are
    positive, and
  \item the integers $\ell_p$, $\ell_q$ and $\ell_s$ in the types of
    singular points can be calculated explicitly as in Remark \ref{ex::weightedprojSeifertInvariants}.
  \end{itemize}
  Moreover, the following hold
  \begin{equation} \label{eq:weights}
    H(F_1)>H(F_2)>H(F_3) \quad \text{and} \quad s k_0 = q k_2+ p k_1.
  \end{equation}
  The moment map labels can be easily determined using the
  localization formula for equivariant cohomology
  (see Theorem~\ref{prop:localization} below). To this end, using the equivariant
  symplectic form $\omega^{S^1}=\omega - H y$, we have that
  \begin{equation}
    \label{eq:45}
    0=\int\limits_M\omega^{S^1} =-\frac{1}{y}\left( \frac{ q\, H_{\text{min}}}{k_0k_1}+   \frac{p\, H_{\text{max}}}{k_0k_2} - \frac{s\, H(x_s)}{k_1k_2} \right),
  \end{equation}
  where $H_{\min} = H(F_3)$ and $H_{\max} = H(F_1)$. Set $\alpha :=
  H(F_2)$ and let $\beta$ be the normalized symplectic area of the
  $\mathbb{Z}_{k_0}$-orbi-sphere. By \eqref{eq:weights}, \eqref{eq:45}
  and Corollary \ref{cor:Z_k-orbi-sphere}, we obtain the moment map labels in
  Figure~\ref{Figure:mgwps}. 

  \renewcommand{\thefigure}{\thesection.\arabic{figure}}
  \begin{figure}[h!]
    \centering
    \begin{tikzpicture}	
      \coordinate (P1) at (-1,3);
      \fill (P1) circle (4pt);
      \coordinate (P2) at (-1,0);
      \fill (P2) circle (4pt);		
      \coordinate (P3) at (0,1.5);
      \fill (P3) circle (4pt);		
      \draw[very thick](P1) -- (P2);
      \draw[very thick](P2) -- (P3);
      \draw[very thick](P1) -- (P3);
      \coordinate[label=left: ${k_0:=aq-cp}$] (AB) at ($(P1)!0.5!(P2)$);
      \coordinate[label=right: ${\,k_2:= as-bp}$] (BC) at ($(P1)!0.4!(P3)$);	
      \coordinate[label=right: ${\,k_1:=bq-cs}$] (BC) at ($(P2)!0.4!(P3)$);	
      \coordinate[label=above:${\frac{1}{p}(1,\ell_p), \color{red} \alpha + \frac{q k_2 }{s} \beta }$] (BC) at ($(P1)$);
      \coordinate[label=below:${\frac{1}{q}(1,\ell_q), \color{red} \alpha - \frac{p k_1}{s }  \beta}$] (BC) at ($(P2)$);
      \coordinate[label=right:$\,{\frac{1}{s}(1,\ell_s), \color{red} \alpha} $] (BC) at ($(P3)$);    
    \end{tikzpicture}
    \caption{Labeled multigraph of Example~\ref{ex:projective}  if there is no fixed surface.}\label{Figure:mgwps}
  \end{figure}
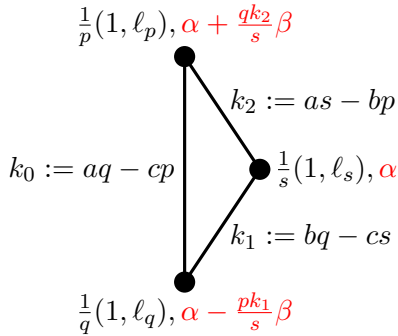
  \hfill$\Diamond$
\end{example}

\begin{example}\label{tswEx} (Example~\ref{tswEx_1}, continued) 
  By bringing together Examples \ref{tswEx_1},
  \ref{exm:circle_action_Talvacchia}, \ref{exm:tsw_symp},
  \ref{exm:tsw_ham} and \ref{tswIS}, we obtain that the Hamiltonian $S^1$-space studied in
  \cite{counterex} has the labeled multigraph in Figure
  \ref{fig::typeBspace}.
  \renewcommand{\thefigure}{\thesection.\arabic{figure}}
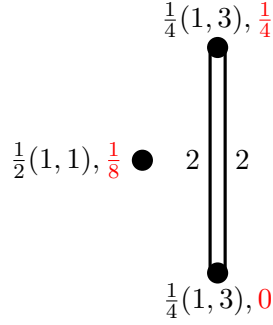
\begin{figure}[h!]
	\centering
	\begin{tikzpicture}	
		\coordinate (Q1) at (7,3);
		\fill (Q1) circle (4pt);
		\coordinate (Q2) at (7,0);
		\fill (Q2) circle (4pt);	
		
		\coordinate (Q0) at (6,1.5);
		\fill (Q0) circle (4pt);	
	
		\coordinate[label=left:$2$] (BC) at ($(6.9,1.5)$);	
		\coordinate[label=right:$2$] (BC) at ($(7.1,1.5)$);
		
		\coordinate[label=below:${\frac{1}{4}(1,3), \color{red}{0}}$] (BC) at ($(Q2)$);
		\coordinate[label=above:${\frac{1}{4}(1,3), \color{red}{\frac{1}{4}}}$] (BC) at ($(Q1)$);
		\coordinate[label=left:${\frac{1}{2}(1,1),\color{red} \frac{1}{8}}\,\,$] (BC) at ($(Q0)$);
		
		\draw[very thick](7.1,3) -- (7.1,0);
		\draw[very thick](6.9,3) -- (6.9,0);
	\end{tikzpicture}
\caption{Labeled multigraph of the Hamiltonian $S^1$-space studied in \cite{counterex}.}
\label{fig::typeBspace}
\end{figure}
In particular, this example illustrates that, unlike what happens in
the case of manifolds (see \cite{karshon}), the class of combinatorial
invariants that we associate to a Hamiltonian $S^1$-space {\em must
  necessarily} include multigraphs. Moreover, two edges that are incident to
the same two vertices can have equal labels (cf. Remark \ref{rmk:fixed_orbisurfaces_iso_spheres})!
\hfill$\Diamond$
\end{example}

As expected, the labeled multigraph of a Hamiltonian $S^1$-space is an
invariant of the isomorphism class. To state this result formally, by
a slight abuse of terminology, we
say that two labeled multigraphs are {\bf equal} if there exists a
bijection between them that preserves all the labels.

\begin{lemma}\label{lemma:trivial_direction}
  Let $(M_i,\omega_i, H_i)$ be a Hamiltonian $S^1$-space for
  $i=1,2$. If $(M_1,\omega_1, H_1)$ and $(M_2,\omega_2, H_2)$ are
  isomorphic, their labeled multigraphs are equal.
\end{lemma}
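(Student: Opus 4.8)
The plan is to show that an isomorphism of Hamiltonian $S^1$-spaces $\varphi : (M_1,\omega_1,H_1) \to (M_2,\omega_2,H_2)$ induces a label-preserving bijection between the two labeled multigraphs. Recall that, by definition, $\varphi$ is an $S^1$-equivariant symplectomorphism with $H_2 \circ \varphi = H_1$. Since $\varphi$ is $S^1$-equivariant, it maps $S^1$-orbits to $S^1$-orbits and preserves stabilizers; hence it restricts to homeomorphisms $M_1^K \to M_2^K$ and $\mathrm{Stab}(M_1;K) \to \mathrm{Stab}(M_2;K)$ for every subgroup $K \leq S^1$ (using the notation of \eqref{eq:61}). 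In particular $\varphi$ induces a bijection between the connected components of $M_1^{S^1}$ and those of $M_2^{S^1}$, and, for each $k \geq 2$, a bijection between the $\Z_k$-isotropy orbi-spheres of $M_1$ and those of $M_2$ (these are exactly the closures of connected components of $\mathrm{Stab}(M_i;\Z_k)$ by Definition \ref{defn:Z_k-sphere}). Since $\varphi$ is a diffeomorphism of orbifolds and a good $C^{\infty}$ map, by Lemma \ref{lemma:good_homomorphism} it preserves orbifold structure groups, so it sends fat vertices to fat vertices (two-dimensional components to two-dimensional components) and singular fixed points to singular fixed points with isomorphic structure groups. This gives the desired bijection on vertices and edges; what remains is to check that all the labels match.

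Next I would verify label preservation case by case, which is routine given the results already established. The moment map value of a component $C$ of $M_1^{S^1}$ equals that of $\varphi(C)$ because $H_2 \circ \varphi = H_1$; similarly the label $k$ of a $\Z_k$-isotropy orbi-sphere is preserved because the stabilizer is an intrinsic invariant carried along by the equivariant map $\varphi$. For a fat vertex corresponding to a fixed orbi-surface $\Sigma$, the restriction $\varphi|_\Sigma : \Sigma \to \varphi(\Sigma)$ is a symplectomorphism of compact connected symplectic orbi-surfaces, so by Theorem \ref{thm:classification_symplectic_orbi-surfaces} it preserves the diffeomorphism type (hence the genus and, via Theorem \ref{thm::classorbisurface}, the list of orders of orbifold structure groups of the singular points) and the normalized symplectic area $a(\Sigma) = \frac{1}{2\pi}\int_\Sigma \omega|_\Sigma$. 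Moreover, since $\varphi$ preserves the $S^1$-action and the symplectic form, the symplectic slice representation at each fixed point is intertwined by the derivative $d\varphi$: concretely, at an isolated singular fixed point $x$ the representations $\rho$ at $x$ and $\rho'$ at $\varphi(x)$ satisfy a relation of the form \eqref{eq:40}, so by Proposition \ref{prop:triple_determines} the triples $(p,a_1,a_2)$ coincide, hence the type $\frac{1}{m}(1,l)$ (or the pair of types, in the ambiguous case $l^2 \neq 1 \bmod m$) attached to the vertex is the same. The same argument via $d\varphi$ applied to the singular points lying on a fixed orbi-surface shows that their types, recorded using the preferred representative of Remark \ref{rmk:type_fixed}, are also preserved.

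Putting these observations together yields a bijection between the two labeled multigraphs preserving every label, which is exactly the assertion that they are equal in the sense defined just before the statement. I do not expect any serious obstacle here: this is the \emph{trivial} direction of Main Result \ref{mr1}, and every ingredient — the behavior of equivariant symplectomorphisms on orbits and stabilizers, preservation of structure groups for good maps (Lemma \ref{lemma:good_homomorphism}), the classification of symplectic orbi-surfaces (Theorem \ref{thm:classification_symplectic_orbi-surfaces}), and the rigidity of slice representations (Proposition \ref{prop:triple_determines}) — has already been set up in the excerpt. The only point requiring mild care is bookkeeping in the ambiguous labeling case (equal orbi-weights with $l^2 \neq 1 \bmod m$), where the vertex carries both $\frac{1}{m}(1,l)$ and $\frac{1}{m}(1,l')$; here one simply notes that Proposition \ref{prop:triple_determines} only guarantees equality of the multisets $\{a_1,a_2\}$, which is precisely why the label is defined as an unordered pair, so the labels still match.
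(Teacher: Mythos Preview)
Your proposal is correct and follows essentially the same route as the paper's proof: use equivariance to get a bijection on $\mathrm{Stab}(M_i;K)$ and hence on vertices and edges, then verify the labels via $H_2\circ\varphi=H_1$, Proposition \ref{prop:triple_determines} (together with Theorem \ref{cor::localformpt}) for the type labels at isolated fixed points, and Theorems \ref{thm::classorbisurface} and \ref{thm:classification_symplectic_orbi-surfaces} for the fat-vertex labels. Your write-up is in fact slightly more detailed than the paper's (you spell out the role of the slice representation and the ambiguous $l^2\neq 1\bmod m$ case), but the argument is the same.
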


\begin{proof}
  Let $\varphi : (M_1,\omega_1) \to (M_2,\omega_2)$ be an
  isomorphism. By definition, it is an $S^1$-equivariant
  diffeomorphism. Hence, $\varphi$ establishes a bijection between
  connected components of $\mathrm{Stab}(M_1;K)$ and $\mathrm{Stab}(M_1;K)$ for any subgroup $K$ of
  $S^1$. Moreover, this bijection preserves the property of being a pole of an isotropy
  orbi-sphere. Therefore, there is a bijection
  $\overline{\varphi}$ between the
  multigraphs of $(M_1,\omega_1, H_1)$ and $(M_2,\omega_2,
  H_2)$. Since $H_2 \circ \varphi = H_1$, the moment map labels are
  preserved by $\overline{\varphi}$. Since $\varphi$ is an $S^1$-equivariant
  diffeomorphism, $\overline{\varphi}$ preserves the
  labels of edges. Moreover, by Theorem \ref{cor::localformpt} and
  Proposition \ref{prop:triple_determines}, $\overline{\varphi}$
  preserves the labels of vertices corresponding to singular points
  that are isolated (see also Remark \ref{rmk:type}). Finally, since
  $\varphi$ is a symplectomorphism, it restricts to a
  symplectomorphism between fixed orbi-surfaces. By Theorems
  \ref{thm::classorbisurface} and
  \ref{thm:classification_symplectic_orbi-surfaces},
  $\overline{\varphi}$ preserves the labels of fat vertices. 
\end{proof}

One of the main objectives of the rest of this paper is to establish
the converse to Lemma \ref{lemma:trivial_direction} (see Theorem
\ref{thm:uniqueness}). 

\subsection{The labeled multigraph determines the fixed point set}\label{sec:label-mult-determ}
In this section, we show that the labeled multigraph of a Hamiltonian $S^1$-space can be used to `read off' local invariants of each connected component of the fixed point set of the $S^1$-action. First we consider the case of a vertex corresponding to an isolated fixed point.

\begin{lemma}\label{lemma:gamma_thin_vertex}
  Let $(M,\omega,H)$ be a Hamiltonian $S^1$-space and let $v$ be a vertex
  of the labeled multigraph of $(M,\omega,H)$ that corresponds to an isolated fixed point $x \in
  M$. Then the labeled multigraph of $(M,\omega,H)$ determines an open neighborhood of $x$ up to
  isomorphism.  
\end{lemma}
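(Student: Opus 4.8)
The statement to prove is that if $v$ is a vertex of the labeled multigraph corresponding to an isolated fixed point $x$, then the labels at $v$ together with the labels of the edges incident to $v$ determine an $S^1$-invariant open neighborhood of $x$ up to isomorphism of Hamiltonian $S^1$-spaces. The overall strategy is to invoke the local normal form, Theorem \ref{cor::localformpt}, which says that such a neighborhood is isomorphic to $(V, \omega_0, H_{\mathrm{lin}})$ for a suitable open neighborhood $V \subseteq \C^2/\Z_m$ of $[0,0]$, where the data needed to pin this down are: the orbifold structure group $\Z_m$ and the type $\frac{1}{m}(1,l)$ of $x$; the integers $p, a_1, a_2$ giving the orbi-weights; and the value $H(x)$. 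So the proof reduces to extracting each of these pieces of data from the labels of $v$ and the edges at $v$, and then checking that the resulting Hamiltonian $S^1$-space is well-defined independently of the (finitely many) choices left ambiguous by the labeling conventions.

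\textbf{Key steps, in order.} First, the moment map label of $v$ is by definition $H(x)$, so that datum is immediate. Second, the vertex label records the type of the singular point: either a single expression $\frac{1}{m}(1,l)$ (when the orbi-weights are distinct, or equal with $l^2 = 1 \bmod m$) or both $\frac{1}{m}(1,l)$ and $\frac{1}{m}(1,l')$ (when equal with $l^2 \neq 1 \bmod m$). In the first case $m$ and $l$ are read off directly; in the second the two models are unitarily isomorphic by Lemma \ref{lemma:model_iso}, so it does not matter which representative we pick. Third — the crux — I must recover the triple $(p,a_1,a_2)$. I claim this is determined by the edges incident to $v$ together with the convention of Remark \ref{rmk:type}: by Corollary \ref{cor:north-south}, $x$ lies on at most two isotropy orbi-spheres, and by Lemma \ref{lem::isotropy} (as recorded in the Remark following Definition \ref{defn:Z_k-sphere}) the orbi-weight of $x$ in the direction of a $\Z_k$-isotropy orbi-sphere of which $x$ is the north (resp.\ south) pole equals $-k/m$ (resp.\ $k/m$). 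Since $p = 1$ whenever $x$ lies on a fixed orbi-surface is the only situation forcing $p > 1$ to be genuinely hidden, I need to argue that in fact the condition $\gcd(|a_1 l - a_2|, m) = p$ from Theorem \ref{cor::localformpt}, together with $\gcd(a_1,a_2) = 1$, determines $p$ once we know $m$, $l$, and the {\em unordered} pair $\{a_1, a_2\}$ — and the orbi-weights (hence $\{a_1/p, a_2/p\}$) are encoded either by the incident edges or, when $x$ lies on fewer than two isotropy orbi-spheres, are constrained by Theorem \ref{cor::localformpt} to satisfy $\{|a_1|,|a_2|\} = \{0,1\}$ only in the non-isolated case, which is excluded here. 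Concretely: each incident edge with label $k$ tells us that one of $\pm k/m$ is an orbi-weight, and its position relative to $v$ (top vs.\ bottom of the edge) fixes the sign; the labeling convention of Remark \ref{rmk:type} tells us which coordinate direction each orbi-weight sits in. When $x$ has two incident edges this determines $a_1/p$ and $a_2/p$ outright; when it has one or zero, the missing orbi-weights together with $a_1 a_2 \neq 0$, $\gcd(a_1,a_2)=1$, and $\gcd(|a_1 l - a_2|,m) = p$ pin down the remaining freedom — this is essentially a unique-factorization/arithmetic argument of the same flavor as the Claim inside the proof of Proposition \ref{prop_local}.

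\textbf{Expected main obstacle, and conclusion of the argument.} The delicate point is the case where $x$ lies on at most one isotropy orbi-sphere: then the edges alone do not directly display both orbi-weights, and one must recover the second orbi-weight purely from the arithmetic constraints linking $(p, a_1, a_2)$ to the type $\frac{1}{m}(1,l)$. I expect this to require a short but careful number-theoretic lemma showing that, given $m$, $l$ with $\gcd(l,m) = 1$, and one of the two orbi-weights, there is at most one choice of the other orbi-weight consistent with $a_1 a_2 \neq 0$, $\gcd(a_1, a_2) = 1$, $p \mid m$, and $\gcd(|a_1 l - a_2|, m) = p$ — and when the orbi-weights are equal, that $a_1 = a_2 = \pm 1$ and $p = 1$ by Proposition \ref{prop_local}, removing the ambiguity entirely. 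Once the triple $(p, a_1, a_2)$ and the value $H(x)$ are thus determined, Theorem \ref{cor::localformpt} produces the model $(V, \omega_0, H_{\mathrm{lin}})$, and Proposition \ref{prop:triple_determines} guarantees that this model — hence the $S^1$-invariant neighborhood of $x$ up to isomorphism of Hamiltonian $S^1$-spaces — depends only on $(p, a_1, a_2)$, $m$, $l$, and $H(x)$, i.e.\ only on the labels. This completes the proof.
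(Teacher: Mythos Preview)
Your overall strategy is correct and matches the paper's: by Theorem~\ref{cor::localformpt} and Proposition~\ref{prop:triple_determines}, it suffices to read off the type $\frac{1}{m}(1,l)$, the value $H(x)$, and the integers $(p,a_1,a_2)$ from the multigraph. Your treatment of the two-edge case is fine and essentially identical to the paper's.

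The gap is in the zero/one-edge case. You propose to recover the ``missing'' orbi-weight from the arithmetic constraints $a_1a_2\neq 0$, $\gcd(a_1,a_2)=1$, $p\mid m$, and $\gcd(|a_1l-a_2|,m)=p$, and you flag as the main obstacle a number-theoretic lemma asserting this determines the second orbi-weight uniquely. That lemma is false. For instance, take $m=5$, $l=2$, and suppose one orbi-weight is $3/5$, so $p=5$ and $a_1=3$. The constraint $\gcd(|6-a_2|,5)=5$ only says $a_2\equiv 1\pmod 5$; together with $\gcd(3,a_2)=1$ this still allows $a_2\in\{1,-4,11,16,\ldots\}$. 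The arithmetic alone does not pin down $a_2$.

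What you are missing is that the \emph{absence} of an edge is itself data. By Lemma~\ref{lem::isotropy}, the stabilizer of the nearby points in the $z_i$-direction is $\Z_{|a_i|\,m/p}$, and an edge is drawn exactly when this order is $\geq 2$. Hence if no edge appears in direction $i$, then $|a_i|\,m/p=1$, which forces $|a_i|=1$ and $p=m$. So in the one-edge case (edge label $k$) you immediately get $p=m$, one orbi-weight $\pm k/m$, the other $\pm 1/m$; in the zero-edge case both orbi-weights are $\pm 1/m$. Signs are then fixed exactly as in the two-edge case, by whether $v$ is extremal and, if not, by which side the edge sits on (Corollary~\ref{cor:north-south}). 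This is what the paper does --- it writes out only the two-edge case and says the others are ``entirely analogous'', the analogy being precisely this observation. No number-theoretic lemma is needed.
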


\begin{proof}
	By Corollary \ref{cor:sing_point_dim_4}, Theorem
        \ref{cor::localformpt} and Proposition
        \ref{prop:triple_determines}, it suffices to show that the labeled multigraph of $(M,\omega,H)$ determines the type of $x$ as a singular point of $M$, the moment map value $H(x)$, and the integers $p, a_1,a_2$ that are associated to $x$ as in Lemma \ref{lem::orbiweights}. 
	
	By definition of the labeled multigraph of $(M,\omega,H)$, the
        label of $v$ (of the form $\frac{1}{m}(1,l)$ or
        $\left\{\frac{1}{m}(1,l),\frac{1}{m}(1,l')\right\}$) determines
        the type of $x$. Moreover, the moment map label of $v$ is
        precisely $H(x)$. By Corollary \ref{cor:north-south}, the
        vertex $v$ is incident to at most two edges. We only consider
        the case in which $v$ is incident to exactly two edges,
        leaving the other cases to the reader since they are entirely
        analogous. We suppose that the edges are labeled by the
        positive integers $k_1,k_2$. By Lemma \ref{lem::isotropy}, we
        have that 
	$$p =  \gcd(k_1,k_2) \, , \quad \lvert a_1\rvert = \frac{k_1}{\gcd(k_1,k_2)}\quad \text{ and } \quad \lvert a_2\rvert = \frac{k_2}{\gcd(k_1,k_2)}. $$
	
	It remains to determine the signs of $a_1$ and $a_2$. To this
        end, since $v$ corresponds to an isolated fixed point $x \in
        M$, by Corollary \ref{lem::uniqueMaxMin} and Theorem
        \ref{cor::localformpt}, then either $a_1a_2 < 0$ or $a_1a_2 >
        0$. The former occurs if and only if the moment map label of
        $v$ is not extremal among all moment map labels. In this case,
        by Corollary \ref{cor:north-south}, the fixed point $x$ is the
        south pole of one isotropy orbi-sphere and the north pole of
        another. Hence, in this case, the labeled multigraph of
        $(M,\omega,H)$ can be used to determine the signs of $a_1$ and
        $a_2$. The case $a_1a_2 > 0 $ occurs precisely if the moment
        map label of $v$ is either the maximum or minimum among
        all moment map labels. By Corollary \ref{lem::uniqueMaxMin}
        and Theorem \ref{cor::localformpt}, the former (respectively
        latter) occurs precisely if $a_1$ and $a_2$ are both negative
        (respectively positive). Hence, the signs of $a_1$ and $a_2$
        can be determined from the labeled multigraph of 
        $(M,\omega,H)$ also in this case.
\end{proof}


In order to consider the remaining case of a fat vertex, we discuss
first a localization formula coming from equivariant cohomology in our context.

\subsubsection*{Intermezzo: the ABBV localization formula}\label{sec:loc}
The material described below is used throughout the paper
and readers are referred to \cite{olocalization} for more details.

Let $(M,\omega, H)$ be a Hamiltonian $S^1$-space. We fix a generator
of the Lie algebra of $S^1$. Hence, we obtain an infinitesimal
generator $\xi^M \in \mathfrak{X}(M)$ for the action, as well as a generator
$y$ of the dual of the Lie algebra of $S^1$.
Consider the space $\Omega^*(M)^{S^1}$ of $S^1$-invariant differential forms on $M$ and let 
$$
\Omega^*(M)^{S^1}[y]= \Omega^*(M)^{S^1} \otimes \mathbb{R}[y]
$$
be the ring of polynomials in $y$ with coefficients in
$\Omega^*(M)^{S^1}$, where $y$ has degree two. The \textit{Cartan
  differential} $d_{\xi^M}$ on $\Omega(M)^{S^1}[y]$ is defined on
homogeneous elements by
$$d_{\xi^M}(\alpha \otimes y^i):=d\alpha\otimes y^i-
(\iota_{\xi^M}\alpha)\otimes y^{i+1}.
$$
The $S^1$-{\em equivariant de Rham complex of} $M$ is 
$\left(\Omega^*(M)^{S^1}[y],d_X\right)$, and its cohomology $H^*_{S^1}(M)$ is the $S^1$-{\em equivariant de Rham cohomology of} $M$. For example, the equivariant symplectic form 
\begin{equation}\label{equivsympform}
\omega^{S^1}:=\omega - H y
\end{equation}
is an $S^1$-equivariant cohomology class of degree two.

Let $\Sigma$ be a fixed orbi-surface and let
$\nu_{\Sigma}$ denote its normal orbi-bundle. By fixing an
$S^1$-invariant almost complex structure that is compatible with
$\omega$, we may see $\nu_{\Sigma}$ as a complex line bundle. Since
$\Sigma$ is fixed, we obtain a fiberwise linear complex $S^1$-action
on $\nu_{\Sigma}$. Moreover, since the singular points of $M$ are
isolated by assumption, the regular locus of $\Sigma$ is
connected. Hence, just as in the case of smooth manifolds, the
isotropy weight $\lambda$ of the fiberwise linear action does not depend on the
regular point and must be equal to $\pm 1$ by effectiveness. Finally, we define the {\bf
  equivariant Euler class} of $\nu_F$ by
\begin{align}\label{eq:46} 
	e^{S^1}(\nu_{\Sigma})=e(\nu_{\Sigma})+y\,\lambda_{\Sigma}.
\end{align}
We remark that, in this case, the equivariant Euler class is equal to
the equivariant Chern class (cf. Remark \ref{rmk:chern_class}).

\begin{example}
  Suppose that $\Sigma$ has $k$ singular points and that the degree of $\nu_{\Sigma}$ is $\beta_\Sigma$, so that
  $c_1(\nu_{\Sigma}) = \beta_{\Sigma}u$ for a generator $u\in
  H^2(\Sigma;\mathbb{Z})$. Let $\iota_{\Sigma} : \Sigma
  \hookrightarrow M$ denote the inclusion. Then 
  \begin{equation}\label{eq:*}
    \iota_{\Sigma}^*\, c_1^{S^1}(TM) =c_1^{S^1}(T\Sigma)+\beta_{\Sigma}\,
    u + \lambda \, y  =  \left( 2-2g-k+ \beta_\Sigma
      +\sum_{i=1}^k\frac{1}{m_i} \right)u + \lambda y, 
  \end{equation} 
  where $c_1^{S^1}(TM)$ is the equivariant first Chern class of $TM$,
  the integer $m_i$ is the order of the orbifold structure group of the singular
  point $x_i$ on $\Sigma$ and $g$ is the genus of $\Sigma$. In
  \eqref{eq:*} we use the fact that, since $\Sigma\subset M^{S^1}$, the circle acts with weight $0$ on $T\Sigma$ and also that $c_1^{S^1}(T\Sigma)=c_1(T\Sigma) = \chi(\Sigma) u$, where 
  \begin{equation}\label{eq:**}
    \chi(\Sigma)  = 2-2g+\sum_{i=1}^k\left(\frac{1}{m_i}-1\right)
  \end{equation}
  is the orbifold Euler characteristic of $\Sigma$ (see \cite{furuta}). \hfill$\Diamond$
\end{example}

The following result is a special case of the ABBV localization
formula for orbifolds (see \cite[Theorem 2.1]{olocalization}).

\begin{theorem}
  \label{prop:localization} Let $(M,\omega, H)$ be a Hamiltonian
  $S^1$-space and let $\alpha$ be an
  $S^1$-equivariant closed form. Then
  \begin{equation}\label{eq:localization}\int_M\alpha=\sum\limits_\Sigma\int_\Sigma\frac{\iota_{\Sigma}^*\alpha}{e^{S^1}(\nu_\Sigma)}
    + \sum\limits_x \frac{\alpha(x)}{m_x\lambda_{1,x}\lambda_{2,x}},\end{equation}
  where the first sum is over the fixed orbi-surfaces, the second
  is over the isolated fixed points, and, for each isolated fixed
  point $x$, the integer $m_x$ is the order of its
  orbifold structure group and $\lambda_{1,x},\lambda_{2,x}$
  are its orbi-weights.
\end{theorem}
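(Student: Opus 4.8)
\textbf{Proof proposal for Theorem \ref{prop:localization} (the ABBV localization formula for orbifolds).}

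The plan is to deduce the statement from the general orbifold localization theorem of \cite[Theorem 2.1]{olocalization}, specializing to the case of a Hamiltonian $S^1$-space as defined in this paper. First I would observe that, by assumption, a Hamiltonian $S^1$-space $(M,\omega,H)$ is compact, has isolated singular points, and every connected component of the fixed point set $M^{S^1}$ is, by Corollary \ref{cor:fixed_point_set}, a full symplectic suborbifold of dimension at most two: so it is either an isolated fixed point $x$ or a fixed orbi-surface $\Sigma$. This is precisely the situation in which the general localization formula simplifies to the two-term sum appearing in \eqref{eq:localization}. The key point is that the contribution of each fixed-point component $C$ to $\int_M \alpha$ is $\int_C \iota_C^*\alpha / e^{S^1}(\nu_C)$, where $\nu_C$ is the normal orbi-bundle and the integral over $C$ is the orbifold integral (which carries the $1/|\Gamma|$ factors coming from the orbifold structure groups).

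The main steps, in order, would be: (1) recall that $\alpha$ being $S^1$-equivariantly closed means $d_{\xi^M}\alpha = 0$ in $\Omega^*(M)^{S^1}[y]$, so that $[\alpha] \in H^*_{S^1}(M)$; (2) invoke \cite[Theorem 2.1]{olocalization}, whose statement gives $\int_M \alpha = \sum_C \int_C \iota_C^*\alpha/e^{S^1}(\nu_C)$ summed over connected components $C$ of $M^{S^1}$; (3) split this sum according to whether $C$ is a fixed orbi-surface or an isolated fixed point. For a fixed orbi-surface $\Sigma$, the normal orbi-bundle $\nu_\Sigma$ is a complex line orbi-bundle (after fixing a compatible $S^1$-invariant almost complex structure, using Proposition \ref{prop:acs} and Remark \ref{rmk:contractible}), the $S^1$ acts fiberwise with weight $\lambda_\Sigma = \pm 1$ by effectiveness and connectedness of the regular locus, and its equivariant Euler class is $e^{S^1}(\nu_\Sigma) = e(\nu_\Sigma) + y\,\lambda_\Sigma$ as in \eqref{eq:46}; this yields the first sum in \eqref{eq:localization} verbatim. (4) For an isolated fixed point $x$ with orbifold structure group of order $m_x$, the normal orbi-bundle is simply the tangent orbi-vector space $T_x M \simeq \C^2/\Z_{m_x}$, and the $S^1$-action on it is the linear action of Theorem \ref{cor::localformpt} with orbi-weights $\lambda_{1,x} = a_1/p$ and $\lambda_{2,x} = a_2/p$. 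Its equivariant Euler class, computed from the $\widehat{G}$-action on $\C^2$ and the fact that the orbifold integral over a point introduces a factor $1/m_x$, is $m_x\,\lambda_{1,x}\lambda_{2,x}\, y^2$ (up to the usual sign conventions), so the contribution of $x$ is $\alpha(x)/(m_x\lambda_{1,x}\lambda_{2,x})$, giving the second sum.

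The main obstacle I expect is the careful bookkeeping in step (4): one has to check that the equivariant Euler class of the normal orbi-vector space at an isolated singular fixed point, computed via the finite extension $\widehat G$ of $S^1$ by $\Gamma = \Z_{m_x}$ (Lemma \ref{lem::orbiweights}) and the covering $\widehat G_0 \simeq S^1 \to S^1$ of degree $p$, really produces the denominator $m_x\lambda_{1,x}\lambda_{2,x}$ rather than, say, $p^2 \lambda_{1,x}\lambda_{2,x}$ or $(m_x/p)\,a_1 a_2$ — i.e. that the orbifold-theoretic factor and the rescaling of weights by $1/p$ combine correctly. Concretely, for the $\widehat G_0$-action on $\C^2$ with integer weights $a_1, a_2$ the ordinary equivariant Euler class would be $a_1 a_2\, y^2$ in terms of the generator dual to $\widehat G_0$; passing to the $S^1 = \widehat G_0/\Gamma_0$-variable $y$ via the degree-$p$ covering rescales this, and the orbifold integral over the point $[0] \in \C^2/\Z_{m_x}$ contributes the remaining factor so that the total denominator is $m_x (a_1/p)(a_2/p)$. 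I would verify this normalization by cross-checking against the smooth case (where $m_x = 1$, $p = 1$, and the formula reduces to the classical ABBV formula of \cite[Corollary A.7]{karshon} and its usual statement for manifolds) and against the weighted-projective-space computation in Example \ref{ex:projective2}, where \eqref{eq:45} was already used; consistency there confirms the sign and normalization conventions. The remaining steps are routine once the general theorem of \cite{olocalization} is in hand.
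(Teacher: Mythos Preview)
Your proposal is correct and matches the paper's approach: the paper does not give a proof at all, merely stating that the result ``is a special case of the ABBV localization formula for orbifolds (see \cite[Theorem 2.1]{olocalization}).'' Your plan to specialize that general theorem to the present setting, together with the bookkeeping you outline for the isolated-fixed-point contribution, is exactly the content that the paper leaves implicit.
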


\subsubsection*{Back to the case of a fat vertex}
Let $\Sigma$
denote the corresponding fixed orbi-surface. By Corollary
\ref{cor:fixed_point_set}, $\Sigma$ is a full symplectic suborbifold
of $(M,\omega)$. Hence, by Corollary \ref{cor:complex_bundle}, the
normal bundle to $\Sigma$ can be endowed with the structure of a
complex vector orbi-bundle. Since $\dim \Sigma = 2$, the normal bundle
to $\Sigma$ can be viewed as a complex line orbi-bundle. 

\begin{proposition}\label{prop:graph_seifert}
  Let $(M,\omega,H)$ be a Hamiltonian $S^1$-space and let $v $ be a
  fat vertex of the labeled multigraph of
  $(M,\omega,H)$ that corresponds to a fixed orbi-surface $\Sigma$. Then the labeled multigraph of
  $(M,\omega,H)$ determines the Seifert invariant of the
  principal $S^1$-orbi-bundle associated to the normal orbi-bundle to
  $\Sigma$. 
\end{proposition}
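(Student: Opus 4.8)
The plan is to show that the three pieces of data constituting the Seifert invariant of the circle bundle $S(\nu_\Sigma) \to \Sigma$ — namely the genus $g$ of $|\Sigma|$, the pairs $(\alpha_i,\beta_i)$ attached to the (finitely many) singular points of $\Sigma$, and the ``balancing'' integer $\beta_0$ (equivalently, via Proposition \ref{def::EulerOrbibundle}, the degree of $\nu_\Sigma$) — are all recoverable from the labels on $v$ together with the rest of the labeled multigraph. By Remark \ref{rmk:seifert-orbifolds} and Remark \ref{rmk:seifert_invariant}, the Seifert invariant determines and is determined by these, so it suffices to read each of them off.

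First I would handle the ``static'' part of the data, which lives entirely on the vertex label. By definition of the labeled multigraph (Section \ref{sec:label-mult-hamilt}), a fat vertex $v$ is labeled with the genus $g$ of $|\Sigma|$, the type $\tfrac{1}{m_i}(1,l_i)$ of each singular point $x_i \in \Sigma$, and the normalized symplectic area $a(\Sigma)$. By Theorem \ref{thm::classorbisurface}, $g$ and the orders $m_i = \alpha_i$ of the orbifold structure groups determine the diffeomorphism type of $\Sigma$, so the $\alpha_i$ are immediately available. For the numerators $\beta_i$: near a singular point $x_i$ of $\Sigma$, Remark \ref{rmk:type_fixed} tells us that $p = 1$ and the orbi-weights at $x_i$ are $\{0, \pm 1\}$, the $\pm 1$ occurring in the direction of the isotropy orbi-sphere through $x_i$ (if there is one). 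The type $\tfrac{1}{m_i}(1,l_i)$ then pins down, via Lemma \ref{lem::isotropy} applied in the local model $\C^2/\Z_{m_i}$ and the description of the Seifert invariant of an exceptional orbit in Section \ref{sec:seifert-manifolds}, the local gluing datum $(\alpha_i,\beta_i)$: concretely, $\beta_i$ is the residue mod $m_i$ of the relevant exponent $l_i$ (or $l_i^{-1}$), normalized to lie in $(0,\alpha_i)$, exactly as in the computation of Remark \ref{ex::weightedprojSeifertInvariants}. I would carry out this local identification carefully, comparing the $\Z_k$-isotropy-orbi-sphere picture of Corollary \ref{cor:Z_k-orbi-sphere} with the normal-form data, to fix the precise formula for $\beta_i$ in terms of $l_i$, $m_i$ and the label $k$ of the incident edge. (If $x_i$ lies on no isotropy orbi-sphere the analysis still applies with the $\pm 1$ weight replaced appropriately; by Remark \ref{rmk:fixed_orbisurfaces_iso_spheres} both situations can arise.)

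The remaining — and I expect the main — obstacle is recovering $\beta_0$, equivalently the degree $\deg(\nu_\Sigma)$, since this is a \emph{global} invariant not visible in any single vertex or edge label: one must use how $\Sigma$ sits inside $M$. Here the tool is the ABBV localization formula, Theorem \ref{prop:localization}. I would apply it to the equivariant symplectic form $\omega^{S^1} = \omega - Hy$ (and, if needed, to the equivariant first Chern class $c_1^{S^1}(TM)$, using \eqref{eq:*} and \eqref{eq:**}). Since $\Sigma$ is a global extremum of $H$ (Remark \ref{rmk:type_fixed}), the contribution of $\Sigma$ to $\int_M \omega^{S^1}$ is, by the projection formula and \eqref{eq:46},
\begin{equation*}
  \int_\Sigma \frac{\iota_\Sigma^*(\omega - Hy)}{e(\nu_\Sigma) + \lambda_\Sigma y}
  = \pm\frac{1}{y}\left( a(\Sigma)\cdot 2\pi \cdot \tfrac{1}{y}\cdot(\cdots) \right),
\end{equation*}
which after expanding the geometric series in $1/y$ and extracting the appropriate coefficient yields a linear relation among $a(\Sigma)$, $H|_\Sigma$ (a vertex label), the moment map labels of all other fixed components, the orders $m_x$ and orbi-weights $\lambda_{i,x}$ of the isolated fixed points, the edge labels $k$, and the unknown $\deg(\nu_\Sigma)$. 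Every quantity in this relation except $\deg(\nu_\Sigma)$ is a label of the multigraph (the orbi-weights of isolated fixed points being recoverable by Lemma \ref{lemma:gamma_thin_vertex}), so the relation determines $\deg(\nu_\Sigma)$, hence $\beta_0$ via \eqref{eq:23}. The delicate point is bookkeeping: ensuring the localization identity is nondegenerate in $\deg(\nu_\Sigma)$ (i.e. that the coefficient of the unknown does not vanish) and that all other terms are genuinely multigraph labels — this is where I would spend most of the effort, modeling the computation on Example \ref{ex:projective2} and the identity \eqref{eq:45} there, which is exactly this argument carried out in a special case. Once $g$, the $(\alpha_i,\beta_i)$, and $\beta_0$ are all in hand, the Seifert invariant $(g;\beta_0,(\alpha_1,\beta_1),\dots,(\alpha_n,\beta_n))$ is determined, completing the proof.
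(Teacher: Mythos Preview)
Your approach is essentially the paper's: read $g$ and the local Seifert pairs $(\alpha_i,\beta_i)$ from the vertex label, then recover $\deg(\nu_\Sigma)$ by localization and hence $\beta_0$ via \eqref{eq:23}. The paper is more direct on the local pairs than you are --- from the type label $\tfrac{1}{m}(1,l)$ the Seifert pair at $x_i$ is simply $(m,l)$ if $H(\Sigma)$ is the maximum and $(m,l')$ if it is the minimum, with no need to invoke Lemma~\ref{lem::isotropy} or the edge labels --- but your route would arrive at the same answer.

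The one point you underestimate is what the ``nondegeneracy'' issue actually is. When $(M,\omega,H)$ has \emph{two} fixed orbi-surfaces $\Sigma_\pm$, localizing $\omega^{S^1}$ gives a single linear relation in the \emph{two} unknowns $\beta_-,\beta_+$, so a second identity is needed. The paper gets it by localizing the constant form $1$ (not $c_1^{S^1}(TM)$), which yields $\beta_- + \beta_+ = \sum_i \tfrac{1}{m_i\lambda_{i,1}\lambda_{i,2}}$; the resulting $2\times 2$ system has coefficient matrix $\bigl(\begin{smallmatrix} H_{\min} & H_{\max}\\ 1 & 1\end{smallmatrix}\bigr)$, whose determinant $H_{\min}-H_{\max}$ is nonzero --- that is the nondegeneracy you were anticipating. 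In the one-surface case, localizing $1$ already gives $\beta$ outright (equation \eqref{eq:bsigma-}), so $\omega^{S^1}$ is not even needed there.
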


\begin{proof}
  Let $n$ be the number of singular points that lie on $\Sigma$. Then
  the Seifert invariant that we are interested in is of the form
  $$ (g;\beta_0, (\alpha_1,\beta_1),\dots,(\alpha_n,\beta_n)), $$
  \noindent
  where $g$ is the genus of $|\Sigma|$, $(\alpha_i,\beta_i)$ is the
  Seifert invariant of the singular point $x_i \in \Sigma$, and
  $\beta_0 \in \Z$ is uniquely determined by equation
  \eqref{eq:13}.

  Since there is a label for each singular point on $\Sigma$, then $n$
  can be recovered from the labeled multigraph of
  $(M,\omega,H)$. Moreover, the genus label of $v$ is precisely
  $g$. Next we show that the Seifert invariant of $x_i$ is determined
  by the labeled multigraph of
  $(M,\omega,H)$ for each $i=1,\ldots, n$. We fix $i=1,\ldots,
  n$ and let $\frac{1}{m}(1,l)$ be the type label that corresponds to
  $x_i$. Let $1\leq
  l^\prime<m$ be such that $l\cdot l^\prime=1 \mod m$. Then the
  Seifert invariant of $x_i$ is either $(m,l^\prime)$, if $H(\Sigma)$ is
  the minimum of $H$, or $(m,l)$, if $H(\Sigma)$ is
  the maximum of $H$. (By Remark \ref{rmk:type_fixed}, these are the
  only cases to consider.)

  It remains to show that $\beta_0$ is determined by the labeled multigraph of
  $(M,\omega,H)$. By Proposition \ref{def::EulerOrbibundle}, and
  Remarks \ref{rmk:chern_class} and \ref{rmk:seifert-orbifolds}, it
  suffices to show that the degree of the normal orbi-bundle to $\Sigma$ is determined by the labeled multigraph of
  $(M,\omega,H)$. This can be checked directly using the localization formula for orbifolds (see
  Theorem~\ref{prop:localization}), as follows. First we assume that there are two extremal fixed orbi-surfaces
  $\Sigma_\pm$, labeled so that $H(\Sigma_-)$ (respectively
  $H(\Sigma_+)$) is the minimum $H_{\min}$ (respectively maximum
  $H_{\max}$) of $H$. By applying Theorem \ref{prop:localization} to the equivariant 
  symplectic form $\omega^{S^1}:= \omega - H y$, we obtain
  \begin{equation}
    \label{eq:47}
    \begin{split}
      0 & =  \int_{\Sigma_{-}}\frac{\omega-H_{\min}y}{\beta_{-}u_-+y}+\int_{\Sigma_{+}}\frac{\omega-H_{\max}y}{\beta_{+}u_+-y}-\sum_{i=1}^n\frac{1}{m_{i}}\frac{H(x_i)y}{\lambda_{i,1}\lambda_{i,2}\,\, y^2}\\
           & = \int_{\Sigma_{-}}{\scriptstyle \sum\limits_{j=0}^\infty (-1)^j \left(\frac{\beta_{-}u_-}{y}\right)^j}
               \left(\frac{\omega}{y}-H_{\min}\right)+\int_{\Sigma_{+}}{\scriptstyle\sum\limits_{j=0}^\infty
               \left(\frac{\beta_{+}u_+}{y}\right)^j}\left(H_{\max}-\frac{\omega}{y}\right)
             \\
             & \mbox{} -\sum_{i=1}^n\frac{1}{m_{i}}\frac{H(x_i)}{\lambda_{i,1}\lambda_{i,2}\,
               y} \\
          & =  \frac{1}{y}\left(area(\Sigma_{-})+\beta_{-} H_{\min}+\beta_{+} H_{\max}-area(\Sigma_{+})  -\sum_{i=1}^n\frac{1}{m_{i}}\frac{H(x_i)}{\lambda_{i,1}\lambda_{i,2}}\right),
    \end{split}
  \end{equation}
  where
  \begin{itemize}[leftmargin=*]
  \item $u_{\pm} \in H^2(\Sigma_{\pm};\Z)$ is a generator,
  \item $\nu_{\pm}$ is the normal orbi-bundle to $\Sigma_{\pm}$
    with degree given by $\beta_{\pm}$ and equivariant Euler class equal to $e^{S^1}(\nu_{\pm})$, 
  \item we assume that there are $n$ isolated fixed points
    $x_1,\ldots, x_n$, and 
  \item the order of the orbifold structure group of $x_i$ is
    $m_i$ and the orbi-weights are
    $\lambda_{i,1},\lambda_{i,2}$. 
  \end{itemize}
  In order to see that \eqref{eq:47} holds, we remark that, since $\nu_{\pm}$ is a complex line orbi-bundle, the
  equivariant Euler class equals the equivariant first Chern
  class $c_1^{S^1}(\nu_{\pm})$ (see Remark
  \ref{rmk:chern_class}). Moreover, since every the non-zero
  orbi-weight of any point in $\Sigma_{\pm}$ is $\mp 1$, we have that
  $c_1^{S^1} (\nu_{\Sigma_\pm}) = \beta_{\pm} u_\pm \mp y$. On the other
  hand, integrating the equivariant form $1$ and using the
  localization formula again, we have that

  \begin{equation}
    \label{eq:48}
    0 =\int_M1 = \frac{1}{y^2}\left(-\beta_{-}-\beta_{+}+\sum\limits_{i=1}^n\frac{1}{m_{i}}\frac{1}{\lambda_{i,1}\lambda_{i,2}}\right).
  \end{equation}
  Hence, by \eqref{eq:47} and \eqref{eq:48}, the following holds:
  \begin{equation}\label{eq:linear_system}
    \begin{pmatrix}
      H_{\min} & H_{\max} \\
      1 & 1 
    \end{pmatrix}
    \begin{pmatrix}
      \beta_- \\
      \beta_+
    \end{pmatrix} =
    \begin{pmatrix}
      area(\Sigma_{+}) - area(\Sigma_{-}) +
      \sum\limits_{i=1}^n\frac{1}{m_{i}}\frac{H(x_i)}{\lambda_{i,1}\lambda_{i,2}}
      \\
      \sum\limits_{i=1}^n\frac{1}{m_{i}}\frac{1}{\lambda_{i,1}\lambda_{i,2}}
    \end{pmatrix}.
  \end{equation}
  Since $H_{\min}<H_{\max}$, \eqref{eq:linear_system} admits a
  unique solution $(\beta_-,\beta_+)$. Hence, since all the other
  quantities in \eqref{eq:linear_system} can be reconstructed from the
  labeled multigraph, the result follows.
  
  Suppose next that $(M,\omega,H)$  has a unique fixed orbi-surface
  $\Sigma$ and denote by $\beta$ the degree of its normal orbi-bundle. By integrating $1$ and using the localization formula  of
  Theorem~\ref{prop:localization}, we obtain that
  \begin{equation}\label{eq:bsigma-}
    \beta = \sum_{i=1}^n\frac{1}{m_{i}}\frac{1}{\lambda_{i,1}\lambda_{i,2}}.
  \end{equation}
  Since the right hand side of \eqref{eq:bsigma-} can be reconstructed
  from the labeled multigraph, the result follows.
\end{proof}

\begin{remark}\label{rmk:last_case}
  The last case in the proof of Proposition
  \ref{prop:graph_seifert} can be shown alternatively as
  follows. Without loss of generality, by adding a suitable constant we may assume that $H(\Sigma)
  \neq 0$. Then, if we integrate $\omega^{S^1}$, we obtain that
  \begin{equation}\label{eq:comp_cond2}
    \beta = \beta_{\pm} = \frac{1}{H(\Sigma_\pm)} \left(\pm \,area(\Sigma_\pm)+\sum\limits_{i=1}^n\frac{1}{m_{i}}\frac{H(x_i)}{\lambda_{i,1}\lambda_{i,2}}\right),
  \end{equation}
  depending on whether $\Sigma=\Sigma_\pm$ is a maximal or a minimal
  orbi-surface. Note that \eqref{eq:bsigma-} together with
  \eqref{eq:comp_cond2} give us a necessary compatibility condition
  for the labels of the multigraphs of a Hamiltonian $S^1$-space that
  has a unique fixed orbi-surface.
\end{remark}

\begin{example}[Weighted projective spaces, continued]\label{exm:wps2}
   We determine the labeled multigraph of the Hamiltonian $S^1$-space
  $(M=\mathbb{C}P^2(p,s,q),\omega,H)$ of
  Example~\ref{ex:projective} under the condition that $k_1=0$ (see
  Figure~\ref{fig:mgwpsk1_0} and cf. Example \ref{ex:projective2}). Here $A$ is a positive
  real number that represents the area label. Moreover, as in Example \ref{ex:projective2}, the positive integers
  $\ell_p$, $\ell_q$ and $\ell_s$ are obtained as in Remark
  \ref{ex::weightedprojSeifertInvariants}. Finally, the relation
  between the moment map labels of the minimum and maximum can be
  obtained by combining Lemma \ref{lemma:degree_Ok} and \eqref{eq:comp_cond2}.
\renewcommand{\thefigure}{\thesection.\arabic{figure}}
  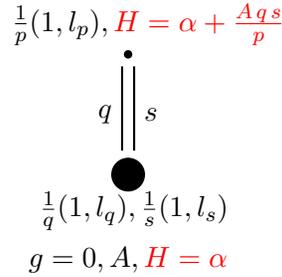
\begin{figure}[h!]
    \begin{center}
      \begin{tikzpicture}[scale=.8]			
        \coordinate (point) at  (-6.5,3);
        \fill (point) circle (2pt);
        \fill  (-6.5,1) circle (8pt);
        
        \draw[thick](-6.6,2.8) -- (-6.6,1.4);
        \draw[thick](-6.4,2.8) -- (-6.4,1.4);
        \coordinate[label=right:{$s$}] (c1) at (-6.4,2);
        \coordinate[label=left:{$q$}] (c1) at (-6.6,2);
        \coordinate[label=right:{$\frac{1}{p}(1,l_{p}), \color{red} H=\alpha + \frac{A \, q \,s}{p}$}] (c2) at (-8.6,3.5);
        \coordinate[label=above:{$\frac{1}{q}(1,l_q),\frac{1}{s}(1,l_s)$}] (AAA)  at (-6.4,-.1);
        \coordinate[label=above:{$g=0,A , \color{red} H=\alpha$}] (ABB)  at (-6.5,-.8);
      \end{tikzpicture}	\hspace{1cm}
    \end{center}
    \caption{Labeled multigraph of Example~\ref{ex:projective} with
      one fixed surface.}
    \label{fig:mgwpsk1_0}
  \end{figure}
  This example shows that, if $M$ is an orbifold, the labeled
  multigraph of $(M,\omega,H)$ may have edges that are incident to a
  fat vertex, something that cannot occur for manifolds! \hfill $\Diamond$
\end{example}

\begin{example}[Projectivized plane bundles, continued]\label{exm:projectivized_suborbifolds2}
  Let $\Sigma$ be a compact, connected orientable orbi-surface and let
  $\mathrm{pr} : L \to \Sigma$ be a complex line orbi-bundle. Let
  $$(g;\beta_0, (m_1,l_1),\ldots, (m_n,l_n))$$
  be the Seifert invariant of the principal $S^1$-orbi-bundle
  associated to $L$.  Consider the
  projectivization $\mathbb{P}(L \oplus \C) \to \Sigma$ constructed in
  Example \ref{exm:projectivized} and fix $s >0$. We calculate the labeled
  multigraph of the Hamiltonian $S^1$-space $(\mathbb{P}(L \oplus \C),
  \omega_s, H)$, where $\omega_s$ is the symplectic form of
  Example~\ref{exm:proj_symp}, and $H$ is defined in
  Example~\ref{exm:proj_plane_ham} (see Figure \ref{fig:proj}
  below). We observe that the fixed point set consists of two copies
  $\Sigma_+$ and $\Sigma_-$
  of $\Sigma$, and that the normal bundles to $\Sigma_+$ and
  $\Sigma_-$ have opposite degrees $\beta_+=-\beta_-$ determined by Proposition~\ref{def::EulerOrbibundle}. Moreover, the type labels in  Figure \ref{fig:proj} for the
  singular points of $\Sigma_{\pm}$ can be calculated as in the proof
  of Proposition \ref{prop:graph_seifert}, knowing that, since  the normal orbi-bundles of $\Sigma_\pm$ have opposite orientations, if $(m_i,l_i)$ is a Seifert invariant of $\Sigma_-$, then the corresponding Seifert invariant of $\Sigma_+$ is $(m_i,\hat{l}_i)$ with $\hat{l}_i:=m_i-l_i$ (see \cite[Theorem 6, p. 184]{seifert}). As usual, in  Figure \ref{fig:proj},  we denote by $l_i^\prime$ the integers $1\leq l_i^\prime <m_i$ such that $l_i \cdot l_i^\prime=1 \mod m_i$. The existence of isotropy
  orbi-spheres and the labels of the corresponding edges can be
  deduced from Lemma \ref{lem::isotropy} and Remark
  \ref{rmk:type_fixed}. Finally, the relation between the
  area labels can be obtained by integrating the equivariant
  symplectic form $\omega_s^{S^1}$ and using the localization formula
  of Theorem \ref{prop:localization}.  

\renewcommand{\thefigure}{\thesection.\arabic{figure}}
  \begin{figure}[h!]
  \centering
  \begin{tikzpicture}[scale=.8]	
    \coordinate (point) at  (-0.5,3);
    \fill  (5.5,3) circle (8pt);
    \fill  (5.5,1) circle (8pt);
    \coordinate[label=above:{$\frac{1}{m_1}(1,l_1^\prime),\dots,\frac{1}{m_n}(1,l_n^\prime)$}] (VFD) at (5.7,0);
    \coordinate[label=above:{$g,A,\color{red} H=\alpha$}] (VFE) at (5.5,-0.5);			
    \coordinate[label=below:{$\frac{1}{m_1}(1,\hat{l}_1),\dots,\frac{1}{m_n}(1,\hat{l}_n)$}] (VCC) at (5.6,4.7);
    \coordinate[label=below:{$g, A - \beta_- s, \color{red} H=\alpha + s$}] (VCC) at (5.5,4);			
    \draw[thick](5,2.6) -- (5,1.4);
    \draw[thick](6,2.6) -- (6,1.4);
    \node at ($(5,2)!.5!(6,2)$) {\ldots};
    \coordinate[label=right:{$m_n$}] (c1) at (6,2);
    \coordinate[label=left:{$m_1$}] (c1) at (5,2);		
  \end{tikzpicture}\caption{Multigraph of the Hamiltonian $S^1$-space $(\mathbb{P}(L \oplus \C),
  \omega_s, H)$.} \label{fig:proj}
\end{figure}
\end{example}

\section{Equivariant operations}\label{ChapterEquivariantOperations}
\subsection{Symplectic weighted blow-up}\label{sec:sympl-weight-blow}
In this section we describe how to perform equivariant symplectic
weighted blow-ups at singular points, a construction that we use to
desingularize Hamiltonian $S^1$-spaces equivariantly. 
Recall that, topologically, a symplectic blow-up at a regular point
corresponds to cutting out a symplectic ball around the point and
collapsing the boundary of the resulting space along the fibers of the
Hopf fibration. Smoothly this can be achieved by symplectic
reduction using an auxiliary circle action (see \cite{gs_birational}).
This was generalized to {\em symplectic cutting} by Lerman \cite{cut}.
Symplectic {\em weighted} blow-ups were introduced by 
Meinrenken and Sjamaar in \cite{MS}, and by Godinho in
\cite{lgodinho}: Topologically, the idea is to remove a symplectic orbi-ball centered
around a singular point\footnote{Strictly speaking, one may carry out
  a weighted blow-up at a regular point, but, since we do not consider
this case in this paper, we restrict to singular points.} and to collapse the orbits of some
(weighted) circle action on the boundary of the resulting space. In
what follows, we describe this operation. 

Let $(M^{2n},\omega)$ be a $2n$-dimensional symplectic orbifold and
let $x\in M$ be a singular point that is isolated with cyclic orbifold
structure group $\Gamma$.
By Darboux's Theorem for orbifolds (see Theorem \ref{thm:darboux_orbifolds}), there exists 
a l.u.c.  $(\widehat{U},\Gamma,\varphi)$  centered at $x$ such that
$\widehat{U}$ is an open subset  of $\R^{2n}\simeq \C^n$, where
$\Gamma < \mathrm{U}(n)$ acts in a linear fashion, $\omega_{\lvert
  U}$ is as in Example~\ref{exm:symp_ovs}, and $[0]$ is the only
singular point in $U$. Since $\Gamma <
\mathrm{U}(n)$ is abelian, we may assume without loss of generality that $\Gamma$ is contained in
the maximal torus of $\mathrm{U}(n)$ consisting of diagonal matrices. We fix a $\Gamma$-invariant
ball $\widehat{B} \subseteq \widehat{U} \in \C^n$ around $0$ and
$\widehat{m}:=(\widehat{m}_1,\dots,\widehat{m}_n)\in\mathbb{Z}^n_+$ so
that $\gcd(\widehat{m}_1,\dots,\widehat{m}_n) = 1$. Moreover, we
denote a circle acting on $\C^n$ by $\widehat{S}^1$.
The $\widehat{S}^1$-action on $\widehat{B}$ given by
\begin{equation}
  \label{eq:49}
  \begin{split}
    \widehat{S}^1\times\widehat{B}&\rightarrow\widehat{B} \\
    (\lambda,z_1,\ldots, z_2)&\mapsto(\lambda^{\widehat{m}_1} z_1,\dots,\lambda^{\widehat{m}_n}z_n),
  \end{split}
\end{equation}
is effective, fixes only $0$ and commutes with the
$\Gamma$-action. Since $\widehat{m} \in \Z^n_+$, the moment map
$\widehat{H} : \widehat{B} \to \R$ for the above
$\widehat{S}^1$-action given by
$$ \widehat{H}(z_1,\ldots, z_n) = \frac{1}{2}\sum\limits_{i=1}^n
\widehat{m}_i |z_i|^2 $$
is proper. The $\widehat{S}^1$-action of \eqref{eq:49} descends to an effective Hamiltonian action
of the quotient $\widehat{S}^1/(\widehat{S}^1 \cap \Gamma) \simeq S^1$ on
the orbifold $B=\widehat{B}/\Gamma \subseteq U$ that fixes only $[0]$. Let $H : B \to \R$ be the moment
map for this action such that $H([0]) = 0$ and let
$m:=(m_1,\dots,m_n) \in \mathbb{Q}^n_+$ be the orbi-weights for this action at
$[0]$. (By Lemma \ref{lem::orbiweights}, for each $i=1,\ldots, n$, the orbi-weight $m_i$ equals
$\widehat{m}_i/p$, where $p = |\widehat{S}^1 \cap \Gamma|$.) By
Corollary \ref{cor:linear-symp_action},
$$ H([z_1,\ldots,z_n]_{\Gamma}) = \frac{1}{2} \sum\limits_{i=1}^n m_i |z_i|^2. $$
Let $\varepsilon > 0$ be sufficiently small. Since $p\varepsilon$ is a
regular value of $\widehat{H}$, $\varepsilon$ is a regular value of
$H$. Moreover, since $[0]_{\Gamma} \in B$ is the only singular point, $H^{-1}(\varepsilon)$ is a smooth
submanifold of $B$. Hence, the quotient $H^{-1}(\varepsilon)/S^1$ is an orbifold
that inherits a symplectic form from $\omega$ by symplectic reduction.
Moreover, it satisfies the following properties:
\begin{enumerate}[label=(\alph*),ref=(\alph*),leftmargin=*]
\item \label{item:23} the singular points are isolated and have cyclic orbifold
  structure groups, and
\item \label{item:24} it is diffeomorphic to $\C P^{n-1}(\widehat{m}_1,\ldots,
  \widehat{m}_n)/\Gamma'$ , where
  \begin{equation}
    \label{eq:51}
    \Gamma':= \Gamma/(\Gamma\cap \widehat{S}^1).
  \end{equation}
\end{enumerate}
Since a singular point in $H^{-1}(\varepsilon)/S^1$ corresponds to an
orbit with finite stabilizer, property \ref{item:23} is a consequence of the local normal
form for Hamiltonian $S^1$-actions (see \cite{gs,marle}). On the other
hand, property \ref{item:24} follows from the fact that $\widehat{H}$ is
proper and the definition of $\C P^{n-1}(\widehat{m}_1,\ldots,
\widehat{m}_n)$ -- see Example \ref{ex_weightedprj}. 

For any $\eta >0$ sufficiently small, we set
$B_{\eta}(x):=H^{-1}([0,\eta))$. This is an $S^1$-invariant open symplectic
ellipsoid centered at $x$ and contained in $U$. Given $\varepsilon,
\delta > 0$ sufficiently small, we consider the following $S^1$-action
on the product
$Z_{\varepsilon,\delta}:=B_{\varepsilon+\delta}(x)\times\mathbb{C}$
\begin{align*}
	S^1\times Z_{\varepsilon,\delta}& \,\, \rightarrow \,\, Z_{\varepsilon,\delta}\\
	\left(\lambda,[z_1,\dots,z_n]_\Gamma,w\right)&\,\, \mapsto \,\, \left([\lambda^{m_1}z_1,\dots,\lambda^{m_n}z_n]_\Gamma,\lambda^{-1}w\right).
\end{align*}
We endow $Z_{\varepsilon,\delta}$ with the symplectic form given by
the sum of the pullback of $\omega$ and of the pullback of the
standard symplectic form on $\C$. Then the above circle action is
Hamiltonian and a moment map is given by 
$$\widetilde{H}\left([z_1,\dots,z_n]_\Gamma,w\right)=\frac{1}{2}\left(m_1 \lvert z_1\rvert ^2+\cdots+m_n \lvert z_n\rvert ^2-\lvert w\rvert^2\right).$$
Since $\varepsilon$ is a regular value of $H$, it is also a regular
value of $\widetilde{H}$. Moreover, since $([0]_{\Gamma},0) \in
Z_{\varepsilon,\delta}$ is the only singular point, $\widetilde{H}^{-1}(\varepsilon)$ is, in fact, a smooth
submanifold of $Z_{\varepsilon,\delta}$. We observe that the restriction of
the inclusion $B_{\varepsilon+\delta}(x) \hookrightarrow
Z_{\varepsilon,\delta}$ to $H^{-1}(\varepsilon)$ is a smooth embedding
$H^{-1}(\varepsilon) \hookrightarrow
\widetilde{H}^{-1}(\varepsilon)$. Hence, it can be checked that the symplectic reduction
$(\widetilde{H}^{-1}(\varepsilon)/S^1, \omega_{\mathrm{red}})$ is a
symplectic orbifold that satisfies the following properties:
\begin{enumerate}[label=(\Alph*),ref=(\Alph*),leftmargin=*]
\item \label{item:25} the smooth embedding $H^{-1}(\varepsilon) \hookrightarrow
  \widetilde{H}^{-1}(\varepsilon)$ descends to a full embedding
  $H^{-1}(\varepsilon)/S^1 \hookrightarrow
  \widetilde{H}^{-1}(\varepsilon)/S^1$  -- see Remark
  \ref{rmk:fixed_fully_embedded},
\item \label{item:26} the only singular points in
  $\widetilde{H}^{-1}(\varepsilon)/S^1$ lie in the image of the above
  embedding, and
\item \label{item:27} the complement of $H^{-1}(\varepsilon)/S^1$ in
  $\widetilde{H}^{-1}(\varepsilon)/S^1$ is symplectomorphic to the
  open subset $H^{-1}((\varepsilon, \varepsilon + \delta)) =
  B_{\varepsilon + \delta}(x) \smallsetminus \overline{B_{\varepsilon}(x)}$ of
  $B_{\varepsilon + \delta}(x)$. 
\end{enumerate}




We denote the image of the embedding $H^{-1}(\varepsilon)/S^1 \hookrightarrow
\widetilde{H}^{-1}(\varepsilon)/S^1$ by $\Sigma_0$. Explicitly, it is
given by 
\begin{equation} \label{eq:ed}
\Sigma_0  := \left\{\left([z_1,\dots,z_n]_\Gamma,0\right)\in  \widetilde{H}^{-1}(\varepsilon) \right \}/\,\,S^1. 
\end{equation}

By
property \ref{item:27}, the symplectic orbifolds $(M
\smallsetminus \overline{B_{\varepsilon}(x)},\omega)$ and
$(\widetilde{H}^{-1}(\varepsilon)/S^1, \omega_{\mathrm{red}})$ can be glued along
$B_{\varepsilon + \delta}(x) \smallsetminus
\overline{B_{\varepsilon}(x)}$ to obtain the symplectic orbifold
\begin{equation}
  \label{eq:wbu}
  \left(\widetilde{M}_{x,\varepsilon}:=\left( M\smallsetminus
  \overline{B_{\varepsilon}(x)}\right)
  \coprod_{B_{\varepsilon+\delta}(x) \smallsetminus
  \overline{B_{\varepsilon}(x)}} \widetilde{H}^{-1}(\varepsilon)/S^1, \widetilde{\omega}_{\varepsilon}\right).
\end{equation}


\begin{definition}\label{def:bup}
  Let $(M,\omega)$ be a symplectic orbifold and let $x \in M$ be a
  singular point with cyclic orbifold structure group $\Gamma$. The
  \textbf{(symplectic) weighted blow-up of} $(M,\omega)$ at $x$ of
  {\bf size} $\varepsilon$ and {\bf weights} $(m_1,\ldots, m_n) \in \mathbb{Q}^n_+$ is the symplectic orbifold 
  $(\widetilde{M}_x, \widetilde{\omega})$ in \eqref{eq:wbu}. The
  \textbf{exceptional divisor} of the weighted blow-up is the (fully
  embedded) suborbifold
  $$\Sigma_0 \simeq \mathbb{C}P^{n-1}(\widehat{m}_1,\ldots,
  \widehat{m}_n)/\Gamma'$$
  of \eqref{eq:ed}, where $\Gamma'$ is as in \eqref{eq:51}.
\end{definition}

\begin{remark}\label{rmk:size_blow_up}
  By looking at the construction of a weighted blow-up at a singular
  point, it is clear that there are simple constraints on the possible
  size, e.g., the symplectic size of a Darboux chart. To simplify the
  exposition, we omit stating explicitly that $\varepsilon > 0$
  satisfies these constraints, assuming implicitly that it does. We
  trust that this does not cause confusion.
\end{remark}

Since we are primarily interested in performing weighted blow-ups on symplectic orbifolds with
isolated singular points all of whose orbifold structure groups are
cyclic, the following result plays an important role.

\begin{lemma}\label{lemma:blow_up_preserves}
  Let $(M,\omega)$ be a $2n$-dimensional symplectic orbifold with isolated singular
  points such that all orbifold structure groups are cyclic. Let $x
  \in M$ be a singular point. For all $\varepsilon >0$ and all
  $(m_1,\ldots, m_n) \in \mathbb{Q}^n_+$, the
  weighted blow-up of $(M,\omega)$ at $x$ of size $\varepsilon$ and
  weights $(m_1,\ldots, m_n)$ has isolated singular points and is such
  that all orbifold structure groups are cyclic.
\end{lemma}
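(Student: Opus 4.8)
The plan is to reduce the statement to a local computation near the exceptional divisor, using the explicit description of the weighted blow-up given in the construction preceding Definition \ref{def:bup}. Away from $\overline{B_\varepsilon(x)}$, the blow-up $(\widetilde M_{x,\varepsilon}, \widetilde\omega_\varepsilon)$ is, by construction, symplectomorphic to $(M\smallsetminus \overline{B_\varepsilon(x)}, \omega)$, so on that open set the singular points are exactly those of $M$ other than $x$; by hypothesis these are isolated and have cyclic orbifold structure groups. Thus the entire question is about the singular points of $\widetilde H^{-1}(\varepsilon)/S^1$. By property \ref{item:26}, the only singular points of $\widetilde H^{-1}(\varepsilon)/S^1$ lie on the image of the full embedding $H^{-1}(\varepsilon)/S^1 \hookrightarrow \widetilde H^{-1}(\varepsilon)/S^1$, i.e., on the exceptional divisor $\Sigma_0$. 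Hence it suffices to analyze $\Sigma_0$ and a neighborhood of it inside $\widetilde H^{-1}(\varepsilon)/S^1$.

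First I would use property \ref{item:24}, together with Definition \ref{def:bup}, which identifies $\Sigma_0$ diffeomorphically with the quotient $\mathbb{C}P^{n-1}(\widehat m_1,\ldots,\widehat m_n)/\Gamma'$, where $\Gamma' = \Gamma/(\Gamma\cap\widehat S^1)$ is cyclic (being a quotient of the cyclic group $\Gamma$). A weighted projective space $\mathbb{C}P^{n-1}(\widehat m_1,\ldots,\widehat m_n)$ is a quotient orbifold in the sense of Example \ref{ex_weightedprj}, so $\Sigma_0$ is obtained as the orbit space of the $S^1\times\Gamma'$-action on the level set $E$ of Example \ref{ex_weightedprj}; all of its orbifold structure groups are abelian finite subgroups of products of circles, and a singular point of $\Sigma_0$ occurs exactly at an orbit with nontrivial (abelian) stabilizer. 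Since $\Sigma_0$ is compact and the singular locus of a quotient of a weighted projective space by a cyclic group is a finite union of coordinate-type subspaces, the singular points of $\Sigma_0$ are isolated provided the weights and the $\Gamma'$-action are chosen so; but what really matters for us is the singular structure of the ambient orbifold $\widetilde H^{-1}(\varepsilon)/S^1$ near $\Sigma_0$.

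The key step is therefore the local normal form near points of $\Sigma_0$. A point of $\widetilde H^{-1}(\varepsilon)/S^1$ corresponds to an $S^1$-orbit in $\widetilde H^{-1}(\varepsilon) \subseteq B_{\varepsilon+\delta}(x)\times\mathbb C$; its orbifold structure group is governed, via the Marle--Guillemin--Sternberg local normal form for Hamiltonian $S^1$-actions (see \cite{gs,marle}) applied upstairs in the manifold $\widehat B\times\mathbb C$ with the combined $\widehat S^1$-, $S^1$- and $\Gamma$-actions, by the stabilizer of the corresponding point. I would trace through the weighted $S^1$-action $\lambda\cdot([z_1,\dots,z_n]_\Gamma, w) = ([\lambda^{m_1}z_1,\dots,\lambda^{m_n}z_n]_\Gamma, \lambda^{-1}w)$ and the $\Gamma$-action, and show that near any point $[\,[z_1,\dots,z_n]_\Gamma, 0\,]$ of $\Sigma_0$ the reduced orbifold is modeled on a quotient $\mathbb C^n/\Gamma''$ for a finite abelian group $\Gamma''$ that is an extension built from (a subquotient of) $\Gamma$ and a cyclic isotropy group coming from the weighted circle; Lemma \ref{lemma:abelian+isolated_cyclic} (and its underlying computation) then forces $\Gamma''$ to be cyclic precisely because, away from the normal direction to $\Sigma_0$ where the $S^1$-quotient already smooths things out, the standing hypothesis that $M$ has only isolated singular points with cyclic structure groups bounds the rank of the abelian group. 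Concretely, the exceptional divisor has codimension one in each fiber direction of the $\mathbb C$-factor, so the $\lambda^{-1}w$ direction kills one of the potential two generators, leaving a cyclic group.

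\textbf{Main obstacle.} The step I expect to be delicate is keeping careful track of how the three commuting actions ($\Gamma$, $\widehat S^1$, and the weighted $S^1$ used for the symplectic cut) interact at a non-free orbit on $\Sigma_0$, and verifying that the resulting orbifold structure group of $\widetilde H^{-1}(\varepsilon)/S^1$ at such a point is genuinely cyclic rather than merely abelian --- i.e., showing that the "diagonal" subgroup one gets from combining the cyclic $\Gamma'$-stabilizer with the cyclic stabilizer along the exceptional divisor does not produce a $\mathbb Z_a\times\mathbb Z_b$ with $\gcd(a,b)>1$. The way I would handle this is to observe that the stabilizer of a point in $\Sigma_0$ in the full $\widehat S^1\times\Gamma$-action, modulo the weighted $S^1$ one is cutting by, sits inside $N_{\mathrm U(n)}$ of the original cyclic $\Gamma$ and acts faithfully on a complex line (the normal direction to $\Sigma_0$), which forces it to be cyclic; combined with $\Gamma$ itself being cyclic and with the one-dimensional reduction, Lemma \ref{lemma:abelian+isolated_cyclic} applied in dimension matching the local model closes the argument. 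Isolatedness of the new singular points then follows because singular points of $\mathbb C P^{n-1}(\widehat m_1,\dots,\widehat m_n)/\Gamma'$ are isolated on $\Sigma_0$ (they correspond to isolated coordinate orbits) and, in the normal direction, the $S^1$-reduction plus the above local model shows no new positive-dimensional singular stratum appears.
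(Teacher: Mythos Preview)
Your decomposition into ``away from the exceptional divisor'' versus ``near $\Sigma_0$'' is correct, and your use of property \ref{item:26} is exactly right. But you are working much harder than necessary, and the difficulty you flag as the ``main obstacle'' is an artifact of your chosen route rather than a genuine feature of the problem.

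The key simplification you are missing is this: because $x=[0]_\Gamma$ is the \emph{only} singular point of $B_{\varepsilon+\delta}(x)$ (this is where the hypothesis on $M$ enters) and $\varepsilon$ is a regular value, the level set $\widetilde H^{-1}(\varepsilon)$ is a genuine smooth manifold --- the paper states this explicitly just before listing properties \ref{item:25}--\ref{item:27}. The auxiliary $S^1$-action on it is locally free, so the orbifold structure groups of $\widetilde H^{-1}(\varepsilon)/S^1$ are precisely the $S^1$-stabilizers, and finite subgroups of $S^1$ are automatically cyclic. This is exactly property \ref{item:23}, and it renders your detour through Lemma \ref{lemma:abelian+isolated_cyclic} and your worry about a possible $\mathbb Z_a\times\mathbb Z_b$ unnecessary: there is no second generator, because the structure group comes from a single circle. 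Your approach of lifting all the way to $\widehat B\times\mathbb C$ and tracking the combined $\widehat S^1$-, $\Gamma$- and cutting-$S^1$-actions is what creates the obstacle; working instead at the intermediate level $B\times\mathbb C$, where the level set is already smooth, dissolves it.

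The paper's proof is accordingly one line: it cites properties \ref{item:23}, \ref{item:25} and \ref{item:26} established in the construction. By \ref{item:26} all singular points of $\widetilde H^{-1}(\varepsilon)/S^1$ lie on $\Sigma_0$; by \ref{item:25} (full embedding) their structure groups in the ambient agree with those in $\Sigma_0\cong H^{-1}(\varepsilon)/S^1$; and by \ref{item:23} the latter are isolated and cyclic. Note also that Lemma \ref{lemma:abelian+isolated_cyclic} is stated only for $\mathrm U(2)$, so your proposed invocation ``in dimension matching the local model'' would require separate justification in general.
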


\begin{proof}
  This result follows immediately by the conditions on the singular
  points of $M$, the above construction of the weighted blow-up, and
  properties \ref{item:23}, \ref{item:25} and \ref{item:26} above.
\end{proof}

Let $G$ be a compact Lie group. As expected, a weighted blow-up can be
performed on a Hamiltonian $G$-space so that the result is still a
Hamiltonian $G$-space by making $G$-equivariant choices and using a
local circle action that commutes with the $G$-action (see
\cite[Section 4.1]{MS})\footnote{Strictly speaking, in \cite[Section
  4.1]{MS} it is not checked that the moment map is good in the sense
  of Definition \ref{defn:good_map}. However, this follows at once by
  observing that a weighted blow-up is obtained by symplectic cutting
  and that the map used to perform the cut is good by construction.}. Moreover, for a fixed singular point $x$, whenever the size is sufficiently
small, the result only depends on the weights and the size up to isomorphisms of Hamiltonian $G$-spaces (see \cite[Proposition
4.2]{MS}). In the case of a $G\simeq S^1$-action, we denote an
$S^1$-equivariant weighted blow-up at an isolated singular point $x$
with cyclic structure group $\Gamma$ of size $\varepsilon$ and weights
$(m_1,\ldots,m_n)$ by
$$
\text{WBU}(\lvert \Gamma \rvert, k_1,\ldots, k_n, m_1,\ldots,m_n,\varepsilon),
$$
where $k_1 \geq \ldots \geq k_n$  are the orbi-weights of the
$S^1$-action at $x$ (see Definition \ref{defn:orbi-weights}).

\subsubsection{Weighted blow-ups of Hamiltonian $S^1$-spaces}
In this section, we consider two families of $S^1$-equivariant weighted blow-ups on
Hamiltonian $S^1$-spaces that play an important role in some of our
key results (see Proposition~\ref{prop::desing}, and Theorems \ref{fig::minimalfixedsurface} and \ref{thm:existisolated}). Before
discussing these results further, we observe that weighted
blow-ups preserve Hamiltonian $S^1$-spaces (see the beginning of
Section \ref{sec:labelled-multigraph}). This is an immediate
consequence of Lemma \ref{lemma:blow_up_preserves} and of
\cite[Section 4]{MS}.

\begin{corollary}\label{cor:weighted}
  Let $(M,\omega,H)$ be a
  Hamiltonian $S^1$-space and let $x\in M$ be a singular point. Any
  $S^1$-equivariant weighted blow-up of $(M,\omega, H)$ at $x$ is a
  Hamiltonian $S^1$-space.
\end{corollary}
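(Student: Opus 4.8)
The plan is to observe that Corollary~\ref{cor:weighted} is a formal consequence of two facts already in place: that the weighted blow-up construction, carried out $S^1$-equivariantly, produces a Hamiltonian $S^1$-space in the sense of Definition~\ref{defn:hamiltonian}; and that the operation preserves the three standing hypotheses packaged into the phrase ``Hamiltonian $S^1$-space'' at the beginning of Section~\ref{sec:labelled-multigraph}, namely compactness, discreteness of the singular set, and cyclicity of all orbifold structure groups. So the proof is essentially a matter of assembling references.

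First I would recall the explicit construction preceding Definition~\ref{def:bup}: starting from a Darboux l.u.c. centered at the singular point $x$ (Theorem~\ref{thm:darboux_orbifolds}), one forms the product $Z_{\varepsilon,\delta} = B_{\varepsilon+\delta}(x) \times \mathbb{C}$ with an auxiliary Hamiltonian $S^1$-action whose moment map is $\widetilde{H}$, performs symplectic reduction at the regular value $\varepsilon$, and glues $(\widetilde{H}^{-1}(\varepsilon)/S^1, \omega_{\mathrm{red}})$ to $(M \smallsetminus \overline{B_\varepsilon(x)}, \omega)$ along the annular region $B_{\varepsilon+\delta}(x) \smallsetminus \overline{B_\varepsilon(x)}$. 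When the ambient $(M,\omega,H)$ carries a Hamiltonian $S^1$-action and $x$ is a fixed point (which it is, being singular, by Corollary~\ref{cor:action_preserves_structure_group}), all of these choices can be made $S^1$-equivariantly: the Darboux chart can be taken $S^1$-invariant (Theorem~\ref{thm:darboux}), the auxiliary circle acts commuting with the given $S^1$, and hence the reduced space inherits a residual $S^1$-action. By \cite[Section~4]{MS} this action is Hamiltonian, with moment map obtained by restricting and descending $H$ along the cut; and as noted in the footnote to the definition of $\mathrm{WBU}$, this moment map is good in the sense of Definition~\ref{defn:good_map} because the symplectic cut is performed via a good map. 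Thus $(\widetilde{M}_x, \widetilde{\omega}, \widetilde{H})$ is a Hamiltonian $S^1$-space in the sense of Definition~\ref{defn:hamiltonian}.

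Next I would verify the three extra conditions. Compactness of $\widetilde{M}_x$ is immediate: it is obtained by removing an open ellipsoid from the compact $M$ and gluing in the compact space $\widetilde{H}^{-1}(\varepsilon)/S^1$ (compact because $\widetilde{H}$ is proper, so $\widetilde{H}^{-1}(\varepsilon)$ is compact and the $S^1$-quotient of a compact space is compact). That the singular set remains discrete and that all orbifold structure groups remain cyclic is precisely the content of Lemma~\ref{lemma:blow_up_preserves}, applied with $2n = 4$; that lemma in turn rests on properties~\ref{item:23}, \ref{item:25} and \ref{item:26} of the construction. Putting these together gives the corollary.

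I do not expect a genuine obstacle here: the work has all been done in Lemma~\ref{lemma:blow_up_preserves} and in \cite{MS}, and the only thing to be careful about is the bookkeeping that the ``Hamiltonian $S^1$-space'' terminology silently includes compactness and the two orbifold conditions, so that all three must be checked and not just the existence of the equivariant moment map. If anything requires a word of justification beyond citation, it is the compactness of $\widetilde{H}^{-1}(\varepsilon)/S^1$, which follows from properness of $\widetilde{H}$ as just indicated.

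\begin{proof}
  Since $x$ is a singular point of the Hamiltonian $S^1$-space $(M,\omega,H)$ and $M$ has discrete singular set, $x$ is a fixed point of the $S^1$-action by Corollary~\ref{cor:action_preserves_structure_group}. As explained in \cite[Section~4]{MS}, an $S^1$-equivariant weighted blow-up at $x$ is obtained by making the choices in the construction preceding Definition~\ref{def:bup} $S^1$-equivariantly, using an auxiliary circle action commuting with the given $S^1$-action; the residual $S^1$-action on $(\widetilde{M}_x,\widetilde{\omega})$ is then Hamiltonian, and its moment map is good in the sense of Definition~\ref{defn:good_map} since the underlying symplectic cut is performed via a good map. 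Hence $(\widetilde{M}_x,\widetilde{\omega})$ carries a Hamiltonian $S^1$-action in the sense of Definition~\ref{defn:hamiltonian}.

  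It remains to check the three conditions in the definition of a Hamiltonian $S^1$-space (see the beginning of Section~\ref{sec:labelled-multigraph}). First, $\widetilde{M}_x$ is compact: by \eqref{eq:wbu} it is obtained from the compact orbifold $M$ by removing the open set $B_\varepsilon(x)$ and gluing in $\widetilde{H}^{-1}(\varepsilon)/S^1$, which is compact because $\widetilde{H}$ is proper (so $\widetilde{H}^{-1}(\varepsilon)$ is compact and its $S^1$-quotient is compact). Second, by Lemma~\ref{lemma:blow_up_preserves} the weighted blow-up has isolated singular points and all its orbifold structure groups are cyclic. Therefore $(\widetilde{M}_x,\widetilde{\omega},\widetilde{H})$ is a Hamiltonian $S^1$-space.
\end{proof}
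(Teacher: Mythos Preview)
Your approach is exactly the paper's: the text immediately preceding the corollary states that it ``is an immediate consequence of Lemma~\ref{lemma:blow_up_preserves} and of \cite[Section~4]{MS},'' and your proof simply unpacks this, adding an explicit check of compactness that the paper leaves implicit.

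One small slip: your compactness argument is not quite right. The map $\widetilde{H}$ is defined on the \emph{open} set $Z_{\varepsilon,\delta} = B_{\varepsilon+\delta}(x)\times\mathbb{C}$, and on that domain it is not proper; indeed $\widetilde{H}^{-1}(\varepsilon)$ is not compact (points with $H([z])\to\varepsilon+\delta$ and $|w|^2\to 2\delta$ escape). Compactness of $\widetilde{M}_x$ follows instead from the observation that, as a set, $\widetilde{M}_x$ is the union of the compact set $M\smallsetminus B_\varepsilon(x)$ (closed in the compact $M$) and the compact exceptional divisor $\Sigma_0\cong\mathbb{C}P^{n-1}(\widehat{m}_1,\dots,\widehat{m}_n)/\Gamma'$; equivalently, the blow-down map $\widetilde{M}_x\to M$ is a continuous surjection with compact fibers onto a compact space. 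This is a cosmetic fix and does not affect the substance of your argument.
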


Our aim is
to describe the effect of the aforementioned blow-ups on the labeled multigraphs
(see Propositions~\ref{prop::desing} and \ref{prop::desing2} below). Our
choice to split these results is dictated by how we use each family in
our classification results: Proposition \ref{prop::desing} is used in
the desingularization process of Theorem \ref{thm:desingularization}, while
(the reverse construction to that of) Proposition \ref{prop::desing2} is needed to obtain the minimal
models of Theorems \ref{fig::minimalfixedsurface} and
\ref{thm:existisolated}. Moreover, there are some small geometric
differences between the two families (see Remarks
\ref{rem::weightedblowupConnectsum} and \ref{rmk:other_blow-up}). Finally, we believe that this makes it
easier for the reader to extract the necessary information. 

\begin{proposition}\label{prop::desing} Let $(M,\omega,H)$ be a
  Hamiltonian $S^1$-space and let $x\in M$ be a singular point of type
  $\frac{1}{m}(1,l)$. Let $1 \leq l' < m$ be the integer such that $l
  \cdot l' = 1 \mod m$ and let $\gamma \in \Z$ be such that $ll'
  + m \gamma = 1$. Let $\frac{a_1}{p},\frac{a_2}{p}$ be the
  orbi-weights of the $S^1$-action at $x$, where $\gcd(a_1,a_2)=1$ and
  $a_1\geq a_2$. 
  Let $b_1,b_2>0$ be integers such that
  \begin{equation}\label{eq:ebucond}
    \gcd(b_1,b_2)=1\quad \text{and}\quad b_1l - b_2 = b\, m, \quad \text{for some $b \in\mathbb{Z}$}.
  \end{equation}
  Let $s_1$ be the integer that satisfies $1\leq s_1 < b_1$ and
  $s_1= b \mod b_1$ if  $a_1m \geq a_2b_1-a_1b_2$, or
  $s_1 b = 1 \! \!\!\mod b_1$
  otherwise. Let $s_2$ be the integer that satisfies $1\leq s_2 < b_2$ and $s_2 = - (b l' +
  b_1\gamma)\! \!\!\mod b_2  $ if  $a_2 m\geq a_1b_2 - a_2b_1$, or
  $(b l' + b_1\gamma)s_2 =  - 1\! \!\!\mod b_2$
  otherwise. Finally, we set $k:=m/p$.

  An $S^1$-equivariant weighted blow-up of $(M,\omega)$ at $x$
  $$
  \text{WBU}\left(m,\frac{a_1}{p},\frac{a_2}{p},\frac{b_1}{m},\frac{b_2}{m},\varepsilon\right)
  $$
  results in the following `local' changes in the labeled multigraph of
  $(M,\omega,H)$ near the vertex that corresponds to the connected
  component of
  the fixed point set to which $x$ belongs:\\ \\
  \noindent\begin{minipage}{.55\linewidth}
    $(I)$ Non-extremal fixed point: \\  \\
    \resizebox{6cm}{!}{%
      \begin{tikzpicture}			
        \coordinate[label=right:] (BB) at (-1.5,1.3);
        \coordinate[label=right:${\,\,\frac{1}{b_1}(1,s_1), \color{red} \alpha + \frac{k \lvert a_1\rvert}{ b_1}\varepsilon}$] (AA) at (-2,0.7);
        \coordinate[label=right:${\,\, \frac{1}{b_2}(1,s_2), \color{red} \alpha - \frac{k\lvert a_2\rvert}{ b_2}\varepsilon }$] (CC) at (-2,-0.5);
					\coordinate[label=right:] (DD) at (-1.5,-1.1);

					\coordinate[label=left:$\lvert a_1\rvert k$](X) at ($(AA)!1!(BB)$);
					\coordinate[label=left:$\lvert a_2\rvert k$](X) at ($(CC)!1!(DD)$);
					\coordinate[label=left:$\frac{\lvert a_1b_2-a_2b_1\rvert}{p}$](X) at ($(AA)!0.5!(CC)$);
					
					\draw[very thick] (BB)--(AA)--(CC)--(DD);
					\fill (AA) circle (4pt);
					\fill (CC) circle (4pt);
					
					\hspace{-.5cm}\draw[->, very thick] (-4,0) to (-3.5,0);
					
					\coordinate[label=left:${\frac{1}{m}(1,l),\, \color{red} \alpha\,\,}$] (A) at (-4.5,0);
					\coordinate[label=right:] (B) at (-4,1);
					\coordinate[label=right:] (C) at (-4,-1);
					\coordinate[label=left:$\lvert a_1\rvert k$] (D)at ($(A)!0.7!(B)$);
					\coordinate[label=left:$\lvert a_2\rvert k$] (D)at ($(A)!0.7!(C)$);
					\draw[very thick] (B) -- (A)--(C);
					\fill (A) circle (4pt);
				\end{tikzpicture} }
			\end{minipage}%
\noindent\begin{minipage}{.55\linewidth}
  $(II)$ Isolated extremum  {\small ($a_1b_2\neq a_2 b_1$):} \\
  \\ 
				\resizebox{6cm}{!}{%
				\begin{tikzpicture}				
					\coordinate[label=right:] (BB) at (-1,1);
					\coordinate[label=above:${\hspace{3cm}\frac{1}{b_i}(1,s_i),   \color{red}  \alpha + \frac{k \, |a_i|}{b_i}\varepsilon}$] (AA) at (-0.7,0.5);
					\coordinate[label=below:${\,\,\,\frac{1}{b_j}(1,s_j), \color{red}  \alpha + \frac{k\, |a_j|}{b_j}\varepsilon  }$] (CC) at (0,-0.5);
					\coordinate[label=right:] (DD) at (0.7,0.5);

					\coordinate[label=left:$\lvert a_i\rvert k$](X) at ($(AA)!1!(BB)$);
					\coordinate[label=right:$\lvert a_j\rvert k$](X) at ($(CC)!0.7!(DD)$);
					\coordinate[label=left:$\frac{\lvert a_1b_2-a_2b_1\rvert}{p}$](X) at ($(AA)!0.7!(CC)$);
					
					\draw[very thick] (BB)--(AA)--(CC)--(DD);
					\fill (AA) circle (4pt);
					\fill (CC) circle (4pt);
					
					\draw[->, very thick] (-2.5,-.4) to (-2,-.4);
					
					\coordinate[label=below:${\frac{1}{m}(1,l), \color{red} \alpha}$] (A) at (-4,-0.5);
					\coordinate[label=right:] (B) at (-4.7,0.5);
					\coordinate[label=right:] (C) at (-3.3,0.5);
					\coordinate[label=left:$\lvert a_i\rvert k$] (D)at ($(A)!0.7!(B)$);
					\coordinate[label=right:$\lvert a_j\rvert k$] (D)at ($(A)!0.7!(C)$);
					\draw[very thick] (B) -- (A)--(C);
					\fill (A) circle (4pt);
				\end{tikzpicture}}
			\end{minipage}	\\	\\
				
\noindent \begin{minipage}{.55\linewidth}
$(III)$ Isolated extremum \\ \hspace{-.5cm} {\small ($b_i= |a_i|, p=m$):} \\ \\
\resizebox{5cm}{!}{%
			\begin{tikzpicture}				
					\coordinate[label=left:] (AA) at (-1.7,0.5);
					\coordinate[label=below:] (CC) at (-1,-0.5);
					\coordinate[label=below:${\hspace{2cm}\frac{1}{\lvert a_1\rvert }(1,s_1),\frac{1}{\lvert a_2\rvert}(1,s_2),}$] (XX) at (-1.8,-0.9);
					\coordinate[label=below:${\hspace{2cm}A=\frac{m}{a_1a_2}\varepsilon,g=0, \color{red} \alpha + \varepsilon}$] (XX) at (-1.9,-1.5);
					\coordinate[label=right:] (DD) at (-0.3,0.5);

					\coordinate[label=right:$\lvert a_2 \rvert$](X) at ($(CC)!0.7!(DD)$);
					\coordinate[label=left:$\lvert a_1\rvert$](X) at ($(AA)!0.3!(CC)$);
					
					\draw[ very thick] (AA)--(CC)--(DD);
					\fill (CC) circle (8pt);
					
					\draw[->, very thick] (-2.8,-.3) to (-2.3,-0.3);
					
					\coordinate[label=below: ${\vspace{.4cm}\frac{1}{m}(1,l), \color{red} \alpha}$] (A) at (-4,-0.5);
					\coordinate[label=right:] (B) at (-4.7,0.5);
					\coordinate[label=right:] (C) at (-3.3,0.5);
					\coordinate[label=left:$\lvert a_1 \rvert$] (D)at ($(A)!0.7!(B)$);
					\coordinate[label=right:$\lvert a_2 \rvert$] (D)at ($(A)!0.7!(C)$);
					\draw[very thick] (B) -- (A)--(C);
					\fill (A) circle (4pt);
				\end{tikzpicture} }
			\end{minipage}%
\noindent\begin{minipage}{.55\linewidth}
$(IV)$ Point on fixed orbi-surface \\ {\small ($p=1$):}\\ \\
\resizebox{5cm}{!}{%
				\begin{tikzpicture}	\hspace{.8cm}		
						
					\coordinate[label=right:${\frac{1}{b_i}(1,s_i), \color{red} \alpha + \frac{m}{b_i}\varepsilon}$] (AA) at (-1.5,0.5);
					\coordinate[label=below:] (CC) at (-1.5,-0.5);
					\coordinate[label=below:${\hspace{1cm}\frac{1}{b_j}(1,s_j),A- \frac{1}{b_j}\varepsilon,g, \color{red} \alpha}$] (XX) at (-1.5,-.8);				
					\coordinate[label=right:] (DD) at (-1.5,1.5);
					\coordinate[label=right:$m$](X) at ($(AA)!0.7!(DD)$);
					\coordinate[label=right:$b_j$](X) at ($(AA)!0.5!(CC)$);
					\draw[very thick] (CC)--(DD);
					\fill (AA) circle (4pt);
					\fill (CC) circle (8pt);
					
					\draw[->, very thick] (-3,0) to (-2.5,0);

					\coordinate[label=below:] (A) at (-4,-0.5);
					\coordinate[label=below:${\frac{1}{m}(1,l),A,g,\color{red} \alpha}$] (X) at (-4.5,-0.8);			
					\coordinate[label=right:] (B) at (-4,1);
					\coordinate[label=left:$m$] (D) at ($(A)!0.7!(B)$);
					\draw[very thick] (A) -- (B);		
					\fill (A) circle (8pt);		
				\end{tikzpicture}}
			\end{minipage}	\\ \\
together with cases $(II)$ -- $(IV)$ turned upside down\footnote{In
  all cases upside down, the
  signs of the terms involving $\varepsilon$ in the moment map labels are changed.}. Here
if $b_2|a_1|>b_1|a_2|$ and $a_1\neq 0$ then $i=1$ and $j=2$, and $i=2$ and $j=1$ otherwise.
\end{proposition}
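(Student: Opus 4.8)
The strategy is to reduce the global statement to a purely local computation on the model $\C^2/\Z_m$ (equipped with a linear Hamiltonian $S^1$-action) and then to identify, in the resulting symplectic reduction, the new fixed points, their types, their orbi-weights, the new isotropy orbi-spheres and (in the cases where the exceptional divisor survives as a fixed orbi-surface) its genus, area and singular points. The point is that, by Theorem \ref{cor::localformpt}, an $S^1$-invariant neighborhood $U$ of $x$ is isomorphic as a Hamiltonian $S^1$-space to a neighborhood $V$ of $[0,0]$ in $\C^2/\Z_m$ with moment map $H_{\mathrm{lin}}([z_1,z_2]) = \alpha + \tfrac12(\tfrac{a_1}{p}|z_1|^2 + \tfrac{a_2}{p}|z_2|^2)$, and an $S^1$-equivariant weighted blow-up at $x$ of size $\varepsilon$ and weights $(\tfrac{b_1}{m},\tfrac{b_2}{m})$ is, by construction (Section \ref{sec:sympl-weight-blow} and \cite[Section 4]{MS}), supported in such a neighborhood and depends, for $\varepsilon$ small, only on the local data. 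Outside $U$ the labeled multigraph is unchanged, so only the `local' changes described in $(I)$--$(IV)$ need to be verified.

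\textbf{First steps.} First I would unwind the construction: lift everything to $\widehat{B}\subset\C^2$ with $\Gamma=\Z_m$ acting by $(\xi_m z_1,\xi_m^l z_2)$, take the auxiliary $\widehat S^1$ acting with weights $(b_1,b_2)$ on $\widehat B$ and with weight $-1$ on the extra $\C$-factor, and form $\widetilde H^{-1}(\varepsilon)/\widehat S^1$ (more precisely the $S^1$-quotient after passing to $B=\widehat B/\Gamma$). The exceptional divisor is $\Sigma_0\cong \C P^1(b_1,b_2)/\Gamma'$ with $\Gamma'=\Z_m/(\Z_m\cap\widehat S^1)$; condition \eqref{eq:ebucond} guarantees that $\widehat S^1$ is compatible with the $\Z_m$-action (so $\Sigma_0$ indeed has at most two singular points) and the divisibility $b_1 l - b_2 = bm$ is exactly what makes the local picture an orbifold of the expected type. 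Then I would compute the residual $S^1$-action on $\widetilde H^{-1}(\varepsilon)/S^1$ near $\Sigma_0$: away from $\Sigma_0$ it agrees with the old action on $B_{\varepsilon+\delta}(x)\smallsetminus\overline{B_\varepsilon(x)}$, so the two `old' edges with labels $|a_1|k$ and $|a_2|k$ persist; on $\Sigma_0$ the induced $S^1$-action is the one with orbi-weights proportional to $a_1 b_2 - a_2 b_1$, giving the new edge with label $\tfrac{|a_1 b_2 - a_2 b_1|}{p}$ (or a fat vertex in cases $(III)$ and $(IV)$ when this number is $0$, i.e. when $a_1 b_2 = a_2 b_1$, which forces $p=1$ and $b_i=|a_i|$ in the isolated case). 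The new fixed points are the two poles of $\Sigma_0$; their orbi-weights in the $\C$-direction are computed from the reduction (the $\tfrac{k|a_i|}{b_i}$ appearing in the moment-map labels comes from $\widetilde H$ restricted to the fiber over a pole), and the constants $H(x_{\mathrm{pole}}) = \alpha \pm \tfrac{k|a_i|}{b_i}\varepsilon$ follow since $H_{\mathrm{lin}}$ takes the value $\alpha + \tfrac{a_i}{p}\cdot(\text{radius}^2/2)$ there and the pole sits on $H=\varepsilon$ in the $i$-th coordinate direction. The area label $A = \tfrac{m}{a_1 a_2}\varepsilon$ in case $(III)$ is just $\tfrac{k}{2\pi}\int_{\Sigma_0}\omega$ computed via \eqref{eq:44} applied to $\Sigma_0$ as a $\Z_{|a_1 b_2 - a_2 b_1|/p}$-orbi-sphere --- but in case $(III)$ $\Sigma_0$ is fixed, so instead one uses that $\Sigma_0$ has symplectic area $\varepsilon$ times the relevant factor directly from the reduced form, and in case $(IV)$ the new area $A - \tfrac1{b_j}\varepsilon$ of the old fixed surface is the old area diminished by the symplectic volume removed.

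\textbf{Identifying the types of the new singular points.} The genuinely fiddly part, and the one I expect to be the main obstacle, is pinning down the integers $s_1,s_2$ that label the types $\tfrac{1}{b_i}(1,s_i)$ of the two new fixed points (the poles of $\Sigma_0$), including getting the two cases in the definition of each $s_i$ (the `$s_1 = b \bmod b_1$' versus `$s_1 b \equiv 1 \bmod b_1$' dichotomy, governed by the sign of $a_1 m - (a_2 b_1 - a_1 b_2)$). This requires: (a) writing down an explicit l.u.c. centered at each pole of $\C P^1(b_1,b_2)/\Gamma'$ inside the reduced space --- one does this exactly as in Examples \ref{exm:quotient_wpl} and \ref{exm:weighted_O}, using homogeneous coordinates --- and reading off the $\Z_{b_i}$-action on the normal directions; (b) using Remark \ref{ex::weightedprojSeifertInvariants} (the B\'ezout-type bookkeeping that produced the Seifert invariants of $\C P^1(p,q)\hookrightarrow\C P^2(p,q,k)$) adapted to the present weights; and (c) applying the normalization convention of Remark \ref{rmk:type} --- i.e. choosing the representative of the type so that $1$ sits in the direction of the larger orbi-weight --- which is precisely what produces the two-case description of $s_i$: whether one reports $s_i$ or its inverse mod $b_i$ depends on which of the two orbi-weights at the pole is larger, and that comparison is $a_i m$ against $a_j b_i - a_i b_j$. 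I would organize this by treating the pole $x_{\max}$ of $\Sigma_0$ (the one with the $z_1$-coordinate, say) and $x_{\min}$ separately, writing the residual action in a chart, computing the two orbi-weights there (one is $\pm\tfrac{k|a_1 b_2 - a_2 b_1|}{p b_1}$ along $\Sigma_0$ and the other is $\tfrac{k|a_1|}{b_1}$ along the fiber), and then invoking Proposition \ref{prop:triple_determines} and Remark \ref{rmk:type} to read off $s_1$. The verification that these orbi-weights satisfy the consistency relation $b_1 l' - \cdots \equiv \cdots$ defining a valid type of the form $\tfrac{1}{b_1}(1,s_1)$ uses $ll' + m\gamma = 1$ and \eqref{eq:ebucond}. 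The cases $(II)$, $(III)$, $(IV)$ `turned upside down' follow by applying the already-established cases to the Hamiltonian $S^1$-space $(M,\omega,-H)$, which swaps maxima and minima and flips the signs of all orbi-weights, hence the sign flips in the moment-map labels noted in the footnote. Finally, Corollary \ref{cor:weighted} and Lemma \ref{lemma:blow_up_preserves} guarantee that the output is again a Hamiltonian $S^1$-space in our sense, so the new labeled multigraph is well-defined and the local modification described is the whole of the change.
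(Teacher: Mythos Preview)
Your plan is essentially the paper's own proof: reduce to the local model $\C^2/\Z_m$ via Theorem~\ref{cor::localformpt}, set up the auxiliary $\widetilde S^1$-action with weights $(b_1/m,b_2/m)$, perform the cut, identify $\Sigma_0\cong\C P^1(b_1,b_2)$ (condition~\eqref{eq:ebucond} forces $\Gamma'=\{1\}$, which you should say explicitly), and then read off the new edge label $|a_1b_2-a_2b_1|/p$, the orbi-weights at the two poles, their moment-map values, and the types $s_1,s_2$ via explicit $\Z_{b_i}$-actions in local coordinates together with the normalization convention of Remark~\ref{rmk:type}. Your diagnosis that the $s_i$ bookkeeping is the fiddly part, and that the two-case dichotomy comes from which orbi-weight at each pole is larger, matches the paper exactly.

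The one place where the paper does something cleaner than what you sketch is the area computations in cases $(III)$ and $(IV)$. Your suggestion to get the area ``directly from the reduced form'' (case $(III)$) or as ``the old area diminished by the symplectic volume removed'' (case $(IV)$) is heuristic and would need justification --- in case $(IV)$ the blow-up does not simply excise a ball from $\Sigma$, it replaces a disk by another disk, so the area change is not transparently a removed volume. The paper instead applies the localization formula (Theorem~\ref{prop:localization}) to the equivariant forms $1$ and $\omega^{S^1}$ on the blown-up space versus the original, and the area difference drops out of two short linear equations. This is both more rigorous and faster than a direct symplectic-volume argument, and I would recommend adopting it.
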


\begin{remark}\label{rmk:desing_well_defined}
  If $a_1 = a_2 = \pm 1$ and $l^2 \neq 1 \mod m$, then $x$ has two
  labels for its type (see Section \ref{sec:label-mult-hamilt}): these
  are of the form $\frac{1}{m}(1,l)$ and $\frac{1}{m}(1,l')$. In
  this case, the integers
  $b_1,b_2,s_1,s_2$ and $b_1', b_2', s_1', s_2'$ of Proposition
  \ref{prop::desing} are related as follows:
  $$ b_1' = b_2 \, ,\, b_2' = b_1 \, ,\, s_1' = s_2 \, ,\, s_2' =
  s_1.$$
  In particular, there is no ambiguity in case $(II)$ above (this is
  the only one that may occur). To simplify the exposition, we do not
  mention this situation explicitly in the proof of Proposition \ref{prop::desing}.
\end{remark}

\begin{remark}\label{rmk:exc_divisor}
  In cases $(I)$, $(II)$ and $(IV)$, the exceptional divisor
  is an isotropy orbi-sphere and, thus, corresponds to the `extra'
  edge in the labeled multigraph. On the other hand, in case $(III)$,
  the exceptional divisor is a connected component of the fixed point
  set. Moreover,
  if $x$ is a singular point of type $\frac{1}{m}(1,l)$, the exceptional divisors of the weighted blow-ups 
  $$
  \text{WBU}\left(m,\frac{a_1}{p},\frac{a_2}{p},\frac{l'}{m},\frac{1}{m},\varepsilon\right) \quad \text{or} \quad \text{WBU}\left(m,\frac{a_1}{p},\frac{a_2}{p},\frac{1}{m},\frac{l}{m},\varepsilon\right),
  $$ 
  with $1\leq l' <m$ and $l \cdot  l' = 1 \mod m$, 
  have a unique singular point and the order of its orbifold structure group is smaller than $m$.	
\end{remark}

\begin{proof}[Proof of Proposition \ref{prop::desing}]
  First, without loss of generality, we may assume that, if $x$ is an
  extremal fixed point of $H$, then $H(x)$ is minimal. (The case of a
  maximum is entirely analogous.) Second, by construction, away from an open neighborhood of $x$ and of the
  exceptional divisor, the Hamiltonian $S^1$-space $(M,\omega,H)$ and
  its $S^1$-equivariant blow-up at $x$ given by
  $\text{WBU}\left(m,\frac{a_1}{p},\frac{a_2}{p},\frac{b_1}{m},\frac{b_2}{m},\varepsilon\right)$
  are isomorphic. Hence, by definition of the labeled multigraph of a 
  Hamiltonian $S^1$-space, it suffices to consider the effects of
  the above blow-up in an open neighborhood of $x$. Clearly, since we
  remove an open neighborhood of $x$, we are removing the
  corresponding fixed point (or a neighborhood thereof from the fixed
  minimal orbi-surface). Hence, the corresponding vertex (or
  label of the type of the singular point on a fat vertex) disappears from the labeled multigraph. Moreover, by
  Theorem \ref{cor::localformpt}, we may assume that such an open
  neighborhood is a suitable open neighborhood of $[0,0]_m$ in a local
  model $\C^2/\Z_m$ for a cyclic isolated singular point of type
  $\frac{1}{m}(1,l)$, that $x = [0,0]_m$ and that the moment map value
  of $x$ is $\alpha = 0$. Finally, to simplify the exposition, we assume
  that the above open neighborhood of $[0,0]_m$ equals $\C^2/\Z_m$. 

  We denote the circle that we use to perform the
  blow-up by $\widetilde{S}^1$. In order to perform the desired
  weighted blow-up at $[0,0]_m$, we consider the following
  $\widetilde{S}^1$-action:
  $$ \widetilde{\lambda} \cdot [z_1,z_2]_m =
  [\widetilde{\lambda}^{\frac{b_1}{m}}z_1,\widetilde{\lambda}^{\frac{b_2}{m}}z_2]_m.$$
  where $b_1,b_2$ satisfy \eqref{eq:ebucond}. 
  By Theorem~\ref{cor::localformpt}, this circle action is
  well-defined and effective. Moreover, it has a unique fixed
  point  $x=[0,0]_m$ and commutes with the original $S^1$-action
  that is given by
  \begin{equation}
    \label{eq:58}
    \lambda \cdot [z_1,z_2]_m =
     [\lambda^{\frac{a_1}{p}}z_1,\lambda^{\frac{a_2}{p}}z_2]_m. 
  \end{equation}
  The extended $\widetilde{S}^1$-action on $\C^2/\Z_m \times \C$ that we use to perform
  symplectic cutting is given by
  \begin{equation}
    \label{equ::blowupAction}
    \widetilde{\lambda} \cdot ([z_1,z_2]_m,w) = ([\widetilde{\lambda}^{\frac{b_1}{m}}z_1,\widetilde{\lambda}^{\frac{b_2}{m}}z_2]_m,\widetilde{\lambda}^{-1}w).
  \end{equation}
  It is Hamiltonian with moment map
  \begin{equation}\label{eq:mmwbu}
    \widetilde{H}([z_1,z_2]_m,w)=\frac{1}{2}\left(\frac{b_1}{m}\lvert z_1\rvert ^2+\frac{b_2}{m}\lvert z_2\rvert ^2-\lvert w\rvert^2\right).
  \end{equation}
  The exceptional divisor is given by 
  \begin{equation}
    \label{eq:53}
    \Sigma_0  := \left\{\left([z_1,z_2]_m,0\right)\in
      \widetilde{H}^{-1}(\varepsilon) \right \}/\,\, \widetilde{S}^1.
  \end{equation}
  By \eqref{equ::blowupAction} and Lemma~\ref{lem::isotropy},
  $\Sigma_0$ has two points with non-trivial orbifold structure group
  and these are $\mathbb{Z}_{b_1}$ and $\mathbb{Z}_{b_2}$,
  corresponding to
  \begin{equation}
    \label{eq:57}
    x_1 := \left(\left[\left(\frac{2 \varepsilon
            m}{b_1}\right)^{1/2},0\right]_m,0\right) \quad \text{and}
    \quad x_2:= \left(\left[0,\left(\frac{2 \varepsilon
            m}{b_2}\right)^{1/2}\right]_m,0\right)
  \end{equation}
  respectively. Moreover,
  since $\gcd(b_1l-b_2,m)= m$, the orbifold structure group
  $\mathbb{Z}_m$ of $[0,0]_m$ acts as a subgroup of the identity
  component of the extension  of $\widetilde{S}^1$ by
  $\mathbb{Z}_m$. By property \ref{item:24} in Section
  \ref{sec:sympl-weight-blow}, it follows that $\Sigma_0$ is
  diffeomorphic to $\C P^1(b_1,b_2)$.

  Next we calculate the order of the isotropy group for the
  $S^1$-action of the points on the exceptional divisor $\Sigma_0$. 
  Let $r\in \mathbb{Z}$ be such that $a_2- a_1l = r  p$. The original $S^1$-action descends to a neighborhood of the exceptional divisor in the weighted blow-up as
  \begin{equation}
    \label{eq:52}
    \begin{split}
      S^1\times \widetilde{H}^{-1}(\varepsilon)/\widetilde{S}^1&\rightarrow \widetilde{H}^{-1}(\varepsilon)/\widetilde{S}^1\\
      (\lambda,[z_1,z_2,w])&\mapsto[\lambda^{\frac{a_1}{p}}z_1,\lambda^{\frac{a_2}{p}}z_2,w],
    \end{split}
  \end{equation}
  and so
  \begin{align}\label{eq:actionbup}
    \lambda \cdot [z_1,z_2,w]&=[z_1,\lambda^{\frac{(a_2b_1-a_1b_2)}{p b_1}}z_2,\lambda^\frac{a_1k}{b_1}w]=[\lambda^{\frac{(a_1b_2-a_2b_1)}{p b_2}}z_1, z_2,\lambda^{\frac{a_2k}{b_2}}w].
  \end{align}
  Since 
  $$a_2=a_1l+ r  p, \quad b_2=b_1l-b m \quad \text{and} \quad m=kp,$$ 
  it follows that  
  $$\frac{\lvert a_1b_2-a_2b_1\rvert}{p}\in \Z_{\geq 0}.$$ 
  This is the desired order. Moreover, by \eqref{eq:53}, \eqref{eq:52}
  and \eqref{eq:actionbup}, the singular points on $\Sigma_0$ are
  fixed points of the $S^1$-action and the orbi-weights of the point
  corresponding to $x_1$ are
  \begin{equation}
    \label{eq:55}
    \frac{a_2b_1 - a_1b_2}{pb_1} \, , \, 
    \frac{a_1m}{pb_1}, 
  \end{equation}
  while those of the point
  corresponding to $x_2$ are
  \begin{equation}
    \label{eq:56}
    \frac{a_1b_2 - a_2b_1}{pb_2} \, , \, 
    \frac{a_2m}{pb_2}.
  \end{equation}
  If $ a_1b_2-a_2b_1\neq 0$,
  then the exceptional divisor is an isotropy orbi-sphere with
  singular points that have orbifold structure groups of order $b_1$
  and $b_2$ and these are the `poles' of the orbi-sphere. This
  occurs in cases $(I),(II)$ and $(IV)$, in which there is a
  `new' edge in the graph, which corresponds to the exceptional
  divisor (cf. Remark \ref{rmk:exc_divisor}). Moreover, if one of
  $a_1$ or $a_2$ vanishes, then precisely one of the two singular
  points has one zero orbi-weight, so that it lies on the minimal fixed
  orbi-surface (see case $(IV)$). If $ a_1b_2-a_2b_1= 0$,
  then the exceptional divisor is a component of the fixed point set. Finally, since
  $$
  \gcd{(a_1,a_2)}=\gcd{(b_1,b_2)}=1 \text{ and } b_1,b_2 >0,
  $$
  we have that
  $$b_1=\lvert a_1\rvert,  \quad b_2=\lvert a_2\rvert,  \quad
  a_1a_2>0 \quad \text{and} \quad p = m,$$ 
  i.e., we are in case $(III)$. \\


Let us determine the type of the two singular points on
$\Sigma_0$. Since these are fixed points of the $S^1$-action, by
definition of the labeled multigraph, this determines the
corresponding labels of the multigraph. To this end, first note that $\gcd(\lvert b \rvert,b_1)=1$, since
$$
1 = \gcd(b_1,b_2)=\gcd(b_1,b_1l - b m)=\gcd (b_1,\lvert b \rvert m)
$$
and $\gcd(b_1,\lvert b \rvert )$  divides $\gcd(b_1 , \lvert b
\rvert  m)$. By \eqref{equ::blowupAction}, the action of
$\mathbb{Z}_{b_1}$ on $\C^2/\Z_m \times \C$ 
as a subgroup of $\widetilde{S}^1$ can be written as 
\begin{equation}
  \label{eq:54}
\xi_{b_1}\cdot \left([z_1,z_2]_m,w\right)= ([z_1,\xi_{b_1}^{-b}z_2]_m,\xi_{b_1}^{-1}w).
\end{equation}
Hence, if $\frac{1}{b_1}(1,s_1)$ is the type of the singular point
corresponding to $x_1$, then by \eqref{eq:55}, either
$s_1=b \mod b_1$ if $a_1m\geq a_2b_1-a_1 b_2$ or $s_1 b = 1
\mod b_1$. For the singular point that corresponds to $([0,1]_m,0)$,
first we show that that $\gcd(b_1\gamma+b l',b_2)=1$, where
$l',\gamma$ are as in the statement. To this end, we set $d:=
\gcd(b_1\gamma+b l',b_2)$ and write $b_2=b_2^\prime \, d$ and
$b_1\gamma+b l'=c\, d$ for some integers $b_2^\prime$ and
$c$. Then 
\begin{align*}
  c\, d=b_1\left(\frac{1-ll'}{m}\right)-l'\left(\frac{b_2-b_1l}{m}\right)=\frac{b_1-b_2l'}{m},
\end{align*}
so that $d$ divides $b_1-b_2l'$. Since $b_2=b_2^\prime d$, it follows
that $d$ divides $b_1$. Hence, since $\gcd(b_1,b_2)= 1$, then $d=1$.
In analogy with \eqref{eq:53}, the action of  $\mathbb{Z}_{b_2}$ on
$\C^2/\Z_m \times \C$ as a subgroup of $\widetilde{S}^1$ can be written as
\begin{equation}\label{eq:labels2}
e^{\frac{2\pi i}{b_2}}\cdot\left([z_1,z_2]_m,w\right) =
([e^{\frac{2\pi i(b l' +b_1\gamma) }{b_2}}z_1,z_2]_m,e^{-\frac{2\pi i}{b_2}}w).
\end{equation}
Hence, if $\frac{1}{b_2}(1,s_2)$ is the type of the singular point
corresponding to $x_2$, then by \eqref{eq:56}, either
$s_2=-(b l' + b_1\gamma) \mod b_2$ if $a_2m\geq a_1b_2-a_2 b_2$,
or $-(b l'+ b_1\gamma)  s_2 = 1 \mod b_2$. \\

The moment map value of the singular points on $\Sigma_0$ can be
obtained easily by combining \eqref{eq:mmwbu} and the homogeneous
moment map for the $S^1$-action on $\C^2/\Z_m$ of
\eqref{eq:58}. These agree with those given in the statement. Hence,
this completes the proof of cases $(I)$ and $(II)$. \\

To complete the proof of cases $(III)$ and $(IV)$, by definition of
the labeled multigraph, we need to
determine the symplectic invariants of the (possibly new) fixed
minimal orbi-surface (see Theorem
\ref{thm:classification_symplectic_orbi-surfaces}). In Case $(III)$,
the fixed orbi-surface is `created' by the weighted blow-up and, as
shown above, it has genus zero and precisely two singular points whose
type we have already determined. In Case $(IV)$, there is a fixed
minimal orbi-surface both before and after the weighted blow-up.
Topologically, what this operation does is to remove a disk from the
fixed orbi-surface and to replace it with another disk, i.e., the
genus is unaffected. Moreover, the new label of the (possibly)
singular point on the fixed orbi-surface can be calculated as
above.

Hence, it remains to calculate the area of the fixed
orbi-surface after the blow-up in both cases. We do this using the
Localization Formula for equivariant cohomology (see
Theorem~\ref{prop:localization}): More explicitly, we compute the
integrals of the  equivariant form $1$ and of the equivariant
symplectic form before and after the blow-up. First, we deal with case
$(III)$: 
\begin{align*}
		0& =\int_{\widetilde{M}_{x,\varepsilon}} 1 =\int_{M} 1 -  \frac{m}{ a_1 a_2\, y^2} +  \int_{\Sigma_{0}} \frac{1}{\beta_{0}u_0 + y}
		\\ & =  -  \frac{m}{ a_1 a_2\, y^2} + \frac{1}{y} \int_{\Sigma_{0}} \sum_{j=0}^\infty \left(-\frac{\beta_{0}u_0}{y}\right)^j =\frac{1}{y^2}\left(-  \frac{m}{ a_1 a_2}  - \beta_{0} \right), 	
\end{align*}
where $u_0\in H^2(\Sigma_0;\mathbb{Z})$ is a generator and $\beta_0$
is the degree of the normal orbi-bundle of $\Sigma_0$ in
$\widetilde{M}_{x,\varepsilon}$. Hence, $\beta_{0}=  -  \frac{m}{ a_1 a_2}$. Moreover,
\begin{equation}
  \label{eq:59}
  \begin{split}
    0& =\int_{\widetilde{M}_{x,\varepsilon}}\tilde{\omega}^{S^1} =\int_{M}\omega^{S^1} + \frac{m^2 \, H(x) }{m\, a_1 a_2\, y} +  \int_{\Sigma_{0}} \frac{\iota_{\Sigma_0}\tilde{\omega}^{S^1}}{\beta_{0}u_{0} + y}
    \\ &= \int_{\Sigma_{0}} \frac{\iota_{\Sigma_0}\tilde{\omega}^{S^1}}{\beta_{0}u_{0} + y}
    =\frac{1}{y}\left(area(\Sigma_{0})+ \beta_{0} \varepsilon\right),
  \end{split}
\end{equation}
where we use that $\alpha = H(x) = 0$ by assumption (cf. also
\eqref{eq:47}). Hence, the area of the exceptional divisor
$\Sigma_{0}$ is $\frac{m}{a_1 a_2} \varepsilon$. This completes the
proof of case $(III)$.

Finally, in case $(IV)$, we denote by $\Sigma$ and
$\widetilde{\Sigma}$ the fixed orbi-surfaces before and after the
weighted blow-up. Correspondingly, we use the notation $u$ and
$\widetilde{u}$ for the generators in cohomology, and $\beta$ and
$\widetilde{\beta}$ for the degrees of the normal bundles. Then, 
\begin{align*}
		0& =\int_{\widetilde{M}_{x,\varepsilon}} 1 =\int_{M} 1
                   -  \int_{\Sigma} \frac{1}{\beta u + y} +  \int_{\widetilde{\Sigma}} \frac{1}{\widetilde{\beta}\widetilde{u} + y} - \frac{b_i}{y^2m \,b_j}
	 =\frac{1}{y^2}\left(\beta - \widetilde{\beta}  - \frac{b_i}{m \, b_j} \right).
\end{align*}
Hence, $\widetilde{\beta} - \beta = - \frac{b_i}{m \,
  b_j}$. Analogously, since the moment map value of the new isolated
fixed point is $\frac{m \varepsilon}{b_i}$, we have that

\begin{align*}
		0&
                   =\int_{\widetilde{M}_{x,\varepsilon}}\widetilde{\omega}^{S^1}
                   =\int_{M}\omega^{S^1} -  \int_{\Sigma}
                   \frac{\iota_{\Sigma}\omega^{S^1}}{\beta u + y} +
                   \int_{\widetilde{\Sigma}}
                   \frac{\iota_{\widetilde{\Sigma}}\tilde{\omega}^{S^1}}{\widetilde{\beta}
                   \widetilde{u} + y} 
		+ \frac{1}{b_j\, y}\varepsilon\\ 
	&=  \frac{1}{y}\left( area(\tilde{\Sigma}) - area(\Sigma)  + \frac{1}{b_j} \varepsilon \right).
\end{align*}
Hence, $area(\widetilde{\Sigma} )= area(\Sigma)  - \frac{1}{b_j}
\varepsilon$, as desired.
\end{proof}

\begin{remark}\label{rem::weightedblowupConnectsum}
  As proved in \cite[Lemma 5.1]{lgodinho} and in  \cite[page
  3]{desing}, topologically each weighted blow-up described in
  Proposition~\ref{prop::desing} is the connect sum of $M$ with a
  suitable weighted projective space with reverse orientation. By
  construction, in each case the orders of the orbifold structure
  groups of the singular points on the resulting exceptional
  divisors are coprime.
\end{remark}

The proof of the following result, which deals with the second family
of weighted blow-ups, is entirely analogous to that of Proposition
\ref{prop::desing}. For brevity, we omit it.

\begin{proposition}\label{prop::desing2}
Let $(M,\omega,H)$ be a Hamiltonian $S^1$-space and let $x\in M$ be a singular point of type $\frac{1}{m}(1,l)$. Let $1 \leq l' < m$ be the integer such that $l
  \cdot l' = 1 \mod m$ and let $\gamma \in \Z$ be such that $ll'
  + m \gamma = 1$. Let $\frac{a_1}{p},\frac{a_2}{p}$ be the
  orbi-weights of the $S^1$-action at $x$, where $\gcd (a_1,a_2)=1$, $a_1\geq a_2$ and
  $\gcd(\lvert a_1l - a_2\rvert, m)=p<m$. We set $k:=m/p$. Let
  $b_1,b_2>0$ and $b$ be integers such that
  \begin{equation}\label{eq:Prop}
    \gcd(b_1,b_2)=1, \quad b_1l-b_2=b p \quad \text{and} \quad
    \gcd(b, k)=1.
  \end{equation}
  Let $s_1$ be the integer that satisfies $1\leq s_1 < b_1 k$ and
  $s_1=b \! \!\!\mod b_1 k$ if  $a_1p\geq a_2b_1-a_1b_2$, or $s_1
  b= 1 \! \!\!\mod b_1 k$ otherwise. Let $s_2$ be the integer that
  satisfies $1\leq s_2 < b_2 k$ and $s_2 = - (b l' + b_1 k \gamma)\! \!\!\mod b_2  k$
  if  $a_2 p\geq a_1b_2 - a_2b_1$, or $(b l'+b_1 k \gamma)s_2=
  -1\! \!\!\mod b_2 k$ otherwise.

  An $S^1$-equivariant weighted blow-up of $(M,\omega)$ at $x$
  $$
  \text{WBU}\left(m,\frac{a_1}{p},\frac{a_2}{p},\frac{b_1}{p},\frac{b_2}{p},\varepsilon\right)
  $$
  results in the following `local' changes in the labeled multigraph of
  $(M,\omega,H)$ near the vertex that corresponds to the connected
  component of
  the fixed point set to which $x$ belongs: \\ \\
  \noindent\begin{minipage}{.55\linewidth}
    $(I)$ Non-extremal fixed point: \\  \\ 
    \resizebox{6cm}{!}{%
				\begin{tikzpicture}		
					\coordinate[label=right:] (BB) at (-1.5,1.3);
					\coordinate[label=right:${\,\,\frac{1}{b_1 k}(1,s_1), \color{red} \alpha + \frac{\lvert a_1\rvert}{b_1}\varepsilon}$] (AA) at (-2,0.7);
					\coordinate[label=right:${\,\, \frac{1}{b_2 k}(1,s_2), \color{red} \alpha - \frac{\lvert a_2\rvert}{b_2}\varepsilon }$] (CC) at (-2,-0.5);
					\coordinate[label=right:] (DD) at (-1.5,-1.1);
									
					\coordinate[label=left:$\lvert a_1\rvert k$](X) at ($(AA)!1!(BB)$);
					\coordinate[label=left:$\lvert a_2\rvert k$](X) at ($(CC)!1!(DD)$);
					\coordinate[label=left:$\frac{\lvert a_1b_2-a_2b_1\rvert k}{p}$](X) at ($(AA)!0.5!(CC)$);
					
					\draw[very thick] (BB)--(AA)--(CC)--(DD);
					\fill (AA) circle (4pt);
					\fill (CC) circle (4pt);
					
					\hspace{-.5cm}\draw[->, very thick] (-4,0) to (-3.5,0);
					
					\coordinate[label=left:${\frac{1}{m}(1,l),\, \color{red} \alpha\,\,}$] (A) at (-4.5,0);
					\coordinate[label=right:] (B) at (-4,1);
					\coordinate[label=right:] (C) at (-4,-1);
					\coordinate[label=left:$\lvert a_1\rvert k$] (D)at ($(A)!0.7!(B)$);
					\coordinate[label=left:$\lvert a_2\rvert k$] (D)at ($(A)!0.7!(C)$);
					\draw[very thick] (B) -- (A)--(C);
					\fill (A) circle (4pt);
				\end{tikzpicture}}
			\end{minipage}%
\noindent\begin{minipage}{.55\linewidth}
$(II)$ Isolated extremum {\small ($a_1b_2\neq a_2 b_1$):} \\ \\				
\resizebox{6cm}{!}{%
				\begin{tikzpicture}				
					\coordinate[label=right:] (BB) at (-1,1);
					\coordinate[label=above:${\hspace{3cm}\frac{1}{b_i k}(1,s_i),   \color{red}  \alpha \!+\! \frac{\lvert a_i\rvert}{b_i}\varepsilon}$] (AA) at (-0.7,0.5);
					\coordinate[label=below:${\,\,\,\frac{1}{b_j k}(1,s_j), \color{red}  \alpha + \frac{\lvert a_j \rvert}{b_j}\varepsilon  }$] (CC) at (0,-0.5);
					\coordinate[label=right:] (DD) at (0.7,0.5);

					\coordinate[label=left:$\lvert a_i\rvert k$](X) at ($(AA)!1!(BB)$);
					\coordinate[label=right:$\lvert a_j\rvert k$](X) at ($(CC)!0.7!(DD)$);
					\coordinate[label=left:$\frac{\lvert a_1b_2-a_2b_1\rvert k}{p}$](X) at ($(AA)!0.7!(CC)$);
					
					\draw[very thick] (BB)--(AA)--(CC)--(DD);
					\fill (AA) circle (4pt);
					\fill (CC) circle (4pt);
					
					\draw[->, very thick] (-2.5,0.2) to (-2,0.2);
					
					\coordinate[label=below:${\frac{1}{m}(1,l), \color{red} \alpha}$] (A) at (-4,-0.5);
					\coordinate[label=right:] (B) at (-4.7,0.5);
					\coordinate[label=right:] (C) at (-3.3,0.5);
					\coordinate[label=left:$\lvert a_i\rvert k$] (D)at ($(A)!0.7!(B)$);
					\coordinate[label=right:$\lvert a_j\rvert k$] (D)at ($(A)!0.7!(C)$);
					\draw[very thick] (B) -- (A)--(C);
					\fill (A) circle (4pt);
				\end{tikzpicture}}
			\end{minipage}	\\	\\
			
\noindent\begin{minipage}{.55\linewidth}
$(III)$ Isolated extremum  {\small ($b_i=\lvert a_i\rvert$):} \\ \\
\resizebox{6cm}{!}{%
				\begin{tikzpicture}				
					\coordinate[label=left:] (AA) at (-1.7,0.5);
					\coordinate[label=below:] (CC) at (-1,-0.5);
					\coordinate[label=below:${\hspace{.5cm}\frac{1}{\lvert a_1 \rvert k}(1,s_1),\frac{1}{\lvert a_2\rvert k}(1,s_2)}$] (XX) at (-1,-0.9);
					\coordinate[label=below:${\hspace{.5cm}A=\frac{p^2}{a_1a_2 m}\varepsilon,g=0, \color{red} \alpha + \varepsilon}$] (XX) at (-1,-1.5);
					\coordinate[label=right:] (DD) at (-0.3,0.5);

					\coordinate[label=right:$\lvert a_2 \rvert k$](X) at ($(CC)!0.7!(DD)$);
					\coordinate[label=left:$\lvert a_1\rvert k$](X) at ($(AA)!0.3!(CC)$);
					
					\draw[ very thick] (AA)--(CC)--(DD);
					\fill (CC) circle (8pt);
					
					\draw[->, very thick] (-2.8,-0.2) to (-2.3,-0.2);
					
					\coordinate[label=below: ${\frac{1}{m}(1,l), \color{red} \alpha}$] (A) at (-4,-0.5);
					\coordinate[label=right:] (B) at (-4.7,0.5);
					\coordinate[label=right:] (C) at (-3.3,0.5);
					\coordinate[label=left:$\lvert a_1\rvert k$] (D)at ($(A)!0.7!(B)$);
					\coordinate[label=right:$\lvert a_2 \rvert k$] (D)at ($(A)!0.7!(C)$);
					\draw[very thick] (B) -- (A)--(C);
					\fill (A) circle (4pt);
				\end{tikzpicture}}
			\end{minipage}%
			\noindent\begin{minipage}{.55\linewidth}
$(IV)$ Point on fixed orbi-surface \\ {\small ($p=1$):}\\ \\
	\resizebox{6cm}{!}{%
				\begin{tikzpicture}	\hspace{-.2cm}		
						
					\coordinate[label=right:${\frac{1}{b_i m}(1,s_i), \color{red} \alpha + \frac{a_i}{b_i}\varepsilon}$] (AA) at (-1.5,0.5);
					\coordinate[label=below:] (CC) at (-1.5,-0.5);
					\coordinate[label=below:${\hspace{1cm}\frac{1}{b_j m}(1,s_j),A- \frac{1}{mb_j} \varepsilon,g, \color{red} \alpha }$] (XX) at (-1,-.8);				
					\coordinate[label=right:] (DD) at (-1.5,1.5);
					\coordinate[label=right:$m$](X) at ($(AA)!0.7!(DD)$);
					\coordinate[label=right:$b_j m$](X) at ($(AA)!0.5!(CC)$);
					\draw[very thick] (CC)--(DD);
					\fill (AA) circle (4pt);
					\fill (CC) circle (8pt);
					
					\draw[->, very thick] (-3,0) to (-2.5,0);

					\coordinate[label=below:] (A) at (-4,-0.5);
					\coordinate[label=below:${\frac{1}{m}(1,l),A,g,\color{red} \alpha}$] (X) at (-4,-0.8);			
					\coordinate[label=right:] (B) at (-4,1);
					\coordinate[label=left:$m$] (D) at ($(A)!0.7!(B)$);
					\draw[very thick] (A) -- (B);		
					\fill (A) circle (8pt);		
				\end{tikzpicture}}
			\end{minipage} 	\\ \\
together with cases $(II)$-$(IV)$ turned upside down\footnote{In
  all cases upside down, the
  signs of the terms involving $\varepsilon$ in the moment map labels are changed.}. Here
$i=1$ and $j=2$ if $b_2|a_1|>b_1|a_2|$ and $a_1\neq 0$, and $i=2$ and
$j=1$ otherwise.
\end{proposition}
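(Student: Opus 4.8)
\textbf{Proof strategy for Proposition \ref{prop::desing2}.}

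The plan is to follow, essentially verbatim, the proof of Proposition \ref{prop::desing}, adjusting only the arithmetic coming from the change of weights from $(b_1/m, b_2/m)$ to $(b_1/p, b_2/p)$. As before, I would begin by reducing to a local model: using Theorem \ref{cor::localformpt}, I may assume $x = [0,0]_m$ in a model $\C^2/\Z_m$ for a cyclic isolated singular point of type $\frac{1}{m}(1,l)$, with moment map value $\alpha = 0$, and that the $S^1$-action is $\lambda \cdot [z_1,z_2]_m = [\lambda^{a_1/p}z_1, \lambda^{a_2/p}z_2]_m$. I introduce an auxiliary circle $\widetilde{S}^1$ acting by $\widetilde{\lambda} \cdot [z_1,z_2]_m = [\widetilde{\lambda}^{b_1/p}z_1, \widetilde{\lambda}^{b_2/p}z_2]_m$; the condition $b_1 l - b_2 = bp$ in \eqref{eq:Prop} (rather than $b_1 l - b_2 = bm$ as in \eqref{eq:ebucond}) is exactly what makes this action well-defined and effective by Theorem \ref{cor::localformpt}, since now $\gcd(|b_1 l - b_2|, m) = p \cdot \gcd(|b|, k)$ and $\gcd(b,k)=1$ forces the relevant gcd to equal $p$, not $m$. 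The difference in the divisibility condition is the source of all the discrepancies between the two statements (e.g.\ the orbifold structure groups on the exceptional divisor being $\Z_{b_1 k}$ and $\Z_{b_2 k}$ instead of $\Z_{b_1}$ and $\Z_{b_2}$).

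Next I would carry out the symplectic cut with the extended $\widetilde{S}^1$-action $\widetilde{\lambda}\cdot([z_1,z_2]_m, w) = ([\widetilde{\lambda}^{b_1/p}z_1, \widetilde{\lambda}^{b_2/p}z_2]_m, \widetilde{\lambda}^{-1}w)$, whose moment map is $\widetilde{H}([z_1,z_2]_m,w) = \frac{1}{2}(\frac{b_1}{p}|z_1|^2 + \frac{b_2}{p}|z_2|^2 - |w|^2)$. Applying property \ref{item:24} of Section \ref{sec:sympl-weight-blow}, the exceptional divisor $\Sigma_0$ is diffeomorphic to $\C P^1(\widehat{b}_1, \widehat{b}_2)/\Gamma'$, where the $\widehat{b}_i$ are integer versions of the weights and $\Gamma' = \Z_m/(\Z_m \cap \widetilde{S}^1)$; since here $\Z_m \cap \widetilde{S}^1 \simeq \Z_p$, one gets $\Gamma' \simeq \Z_k$, and a short computation identifies $\Sigma_0$ with $\C P^1(b_1 k, b_2 k)$ (up to the usual diffeomorphism classification of orbi-surfaces, Theorem \ref{thm::classorbisurface}). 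I then compute, exactly as in Proposition \ref{prop::desing}, the residual $S^1$-action near $\Sigma_0$ via the analogue of \eqref{eq:actionbup}, obtaining that the edge joining the two new vertices carries the label $\frac{|a_1 b_2 - a_2 b_1| k}{p}$ and that the orbi-weights at the two poles are the analogues of \eqref{eq:55}–\eqref{eq:56} with $m$ replaced by $p$ in the numerators of the "radial" weights. Reading off signs distinguishes cases $(I)$–$(IV)$ just as before: case $(III)$ ($\Sigma_0$ a fixed orbi-surface) arises precisely when $a_1 b_2 = a_2 b_1$, which, combined with $\gcd(a_1,a_2)=\gcd(b_1,b_2)=1$, forces $b_i = |a_i|$; case $(IV)$ ($x$ on a fixed orbi-surface) arises when $p=1$, in which case one $a_i$ vanishes.

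Finally, to determine the type labels $\frac{1}{b_1 k}(1,s_1)$ and $\frac{1}{b_2 k}(1,s_2)$ I would, as in the proof of Proposition \ref{prop::desing}, write the action of the relevant structure group $\Z_{b_i k}$ on $\C^2/\Z_m \times \C$ as a subgroup of $\widetilde{S}^1$: the analogue of \eqref{eq:54} becomes $\xi_{b_1 k}\cdot([z_1,z_2]_m,w) = ([z_1, \xi_{b_1 k}^{-b}z_2]_m, \xi_{b_1 k}^{-1}w)$ and the analogue of \eqref{eq:labels2} uses the exponent $bl' + b_1 k\gamma$ in place of $bl' + b_1\gamma$; verifying that $\gcd(b, b_1 k) = 1$ and $\gcd(bl' + b_1 k\gamma, b_2 k) = 1$ (so that these really are generators) is a routine gcd manipulation parallel to the one in Proposition \ref{prop::desing}, using $\gcd(b,k)=1$, $\gcd(b_1,b_2)=1$ and $ll' + m\gamma = 1$. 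Comparing with the orbi-weight signs gives the two cases for each $s_i$ stated in the proposition. The moment map values follow by combining $\widetilde{H}$ with the homogeneous moment map for the $S^1$-action, and the area labels in cases $(III)$ and $(IV)$ follow by integrating the equivariant forms $1$ and $\widetilde{\omega}^{S^1}$ and applying the localization formula (Theorem \ref{prop:localization}), exactly as in \eqref{eq:59} but now producing the factor $\frac{p^2}{a_1 a_2 m}$ in case $(III)$ because the radial orbi-weight along $\Sigma_0$ at the relevant pole is $\frac{a_i m}{p b_i} = \frac{a_i}{a_i} \cdot \frac{m}{p} = k$ rather than $\frac{a_i m}{p b_i}$ with $b_i$ arbitrary. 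The main obstacle — though it is bookkeeping rather than a genuine difficulty — is keeping the two sets of gcd and congruence conditions straight and confirming that every claimed divisibility (especially $\gcd(bl' + b_1 k\gamma, b_2 k) = 1$, which now involves the extra factor $k$) genuinely holds under hypothesis \eqref{eq:Prop}; since all of this is a near-mechanical transcription of the already-proved Proposition \ref{prop::desing}, I would, as the authors do, present it as "entirely analogous" and omit the details.
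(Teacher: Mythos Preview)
Your proposal is correct and matches the paper's approach exactly: the authors state that the proof ``is entirely analogous to that of Proposition~\ref{prop::desing}'' and omit it, which is precisely what you outline (and you supply more of the bookkeeping than they do). The key adjustments you identify --- the divisibility condition $\gcd(|b_1 l - b_2|, m) = p$ rather than $m$, the quotient $\Gamma' \simeq \Z_k$ yielding structure groups $\Z_{b_i k}$, and the modified exponents in the analogues of \eqref{eq:54}--\eqref{eq:labels2} --- are exactly the changes needed.
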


\begin{remark}\label{rmk:other_blow-up}
  Topologically, each weighted blow-up described in
  Proposition~\ref{prop::desing} is the connect sum of $M$ with a
  suitable quotient of a weighted projective space by a finite  cyclic
  group with reverse orientation (cf. Remark
  \eqref{rem::weightedblowupConnectsum}). We remark that $p < m$ in all cases
  considered in Proposition \eqref{prop::desing2}. 
\end{remark}

By Propositions
\ref{prop::desing} and \eqref{prop::desing2} there are clear
restrictions on the size of an $S^1$-equivariant weighted blow-up,
which are entirely analogous to those that occur in the smooth case
(see \cite[Definition 7.1]{karshon}): Namely, the moment map labels
need to stay strictly monotone along `chains' of edges and the area
labels need to stay positive. We call these
blow-ups {\bf admissible}. \\

We conclude this section by stating a result that we need in Section
\ref{ChapterClassification}. It keeps track of the change in the
degree of a fixed orbi-surface after performing a blow-up as in case
$(IV)$ in either of the above propositions and is an immediate consequence of
\eqref{eq:48} and \eqref{eq:bsigma-}. Moreover, we emphasize that the
notation below is different to that of Propositions \ref{prop::desing}
and \ref{prop::desing2}.

\begin{corollary}\label{lem::blowupchange0}
Let $(M,\omega,H)$ be  a Hamiltonian $S^1$-space with a fixed
orbi-surface $\Sigma$ 
and let $x\in\Sigma$ be a singular point with orbifold structure group
of order $m$.  Let $(\widetilde{M},\widetilde{\omega},\widetilde{H})$
be the Hamiltonian $S^1$-space obtained from $(M,\omega,H)$ by
performing a weighted blow-up at $x$ as in case $(IV)$ of either
Proposition~\ref{prop::desing} or \ref{prop::desing2}. Let the orders
of the orbifold structure groups of the two singular points on the
exceptional divisor be $p$ and $q$, where the former corresponds to
the singular point lying on the fixed orbi-surface
$\widetilde{\Sigma}$ in
$(\widetilde{M},\widetilde{\omega},\widetilde{H})$. Let $\beta,
\widetilde{\beta}$ denote the degrees of the normal bundles of
$\Sigma, \widetilde{\Sigma}$ in $(M,\omega,H)$ and
$(\widetilde{M},\widetilde{\omega},\widetilde{H})$ respectively. Then
\begin{equation}
  \label{eq:60}
  \widetilde{\beta} - \beta = -\frac{q}{m p}.
\end{equation}

\end{corollary}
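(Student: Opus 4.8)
The statement is purely about how the degree of a fixed orbi-surface changes under a weighted blow-up of type $(IV)$, and the excerpt already tells us the inputs we need: in a type $(IV)$ blow-up the original fixed orbi-surface $\Sigma$ containing the singular point $x$ of order $m$ is replaced by a new fixed orbi-surface $\widetilde{\Sigma}$, and a `new' isolated fixed point is created on the exceptional divisor, which is an isotropy orbi-sphere joining the (possibly singular) point on $\widetilde{\Sigma}$ to that isolated fixed point. The plan is to apply the ABBV localization formula of Theorem \ref{prop:localization} to the equivariant form $1$, both for $(M,\omega,H)$ and for $(\widetilde{M},\widetilde{\omega},\widetilde{H})$, and take the difference. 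This is exactly the computation already carried out in case $(IV)$ of the proof of Proposition \ref{prop::desing} (and the analogous one for Proposition \ref{prop::desing2}), so the role of this corollary is to record the conclusion in the notation $\beta,\widetilde\beta,p,q,m$ of the present statement.

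First I would set up notation: let $\beta$ be the degree of $\nu_\Sigma$ in $M$ and $\widetilde\beta$ the degree of $\nu_{\widetilde\Sigma}$ in $\widetilde M$. Since $\Sigma$ is a fixed orbi-surface, by Remark \ref{rmk:type_fixed} the non-zero orbi-weight of any point of $\Sigma$ is $\pm 1$, so $e^{S^1}(\nu_\Sigma)=\beta u\pm y$ with $u\in H^2(\Sigma;\Z)$ a generator, and likewise for $\widetilde\Sigma$. All the other connected components of the fixed point set of $M$ are untouched by the blow-up, and they appear identically in the localization formulas for $M$ and for $\widetilde M$; the only differences are (a) $\Sigma$ versus $\widetilde\Sigma$, and (b) the single new isolated fixed point lying on the exceptional divisor, whose orbifold structure group has order $q$ (the order of the structure group of the pole of the exceptional divisor that is \emph{not} on $\widetilde\Sigma$) and whose orbi-weights, read off from the exceptional divisor as in the proof of Proposition \ref{prop::desing}, contribute a factor $\pm m\,p$ in the product of orbi-weights (more precisely, the relevant product $\lambda_1\lambda_2$ equals $\pm\frac{m}{b_ib_j}$ in the notation there, which, since one of the structure group orders is $m$ and the other is $q$, matches the normalization giving $\tfrac{q}{mp}$ after accounting for $p = b_i$-type data).

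The key step is then to subtract $\int_M 1 = 0$ from $\int_{\widetilde M} 1 = 0$, expand each Euler-class denominator as a geometric series in $y^{-1}$ exactly as in \eqref{eq:47}, and collect the coefficient of $y^{-2}$. Everything coming from the unchanged fixed components cancels, leaving
\begin{equation*}
  0 = \big(\widetilde\beta - \beta\big) + \frac{q}{mp},
\end{equation*}
which is \eqref{eq:60}. The signs require a little care: one must track the orientation conventions for the normal bundles of $\Sigma$ and $\widetilde\Sigma$ (both are, say, minimal orbi-surfaces in the relevant local picture, so the sign in $e^{S^1}(\nu)=\beta u + y$ is the same for both), and one must correctly identify which of the two orbifold structure group orders on the exceptional divisor is $p$ (the one on $\widetilde\Sigma$) and which is the new isolated point's order $q$. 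This bookkeeping is the only genuine obstacle; the homological content is entirely routine given Theorem \ref{prop:localization} and the explicit description of the exceptional divisor in case $(IV)$.

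Concretely, I would write: ``Apply Theorem \ref{prop:localization} with $\alpha = 1$ to both $(M,\omega,H)$ and $(\widetilde M,\widetilde\omega,\widetilde H)$. By \eqref{eq:48} (and its analog for a unique fixed orbi-surface, \eqref{eq:bsigma-}), the contributions of all fixed point components other than $\Sigma$, resp.\ $\widetilde\Sigma$, and the new isolated fixed point on the exceptional divisor are identical in the two formulas. Subtracting, and using that the new isolated fixed point has orbifold structure group of order $q$ and orbi-weights whose product equals $\mp m p/(\text{structure group data})$ by the computation of orbi-weights on the exceptional divisor in the proof of Proposition \ref{prop::desing} case $(IV)$, we obtain $\widetilde\beta - \beta + \tfrac{q}{mp} = 0$, i.e.\ \eqref{eq:60}.'' I would end by noting that the same argument applies verbatim with Proposition \ref{prop::desing2} in place of Proposition \ref{prop::desing}, since the only change is the structure group orders attached to the exceptional divisor, which are already encoded in $p$ and $q$.
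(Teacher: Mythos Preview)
Your proposal is correct and takes essentially the same approach as the paper: the paper records this corollary as an immediate consequence of \eqref{eq:48} and \eqref{eq:bsigma-} (localization applied to $\alpha=1$), and the explicit subtraction argument you outline is precisely the computation carried out in case $(IV)$ of the proof of Proposition~\ref{prop::desing}, as you correctly identify. Your description of the orbi-weight product at the new isolated fixed point is a bit vague, but in the paper's notation this is simply the identity $\tfrac{1}{b_i}\cdot\tfrac{1}{\lambda_1\lambda_2}=-\tfrac{b_i}{m\,b_j}$, which under the relabeling $p=b_j$, $q=b_i$ gives exactly \eqref{eq:60}.
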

\subsubsection{Desingularization of Hamiltonian $S^1$-spaces}
We may proceed to the proof of our first main result using the
description of the effects of the families of $S^1$-equivariant
weighted blow-ups in Propostion~\ref{prop::desing}.

\begin{theorem}[Desingularization]\label{thm:desingularization}
  Let $(M,\omega,H)$ be a Hamiltonian $S^1$-space. After applying at
  most finitely many $S^1$-equivariant weighted blow-ups to
  $(M,\omega,H)$, the resulting Hamiltonian $S^1$-space has no
  singular point.
\end{theorem}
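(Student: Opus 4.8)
The plan is to desingularize the orbifold one singular point at a time, using an induction on a suitable complexity measure attached to the finite set of singular points. Since $(M,\omega,H)$ is a Hamiltonian $S^1$-space, by our standing assumptions its singular set is finite, each singular point $x$ is fixed by the $S^1$-action (Corollary \ref{cor:action_preserves_structure_group}), and has cyclic orbifold structure group $\Z_m$ with a well-defined type $\frac{1}{m}(1,l)$ (Corollary \ref{cor:sing_point_dim_4}, Theorem \ref{cor::localformpt}). To each singular point $x$ of type $\frac{1}{m}(1,l)$ I would associate the \emph{length} of the Hirzebruch--Jung continued fraction expansion of $m/l$ (equivalently, the number of blow-ups needed in the classical Hirzebruch--Jung resolution of the cyclic quotient singularity $\C^2/\Z_m(1,l)$), together with $m$ itself; the complexity of $(M,\omega,H)$ is then, say, the lexicographically-ordered multiset of pairs $(m_x, \mathrm{length}(m_x/l_x))$ over all singular points $x$. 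I will show that a single well-chosen admissible $S^1$-equivariant weighted blow-up strictly decreases this complexity, so that after finitely many steps there are no singular points left.

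\textbf{The key step.} Fix a singular point $x$ of type $\frac{1}{m}(1,l)$ with $m \geq 2$; write $\frac{a_1}{p}, \frac{a_2}{p}$ for the orbi-weights with $\gcd(a_1,a_2)=1$, $a_1 \geq a_2$. I would perform the $S^1$-equivariant weighted blow-up
$$
\mathrm{WBU}\!\left(m, \tfrac{a_1}{p}, \tfrac{a_2}{p}, \tfrac{l'}{m}, \tfrac{1}{m}, \varepsilon\right)
\quad\text{or}\quad
\mathrm{WBU}\!\left(m, \tfrac{a_1}{p}, \tfrac{a_2}{p}, \tfrac{1}{m}, \tfrac{l}{m}, \varepsilon\right),
$$
where $1 \leq l' < m$ satisfies $l \cdot l' \equiv 1 \bmod m$, choosing $\varepsilon>0$ small enough that the blow-up is admissible (monotonicity of moment-map labels along chains and positivity of area labels — these impose only upper bounds on $\varepsilon$, cf. Remark \ref{rmk:size_blow_up}, and there is always room since $x$ is an isolated fixed point or sits on a fixed orbi-surface). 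The weights $(l', 1)$ (resp. $(1,l)$) satisfy the hypothesis \eqref{eq:ebucond} of Proposition \ref{prop::desing}: indeed $l' \cdot l - 1 = m\gamma$ for a suitable $\gamma$ (resp. $1 \cdot l - l = 0 = m \cdot 0$), so the blow-up falls under one of the cases $(I)$--$(IV)$ of Proposition \ref{prop::desing}. In every case the vertex corresponding to $x$ is removed and the exceptional divisor is created, and by Remark \ref{rmk:exc_divisor} the exceptional divisor has a unique singular point whose orbifold structure group has order $\ell' < m$ (equal to $b_1$ or $b_2$, precisely the first denominator in the Hirzebruch--Jung expansion of $m/l$). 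Thus this single blow-up replaces one singular point of structure-group order $m$ by one of strictly smaller order; the other singular point on the exceptional divisor is regular (order $1$) because $\gcd(b_1,b_2)=1$ by construction. All other singular points of $M$ are untouched. Consequently the complexity drops: we have removed the pair $(m, \mathrm{length}(m/l))$ and introduced a pair with strictly smaller first (or, if one uses the continued-fraction bookkeeping, strictly shorter) entry. Iterating the Hirzebruch--Jung algorithm, after finitely many such blow-ups the image of $x$ becomes regular.

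\textbf{Conclusion of the induction.} Since $M$ has only finitely many singular points to begin with and each weighted blow-up, by Lemma \ref{lemma:blow_up_preserves} (and Corollary \ref{cor:weighted}), produces again a Hamiltonian $S^1$-space with finitely many singular points all of whose structure groups are cyclic, applying the key step repeatedly yields a strictly decreasing sequence of complexities in a well-ordered set. Hence the process terminates, and it can only terminate when there are no singular points left. This proves the theorem.

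\textbf{Main obstacle.} The routine but genuinely technical point to verify carefully is that the denominators appearing in the exceptional divisor of $\mathrm{WBU}(m, \tfrac{a_1}{p}, \tfrac{a_2}{p}, \tfrac{l'}{m}, \tfrac{1}{m}, \varepsilon)$ really are the Hirzebruch--Jung partial denominators of $m/l$, so that the chosen complexity measure is monotone under the algorithm; this is exactly the content of Remark \ref{rmk:exc_divisor} (the new singular point has order $< m$) combined with the classical fact that the Hirzebruch--Jung continued fraction terminates. One must also be mindful of the ambiguity in the type label when $a_1 = a_2 = \pm 1$ and $l^2 \not\equiv 1 \bmod m$ (Remark \ref{rmk:desing_well_defined}), but since the two choices are interchanged by swapping $(b_1,b_2)$ the decrease in complexity is insensitive to it. The admissibility (small-$\varepsilon$) constraints are harmless because they only bound $\varepsilon$ from above and never obstruct performing the blow-up.
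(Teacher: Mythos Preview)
Your proposal is correct and follows essentially the same route as the paper: perform the specific blow-up $\mathrm{WBU}(m,\tfrac{a_1}{p},\tfrac{a_2}{p},\tfrac{l'}{m},\tfrac{1}{m},\varepsilon)$ at a singular point of type $\tfrac{1}{m}(1,l)$, invoke Proposition~\ref{prop::desing} and Remark~\ref{rmk:exc_divisor} to see that the exceptional divisor contributes a single new singular point of order $l'<m$, and iterate. The only cosmetic difference is the bookkeeping: the paper blows up a point of \emph{minimal} order and uses the lexicographic pair (number of singular points, minimal order) as its complexity, whereas you track the multiset of orders (decorated with Hirzebruch--Jung lengths); your Hirzebruch--Jung framing is in fact exactly what the paper spells out in the ``Intermezzo'' immediately following its proof.
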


\begin{proof}
  If $(M,\omega,H)$ has no singular points, there is nothing to
  prove. Otherwise, since $M$ has isolated singular points and is
  compact, the singular set of $M$ is finite and non-empty. Let $x \in
  M$ be a singular point that has orbifold structure group with
  minimal order among all singular points. Let $\frac{1}{m}(1,l)$ and
  $a_1/p, a_2/p$ be the type and the orbi-weights of $x$
  respectively. Let $1\leq m'<m$ be the unique integer satisfying $m' l =
  1 \mod m$.

  By Corollary \ref{cor:weighted}, for some $\varepsilon >
  0$ sufficiently small, the
  $S^1$-equivariant weighted blow-up at $x$ 
  $$\text{WBU}\left(m,\frac{a_1}{p},\frac{a_2}{p},\frac{m'}{m},\frac{1}{m},\varepsilon
  \right)$$
  yields a new Hamiltonian $S^1$-space that we denote by $(M',
  \omega', H')$. This is one of the blow-ups considered in Proposition
  \ref{prop::desing}. More precisely,
  \begin{itemize}[leftmargin=*]
  \item if $x$ is not extremal, then the above blow-up is
    of type $(I)$;
  \item if $x$ is extremal, isolated and
    $a_1 \neq a_2 m'$ holds, then the above blow-up is
    of type $(II)$;
  \item if $x$ is extremal, isolated and $a_1 = a_2 m'$, then the above blow-up is
    of type $(III)$ -- cf. \eqref{eq:55} and \eqref{eq:56};
  \item if $x$ is extremal and not isolated, then the above blow-up is
    of type $(IV)$. 
  \end{itemize}
  By Proposition \ref{prop::desing} and Remark \ref{rmk:exc_divisor}, the cardinality of the singular set of $(M',
  \omega', H')$ is at most that of $(M,\omega,H)$. Moreover, if the
  two cardinalities are equal, then the minimal order of the
  orbifold structure group occurring in the singular set of $(M',
  \omega', H')$ is smaller that $m$. Since both the cardinality
  of the singular set of $(M,\omega,H)$ and $m$ are finite, the result follows by iterating
  the above procedure finitely many times.
\end{proof}

\subsubsection*{Intermezzo: Relation to Hirzebruch-Jung continued fractions and minimal resolutions of cyclic singularities in complex algebraic geometry}
%
%

A closer look at the algorithm for desingularizing Hamiltonian $S^1$-spaces outlined in the proof of Theorem \ref{thm:desingularization} reveals a connection with the well-known theory of resolutions of cyclic singularities of complex dimension two using the so-called Hirzebruch-Jung continued fractions (see \cite{reid}). First, we introduce these continued fractions. 

Let $1 \leq m_1 < m_0$ be integers such that $\gcd(m_0,m_1) = 1$. By the
Euclidean algorithm, there exist unique integers $c_1 \geq 2$ and $1
\leq m_2 < m_1$ such that $m_0 = c_1m_1 - m_2$. In particular,
\begin{align}\label{equ_hj_recursion}
	0<c_1-\frac{m_0}{m_1}<1;
\end{align}
moreover, since $\gcd(m_0,m_1) = 1$, then $\gcd(m_1,m_2) = 1$. Hence, we
may iterate this process finitely many times to obtain finite
sequences of positive integers $(m_i)^{n}_{i=0}$ and $(c_i)_{i=1}^n$
such that
\begin{itemize}[leftmargin=*]
\item $1 \leq m_{i+1} < m_i$ and $\gcd(m_i,m_{i+1}) = 1$ for all
  $i=0,\ldots, n-1$,
\item $c_i \geq 2$ for all $i=1,\ldots, n$, and
\item $m_i = c_i m_i - m_{i+1}$ for all $i=0,\ldots, n-2$ and $m_{n-1}
  = c_n m_n = c_n$. 
\end{itemize}
By construction, for all $i=0,\ldots, n-1$,
\begin{equation}\label{eq:HJCF}
  \frac{m_i}{m_{i+1}}=c_{i+1}-\frac{1}{c_{i+2}-\frac{1}{\ddots
      -\frac{1}{c_n}}}=:[c_{i+1}, \ldots, c_n].
\end{equation}

\begin{definition}\label{def:HJCF}
  Given integers $1 \leq m_1 < m_0$ with
  $\gcd(m_0,m_1) = 1$, the continued fraction of \eqref{eq:HJCF} with
  $i=0$ is the \textbf{Hirzebruch-Jung continued fraction} of $m_0/m_1$.
\end{definition}

Suppose that we run the algorithm of the proof of Theorem \ref{thm:desingularization}: Let $x_0$ be a singular point of a Hamiltonian $S^1$-space $(M_0,\omega_0,H_0)$ and let $\frac{1}{m_0}(1,l_0)$ be the type of $x_0$. Let $\frac{a_1}{p} \geq \frac{a_2}{p}$ be the orbi-weights of $x_0$. We denote by $m_1$ the unique integer such that $1 \leq m_1 < m_0$ and $m_1 l_0 = 1 \mod m_0$. We perform an  $S^1$-equivariant weighted blow-up at $x$ 
  $$\text{WBU}\left(m_0,\frac{a_1}{p},\frac{a_2}{p},\frac{m_1}{m_0},\frac{1}{m_0},\varepsilon
  \right)$$
for some $\varepsilon > 0$ sufficiently small, thereby obtaining a new Hamiltonian $S^1$-space $(M_1,\omega_1,H_1)$. We denote by $x_1$ the (possibly) singular point in $(M_1,\omega_1,H_1)$ that the above blow-up generates and denote its type by $\frac{1}{m_1}(1,l_1)$. Since $a_1 m_0 \geq a_2 m_1 - a_1$, by Proposition \ref{prop::desing}, the integer $l_1$ is equal to $b$ modulo $m_1$, where $b$ is the unique integer such that $m_1 l_0 - 1 = bm_0$. 

Let $[c_1,\ldots, c_n]$ denote the Hirzebruch-Jung continued fraction of $m_0/m_1$. Since $l_1 = b \mod m_1$, by definition of $b$,
$$(c m_1  - m_0)l_1 = 1 \mod m_1$$
for any integer $c$. In particular, the above equality holds for $c_1$ and so by \eqref{equ_hj_recursion} the integer $m_2 = c_1 m_1 - m_0$ is the unique one such that $1 \leq m_2 < m_1$ and $m_2 l_1 = 1 \mod m_1$. By iterating this argument, we see that the sequence of integers $(m_i)_{i=0}^n$ given by the order of the cyclic orbifold structure groups of the singular points constructed by running the algorithm of the proof of Theorem \ref{thm:desingularization} is precisely that of the Hirzebruch-Jung continued fraction of $m_0/m_1$!

\begin{example}\label{tswEx2} 
We present here two  equivariant desingularizations of the Hamiltonian $S^1$-space $(M,\omega,H)$ of Example \ref{tswEx}.

We consider first the singular point of of type  $\frac{1}{4}(1,3)$ where the moment map attains its minimum value. Since $l=3$ and $3 \cdot 3 =  1 \mod 4$, we consider $m_1=3$  and 
the Hirzebruch-Jung continued fraction $\frac{4}{3}=[2,2,2]$. We obtain the sequence 
	\begin{align*}
		m=4, \quad   m_1 & =3, \quad m_2= c_1 m_1- m = 2 \cdot 3 - 4 = 2 \quad \text{and} \\ m_3& = c_2\, m_2 - m_1 = 2 \cdot 2  - 3 = 1,
	\end{align*}
and perform three weighted blow-ups of Type $(II)$ as in Proposition~\ref{prop::desing} (see Figure~\ref{fig::desing1}):
	\begin{itemize}[leftmargin=*]
		\item  a $\text{WBU}(4,\frac{1}{2},\frac{1}{2},\frac{3}{4},\frac{1}{4},\varepsilon_1)$ that creates a new singular point of type $\frac{1}{3}(1,2 )$  that is a minimum of the resulting moment map;
		\item a $\text{WBU}(3,\frac{2}{3},\frac{1}{3},\frac{2}{3},\frac{1}{3},\varepsilon_2)$ that creates a new singular point of type $\frac{1}{2}(1,1)$ that is again a minimum;
		\item a $\text{WBU}(2,\frac{1}{2},\frac{1}{2},\frac{1}{2},\frac{1}{2},\varepsilon_3)$ that creates a smooth fixed surface of self-intersection $-2$ as the exceptional divisor, where the resulting moment map attains its minimum value.
	\end{itemize}
	We proceed similarly for the maximal point of $H$ and then we desingularize the singular point of type $\frac{1}{2}(1,1)$. Since $1 \cdot1   = 1 \mod 2$ and  $\frac{2}{1}=[2]$, this can be achieved with one weighted blow-up $\text{WBU}(2,\frac{1}{2},-\frac{1}{2},\frac{1}{2},\frac{1}{2},\varepsilon)$ of Type $(I)$ in Proposition~\ref{prop::desing}.	
\renewcommand{\thefigure}{\thesection.\arabic{figure}}
	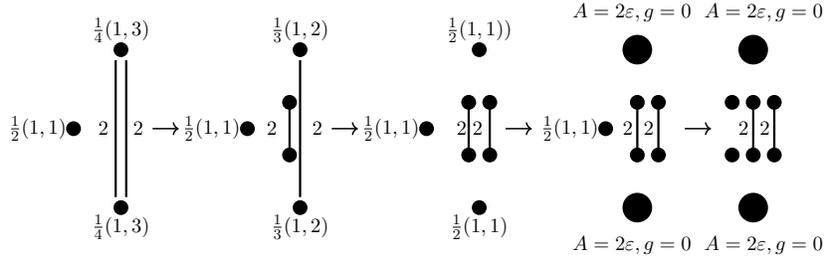
\begin{figure}[h!]
		\centering		
		\resizebox{11cm}{!}{%
		\begin{tikzpicture}	
			\coordinate (Q1) at (0,3);
			\fill (Q1) circle (4pt);
			\coordinate (Q2) at (0,0);
			\fill (Q2) circle (4pt);	
			\coordinate (Q0) at (-.9,1.5);
			\fill (Q0) circle (4pt);	
			
			\coordinate[label=left:$2$] (BC) at ($(-0.1,1.5)$);	
			\coordinate[label=right:$2$] (BC) at ($(0.1,1.5)$);
			
			\coordinate[label=below:${\frac{1}{4}(1,3)}$] (BC) at ($(Q2)$);
			\coordinate[label=above:${\frac{1}{4}(1,3)}$] (BC) at ($(Q1)$);
			\coordinate[label=left:${\frac{1}{2}(1,1)}$] (BC) at ($(Q0)$);
			
			\draw[very thick](0.1,2.8) -- (0.1,0.2);
			\draw[very thick](-0.1,2.8) -- (-0.1,0.2);
			\draw[->, thick] (0.6,1.5) to (1.1,1.5);
			
			\coordinate (Q1) at (3.4,3);
			\fill (Q1) circle (4pt);
			\coordinate (Q2) at (3.4,0);
			\fill (Q2) circle (4pt);
			\coordinate (Q3) at (3.2,2);
			\fill (Q3) circle (4pt);
			\coordinate (Q4) at (3.2,1);
			\fill (Q4) circle (4pt);	
			\coordinate (Q0) at (2.4,1.5);
			\fill (Q0) circle (4pt);	
			
			\coordinate[label=left:$2$] (BC) at ($(3.1,1.5)$);	
			\coordinate[label=right:$2$] (BC) at ($(3.5,1.5)$);

			\coordinate[label=below:${\frac{1}{3}(1,2)}$] (XX) at ($(Q2)$);
			\coordinate[label=above:${\frac{1}{3}(1,2)}$] (X) at ($(Q1)$);
			\coordinate[label=left:${\frac{1}{2}(1,1)}$] (BC) at ($(Q0)$);
			
			\draw[very thick](3.4,2.8) -- (3.4,0.2);
			\draw[very thick](3.2,2) -- (3.2,1);
			\draw[->, thick] (4.0,1.5) to (4.5,1.5);
			
			\coordinate (Q1) at (6.8,3);
			\fill (Q1) circle (4pt);
			\coordinate (Q2) at (6.8,0);
			\fill (Q2) circle (4pt);	
			\coordinate (Q0) at (5.8,1.5);
			\fill (Q0) circle (4pt);
			
			\coordinate (B1) at (7,1);
			\fill (B1) circle (4pt);
			\coordinate (B2) at (6.6,1);
			\fill (B2) circle (4pt);
			\coordinate (B3) at (6.6,2);
			\fill (B3) circle (4pt);
			\coordinate (B4) at (7,2);
			\fill (B4) circle (4pt);	
			
			\coordinate[label=left:$2$] (BC) at ($(6.7,1.5)$);	
			\coordinate[label=left:$2$] (BC) at ($(7,1.5)$);

			\coordinate[label=below:${\frac{1}{2}(1,1)}$] (XX) at ($(Q2)$);
			\coordinate[label=above:${\frac{1}{2}(1,1))}$] (X) at ($(Q1)$);
			\coordinate[label=left:${\frac{1}{2}(1,1)}$] (BC) at ($(Q0)$);
			
			\draw[very thick](7,1) -- (7,2);
			\draw[very thick](6.6,1) -- (6.6,2);

			\draw[->, thick] (7.3,1.5) to (7.8,1.5);
			
			\coordinate (Q1) at (9.8,3);
			\fill (Q1) circle (8pt);
			\coordinate (Q2) at (9.8,-0);
			\fill (Q2) circle (8pt);
			
			\coordinate (BB0) at (10.2,1);
			\fill (BB0) circle (4pt);
			\coordinate (BB1) at (10.2,2);
			\fill (BB1) circle (4pt);
			
			\coordinate (B1) at (9.8,1);
			\fill (B1) circle (4pt);
			\coordinate (B2) at (9.2,1.5);
			\fill (B2) circle (4pt);
			\coordinate (B4) at (9.8,2);
			\fill (B4) circle (4pt);	
			
			\coordinate[label=left:$2$] (BC) at ($(10.25,1.5)$);
			\coordinate[label=left:$2$] (BC) at ($(9.85,1.5)$);
			\coordinate[label=left:${\frac{1}{2}(1,1)}$] (BC) at ($(B2)$);
			
			\coordinate[label=below:${A=2\varepsilon,g=0}$] (XX) at ($(9.7,-0.4)$);
			\coordinate[label=above:${A=2\varepsilon,g=0}$] (X) at ($(9.7,3.4)$);

			\draw[very thick](10.2,1) -- (10.2,2);
			\draw[very thick](9.8,1) -- (9.8,2);
			\draw[->, thick] (10.7,1.5) to (11.2,1.5);
			
			\coordinate (Q1) at (12,3);
			\fill (Q1) circle (8pt);
			\coordinate (Q2) at (12,-0);
			\fill (Q2) circle (8pt);
			
			\coordinate (BB0) at (12.4,1);
			\fill (BB0) circle (4pt);
			\coordinate (BB1) at (12.4,2);
			\fill (BB1) circle (4pt);
			
			\coordinate (B1) at (12,1);
			\fill (B1) circle (4pt);
			\coordinate (B2) at (11.6,1);
			\fill (B2) circle (4pt);
			\coordinate (B3) at (11.6,2);
			\fill (B3) circle (4pt);
			\coordinate (B4) at (12,2);
			\fill (B4) circle (4pt);

			\coordinate[label=left:$2$] (BC) at ($(12.45,1.5)$);
			\coordinate[label=left:$2$] (BC) at ($(12.05,1.5)$);

			\coordinate[label=below:${A=2 \varepsilon,g=0}$] (XX) at ($(12.2,-0.4)$);
			\coordinate[label=above:${A= 2 \varepsilon,g=0}$] (X) at ($(12.2,3.4)$);

			\draw[very thick](12.4,1) -- (12.4,2);
			\draw[very thick](12,1) -- (12,2);	
			
		\end{tikzpicture}}
		\caption{An equivariant desingularization of  $(M,\omega,H)$ obtained using only the weighted blow-ups of Proposition~\ref{prop::desing}.}
		\label{fig::desing1}
	\end{figure}
	
	Alternatively, we can perform first two weighted blow-ups $\text{WBU}(4,\pm\frac{1}{2},\pm\frac{1}{2},\frac{1}{2},\frac{1}{2},\varepsilon)$ of Type $(III)$ in Proposition~\ref{prop::desing2} at the extremal fixed points. The corresponding exceptional divisors are two (extremal) fixed orbi-spheres, each one with two singular points of type $\frac{1}{2}(1,1)$. Then  we desingularize these points using two weighted blow-ups $\text{WBU}(2,1,0,\frac{1}{2},\frac{1}{2},\delta)$ and $\text{WBU}(2,0,-1,\frac{1}{2},\frac{1}{2},\delta)$ of Type $(IV)$ in Proposition~\ref{prop::desing}. Finally, we desingularize the remaining fixed point of type $\frac{1}{2}(1,1)$ using a weighted blow-up $\text{WBU}(2,\frac{1}{2},-\frac{1}{2},\frac{1}{2},\frac{1}{2},\varepsilon)$ of Type $(I)$ in Proposition~\ref{prop::desing} (see  Figure \ref{fig::desing2}).
\renewcommand{\thefigure}{\thesection.\arabic{figure}}
	\begin{figure}[h!]
		\centering
				\resizebox{10.5cm}{!}{%
		\begin{tikzpicture}
			\coordinate (Q1) at (0,3);
			\fill (Q1) circle (4pt);
			\coordinate (Q2) at (0,0);
			\fill (Q2) circle (4pt);	
			\coordinate (Q0) at (-1,1.5);
			\fill (Q0) circle (4pt);	
			
			\coordinate[label=left:$2$] (BC) at ($(-0.1,1.5)$);	
			\coordinate[label=right:$2$] (BC) at ($(0.1,1.5)$);
			
			\coordinate[label=below:${\frac{1}{4}(1,3)}$] (BC) at ($(Q2)$);
			\coordinate[label=above:${\frac{1}{4}(1,3)}$] (BC) at ($(Q1)$);
			\coordinate[label=left:${\frac{1}{2}(1,1)}$] (BC) at ($(Q0)$);
			
			\draw[very thick](0.1,2.8) -- (0.1,0.2);
			\draw[very thick](-0.1,2.8) -- (-0.1,0.2);
			\draw[->, thick] (1,1.5) to (1.7,1.5);
			
			\coordinate (Q1) at (4,3);
			\fill (Q1) circle (8pt);
			\coordinate (Q2) at (4,0);
			\fill (Q2) circle (8pt);	
			\coordinate (Q0) at (3,1.5);
			\fill (Q0) circle (4pt);	
			
			\coordinate[label=left:$2$] (BC) at ($(3.9,1.5)$);	
			\coordinate[label=right:$2$] (BC) at ($(4.1,1.5)$);

			\coordinate[label=below:${\frac{1}{2}(1,1),\frac{1}{2}(1,1),}$] (XX) at ($(4,-0.3)$);
			\coordinate[label=below:${A=\varepsilon,g=0}$] (XXX) at ($(4,-0.9)$);
			\coordinate[label=above:${\frac{1}{2}(1,1),\frac{1}{2}(1,1)}$] (X) at ($(4,3.2)$);
			\coordinate[label=above:${A=\varepsilon,g=0}$] (XI) at ($(4,3.8)$);
			\coordinate[label=left:${\frac{1}{2}(1,1)}$] (BC) at ($(Q0)$);
			
			\draw[very thick](4.2,2.6) -- (4.2,0.4);
			\draw[very thick](3.8,2.6) -- (3.8,0.4);
			\draw[->, thick] (5,1.5) to (5.7,1.5);
			
			\coordinate (Q1) at (8,3);
			\fill (Q1) circle (8pt);
			\coordinate (Q2) at (8,-0);
			\fill (Q2) circle (8pt);	
			\coordinate (Q0) at (7,1.5);
			\fill (Q0) circle (4pt);
			
			\coordinate (B1) at (8.2,1);
			\fill (B1) circle (4pt);
			\coordinate (B2) at (7.8,1);
			\fill (B2) circle (4pt);
			\coordinate (B3) at (7.8,2);
			\fill (B3) circle (4pt);
			\coordinate (B4) at (8.2,2);
			\fill (B4) circle (4pt);	
			
			\coordinate[label=left:$2$] (BC) at ($(7.9,1.5)$);	
			\coordinate[label=left:$2$] (BC) at ($(8.2,1.5)$);

			\coordinate[label=below:${A=\varepsilon-\delta,g=0}$] (XX) at ($(8,-0.4)$);
			\coordinate[label=above:${A=\varepsilon-\delta,g=0}$] (X) at ($(8,3.4)$);
			\coordinate[label=left:${\frac{1}{2}(1,1)}$] (BC) at ($(Q0)$);
			
			\draw[very thick](8.2,1) -- (8.2,2);
			\draw[very thick](7.8,1) -- (7.8,2);

			\draw[->, thick] (9.5,1.5) to (10.2,1.5);
			
			\coordinate (Q1) at (12,3);
			\fill (Q1) circle (8pt);
			\coordinate (Q2) at (12,-0);
			\fill (Q2) circle (8pt);
			
			\coordinate (BB0) at (12.4,1);
			\fill (BB0) circle (4pt);
			\coordinate (BB1) at (12.4,2);
			\fill (BB1) circle (4pt);
			
			\coordinate (B1) at (12,1);
			\fill (B1) circle (4pt);
			\coordinate (B2) at (11.6,1);
			\fill (B2) circle (4pt);
			\coordinate (B3) at (11.6,2);
			\fill (B3) circle (4pt);
			\coordinate (B4) at (12,2);
			\fill (B4) circle (4pt);

			\coordinate[label=left:$2$] (BC) at ($(12.45,1.5)$);
			\coordinate[label=left:$2$] (BC) at ($(12.05,1.5)$);

			\coordinate[label=below:${A=\varepsilon-\delta,g=0}$] (XX) at ($(12,-0.4)$);
			\coordinate[label=above:${A=\varepsilon-\delta,g=0}$] (X) at ($(12,3.4)$);

			\draw[very thick](12.4,1) -- (12.4,2);
			\draw[very thick](12,1) -- (12,2);
			
		\end{tikzpicture}}
		\caption{An alternative desingularization of $(M,\omega,H)$}\label{fig::desing2}
	\end{figure}
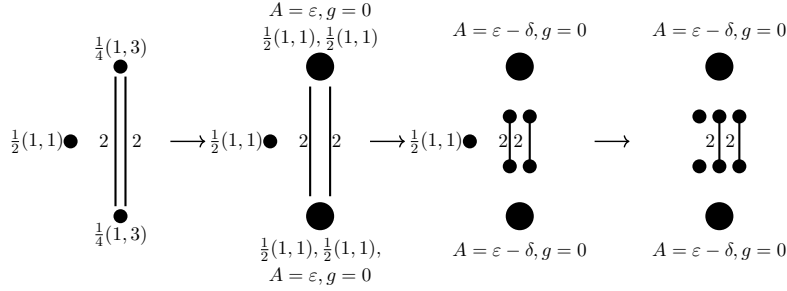\hfill$\Diamond$ 
	\end{example}

\subsection{Symplectic weighted blow-down}\label{sec:wbd}
In this section we consider the reverse procedure of a symplectic
weighted blow-up, called a symplectic weighted blow-down. While we use
the former to desingularize Hamiltonian $S^1$-spaces (see
Theorem \ref{thm:desingularization}), the latter are used to obtain
the so-called ``minimal models'' (see Section \ref{sec:existence}). \\

Let $(M^{2n},\omega)$ be a symplectic orbifold that has
isolated singular points. Suppose that $\Sigma \simeq \mathbb{C}P^{n-1}(p_1,\ldots,p_n)/\mathbb{Z}_m$
is a fully embedded symplectic suborbifold of $(M^{2n},\omega)$. By Theorems \ref{thm:tub_neigh_orb} and
\ref{thm:esnt}, there exists a neighborhood $N(\Sigma)$ of $\Sigma$ in
$M$ that is symplectomorphic to some neighborhood of the zero section
in the normal orbi-bundle $\nu_\Sigma$, which is a complex line orbi-bundle
over $\Sigma$. Assume that $\deg(\nu_\Sigma)<0$. Then $N(\Sigma)$ is
equivariantly symplectomorphic to a neighborhood  of the exceptional
divisor in some weighted blow-up of $\mathbb{C}^n/\Gamma$ at
$[0]_\Gamma$, for some cyclic group $\Gamma$  that contains
$\mathbb{Z}_m$ (see \cite[Section 5.2]{lgodinho}). To perform the weighted blow-down we  remove a
neighborhood of $\Sigma$ and glue back  the image of an orbi-ball in
$\mathbb{C}^n/\Gamma$ of the appropriate size by an extension of this
symplectomorphism.

\begin{remark}\label{rmk:symp_weighted_blow_down}
  Just like in the case of manifolds one can show that
  \begin{itemize}[leftmargin=*]
  \item the above construction is independent of the symplectomorphism
    between $N(\Sigma)$ and the neighborhood of the exceptional
    divisor in the weighted blow-up of $\mathbb{C}^n/\Gamma$;
  \item if $G$ is a compact Lie group that acts on $(M,\omega)$ in a
    Hamiltonian fashion and $\Sigma$ is $G$-invariant, then the above
    construction can be carried out $G$-equivariantly. In particular,
    the blown-down space inherits a Hamiltonian $G$-action; and
  \item if there exists a complex structure on $M$ that is compatible
    with $\omega$ (i.e., $(M,\omega)$ is Kähler), then the blown-down
    space inherits a complex structure that is compatible
    with $\omega$.
  \end{itemize}
\end{remark}

 \subsubsection{Equivariant weighted blow-downs on Hamiltonian $S^1$-spaces} 
Henceforth we restrict to Hamiltonian $S^1$-spaces. By the above
discussion, we may blow down a (fully embedded) invariant orbi-sphere if
and only if its normal orbi-bundle has negative degree. By Theorem
\ref{thm:esnt}, the following
result explains how one can obtain this degree from the
orbi-weights of the $S^1$-action.

\begin{lemma}\label{lem::euler}
  Let $\Sigma:=\mathbb{C}P^1(p,q)/\mathbb{Z}_n$ be the orbi-sphere
  corresponding to the 
  $\mathbb{Z}_n$-action on  $\mathbb{C}P^1(p,q)$ given by
  \begin{align*}
    \xi_n\cdot[z_0:z_1]_{p,q}=[\xi_n^a z_0:\xi_n^b z_1]_{p,q},
  \end{align*} 
  with $a,b\in \mathbb{Z}$ such that $aq-bp=1$. Let $L \to \Sigma$ be
  a complex line orbi-bundle.

  Consider the $S^1$-action on $\Sigma$ that rotates it $c$ times
  while fixing the singular points of order $nq$ and $np$, that we
  call the south and north poles respectively. Suppose that the action
  lifts to $L$ and let $m_{-}$ (respectively $m_+$) be the
  orbi-weight of the action restricted to the fiber over the south
  (respectively north) pole. Then 
  $$
  m_+-m_- = - c \deg(L).
  $$
  Moreover, $L$ is isomorphic to 
  \begin{equation}
    \label{eq:64}
    \mathcal{O}_{p,q}\left(-\frac{npq\left(m_+-m_-\right)}{c}\right)/\mathbb{Z}_n(a,b,b\,l_++a\,l_-),
  \end{equation}
  where $1\leq l_-< nq$ and $1\leq l_+< np$ are such that $(nq,l_-)$
  and $(np,l_+)$ are the Seifert invariants of the
  singular points of the principal $S^1$-orbi-bundle associated to $L$.
  \end{lemma}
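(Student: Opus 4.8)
The statement has two parts: the weight formula $m_+ - m_- = -c\deg(L)$, and the explicit identification \eqref{eq:64}. For the first part, the plan is to use the equivariant symplectic neighborhood theorem (Theorem~\ref{thm:esnt}) together with the localization formula for orbifolds (Theorem~\ref{prop:localization}), exactly as in the proof of Proposition~\ref{prop:graph_seifert}. Concretely, I would form the disc orbi-bundle $D(L) \to \Sigma$ (or a compact model thereof, e.g. $\mathbb{P}(L \oplus \C)$), equip it with the lifted $S^1$-action, and compute $\int$ of the equivariant first Chern class $c_1^{S^1}(L)$ against the equivariant Euler class of the normal bundles to the two fixed sections. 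The $S^1$-action rotates $\Sigma$ $c$ times, so at the south (resp.\ north) pole the tangent weight along $\Sigma$ equals $\pm\, c/(nq)$ (resp.\ $\mp\, c/(np)$) by Lemma~\ref{lem::isotropy}, while the fibre weights are $m_-$ and $m_+$; restricting $c_1^{S^1}(L)$ to each pole gives $m_\pm\, y$ in degree two. Unravelling the ABBV formula then yields a linear relation among $m_+$, $m_-$, $\deg(L)$, $c$ and the orders $np, nq$, and the normalization (only the two orbi-weights of the tangent direction enter) should collapse it precisely to $m_+ - m_- = -c\deg(L)$, in complete analogy with the manifold identity $m_+ - m_- = -c\,e$ used implicitly in \cite{karshon}.

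\textbf{Identifying the bundle.} For the second part, the key point is that a complex line orbi-bundle over $\Sigma = \C P^1(p,q)/\Z_n$ is determined up to isomorphism by its underlying principal $S^1$-orbi-bundle, hence by its Seifert invariant $(0; b_0, (np, l_+), (nq, l_-))$ via Remarks~\ref{rmk:chern_class} and \ref{rmk:seifert-orbifolds}. I would first observe, using Example~\ref{ex::orbifold_quotient} and Corollary~\ref{cor:degree_quotient}, that $\mathcal{O}_{p,q}(k)/\Z_n(a,b,c')$ realizes every such Seifert invariant as $k, c'$ vary, and that its degree is $k/(npq)$ by Lemma~\ref{lem::linebundle}. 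Matching degrees forces $k = npq\deg(L) = -npq(m_+-m_-)/c$ by the first part. It then remains to check that the residual $\Z_n$-action determined by $(a,b,\,b\,l_+ + a\,l_-)$ produces exactly the stabilizer exponents $l_+$ and $l_-$ in the fibre over the two singular points; this is a direct computation with the weighted-homogeneity relations \eqref{eq:22} and the choice $aq - bp = 1$, parallel to Remark~\ref{ex::weightedprojSeifertInvariants}. Finally, I would invoke the classification of compact orientable orbi-surfaces (Theorem~\ref{thm::classorbisurface}) and the Seifert classification (Remark~\ref{rmk:seifert_invariant}) to conclude that the two bundles, having the same Seifert invariant, are isomorphic.

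\textbf{Main obstacle.} The routine bookkeeping — signs, orientations of the two poles, and which of $l_\pm$ or its inverse modulo $n\cdot(\text{order})$ appears — is where I expect the real difficulty to lie, rather than in any conceptual step. In particular, pinning down that the correct fibre exponent of the residual $\Z_n$-action is $b\,l_+ + a\,l_-$ (and not, say, $a\,l_+ + b\,l_-$ or a sign-flipped variant) requires carefully tracking the identification of Remark~\ref{rmk:neigh_exceptional_orbit} through the quotient by $\Z_n$, and reconciling the two bases of $H_1$ of the boundary torus as in the derivation preceding Definition~\ref{defn:seifert_invariant}. I would handle this by reducing to the standard local models near each singular point: near the pole of order $np$, the bundle looks like $\mathcal{O}_{1,1}(\ast)$ over a disc quotient $\C/\Z_{np}$, and the $\Z_n$-part of the structure group acts on the fibre with exponent dictated by the chosen lift $(a,b,\cdot)$ restricted to that $\Z_n$; an explicit diagonalization then reads off $l_+$, and symmetrically $l_-$ at the other pole. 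Once both local exponents and the global degree agree, the Seifert data coincide and \eqref{eq:64} follows.
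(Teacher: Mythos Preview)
Your approach to the second part (identifying $L$ with the quotient bundle \eqref{eq:64}) is essentially the paper's: compute the Seifert invariants of the candidate $\mathcal{O}_{p,q}(k)/\Z_n(a,b,bl_++al_-)$ at each singular point via the weighted-homogeneity relations, match the degree using Lemma~\ref{lem::linebundle}, and conclude by the Seifert classification. Your ``main obstacle'' paragraph correctly anticipates where the work lies, and the paper carries out exactly that local check.

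For the weight formula $m_+-m_-=-c\deg(L)$, however, the paper does \emph{not} use localization. It argues directly with the clutching construction: trivialize $L$ over the two hemispheric orbi-discs $D^\pm/\Z_{np},\,D^\mp/\Z_{nq}$, write the gluing map along the equatorial circle as $[z,w]_{nq}\mapsto[z^{-q/p},\,z^{-k/p}w]_{np}$ for an integer $k$, and then impose two conditions. Well-definedness under the $\Z_{nq}$-identification forces $k\equiv l_+q+l_-p\bmod npq$, which recovers $\deg(L)=k/(npq)$ via Proposition~\ref{def::EulerOrbibundle}; $S^1$-equivariance of the gluing map then reads off $m_+=m_- - ck/(npq)$ immediately. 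This is elementary and self-contained --- no equivariant cohomology, no auxiliary $4$-manifold, and the Seifert invariants $(np,l_+),(nq,l_-)$ appear organically as the fibre exponents in the two local trivializations.

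Your localization route is valid, but your setup is tangled: you speak of forming $\mathbb{P}(L\oplus\C)$ and integrating ``against the equivariant Euler class of the normal bundles to the two fixed sections,'' yet the relation you want lives on the $2$-dimensional base. The clean version is simply to integrate $c_1^{S^1}(L)$ over $\Sigma$ itself and localize at the two poles, where the tangent orbi-weights are $\pm c/(nq)$ and $\mp c/(np)$ and the fibre restrictions are $m_\pm y$; this yields $\deg(L)=(m_--m_+)/c$ in one line. The invocation of Theorem~\ref{thm:esnt} is also unnecessary, since no symplectic structure enters. What your approach buys is brevity once the machinery is in place; what the paper's buys is that it simultaneously extracts the congruence $k\equiv l_+q+l_-p\bmod npq$, which feeds directly into the second part without a separate computation.
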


\begin{proof}
  We split the orbi-sphere $\mathbb{C}P^1(p,q)/\mathbb{Z}_n$ into its
  southern  and northern hemispheres,  given by orbi-discs
  $D^-/\mathbb{Z}_{nq}$ and $D^+/\mathbb{Z}_{np}$, where $D^-$ and
  $D^+$ are disks  in $\mathbb{C}$. Since these are simply connected,
  we obtain trivializations $(D^-\times\mathbb{C})/\mathbb{Z}_{nq}$
  and $(D^+\times\mathbb{C})/\mathbb{Z}_{np}$ of
  $L$ for some $\mathbb{Z}_{nq}$- and  $\mathbb{Z}_{np}$-actions given
  by 
\begin{equation} \label{Znp_action}	
 \begin{array}{rcl}
{\scriptstyle	\mathbb{Z}_{nq}\times D^-\times\mathbb{C} }& \rightarrow & {\scriptstyle D^-\times\mathbb{C}}\\
	{\scriptstyle (\xi_{nq},z,w)} & \mapsto & {\scriptstyle(\xi_{nq}\, z,\xi^{l_-}_{nq}\, w)}
\end{array}	
\quad \text{and} \quad
\begin{array}{rcl}
	\scriptstyle{		\mathbb{Z}_{np} \times D^+\times\mathbb{C} }& \rightarrow & \scriptstyle{ D^+\times\mathbb{C} }\\
		\scriptstyle{	(\xi_{np},z,w)} & \mapsto &\scriptstyle{ (\xi_{np} \, z ,\xi_{np}^{l_+} \, w) }
		\end{array}
	\end{equation}
	for unique integers $l_-,l_+$ with $1\le l_-<nq$ and $1\leq l_+<np$, such that $\gcd(nq,l_-)=\gcd(np,l_+)=1$.
	The circle action on these quotients is given by
\begin{equation*}
\begin{array}{rcl}		
		\scriptstyle{S^1\times (D^-\times\mathbb{C})/\mathbb{Z}_{nq} }&\scriptstyle{\stackrel{\alpha_1}{\rightarrow}} &\scriptstyle{  (D^-\times\mathbb{C})/\mathbb{Z}_{nq}}\\
			\scriptstyle{(\lambda,[z,w]_{nq})}&\scriptstyle{\mapsto} & \scriptstyle{[\lambda^{\frac{c}{nq}}z,\lambda^{m_-}w]_{nq}}
		\end{array}
\,\, \text{and} \,\,
\begin{array}{rcl}
		\scriptstyle{S^1\times  (D^+\times\mathbb{C})/\mathbb{Z}_{np}}&\scriptstyle{\stackrel{\alpha_2}{\rightarrow}} & \scriptstyle{(D^+\times\mathbb{C})/\mathbb{Z}_{np}}\\
			\scriptstyle{(\lambda,[\tilde{z},w]_{np})}&\scriptstyle{\mapsto} &\scriptstyle{ [\lambda^{-\frac{c}{np}}\tilde{z},\lambda^{m_+}w]_{np}}.
		\end{array}
	\end{equation*}
Hence, by \eqref{eq:condition1}, the orbi-weights satisfy 
$$
-l_+c\equiv m_+np \mod np\quad\text{and}\quad l_-c\equiv m_-nq \mod nq,
$$	
where we recall that $m_+np$ (respectively $m_-nq$) is an integer that
is not necessarily
divisible by $np$ (respectively $nq$).

Let $g: (\partial D^-\times\mathbb{C})/\mathbb{Z}_{nq} \to (\partial
D^+\times\mathbb{C})/\mathbb{Z}_{np}$ be the clutching map for $L$. We
write it as 
\begin{equation}
  \label{equ::gluingmap}
  [z,w]_{nq}\mapsto	[z^{-\frac{q}{p}},z^{-\frac{\alpha}{np}}w]_{np}
\end{equation}
for some $\alpha\in \Z$. For $g$ to be  well-defined, we need that
\begin{align*}
g([z,w]_{nq})=g([e^\frac{2\pi i}{qn}z,e^\frac{2\pi i l_-}{qn}w]_{nq}).
\end{align*}
Hence,
\begin{align*}
[z^{-\frac{q}{p}},z^{-\frac{\alpha}{pn}}w]_{np} & =[e^\frac{-2\pi i}{pn}z^{-\frac{q}{p}},e^{-\frac{2\pi i \alpha}{n^2qp}+\frac{2\pi i  l_-}{qn}}z^{-\frac{\alpha}{pn}}w]_{np} \\ 
&=  [e^\frac{-2\pi i}{pn}z^{-\frac{q}{p}},e^{\frac{2\pi i(l_-np-\alpha)}{n^2qp}}z^{-\frac{\alpha}{pn}}w]_{np} \\ &=  [z^{-\frac{q}{p}},e^{\frac{2\pi i(l_+nq+l_-np-\alpha)}{n^2qp}}z^{-\frac{\alpha}{pn}}w]_{np},
\end{align*}
implying that
\begin{equation*}
\alpha  = l_-pn+l_+qn\mod n^2qp.\nonumber
\end{equation*}
Set $k:=\alpha/n\in \mathbb{Z}$; then $g$ is of the form
\begin{equation}\label{eq:mapg}
g([z,w]_{nq})=[z^{-\frac{q}{p}},z^{-\frac{k}{p}}w]_{np},
\end{equation}
for some $k$ satisfying
\begin{equation}\label{equ::welldefinedk}
  k=l_+q+l_-p \mod npq.
\end{equation}

Let $b\in  \Z$ be such that $k=l_+q+l_-p  + b \,npq$. Then $(0;b,(np,l_+),(nq,l_-))$ is the Seifert invariant of
the circle  orbi-bundle  associated to $L$ and, by Lemma
\ref{lem::linebundle} and Corollary
\ref{cor:degree_quotient}, 
$$\deg(L)=b+\frac{l_+}{np}+\frac{l_-}{nq}=\frac{k}{npq}. $$
Since the $S^1$-action on $\mathbb{C}P^1(p,q)/\mathbb{Z}_n$ lifts to $L$, the gluing map $g$ needs to be $S^1$-equivariant. Hence we need	
\begin{align*}
  &g\left(\alpha_1\left(\lambda,[z,w]_{nq}\right)\right) =\alpha_2\left(\lambda,g\left([z,w]_{np}\right)\right)   \Leftrightarrow \\   & \Leftrightarrow
                                                                                                                                         g\left([\lambda^{\frac{c}{nq}}z,\lambda^{m_-}w]_{nq}\right) =\alpha_2\left(\lambda,[z^{-\frac{q}{p}},z^{-\frac{k}{p}}w]_{np}\right)\\
  & \Leftrightarrow [\lambda^{-\frac{c}{np}}z^{-\frac{q}{p}},		
    \lambda^{-\frac{ck}{nqp}}z^{-\frac{k}{p}}\lambda^{m_-}w]_{np}  =
    [\lambda^{-\frac{c}{np}}z^{-\frac{q}{p}},\lambda^{m_+}z^{-\frac{k}{p}}w]_{np},
\end{align*}
for all $\lambda \in S^1$,
which gives $m_+=m_--\frac{ck}{npq}$. We conclude that 
$$k=-\frac{(m_+-m_-)npq}{c},$$
so that the first result holds. \\

Next we show that $L$ is isomorphic to the orbi-bundle of \eqref{eq:64}. Since the
$\mathbb{Z}_n$-action on $\mathcal{O}_{p,q}(k)$ is given by
\begin{align*} 
  \xi_n\cdot[z_0:z_1:w]_{p,q,k}&=[\xi_n^az_0:\xi_n^bz_1:\xi_n^{(bl_++al_-)}w]_{p,q,k} \\&
  =[z_0:\xi_n^{\frac{bp-aq}{p}}z_1:\xi_n^{\frac{(bl_++al_-)p-ak}{p}}w]_{p,q,k}\\
                               &=[z_0:\xi_{np}z_1:\xi_{np}^{(bl_++al_-)p-a(ql_++pl_-)}w]_{p,q,k} \\ 
                               &=[z_0:\xi_{np}z_1:\xi_{np}^{l_+}w]_{p,q,k},
\end{align*}
$\mathcal{O}_{p,q}(k)/\mathbb{Z}_n(a,b,b\,l_++a\,l_-)$ has a singular
point of type $\frac{1}{np}(1,l_+)$, with $\Z_n$ acting with weight
$l_+$ on the fiber over this singular point. Analogously, it has a
singular point of type $\frac{1}{nq}(1,l_-)$,  with $\Z_n$ acting with
weight $l_-$ on the fiber over this singular point. We conclude that
the Seifert invariants corresponding to the two singular points are
$(nq,l_-)$ and $(np,l_+)$ respectively.
By Lemma~\ref{lem::linebundle},
$$\deg \left( \mathcal{O}_{p,q}(k)/\mathbb{Z}_n(a,b,b\,l_++a\,l_-) \right)= \frac{k}{n\,p\,q}.$$ 
Hence, the Seifert invariants of the principal $S^1$-orbi-bundles
associated to $\mathcal{O}_{p,q}(k)/\mathbb{Z}_n(a,b,b\,l_++a\,l_-)$
and $L$ are equal. Therefore, the two complex line orbi-bundles are isomorphic. 
\end{proof}

Lemma \ref{lem::euler} immediately implies the following result, which
generalizes \cite[Corollary 5.5]{karshon}.

\begin{corollary}\label{cor::index2gradientsph}
  Let $(M,\omega,H)$ be a Hamiltonian $S^1$-space. Any
  fully embedded $S^1$-invariant orbi-sphere whose north and south
  poles are non-extremal fixed points has negative
  self-intersection\footnote{i.e., the degree of the normal orbi-bundle.}. 
\end{corollary}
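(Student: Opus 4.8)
\textbf{Proof strategy for Corollary \ref{cor::index2gradientsph}.}
The plan is to reduce the statement to a direct application of Lemma \ref{lem::euler} once we have identified the relevant lifted $S^1$-action and the orbi-weights on the normal orbi-bundle. Let $\overline{\Sigma}$ be a fully embedded $S^1$-invariant orbi-sphere in $(M,\omega,H)$ with non-extremal north and south poles $x_+$ and $x_-$. By Proposition \ref{prop:iso_sphere} and Corollary \ref{cor:Z_k-orbi-sphere}, $\overline{\Sigma}$ is a $\Z_k$-isotropy orbi-sphere for some $k \geq 2$, diffeomorphic to $\C P^1(n'_{\min},n'_{\max})/\Z_m$ with $n_{\min} = m n'_{\min}$, $n_{\max} = m n'_{\max}$ the orders of the orbifold structure groups at the two poles. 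Since $\overline{\Sigma}$ is fully embedded and symplectic, its normal orbi-bundle $L := \nu_{\overline{\Sigma}}$ is a complex line orbi-bundle by Corollary \ref{cor:complex_bundle}, and the $S^1$-action on $M$ restricts to a fiberwise linear $S^1$-action on $L$ lifting the rotation of $\overline{\Sigma}$. The rotation number $c$ of that induced action equals $k$ (equivalently, the nontrivial orbi-weight on $T\overline{\Sigma}$), which is a positive integer.

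The next step is to compute the orbi-weights $m_-$ and $m_+$ of the lifted action on the fibers of $L$ over $x_-$ and $x_+$ in terms of data at the poles. At the pole $x_{\pm}$, the two orbi-weights of the $S^1$-action on $T_{x_\pm}M$ are the tangential one (along $\overline{\Sigma}$), equal to $\mp k/n_{\max}$ respectively $\pm k/n_{\min}$ up to sign conventions by Lemma \ref{lem::isotropy} and the remark following Corollary \ref{cor:Z_k-orbi-sphere}, and the normal one, which is precisely $m_{\pm}$. Here is where the non-extremality hypothesis enters: since $H(x_-)$ and $H(x_+)$ are not extremal, by Corollary \ref{lem::uniqueMaxMin} and Theorem \ref{cor::localformpt} each of these fixed points has one positive and one negative orbi-weight; because the tangential orbi-weight at the north pole $x_+$ is positive (it points into $\overline{\Sigma}$, towards smaller $H$) and at the south pole $x_-$ is negative, the normal orbi-weight $m_+$ at $x_+$ must be \emph{negative} and $m_-$ at $x_-$ must be \emph{positive}. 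Hence $m_+ - m_- < 0$.

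Finally I would invoke Lemma \ref{lem::euler}, which gives $m_+ - m_- = -c\,\deg(L)$ with $c = k > 0$. Combining this with $m_+ - m_- < 0$ yields $\deg(L) = -(m_+-m_-)/c > 0$? — wait, that has the wrong sign; rather $\deg(L) = -(m_+ - m_-)/c$, and since $m_+ - m_- < 0$ this gives $\deg(L) = -(\text{negative})/(\text{positive}) > 0$. To get the correct (negative) conclusion, one must be careful with the orientation conventions of Lemma \ref{lem::euler}: the poles there are labeled by the orders $nq$ (south) and $np$ (north), and the rotation is counted with a sign fixed by the identification $\overline{\Sigma} \simeq \C P^1(p,q)/\Z_n$. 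The main obstacle — and the only subtle point — is therefore precisely the bookkeeping of signs: matching the abstract labeling ``south/north pole'' and ``rotates $c$ times'' in Lemma \ref{lem::euler} to the moment-map labeling (south $=$ lower $H$, north $=$ higher $H$) used for isotropy orbi-spheres in Definition \ref{defn:Z_k-sphere}, and checking that with those conventions the signs of $m_-$ (positive, at lower $H$) and $m_+$ (negative, at higher $H$) force $\deg(L) < 0$. Once the sign dictionary is pinned down, the conclusion is immediate; I would verify it on the model case of a linear action on $\C^2/\Z_m$ (as in Lemma \ref{lem::isotropy}) to fix all conventions, then state the general case. This is the exact analogue of how \cite[Corollary 5.5]{karshon} follows from the manifold version of the Euler-class computation, with the orbifold structure groups at the poles playing no essential role beyond rescaling.
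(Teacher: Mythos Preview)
Your approach is exactly the paper's: the corollary is an immediate consequence of Lemma~\ref{lem::euler}, once the signs of the fiber orbi-weights $m_\pm$ are determined from the non-extremality hypothesis. However, you have reversed the sign of the tangential orbi-weights at the poles, and that --- not the conventions in Lemma~\ref{lem::euler} --- is the source of your ``wrong sign''.

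Recall from Theorem~\ref{cor::localformpt} that in the local model the moment map is $H(x)+\tfrac12\bigl(\tfrac{a_1}{p}|z_1|^2+\tfrac{a_2}{p}|z_2|^2\bigr)$, so a positive orbi-weight corresponds to a direction in which $H$ \emph{increases}. At the south pole $x_-$ (minimum of $H$ along $\overline{\Sigma}$), the tangential direction points towards larger $H$, hence the tangential orbi-weight is \emph{positive}; since $x_-$ is non-extremal, the normal orbi-weight $m_-$ is therefore \emph{negative}. At the north pole $x_+$ the tangential direction points towards smaller $H$, so the tangential orbi-weight is \emph{negative}, and non-extremality forces $m_+>0$. (Your parenthetical ``it points into $\overline{\Sigma}$, towards smaller $H$'' is a correct geometric statement but leads to a negative orbi-weight, not a positive one.) Hence $m_+-m_->0$, and Lemma~\ref{lem::euler} gives $\deg(L)=-(m_+-m_-)/c<0$ as desired. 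No convention-matching acrobatics are needed.

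One minor point: your opening claim that $\overline{\Sigma}$ must be a $\Z_k$-isotropy orbi-sphere with $k\ge 2$ is not justified by Proposition~\ref{prop:iso_sphere} (which starts from a component of $\mathrm{Stab}(M;\Z_k)$, not from an arbitrary invariant orbi-sphere), and is in fact false --- a free, fully embedded $S^1$-invariant orbi-sphere (i.e.\ $c=1$) is also covered by the corollary. This does not affect the argument: Lemma~\ref{lem::euler} applies for any positive rotation number $c$, and the sign analysis above goes through unchanged.
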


An equivariant weighted blow-down may give rise to a singular
point that has a cyclic orbifold structure group. By Theorem
\ref{thm:esnt}, the next result determines the order of its orbifold
structure group. 

\begin{lemma}\label{lem::weightedblowdown}
   Let $\Sigma:=\mathbb{C}P^1(p,q)/\mathbb{Z}_n$ be the orbi-sphere
  corresponding to the 
  $\mathbb{Z}_n$-action on  $\mathbb{C}P^1(p,q)$ given by
  \begin{align*}
    \xi_n\cdot[z_0:z_1]_{p,q}=[\xi_n^a z_0:\xi_n^b z_1]_{p,q},
  \end{align*} 
  with $a,b\in \mathbb{Z}$ such that $aq-bp=1$. Let $k <0$, $1\leq l_-
  < nq$ and $1\leq l_+ < np$ be integers such that $\gcd(l_-,nq)=\gcd(l_+,nq)=1$.
  Suppose that $S^1$ acts on $\Sigma$ and that the action lifts to the
  complex line orbi-bundle 
  $$\mathcal{O}_{p,q}(k)/\mathbb{Z}_n(a,b,b \,l_++a\,l_-) \to \Sigma.$$ 
  Then an $S^1$-equivariant symplectic weighted blow-down along the
  zero section gives rise to a singular point with orbifold structure
  group given by $\Z_{|nk|}$.
\end{lemma}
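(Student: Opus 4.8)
The goal is to determine the order of the orbifold structure group of the singular point created by an $S^1$-equivariant symplectic weighted blow-down of the complex line orbi-bundle $\mathcal{O}_{p,q}(k)/\mathbb{Z}_n(a,b,bl_++al_-) \to \mathbb{C}P^1(p,q)/\mathbb{Z}_n$ along its zero section, when $k < 0$. The key observation is that the weighted blow-down is, by construction, the inverse operation of a weighted blow-up, so I should identify which weighted blow-up of which $\mathbb{C}^2/\Gamma$ produces this line orbi-bundle as (a neighborhood of) its exceptional divisor. By the discussion at the start of Section \ref{sec:wbd} (see also \cite[Section 5.2]{lgodinho}), since $\deg(\mathcal{O}_{p,q}(k)/\mathbb{Z}_n(a,b,bl_++al_-)) = k/(npq) < 0$, the normal orbi-bundle has negative degree and the blow-down is well-defined, producing a point whose orbifold structure group $\Gamma$ is cyclic and contains $\mathbb{Z}_n$.

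\textbf{First step.} I would reduce to the case $n = 1$. For $n = 1$ the orbi-sphere is simply $\mathbb{C}P^1(p,q)$ and the line orbi-bundle is $\mathcal{O}_{p,q}(k)$. I claim that in this case a symplectic weighted blow-down of $\mathcal{O}_{p,q}(k) \to \mathbb{C}P^1(p,q)$ (for $k < 0$, with appropriate weights) recovers $\mathbb{C}^2/\mathbb{Z}_{|k|}$ (with a type depending on $p,q$), because the exceptional divisor of the weighted blow-up $\text{WBU}$ at a point of type $\frac{1}{|k|}(1,l)$ with weights giving rise to the $\mathbb{C}P^1(p,q)$ structure has normal orbi-bundle of degree $-\frac{|k|}{|k|}\cdot(\text{something}) $; concretely one checks via Proposition \ref{def::EulerOrbibundle} and Remark \ref{ex::weightedprojSeifertInvariants} that the Seifert invariant $(0;b,(p,l_+),(q,l_-))$ of $\mathcal{O}_{p,q}(k)$ is precisely what arises as the exceptional divisor of a weighted blow-up of $\mathbb{C}^2/\mathbb{Z}_{|k|}$. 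This matches the well-known fact (Remark \ref{rem::weightedblowupConnectsum}) that such a blow-up is a connect sum with a weighted projective space; reversing it gives back $\mathbb{C}^2/\mathbb{Z}_{|k|}$.

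\textbf{Second step.} For general $n$, I would note that $\mathcal{O}_{p,q}(k)/\mathbb{Z}_n(a,b,bl_++al_-)$ is the $\mathbb{Z}_n$-quotient of $\mathcal{O}_{p,q}(k)$, and the $\mathbb{Z}_n$-action extends naturally to the blown-down space $\mathbb{C}^2/\mathbb{Z}_{|k|}$ (acting linearly near the singular point, since the action on a neighborhood of the zero section extends to a neighborhood of the cone point). Thus the blown-down space near the new singular point is $(\mathbb{C}^2/\mathbb{Z}_{|k|})/\mathbb{Z}_n$, and since the $\mathbb{Z}_n$-action fixes the cone point and acts with weights coprime to $n$ on the two coordinate directions (this is where the hypotheses $\gcd(l_-,nq) = \gcd(l_+,nq) = 1$ and the formula for the weights $bl_++al_-$ enter), by Lemma \ref{lemma:abelian+isolated_cyclic} and Lemma \ref{lemma:cyclic_rep_isolated_sing} the resulting group is cyclic of order $n|k| = |nk|$. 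Alternatively, more directly: the orbifold structure group of the blown-down point is the subgroup $\Gamma$ of the circle used to perform the weighted blow-down, and the Seifert data $(0;b,(np,l_+),(nq,l_-))$ together with $\deg = k/(npq)$ force $|\Gamma| = npq \cdot |\deg| \cdot \gcd\text{-corrections} = |nk|$; I would verify this by tracking the clutching map exactly as in the proof of Lemma \ref{lem::euler} (equations \eqref{equ::gluingmap}--\eqref{equ::welldefinedk}).

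\textbf{Main obstacle.} The delicate part is bookkeeping the cyclic group orders and verifying that the group one obtains is genuinely cyclic of order exactly $|nk|$ and not merely of order dividing some multiple of it. This requires carefully identifying the circle $\widehat{S}^1$ used to perform the blow-down, its intersection with the structure group of the model $\mathbb{C}^2/\Gamma$, and confirming via the relation $\Gamma' = \Gamma/(\Gamma \cap \widehat{S}^1)$ (equation \eqref{eq:51}) that $\mathbb{C}P^1(p,q)/\mathbb{Z}_n$ is indeed the exceptional divisor. I expect the cleanest route is to invoke Theorem \ref{thm:esnt} to reduce the whole question to the model computation, then run the inverse of the construction in Section \ref{sec:sympl-weight-blow} with $|\Gamma| = |nk|$, checking that the exceptional divisor $\mathbb{C}P^{1}(\widehat{m}_1,\widehat{m}_2)/\Gamma'$ of that blow-up has exactly the Seifert invariants and $S^1$-weights prescribed in the statement — this pins down $|\Gamma|$ uniquely by the classification of Seifert orbi-fibrations (Remark \ref{rmk:seifert_invariant} and Corollary \ref{cor:degree_quotient}).
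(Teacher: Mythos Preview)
Your approach is correct in outline, but it is considerably more elaborate than the paper's argument, which is essentially a one-liner. The paper simply invokes Example~\ref{ex::orbifold_quotient}: there it was already shown that $\mathcal{O}_{p,q}(-k)/\mathbb{Z}_n(a,b,bl_++al_-)$ is precisely the normal orbi-bundle of $\mathbb{C}P^1(p,q)/\mathbb{Z}_n$ inside $\mathbb{C}P^2(p,q,-k)/\mathbb{Z}_n(a,b,bl_++al_-)$, and the same example already computes that the third coordinate point $[0:0:1]$ has orbifold structure group $\mathbb{Z}_{n(-k)}=\mathbb{Z}_{|nk|}$. Since the bundle in the statement is the opposite of this normal bundle, the blow-down recovers a neighborhood of that point, and the result is immediate.

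Your two-step reduction (first $n=1$, then pass to the $\mathbb{Z}_n$-quotient) reaches the same destination but requires you to independently verify several things the paper gets for free: that the $\mathbb{Z}_n$-action on $\mathbb{C}^2/\mathbb{Z}_{|k|}$ is effective, that the combined action has an isolated fixed point at the origin (so that Lemma~\ref{lemma:abelian+isolated_cyclic} applies), and that the extension is genuinely of order $n|k|$. These all hold, and your ``Main obstacle'' paragraph correctly identifies the bookkeeping involved, but the paper's route avoids this entirely by recognizing the total space as an open piece of a quotient of a weighted projective plane whose singular structure was already tabulated. What your approach buys is perhaps a clearer conceptual picture of why the $n$ enters multiplicatively; what the paper's approach buys is brevity and no new verification.
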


\begin{proof}
  By Example \ref{ex::orbifold_quotient}, $\mathcal{O}_{p,q}(-k)/\mathbb{Z}_n(a,b,b \,l_++a\,l_-)$ is isomorphic to the normal orbi-bundle of $\mathbb{C}P(p,q)/\mathbb{Z}_n$ inside 
  $$\mathbb{C}P^2(p,q,-k)/\mathbb{Z}_n(a,b,b \,l_++a\,l_-).$$ 
  Therefore, a neighborhood $N(\Sigma)$ of $\Sigma$ is equivariantly
  symplectomorphic  to a neighborhood  of the exceptional divisor in
  some weighted blow-up of $\mathbb{C}^n/\mathbb{Z}_{nk}$ at
  $[0]_{\mathbb{Z}_{nk}}$. Since the complex line orbi-bundle in the
  statement is the opposite of
  $\mathcal{O}_{p,q}(-k)/\mathbb{Z}_n(a,b,b \,l_++a\,l_-)$, the result follows.
\end{proof}

\section{Classification of Hamiltonian $S^1$-spaces}\label{ChapterClassification}
\subsection{Uniqueness}\label{uniqueness}
By Lemma \ref{lemma:trivial_direction}, the labeled multigraph of a Hamiltonian
$S^1$-space is an invariant of the isomorphism class. The aim of this
section is to show that it is the {\em only} invariant.

\begin{theorem}\label{thm:uniqueness}
  If two Hamiltonian $S^1$-spaces have equal labeled multigraphs then they are equivariantly symplectomorphic.
\end{theorem}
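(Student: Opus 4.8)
The plan is to reduce the uniqueness statement for Hamiltonian $S^1$-spaces over orbifolds to the corresponding uniqueness statement for Hamiltonian $S^1$-spaces over manifolds, namely \cite[Theorem 4.1]{karshon}, by means of an equivariant desingularization. Let $(M_1,\omega_1,H_1)$ and $(M_2,\omega_2,H_2)$ be two Hamiltonian $S^1$-spaces with equal labeled multigraphs, and let $\Phi$ denote a fixed bijection between their labeled multigraphs preserving all labels. The first step is to run the desingularization algorithm of Theorem \ref{thm:desingularization} on $(M_1,\omega_1,H_1)$, keeping careful track of the choices made: at each stage we blow up a singular point of minimal orbifold structure group order using the specific weighted blow-up $\text{WBU}\bigl(m,\tfrac{a_1}{p},\tfrac{a_2}{p},\tfrac{m'}{m},\tfrac{1}{m},\varepsilon\bigr)$ appearing in that proof, which is one of the four types of Proposition \ref{prop::desing}. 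The crucial observation is that, by Lemma \ref{lemma:gamma_thin_vertex}, Proposition \ref{prop:graph_seifert}, and the explicit descriptions in Proposition \ref{prop::desing}, the effect of each such blow-up on the labeled multigraph is determined entirely by the labeled multigraph together with the (combinatorial) data of which vertex is being blown up and the size $\varepsilon$. Hence, via the bijection $\Phi$, the very same sequence of equivariant weighted blow-ups (with the same weights and sizes) can be performed on $(M_2,\omega_2,H_2)$, and at each stage the resulting labeled multigraphs of the two spaces remain equal. One must choose the sizes $\varepsilon$ at each stage small enough that the blow-ups are admissible in both spaces simultaneously; this is possible since there are only finitely many steps and finitely many constraints.

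The second step is to apply \cite[Theorem 4.1]{karshon}. After finitely many blow-ups we obtain Hamiltonian $S^1$-spaces $(\widetilde{M}_1,\widetilde{\omega}_1,\widetilde{H}_1)$ and $(\widetilde{M}_2,\widetilde{\omega}_2,\widetilde{H}_2)$ that are now \emph{smooth} compact symplectic manifolds, by Theorem \ref{thm:desingularization}, and whose labeled multigraphs coincide. When all fixed points are manifold points the labeled multigraph (forgetting the type labels, which are trivial) carries precisely the data of Karshon's invariant in \cite[Section 2.1]{karshon}; one needs to check that the degree data of fat vertices recorded via the Seifert invariant (Proposition \ref{prop:graph_seifert}) matches the self-intersection/genus/area data in Karshon's graph, which is routine given Remark \ref{rmk:chern_class} and Corollary \ref{cor:Z_k-orbi-sphere}. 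By \cite[Theorem 4.1]{karshon}, there is an $S^1$-equivariant symplectomorphism $\widetilde{\varphi}:(\widetilde{M}_1,\widetilde{\omega}_1)\to(\widetilde{M}_2,\widetilde{\omega}_2)$ with $\widetilde{H}_2\circ\widetilde{\varphi}=\widetilde{H}_1$.

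The third step is to blow back down equivariantly and conclude. Each weighted blow-up performed in step one has an inverse equivariant weighted blow-down along the exceptional divisor (Section \ref{sec:wbd}, in particular Remark \ref{rmk:symp_weighted_blow_down}), and the exceptional divisors in $\widetilde{M}_1$ and $\widetilde{M}_2$ correspond to one another under $\widetilde{\varphi}$ because they correspond to distinguished edges/fat vertices of the (common) labeled multigraph. Working backwards through the sequence, at each stage the equivariant symplectomorphism on the blown-up spaces descends, by the equivariant symplectic neighborhood theorem (Theorem \ref{thm:esnt}) applied to a neighborhood of the divisor being contracted, to an equivariant symplectomorphism intertwining the moment maps on the blown-down spaces; here one uses that the weighted blow-down is canonical up to isomorphism (Remark \ref{rmk:symp_weighted_blow_down}, first bullet) so the choice does not matter up to the ambiguity already absorbed by Theorem \ref{thm:esnt}. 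After undoing all the blow-ups we arrive at an $S^1$-equivariant symplectomorphism $\varphi:(M_1,\omega_1)\to(M_2,\omega_2)$ with $H_2\circ\varphi=H_1$, as desired.

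\textbf{Main obstacle.} The delicate point is the third step: showing that an equivariant symplectomorphism between the blown-up spaces can be \emph{pushed down} through a weighted blow-down, i.e., that the blow-down operation is sufficiently functorial. One must verify that $\widetilde{\varphi}$ can be arranged (or modified near the exceptional divisor, using the contractibility of the space of compatible almost complex structures, Remark \ref{rmk:contractible}, and Theorem \ref{thm:esnt}) so that it respects the tubular neighborhood identifications used to define the two blow-downs; only then does it descend. Controlling this simultaneously for an entire sequence of blow-downs, and checking that the distinguished divisors really are matched up by $\widetilde{\varphi}$ at every stage (which is where the hypothesis that the labeled multigraphs — not merely the underlying graphs — agree gets used, via Lemma \ref{lemma:trivial_direction} applied inductively), is the technical heart of the argument.
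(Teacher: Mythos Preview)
Your proposal is correct and follows essentially the same strategy as the paper: desingularize both spaces in parallel using the algorithm of Theorem~\ref{thm:desingularization} with identical weights and sizes, invoke \cite[Theorem 4.1]{karshon} on the resulting smooth spaces, and then reverse the blow-ups. The paper organizes this as an induction on the number $k$ of blow-ups needed in the desingularization, which neatly sidesteps the bookkeeping you flag as the main obstacle: at the inductive step one performs a single blow-up on each side, applies the inductive hypothesis to identify the blown-up spaces, and then observes that the original spaces are both obtained by blowing down the \emph{same} orbi-sphere in the (now identified) space, so only one blow-down needs to be controlled at a time rather than an entire chain.
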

\begin{proof}
Suppose that $(M_1,\omega_1,\Phi_1)$ and
$(M_2,\omega_2,\Phi_2)$ have equal labeled
multigraphs. By Theorem \ref{thm:desingularization}, both $(M_1,\omega_1,\Phi_1)$ and
$(M_2,\omega_2,\Phi_2)$ can be desingularized by applying finitely
many $S^1$-equivariant weighted blow-ups. In fact, the proof of
Theorem \ref{thm:desingularization} provides an explicit algorithm.
Moreover, we observe that, except for the size of each blow-up, all
the data to run the algorithm are encoded in the labeled
multigraph. Since the labeled multigraphs of $(M_1,\omega_1,\Phi_1)$ and
$(M_2,\omega_2,\Phi_2)$ are assumed to be equal, we may perform the
desingularization using the {\em same} algorithm, i.e., we make sure
that the size of each blow-up is the same for both spaces at each
step. In particular, there exists a non-negative integer $k$ such that
the desingularization procedure for both spaces is completed after
exactly $k$ blow-ups.

We argue by induction on $k$. If $k=0$, then neither space has
singular points and the result follows by \cite[Theorem
4.1]{karshon}. Suppose that the result holds for any two Hamiltonian
$S^1$-spaces with equal labeled multigraphs whose desingularization
procedure requires $k$ blow-ups. By Proposition \ref{prop::desing}, after applying the first
$S^1$-equivariant weighted blow-up to both $(M_1,\omega_1,\Phi_1)$ and
$(M_2,\omega_2,\Phi_2)$, we obtain two Hamiltonian $S^1$-spaces
$(M'_1,\omega'_1,\Phi'_1)$ and $(M'_2,\omega'_2,\Phi'_2)$ with equal labeled multigraphs whose desingularization
procedure requires $k$ blow-ups. Hence, by the inductive hypothesis,
we may assume that they are, in fact, equal Hamiltonian
$S^1$-spaces. Since $(M_1,\omega_1,\Phi_1)$ and
$(M_2,\omega_2,\Phi_2)$ are (isomorphic to spaces) obtained from $(M'_1,\omega'_1,\Phi'_1)$
and $(M'_2,\omega'_2,\Phi'_2)$ by applying an $S^1$-equivariant
weighted blow-down to the same $S^1$-invariant orbi-sphere, the
result follows.
\end{proof}

\subsection{Existence}\label{sec:existence}
By Theorem \ref{thm:uniqueness}, the labeled multigraph  of a
Hamiltonian $S^1$-space is a complete invariant. In this section, we
turn to the question of determining which spaces actually occur. Our
aim is to show that every Hamiltonian $S^1$-space can be obtained from
a minimal space by a sequence of $S^1$-equivariant symplectic weighted
blow-ups. By ``minimal'' we mean that no equivariant symplectic
blow-down can be performed. This is the analog of \cite[Theorem 6.3]{karshon} for orbifolds. 

Our strategy is as follows. First, we obtain the ``candidates'' for
the labeled multigraphs of the minimal spaces in Theorems
\ref{fig::minimalfixedsurface} and \ref{thm:existisolated}. The
technique is to perform equivariant weighted symplectic blow-downs to
kill $S^1$-invariant orbi-spheres with normal orbi-bundles  of
negative degree, reversing the constructions described in
Propositions~\ref{prop::desing} and \ref{prop::desing2}. Second, we
show that each candidate actually arises as the labeled multigraph of
some Hamiltonian $S^1$-space in Sections~\ref{MSF} and \ref{MSI}.

\subsubsection{Chains of $S^1$-invariant orbi-spheres}\label{sec:gradientsphere}
Let $(M,\omega,H)$ be a Hamiltonian $S^1$-space. In analogy with
\cite[Section 3.1]{karshon}, there exists a  {\bf compatible metric}
on $M$,  i.e., an
$S^1$-invariant Riemannian metric $\langle \cdot , \cdot \rangle$ such
that the endomorphism $J:TM  \to TM$ defined by 
$$
\langle u, v \rangle = \widetilde{\omega} (u,Jv)
$$
is an almost complex structure. By construction, the gradient and Hamiltonian vector fields of
$H$ commute. Moreover, either both vanish or are linearly independent
at a point. Since $M$ is compact, we obtain
an action of $\mathbb{C}^* \simeq \mathbb{R} \times S^1$ on
$M$ extending the given Hamiltonian $S^1$-action. Let $\mathcal{O}$ be
a $\mathbb{C}^*$-orbit that contains more than a point. Since $M$ has
isolated singular points, $\mathcal{O}$ consists solely of regular
points. Hence, \cite[Lemma 3.3]{karshon} holds, so that $\mathcal{O}$
is symplectomorphic to a cylinder endowed with a suitable
$S^1$-action. By Theorem \ref{cor::localformpt}, the
closure of $\mathcal{O}$ is
topologically a 2-sphere that is called a {\bf gradient
  orbi-sphere}: the two points that lie in the closure are fixed
points that we call north and south poles of the orbi-sphere,
depending on the value of $H$. We remark that a gradient orbi-sphere need not be a
suborbifold, just as in the case of manifolds (see \cite[Lemma
4.9]{aharaHattori} and Lemma \ref{lemma:free_smooth} below).

Let $p \in \mathcal{O}$ be a point and let $K \subseteq S^1$ denote
its stabilizer under the $S^1$-action. Since $\mathcal{O}$ contains more than a point, $K
\neq S^1$. Moreover, every point in $\mathcal{O}$ has stabilizer for
the $S^1$-action equal to $K$. Hence, we say that a gradient
orbi-sphere is {\bf free} if it is the closure of a
$\mathbb{C}^*$-orbit all of whose points have trivial stabilizer. In
analogy with \cite[Lemma 3.5]{karshon}, the gradient orbi-spheres that
are not free are precisely the isotropy orbi-spheres of
$(M,\omega,H)$. In particular, by Proposition \ref{prop:iso_sphere} only free gradient orbi-spheres may
fail to be suborbifolds. 

If we vary the compatible metric on $(M,\omega,H)$, the gradient
orbi-spheres that are not free do not change, while those that are free may
do. Since the arguments in the proof of \cite[Lemma 3.6]{karshon}
apply to Hamiltonian $S^1$-spaces (because they have isolated singular
points), for a generic choice of compatible metric there are no free
gradient orbi-spheres both of whose poles are interior fixed points.

Fix a generic compatible metric on $(M,\omega,H)$. By the arguments in
\cite[Section 3]{aharaHattori}, if no pole of a free gradient
orbi-sphere is an isolated extremum, then the orbi-sphere is a
fully embedded suborbifold. Otherwise, the following result holds
(cf. \cite[Lemma 4.9]{aharaHattori}).

\begin{lemma}\label{lemma:free_smooth}
Let $(M,\omega,H)$ be a Hamiltonian $S^1$-space with a generic
compatible metric. Let $Z$ be a free gradient orbi-sphere one of whose
poles $F$ is an isolated extremum that has type
$\frac{1}{m}(1,l)$. Let $\frac{a_1}{m},\frac{a_2}{m}$ be the
orbi-weights at $F$, with  $a_1l-a_2= 0 \mod m$. Then $Z$ is fully
embedded at $F$ if and only if at least one of $a_1$ or $a_{2}$ equals
$1$ in absolute value.
\end{lemma}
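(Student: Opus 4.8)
The statement is local near the isolated extremum $F$, so the plan is to work in a model chart. By Theorem~\ref{cor::localformpt}, an $S^1$-invariant neighborhood of $F$ is isomorphic as a Hamiltonian $S^1$-space to an open neighborhood of $[0,0]$ in the model $\C^2/\Z_m$ for a cyclic isolated singular point of type $\frac{1}{m}(1,l)$, with the linear $S^1$-action $\lambda\cdot[z_1,z_2]_m=[\lambda^{a_1/m}z_1,\lambda^{a_2/m}z_2]_m$ (here $p=m$ since $F$ is an isolated extremum and by Remark~\ref{rmk:type_fixed} we are using the preferred representative with $a_1\geq a_2$); by Theorem~\ref{cor::localformpt} the relevant arithmetic conditions $\gcd(a_1,a_2)=1$, $a_1a_2\neq 0$, $\gcd(|a_1l-a_2|,m)=m$ hold, the last of which is exactly $a_1l-a_2\equiv 0\bmod m$. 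The gradient flow of $H$ in this chart is, up to the compatible metric, the flow of the holomorphic $\C^*$-action $w\cdot[z_1,z_2]_m=[w^{a_1/m}z_1,w^{a_2/m}z_2]_m$ (lifted through the extension of $S^1$ by $\Z_m$), so the free gradient orbi-sphere $Z$ emanating from $F$ corresponds to a $\C^*$-orbit with trivial $S^1$-stabilizer. By Lemma~\ref{lem::isotropy}, a nonzero point $[z_1,0]_m$ (resp. $[0,z_2]_m$) has stabilizer $\Z_{|a_1|}$ (resp. $\Z_{|a_2|}$) when $p=m$, i.e. $k=1$; since $Z$ is free and both $a_1,a_2$ are nonzero, the closure of $Z$ near $F$ must be a generic $\C^*$-orbit $\{[tz_1^0,t z_2^0]_m\}$ with $z_1^0 z_2^0\neq 0$ — the coordinate axes are the \emph{isotropy} orbi-spheres, not free ones.

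\textbf{Key steps.} First I would pin down the local structure: describe $Z\cap U$ explicitly as the image in $\C^2/\Z_m$ of a one-parameter family through a point with both coordinates nonzero, parametrized by $t\in\R_{>0}$ running along the gradient direction, with $t\to 0$ giving $F$. Second, I would determine when the inclusion $Z\hookrightarrow M$ is a good $C^\infty$ map at $F$ — equivalently, when $Z$ near $F$ admits a suborbifold chart compatible with the l.u.c. $\C^2/\Z_m$ at $F$ in the sense of Definitions~\ref{defn:suborbifold_atlas} and \ref{defn:c_infty_lift}. This reduces to asking whether the curve $t\mapsto [t^{a_1/m}z_1^0, t^{a_2/m}z_2^0]$ (with a suitable reparametrization absorbing the metric) lifts to a $\Z_m$-equivariant embedded disk in $\C^2$ around $0$; the lift is $s\mapsto (s^{a_1}z_1^0, s^{a_2}z_2^0)$ after writing $t=s^m$, which is an embedded holomorphic disk precisely when the map $s\mapsto s^{a_i}$ is an embedding near $s=0$ in one of the two factors, hence precisely when $|a_1|=1$ or $|a_2|=1$. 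Third, for the converse direction (if neither $|a_i|=1$) I would exhibit the obstruction: the closure $\overline{Z}$ fails to be a suborbifold at $F$ because any suborbifold chart would force $\Z_m$ to act on the would-be one-dimensional tangent $\Delta$-slice, but $\Z_m$ acts with distinct nontrivial characters on the two axes and the orbit $\overline{Z}$ is transverse to both, so no $\Z_m$-invariant one-dimensional submanifold of $\C^2$ maps onto it; this is the orbifold analog of the failure in \cite[Lemma 4.9]{aharaHattori}, and I would cite the structure of $N_{\mathrm{U}(2)}(\Z_m)$ from Lemma~\ref{lemma:normalizer} to control the admissible local symmetries. Finally, I would note that away from $F$ the orbi-sphere $Z$ consists of regular points (since $M$ has isolated singular points) so fullness there is automatic, matching the discussion preceding the lemma.

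\textbf{Main obstacle.} The delicate point is the ``only if'' direction: making rigorous that when $|a_1|,|a_2|\geq 2$ the gradient orbi-sphere genuinely fails to be a \emph{full suborbifold} at $F$, rather than merely failing to be fully \emph{embedded}. I expect to handle this by the same mechanism as \cite[Example 5.4]{weilandt} and \cite[Lemma 4.9]{aharaHattori}: one constructs the suborbifold atlas for $\overline{Z}\smallsetminus\{F\}$ from the trivial-stabilizer charts, and then shows that its germ at $F$ cannot be completed to a chart of the form $(\widehat{U},\Z_m,\varphi)$ with $\widehat{V}\subseteq\widehat{U}$ a $\Z_m$-invariant submanifold, because the branched cover $s\mapsto(s^{a_1}z_1^0,s^{a_2}z_2^0)$ is not an immersion at $0$ unless some $|a_i|=1$. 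Translating this ``branched cover is not an embedding'' fact into the precise language of Definition~\ref{defn:suborbifold_atlas} (condition~\ref{item:15} on the subgroup $\Delta_\beta=\Gamma_\beta$) is the bookkeeping that will take the most care; everything else is routine once the local model is fixed via Theorem~\ref{cor::localformpt}.
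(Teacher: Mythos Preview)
Your approach is essentially the paper's: pass to the local model of Theorem~\ref{cor::localformpt}, identify $Z$ near $F$ with an orbit of the extended $\C^*$-action, and test smoothness at the origin. The paper does exactly this, but instead of your parametrization $s\mapsto(s^{a_1}z_1^0,s^{a_2}z_2^0)$ it writes the image as the zero locus $w_2^{a_1}z_1^{a_2}-w_1^{a_2}z_2^{a_1}=0$ in $\C^2$ and observes that this is smooth at the origin precisely when one exponent is $\pm1$. That formulation makes your ``main obstacle'' evaporate: the lift in $\C^2$ is automatically $\Z_m$-invariant (since $a_1l\equiv a_2\bmod m$), so full embeddedness reduces to ordinary smoothness of a plane curve, with no suborbifold-atlas bookkeeping needed.

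There is one genuine slip. You assert that ``the coordinate axes are the isotropy orbi-spheres, not free ones,'' and conclude that $Z$ free forces $z_1^0z_2^0\neq0$. But you yourself computed via Lemma~\ref{lem::isotropy} that the stabilizer of $[z_1,0]_m$ is $\Z_{|a_1|}$ with $k=1$; this is \emph{trivial} exactly when $|a_1|=1$. So when $|a_1|=1$ the axis $\{z_2=0\}$ is itself a free gradient orbi-sphere and could be $Z$. The paper treats this case first: if a point of $Z$ has $w_i=0$, then $Z$ is the axis, manifestly fully embedded, and freeness forces $|a_j|=1$ for $j\neq i$. Only after disposing of this case does one pass to $w_1w_2\neq0$. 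Your argument still reaches the right conclusion once this case is inserted (an axis is embedded and has some $|a_j|=1$; a generic orbit is embedded iff some $|a_i|=1$), but as written the case split is incomplete.
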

\begin{proof}
Consider coordinates $[z_1,z_2]_m$ around $F$ as in
Theorem~\ref{cor::localformpt} and let $x$ be a point in $Z$ near $F$
with coordinates $[w_1,w_2]_m$. We may assume that
$F$ is a maximal point. If $w_i=0$ for $i=1$ or $2$ then $Z$ is fully
embedded at $F$ and $a_j=-1$ for $j\neq i$, because $Z$ is assumed to
be a gradient sphere. Otherwise, $Z$ is given near $F$ by coordinates $[z^{a_1}w_1,z^{a_2}w_2]_m$ with $z\in\C^*$. Since $a_1a_2\neq 0$, near $F$ the orbi-sphere $Z$ is defined by the equation 
\begin{equation}\label{eq:locus}
w_2^{a_1} z_1^{a_2} - w_1^{a_2} z_2^{a_1}=0
\end{equation}
and so it is fully embedded at $F$ if and only if at least one of $a_1,a_2$ is $-1$. (Note that this is exactly the condition for the locus of \eqref{eq:locus} to be smooth in $\C^2$.)
\end{proof}

To conclude this section, we introduce the
following notion for Hamiltonian $S^1$-spaces, that is analogous to
\cite[Definition 5.1]{karshon}. 

\begin{definition}\label{def:chain}
Let $(M,\omega,H)$ be a Hamiltonian $S^1$-space and let $g$ be a
compatible metric on $(M,\omega,H)$. A {\bf chain of $S^1$-invariant
  gradient orbi-spheres} is a
sequence of gradient orbi-spheres $C_1, \ldots, C_k$ such that the south
pole of $C_1$ is a minimum of $H$, the north pole of $C_i$
is the south pole of $C_{i+1}$ for $i=1, \ldots k-1$, and the north
pole of $C_k$ is a maximum of $H$. We say that such a chain is {\bf
  non-trivial} if it contains more than one orbi-sphere or if it
contains an orbi-sphere with non-trivial stabilizer. 
\end{definition}

\subsubsection{Blowing down to a minimal space with fixed orbi-surfaces}

\begin{lemma}\label{lemma:existsurface1}		
  Any Hamiltonian $S^1$-space $(M,\omega,H)$ that has at least one fixed orbi-surface
  can be obtained from a Hamiltonian  $S^1$-space that has at least
  one fixed orbi-surface and no non-extremal fixed points by applying finitely many $S^1$-equivariant
  weighted blow-ups.
\end{lemma}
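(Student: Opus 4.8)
The plan is to induct on the number of non-extremal fixed points of $(M,\omega,H)$, reducing this quantity by one with each $S^1$-equivariant weighted blow-\emph{down}; then the blow-ups in the statement are simply the reverse operations. If $(M,\omega,H)$ has no non-extremal fixed points, there is nothing to prove. Otherwise, fix a generic compatible metric on $(M,\omega,H)$ (in the sense of Section \ref{sec:gradientsphere}), so that there are no free gradient orbi-spheres both of whose poles are interior fixed points. Since $(M,\omega,H)$ has a fixed orbi-surface, that orbi-surface is an extremum of $H$ by Remark \ref{rmk:type_fixed}; say it realizes the minimum (the maximum case being symmetric). Let $x$ be a non-extremal fixed point whose moment map value $H(x)$ is minimal among all non-extremal fixed points.

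The first key step is to produce, through $x$, a gradient orbi-sphere $Z$ whose south pole is a \emph{minimum} of $H$: start at $x$ and follow the downward gradient flow of $H$. By Theorem \ref{cor::localformpt} the closure of each $\mathbb{C}^*$-orbit emanating downward from $x$ is a gradient orbi-sphere whose south pole $y$ satisfies $H(y) < H(x)$; by minimality of $H(x)$ among non-extremal fixed points, $y$ must be an extremum, hence a minimum. If $Z$ is an isotropy orbi-sphere, it is a fully embedded suborbifold by Proposition \ref{prop:iso_sphere}; if it is a free gradient orbi-sphere, then since its south pole is an extremum (the minimal fixed orbi-surface, or possibly a minimal isolated fixed point), it is fully embedded at that pole — either automatically (if the pole lies on the fixed orbi-surface, which has $p=1$) or, in the isolated extremum case, by Lemma \ref{lemma:free_smooth} after checking the orbi-weight condition; and it is fully embedded at its north pole $x$ by the genericity of the metric (no free gradient orbi-sphere has two interior poles, and the downward continuation argument applied at $x$ would force an extremal pole). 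The second key step is to show $Z$ has normal orbi-bundle of negative degree, so that an $S^1$-equivariant symplectic weighted blow-down along $Z$ is possible (Remark \ref{rmk:symp_weighted_blow_down}). Here I would distinguish cases according to the nature of the poles of $Z$: if both poles of $Z$ were non-extremal this would follow from Corollary \ref{cor::index2gradientsph}, but since the south pole is extremal I instead use Lemma \ref{lem::euler} together with the orbi-weight data at the two poles — the relevant orbi-weight of the $S^1$-action on the fiber of $\nu_Z$ over the north pole $x$ is positive (as $x$ is not an extremum and $Z$ descends from $x$) while over the south pole it is controlled by the local model there — to conclude $m_+ - m_- = -c\deg(\nu_Z)$ has the correct sign, hence $\deg(\nu_Z) < 0$.

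Having established these, I would perform the $S^1$-equivariant weighted blow-down of $(M,\omega,H)$ along $Z$, obtaining a Hamiltonian $S^1$-space $(M',\omega',H')$ (again by Remark \ref{rmk:symp_weighted_blow_down} and Corollary \ref{cor:weighted} for the reverse operation). This collapses $Z$ to a single point, which is either regular or, by Lemma \ref{lem::weightedblowdown}, a cyclic isolated singular point; in either case the effect on the labeled multigraph is precisely the reverse of one of the moves of Proposition \ref{prop::desing} or \ref{prop::desing2} (cases $(I)$ or $(IV)$, depending on whether the south pole of $Z$ lies on the fixed orbi-surface). In particular the fixed orbi-surface of $(M,\omega,H)$ survives in $(M',\omega',H')$ (the blow-down happens along an orbi-sphere whose interior is disjoint from it, and at worst modifies a disk neighborhood of one singular point on it via case $(IV)$, which does not destroy the orbi-surface), and the non-extremal fixed point $x$ is removed, so $(M',\omega',H')$ has strictly fewer non-extremal fixed points. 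Applying the inductive hypothesis to $(M',\omega',H')$ finishes the proof, since $(M,\omega,H)$ is recovered from $(M',\omega',H')$ by the single $S^1$-equivariant weighted blow-up reversing the blow-down we performed, composed with the finitely many blow-ups supplied by induction.

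The main obstacle I anticipate is the second key step: verifying that the gradient orbi-sphere $Z$ descending from the lowest non-extremal fixed point has negatively-curved normal orbi-bundle when one of its poles is extremal. Corollary \ref{cor::index2gradientsph} does not apply directly in that situation, and one must instead argue carefully from the local normal forms at both poles (Theorem \ref{cor::localformpt}), keeping track of the orbi-weights and their signs, and feeding these into the degree formula of Lemma \ref{lem::euler}; the bookkeeping is delicate, especially in the case where the extremal pole is itself an isolated singular fixed point rather than a point of the fixed orbi-surface, where one must simultaneously invoke Lemma \ref{lemma:free_smooth} to know $Z$ is a suborbifold there. A secondary subtlety is ensuring that the genericity of the compatible metric can be arranged without disturbing the fixed orbi-surface or the isotropy orbi-spheres, which is handled exactly as in \cite[Lemma 3.6]{karshon} since our spaces have isolated singular points.
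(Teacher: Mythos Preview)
Your approach is correct and essentially the same as the paper's, which also blows down $S^1$-invariant orbi-spheres connecting a non-extremal fixed point to the fixed orbi-surface and uses Lemma \ref{lem::euler} for the negative-degree check; the only organizational difference is that the paper first disposes of isotropy orbi-spheres with both poles non-extremal via Corollary \ref{cor::index2gradientsph}, whereas your choice of $x$ with extremal $H$-value among non-extremal fixed points achieves the same reduction in one step. Your anticipated obstacle is not one: once you place the fixed orbi-surface at the minimum, Corollary \ref{lem::uniqueMaxMin} forces the south pole of $Z$ to lie \emph{on} that orbi-surface, so the isolated-extremum case you worry about never arises, $Z$ is fully embedded (neither pole is an isolated extremum), and the Lemma \ref{lem::euler} computation is immediate since $m_-=0$ (the normal fiber to $Z$ at the south pole is tangent to the fixed orbi-surface) and $m_+>0$ (as $x$ is non-extremal and $Z$ descends from it).
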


\begin{proof}
  By Corollary~\ref{cor::index2gradientsph}, every isotropy
  orbi-sphere whose
  north and south poles are non-extremal has negative
  self-intersection and can, therefore, be blown down. Therefore we
  may assume that $(M,\omega,H)$ contains no such isotropy
  orbi-spheres. Let $x$ be a non-extremal fixed point of $(M,\omega,H)$. We claim that there is a fully embedded
  $S^1$-invariant orbi-sphere one of whose poles is $x$ with the
  property that the other pole lies on a fixed orbi-surface in
  $(M,\omega,H)$. To see this, we fix a generic compatible metric on
  $(M,\omega,H)$, so that $x$ is a pole of a gradient orbi-sphere whose
  other pole lies on a fixed orbi-surface. Hence this gradient
  orbi-sphere is fully
  embedded. Moreover, by Lemma \ref{lem::euler}, this
  gradient orbi-sphere has negative self-intersection and can be
  therefore blown down. Iterating this procedure finitely many times,
  the result is proved.
\end{proof}	

By Lemma \ref{lemma:existsurface1}, in order to understand the
Hamiltonian $S^1$-spaces with at least one fixed surface, it suffices
to determine which minimal spaces with this property occur. The
following result describes the labeled multigraphs of the latter spaces.

\begin{theorem}\label{fig::minimalfixedsurface}
  Let $(M,\omega,H)$ be a minimal Hamiltonian $S^1$-space with at
  least one fixed orbi-surface. Then the labeled multigraph of
  $(M,\omega,H)$ is one of the following:
  
  
  \vspace{.1cm}
  \resizebox{12cm}{!}{%
    \begin{tikzpicture}
      
      \coordinate (point) at  (-6.5,1);
      \fill (point) circle (2pt);
      \fill  (-6.5,3) circle (8pt);
      
      \coordinate (point) at  (-0.5,3);
      \fill (point) circle (2pt);
      \fill  (-0.5,1) circle (8pt);
      
      \fill  (5.5,3) circle (8pt);
      \fill  (5.5,1) circle (8pt);
      \coordinate[label=above:{$\frac{1}{qc}(1,l_{qc}),\color{red} H=\alpha + \frac{A}{\beta_\Sigma}$}] (c1) at (-0.5,3);
      \coordinate[label=right:{$\frac{1}{qc}(1,l_{qc}), \color{red}$}] (c2) at (-7.2,0.5);
      \coordinate[label=right:{$ \color{red} H=\alpha $}] (c2) at (-7.1,0.);
      \coordinate[label=below:{$(I)$}] (c21) at (-6.5,-0.5);
      
      \coordinate[label=below:{$\frac{1}{p_1c}(1,l_1),\frac{1}{p_2c}(1,l_2)$}] (ACC) at (-0.5,0.65); 
      
      \coordinate[label=below:{$g=0,A, \color{red} H=\alpha$}] (ACD) at (-0.5,0); 
      \coordinate[label=below:{$(II)$}] (c22) at (-0.5,-0.5);
      
      \coordinate[label=above:{$\frac{1}{p_1c}(1,l_1),\frac{1}{p_2c}(1,l_2)$}] (AAA)  at (-6.4,3.8);
      \coordinate[label=above:{$g=0,A , \color{red} H=\alpha + \frac{A}{\beta_\Sigma}$}] (ABB)  at (-6.5,3.3);
      \coordinate[label=above:{$\frac{1}{p_1}(1,\hat{l}_1),\dots,\frac{1}{p_n}(1,\hat{l}_n)$}] (VFD) at (5.7,0);
      \coordinate[label=above:{$g,A,\color{red} H=\alpha$}] (VFE) at (5.5,-0.4);
      
      \coordinate[label=below:{$\frac{1}{p_1}(1,l_1),\dots,\frac{1}{p_r}(1,l_n)$}] (VCC) at (5.6,4.4);
      \coordinate[label=below:{$g, A - \beta_{\Sigma_-}s, \color{red} H=\alpha + s$}] (VCC) at (5.5,3.8);
      
      \coordinate[label=below:{$(III)$}] (c22) at (5.5,-0.5);
      
      \draw[thick](-6.6,2.6) -- (-6.6,1.2);
      \draw[thick](-6.4,2.6) -- (-6.4,1.2);
      \coordinate[label=right:{$p_2c$}] (c1) at (-6.4,2);
      \coordinate[label=left:{$p_1c$}] (c1) at (-6.6,2);
      
      \draw[thick](-0.6,2.8) -- (-0.6,1.4);
      \draw[thick](-0.4,2.8) -- (-0.4,1.4);
      \coordinate[label=right:{$p_2c$}] (c1) at (-0.4,2);
      \coordinate[label=left:{$p_1c$}] (c1) at (-0.6,2);

      \draw[thick](5,2.6) -- (5,1.4);
      \draw[thick](6,2.6) -- (6,1.4);
      \node at ($(5,2)!.5!(6,2)$) {\ldots};
      \coordinate[label=right:{$p_n$}] (c1) at (6,2);
      \coordinate[label=left:{$p_1$}] (c1) at (5,2);		
    \end{tikzpicture}}
  
  \vspace{.1cm}
  where the labels satisfy the following compatibility conditions:
  \begin{itemize}[leftmargin=*]
  \item \textbf{Multigraph (I):}  assuming w.l.o.g. that $p_1\geq p_2$, we have 
    \begin{enumerate}[label=(\roman*)]
    \item $1\leq l_{qc} < q c,  \quad 1\leq l_{1} <  p_1c, \quad  1\leq l_2< p_2 c$, \label{cond0}
    \item $\gcd(l_{qc},qc)  =  \gcd(l_1,p_1c) =  \gcd(l_2,p_2c)=1,$
    \item $ p_1 l_{qc}  = p_2  \mod q, \quad \gcd{(p_1l_{qc} - p_2, c)}=1$,
    \item $q=c\, p_1p_2 \beta_\Sigma$,
    \item $q - l_1p_2-l_2p_1 = 0 \mod p_1p_2c$, \label{cond}
    \item $p_2 -l_{qc} p_1  - l^\prime_1 q  =0 \mod p_1qc$, where $1\leq l^\prime_{1} < p_1 c$ is such that $l_1 \cdot  l^\prime_1=1 \mod p_1c$. \label{condiv}
    \end{enumerate}
  \item \textbf{Multigraph (II):}  assuming w.l.o.g. that $p_1\leq p_2$, we have 
    \begin{enumerate}[label=(\roman*)]
    \item $1\leq l_{qc} < q c,  \quad 1\leq l_{1} <  p_1c, \quad  1\leq l_2< p_2 c,$
    \item  $\gcd(l_{qc},qc)  =  \gcd(l_1,p_1c) =  \gcd(l_2,p_2c)=1$,
    \item   $p_1 l_{qc}  = p_2  \mod q, \quad \gcd{(p_1l_{qc} - p_2, c)}=1$,
    \item $q=c\, p_1p_2 \beta_\Sigma$,
    \item   $q - l^\prime_1p_2-l^\prime_2p_1 = 0  \mod p_1p_2c$, with $1\leq l^\prime_i<p_i c$ s.t. $l_i \cdot l^\prime_i=1 \!\mod p_ic$,\footnote{i.e. $(p_i c, l^\prime_i)$  is the Seifert invariant of the circle orbi-bundle associated to the normal orbi-bundle of $\Sigma_-$ corresponding to the singular point of order $p_ic$.}
    \item $p_2 - l_1 q -l_{qc} p_1 =0 \mod p_1qc$.
    \end{enumerate}
  \item \textbf{Multigraph (III):}
    \begin{enumerate}[label=(\roman*)]
    \item $1\leq l_{i},\hat{l}_i < p_i$,  for $i=1,\ldots, n$,
    \item $\gcd (l_i,p_i)=1$, for $i=1,\ldots, n$,
    \item if  $g=0$ and $n\leq 2$ then $\beta_{\Sigma_-} = 0$, \label{condIII0}
    \item $l_i^\prime + \hat{l_i} = p_i$, for $i=1,\ldots, n$, with  $1\leq l^\prime_i<p_i$ s.t. $l_i \cdot l^\prime_i=1 \!\mod p_i$.\footnote{Or equivalently, $l_i+\hat{l_i}^\prime = p_i$, ($i=1,\ldots, n$)  with  $1\leq \hat{l}^\prime_i<p_i$ s.t. $\hat{l}_i \cdot \hat{l}^\prime_i=1 \!\mod p_i$,} \label{condIII}
    \item $\sum_{i=1}^n \frac{l_i}{p_i} + \beta_{\Sigma_-} \in \Z$.\label{condv}
    \end{enumerate}
  \end{itemize}
\end{theorem}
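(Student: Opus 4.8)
The proof proceeds by starting from a minimal Hamiltonian $S^1$-space $(M,\omega,H)$ with at least one fixed orbi-surface, and running an elimination argument on the non-extremal fixed points and isotropy orbi-spheres using the blow-down operations of Section \ref{sec:wbd}. By Lemma \ref{lemma:existsurface1}, since $(M,\omega,H)$ is minimal we already know it has no non-extremal fixed points: indeed, if it had one, the argument of that lemma would produce an $S^1$-invariant orbi-sphere of negative self-intersection that could be blown down, contradicting minimality. So the underlying combinatorics is already quite rigid: the multigraph has only extremal vertices, at most two of which are fat. The plan is then to split into cases according to the structure of the extremal fixed point set --- two fixed orbi-surfaces; or one fixed orbi-surface together with one isolated extremum; or one fixed orbi-surface and another fixed orbi-surface with an edge between them --- and in each case use the localization formula (Theorem \ref{prop:localization}) applied to the equivariant classes $1$ and $\omega^{S^1} = \omega - Hy$ to pin down all the relevant labels and the compatibility relations among them.

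\textbf{Key steps.} First I would record the structural constraint: by Corollary \ref{lem::uniqueMaxMin}, $H$ has a unique local maximum and a unique local minimum, each of which is either an isolated fixed point or a fixed orbi-surface; and by Corollary \ref{cor:fixed_point_set} and Remark \ref{rmk:type_fixed}, a fixed orbi-surface is always a global extremum and its singular points have orbi-weights $(a_1,a_2)$ with $\{|a_1|,|a_2|\}=\{0,1\}$ and $p=1$. Second, I would argue that minimality forces the number of isotropy orbi-spheres to be small: any isotropy orbi-sphere joining the minimum to the maximum, when one of them is a fixed orbi-surface, has negative self-intersection by Lemma \ref{lem::euler} (the poles being extremal and the degree computation forcing the sign), hence is blown-downable --- so the only survivors are configurations in which blowing down would not strictly simplify the space, i.e.\ would reintroduce a singular point that cannot be removed, which is exactly the phenomenon of case $(III)$ of Proposition \ref{prop::desing2}. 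This analysis yields precisely the three multigraph shapes listed. Third, for each shape I would write down the localization identities: integrating the equivariant form $1$ gives a relation like \eqref{eq:48} or \eqref{eq:bsigma-} among the degrees $\beta_{\Sigma}$ of the normal orbi-bundles and the orbi-data of the (at most two) isolated fixed points; integrating $\omega^{S^1}$ gives a relation like \eqref{eq:47} among the areas, moment map values, and $\beta_\Sigma$. These produce the moment-map labels $H = \alpha + A/\beta_\Sigma$, the area labels $A - \beta_{\Sigma_-}s$, and the equation $q = c\,p_1 p_2 \beta_\Sigma$ in conditions (I)(iv), (II)(iv). Fourth, the number-theoretic congruence conditions --- (I)\ref{cond0}--\ref{condiv}, (II)(i)--(vi), (III)(i)--\ref{condv} --- come from computing the Seifert invariants of the normal orbi-bundle of each fixed orbi-surface via Proposition \ref{def::EulerOrbibundle} and Corollary \ref{cor:degree_quotient}, together with the relation $l_i' + \hat{l}_i = p_i$ between the Seifert invariants at the two poles of an edge (this is the reversal of orientation phenomenon, \cite[Theorem 6, p.\ 184]{seifert}, already used in Example \ref{exm:projectivized_suborbifolds2}). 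The condition that $\beta_\Sigma$ is forced to be $0$ when $g=0$ and $n\le 2$ in (III)\ref{condIII0} comes from the fact that such a fixed orbi-surface is diffeomorphic to a (quotient of a) weighted projective line, and Lemma \ref{lem::deg_zero} then applies. Finally, for multigraph (I) and (II) the extra integer $l_{qc}$ and its congruences record the type of the single isolated extremum and the orbi-weights at it, tied to the edge labels $p_1 c, p_2 c$ via Lemma \ref{lem::isotropy}.

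\textbf{Main obstacle.} The hard part will be the bookkeeping to show that the listed congruences are \emph{exactly} the ones satisfied --- neither fewer nor more --- and in particular showing that minimality (no further blow-down possible) is equivalent to the stated shape of the multigraph together with these conditions. Proving ``no further blow-down possible'' requires, for each candidate edge or extremal orbi-surface, computing the self-intersection of every fully embedded $S^1$-invariant orbi-sphere (using a generic compatible metric and Lemma \ref{lemma:free_smooth} to handle free gradient orbi-spheres that might fail to be suborbifolds) and checking that blowing down would strictly increase the order of some orbifold structure group --- i.e.\ that we are at the ``bottom'' of the Hirzebruch--Jung process rather than in its middle. This is where one must be careful that a free gradient orbi-sphere at an isolated extremum of type $\frac1m(1,l)$ with orbi-weights $(a_1,a_2)$ is fully embedded precisely when $|a_1|=1$ or $|a_2|=1$ (Lemma \ref{lemma:free_smooth}), so that the only potentially blow-downable spheres are the ones whose blow-down recreates the same local picture. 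I would organize this as a finite case analysis on $n$ (the number of singular points on a fixed orbi-surface) and on whether the opposite extremum is fat or isolated, and verify in each case that the localization identities plus the Seifert-invariant congruences are simultaneously necessary and sufficient. The remaining verifications --- that the moment-map and area labels take the displayed form, and that the congruences reduce to the stated ones --- are routine manipulations of \eqref{eq:47}, \eqref{eq:48}, \eqref{eq:bsigma-}, \eqref{eq:comp_cond2} and the degree formulas of Section \ref{sec:orbi-bundles}.
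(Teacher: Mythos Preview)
Your overall architecture is right, but two of your ``key steps'' contain genuine errors that would break the argument. First, your claim that ``any isotropy orbi-sphere joining the minimum to the maximum, when one of them is a fixed orbi-surface, has negative self-intersection by Lemma \ref{lem::euler}'' is false. In case (III), both poles of such an orbi-sphere lie on fixed orbi-surfaces, so the normal orbi-weights $m_\pm$ in Lemma \ref{lem::euler} are both $0$ and the degree is $0$, not negative; in cases (I)/(II) the degree is actually \emph{positive}. So minimality is not what limits the number of edges. The paper instead argues directly: in the one-surface case the isolated extremum can be the pole of at most two isotropy orbi-spheres (Theorem \ref{cor::localformpt} / Corollary \ref{cor:north-south}), hence $\Sigma$ has at most two singular points; in the two-surface case one builds a bijection between the singular points on $\Sigma_-$ and $\Sigma_+$ by following the unique isotropy orbi-sphere through each. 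Minimality only enters at the very end, to force condition (III)\ref{condIII0}: since $\beta_{\Sigma_-}=-\beta_{\Sigma_+}$, if $g=0$, $n\le 2$ and $\beta_{\Sigma_-}\neq 0$ then one of $\Sigma_\pm$ has a normal orbi-bundle of negative degree over a genus-zero base with $\le 2$ singular points and can therefore be blown down --- this, not Lemma \ref{lem::deg_zero}, is the source of (III)\ref{condIII0}. Lemma \ref{lem::deg_zero} is used for condition (III)\ref{condIII}, applied to the (degree-zero) normal bundle of each isotropy orbi-sphere.

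Second, you propose to derive all the labels by localizing only $1$ and $\omega^{S^1}$. These do give $\beta_\Sigma$, $q=c\,p_1p_2\beta_\Sigma$, and the moment-map/area relations, but neither integral sees the genus. The paper obtains $g=0$ in cases (I)/(II), and $g_+=g_-$ in case (III), by localizing a \emph{third} class, the equivariant first Chern class $c_1^{S^1}(TM)$, via \eqref{eq:*} and \eqref{eq:**}; without this step you cannot conclude that the fixed orbi-surface in (I)/(II) is a sphere or that the two surfaces in (III) are diffeomorphic. Finally, free gradient orbi-spheres and Lemma \ref{lemma:free_smooth} play no role here --- they are needed only in the isolated-fixed-point theorem (Theorem \ref{thm:existisolated}), so you can drop that part of your case analysis entirely.
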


\begin{proof}
  The Hamiltonian $S^1$-space $(M,\omega,H)$ either has exactly one or
  two fixed orbi-surfaces and no non-extremal fixed point. 

  Suppose first that there is only one fixed orbi-surface $\Sigma$
  that, without loss of generality, is a maximum. We claim that
  $\Sigma$ has at most two singular points. Suppose not; since there
  are no non-extremal fixed points, there would be at least three
  isotropy orbi-spheres incident to the isolated fixed point at the
  minimum. This contradicts Theorem~\ref{cor::localformpt}.

  Let $p_1 c$ and $p_2 c$, with $\gcd(p_1,p_2)=1$, be the orders of
  the orbifold structure group of the two (possibly) singular points in $\Sigma$ and let $qc$ be the
  order of the orbifold structure group of the isolated fixed point. 
  By \eqref{eq:bsigma-}, we have that $\beta_\Sigma = \frac{q}{c p_1
    p_2} >0$, where $\beta_{\Sigma}$ is the self-intersection of
  $\Sigma$. Moreover, by applying Theorem~\ref{prop:localization} to
  the first equivariant Chern class $\alpha=c_1^{S^1}(TM)$ and using
  \eqref{eq:*} and \eqref{eq:**}, we have that
$$
0 =  \frac{p_1 + p_2}{c \, p_1\, p_2 \, y} + \int_\Sigma \frac{c_1(T\Sigma) + \beta_\Sigma\, \mathit{u} - y}{\beta_\Sigma\, \mathit{u} - y} 
= \frac{p_1 + p_2}{c p_1\, p_2 \, y} + \int_\Sigma \frac{\left(-2g + \frac{1}{c\,p_1} +   \frac{1}{cp_2}  \right) \mathit{u}}{\beta_\Sigma\, \mathit{u} - y},
$$
for  a generator $u\in H^2(\Sigma;\mathbb{Z})$. Hence, 
$$
\frac{p_1 + p_2}{c \, p_1\, p_2 } =  - 2g + \frac{1}{c\, p_1} + \frac{1}{c p_2},
$$
which yields that $g=0$. Moreover, by \eqref{eq:comp_cond2} and since $\beta_\Sigma = \frac{q}{c p_1
    p_2} >0$, we have that
%
$\mathrm{area}(\Sigma)=\beta_\Sigma \left(H_{max}- H_{min}
\right)$. We conclude that, in this case, the only possible labeled
multigraph is given by $(I)$. (The case of $\Sigma$ at the minimum
corresponds to $(II)$.)

Suppose that there are two fixed orbi-surfaces, $\Sigma_-$ and
$\Sigma_+$, that are respectively the minimum and maximum of
$H$. Since $(M,\omega,H)$ contains no isolated fixed points, by the
second equation in \eqref{eq:linear_system} we have that
$\beta_{\Sigma_-}=-\beta_{\Sigma_+}$. We claim that $\Sigma_-$ and
$\Sigma_+$ contain the same number of singular points. This holds if
there are no singular points, so we may assume that there is at least
one. By Corollary \ref{cor:action_preserves_structure_group}, any
singular point is fixed by the $S^1$-action. Hence, it must lie on
either $\Sigma_-$ or $\Sigma_+$. Let $x_-$ be a singular point on
$\Sigma_-$. By Theorem \ref{cor::localformpt} and Lemma
\ref{lem::isotropy}, $x_-$ is the south pole of an isotropy
orbi-sphere. Since $(M,\omega,H)$ contains no isolated fixed points,
the north pole of this isotropy orbi-sphere is necessarily a singular
point $x_+$ on $\Sigma_+$. Moreover, the orders of the orbifold
structure groups of $x_-$ and $x_+$ coincide This defines a function between the set of
singular points on $\Sigma_-$ and those on $\Sigma_+$ that preserves orbifold structure groups. This function
is injective by Theorem \ref{cor::localformpt} and Lemma
\ref{lem::isotropy}. Reversing the roles of $\Sigma_-$ and $\Sigma_+$,
this function is a bijection between the sets of singular points of
$\Sigma_-$ and $\Sigma_+$ that preserves orbifold structure
groups.

Let $n$ be the number of singular points on $\Sigma_{\pm}$. By applying Theorem~\ref{prop:localization} to
  the first equivariant Chern class $\alpha=c_1^{S^1}(TM)$, we have that  
$$
\int_{\Sigma_-}  \frac{\left( 2 - n - 2g_- +  \sum_{i=1}^{n} \frac{1}{p_{i}} \right) \mathit{u}} {\beta_{\Sigma_-} \, \mathit{u} + y} +
\int_{\Sigma_+}   \frac{\left( 2 - n - 2g_+ + \sum_{i=1}^{n} \frac{1}{p_{i}}\right)\mathit{u}}{\beta_{\Sigma_-} \, \mathit{u} - y}=0,
$$
where $g_\pm$ is  the genus of $\Sigma_\pm$. Hence, $g_+ =g_-$ so that, by Theorem \ref{thm::classorbisurface}, $\Sigma_-$ is diffeomorphic to $\Sigma_+$. By the first equation in \eqref{eq:linear_system}, we have 
$$\mathrm{area}(\Sigma_+) - \mathrm{area}(\Sigma_-) =- b_{\Sigma_-} \left(H_{max} - H_{min}\right).$$
We conclude that, in this case, the only possible multigraph is $(III)$. \\

To finish the proof, we turn to the compatibility
conditions. Conditions $(i)$ and $(ii)$ on all multigraphs are
determined by Theorem~\ref{cor::localformpt} as well as condition
$(iii)$ in multigraphs $(I)$ and $(II)$. Condition $(iv)$ in
multigraphs $(I)$ and $(II)$ is a consequence of \eqref{eq:bsigma-} as
shown above. Condition $(v)$ in both multigraphs $(I)$ and
$(II)$ follows from Proposition~\ref{def::EulerOrbibundle} applied to
the principal $S^1$-orbi-bundle associated to the normal orbi-bundle of
$\Sigma$.
%
%
%
%
Note that this condition, along with $(i)$, completely determine $l_1$
and $l_2$ since we must have $p_2 l_1 = q \mod p_1$ and $p_1 l_2= q
\mod p_2$ in multigraph $(I)$ and $p_2 l^\prime_1 = q \mod p_1$ and
$p_1 l^\prime_2= q \mod p_2$ in multigraph $(II)$.  
Condition \ref{condiv} for multigraphs $(I)$ and $(II)$ is obtained by applying Lemma~\ref{lem::euler} to the $\mathbb{Z}_{p_1c}$-isotropy orbi-sphere. Indeed, for Multigraph $(I)$, the orbi-weights at the fibers over the north and south poles of the $\mathbb{Z}_{p_1c}$-isotropy orbi-sphere are $0$ and $p_2/q$ and so, by Lemma~\ref{lem::euler}, the degree of the normal orbi-bundle  of this orbi-sphere is $\frac{p_2}{p_1 q c}$ and then, by Proposition~\ref{def::EulerOrbibundle}, we have that
$$
\frac{l^\prime_1}{p_1c} + \frac{l_q}{qc} - \frac{p_2}{p_1 q c} \in \mathbb{Z}.
$$
Condition $(vi)$ for multigraph $(II)$ is obtained similarly. Note
that, for given values of $l_1,l_2$ satisfying conditions \ref{cond0}-\ref{cond} for multigraphs $(I)$ and $(II)$, the value of $l_{qc}$ is uniquely determined by \ref{condiv}. In fact, for multigraph $(I)$, we have, by \ref{condiv}, that
$$
 \frac{p_2 - l^\prime_1 q}{p_1}\in \mathbb{Z},  
$$
and so $l_{qc}$ is the unique integer satisfying $1\leq l_{qc}< qc$ and $l_{qc}  =  \frac{p_2 - l^\prime_1 q}{p_1} \mod qc$ (similarly for multigraph $(II)$).

Condition \ref{condIII} in multigraph (III) is a consequence of
Lemma~\ref{lem::deg_zero}, since the Seifert invariant of the
principal $S^1$-orbi-bundle associated to the normal orbi-bundle of a  $\mathbb{Z}_{p_i}$-isotropy orbi-sphere  corresponding to a singular point on the north pole is $(p_i,l_i^\prime)$, and the one corresponding to the south pole is $(p_i,\hat{l}_i)$. 
Condition~\ref{condv} in multigraph (III) is a consequence of
Proposition~\ref{def::EulerOrbibundle}, since the Seifert invariant of
the principal $S^1$-orbi-bundle associated to the normal orbi-bundle
of $\Sigma_+$  is $(g; \beta_0, (p_1,l_1),\ldots (p_n,l_n))$ and $\beta_0\in
\Z$. 
Note that, if the normal orbi-bundle of one of the fixed orbi-surfaces
has negative degree, genus zero and at most two singular points, then
it can be blown down, resulting in either multigraph $(I)$ or
$(II)$. We exclude these cases with condition \ref{condIII0}. 
%
\end{proof}

\subsubsection{Minimal spaces with fixed orbi-surfaces}\label{MSF} In
this section, we provide explicit Hamiltonian $S^1$-spaces whose
labeled multigraphs are those listed in Theorem~\ref{fig::minimalfixedsurface}.

\vspace{.2cm}
\noindent $\bullet$ {\bf Multigraphs $(I)$ and $(II)$:}
Consider the quotient  
$$M=\mathbb{C}P^2(p_1,p_2,q)/\mathbb{Z}_c(a,b,bl_1+al_2),$$ 
of Example~\ref{ex::orbifold_quotient}, where we may assume that
$p_1\geq p_2$. The integers $a,b,l_{1},l_{2}$ are such that $1\leq
l_{1} <c\, p_1$, $1\leq l_{2} <c\, p_2$, $\gcd(c\, p_1, l_{1}) = \gcd(c\, p_2,l_{2})=1$, 
\begin{equation}\label{eq:qq} q - l_1p_2-l_2p_1 = 0 \mod p_1p_2\, c,\end{equation}
and $b p_1- a p_2=1$ (cf. the necessary conditions for multigraphs
$(I)$ and $(II)$ in Theorem \ref{fig::minimalfixedsurface}). We equip $M$ with the symplectic form inherited from
the standard symplectic form on $\mathbb{C}P^2(p_1,p_2,q)$
(see Example~\ref{exm:wps_as_symp}). We have that
\begin{align*}  [z_0:z_1:z_2]_{c}  & =[e^{\frac{2\pi i a}{c}} z_0: e^{\frac{2\pi i b }{c}} z_1:  e^{\frac{2\pi i (b l_1+ a l_2) }{c}}  z_2]_c \\& 
=  [z_0: e^{\frac{2\pi i  }{c p_1 }}  z_1:  e^{\frac{2\pi i  l_1}{cp_1}}  z_2]_c  = [e^{- \frac{2\pi i  }{c p_2 }} z_0:   z_1:  e^{-\frac{2\pi i  l_2}{cp_2}}  z_2]_c \\ 
& = [e^{ \frac{2\pi i  (aq-al_2p_1-bl_1p_1)}{q c}} z_0:  e^{\frac{2\pi i  (bq - bl_1p_2 - a l_2p_2)}{q c}}  z_1:   z_2]_c \\ 
& = [\xi_{qc} z_0: \xi_{qc}^{l_{qc}} z_1: z_2]_c,
\end{align*}
for some primitive $qc$-th root of unit $\xi_{qc}$ and $l_{qc}\in \Z$
satisfying $1\leq l_{qc}< qc$ and $l_{qc}= \frac{p_2-l_1^\prime}{p_1}
\mod qc$ where, as usual, $1\leq l^\prime_1< p_1c$ is such that
$l_1\cdot l^\prime_1 = 1 \mod p_1c$ (note that from \eqref{eq:qq} we
have $p_2- l^\prime_1q=0\mod p_1$). We see that
$[1:0:0]_c$, $[0:1:0]_c$,  and $[0:0:1]_c$ are singular points of type
$\frac{1}{p_1c}(1,l_1)$, $\frac{1}{p_2c}(1,l_2)$ and
$\frac{1}{qc}(1,l_{qc})$ respectively.

The $S^1$-action on $M$ given by
\begin{align*}
\phi: S^1 \times M & \to M \\	
	(\lambda, [z_0:z_1:z_2]_c ) & \mapsto [z_0:z_1:\lambda z_2]_c.
\end{align*}
is Hamiltonian (cf. Example \ref{exm:weighted_ham}). It 
fixes the orbi-sphere
$$\Sigma = \{[z_0:z_1:0] \in M \} \simeq \mathbb{C}P^1(p_1,p_2)/\mathbb{Z}_c$$
as well as the point $[0:0:1]_c$. The labeled
multigraph of this Hamiltonian
$S^1$-space is of type $(I)$ in 
Theorem \ref{fig::minimalfixedsurface} (cf. Example \ref{exm:wps2}). A
multigraph of type $(II)$ in Theorem \ref{fig::minimalfixedsurface} is
obtained by reversing the action.


\vspace{.2cm}			
\noindent $\bullet$ {\bf Multigraph $(III)$:}  This type of multigraph
is that of the Hamiltonian $S^1$-action on the projectivized plane
bundle $\mathbb{P}(L\oplus\C)\to \Sigma$  of
Example~\ref{exm:projectivized_suborbifolds2}, where $L$ is the
complex line orbi-bundle that has corresponding principal $S^1$-orbi-bundle with Seifert invariant 
$$
(g;\beta_0,(p_1,\hat{l}_1^\prime),\ldots,(p_n,\hat{l}_n^\prime)),
$$
where $\beta_0=b_{\Sigma_-}-\sum_{i=1}^n \frac{\hat{l}_i^\prime}{p_i}$ (see Figure~\ref{fig:proj}). \\

Lemma \ref{lemma:existsurface1}, 
Theorems \ref{thm:uniqueness} and \ref{fig::minimalfixedsurface}, and
the above constructions immediately imply the following
result.
\begin{theorem}\label{thm:existsurface}		
  Any Hamiltonian $S^1$-space that has at least one
  fixed orbi-surface can be obtained by applying finitely many
  $S^1$-equivariant symplectic weighted blow-ups to either
  \begin{enumerate}[label=(\arabic*),ref=(\arabic*),leftmargin=*]
  \item a weighted projective plane or a quotient  of a  weighted
    projective plane by a cyclic group, or
  \item a ruled orbi-surface (a projectivized plane bundle over a closed orbi-surface).
  \end{enumerate}
\end{theorem}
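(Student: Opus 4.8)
The statement of Theorem \ref{thm:existsurface} is an immediate consequence of three ingredients already established: Lemma \ref{lemma:existsurface1}, Theorem \ref{thm:uniqueness}, and Theorem \ref{fig::minimalfixedsurface}, together with the explicit constructions in Section \ref{MSF} of Hamiltonian $S^1$-spaces realizing each of the three families of labeled multigraphs. The plan is therefore to assemble these pieces rather than to prove anything substantially new.

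First I would invoke Lemma \ref{lemma:existsurface1}: given a Hamiltonian $S^1$-space $(M,\omega,H)$ with at least one fixed orbi-surface, after finitely many $S^1$-equivariant weighted blow-ups we may reduce to the case of a Hamiltonian $S^1$-space with at least one fixed orbi-surface and \emph{no} non-extremal fixed points. By Corollary \ref{cor::index2gradientsph} and the blow-down procedure of Section \ref{sec:wbd}, such a reduced space can be taken to be minimal, i.e.\ one on which no equivariant weighted blow-down can be performed (if a blow-down is still possible, perform it; this terminates since each blow-down strictly decreases, say, the number of edges plus fat vertices of the labeled multigraph, or the total symplectic volume). Then Theorem \ref{fig::minimalfixedsurface} tells us that the labeled multigraph of this minimal space is one of the three listed, $(I)$, $(II)$ or $(III)$. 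Finally, the explicit models in Section \ref{MSF} — the quotients $\mathbb{C}P^2(p_1,p_2,q)/\mathbb{Z}_c(a,b,bl_1+al_2)$ with the rotation action on the last coordinate for multigraphs $(I)$ and $(II)$, and the projectivized plane bundle $\mathbb{P}(L\oplus\mathbb{C})\to\Sigma$ of Example \ref{exm:projectivized_suborbifolds2} for multigraph $(III)$ — realize each of these labeled multigraphs. By Theorem \ref{thm:uniqueness} (the labeled multigraph is a complete invariant), the minimal space to which $(M,\omega,H)$ was reduced is equivariantly symplectomorphic to one of these concrete models. Retracing the sequence of blow-ups, $(M,\omega,H)$ itself is obtained from a weighted projective plane, a cyclic quotient thereof, or a ruled orbi-surface by applying finitely many $S^1$-equivariant symplectic weighted blow-ups, which is exactly the assertion of the theorem.

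The only point requiring a small amount of care is the matching between the combinatorial families of Theorem \ref{fig::minimalfixedsurface} and the geometric models of Section \ref{MSF}: one must check that the constructed spaces genuinely have the claimed labeled multigraphs, with all compatibility conditions $(i)$--$(vi)$ satisfied. For multigraphs $(I)$ and $(II)$ this follows from the weighted-homogeneity computation carried out in Section \ref{MSF} (identifying the types of the three fixed loci as $\frac{1}{p_1c}(1,l_1)$, $\frac{1}{p_2c}(1,l_2)$ and $\frac{1}{qc}(1,l_{qc})$), combined with Example \ref{exm:wps2} and the area/degree computations via the localization formula of Theorem \ref{prop:localization}; for multigraph $(III)$ it follows from the labeled-multigraph computation for projectivized plane bundles in Example \ref{exm:projectivized_suborbifolds2}. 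This verification is essentially bookkeeping, using Lemma \ref{lem::linebundle}, Corollary \ref{cor:degree_quotient}, Remark \ref{ex::weightedprojSeifertInvariants} and Proposition \ref{def::EulerOrbibundle} to pin down the Seifert-invariant data.

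The main (and really the only genuine) obstacle is not in this final assembly at all but upstream, in Theorem \ref{fig::minimalfixedsurface} itself — the enumeration of the possible minimal labeled multigraphs — and in verifying that the three geometric families exhaust all of them. In the present proof I would treat Theorem \ref{fig::minimalfixedsurface} as a black box, so the argument here is short; the subtlety to watch is simply that one does not inadvertently omit a minimal case (e.g.\ a fixed orbi-surface of negative self-intersection with genus zero and at most two singular points, which condition \ref{condIII0} of Theorem \ref{fig::minimalfixedsurface} rules out precisely because it admits a further blow-down into families $(I)$ or $(II)$). Once that exhaustiveness is granted, the theorem follows formally.
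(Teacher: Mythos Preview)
Your proposal is correct and follows exactly the paper's own route: the paper states the theorem as an immediate consequence of Lemma \ref{lemma:existsurface1}, Theorems \ref{thm:uniqueness} and \ref{fig::minimalfixedsurface}, and the explicit constructions in Section \ref{MSF}, which is precisely the assembly you outline. Your additional remark about possibly needing one further blow-down (of a genus-zero fixed orbi-sphere with negative self-intersection) after applying Lemma \ref{lemma:existsurface1} is a useful clarification that the paper leaves implicit in condition \ref{condIII0} of Theorem \ref{fig::minimalfixedsurface}.
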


\subsubsection{Blowing down to a minimal space with only isolated fixed points}
\begin{lemma}\label{lemma:only_iso}
  Any Hamiltonian $S^1$-space with only isolated fixed
  points can be obtained from a minimal Hamiltonian $S^1$-space with
  only isolated fixed points by applying finitely many
  $S^1$-equivariant symplectic weighted blow-ups. Moreover, any such minimal space has at
  least one non-extremal fixed point.
\end{lemma}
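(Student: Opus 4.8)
\textbf{Proof plan for Lemma \ref{lemma:only_iso}.} The plan is to mimic the strategy of Lemma \ref{lemma:existsurface1}, replacing the role of fixed orbi-surfaces with non-extremal fixed points, and using Corollary \ref{cor::index2gradientsph} together with Lemma \ref{lem::euler} to locate $S^1$-invariant orbi-spheres of negative self-intersection that may be blown down. First I would fix a generic compatible metric on $(M,\omega,H)$ as in Section \ref{sec:gradientsphere}, so that the notions of gradient orbi-spheres, free/non-free orbi-spheres, and chains of $S^1$-invariant gradient orbi-spheres (Definition \ref{def:chain}) are available, and so that there are no free gradient orbi-spheres both of whose poles are interior fixed points. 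If $(M,\omega,H)$ is already minimal there is nothing to prove, so assume it is not; then some fully embedded $S^1$-invariant orbi-sphere $Z$ can be blown down, and by Corollary \ref{cor::index2gradientsph} (and the sign discussion of Remark \ref{rmk:symp_weighted_blow_down} on degrees) this happens precisely when the degree of $\nu_Z$ is negative. The point is to show that after finitely many such blow-downs one reaches a minimal space, and that this minimal space still has only isolated fixed points and necessarily has a non-extremal fixed point.

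The key steps, in order, are as follows. (1) Show that performing an $S^1$-equivariant weighted blow-down along a fully embedded $S^1$-invariant orbi-sphere whose poles are isolated fixed points produces a Hamiltonian $S^1$-space (Remark \ref{rmk:symp_weighted_blow_down}) that again has only isolated fixed points: this is the reverse of cases $(I)$, $(II)$, $(III)$ of Propositions \ref{prop::desing} and \ref{prop::desing2}, none of which creates a fixed orbi-surface unless the orbi-sphere being collapsed touches an existing one --- so I must check that an orbi-sphere connecting two isolated fixed points cannot meet a fixed orbi-surface, which follows from Corollary \ref{cor:north-south} (a fixed point on a fixed orbi-surface lies on at most one isotropy orbi-sphere and is then singular, and a gradient orbi-sphere with both poles isolated is either isotropy or free). (2) Establish a termination argument: each admissible weighted blow-down strictly decreases a suitable non-negative complexity, e.g. the total number of $S^1$-invariant (isotropy plus gradient) orbi-spheres counted appropriately, or the sum of orders of orbifold structure groups plus the number of edges of the labeled multigraph; here I would invoke the explicit effect of a blow-down on the multigraph (reversing Propositions \ref{prop::desing}/\ref{prop::desing2}) to see the complexity drops. (3) Having reached a minimal space $(M_0,\omega_0,H_0)$ with only isolated fixed points, argue that it must have a non-extremal fixed point: if every fixed point were extremal, then by Corollary \ref{lem::uniqueMaxMin} there would be exactly one maximum $F_+$ and one minimum $F_-$, and by Theorem \ref{cor::localformpt} and Lemma \ref{lem::isotropy} every $S^1$-invariant orbi-sphere would run from $F_-$ to $F_+$; a generic gradient orbi-sphere joining $F_-$ and $F_+$ then has, by Corollary \ref{cor::index2gradientsph} applied after a perturbation or directly by Lemma \ref{lem::euler} (computing its degree from the orbi-weights at the two extrema, which have opposite signs, forcing a negative value unless an obstruction to full embeddedness intervenes --- handled by Lemma \ref{lemma:free_smooth}), negative self-intersection and is fully embedded, hence can be blown down, contradicting minimality. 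One must be slightly careful that Lemma \ref{lemma:free_smooth} leaves open the non-fully-embedded case; in that situation I would argue that the relevant orbi-weight forces the orbi-sphere to actually be an isotropy orbi-sphere (so fully embedded by Proposition \ref{prop:iso_sphere}) or reduce to the extremal fixed point being a local model in which a blow-down of the appropriate type still applies.

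The main obstacle I expect is Step (3), specifically ruling out a minimal Hamiltonian $S^1$-space with only isolated fixed points and \emph{no} non-extremal fixed point: one has to show that such a space would always admit a blow-down, and the subtlety is exactly the interplay between full embeddedness of gradient orbi-spheres at an isolated extremum (Lemma \ref{lemma:free_smooth}) and the possibility of a ``free'' gradient orbi-sphere whose self-intersection computation via Lemma \ref{lem::euler} does not immediately yield a negative number. I would handle this by a careful case analysis on the orbi-weights $a_1/m, a_2/m$ at the two extrema: when $\{|a_1|,|a_2|\} \ni 1$ full embeddedness holds and Lemma \ref{lem::euler} gives negative degree directly; when neither is $1$, the orbi-sphere joining the extrema through points with $z_1z_2\neq 0$ (the locus \eqref{eq:locus}) is not a suborbifold, but then I would show that an isotropy orbi-sphere through one of the extrema must also exist (since $\gcd(|a_1l-a_2|,m)$ and the local model of Theorem \ref{cor::localformpt} force points with non-trivial stabilizer near the extremum), and that isotropy orbi-sphere is fully embedded and of negative self-intersection by Corollary \ref{cor::index2gradientsph} (or Lemma \ref{lem::euler}), again contradicting minimality. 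Once Steps (1)--(3) are in place, iterating Step (1)+(2) finitely many times and applying Step (3) to the terminal space completes the proof.
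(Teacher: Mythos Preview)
Your Steps (1) and (2) are fine in spirit, and indeed the paper's argument for the first assertion is even terser than yours: it simply says ``blowing down does not create fixed orbi-surfaces, so the first statement follows.'' The point is that an $S^1$-equivariant weighted blow-down along an orbi-sphere with isolated poles replaces that sphere by a single point (the reverse of cases $(I)$, $(II)$ of Propositions \ref{prop::desing}/\ref{prop::desing2}), so the fixed locus remains zero-dimensional; termination is implicit from compactness.

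Your Step (3), however, has a genuine sign error that makes the argument fail. If $(M_0,\omega_0,H_0)$ had only two fixed points $F_-$ (minimum) and $F_+$ (maximum), then at $F_-$ both orbi-weights are positive and at $F_+$ both are negative. For any fully embedded $S^1$-invariant orbi-sphere $Z$ joining them, the weight on the normal fiber is therefore $m_->0$ at the south pole and $m_+<0$ at the north pole. Lemma \ref{lem::euler} gives $m_+-m_-=-c\deg(\nu_Z)$ with $c>0$, so $\deg(\nu_Z)>0$, not negative. Thus \emph{no} such orbi-sphere can be blown down, and Corollary \ref{cor::index2gradientsph} is of no help either since it explicitly requires both poles to be non-extremal. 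Your proposed contradiction with minimality never materializes; in fact, spheres joining extrema behave like lines in $\mathbb{C}P^2$, with positive self-intersection.

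The paper sidesteps all of this with a one-line application of the localization formula (Theorem \ref{prop:localization}) to the constant form $1$: if only $F_1$ (min) and $F_2$ (max) were present, with orbi-weights $\frac{a_1}{p_1},\frac{a_2}{p_1}$ and $-\frac{b_1}{p_2},-\frac{b_2}{p_2}$ respectively (all $a_i,b_i,p_i>0$), then
\[
0=\int_M 1 = \frac{p_1}{a_1a_2}+\frac{p_2}{b_1b_2}>0,
\]
a contradiction. This numerical obstruction is the missing idea in your plan; the geometric blow-down route cannot reach it.
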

\begin{proof}
  Since blowing down does not create fixed orbi-surfaces, the first
  statement follows. Suppose that $(M,\omega,H)$ is a minimal
  Hamiltonian $S^1$-space that has no non-extremal fixed points. Then
  there are precisely two isolated fixed points $F_1$ and $F_2$ at
  which $H$ attains its minimum and maximum respectively. By Theorem~\ref{prop:localization},
  $$
  0=\int_{M} 1 = 
  \frac{p_1}{a_1 a_2} +  \frac{p_2}{b_1 b_2},
  $$
where $p_1$ and  $p_2$ are the orders of the orbifold structure groups
of $F_1$ and $F_2$ and $\frac{a_1}{p_1},\frac{a_2}{p_1}$ and
$-\frac{b_1}{p_2},-\frac{b_2}{p_2}$ (with $a_1,a_2,b_1,b_2>0$) are the
orbi-weights  at $F_1$ and $F_2$ respectively. Since $p_i, a_i,b_i>0$
for $i=1,2$, this is impossible.  
\end{proof}

By Lemma \ref{lemma:only_iso}, in order to understand the
Hamiltonian $S^1$-spaces with only isolated fixed points, it suffices
to determine which minimal spaces with this property occur. The
following result describes the labeled multigraphs of the latter spaces.

\begin{theorem}\label{thm:existisolated} Let $(M,\omega,H)$ be a
  minimal Hamiltonian $S^1$-space with only isolated fixed points. Then the labeled multigraph of
  $(M,\omega,H)$ is one of the following:

	
	\vspace{.2cm}
		\resizebox{12cm}{!}{%
			\begin{tikzpicture}[scale=.7]
				
				\coordinate (1A) at (-7.5,3);
				\coordinate[label=above:$(A)$] (2a) at (-5,-2);
				\coordinate (PP1) at (-5,3);
				\fill (PP1) circle (4pt);
				\coordinate (PP2) at (-5,0);
				\fill (PP2) circle (4pt);		
				\coordinate (PP3) at (-6,1.5);
				\fill (PP3) circle (4pt);
				\coordinate (PP4) at (-4,1.5);
				\fill (PP4) circle (4pt);		
				\draw[very thick](PP1) -- (PP3);
				\draw[very thick](PP2) -- (PP3);
				\draw[very thick](PP1) -- (PP4);
				\draw[very thick](PP4) -- (PP2);
				\coordinate[label=right:$m$] (AB) at ($(PP1)!0.5!(PP4)$);
				\coordinate[label=left:$n$] (BC) at ($(PP1)!0.5!(PP3)$);
				\coordinate[label=right:$n$] (AB) at ($(PP2)!0.5!(PP4)$);
				\coordinate[label=left:$m$] (BC) at ($(PP2)!0.5!(PP3)$);
				\coordinate[label=above:{$\frac{1}{c}(1,c-l^\prime), \color{red} H= \alpha + m \beta + n \gamma$}] (AB) at ($(PP1)$);
				\coordinate[label=below:{$\frac{1}{c}(1,c-l), \color{red} H=\alpha $}] (AB) at ($(PP2)$);
				\coordinate[label=left:{$\frac{1}{c}(1,l^\prime),$}] (AB) at ($(PP3)$);	
				\coordinate[label=left:{$\color{red} H= \alpha + m \beta $}] (AB) at (-6.2,.8);	
				\coordinate[label=right:{$\frac{1}{c}(1,l),$}] (AB) at ($(PP4)$);
				\coordinate[label=right:{$\color{red} H= \alpha + n \gamma $}] (AB) at (-4,.8);
				\coordinate (2A) at (3.5,3);
				\coordinate[label=above:$(B)$] (1a) at (6,-2);
				
				\coordinate (P1) at (6,3);
				\fill (P1) circle (4pt);
				\coordinate (P2) at (6,0);
				\fill (P2) circle (4pt);		
				\coordinate (P3) at (7,1.5);
				\fill (P3) circle (4pt);		
				\draw[very thick](P1) -- (P2);
				\draw[very thick](P2) -- (P3);
				\draw[very thick](P1) -- (P3);
				
				\coordinate (Q0) at (4.5,1.5);		
				\fill (Q0) circle (4pt);
				\coordinate (Q00) at (1.5,1.5);
				\fill (Q00) circle (4pt);	
				\node at ($(Q0)!.5!(Q00)$) {\ldots};

				\coordinate[label=above:${\frac{1}{k_2}(1,k_2-1)}$] (BC) at ($(Q0)$);
				\coordinate[label=below:${\color{red} H=H_2}$] (BC) at ($(Q0)$);
				\coordinate[label=above:${\frac{1}{k_r}(1,k_r-1)}$] (BC) at ($(Q00)$);
				\coordinate[label=below:${\color{red} H=H_r}$] (BC) at ($(Q00)$);
				\coordinate[label=right:$s$] (AB) at ($(P1)!0.5!(P2)$);
				\coordinate[label=right:$m$] (BC) at ($(P1)!0.5!(P3)$);	
				\coordinate[label=right:$n$] (BC) at ($(P2)!0.5!(P3)$);	
				\coordinate[label=above:{$\frac{1}{p}(1,l_{\text{max}}),\color{red} H=\alpha + m a_2$}] (BC) at ($(P1)$);
				\coordinate[label=below:{$\frac{1}{q}(1,l_{\text{min}}),\color{red} H=\alpha - n a_1$}] (BC) at ($(P2)$);
				\coordinate[label=right:{$\frac{1}{k_1}(1,l_1)$}] (BC) at	
				($(7.1,1.5)$);
				\coordinate[label=right:{$\color{red} H=\alpha $}] (BC) at	
				($(7.1,.9)$);
				
			\end{tikzpicture}}\\

		\hspace{3cm}
			\resizebox{6cm}{!}{%
		\begin{tikzpicture}[scale=.8]

\coordinate (Q1) at (6,3);
\fill (Q1) circle (4pt);
\coordinate (Q2) at (6,0);
\fill (Q2) circle (4pt);	
\coordinate (Q0) at (4.5,1.5);
\fill (Q0) circle (4pt);
\coordinate (Q00) at (1.5,1.5);
\fill (Q00) circle (4pt);	
\node at ($(Q0)!.5!(Q00)$) {\ldots};

\coordinate[label=left:$m_1c$] (BC) at ($(5.9,1.5)$);	
\coordinate[label=right:$m_2c$] (BC) at ($(6.1,1.5)$);

\coordinate[label=below:${\frac{1}{qc}(1,l_{min}), \color{red} H=\alpha}$] (BC) at ($(Q2)$);
\coordinate[label=above:${\frac{1}{pc}(1,l_{max}),\color{red} H=\alpha + c m_1 m_2 a}$] (BC) at ($(Q1)$);
\coordinate[label=above:${\frac{1}{k_1}(1,k_1-1)}$] (BC) at ($(4.5,1.5)$);
\coordinate[label=below:${\color{red}  H=H_1}$] (BC) at ($(Q0)$);
\coordinate[label=above:${\frac{1}{k_r}(1,k_r-1)}$] (BC) at ($(Q00)$);
\coordinate[label=below:${\color{red}  H=H_r}$] (BC) at ($(Q00)$);

\draw[very thick](6.1,2.8) -- (6.1,0.2);
\draw[very thick](5.9,2.8) -- (5.9,0.2);
\coordinate[label=above:$(C)$] (BCCC) at ($(6,-1.5)$);
	\end{tikzpicture}}

where the labels satisfy the following compatibility conditions:
\begin{itemize}[leftmargin=*]
	\item \textbf{Multigraph (A):} assuming, w.l.o.g. that $m\geq
          n$, we have that
	\vspace{.2cm}
	\begin{enumerate}[label=(\roman*)]
	\item $1\leq l,l^\prime < c$, $\gcd(l,c)=\gcd(l^\prime,c)=1$ and $l\cdot l^\prime=1 \mod c$;
	\item $c = 0 \mod \gcd(n,m)$ and $\gcd\left(\frac{ml+n}{ \gcd(n,m)},c\right)=\frac{c}{ \gcd(n,m)}$.\label{condAii}
	\end{enumerate}
	
	\vspace{.2cm}
	\item \textbf{Multigraph (B):}
	\vspace{.2cm}
	\begin{enumerate}[label=(\roman*)]
	\item $1\leq l_{\text{min}} <  q$, $1\leq l_{\text{max}} <  p$,  $1\leq l_{1} <  k_1$ and \\
	$\gcd (l_{\text{min}},q)=\gcd (l_{\text{max}},p)=\gcd (l_{1},k_1)=1$;
	\item $\gcd \left(  \frac{m l_1+n}{\gcd(m,n)}, k_1 \right) =\frac{k_1}{\gcd (m,n)}$;
	\item $\gcd \left(  \frac{m l -s}{\gcd(m,s)}, p  \right) =\frac{p }{\gcd(m,s)}$, where $1\leq l < p$ is such that\\ $l=l_{\text{max}}$ if $s\geq m$ and $l \cdot l_{\text{max}}=1 \mod p$  otherwise;
	\item $\gcd \left(  \frac{n l -s}{\gcd(n,s)}, q  \right) =\frac{ q }{\gcd(n,s)}$, where $1\leq l < q$ is such that\\ $l=l_{\text{min}}$ if $n\geq s$ and $l \cdot l_{\text{min}}=1 \mod q$ otherwise;
	\item $(p  -x m s)n +q m = k_1 s$, where $x=\sum_{i=2}^r k_i$, and  $p\geq x ms$, $q \geq x n s$;
	\item $p  a_2 = q  a_1 +  s \sum_{i=2}^r k_i (H_i-\alpha)$, and $\alpha-na_1<H_i< \alpha + m a_2$.\label{cc}
	\end{enumerate} 

\vspace{.2cm}
	\item \textbf{Multigraph (C):}
	\vspace{.2cm}
	\begin{enumerate}[label=(\roman*)]
		\item $\gcd(m_1,m_2)=\gcd(m_i,p)=\gcd(m_i,q)=1$, with $i=1,2$;
		\item $1\leq l_{\text{min}} < c q$, $1\leq l_{\text{max}} < c p$, and \\ $\gcd( l_{\text{min}} ,cq)=\gcd(l_{\text{max}},cp)=1$;
		\item $\gcd(m_i l_{\text{min} } - m_j ,q c) = q$ and $\gcd(m_j l_{\text{max} } - m_i ,p c) = p$, where $m_i=\max\{m_1,m_2\}$ and $m_j=\min\{m_1,m_2\}$;
	\item $p + q  =c\, m_1m_2(k_1+\dots+k_n)$;\label{cc2}
	\item $p  \, a =   \sum_{i=1}^r k_i (H_i -   \alpha )$, and $\alpha < H_i < \alpha + c \, m_1m_2 a$.\label{ccc}
\end{enumerate}
\end{itemize}  
\end{theorem}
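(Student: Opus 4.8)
The proof of Theorem \ref{thm:existisolated} follows the same two-part strategy announced at the start of Section \ref{sec:existence}. First I would establish that the three multigraphs $(A)$, $(B)$, $(C)$ are the only ones that can arise for a minimal Hamiltonian $S^1$-space with only isolated fixed points; second, I would verify that the listed compatibility conditions are exactly those forced by the general machinery (Theorem \ref{cor::localformpt}, Lemma \ref{lem::isotropy}, Lemma \ref{lem::euler}, Proposition \ref{def::EulerOrbibundle}, Lemma \ref{lem::deg_zero}, and the localization formula Theorem \ref{prop:localization}). The realization that each candidate actually occurs is deferred to Section \ref{MSI}, so in this theorem I only need necessity.

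\textbf{Structure of the necessity argument.} By Lemma \ref{lemma:only_iso}, a minimal space with only isolated fixed points has at least one non-extremal fixed point $x$. By Corollary \ref{lem::uniqueMaxMin} there is a unique minimum $F_{\min}$ and a unique maximum $F_{\max}$, both isolated. By Corollary \ref{cor:north-south}, each non-extremal fixed point lies on at most two isotropy orbi-spheres and, if it lies on two, it is the south pole of one and the north pole of the other; this severely constrains the combinatorics. The key local constraint is Corollary \ref{cor::index2gradientsph}: any fully embedded $S^1$-invariant orbi-sphere joining two non-extremal fixed points has negative self-intersection, hence can be blown down; minimality therefore forbids such orbi-spheres unless they fail to be suborbifolds, and Lemma \ref{lemma:free_smooth} pins down when that failure happens. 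Running a case analysis on the number of non-extremal fixed points and on which ones are joined to $F_{\min}$ and $F_{\max}$, together with the observation that a chain of isotropy orbi-spheres forces monotone moment map values, I expect to be left precisely with: two non-extremal vertices each joined to both extrema (a ``square'', multigraph $(A)$), one non-extremal vertex joined to both extrema plus a chain of self-intersection-$(-1)$ type spheres (multigraph $(B)$), or two extrema joined by a single isotropy edge plus a chain (multigraph $(C)$). The labels $\frac{1}{k_i}(1,k_i-1)$ on the ``free'' chain vertices come from the fact that a minimal free gradient orbi-sphere through a non-extremal point forces the orbi-weights at that point to be $\pm 1$ (Lemma \ref{lemma:free_smooth}, $a_1a_2<0$), so by Lemma \ref{lem::isotropy} the adjacent edge labels and the type $\frac{1}{k}(1,k-1)$ are determined.

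\textbf{Deriving the compatibility conditions.} Conditions $(i)$ in all three multigraphs are immediate from Theorem \ref{cor::localformpt} (the constraints $\gcd(a_1,a_2)=1$, $\gcd(|a_1l-a_2|,m)=p$). The arithmetic conditions relating the edge labels $m,n,s$ to the types $\frac{1}{c}(1,l)$, $\frac{1}{p}(1,l_{\max})$, etc. are obtained by applying Lemma \ref{lem::isotropy} at each pole of each isotropy orbi-sphere: the order of the stabilizer along the sphere, combined with the local model, forces divisibility relations like $\gcd\!\left(\frac{ml+n}{\gcd(n,m)},c\right)=\frac{c}{\gcd(n,m)}$ in $(A)\ref{condAii}$. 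The conditions involving the sum $\sum k_i$ and the sum $\sum k_i(H_i-\alpha)$ (e.g. $(B)(v)$, $(B)\ref{cc}$, $(C)\ref{cc2}$, $(C)\ref{ccc}$) come from applying Theorem \ref{prop:localization} to the equivariant form $1$ and to the equivariant symplectic form $\omega^{S^1}=\omega-Hy$ respectively, exactly as in the proof of Proposition \ref{prop:graph_seifert}: integrating $1$ gives a relation $\sum_x \frac{1}{m_x\lambda_{1,x}\lambda_{2,x}}=0$ which unravels to the ``sum of $k_i$'' identity, and integrating $\omega^{S^1}$ gives the weighted version with the $H_i$. The inequalities $\alpha-na_1<H_i<\alpha+ma_2$ etc. encode that the moment map values along the chain are squeezed between the global min and max (Corollary \ref{lem::uniqueMaxMin}), together with admissibility of the blow-downs used. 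Finally the ``$\gcd$ with $k_i$'' type conditions in $(B)(ii)$--$(iv)$ arise again from Lemma \ref{lem::isotropy} applied to the extra edges emanating from $F_{\min}$ and $F_{\max}$.

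\textbf{Main obstacle.} The hard part is the case analysis in the necessity step: ruling out all configurations other than $(A)$, $(B)$, $(C)$. One must carefully track how blowing down an isotropy orbi-sphere (reversing Propositions \ref{prop::desing} and \ref{prop::desing2}) changes the multigraph, and argue that after all admissible blow-downs the only irreducible configurations are the three listed. The subtlety is that free gradient orbi-spheres need not be suborbifolds (Lemma \ref{lemma:free_smooth}), so one cannot blindly blow everything down; one has to choose a generic compatible metric (as in Section \ref{sec:gradientsphere}) so that no free gradient orbi-sphere joins two interior fixed points, and then check that the remaining free spheres, which touch an extremum, are fully embedded exactly when the orbi-weight at that extremum is $\pm1$ in one coordinate — which is forced by minimality. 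Making this ``no more blow-downs possible'' argument airtight, and in particular handling the interaction between the chain of $\frac{1}{k_i}(1,k_i-1)$ vertices and the square/triangle part of the graph, is where the real work lies; the compatibility conditions themselves are then essentially bookkeeping with the localization formula.
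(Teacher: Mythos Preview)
Your proposal has the right ingredients but misses the actual mechanism by which the paper eliminates all multigraphs other than $(A)$, $(B)$, $(C)$, and your description of the configurations themselves is slightly off.

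\textbf{The main gap.} You describe the necessity step as ``track how blowing down an isotropy orbi-sphere changes the multigraph, and argue that after all admissible blow-downs the only irreducible configurations are the three listed.'' This is not what the paper does, and it is unclear how you would make such an iterative argument terminate and exhaustively enumerate its endpoints. Instead, the paper argues directly: since the space is minimal, Corollary~\ref{cor::index2gradientsph} forces every non-trivial chain of $S^1$-invariant gradient orbi-spheres to contain at most one non-extremal fixed point. Together with the fact that each vertex has at most two incident edges (Theorem~\ref{cor::localformpt} and Lemma~\ref{lem::isotropy}), this yields a finite list of \emph{six} candidate multigraph shapes (drawn in Figure~\ref{fig:indextwo}). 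Each is then analyzed by the following two-step interplay, which is the heart of the proof and which your proposal does not articulate:
\begin{itemize}
\item apply the localization formula (Theorem~\ref{prop:localization}) to the equivariant form $1$, obtaining an \emph{equation} among the edge labels and structure-group orders;
\item use minimality, which forces every fully embedded $S^1$-invariant orbi-sphere to have \emph{non-negative} self-intersection, and translate this via Lemma~\ref{lem::euler} into a system of \emph{inequalities} among the same quantities.
\end{itemize}
For shapes $(2)$, $(3)$, $(5)$ these are incompatible, ruling them out. For shape $(1)$ they force all inequalities to be equalities, pinning down the labels exactly as in $(A)$ (including $p=q=k_1=k_2=c$, via Proposition~\ref{def::EulerOrbibundle}). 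Shapes $(4)$ and $(6)$ survive and become $(B)$ and $(C)$.

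\textbf{Misdescription of $(B)$ and $(C)$.} In $(B)$ the extra vertices of type $\frac{1}{k_i}(1,k_i-1)$ are \emph{not} a chain of spheres: they are isolated vertices with no incident edges, joined to the extrema only by \emph{free} gradient orbi-spheres (which do not appear as edges). In $(C)$ the extrema are joined by \emph{two} parallel edges (the multigraph feature), not one. Also, your derivation of the type $\frac{1}{k_i}(1,k_i-1)$ via Lemma~\ref{lemma:free_smooth} is misplaced: that lemma concerns full embeddedness at an isolated \emph{extremum}. The correct argument is that a non-extremal vertex with no incident edges has, by Lemma~\ref{lem::isotropy}, trivial stabilizer in both directions; this forces $p=m$ and orbi-weights $\pm\frac{1}{k_i}$, whence Theorem~\ref{cor::localformpt} gives $l_i=k_i-1$.

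Once the six shapes are reduced to three, the compatibility conditions are indeed bookkeeping with Theorem~\ref{cor::localformpt} and the localization formula applied to $1$ and to $\omega^{S^1}$, as you say.
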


\begin{proof}
  We fix a generic compatible metric on $(M,\omega,H)$. Since
  $(M,\omega,H)$ is minimal, by Corollary \ref{cor::index2gradientsph}
  any non-trivial chain of $S^1$-invariant gradient 
  orbi-spheres contains at most one non-extremal fixed point. By
  Theorem~\ref{cor::localformpt} and Lemma \ref{lem::isotropy}, every
  vertex of the labeled multigraph of $(M,\omega,H)$ is incident to at most two edges. Consequently,
  we may assume that it is given by one of the six
  possibilities depicted in Figure~\ref{fig:indextwo}. We remind the
  reader that an
  edge label could be $1$, i.e., that edge corresponds to a free
  $S^1$-invariant gradient orbi-sphere. In what follows, we use the
  localization formula 
  of Theorem~\ref{prop:localization} to analyze each case: we either
  rule it out, or determine the compatibility conditions stated
  above. Throughout we use the fact that $(M,\omega,H)$ is minimal,
  so that any fully embedded $S^1$-invariant orbi-sphere has
  non-negative self-intersection.

\renewcommand{\thefigure}{\thesection.\arabic{figure}}
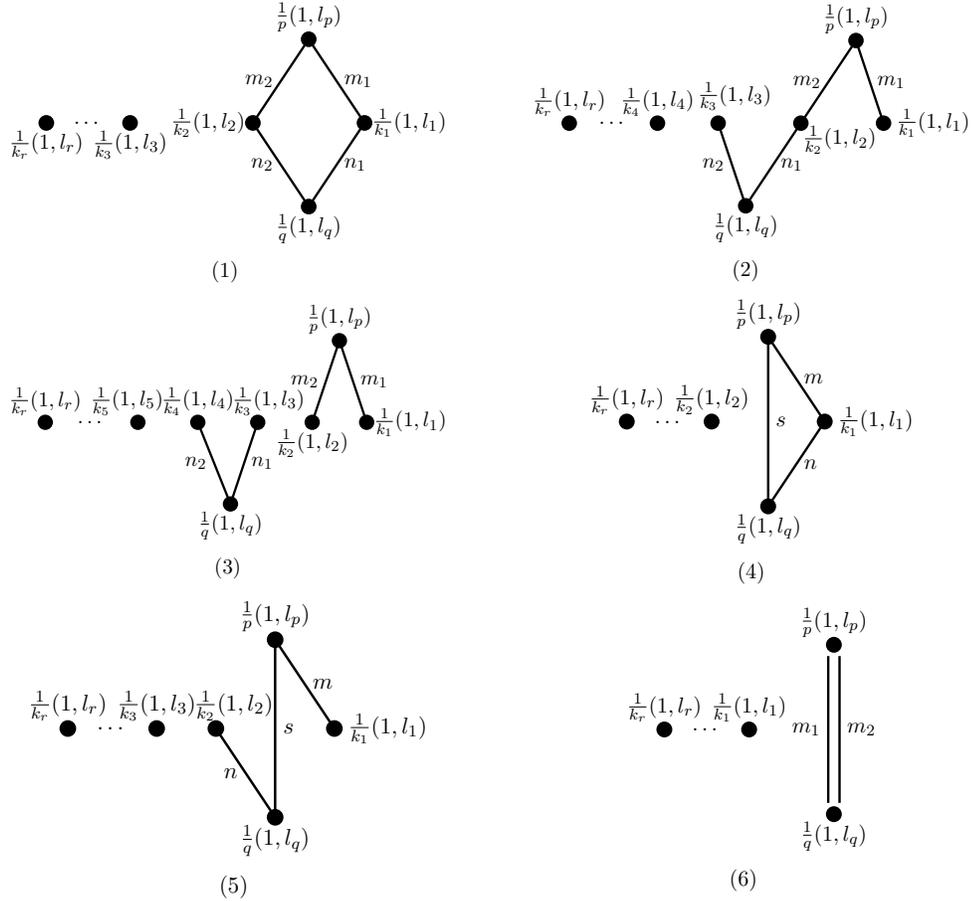
\begin{figure}[h!]
\noindent\begin{minipage}{.55\linewidth}
\centering
\resizebox{6cm}{!}{%
\begin{tikzpicture}
			\coordinate (1A) at (-0.5,3);
			\coordinate (PP1) at (4,3);
			\fill (PP1) circle (4pt);
			\coordinate (PP2) at (4,0);
			\fill (PP2) circle (4pt);		
			\coordinate (PP3) at (3,1.5);
			\fill (PP3) circle (4pt);
			\coordinate (PP4) at (5,1.5);
			\fill (PP4) circle (4pt);		
			\draw[very thick](PP1) -- (PP3);
			\draw[very thick](PP2) -- (PP3);
			\draw[very thick](PP1) -- (PP4);
			\draw[very thick](PP4) -- (PP2);
			\coordinate[label=above:{$\frac{1}{p}(1,l_p)$}] (AB) at ($(PP1)$);
			\coordinate[label=below:{$\frac{1}{q}(1,l_q)$}] (AB) at ($(PP2)$);
			\coordinate[label=left:{$\frac{1}{k_2}(1,l_{2})$}] (AB) at ($(PP3)$);					
			\coordinate[label=right:{$\frac{1}{k_1}(1,l_{1})$}] (AB) at ($(PP4)$);
			\coordinate[label=right:$m_1$] (AB) at ($(PP1)!0.5!(PP4)$);
			\coordinate[label=left:$m_2$] (BC) at ($(PP1)!0.5!(PP3)$);
			\coordinate[label=right:$n_1$] (AB) at ($(PP2)!0.5!(PP4)$);
			\coordinate[label=left:$n_2$] (BC) at ($(PP2)!0.5!(PP3)$);
			
			\coordinate (Q0) at (-0.7,1.5);		
			\fill (Q0) circle (4pt);
			\coordinate (Q00) at (0.8,1.5);
			\fill (Q00) circle (4pt);	
			\node at ($(Q0)!.5!(Q00)$) {\ldots};
			\coordinate[label=below:${\frac{1}{k_r}(1,l_r)}$] (BC) at ($(Q0)$);
			\coordinate[label=below:${\vspace{.2 cm}\frac{1}{k_3}(1,l_3)}$] (BC) at ($(Q00)$);
			\coordinate[label=above:$(1)$] (2a) at (2.5,-1.5);					
\end{tikzpicture}}
\end{minipage}%
\noindent\begin{minipage}{.55\linewidth}
\centering
\resizebox{6cm}{!}{%
\begin{tikzpicture}	
					
			\coordinate (2A) at (7.5,3);			
			\coordinate (P1) at (12.5,3);
			\fill (P1) circle (4pt);
			\coordinate (P2) at (10.5,0);
			\fill (P2) circle (4pt);		
			\coordinate (P3) at (13,1.5);
			\fill (P3) circle (4pt);	
			\coordinate (P4) at (11.5,1.5);
			\fill (P4) circle (4pt);	
			\draw[very thick](P1) -- (P4);
			
			\draw[very thick](P1) -- (P3);
			\draw[very thick](P2) -- (P4);
			
			\coordinate (W0) at (10,1.5);		
			\fill (W0) circle (4pt);
			
			\coordinate (Q0) at (8.9,1.5);		
			\fill (Q0) circle (4pt);
			
			\coordinate (Q00) at (7.3,1.5);
			\fill (Q00) circle (4pt);
			
			\draw[very thick](P2) -- (W0);	
			\node at ($(Q0)!.5!(Q00)$) {\ldots};
			\coordinate[label=above:${\frac{1}{k_4}(1,l_4)}$] (BC) at ($(Q0)$);
			\coordinate[label=above:${\frac{1}{k_r}(1,l_r)}$] (BC) at ($(Q00)$);
			\coordinate[label=above:${\frac{1}{k_3}(1,l_3)}$] (BC) at ($(10.3,1.6)$);
			\coordinate[label=right:${\frac{1}{k_2}(1,l_2)}$] (BC) at ($(11.4,1.2)$);
			\coordinate[label=left:$m_2$] (AB) at ($(P1)!0.5!(P4)$);
			\coordinate[label=right:$n_1$] (AB) at ($(P2)!0.5!(P4)$);
			\coordinate[label=right:$m_1$] (BC) at ($(P1)!0.5!(P3)$);	
			\coordinate[label=left:$n_2$] (BC) at ($(P2)!0.5!(W0)$);	
			\coordinate[label=above:{$\frac{1}{p}(1,l_p)$}] (BC) at ($(P1)$);
			\coordinate[label=below:{$\frac{1}{q}(1,l_q)$}] (BC) at ($(P2)$);
			\coordinate[label=right:{$\frac{1}{k_1}(1,l_1)$}] (BC) at ($(13.1,1.5)$);
			\coordinate[label=above:$(2)$] (2A) at (10.5,-1.5);				
\end{tikzpicture}}
\end{minipage} \\  

\noindent\begin{minipage}{.55\linewidth}
\centering
\resizebox{6cm}{!}{%
		\begin{tikzpicture}
			\coordinate (1A) at (-5.7,3);
					\coordinate[label=above:$(3)$] (BC) at ($(-2.55,-1.5)$);
					\coordinate (P1) at (-0.5,3);
					\fill (P1) circle (4pt);
					\coordinate (P2) at (-2.5,0);
					\fill (P2) circle (4pt);
					
					\coordinate (P3) at (-1,1.5);
					\fill (P3) circle (4pt);
					
					\coordinate (P4) at (-2,1.5);
					\fill (P4) circle (4pt);	
					\coordinate (P5) at (0,1.5);
					\fill (P5) circle (4pt);
					
					\draw[very thick](P1) -- (P3);
					\draw[very thick](P1) -- (P5);
					\coordinate (Q0) at (-3.1,1.5);		
					\fill (Q0) circle (4pt);
					
					\coordinate (W0) at (-4.2,1.5);		
					\fill (W0) circle (4pt);
					\coordinate (Q00) at (-5.9,1.5);
					\fill (Q00) circle (4pt);
					
					\draw[very thick](P2) -- (P4);
					\draw[very thick](P2) -- (Q0);	
					\node at ($(W0)!.5!(Q00)$) {\ldots};
					\coordinate[label=above:${\frac{1}{k_4}(1,l_4)}$] (BC) at ($(Q0)$);
					\coordinate[label=above:${\frac{1}{k_r}(1,l_r)}$] (BC) at ($(Q00)$);
					\coordinate[label=below:${\frac{1}{k_2}(1,l_2)}$] (BC) at ($(P3)$);
					\coordinate[label=above:${\frac{1}{k_3}(1,l_3)}$] (BC) at ($(-1.8,1.5)$);
					\coordinate[label=above:${\frac{1}{k_5}(1,l_5)}$] (BC) at ($(-4.4,1.5)$);
					\coordinate[label=left:$m_2$] (BC) at ($(P1)!0.5!(P3)$);
					\coordinate[label=right:$m_1$] (BC) at ($(P1)!0.5!(P5)$);	
					\coordinate[label=right:$n_1$] (BC) at ($(P2)!0.5!(P4)$);	
					\coordinate[label=left:$n_2$] (BC) at ($(P2)!0.5!(Q0)$);
					\coordinate[label=above:{$\frac{1}{p}(1,l_p)$}] (BC) at ($(P1)$);
					\coordinate[label=below:{$\frac{1}{q}(1,l_q)$}] (BC) at ($(P2)$);
					\coordinate[label=right:{$\frac{1}{k_1}(1,l_1)$}] (BC) at ($(0,1.5)$);
\end{tikzpicture}}
\end{minipage}%
\noindent\begin{minipage}{.55\linewidth}
\centering
\resizebox{4.5cm}{!}{%
\begin{tikzpicture}	
			\coordinate (2A) at (2.5,3);
			\coordinate[label=above:$(4)$]  (BC) at (4.7,-1.5);
			\coordinate (P1) at (5,3);
			\fill (P1) circle (4pt);
			\coordinate (P2) at (5,0);
			\fill (P2) circle (4pt);		
			\coordinate (P3) at (6,1.5);
			\fill (P3) circle (4pt);		
			\draw[very thick](P1) -- (P2);
			\draw[very thick](P2) -- (P3);
			\draw[very thick](P1) -- (P3);
			
			\coordinate (Q0) at (4,1.5);		
			\fill (Q0) circle (4pt);
			\coordinate (Q00) at (2.5,1.5);
			\fill (Q00) circle (4pt);	
			\node at ($(Q0)!.5!(Q00)$) {\ldots};
			\coordinate[label=above:${\frac{1}{k_2}(1,l_2)}$] (BC) at ($(Q0)$);
			\coordinate[label=above:${\frac{1}{k_r}(1,l_r)}$] (BC) at ($(Q00)$);
			\coordinate[label=right:$s$] (AB) at ($(P1)!0.5!(P2)$);
			\coordinate[label=right:$m$] (BC) at ($(P1)!0.5!(P3)$);	
			\coordinate[label=right:$n$] (BC) at ($(P2)!0.5!(P3)$);	
			\coordinate[label=above:{$\frac{1}{p}(1,l_p)$}] (BC) at ($(P1)$);
			\coordinate[label=below:{$\frac{1}{q}(1,l_q)$}] (BC) at ($(P2)$);
			\coordinate[label=right:{$\frac{1}{k_1}(1,l_1)$}] (BC) at ($(6.1,1.5)$);		
\end{tikzpicture}}
\end{minipage} \\

\noindent\begin{minipage}{.55\linewidth}
\centering
\resizebox{5.5cm}{!}{%
\begin{tikzpicture}
\coordinate (2A) at (-3.5,3);
\coordinate[label=above:$(5)$] (Bc) at ($(-.7,-1.5)$);

\coordinate (P1) at (0,3);
\fill (P1) circle (4pt);
\coordinate (P2) at (0,0);
\fill (P2) circle (4pt);		
\coordinate (P3) at (1,1.5);
\fill (P3) circle (4pt);		
\draw[very thick](P1) -- (P2);

\draw[very thick](P1) -- (P3);

\coordinate (Q0) at (-1,1.5);		
\fill (Q0) circle (4pt);

\coordinate (W0) at (-2,1.5);
\fill (W0) circle (4pt);
\coordinate (Q00) at (-3.5,1.5);
\fill (Q00) circle (4pt);

\draw[very thick](P2) -- (Q0);	
\node at ($(W0)!.5!(Q00)$) {\ldots};
\coordinate[label=above:${\frac{1}{k_3}(1,l_3)}$] (BC) at ($(W0)$);
\coordinate[label=above:${\frac{1}{k_r}(1,l_r)}$] (BC) at ($(Q00)$);
\coordinate[label=above:${\frac{1}{k_2}(1,l_2)}$] (BC) at ($(-0.7,1.5)$);
\coordinate[label=right:$s$] (AB) at ($(P1)!0.5!(P2)$);
\coordinate[label=right:$m$] (BC) at ($(P1)!0.5!(P3)$);	
\coordinate[label=left:$n$] (BC) at ($(P2)!0.5!(Q0)$);	
\coordinate[label=above:{$\frac{1}{p}(1,l_p)$}] (BC) at ($(P1)$);
\coordinate[label=below:{$\frac{1}{q}(1,l_q)$}] (BC) at ($(P2)$);
\coordinate[label=right:{$\frac{1}{k_1}(1,l_{1})$}] (BC) at ($(1.1,1.5)$);
\end{tikzpicture}}
\end{minipage}%
\noindent\begin{minipage}{.55\linewidth}
\centering
\resizebox{3.5cm}{!}{%
		\begin{tikzpicture}
%
%
\coordinate (3A) at (4,3);
\coordinate[label=above:$(6)$] (BC) at (5.4,-1.5);

\coordinate (Q1) at (7,3);
\fill (Q1) circle (4pt);
\coordinate (Q2) at (7,0);
\fill (Q2) circle (4pt);	
\coordinate (Q0) at (5.5,1.5);
\fill (Q0) circle (4pt);
\coordinate (Q00) at (4,1.5);
\fill (Q00) circle (4pt);	
\node at ($(Q0)!.5!(Q00)$) {\ldots};

\coordinate[label=left:$m_1$] (BC) at ($(6.9,1.5)$);	
\coordinate[label=right:$m_2$] (BC) at ($(7.1,1.5)$);

\coordinate[label=below:${\frac{1}{q}(1,l_{q})}$] (BC) at ($(Q2)$);
\coordinate[label=above:${\frac{1}{p}(1,l_{p})}$] (BC) at ($(Q1)$);
\coordinate[label=above:${\frac{1}{k_1}(1,l_1)}$] (BC) at ($(Q0)$);
\coordinate[label=above:${\frac{1}{k_r}(1,l_r)}$] (BC) at ($(Q00)$);

\draw[very thick](7.1,2.8) -- (7.1,0.2);
\draw[very thick](6.9,2.8) -- (6.9,0.2);
\end{tikzpicture}}
\end{minipage}
\caption{Possible labeled multigraphs for a minimal Hamiltonian
  $S^1$-space with only interior fixed points.}
\label{fig:indextwo}
\end{figure}


\vspace{.2cm}
\noindent $\bullet$ {\bf Multigraph (1):} By
Theorem~\ref{prop:localization}, we have that
\begin{align}\label{local_gr1}
	0=\int_M1=\frac{p}{m_1m_2}+\frac{q}{n_1n_2}-\frac{k_1}{m_1n_1}-\frac{k_2}{m_2n_2}-k_3-\cdots-k_r
\end{align}
and so
\begin{equation}\label{eq:1.loc}
	0<\frac{p}{m_1m_2}+\frac{q}{n_1n_2} =  \frac{k_1}{m_1n_1}+\frac{k_2}{m_2n_2}+k_3+\cdots+k_r. 
\end{equation}
We claim that the $S^1$-invariant orbi-spheres corresponding to the
four edges are fully embedded. This follows immediately if all of the
labels are greater than one. If $m_1 =
1$, i.e., the edge corresponds to a free $S^1$-invariant gradient orbi-sphere, then there is at most one isotropy orbi-sphere that contains the
fixed point $x$ corresponding to the vertex at the top of the labeled
multigraph. Hence, by
Theorem \ref{cor::localformpt} and Lemma \ref{lem::isotropy}, one of
the orbi-weights of $x$ satisfies the hypotheses of Lemma
\ref{lemma:free_smooth} and so the free
$S^1$-invariant gradient orbi-sphere corresponding to the edge under
consideration is fully embedded. This
very argument applies {\em mutatis mutandis} to all edges, thus
yielding the desired result.

Since $(M,\omega,H)$ is minimal and the $S^1$-invariant orbi-spheres corresponding to the
four edges are fully embedded, the degree of the normal orbi-bundle of
each such orbi-sphere is non-negative. Hence, by
Lemma~\ref{lem::euler}, we have that
\begin{equation}\label{eq::1.1}
	 \frac{k_1}{n_1} \ge\frac{p}{m_2},  \quad
         \frac{k_2}{m_2}\ge\frac{q}{n_1}, \quad \frac{k_1}{m_1}
         \ge\frac{q}{n_2},   \quad \text{and} \quad
         \frac{k_2}{n_2}\ge\frac{p}{m_1}.
\end{equation}
By the first two inequalities in \eqref{eq::1.1}, we have that
	$$
	\frac{k_1}{m_1n_1}+\frac{k_2}{m_2n_2} \geq
        \frac{p}{m_1m_2}+\frac{q}{n_1n_2}.
	$$
Whence by \eqref{eq:1.loc} and the last two inequalities in
\eqref{eq::1.1},  we have that
$$
\frac{k_1}{m_1n_1}=\frac{k_2}{m_2n_2} = \frac{p}{m_1m_2} =
\frac{q}{n_1n_2} \, , \text{ and } k_i = 0 \text{ for } i\geq 3,
$$ 
i.e., there are at most two non-extremal fixed points. Then
\begin{align}\label{graph1}
	\frac{k_1}{m_1}=\frac{q}{n_2},\quad \quad\frac{k_1}{n_1}=\frac{p}{m_2},\quad \quad\frac{k_2}{m_2}=\frac{q}{n_1}\quad\text{and}\quad\frac{k_2}{n_2}=\frac{p}{m_1},
\end{align}
and so the normal orbi-bundles of all the $S^1$-invariant orbi-spheres
on the  two depicted chains have degree $0$. We conclude that
multigraph (1) should be as in
Figure~\ref{fig:minimal1}, where the labels satisfy \eqref{graph1}.

\renewcommand{\thefigure}{\thesection.\arabic{figure}}
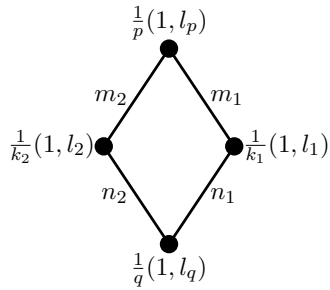
\begin{figure}[h!]
\centering
\resizebox{4.5cm}{!}{%
\begin{tikzpicture}
		\coordinate (PP1) at (4,3);
		\fill (PP1) circle (4pt);
		\coordinate (PP2) at (4,0);
		\fill (PP2) circle (4pt);		
		\coordinate (PP3) at (3,1.5);
		\fill (PP3) circle (4pt);
		\coordinate (PP4) at (5,1.5);
		\fill (PP4) circle (4pt);		
		\draw[very thick](PP1) -- (PP3);
		\draw[very thick](PP2) -- (PP3);
		\draw[very thick](PP1) -- (PP4);
		\draw[very thick](PP4) -- (PP2);
		\coordinate[label=above:{$\frac{1}{p}(1,l_p)$}] (AB) at ($(PP1)$);
		\coordinate[label=below:{$\frac{1}{q}(1,l_q)$}] (AB) at ($(PP2)$);
		\coordinate[label=left:{$\frac{1}{k_2}(1,l_{2})$}] (AB) at ($(PP3)$);					
		\coordinate[label=right:{$\frac{1}{k_1}(1,l_{1})$}] (AB) at ($(PP4)$);
		\coordinate[label=right:$m_1$] (AB) at ($(PP1)!0.5!(PP4)$);
		\coordinate[label=left:$m_2$] (BC) at ($(PP1)!0.5!(PP3)$);
		\coordinate[label=right:$n_1$] (AB) at ($(PP2)!0.5!(PP4)$);
		\coordinate[label=left:$n_2$] (BC) at ($(PP2)!0.5!(PP3)$);
	\end{tikzpicture}}
\caption{The multigraph (1) is necessarily of this form.}
\label{fig:minimal1}
\end{figure}

We may assume that $m_1\geq m_2$. 
By Proposition~\ref{def::EulerOrbibundle}, we have that
\begin{equation}\label{eq::ll}
l_p k_2 + l_2 p= 0 \mod k_2 \,p,
\end{equation}
whence $l_2 p = 0 \mod k_2$ and $ l_p k_2 = 0 \mod p$.
Thus, since
$$\gcd(k_2,l_2)= \gcd(p,l_p)=1,$$
we have that $p = 0\mod
k_2$ and $ k_2 = 0 \mod p$, i.e., $k_2=p$. Hence, by \eqref{graph1},
$n_2=m_1$ and $k_1=q$. Similarly, 
one can show that $k_2=q=p$ and $m_2=n_1$ and obtain a multigraph as
in $(A)$ with $c=p$, $m=m_1$ and $n=m_2$.
Moreover, by \eqref{eq::ll}, $l_p=-l_2 \mod p$, implying that
$l_p=p-l_2$. Similarly, we can obtain the labels for the other fixed
points. Note that, by Proposition~\ref{def::EulerOrbibundle}, we have
that
\begin{equation}\label{eq:labels}
l_2l_1 = 1 \mod p.
\end{equation}

Using the notation in  $(A)$, we denote by $F_1$ and $F_4$, the
minimal and maximal points respectively, by $F_2$ the fixed point with
orbi-weights  $-\frac{m}{c}$,  $\frac{n}{c}$, and by $F_3$ the
remaining fixed point. By Theorem~\ref{prop:localization} applied to
the equivariant symplectic form $\omega^{S^1}$, 
\begin{equation}\label{eq:area1}
  0=\int_M \omega^{S^1} = - \frac{c}{m n y}\left(H(F_1)+H(F_4)-H(F_2)-H(F_3)\right).
\end{equation}
On the other hand, by \eqref{eq:44}, we have that
\begin{equation*}
  \begin{split}
    & H(F_2)-H(F_1)=m \,a_1, \quad H(F_3)-H(F_1)=n \,a_2 , \\
    & H(F_4)-H(F_2)=n \,a_3, \quad H(F_4)-H(F_3)=m \,a_4,
  \end{split}
\end{equation*}
for some positive real numbers $a_1,a_2,a_3,a_4$; each of these is the symplectic area
of the $S^1$-invariant orbi-sphere connecting the corresponding pair of fixed
points.  By \eqref{eq:area1} we conclude that $a_1=a_4$ and $a_2=a_3$
and we obtain the moment map labels in Multigraph $(A)$, where
$\beta=a_1=a_4$ and $\gamma=a_2=a_3$. Conditions $(i)$ and $(ii)$ for
Multigraph $(A)$ are a consequence of Theorem~\ref{cor::localformpt}
and of \eqref{eq:labels}. 

\vspace{.2cm}
\noindent $\bullet$ {\bf Multigraphs (2) and (3):}
First we consider the multigraph $(2)$. By Theorem~\ref{prop:localization}, we have that
\begin{align*}
	0=\int_M1=\frac{p}{m_1m_2}+\frac{q}{n_1n_2}-\frac{k_1}{m_1}-\frac{k_2}{m_2n_1}-\frac{k_3}{n_2}-k_4-\cdots -k_r,
\end{align*}
so that
\begin{equation}
  \label{eq:63}
  0< \frac{p}{m_1m_2}+\frac{q}{n_1n_2} = \frac{k_1}{m_1}+\frac{k_2}{m_2n_1}+\frac{k_3}{n_2}+k_4+\cdots+k_r.
\end{equation}
As in the case of multigraph (1), each $S^1$-invariant orbi-sphere
corresponding to an edge is fully embedded. Hence, since
$(M,\omega,H)$ is minimal, Lemma~\ref{lem::euler} yields
\begin{align*}
	k_1\ge\frac{p}{m_2},\quad\frac{k_2}{n_1}\ge\frac{p}{m_1},\quad\frac{k_2}{m_2}\ge\frac{q}{n_2},\quad   k_3\ge\frac{q}{n_1} .
\end{align*}
Hence,
$$	
\frac{k_1}{m_1}+\frac{k_2}{m_2n_1}+\frac{k_3}{n_2} \geq
2\frac{p}{m_1m_2}+\frac{q}{n_1n_2} >   \frac{p}{m_1m_2}+\frac{q}{n_1n_2},
$$
which contradicts \eqref{loc_cp2}, since $k_i \geq 0$ for each $i \geq
4$. Hence, there is no minimal Hamiltonian $S^1$-space with labeled
multigraph as in $(2)$. A similar argument can be used to rule out the
existence of a minimal Hamiltonian $S^1$-space with labeled
multigraph as in $(3)$.


\vspace{.2cm}
\noindent $\bullet$ {\bf Multigraph (4):} The same argument as in the
above cases yields that 
\begin{align}\label{loc_cp2}
	0=\int_M1&=\frac{p}{ms}+\frac{q}{ns}-\frac{k_1}{mn}-k_2-\cdots-k_r, 
\end{align}
and
\begin{align}\label{graph4}
	\frac{k_1}{n}\ge \frac{p}{s},\quad\frac{k_1}{m}\ge\frac{q}{s}.
\end{align}
Therefore,
\begin{align}\label{eq:loc}
	0<  \frac{p}{ms}+\frac{q}{ns}=  \frac{k_1}{mn}+k_2+\cdots+k_r \leq 2 \frac{k_1}{mn},
\end{align}
and so
\begin{align*}
	k_2+\cdots+k_r\le \frac{k_1}{mn}.
\end{align*}
Note that if $s=1$ the labeled multigraph is a particular case of
either $(1)$ or $(2)$. Hence, we may assume that $s > 1$.

If $r=1$, i.e., if there is only one non-extremal fixed point, we claim
that either $p,q,k_1$ are pairwise coprime or they all have a common
divisor. Set $c = \gcd(p,q)$. By \eqref{eq:loc}, $np+mq=k_1s$. Hence,
$c$ divides $k_1s$. If $c$ has no common divisor with $k_1$
then it must divide $s$ and then, by  Theorem~\ref{cor::localformpt},
we have that $c$ divides $m$ and $n$ since $s/q,n/q$ are the
orbi-weights at the minimum and $-s/p,-m/q$ are the orbi-weights at
the maximum. However, this is impossible by
Theorem~\ref{cor::localformpt}   as the orbi-weights at the singular
point of order $k_1$ are $\frac{m}{k_1},-\frac{n}{k_1}$. 

If $r>1$, set $x:=\sum_{i=2}^r k_i$. By \eqref{graph4} and
\eqref{eq:loc}, $p\geq xms$
and $q\geq xns$. Let $x$ be a non-extremal isolated fixed point of
order $k_i$ with $i=2,\ldots, r$. By the form of the labeled
multigraph, by Theorem \ref{cor::localformpt} and Lemma
\ref{lem::isotropy}, the
orbi-weights at the non-extremal isolated fixed point of order $k_i$ with $i=2,\ldots, r$
are $\frac{1}{k_i}, -\frac{1}{k_i}$. Hence, by
Theorem~\ref{cor::localformpt}, we have that
$l_i=k_i-1$. Additionally, note that if $m=1$, then the free $S^1$-invariant gradient orbi-sphere
that `joins' the non-extremal isolated fixed point with orbifold
structure group of order $k_i$, $i\geq 2$, and the maximum is fully embedded. This
follows from the same argument as in the case of multigraph
$(1)$. Hence, since $(M,\omega,H)$ is minimal, by
Lemma~\ref{lem::euler} $s k_i \geq p$ for each $i=2,\ldots,r$ and so  
$sx \geq p(r-1)$.
This implies that  $r=2$ and $p=k_2 s$. Therefore, by \eqref{eq:loc}, $q=k_1s$. Similarly, if $n=1$ then  $r=2$, $q=k_2 s$ and
$p=k_1s$. If  $m=n=1$ then $r=2$, $k_1=k_2$ and  $p=q=k_1s$.

For any $r\geq 1$,  let us denote the minimal and maximal point of
$(M,\omega,H)$ by $F_-$ and $F_+$ respectively, and by $F_i$
($i=1,\ldots, r$) the non-extremal fixed points. By Theorem~\ref{prop:localization},
\begin{equation}\label{eq:area2}
		0=\int_M \omega^{S^1} = - \frac{1}{y}\left(\frac{p}{sm}H(F_+)+\frac{q}{sn}H(F_-)- \frac{k_1}{mn}H(F_1)-\sum_{i=2}^r k_i H(F_i)\right).
\end{equation}
On the other hand, by \eqref{eq:44}, we have
$$
H(F_1)-H(F_-)=n a_1 \quad \text{and}  \quad  H(F_{+})-H(F_1)=m a_2, 
$$
for some positive real numbers $a_1,a_2$; each of these is the
symplectic area of the $S^1$-invariant orbi-sphere connecting $F_1$ to $F_-$ and $F_+$. By \eqref{eq:area2} and \eqref{eq:loc}, we have
\begin{equation}\label{eq:cond_v}
p a_2 - q a_1 - s \sum_{i=2}^r H(F_i) k_i + s H(F_1)  \sum_{i=2}^r k_i = 0,
\end{equation}
i.e., we obtain the labeled multigraph  (B). Conditions $(i)$ to
$(iii)$ are direct consequences of Theorem~\ref{cor::localformpt}, while
$(iv)$ and $(v)$ are just \eqref{eq:area2} and \eqref{eq:cond_v}.

\vspace{.2cm}
\noindent $\bullet$ {\bf Multigraph (5):} The same argument as in the
above cases yields that 
\begin{align}\label{equ:loc_graph5}
  0=\int_M1=\frac{p}{ms}+\frac{q}{ns}-\frac{k_1}{m}-\frac{k_2}{n}-k_3-\cdots-k_r
\end{align}
and
\begin{align}\label{equ:condition_graph5}
  k_1\ge\frac{p}{s},\quad k_2\ge\frac{q}{s}.
\end{align}
Then
\begin{align*}
  0< \frac{k_1}{m}+\frac{k_2}{n} +k_3+\cdots+k_r=\frac{p}{ms}+\frac{q}{ns}\le\frac{k_1}{m}+\frac{k_2}{n},
\end{align*}
and so $k_3=\cdots=k_r=0$, $p=k_1s$ and $q=k_2s$. Moreover, we may
assume that $s, m,n > 1$, as, otherwise, we fall in either case $(1)$
or $(4)$. 

Since the orbi-weights at the singular point of order $p=k_1s$ are
$-\frac{s}{k_1s},-\frac{m}{k_1s}$, by Theorem~\ref{cor::localformpt},
$m=m^\prime s$ for some positive integer $m^\prime < m$. Analogously,
$n=n^\prime s$ for some positive integer $n^\prime < n$. Hence, we obtain a multigraph as in Figure~\ref{mulit5}.	
\renewcommand{\thefigure}{\thesection.\arabic{figure}}
\begin{figure}[h!]
  \centering
\resizebox{3.8cm}{!}{%
\begin{tikzpicture}		
		\coordinate (P1) at (2,3);
		\fill (P1) circle (4pt);
		\coordinate (P2) at (2,0);
		\fill (P2) circle (4pt);		
		\coordinate (P3) at (3,1.5);
		\fill (P3) circle (4pt);		
		\draw[very thick](P1) -- (P2);
		
		\draw[very thick](P1) -- (P3);
		
		\coordinate (Q0) at (1,1.5);		
		\fill (Q0) circle (4pt);
		
		\draw[very thick](P2) -- (Q0);	

		\coordinate[label=above:${\frac{1}{k_2}(1,l_2)}$] (BC) at ($(1,1.5)$);
		\coordinate[label=right:$s$] (AB) at ($(P1)!0.5!(P2)$);
		\coordinate[label=right:$s m^\prime$] (BC) at ($(P1)!0.5!(P3)$);	
		\coordinate[label=left:$s n^\prime$] (BC) at ($(P2)!0.5!(Q0)$);	
		\coordinate[label=above:{$\frac{1}{k_1s}(1,l_p)$}] (BC) at ($(P1)$);
		\coordinate[label=below:{$\frac{1}{k_2s}(1,l_q)$}] (BC) at ($(P2)$);
		\coordinate[label=right:{$\frac{1}{k_1}(1,l_{1})$}] (BC) at ($(3.1,1.5)$);
		\end{tikzpicture}}\caption{The multigraph in (5) is
                necessarily of this form.}\label{mulit5}
		\end{figure}
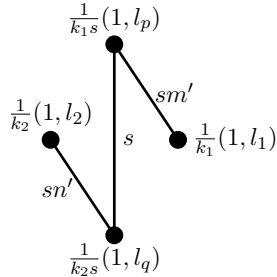
Since $s >1$, the edges with labels $sn'$ and $sm'$ correspond to
isotropy orbi-spheres and are, therefore, fully embedded. Hence, by
Lemma~\ref{lem::euler}, each of their normal orbi-bundles has degree
zero. By Proposition~\ref{def::EulerOrbibundle}, we have that
$$
l_q + l^\prime_2 s = 0 \mod k_2 s,
$$
where $1\leq l^\prime_2 < k_2$ is such that $l^\prime_2 \cdot l_2=1
\mod k_2$. Consequently, $l_q = 0 \mod s$, contradicting $\gcd(l_q,k_2
s)=1$. Hence, there is no minimal Hamiltonian $S^1$-space with labeled
multigraph as in $(5)$.

\vspace{.2cm}	
\noindent $\bullet$ {\bf Multigraph (6):} 
We may assume that $m_1m_2 >1$ as, otherwise, we fall in a particular
case of $(4)$. Theorem~\ref{prop:localization} applied to the equivariant form $1$ yields
\begin{align}\label{graph6}
	(k_1+\cdots+k_r)m_1m_2=p+q.
\end{align}
We set $c:=\gcd (m_1,m_2)$. By Theorem~\ref{cor::localformpt}, $c$
divides both $p$ and $q$, $\frac{m_i}{c}$ is coprime with both $p$ and
$q$ for $i=1,2$, and, since the orbi-weights at the fixed point of
order $k_i$ are $\frac{1}{k_i},-\frac{1}{k_i}$, then $l_i = k_i-1$ for any $i =1,\ldots, r$.

We write $m_i= m^\prime_i c$, $p=p^\prime c$ and $q=q^\prime c$ and
denote the minimal and maximal points of $(M,\omega,H)$ by $F_-$ and
$F_+$, and the other fixed points by $F_1,\ldots, F_r$. Theorem~\ref{prop:localization} yields
\begin{equation}\label{eq:area6}
0=\int_M \omega^{S^1} = - \frac{1}{y}\left(\frac{p^\prime}{m_1^\prime m_2^\prime c}H(F_+)+\frac{q^\prime}{m_1^\prime m_2^\prime c}H(F_-)- \sum_{i=1}^r k_i H(F_i)\right).
\end{equation}
On the other hand, by \eqref{eq:44}, we have
$$
H(F_+)-H(F_-)=m_1^\prime  c \,a_1 =   m_2^\prime  c \,a_2,
$$
where the positive real numbers $a_1,a_2$ are the symplectic areas of
the $\mathbb{Z}_{m_1}$- and $\mathbb{Z}_{m_2}$-isotropy orbi-spheres
respectively. By combining \eqref{graph6} and \eqref{eq:area6}, we conclude that 
\begin{equation}\label{eq:lastcond}
 p^\prime  \, a -   \sum_{i=1}^r H(F_i) k_i +   H(F_-)  \sum_{i=1}^r k_i = 0,
\end{equation}
where $a=\frac{a_1}{m_2^\prime} =\frac{a_2}{m_1^\prime}\in \Z$.  We
obtain multigraph $(C)$, where we write $m_i, p$ and $q$ instead of
$m_i^\prime,p^\prime$ and $q^\prime$ for ease of notation. Conditions
$(i)$ to $(iii)$ are  direct consequences of
Theorem~\ref{cor::localformpt} and $(iv)$ and $(v)$ are just
\eqref{graph6} and \eqref{eq:lastcond}.
\end{proof}	

\subsubsection{Minimal spaces without fixed orbi-surfaces}\label{MSI}
In this section, we provide explicit Hamiltonian $S^1$-spaces whose
labeled multigraphs are listed in Theorem \ref{thm:existisolated}. 

\vspace{.2cm}
\noindent $\bullet$ {\bf Multigraph $(A)$:} Let $M_c= \left(
  \mathbb{C} P^1 \times\mathbb{C}  P^1 \right) /\mathbb{Z}_c$ be as in
Example~\ref{ex:quotient_cp1xcp1_finite}. We equip it with the
symplectic form that descends from the symplectic form $2 c \beta \omega_0 \oplus
2 c \gamma \omega_0$ on $\mathbb{C} P^1 \times\mathbb{C}  P^1$, where
$\omega_0$  is the Fubini-Study form on $\mathbb{C}  P^1 $ and $\gamma, \beta\in \R^+$. 
Consider the Hamiltonian $S^1$-action on $M_c$ of Example~\ref{exm:circle_action_cp1cp1} with Hamiltonian function
$$
H([[z_0:z_1],[w_0:w_1]]_c)=\alpha + m\beta \frac{\lvert z_1\rvert^2}{\lvert z_0\rvert^2+\lvert z_1\rvert^2} +  n \gamma \frac{\lvert w_1\rvert^2}{\lvert w_0\rvert^2+\lvert w_1\rvert^2}, 
$$
with $\alpha \in \R$. We may assume that
$m\geq n$.

By \eqref{eq:30}, a coordinate system centered at $F_1=[[1:0],[1:0]]_c$ is given by 
$$
(z,w)\in \mathbb{C}^2 \longmapsto [[1:z],[1:w]]_c \in M_c,
$$
with $(z,w) \sim ( \xi_{c} \, z , \xi_{c}^{-l}\, w)$ and so the fixed
point $F_1$ has type $\frac{1}{c}(1,c-l)$. The $S^1$-action in these coordinates is given by
$$
\lambda \cdot ([[1: z],[1: w]]_c) = [[1 :\lambda^{\frac{m}{c}} z],[1 : \lambda^{ \frac{n}{c}} w]]_c,
$$
and so the orbi-weights at $F_1$ are $\frac{m}{c}$ and
$\frac{n}{c}$. Moreover, we have $H(F_1)=\alpha$. Similarly, the fixed
point $F_2=[[1:0],[0:1]]_c$ has type $\frac{1}{c}(1,l)$, and
orbi-weights $\frac{m}{c}, - \frac{n}{c}$ and $H(F_2)=\alpha +
n\gamma$. The fixed point  $F_3=[[0:1],[1:0]]_c$ has type
$\frac{1}{c}(1,l^\prime)$ with $1\leq l^\prime < c$ satisfying $
l^\prime \cdot l = 1 \mod c$, the orbi-weights at $F_3$ are
$-\frac{m}{c},  \frac{n}{c}$, and $H(F_3)=\alpha + m\beta$. The fixed
point $F_4=[[0:1],[0:1]]_c$  has type $\frac{1}{c}(1,c- l^\prime)$,
orbi-weights $-\frac{m}{c},-\frac{n}{c}$, and $H(F_4)=\alpha + m\beta
+n \gamma$. Hence, the labeled multigraph of this Hamiltonian
$S^1$-space is as in Theorem~\ref{thm:existisolated} (A). We remark
that, if $c=1$, then $M=\mathbb{C} P^1 \times\mathbb{C}  P^1$ and the
$S^1$-action is the standard diagonal action. \\

\noindent $\bullet$ {\bf Multigraph $(B)$ with $r=1$:} 
Let $M=\mathbb{C} P^2(p, k_1, q)/\mathbb{Z}_c(a,a l_q + b l_p, b)$ be
as in Example~\ref{ex::orbifold_quotient}
for positive integers $a,b,c,p,q,k_1,l_p,l_q$ such that $p,q,k_1$ are
pairwise coprime, $pb- q a=1$ and $1\leq l_p<p$, $1\leq l_q < q$ are
such that $\gcd(1,l_p)=\gcd(1,l_q)=1$ and $l_qp+l_pq=k_1 \mod
pqc$. Consider the symplectic form on $M$ inherited from the the
standard symplectic form on $\mathbb{C} P^2(p, k_1, q)$ of Example~\ref{exm:wps_as_symp} and the $S^1$-action
\begin{align*}
\phi : S^1 \times M &\to M \\
(\lambda, [z_0:z_1:z_2]_c) & \mapsto[\lambda^{\frac{s}{q}}z_0:\lambda^{\frac{n}{q}}z_1:z_2]_c
\end{align*}
with $s,n\in \Z_+$ such that $k_1 s= mq + np$ for some $m\in \Z_+$ and
$$
s l_q = n \mod cq, \quad s l_p = m \mod cp \quad \text{and} \quad m l_1 = -n \mod{ck_1}.
$$ 
This action has three isolated fixed points  $F_1=[1:0:0]_c$,
$F_2=[0:1:0]_c$ and $F_3=[0:0:1]_c$ and has the following Hamiltonian function
$$
H([z_0:z_1:z_2]_c) = K +\frac{1}{c} \left( \frac{sk_1\lvert z_0\rvert^2 + n p \lvert z_1\rvert^2}{\lvert z_0\rvert^2 + \lvert z_1\rvert^2 +\lvert z_2\rvert^2 }\right),
$$
where $K \in \mathbb{R}$. 
A coordinate system centered at $F_1$ is given by 
$$
(z,w)\in \mathbb{C}^2 \longmapsto [1:z:w]_c\in M,
$$
with $(z,w) \sim (\xi_{cp}^{l_p} \, z , \xi_{cp}\, w)$ and so $F_1$ has type $\frac{1}{cp}(1,l_p)$. The $S^1$-action in these coordinates is given by
$$
\lambda \cdot  [1: z: w]_c  = [\lambda^{\frac{s}{cq}}: \lambda^{ \frac{n}{cq}}z: w]_c  = [1 : \lambda^{-\frac{m}{cp}} z : \lambda^{- \frac{s}{cp}}  w]_c,
$$
and so the  orbi-weights at $F_1$ are $-\frac{m}{cp}, -\frac{s}{cp}$. Moreover, we have $H(F_1)=K+\frac{sk_1}{2c}$.
Similarly, $F_2$ has type $\frac{1}{ck_1}(1,l_1)$ with $l_1 l_p + l_q = 0 \mod ck_1$, orbi-weights  $\frac{m}{ck_1},- \frac{n}{ck_1}$, and $H(F_2)=K + \frac{np}{2c}$. Finally, $F_3$ has type $\frac{1}{cq}(1,l_q)$, orbi-weights $\frac{s}{cq}, \frac{n}{cq}$, and $H(F_3)=K$.

If we set $\alpha := K + \frac{np}{2c}$, $a_1:=\frac{p}{2c}$ and  $a_2:=\frac{q}{2c}$, then
$$H(F_1)=\alpha + m a_2, \quad H(F_2)=\alpha \quad \text{and} \quad H(F_3)=\alpha - na_1.$$
Moreover, setting $p:=p^\prime c$, $q:=q^\prime c$, $k_1 := k_1^\prime c$, $m:=m^\prime c$, $n:=n^\prime c$, $s:=s^\prime$ and  
$$
l_{\max}:=\left\{ \begin{array}{ll} l_p & \text{if $m\leq s$} \\  \\ l^\prime_p & \text{otherwise}
\end{array}\right. \quad \text{and} \quad  l_{\min}:= \left\{ \begin{array}{ll} l_q & \text{if $s\geq n$}\\ \\  l^\prime_q &\text{otherwise},
\end{array}\right.
$$ 
with $ l^\prime_p\cdot l_p = 1 \mod cp $ and $ l^\prime_q\cdot l_q= 1
\mod cq$, the above Hamiltonian $S^1$-space has labeled multigraph as
in Theorem \ref{thm:existisolated} (B) with $r=1$. \\

\noindent $\bullet$ {\bf Multigraph $(B)$  with $r>1$:}
Let $(M,\omega,H)$ be a Hamiltonian $S^1$-space with labeled
multigraph as in Theorem~\ref{fig::minimalfixedsurface} $(I)$. We set
$s:=p_1c$, $n:=p_2c$ and, abusing notation to denote $qc$ by $q$, we assume that  $s,n,q$ satisfy 
$$(p  -x m s)n +q m = k_1 s,$$ 
where  $x=\sum_{i=2}^r k_i$, for some $p,m,k_i\geq 1$. Note that $M$
is a quotient of a weighted projective plane by a finite cyclic group,
where the group is allowed to be trivial.
We claim that a Hamiltonian $S^1$-space with labeled multigraph as in Theorem~\ref{thm:existisolated}  (B) with $r>1$ can be
obtained from $(M,\omega,H)$ by performing finitely many
$S^1$-equivariant weighted blow-ups and
blow-downs (see Figure~\ref{fig:constrgraph4}). In what follows,
seeing as we may adjust the size of a blow-up appropriately so as
to satisfy the necessary conditions, we do not specify it, denoting it
with $\bullet$.

\renewcommand{\thefigure}{\thesection.\arabic{figure}}
\begin{figure}[h!]
\centering
\resizebox{9.5cm}{!}{%
\begin{tikzpicture}
			\coordinate (point) at  (-6.5,1);
			\fill (point) circle (2pt);
			\fill  (-6.5,3) circle (8pt);
			
			\draw[thick](-6.6,2.6) -- (-6.6,1.2);
			\draw[thick](-6.4,2.6) -- (-6.4,1.2);
			\coordinate[label=right:{$n$}] (c1) at (-6.4,2);
			\coordinate[label=left:{$s$}] (c1) at (-6.6,2);
				\coordinate[label=right:{$\frac{1}{q}(1,l_{q}) \color{red}$}] (c2) at (-7.2,0.5);
				\coordinate[label=above:{$\frac{1}{s}(1,l_1),\frac{1}{n}(1,l_2)$}] (AAA)  at (-6.4,3.8);
			\coordinate[label=above:{$g=0,A , \color{red} H=\alpha + \frac{A sn }{q}$}] (ABB)  at (-6.5,3.3);
			\draw[->, thick] (-4,2.2) to (-3.2,2.2);	
	\end{tikzpicture}	\hspace{1cm}
\begin{tikzpicture}
		\coordinate (P1) at (-4,3);
		\fill (P1) circle (8pt);
		\coordinate (P2) at (-4,0);
		\fill (P2) circle (4pt);		
		\coordinate (P3) at (-3,1.5);
		\fill (P3) circle (4pt);		
		\draw[very thick](P1) -- (P2);
		\draw[very thick](P2) -- (P3);
		\draw[very thick](P1) -- (P3);

		\coordinate[label=right:$s$] (AB) at ($(P1)!0.5!(P2)$);
		\coordinate[label=right:$m$] (BC) at ($(P1)!0.5!(P3)$);	
		\coordinate[label=right:$n$] (BC) at ($(P2)!0.5!(P3)$);	
		\coordinate[label=above:{$\frac{1}{s}(1,l_1),\frac{1}{m}(1,l_m)$}] (BC) at (-4,3.8);
		\coordinate[label=above:{$g=0$}] (ABB)  at (-4,3.2);
		\coordinate[label=below:{$\frac{1}{q}(1,l_q)$}] (BC) at ($(P2)$);
		\coordinate[label=right:{$\frac{1}{k_1}(1,l_1)$}] (BC) at ($(-2.9,1.5)$);
		\draw[->, thick] (-.5,1.4) to (0.5,1.4); 
\end{tikzpicture}}	
			\resizebox{9.5cm}{!}{%
\begin{tikzpicture}		
		\coordinate (P1) at (-4,3);
		\fill (P1) circle (8pt);
		\coordinate (P2) at (-4,0);
		\fill (P2) circle (4pt);		
		\coordinate (P3) at (-3,1.5);
		\fill (P3) circle (4pt);		
		\draw[very thick](P1) -- (P2);
		\draw[very thick](P2) -- (P3);
		\draw[very thick](P1) -- (P3);
		
		\coordinate[label=right:$s$] (AB) at ($(P1)!0.5!(P2)$);
		\coordinate[label=right:$m$] (BC) at ($(P1)!0.5!(P3)$);	
		\coordinate[label=right:$n$] (BC) at ($(P2)!0.5!(P3)$);	
		\coordinate[label=above:{$\frac{1}{s}(1,l_1),\frac{1}{m}(1,l_m)$}] (BC) at (-4,3.8);
		\coordinate[label=above:{$g=0$}] (BC) at ($(-4,3.2)$);
		\coordinate[label=below:{$\frac{1}{q}(1,l_q)$}] (BC) at ($(P2)$);
		\coordinate[label=right:{$\frac{1}{k_1}(1,l_1)$}] (BC) at ($(-3,1.5)$);
						
		\coordinate (W0) at (-4.7,1.5);
		\fill (W0) circle (4pt);
		\coordinate (W00) at (-6,1.5);
		\fill (W00) circle (4pt);	
		\node at ($(W0)!.5!(W00)$) {\ldots};
		
		\coordinate[label=above:${\frac{1}{k_2}(1,l_2)}$] (BC) at ($(-4.7,1.6)$);
		\coordinate[label=above:${\frac{1}{k_r}(1,l_r)}$] (BC) at ($(-6,1.6)$);
				\draw[->, thick] (-7.5,1.5) to (-7,1.5);
				\draw[->, thick] (-1,1.5) to (-0.5,1.5);
\end{tikzpicture}	\hspace{.5cm}	
\begin{tikzpicture}		
		
		\coordinate (P1) at (-4,3);
		\fill (P1) circle (4pt);
		\coordinate (P2) at (-4,0);
		\fill (P2) circle (4pt);		
		\coordinate (P3) at (-3,1.5);
		\fill (P3) circle (4pt);		
		\draw[very thick](P1) -- (P2);
		\draw[very thick](P2) -- (P3);
		\draw[very thick](P1) -- (P3);
		
		\coordinate[label=right:$s$] (AB) at ($(P1)!0.5!(P2)$);
		\coordinate[label=right:$m$] (BC) at ($(P1)!0.5!(P3)$);	
		\coordinate[label=right:$n$] (BC) at ($(P2)!0.5!(P3)$);	
		\coordinate[label=above:{$\frac{1}{p}(1,l_p)$}] (BC) at ($(-4,3)$);
		\coordinate[label=below:{$\frac{1}{q}(1,l_q)$}] (BC) at ($(P2)$);
		\coordinate[label=right:{$\frac{1}{k_1}(1,l_1)$}] (BC) at ($(-3,1.5)$);
						
		\coordinate (W0) at (-4.7,1.5);
		\fill (W0) circle (4pt);
		\coordinate (W00) at (-6,1.5);
		\fill (W00) circle (4pt);	
		\node at ($(W0)!.5!(W00)$) {\ldots};
		
		\coordinate[label=above:${\frac{1}{k_2}(1,l_2)}$] (BC) at ($(-4.7,1.6)$);
		\coordinate[label=above:${\frac{1}{k_r}(1,l_r)}$] (BC) at  ($(-6,1.6)$);
	\end{tikzpicture}}
\caption{Constructing a Hamiltonian $S^1$-space with labeled
  multigraph as in Theorem \ref{thm:existisolated} (B).}
\label{fig:constrgraph4}
\end{figure}
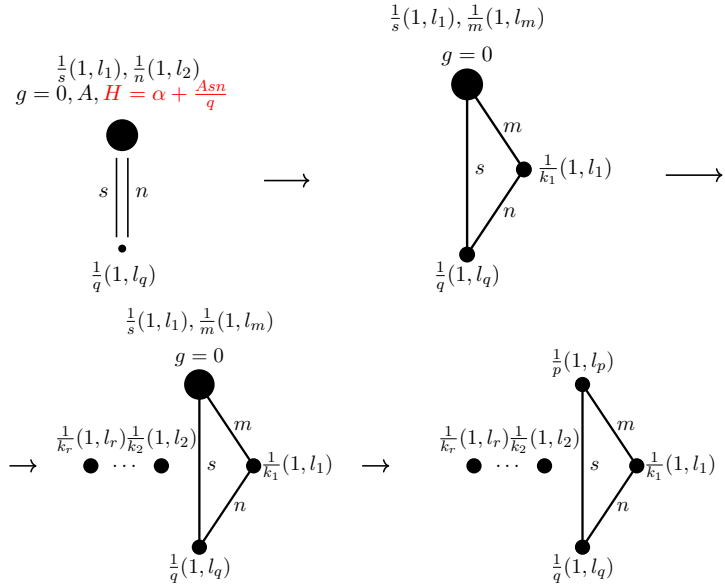

To this end, we first perform a weighted blow-up
$\text{WBU}(n,0,-1,\frac{m}{n},\frac{k_1}{n},\bullet)$ at the
singular point of order $n$ on the maximal orbi-sphere, as in
Proposition~\ref{prop::desing}-(IV). This generates an exceptional divisor
that is an $S^1$-invariant orbi-sphere with north pole a singular
point of order $m$ on the maximal orbi-surface and south pole a
singular point of order $k_1$ (see the labeled multigraph on the right
in the first row of Figure \ref{fig:constrgraph4}). By
Corollary~\ref{lem::blowupchange0} and \eqref{eq:bsigma-}, the normal
orbi-bundle of the maximal orbi-sphere $\Sigma$ in the blown-up
Hamiltonian $S^1$-space has degree
$$\frac{q}{sn}-\frac{k_1}{nm}=x-\frac{p}{ms}.$$
Then we perform $r-1$ weighted blow-ups $\text{WBU}(1,0,-1,1,k_i,\bullet)$  at $r-1$ regular points in $\Sigma$, 
and obtain a space with  $r$ interior fixed points of orders
$k_1,\ldots, k_r$ (see the labeled multigraph on the left in the
second row of Figure~\ref{fig:constrgraph4}).  By
Corollary~\ref{lem::blowupchange0}, this reduces the degree of the
normal orbi-bundle of the maximal orbi-sphere to
$-\frac{p}{ms}$. Since this degree is negative we can blow-down the
maximal orbi-sphere and obtain a singular point of order $p$ and  the
multigraph (B) of Theorem~\ref{thm:existisolated}. Note that  we can
choose the appropriate blow-up sizes so that the moment map labels
satisfy condition  \ref{cc}.\\

\noindent $\bullet$ {\bf Multigraph $(C)$:} For positive integers $m_1,m_2,p,c$ such that $m_1,m_2,p$ are pairwise coprime,
consider  the quotient
$M:=\mathbb{C}P^2(m_1,m_2,p)/\mathbb{Z}_c(l,1,0)$ as in
Example~\ref{ex::orbifold_quotient}, with an $S^1$-action given by  
\begin{align*}
\phi : S^1 \times M &\to M \\
(\lambda, [z_0:z_1:z_2]_c)  & \mapsto [z_0:z_1:\lambda z_2]_c= [\lambda^{-\frac{m_1}{p}}z_0:\lambda^{-\frac{m_2}{p}}z_1:z_2],
\end{align*}
We endow $M$ with the symplectic form of
Example~\ref{exm:cyclic_quotients_symp}. The above action is
Hamiltonian and the labeled multigraph for the resulting Hamiltonian
$S^1$-space $(M,\omega,H)$ is depicted on the left of the first row of
Figure~\ref{multigraph5}. We claim that a Hamiltonian $S^1$-space with labeled multigraph as in Theorem~\ref{thm:existisolated}  (C) can be
obtained from $(M,\omega,H)$ by performing finitely many
$S^1$-equivariant weighted blow-ups and
blow-downs (see Figure \ref{multigraph5}). As above, we do not specify
the size of each blow-up.

\renewcommand{\thefigure}{\thesection.\arabic{figure}}
\begin{figure}[h!]
\centering
\resizebox{8cm}{!}{%
\begin{tikzpicture}
			\coordinate (Q1) at (-4,3);
			\fill (Q1) circle (4pt);
			\coordinate (Q2) at (-4,0);
			\fill (Q2) circle (8pt);

			\coordinate[label=left:$m_1c$] (BC) at ($(-4.1,1.5)$);	
			\coordinate[label=right:$m_2c$] (BC) at ($(-3.9,1.5)$);
			
			\coordinate[label=below:${\frac{1}{m_1c}(1,l_1),\frac{1}{m_2c}(1,l_2)}$] (BC) at ($(-4,-0.2)$);
			\coordinate[label=below:${g=0,A}$] (BC) at ($(-4,-0.8)$);
			
			\coordinate[label=above:${\frac{1}{pc}(1,l_{max})}$] (BC) at ($(Q1)$);

			\draw[very thick](-3.9,2.8) -- (-3.9,0.4);
			\draw[very thick](-4.1,2.8) -- (-4.1,0.4);
			
			\draw[->, thick] (-2.5,1.5) to (-2,1.5);	
			
			\coordinate (Q1) at (0.5,3);
			\coordinate (Q2) at (0.5,0);
	
\end{tikzpicture}
\hspace{.5cm}
\begin{tikzpicture}										
			\coordinate (Q1) at (2,3);
			\fill (Q1) circle (4pt);
			\coordinate (Q2) at (2,0);
			\fill (Q2) circle (8pt);				
			\coordinate (W0) at (0.8,1.5);
			\fill (W0) circle (4pt);
			\coordinate (W00) at (-1.2,1.5);
			\fill (W00) circle (4pt);	
			\node at ($(W0)!.5!(W00)$) {\ldots};

			\coordinate[label=left:$m_1c$] (BC) at ($(1.9,1.5)$);	
			\coordinate[label=right:$m_2c$] (BC) at ($(2.1,1.5)$);
			
			\coordinate[label=below:${\frac{1}{m_1c}(1,l_1),\frac{1}{m_2c}(1,l_2)}$] (BC) at ($(2,-0.5)$);				
			\coordinate[label=below:${}$] (BC) at ($(2,-0.7)$);

			\coordinate[label=above:${\frac{1}{pc}(1,l_{max})}$] (BC) at ($(Q1)$);
			
			\draw[very thick](2.1,2.8) -- (2.1,0.4);
			\draw[very thick](1.9,2.8) -- (1.9,0.4);
			\coordinate[label=above:${\frac{1}{k_1}(1,k_1-1)}$] (BC) at ($(.8,1.5)$);
			\coordinate[label=above:${\frac{1}{k_r}(1,k_r-1)}$] (BC) at ($(-1.2,1.5)$);

			\draw[->, thick] (3.5,1.6) to (4,1.6);		
\end{tikzpicture}}
\hspace{-.5cm}
	\resizebox{6.5cm}{!}{%
\begin{tikzpicture}
\coordinate (Q1) at (6.5,3.5);
\fill (Q1) circle (4pt);
\coordinate (Q2) at (6.5,0.5);
\fill (Q2) circle (4pt);	

\coordinate (Q0) at (5.2,2);
\fill (Q0) circle (4pt);
\coordinate (Q00) at (3,2);
\fill (Q00) circle (4pt);	
\node at ($(Q0)!.5!(Q00)$) {\ldots};

\draw[->, thick] (1.5,1.8) to (2,1.8);	

\coordinate[label=left:$m_1c$] (BC) at ($(6.4,2)$);	
\coordinate[label=right:$m_2c$] (BC) at ($(6.6,2)$);

\coordinate[label=below:${\frac{1}{c^2m_1m_2(k_1+\dots+k_n)-pc}(1,l_{min})}$] (BC) at ($(Q2)$);
\coordinate[label=above:${\frac{1}{pc}(1,l_{max})}$] (BC) at ($(Q1)$);
\coordinate[label=above:${\frac{1}{k_1}(1,k_1-1)}$] (BC) at ($(5.3,2)$);
\coordinate[label=above:${\frac{1}{k_r}(1,k_r-1)}$] (BC) at ($(Q00)$);

\draw[very thick](6.6,3.3) -- (6.6,0.7);
\draw[very thick](6.4,3.3) -- (6.4,0.7);
\end{tikzpicture}}\caption{Construction of a Hamiltonian $S^1$-space with Multigraph $(C)$.}\label{multigraph5}
\end{figure}
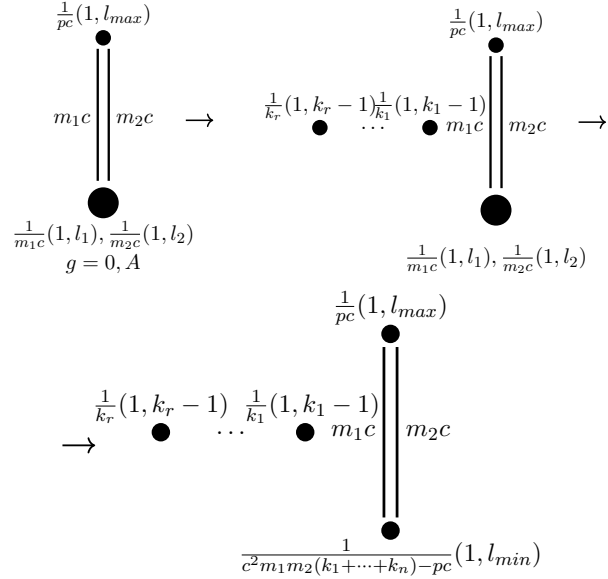

The space $(M,\omega,H)$ has a fixed orbi-sphere
$\Sigma_-\simeq \mathbb{C}P^1(m_1,m_2)/\mathbb{Z}_c$ with  normal
orbi-bundle that has degree $\beta=\frac{p}{cm_1m_2}$ (see
\eqref{eq:bsigma-}). We consider integers $k_1, \ldots, k_r \geq 1$
such that $m_1m_2 \,c \,(k_1+\cdots +k_r) >p$ and perform $r$ weighted
blow-ups, $\text{WBU}(1,1,0,k_i,1,\bullet)$  of  $X$ at $r$ regular
points in $\Sigma_-$ (see the labeled multigraph on the right of the
first row of Figure \ref{multigraph5}). We obtain $r$ non-extremal fixed points with cyclic orbifold structure
groups of orders $k_1,\ldots, k_r$.  By Lemma~\ref{lem::blowupchange0} 
the degree of the normal orbi-bundle of the blown-up minimal orbi-sphere $\widetilde{\Sigma}_-$ becomes 
 $$\widetilde{\beta}=\frac{p}{cm_1m_2} - k_1-\cdots-k_r<0.$$
Hence we can blow-down $\widetilde{\Sigma}_-$, using the reverse
procedure of a weighted blow-up
$\text{WBU}(qc,\frac{m_1}{q},\frac{m_2}{q},\frac{m_1}{q},\frac{m_2}{q},\bullet)$,
obtaining a singular point  of order
$qc:=c^2m_1m_2(k_1+\dots+k_n)-pc$. This yields a Hamiltonian
$S^1$-space with labeled multigraph as in the bottom of Figure
\ref{multigraph5}, i.e., as in Theorem \ref{thm:existisolated}
(C). It can be checked that the labels satisfy condition \ref{cc2}. Moreover, we can choose the appropriate
blow-up sizes so that the moment map labels satisfy condition
\ref{ccc}.

\begin{remark}
  At present, we do not know whether there is an even more explicit
  description of a general minimal Hamiltonian $S^1$-space with 
  labeled multigraph as in Theorem \ref{thm:existisolated} (B) with
  $r > 1$, or (C). In some specific cases, we know the answer: For
  instance, the example studied in \cite{counterex} is a global
  quotient of $\C P^1 \times \C P^1$ and has labeled multigraph of
  type (C) -- see Example \ref{tswEx}. 
\end{remark}

Lemma \ref{lemma:only_iso}, 
Theorems \ref{thm:uniqueness} and \ref{thm:existisolated}, and
the above constructions immediately imply the following
result.
\begin{theorem}\label{thm:fixed}
  Any Hamiltonian $S^1$-space with only isolated fixed points can be obtained by applying finitely many
  $S^1$-equivariant symplectic weighted blow-ups to either
  \begin{enumerate}[label=(\Alph*),ref=(\Alph*),leftmargin=*]
  \item a quotient of $\mathbb{C}P^1 \times \mathbb{C}P^1$ by a finite
    cyclic group;
  \item a quotient of a weighted projective plane by a finite
    cyclic group or a weighted blow-up of one of these spaces at a
    singular point and at a certain number of regular points on the
    same orbi-sphere, followed by a weighted blow-down of this
    orbi-sphere; or
  \item a weighted blow-up of a quotient of a weighted
    projective plane at a certain number of regular points on an
    orbi-sphere followed by an equivariant weighted blow-down of this
    orbi-sphere.
  \end{enumerate}
\end{theorem}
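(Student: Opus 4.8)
The plan is to assemble Theorem \ref{thm:fixed} directly from the ingredients already in place, in exactly the same way that Theorem \ref{thm:existsurface} is assembled from Lemma \ref{lemma:existsurface1}, Theorem \ref{thm:uniqueness} and Theorem \ref{fig::minimalfixedsurface} together with the explicit constructions of Section \ref{MSF}. First I would invoke Lemma \ref{lemma:only_iso}: any Hamiltonian $S^1$-space with only isolated fixed points is obtained from a \emph{minimal} such space by finitely many $S^1$-equivariant symplectic weighted blow-ups. So it suffices to realize every minimal Hamiltonian $S^1$-space with only isolated fixed points as one of the three types (A), (B), (C) listed in the statement (possibly after further blow-ups/blow-downs baked into (B) and (C)).

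Next I would appeal to Theorem \ref{thm:existisolated}: the labeled multigraph of a minimal Hamiltonian $S^1$-space with only isolated fixed points is one of the three families of multigraphs $(A)$, $(B)$, $(C)$ listed there, with the stated compatibility conditions. By Theorem \ref{thm:uniqueness}, the labeled multigraph is a complete invariant, so it is enough to exhibit, for each such admissible multigraph, \emph{some} Hamiltonian $S^1$-space realizing it that is of the asserted form. These realizations are precisely the explicit constructions carried out in Section \ref{MSI}: for multigraph $(A)$, the quotient $(\mathbb{C}P^1 \times \mathbb{C}P^1)/\mathbb{Z}_c$ with the diagonal-type $S^1$-action of Example \ref{exm:circle_action_cp1cp1}, which is case (A); for multigraph $(B)$ with $r=1$, a quotient of a weighted projective plane $\mathbb{C}P^2(p,k_1,q)/\mathbb{Z}_c$ with a suitable linear $S^1$-action, which is case (B); for multigraph $(B)$ with $r>1$, starting from a Hamiltonian $S^1$-space with labeled multigraph as in Theorem \ref{fig::minimalfixedsurface} $(I)$ (a quotient of a weighted projective plane by a finite cyclic group, possibly trivial) and performing the sequence of blow-ups $\text{WBU}(n,0,-1,\tfrac{m}{n},\tfrac{k_1}{n},\bullet)$ and $r-1$ blow-ups $\text{WBU}(1,0,-1,1,k_i,\bullet)$ followed by a weighted blow-down of the maximal orbi-sphere, which is case (B); and for multigraph $(C)$, starting from $\mathbb{C}P^2(m_1,m_2,p)/\mathbb{Z}_c(l,1,0)$ with the $S^1$-action of Section \ref{MSI}, performing $r$ blow-ups $\text{WBU}(1,1,0,k_i,1,\bullet)$ at regular points of the fixed orbi-sphere $\Sigma_-$ and then blowing down the resulting negative self-intersection orbi-surface $\widetilde{\Sigma}_-$, which is case (C). In each case the blow-up sizes can be chosen (as remarked in Section \ref{MSI}) so that the moment map and area labels satisfy the required monotonicity/positivity conditions \ref{cc}, \ref{ccc} etc., so that the labeled multigraph of the constructed space is precisely the prescribed admissible one.

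Concretely, I would write: let $(M,\omega,H)$ be a Hamiltonian $S^1$-space with only isolated fixed points. By Lemma \ref{lemma:only_iso} there is a minimal Hamiltonian $S^1$-space $(M_0,\omega_0,H_0)$ with only isolated fixed points and a finite sequence of $S^1$-equivariant symplectic weighted blow-ups producing $(M,\omega,H)$ from $(M_0,\omega_0,H_0)$. By Theorem \ref{thm:existisolated}, the labeled multigraph of $(M_0,\omega_0,H_0)$ is one of $(A)$, $(B)$, $(C)$. The constructions of Section \ref{MSI} produce, for each admissible multigraph of these types, a Hamiltonian $S^1$-space of the form described in (A), (B), (C) respectively, and by Lemma \ref{lemma:trivial_direction} its labeled multigraph equals the prescribed one; hence by Theorem \ref{thm:uniqueness} this space is equivariantly symplectomorphic to $(M_0,\omega_0,H_0)$. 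Composing the two sequences of weighted blow-ups (the ones internal to the constructions in Section \ref{MSI}, and the ones coming from Lemma \ref{lemma:only_iso}) gives $(M,\omega,H)$ as a finite sequence of $S^1$-equivariant symplectic weighted blow-ups applied to a space of the asserted form, which is exactly the statement.

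The conceptual content of the theorem is therefore entirely front-loaded into Theorem \ref{thm:existisolated} and the explicit constructions of Section \ref{MSI}; the proof of Theorem \ref{thm:fixed} itself is the short bookkeeping argument above. The only genuine subtlety — and the step I expect to need the most care in writing cleanly — is verifying that the blow-up/blow-down sequences described in Section \ref{MSI} for multigraphs $(B)$ with $r>1$ and $(C)$ can indeed be carried out with blow-up sizes making \emph{all} the admissibility inequalities on moment map and area labels simultaneously satisfiable, i.e. that every admissible multigraph of type (B) or (C) (not just generic ones) is hit; this is where one must be sure the parametrization of the compatibility conditions in Theorem \ref{thm:existisolated} matches the parametrization of the constructions, rather than merely overlapping with it. Everything else — the reduction via Lemma \ref{lemma:only_iso}, the completeness of the invariant via Theorem \ref{thm:uniqueness}, and the identification of each constructed space with one of the forms (A)--(C) — is immediate from the stated results.
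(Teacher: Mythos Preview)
Your proposal is correct and matches the paper's approach exactly: the paper states that Theorem \ref{thm:fixed} follows immediately from Lemma \ref{lemma:only_iso}, Theorems \ref{thm:uniqueness} and \ref{thm:existisolated}, and the constructions of Section \ref{MSI}, and your write-up is simply a careful unpacking of that one-line justification. Your flagged subtlety about matching parameters in the $(B)$, $r>1$ and $(C)$ constructions to hit every admissible multigraph is legitimate, but the paper treats it the same way you do, by asserting that the blow-up sizes can be chosen appropriately.
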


To conclude this section, we point out an important difference between
Hamiltonian $S^1$-manifolds and Hamiltonian $S^1$-manifolds: while in
the former, the $S^1$-action of any minimal spaces with only isolated
fixed points can be extended to a toric one (cf. \cite[Theorem
5.1]{karshon}), this no longer holds in the latter. More specifically,
the following result holds. 

\begin{lemma}\label{lemma:notextendTorus}
  The $S^1$-action on a minimal Hamiltonian $S^1$-space with labeled
  multigraph as in Theorem \ref{thm:existisolated} (C) does not extend
  to a toric action. 
\end{lemma}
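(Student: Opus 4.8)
The plan is to argue by contradiction via the Delzant-type classification of compact symplectic toric orbifolds due to Lerman and Tolman \cite{LermanTolman}. So suppose $(M,\omega,H)$ is a minimal Hamiltonian $S^1$-space with labeled multigraph of type $(C)$ and that the $S^1$-action is the restriction to a subcircle $S^1\hookrightarrow T^2$ of an effective Hamiltonian $T^2$-action on $(M,\omega)$, with moment map $\Phi\colon M\to\mathfrak t^*\cong\mathbb R^2$. Then $(M,\omega,\Phi)$ is a compact symplectic toric orbifold, so its moment image $\Delta:=\Phi(M)$ is a compact convex (simple, rational) polygon whose vertices are in bijection with the $T^2$-fixed points and whose edges are in bijection with the $T^2$-invariant symplectic $2$-suborbifolds of $M$, the two endpoints of an edge being the two $T^2$-fixed points contained in the corresponding invariant orbi-sphere. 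I would begin by recording precisely this dictionary and the fact that, since preimages of faces of $\Delta$ under $\Phi$ are connected, distinct $T^2$-invariant $2$-suborbifolds correspond to distinct edges of $\Delta$.

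Next I would translate the $S^1$-data into this toric picture. Since a Hamiltonian $S^1$-space with multigraph of type $(C)$ has only isolated fixed points (Theorem \ref{thm:existisolated}), every $S^1$-fixed point is $T^2$-fixed: indeed, if $F\in M^{S^1}$ then $T^2\cdot F$ is $S^1$-invariant with $H$ constant on it, hence contained in $M^{S^1}$, and were it positive-dimensional it would give a positive-dimensional component of $M^{S^1}$, which is excluded. Conversely $M^{T^2}\subseteq M^{S^1}$, so $M^{S^1}=M^{T^2}$, and reading off multigraph $(C)$ this set has $2+r$ elements with $r\ge1$ (note $\sum_i k_i\ge1$ follows from $p,q\ge1$ together with compatibility condition \ref{cc2}); hence $\Delta$ has at least three vertices. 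Similarly, I would show that every $\mathbb Z_k$-isotropy orbi-sphere $C$ of the $S^1$-action with $k\ge2$ is $T^2$-invariant: as $\mathbb Z_k$ is central in the abelian $T^2$, the connected component of $M^{\mathbb Z_k}$ containing $C$ is a $T^2$-invariant symplectic suborbifold, necessarily of dimension $2$ by effectiveness; since $C$ is a compact full $2$-dimensional suborbifold of this connected $2$-orbifold (Proposition \ref{prop:iso_sphere}), the two coincide. Therefore $C=\Phi^{-1}(e)$ for a unique edge $e$ of $\Delta$, whose endpoints are exactly the two poles of $C$.

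Finally I would apply this to the two isotropy orbi-spheres $C_1$ and $C_2$ appearing in multigraph $(C)$ (labeled $m_1c$ and $m_2c$): by construction they are distinct and they share both poles, namely the global maximum $F_{\max}$ (of type $\tfrac1{pc}(1,l_{\max})$) and the global minimum $F_{\min}$ of $H$. These are single fixed points, since $H$ has a unique maximum and minimum (Corollary \ref{lem::uniqueMaxMin}) and there is no fixed orbi-surface (Theorem \ref{thm:existisolated}), so they correspond to two distinct vertices $v_{\max}\ne v_{\min}$ of $\Delta$. By the previous step $C_1,C_2$ correspond to two distinct edges $e_1\ne e_2$ of $\Delta$, and the endpoint set of each of $e_1$ and $e_2$ equals $\{v_{\max},v_{\min}\}$. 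But the boundary of a convex polygon with at least three vertices is a simple cycle, so no two distinct edges can share the same pair of endpoints. This contradiction shows that no toric extension exists.

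I expect the main obstacle to be making the identification ``$S^1$-isotropy orbi-sphere $=$ preimage of an edge of $\Delta$'' completely rigorous — in particular, ruling out that such an orbi-sphere could fail to be $T^2$-invariant or could meet the preimages of several edges — and ensuring the argument still works when $m_1=m_2=1$, where the symplectic slice representation at $F_{\max}$ does not by itself distinguish $C_1$ and $C_2$ among the invariant $2$-spheres through that point; the remedy, as indicated above, is to argue globally with the $T^2$-invariant set $M^{\mathbb Z_k}$ rather than with tangent-space weight decompositions.
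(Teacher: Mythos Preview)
Your strategy via the Lerman--Tolman polytope is sound and yields a valid contradiction, but one step is flawed and your diagnosis of the difficulty is inverted. The claim ``the connected component of $M^{\mathbb Z_k}$ containing $C$ is a $T^2$-invariant symplectic suborbifold'' can fail: when $m_1=m_2=1$ (so both isotropy orbi-spheres have stabilizer $\mathbb Z_c$), the component of $M^{\mathbb Z_c}$ through $C_1$ is $C_1\cup C_2$ --- connected because the two orbi-spheres share both poles, and not a suborbifold at those poles (this is exactly the phenomenon flagged in the Remark following Definition~\ref{defn:Z_k-sphere}). So ``arguing globally with $M^{\mathbb Z_k}$'' is precisely the step that breaks, not the remedy. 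The clean fix bypasses $M^{\mathbb Z_k}$ altogether: since $T^2$ commutes with $S^1$ it preserves the set $\mathrm{Stab}(M;\mathbb Z_k)$, and being connected it preserves every connected component thereof; hence $T^2$ preserves the closure $C_i$ of each such component (Proposition~\ref{prop:iso_sphere}). Once each $C_i$ is known to be a $T^2$-invariant compact symplectic $2$-suborbifold, the identification $C_i=\Phi^{-1}(e_i)$ and the double-edge contradiction go through as you outline.

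The paper's own proof is shorter and avoids matching orbi-spheres to edges. It observes that the level set of $H$ through an interior fixed point contains at least three $S^1$-orbits with non-trivial stabilizer: that fixed point itself, and one orbit on each of the $\mathbb Z_{m_1c}$- and $\mathbb Z_{m_2c}$-isotropy orbi-spheres (using $m_1c,m_2c\ge2$). In a compact symplectic toric orbifold with only isolated $S^1$-fixed points, however, a level set $H^{-1}(c)=\Phi^{-1}(\ell_c\cap\Delta)$ meets $\partial\Delta$ only at the two endpoints of the segment $\ell_c\cap\Delta$, so it contains at most two $S^1$-orbits with non-trivial stabilizer. Your argument buys a more structural statement (isotropy orbi-spheres are edge preimages); the paper's buys brevity.
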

\begin{proof}
  Let $(M,\omega,H)$ be as in the statement. Since $m_1c, m_2c >1$ and
  since there is at least one non-extremal isolated fixed point, there
  is a level set of $H$ that contains at least three orbits with
  non-trivial stabilizer. By the classification of symplectic toric
  orbifolds in \cite{LermanTolman}, this cannot hold if the
  $S^1$-action can be extended to a toric one. 
\end{proof}

\begin{remark}
  There is an alternative proof of Lemma \ref{lemma:notextendTorus}
  that does not use \cite{LermanTolman}. Let $(M,\omega,H)$ be as in
  the statement of the lemma and suppose that the $S^1$-action extends
  to a toric one. The desingularization procedure of Theorem
  \ref{thm:desingularization} works {\em mutatis mutandis} for toric
  actions. Hence, we may apply finitely many toric weighted blow-ups
  to obtain a symplectic toric manifold. Moreover, we may choose the
  sizes of the blow-up so that the symplectic toric manifold has the
  property that there is a level set of $H$ contains three orbits with
  non-trivial stabilizer. This is because the corresponding property
  holds for the original space $(M,\omega,H)$. This contradicts
  \cite[Proposition 5.21]{karshon}.
\end{remark}


\section{Final remarks}\label{sec:final-remarks}

\subsection{Hamiltonian $S^1$-spaces are K\"{a}hler}\label{sec:hamilt-s1-spac}
\begin{theorem}\label{thm:kahler}
  Every Hamiltonian $S^1$-space is K\"{a}hler, i.e., it admits a
  complex structure for which the symplectic form is K\"{a}hler and
  the $S^1$-action is holomorphic.
\end{theorem}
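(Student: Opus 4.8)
The plan is to exploit Main Result \ref{mr2} (in the precise forms of Theorems \ref{thm:existsurface} and \ref{thm:fixed}) to reduce the statement to two claims: (a) every \emph{minimal} Hamiltonian $S^1$-space is Kähler with holomorphic $S^1$-action; and (b) the Kähler property, together with holomorphicity of the $S^1$-action, is preserved under equivariant weighted blow-ups and blow-downs. Granting (a) and (b), the theorem follows, since by Theorems \ref{thm:existsurface} and \ref{thm:fixed} an arbitrary Hamiltonian $S^1$-space is built from a weighted projective plane, a cyclic quotient thereof, a quotient of $\C P^1 \times \C P^1$, or a projectivized plane bundle over a closed orbi-surface, by finitely many equivariant weighted blow-ups and (in cases (B) and (C) of Theorem \ref{thm:fixed}) blow-downs. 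Preservation under equivariant weighted blow-downs is already recorded in Remark \ref{rmk:symp_weighted_blow_down}, so the substantive work is (a) and the blow-up half of (b).

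For (a), the base models appearing in Theorems \ref{thm:existsurface} and \ref{thm:fixed} are all manifestly Kähler with holomorphic $S^1$-action. A weighted projective space $\C P^n(m_0,\dots,m_n)$ and its finite cyclic quotients are projective toric orbifolds carrying a standard Kähler form compatible with the symplectic forms of Examples \ref{exm:wps_as_symp} and \ref{exm:cyclic_quotients_symp}, and the relevant circle actions (Examples \ref{exm:circle_action_weighted}, and the actions in Section \ref{MSF}) are restrictions of the defining torus action, hence holomorphic; the product $\C P^1 \times \C P^1$ and its finite cyclic quotients carry the product Kähler structure, with the $S^1$-actions of Examples \ref{exm:circle_action_Talvacchia} and \ref{exm:circle_action_cp1cp1} holomorphic. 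For a projectivized plane bundle $\mathbb{P}(L \oplus \C) \to \Sigma$ (Example \ref{exm:projectivized}), I would endow $\Sigma$ with the complex structure of a compact Riemann orbi-surface, $L$ with a holomorphic orbi-line-bundle structure of the prescribed degree, and construct a Kähler form on the complex orbifold $\mathbb{P}(L \oplus \C)$ of the usual ``fiberwise Fubini--Study plus large base'' type, arranged so that $H$ of Example \ref{exm:proj_plane_ham} is a moment map for the holomorphic fiber-scaling action of Example \ref{exm:circle_action_proj}; by Theorem \ref{thm:uniqueness} the resulting Hamiltonian $S^1$-space is then the one at hand. The ``blow-up followed by blow-down'' constructions used to produce the remaining minimal models in Sections \ref{MSF} and \ref{MSI} are handled by (b).

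For the blow-up half of (b): let $(M,J,\omega)$ be Kähler with holomorphic $S^1$-action and let $x \in M$. By the holomorphic analogue of Corollary \ref{cor:linearization_fixed_point} -- obtained by passing to a l.u.c.\ centered at $x$, extending the $S^1$-action to the compact group $\widehat{G}$ acting on $\widehat{U} \subseteq \C^n$ of Proposition \ref{prop:luc_centered_equivariant}, linearizing this holomorphic action (the holomorphic analogue of \cite[Theorem 2.2.1]{dk}), and descending -- one obtains an $S^1$-equivariant biholomorphism of a neighborhood of $x$ onto a neighborhood of $[0]$ in $\C^n/\Gamma$ in which the auxiliary circle $\widetilde{S}^1$ used to perform the weighted blow-up (Section \ref{sec:sympl-weight-blow}, acting with positive weights on the coordinates) is holomorphic and commutes with the $S^1$-action. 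The weighted blow-up of $(M,\omega)$ at $x$ in Definition \ref{def:bup} is an equivariant symplectic cut of $B_{\varepsilon+\delta}(x) \times \C$ by $\widetilde{S}^1$, reglued to $M \smallsetminus \overline{B_{\varepsilon}(x)}$; performing this cut in the Kähler category -- the complex-geometric description of symplectic cutting, valid for orbifolds and for actions commuting with the finite extension $\widehat{G}$ -- produces a Kähler structure on the cut piece whose complex structure and symplectic form restrict, on the annular region $B_{\varepsilon+\delta}(x) \smallsetminus \overline{B_{\varepsilon}(x)}$ (which is disjoint from the cut locus), to $J$ and $\omega$, precisely because the chart above is holomorphic and symplectic there. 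Hence these structures glue with $(M \smallsetminus \overline{B_{\varepsilon}(x)}, J, \omega)$ to a Kähler structure on the blown-up orbifold; compatibility of $J$ and $\omega$ holds on each piece and therefore globally, and the holomorphic $S^1$-actions on the two pieces agree on the overlap, so the $S^1$-action on the blown-up space is holomorphic. The same argument applies verbatim to a weighted blow-up at a regular point, taking $\Gamma$ trivial.

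The main obstacle will be this third step: arranging that the auxiliary circle $\widetilde{S}^1$ is holomorphic, so that the symplectic cut may be performed in the Kähler category, and checking that the complex structure produced by the cut agrees with the ambient $J$ on the gluing annulus. Once the holomorphic local normal form near $x$ is in place, the compatibility of the glued symplectic form is automatic on each piece, so the whole difficulty is concentrated in the holomorphic Bochner linearization for orbifolds and in verifying that equivariant Kähler cutting goes through in the orbifold setting with the finite extension $\widehat{G}$ present. The check in the second step that the various base models carry the correct Hamiltonian $S^1$-space structure (and not merely some Kähler structure) is routine given Theorem \ref{thm:uniqueness}.
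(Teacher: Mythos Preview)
Your proposal takes a genuinely different route from the paper's, and comparing the two is instructive.

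The paper does \emph{not} go via minimal models. Instead it goes in the opposite direction: by Theorem~\ref{thm:desingularization}, $(M,\omega,H)$ can be desingularized by $k$ equivariant weighted blow-ups, and one inducts on $k$. For $k=0$ the space is a smooth Hamiltonian $S^1$-manifold and the result is Karshon's \cite[Theorem~7.1]{karshon}. For the inductive step, after one blow-up one obtains $(M',\omega',H')$ needing $k-1$ blow-ups, hence K\"ahler by induction; since $(M,\omega,H)$ is recovered from $(M',\omega',H')$ by an equivariant weighted blow-\emph{down}, Remark~\ref{rmk:symp_weighted_blow_down} (third bullet) transports the K\"ahler structure down to $(M,\omega,H)$. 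That is the entire argument.

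The key economy here is that the paper only ever needs the \emph{easy} direction, that blow-downs preserve K\"ahler, which is already packaged in Remark~\ref{rmk:symp_weighted_blow_down}; and it outsources the base case entirely to the smooth theory. Your approach instead requires (i) a case-by-case check that every minimal model (including projectivized plane bundles over orbi-surfaces of arbitrary genus, with the correct symplectic form up to Theorem~\ref{thm:uniqueness}) is equivariantly K\"ahler, and (ii) the \emph{harder} direction, that equivariant weighted blow-\emph{ups} preserve K\"ahler in the orbifold setting. You correctly identify the main obstacle in (ii): one needs a holomorphic local model near the blow-up point in which the auxiliary circle $\widetilde{S}^1$ acts holomorphically, and one must reconcile this with the symplectic normalization used in Definition~\ref{def:bup} (a holomorphic Bochner chart need not be Darboux, so matching the K\"ahler-cut symplectic form with $\widetilde{\omega}_\varepsilon$ on the gluing annulus is genuinely delicate). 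This can be carried out, but it is substantially more work than the paper's route, and moreover cases (B) with $r>1$ and (C) of Theorem~\ref{thm:fixed} force you to use blow-downs anyway, so you end up needing both directions.

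In short: your argument is plausible and can likely be completed, but the paper's approach sidesteps both of your substantive steps by reducing directly to the smooth classification via desingularization and then descending by blow-downs.
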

\begin{proof}
  Let $(M,\omega,H)$ be a Hamiltonian $S^1$-space. By Theorem
  \ref{thm:desingularization}, it can be desingularized after applying finitely many
  $S^1$-equivariant weighted blow-ups. Let $k$ be the number of
  blow-ups needed in the desingularization procedure given by the
  proof of Theorem \ref{thm:desingularization}. We argue by induction
  on $k$. If $k=0$, the result follows by \cite[Theorem
  7.1]{karshon}. Suppose that the result holds for $k-1$. After
  applying the first blow-up in the desingularization procedure, we
  obtain a Hamiltonian $S^1$-space $(M',\omega',H')$ that requires
  $k-1$ blow-ups to be desingularized. Hence, by the inductive
  hypothesis, $(M',\omega',H')$ is Kähler. Since $(M,\omega,H)$ is
  (isomorphic to a space) obtained by performing a weighted blow-up on
  $(M',\omega',H')$, the result follows by Remark \ref{rmk:symp_weighted_blow_down}.
\end{proof}

\subsection{Birational equivalence of minimal Hamiltonian $S^1$-spaces}\label{sec:birat-equiv-minim}
The aim of this section is to prove that some of the minimal Hamiltonian $S^1$-spaces obtained in Sections~\ref{MSF} and \ref{MSI} are $S^1$-equivariantly birationally equivalent. 

\begin{theorem}\label{thm:birationalequiv}
  Let $(M,\omega,H)$ be a minimal Hamiltonian $S^1$-space that has either fixed orbi-spheres or no
  fixed orbi-surfaces. Then
  $(M,\omega,H)$ is birationally equivalent to a minimal
  Hamiltonian $S^1$-space with multigraph as in $(B)$ of
  Theorem~\ref{thm:existisolated}.
\end{theorem}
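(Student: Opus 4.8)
The plan is to combine the classification of minimal Hamiltonian $S^1$-spaces with the explicit constructions of Sections~\ref{MSF} and \ref{MSI}. First I would record the characterization of $S^1$-equivariant birational equivalence to be used: since every Hamiltonian $S^1$-space is K\"{a}hler by Theorem~\ref{thm:kahler}, and since in complex dimension two every birational map between (orbifold) surfaces factors into a finite composition of blow-ups and blow-downs at points, two Hamiltonian $S^1$-spaces are $S^1$-equivariantly birationally equivalent precisely when one can be obtained from the other by a finite sequence of $S^1$-equivariant weighted blow-ups and blow-downs at singular points (these operations stay within the class of Hamiltonian $S^1$-spaces by Lemma~\ref{lemma:blow_up_preserves} and Lemma~\ref{lem::weightedblowdown}). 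This is an equivalence relation, so it suffices to connect to a multigraph-$(B)$ space each \emph{minimal} Hamiltonian $S^1$-space that has a fixed orbi-sphere or no fixed orbi-surface. By Lemmas~\ref{lemma:existsurface1} and \ref{lemma:only_iso} together with Theorems~\ref{fig::minimalfixedsurface} and \ref{thm:existisolated}, such a space has labeled multigraph one of $(I)$, $(II)$, $(III)$ with $g=0$, $(A)$, $(B)$ or $(C)$.

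The case $(B)$ is trivial. For $(I)$ and $(II)$ I would invoke the construction in Section~\ref{MSI} of a multigraph-$(B)$ space with $r>1$: it is exactly a finite sequence of $S^1$-equivariant weighted blow-ups followed by one weighted blow-down applied to a space with multigraph $(I)$ (and, after replacing $H$ by $-H$ and inverting the $S^1$-action, to one with multigraph $(II)$), so $(I)$ and $(II)$ are connected to $(B)$. For $(C)$, the construction in Section~\ref{MSI} realizes a $(C)$-space by equivariant weighted blow-ups and one blow-down applied to $\C P^2(m_1,m_2,p)/\Z_c(l,1,0)$ with the $S^1$-action fixing the orbi-sphere $\{z_2=0\}$; that starting space has a fixed orbi-sphere, hence multigraph $(I)$ or $(II)$, so $(C)$ too is connected to $(B)$.

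It remains to treat $(III)$ with $g=0$ and $(A)$. For $(III)$ with $g=0$---a projectivized plane orbi-bundle $\mathbb{P}(L\oplus\C)\to\Sigma$ over an orbi-sphere $\Sigma\simeq\C P^1/\Z_m$, with fixed orbi-surfaces $\Sigma_\pm$---I would perform equivariant weighted blow-ups of type $(IV)$ of Proposition~\ref{prop::desing} at singular points of $\Sigma_+$; by Corollary~\ref{lem::blowupchange0} each one strictly lowers the degree of the normal orbi-bundle of $\Sigma_+$, so after finitely many the degree becomes negative and $\Sigma_+$ can be blown down, producing a Hamiltonian $S^1$-space with exactly one fixed orbi-surface. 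Since a blow-down never creates a fixed orbi-surface, reducing this space to a minimal model by equivariant weighted blow-downs yields, by Theorems~\ref{fig::minimalfixedsurface} and \ref{thm:existisolated}, a multigraph of type $(I)$, $(II)$, $(A)$, $(B)$ or $(C)$ (type $(III)$ is excluded), all already handled. For $(A)$---the quotient $(\C P^1\times\C P^1)/\Z_c$ with the diagonal-type action---I would blow up at one of its two non-extremal (singular) fixed points using admissible weights, e.g.\ the Hirzebruch-Jung weights of Remark~\ref{rmk:exc_divisor}, turning the ``diamond'' into a configuration containing a nontrivial chain of $S^1$-invariant gradient orbi-spheres, and then repeatedly blow down invariant orbi-spheres of negative self-intersection until reaching a minimal model. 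Such a model has no fixed orbi-surface (neither a blow-up at an isolated fixed point nor a blow-down creates one), so it is of type $(A)$, $(B)$ or $(C)$; to finish I would show---by tracking a monotone quantity such as $b_2(|M|)$ or the orbifold Euler characteristic, together with the combinatorics of the gradient-sphere chains---that the reduction terminates and cannot return to type $(A)$, hence lands on $(B)$ or $(C)$ and is therefore connected to $(B)$.

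The main obstacle I expect is this last point for the $(A)$ case: exhibiting an explicit sequence of admissible $S^1$-equivariant weighted blow-ups and blow-downs and proving that it reaches a type-$(B)$ (or $(C)$, or $(I)$/$(II)$) multigraph rather than cycling through type-$(A)$ multigraphs. Unlike the desingularization of Theorem~\ref{thm:desingularization}, there is no obvious a priori bound on such a process, so one must produce a strictly monotone invariant under the relevant blow-downs. A secondary technical point is verifying that all the weighted blow-ups used---in particular the degree-lowering ones in the $(III)$ case---can be performed with sizes that keep the moment-map and area labels admissible, which follows from Corollary~\ref{lem::blowupchange0} and Remark~\ref{rmk:size_blow_up}.
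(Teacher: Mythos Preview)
Your treatment of cases $(I)$, $(II)$, $(C)$ and $(III)$ with $g=0$ is essentially the same as the paper's, modulo inessential choices (the paper blows up on $\Sigma_-$ rather than $\Sigma_+$, and explicitly notes the $n=0$ case where one must blow up a regular point instead). Your observation that after collapsing one of the two fixed orbi-surfaces the minimal model cannot be of type $(III)$ again is correct, since blow-downs never destroy a fixed orbi-surface.

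The genuine gap is case $(A)$, and you have identified it yourself. Your plan is to perform \emph{some} blow-up at a non-extremal fixed point, reduce to a minimal model, and then argue via a monotone invariant that the result cannot again be of type $(A)$. But you do not produce such an invariant: $b_2(|M|)$ and the orbifold Euler characteristic are both unchanged under a blow-up followed by a blow-down, so neither can rule out cycling through type-$(A)$ multigraphs with different values of $c$, $l$, $m$, $n$. Nothing in the combinatorics you sketch prevents this.

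The paper bypasses this difficulty entirely with an explicit three-move construction. At the non-extremal fixed point of type $\frac{1}{c}(1,l)$ it performs the specific blow-up $\text{WBU}\bigl(c,\tfrac{m}{c},-\tfrac{n}{c},\tfrac{1}{c},\tfrac{l}{c},\varepsilon\bigr)$, which splits the diamond into a chain; by Lemma~\ref{lem::euler} the two $\Z_m$- and $\Z_n$-isotropy orbi-spheres adjacent to the extrema then have normal degrees $-\tfrac{l}{c}$ and $-\tfrac{1}{lc}$, so both can be blown down. The result has exactly three fixed points forming a single triangle, i.e.\ a multigraph of type $(B)$ with $r=1$ (a quotient of a weighted projective plane). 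No termination or non-return argument is needed: one blow-up and two blow-downs suffice, and the outcome is visibly of type $(B)$.
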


\begin{proof}
  We assume first that any fixed orbi-surface of $(M,\omega,H)$ has
  genus zero. By Section \ref{MSI}, the result holds if the labeled multigraph
  of $(M,\omega,H)$ is either as in (I) or (II) in Theorem
  \ref{fig::minimalfixedsurface} (see Figure
  \ref{fig:constrgraph4}). Hence, we may assume that the labeled
  multigraph is as in (III) in Theorem
  \ref{fig::minimalfixedsurface} with $g=0$. Let $\Sigma_{\pm}$ denote
  the orbi-sphere lying at the maximum/minimum of $H$ and let $\beta_{\pm}$ be the degree of the normal
  orbi-bundle of $\Sigma_{\pm}$. Since there are no interior fixed
  points in $M$, it follows that $\beta_+ + \beta_- = 0$ (cf. equation
  \eqref{eq:48}). Without loss of generality, we assume that $\beta_- \leq 0$.   We fix notation so that the labeled multigraph of $(M,\omega,H)$ is
  as in the leftmost in Figure \ref{fig:proj2}, where $n$ is the
  number of singular points on $\Sigma_{\pm}$ and we allow $n=0$. The Seifert invariant of the circle orbi-bundle
  associated to the normal orbi-bundle of $\Sigma_-$ is 
  $$
  (g=0,\beta_0, (p_1,\hat{l}_1^\prime), \cdots, (p_n,\hat{l}_n^\prime)), 
  $$
  where, as usual, $1\leq \hat{l}_i^\prime<p_i$ is such that
  $\hat{l}_i^\prime \cdot \hat{l}_i=1 \mod p_i$. 

  If $n > 0$, for each
  $i=1,\ldots, n$, we perform a weighted blow up
  $\text{WBU}(p_i,1,0,\frac{\hat{l}_i^\prime}{p_i},
  \frac{1}{p_i},\varepsilon_i)$ at each of the singular points in
  $\Sigma_-$, where $\varepsilon_i >0$ is sufficiently
  small. As a
  result, for each $i$, after the blow-up we obtain an interior
  singular point of type $\frac{1}{\hat{l}_i^\prime}(1,\hat{s}_i)$,
  where $1\leq \hat{s}_i< \hat{l}_1^\prime$ is such that $\hat{s}_i\,
  p_i=-1 \mod \hat{l}_i^\prime$ (see the second multigraph from the
  left in Figure \ref{fig:proj2}).

    
\renewcommand{\thefigure}{\thesection.\arabic{figure}}
\begin{figure}[h!]
  \centering
  \begin{tikzpicture}[scale=.7]	
    \coordinate (point) at  (-0.5,3);
    \fill  (5.5,4) circle (8pt);
    \fill  (5.5,1) circle (8pt);
    \coordinate[label=above:{$\frac{1}{p_1}(1,\hat{l}_1) \cdots \frac{1}{p_n}(1,\hat{l}_n)$}] (VFD) at (5.5,-.2);
    \coordinate[label=above:{$g=0$}] (VFE) at (5.6,-0.8);			
    \coordinate[label=below:{$\frac{1}{p_1}(1,l_1)\cdots \frac{1}{p_n}(1,l_n)$}] (VCC) at (5.5,5);
    \coordinate[label=below:{$g=0$}] (VCC) at (5.5,5.7);			
    \draw[thick](5,3.6) -- (5,1.4);
    \draw[thick](6,3.6) -- (6,1.4);
    \node at ($(5,2.5)!.5!(6,2.5)$) {\ldots};
    \coordinate[label=right:{$p_n$}] (c1) at (6,2.5);
    \coordinate[label=left:{$p_1$}] (c1) at (5,2.5);		
    
    \draw[->, thick] (7.5,2.7) to (7.9,2.7);	

    \fill  (10.7,4) circle (8pt);
    \fill  (10.2,2.5) circle (3pt);
    \fill  (11.2,2.5) circle (3pt);
    \fill  (10.7,1) circle (8pt);
\coordinate[label=below:{$g=0$}] (VFE) at (10.7,.8);		
\coordinate[label=below:{$g=0,$}] (VCC) at (10.8,5.7);	
\coordinate[label=below:{$\frac{1}{p_1}(1,\hat{l}_1) \cdots \frac{1}{p_n}(1,\hat{l}_n)$}] (VCC) at (10.8,5);
\coordinate[label=below:{$\frac{1}{\hat{l}_1^\prime}(1,\hat{s}_1)$}] (VCC) at (9.4,2.5);
\coordinate[label=below:{$\cdots$}] (VCC) at (10.8,2.3);
\coordinate[label=below:{$\frac{1}{\hat{l}_n^\prime}(1,\hat{s}_n)$}] (VCC) at (12.2,2.5);

\draw[thick](10.2,3.6) -- (10.2,2.5);
\node at ($(10.2,3)!.5!(11.2,3)$) {\ldots};
\draw[thick](11.2,3.6) -- (11.2,2.5);

\coordinate[label=right:{$p_n$}] (c1) at (11.2,3);
\coordinate[label=left:{$p_1$}] (c1) at (10.3,3);	
    
 \draw[->, thick] (13.4,2.7) to (13.8,2.7);	
    
\fill  (16.3,4) circle (8pt);
\fill  (15.9,2.5) circle (3pt);
\fill  (16.7,2.5) circle (3pt);
\fill  (16.3,1) circle (3pt);
       
\coordinate[label=below:{$\frac{1}{\lvert \beta_0\rvert }(1,1)$}] (VFE) at (16.4,.8);		
\coordinate[label=below:{$g=0$}] (VCC) at (16.3,5.7);	
\coordinate[label=below:{$\frac{1}{p_1}(1,\hat{l}_1) \cdots \frac{1}{p_n}(1,\hat{l}_n)$}] (VCC) at (16.3,5);
\coordinate[label=below:{$\frac{1}{\hat{l}_1^\prime}(1,\hat{s}_1)$}] (VCC) at (15,2.5);
\coordinate[label=below:{$\cdots$}] (VCC) at (16.4,2.3);
\coordinate[label=below:{$\frac{1}{\hat{l}_n^\prime}(1,\hat{s}_n)$}] (VCC) at (17.7,2.5);

\draw[thick](15.9,3.6) -- (15.9,2.5);
\node at ($(15.9,3)!.5!(16.7,3)$) {\ldots};
\draw[thick](16.7,3.6) -- (16.7,2.5);

\coordinate[label=right:{$p_n$}] (c1) at (16.7,3);
\coordinate[label=left:{$p_1$}] (c1) at (15.9,3);	
    
 \draw[->, thick] (18.5,2.7) to (18.9,2.7);	
    
\fill  (20,4) circle (8pt);
\fill  (20,1) circle (3pt);
       
\coordinate[label=below:{$\frac{1}{\lvert \beta_0\rvert }(1,1)$}] (VFE) at (20,.8);	
\coordinate[label=below:{$g=0$}] (VCC) at (20,5);	

\end{tikzpicture}\caption{Birational equivalence of $\mathbb{P}(L
  \oplus \C)$ to a weighted projective plane, for an  orbi-bundle $L$
 over an orbi-sphere.} \label{fig:proj2}
\end{figure}
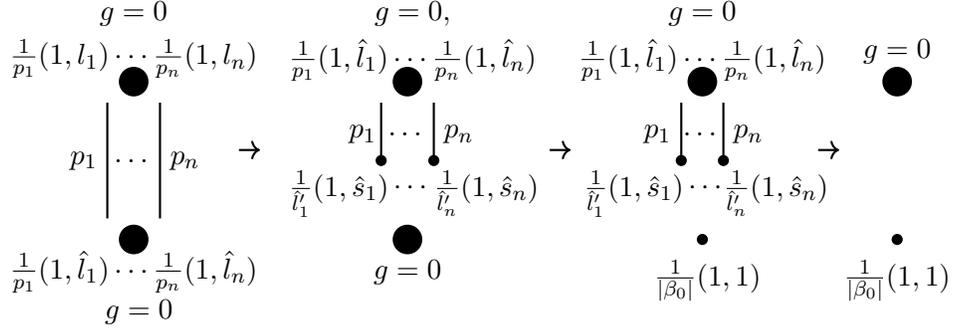

By Corollary~\ref{lem::blowupchange0} and
Proposition~\ref{def::EulerOrbibundle}, the degree of the normal
orbi-bundle of the resulting minimal sphere  is $\beta_- -
\sum_{i=1}^n \frac{\hat{l}_i^\prime}{p_i} = \beta_0 <0$  and so we can
blow down this sphere,  obtaining a singular point of order $\lvert
\beta_0\rvert$. For this we use the reverse construction of the
weighted blow up
$\text{WBU}\left(\beta_0,\frac{1}{\beta_0},\frac{1}{\beta_0},\frac{1}{\beta_0},\frac{1}{\beta_0},
\varepsilon\right)$ for an appropriate $\varepsilon > 0$. We obtain the third multigraph in Figure~\ref{fig:proj2}. By
Lemma \ref{lemma:existsurface1} and Theorem \ref{fig::minimalfixedsurface}, by applying finitely many
$S^1$-equivariant weighted blow-downs, we obtain a minimal Hamiltonian
$S^1$-space with no interior fixed points that has precisely one fixed
orbi-sphere at the maximum, i.e., with labeled multigraph given by (I) in
Theorem \ref{fig::minimalfixedsurface} (see the rightmost labeled
graph in Figure \ref{fig:proj2}). Hence, the result follows in
this case. If $n=0$, we argue as above starting with a standard blow-up
to the smooth sphere $\Sigma_-$.

Next we assume that $(M,\omega,H)$ only has isolated fixed points. If
the multigraph is as in $(A)$ of Theorem~\ref{thm:existisolated}, we
can, for instance,  blow up the singular point of type
$\frac{1}{c}(1,l)$, using a
$\text{WBU}(c,\frac{m}{c},-\frac{n}{c},\frac{1}{c},\frac{l}{c}, \bullet)$,
resulting in an exceptional divisor with one singular point of order
$l$ (see the second multigraph in Figure~\ref{fig:proj3}). By
Corollary~\ref{lem::blowupchange0}, the  normal orbi-bundles of the
blown-up $\Z_m$ and $\Z_n$-isotropy orbi-spheres in the resulting
space have degrees $-\frac{l}{c}$ and $-\frac{1}{l c}$
respectively. Blowing down these orbi-spheres we obtain the
Hamiltonian $S^1$-space with the last multigraph in
Figure~\ref{fig:proj3} (we leave the details to the reader). 

\renewcommand{\thefigure}{\thesection.\arabic{figure}}
\begin{figure}[h!]
  \centering
  \begin{tikzpicture}[scale=.7]	
  	\coordinate (1A) at (-7.5,3);
				\coordinate (PP1) at (-5,3);
				\fill (PP1) circle (4pt);
				\coordinate (PP2) at (-5,0);
				\fill (PP2) circle (4pt);		
				\coordinate (PP3) at (-6,1.5);
				\fill (PP3) circle (4pt);
				\coordinate (PP4) at (-4,1.5);
				\fill (PP4) circle (4pt);		
				\draw[very thick](PP1) -- (PP3);
				\draw[very thick](PP2) -- (PP3);
				\draw[very thick](PP1) -- (PP4);
				\draw[very thick](PP4) -- (PP2);
				\coordinate[label=right:$m$] (AB) at ($(PP1)!0.5!(PP4)$);
				\coordinate[label=left:$n$] (BC) at ($(PP1)!0.5!(PP3)$);
				\coordinate[label=right:$n$] (AB) at ($(PP2)!0.5!(PP4)$);
				\coordinate[label=left:$m$] (BC) at ($(PP2)!0.5!(PP3)$);
				\coordinate[label=above:{$\frac{1}{c}(1,c-l^\prime)$}] (AB) at ($(PP1)$);
				\coordinate[label=below:{$\frac{1}{c}(1,c-l)$}] (AB) at ($(PP2)$);
				\coordinate[label=left:{$\frac{1}{c}(1,l^\prime)$}] (AB) at ($(PP3)$);	
				\coordinate[label=right:{$\frac{1}{c}(1,l)$}] (AB) at ($(PP4)$);

      \draw[->, thick] (-2.2,1.5) to (-1.8,1.5);	
    
				\coordinate (PP1) at (1.2,3);
				\fill (PP1) circle (4pt);
				\coordinate (PP2) at (1.2,0);
				\fill (PP2) circle (4pt);		
				\coordinate (PP3) at (0.2,1.5);
				\fill (PP3) circle (4pt);
				\coordinate (PP4) at (1.7,2.2);
				\fill (PP4) circle (4pt);		
				\coordinate (PP5) at (1.7,.8);
				\fill (PP5) circle (4pt);		
				\draw[very thick](PP1) -- (PP3);
				\draw[very thick](PP2) -- (PP3);
				\draw[very thick](PP2) -- (PP5);
				\draw[very thick](PP1) -- (PP4);
				\draw[very thick](PP4) -- (PP5);
				\coordinate[label=right:$m$] (AB) at ($(PP1)!0.3!(PP4)$);
				\coordinate[label=right:$\quad \quad \quad  \frac{m l +n}{c}$] (AB) at ($(PP3)!0!(PP5)$);
				\coordinate[label=left:$n$] (BC) at ($(PP1)!0.5!(PP3)$);
				\coordinate[label=right:$n$] (AB) at ($(PP2)!0.3!(PP5)$);
				\coordinate[label=left:$m$] (BC) at ($(PP2)!0.5!(PP3)$);
				\coordinate[label=above:{$\frac{1}{c}(1,c-l^\prime)$}] (AB) at ($(PP1)$);
				\coordinate[label=below:{$\frac{1}{c}(1,c-l)$}] (AB) at ($(PP2)$);
				\coordinate[label=left:{$\frac{1}{c}(1,l^\prime)$}] (AB) at ($(PP3)$);	
				\coordinate[label=right:{$\frac{1}{l}(1,s)$}] (AB) at ($(PP5)$);
    
      \draw[->, thick] (3.6,1.5) to (4,1.5);	
				\coordinate (PP1) at (7.3,3);
				\fill (PP1) circle (4pt);
				\coordinate (PP2) at (7.3,0);
				\fill (PP2) circle (4pt);		
				\coordinate (PP3) at (6,1.5);
				\fill (PP3) circle (4pt);
				\coordinate (PP4) at (7.3,2.2);
				\coordinate (PP5) at (7.3,.8);
				\draw[very thick](PP1) -- (PP3);
				\draw[very thick](PP2) -- (PP3);
				\draw[very thick](PP2) -- (PP5);
				\draw[very thick](PP1) -- (PP4);
				\draw[very thick](PP4) -- (PP5);
				\coordinate[label=right:$\quad \quad \quad  \frac{m l +n}{c}$] (AB) at ($(PP3)!0!(PP5)$);
				\coordinate[label=left:$n$] (BC) at ($(PP1)!0.4!(PP3)$);
				\coordinate[label=left:$m$] (BC) at ($(PP2)!0.4!(PP3)$);
				\coordinate[label=above:{$\frac{1}{l}(1,t)$}] (AB) at ($(PP1)$);
				\coordinate[label=left:{$\frac{1}{c}(1,l^\prime)$}] (AB) at ($(PP3)$);	
  \end{tikzpicture}\caption{Birational equivalence of  $\left( \mathbb{C} P^1 \times\mathbb{C}  P^1 \right) /\mathbb{Z}_c$ and a weighted projective plane.} \label{fig:proj3}
\end{figure}
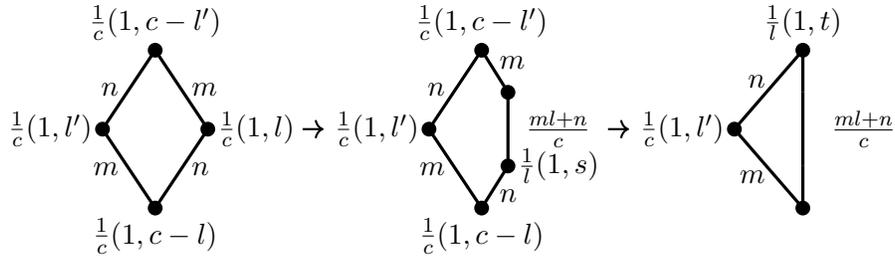

Finally, by Section~\ref{MSI}, if the labeled multigraph is as in
$(C)$ in Theorem~\ref{thm:existisolated}, then $(M,\omega,H)$ is
$S^1$-equivariantly birationally equivalent to a Hamiltonian
$S^1$-space with labeled multigraph as in
Theorem~\ref{fig::minimalfixedsurface}-(I) or (II) (see
Figure~\ref{multigraph5}). This completes the proof.
\end{proof}

\begin{example} Let us consider a minimal Hamiltonian $S^1$-space with
  multigraph as in Theorem~\ref{fig::minimalfixedsurface}-(III), with
  $g=0$ and such that each fixed orbi-sphere has exactly two singular
  points (see Figure \ref{fig:proj2b}). Since there are no interior
  fixed points, the degree $\beta_\pm$ of the normal orbi-bundle of
  the fixed orbi-sphere  $\Sigma_\pm$ is zero. Moreover, by $(iv)$ and
  $(v)$ of Theorem~\ref{fig::minimalfixedsurface}-(III), if the type
  of one of the singular points  in $\Sigma_+$ is $\frac{1}{p}(1,l)$,
  then the type of the other singular point in $\Sigma_+$ must be
  $\frac{1}{p}(1,p-l)$ and the multigraph is the one in
  Figure~\ref{fig:proj2b}, where $1\leq l^\prime<p$ is such that $l \cdot
  l^\prime = 1 \mod p$.
\renewcommand{\thefigure}{\thesection.\arabic{figure}}
  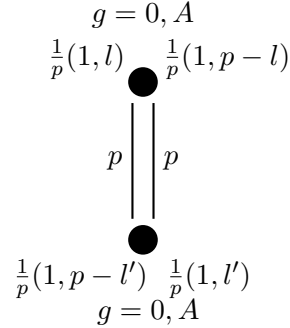
\begin{figure}[h!]
  \centering
  \begin{tikzpicture}[scale=.7]	
    \coordinate (point) at  (-0.5,3);
    \fill  (5.5,4) circle (8pt);
    \fill  (5.5,1) circle (8pt);
    \coordinate[label=above:{$\frac{1}{p}(1,p-l^\prime)\,\,\,\,\frac{1}{p}(1,l^\prime)$}] (VFD) at (5.3,-.3);
    \coordinate[label=above:{$g=0,A$}] (VFE) at (5.6,-0.8);			
    \coordinate[label=below:{$\frac{1}{p}(1,l)\quad \,\, \frac{1}{p}(1,p-l)$}] (VCC) at (6,5);
    \coordinate[label=below:{$g=0, A$}] (VCC) at (5.5,5.7);			
    \draw[thick](5.3,3.6) -- (5.3,1.4);
    \draw[thick](5.7,3.6) -- (5.7,1.4);

    \coordinate[label=right:{$p$}] (c1) at (5.7,2.5);
    \coordinate[label=left:{$p$}] (c1) at (5.3,2.5);
     \end{tikzpicture}\caption{Multigraph of a minimal Hamiltonian $S^1$-space with two orbi-spheres with trivial normal orbi-bundle.} \label{fig:proj2b}
\end{figure}
Since the Seifert invariant of the circle orbi-bundle associated to the normal orbi-bundle of $\Sigma_-$ is 
$$
(g=0, \beta_0=-1, (p,l^\prime), (p,p-l^\prime)),
$$
the proof of Theorem~\ref{thm:birationalequiv} shows that this space
is $S^1$-equivariantly birationally equivalent to $S^2\times S^2$ with the standard diagonal circle action.
\end{example}

\color{black}

\end{document}